\newtheorem{lemma}{Lemma}[section]
\newtheorem{thm}[lemma]{Theorem}
\newtheorem{rem}[lemma]{Remark}
\newtheorem{prop}[lemma]{Proposition}
\newtheorem{cor}[lemma]{Corollary}
\newtheorem{oss}[lemma]{Observation}
\newtheorem{example}[lemma]{Example}
\newtheorem{defn}[lemma]{Definition}
\newcommand{\cl}{C \kern -0.1em \ell}
\newcommand\matC{{\mathbb{C}}}
\newcommand\matZ{{\mathbb{Z}}}
\newcommand\matR{{\mathbb{R}}}
\newcommand\matN{{\mathbb{N}}}
\def\sistema#1{\left\{\begin{array}{l}#1\end{array}\right.}
\newcommand\calD{{\mathcal D}}
\begin{document}

\title{Diffeological Dirac operators and diffeological gluing}

\author{Ekaterina~{\textsc Pervova}}

\maketitle

\begin{abstract}
\noindent This manuscript attempts to present a way in which the classical construction of the Dirac operator can be carried over to the setting of diffeology. A more specific aim is to describe a procedure 
for gluing together two usual Dirac operators and to explain in what sense the result is again a Dirac operator. Since versions of cut-and-paste (surgery) operations have already appeared in the context of 
Atiyah-Singer theory, we specify that our gluing procedure is designed to lead to spaces that are not smooth manifolds in any ordinary sense, and since much attention has been paid in recent years to Dirac 
operators on spaces with singularities, we also specify that our approach is more of a piecewise-linear nature (although, hopefully, singular spaces in a more analytic sense will enter the picture sooner or 
later; but this work is not yet about them).

To define a diffeological Dirac operator, we describe the diffeological counterparts of all the main components (with the exception of the tangent bundle, of which we use a simplistic version; on the other hand, 
in the standard case it does become the usual tangent bundle): the diffeological analogue of a vector bundle, called a \emph{pseudo-bundle} here, endowed with \emph{pseudo-metric} playing a role of a 
Riemannian metric, the spaces of sections of such pseudo-bundles, the pseudo-bundles of Clifford algebras and those of exterior algebras (as specific instances of Clifford modules), and then the diffeological 
analogue of differential forms, which in particular provides a standard counterpart of the cotangent bundle, the pseudo-bundle $\Lambda^1(X)$. The dual pseudo-bundle $(\Lambda^1(X))^*$ is what is used 
in place of the tangent bundle. We then consider a notion of diffeological connection, a straightforward generalization of the standard notion, leading also to the notions of Levi-Civita connections and Clifford 
connections. A diffeological Dirac operator on a diffeological pseudo-bundle $E$ of Clifford modules is then the composition of a given Clifford action of $\cl(\Lambda^1(X),g^{\Lambda})$, where $g^{\Lambda}$ 
is a fixed pseudo-metric on $\Lambda^1(X)$, with a (Clifford) connection on $E$.

To give a more concrete angle to our treatment, we concentrate a lot on the interactions of these constructions with the operation of the so-called \emph{diffeological gluing}. On the level of underlying sets it is 
the same as what is usually called gluing, and the diffeology assigned to the resulting space is a standard quotient diffeology (although it is a rather weak one and so does probably risk being too weak for any 
potential applications). For each of the above-listed notion we outline how it behaves under the gluing procedure, perhaps under additional assumptions on the gluing map (or maps, as the case might be). 
These assumptions, although they progressively get more restrictive, do allow to treat spaces more general than smooth manifolds. Finally, we do attempt to give example whenever possible and for illustrative
purposes; according to one's personal taste, these may or may not appear artificial. The majority of the statements are cited without proofs, with references to other works (more restricted in scope) where such 
can be found.

\noindent MSC (2010): 53C15 (primary), 57R35 (secondary).
\end{abstract}

\section*{Introduction}

Diffeology as a subject, introduced by Souriau in the 80's \cite{So1,So2}, belongs among various attempts made over the years to extend the usual setting of Differential Calculus and/or Differential
Geometry.\footnote{This depends on who you ask, and when.} Many of these attempts were particularly aimed to address the needs of mathematical physics, such as smooth structures \`a la
Sikorski or \`a la Fr\"olicher, the issue being that many objects naturally appearing in, say, noncommutative geometry, such as irrational tori, orbifolds, spaces of connections on principal
bundles in Yang-Mills theory... are not smooth manifolds and do not easily lend themselves to more standard ways of treatment. A rather comprehensive summary of other attempts made to develop
a common setting for such objects can be found in \cite{St}.

The few words just said about the general location of diffeology in the mathematical landscape do justify the attempt to look at the eventual extension of the Atiyah-Singer theory to its setting, and certain 
attempts in that direction have already been made (see \cite{magnot1}); this theory does find itself at the crossroads of all the same subjects. How successful, or useful, further attempts in this sense might be, 
is a different matter, but, as it is commonly said, you never know until you try.

\paragraph{A diffeological space and its diffeology} The notion of a \emph{diffeological space} is a simple and elegant extension of the notion of a smooth manifold. Such a spaceis a set $X$ endowed with 
a diffeological structure analogous to a smooth atlas of a smooth manifold. The charts of such an atlas are maps from domains of Euclidean spaces into $X$, but the difference is that these domains have 
varying dimensions (all possible finite ones, for the definition rigorously stated). On the other hand, two charts with intersecting ranges are related by a smoothsubstitution wherever appropriate (in analogy 
with smooth manifolds), and the ranges of all the charts cover $X$. Constant maps are formally included in the atlas, whose proper name is the \emph{diffeology} of $X$ (or its \emph{diffeological structure}). 
Later on we give precise definitions; for the moment it suffices to think of a diffeological space as an analogue of a smooth manifold, more general in that the charts do not have to have the same dimension.

\paragraph{The basic notion of the Dirac operator} The most basic definition of a Dirac operator is as follows.

\begin{defn}
A \textbf{Dirac operator} $D$ is the following composition of maps:
$$D=c\circ\nabla^E,\mbox{ where }\nabla^E:C^{\infty}(M,E)\to C^{\infty}(M,T^*M\otimes E)\mbox{ and }c:C^{\infty}(M,T^*M\otimes E)\to C^{\infty}(M,E).$$ The data that appear in this definition have
the following meaning:
\begin{itemize}
\item $M$ is a Riemannian manifold, whose metric is denoted by $g$;
\item the metric that $g$ induces on the cotangent bundle $T^*M$ is also denoted by $g$;
\item $E$ is a \emph{bundle of Clifford modules} over $M$. This means that each fibre $E_x$, with $x\in M$, of $E$ is a Clifford module over the corresponding Clifford algebra $\cl(T_x^*M,g_x)$
and the action of the latter depends smoothly on $x$. Finally, $E$ is assumed to be endowed with a Hermitian metric;
\item $\nabla^E$ is a connection on $E$, compatible with the above hermitian metric;
\item $c:C^{\infty}(M,T^*M\otimes E)\to C^{\infty}(M,E)$ is the map that is pointwise given by the Clifford action on $E$:
$$c(\psi\otimes s)(x)=c_x(\psi_x)(s_x)\in E_x.$$
\end{itemize}
\end{defn}

There are two other conditions that are usually imposed on $c$ and on $\nabla^E$, respectively. The condition on $c$ asks that the action of $\cl(T_x^*M,g_x)$ be unitary; whereas the connection 
$\nabla^E$ must be a \textbf{Clifford connection}, that is,
$$\nabla_X^E(c(\psi)s)=c(\nabla_X^{LC}\psi)(s)+c(\psi)\nabla_X^Es,$$ where $\nabla^{LC}$ is the Levi-Civita connection on the cotangent bundle.

\paragraph{The diffeological version} In principle, obtaining a diffeological version of the Dirac operator is an obvious matter: just replace each item appearing in the above list by its diffeological
counterpart, and consider the same composition $c\circ\nabla^E$. The trouble is that for some of these items a diffeological counterpart has not yet been defined, or it has been little studied. This regards 
even the most basic items, such as the notion of a Riemannian manifold; the main problem is that there is no standard construction of the tangent space for diffeological spaces (although there are several 
proposed versions, see the discussion in \cite{iglesiasBook}, the constructions in \cite{HeTangent}, \cite{HMtangent}, \cite{CWtangent}, and references therein).  

On the other hand, the center of the standard construction of the Dirac operator (as it is described above) is the cotangent bundle, and this does have a rather well-developed diffeological counterpart. 
Coupled with the construction of the so-called \emph{diffeological vector pseudo-bundle} (see\cite{iglFibre}, \cite{iglesiasBook}, \cite{vincent}, \cite{CWtangent}, \cite{pseudobundle}), that in diffeology takes 
place of a smooth vector bundle, it provides a reasonable (or at least one that is not unreasonable) starting point for the construction of a diffeological Dirac operator. For that, both are endowed with a 
diffeological counterpart of a Riemannian metric, called \emph{pseudo-metric} $g$ (see \cite{pseudometric-pseudobundle}), and we note right away that fibrewise this is not a scalar product (which in most
cases does not exist on a finite-dimensional diffeological vector space, see \cite{iglesiasBook}), but rather a smooth symmetric semi-definite positive bilinear form with the minimal possible degree of 
degeneracy.

The pseudo-bundle that takes place of the cotangent bundle (see \cite{iglesiasBook} for a recent and comprehensive exposition) is called the \emph{bundle of values of differential forms} $\Lambda^1(X)$ (we
 will always call it a pseudo-bundle, since more often than not it is not really a bundle, not being locally trivial). An element of the pseudo-bundle $\Lambda^1(X)$ is a collection of usual differential $1$-forms 
associated one to each diffeological chart. Such collection is required to be invariant under the usual smooth substitutions on the domains of charts. Whereas, whenever some kind of tangent vectors is needed, 
we use the dual pseudo-bundle $(\Lambda^1(X))^*$ of $\Lambda^1(X)$ (the duality is meant in the diffeological sense, which extends the standard one; it was introduced in \cite{vincent}). This choice is formal 
and is based on the existence of the obvious natural pairing between the elements of $(\Lambda^1(X))^*$ and $\Lambda^1(X)$, but there is no clear geometrical interpretation attached to it; however, if $X$ is 
a usual smooth manifold then $\Lambda^1(X)$ is the cotangent bundle $T^*X$ and $(\Lambda^1(X))^*$ is indeed $TX$.

Assuming, as we will do throughout, that $\Lambda^1(X)$ has finite-dimensional fibres, it can be endowed with a \emph{diffeological pseudo-metric} (or simply a \emph{pseudo-metric}). On each fibre, a 
pseudo-metric is a semi-definite positive symmetric bilinear form that satisfies the usual requirement of smoothness, both within a fibre and across the fibres. It is defined, more generally, on any 
finite-dimensional pseudo-bundle $V$, although sometimes it may not exist. Assuming that it does, we obtain the corresponding pseudo-bundle of Clifford algebras $\cl(V,g)$, where $V$ is the pseudo-bundle 
and $g$ is the chosen pseudo-metric on it, in a more or less straightforward manner; in particular, we obtain a Clifford algebra $\cl(\Lambda^1(X),g^{\Lambda})$, where $g^{\Lambda}$ is a pseudo-metric 
on $\Lambda^1(X)$ (once again, assuming it exists). Even more straightforward is the construction of the pseudo-bundle of exterior algebras $\bigwedge V$, which, as in the standard case, turns out to be a 
pseudo-bundle of Clifford modules over $\cl(V,g)$. There is also the more abstract definition of a pseudo-bundle of Clifford modules, that is analogous to the standard one and which we consider at some length. 

Subsequently, we consider the pseudo-bundle of differential $1$-forms $\Lambda^1(X)$ and its dual $(\Lambda^1(X))^*$, particularly to their interactions with the \emph{diffeological gluing} procedure. Our 
treatment of them is not comprehensive and is for the most part subject to significant restrictions. Still, we do what at the moment is doable (and seems reasonable to do). We then consider \emph{diffeological 
connections}, with the three varieties of them: a diffeological connection on an abstract pseudo-bundle $V$, a \emph{diffeological Levi-Civita connection} on $\Lambda^1(X)$, and a \emph{Clifford connection} 
on a pseudo-bundle of Clifford modules over $\cl(\Lambda^1(X),g^{\Lambda})$, where $g^{\Lambda}$ is some pseudo-metric on $\Lambda^1(X)$ (again, assuming it exists). The end result of the entire 
discussion, the notion of a Dirac operator, is then indeed fully analogous to the standard one, as outlined in the beginning of this introduction.

\paragraph{Definition of a diffeological Dirac operator} The initial data for our version of a diffeological Dirac operator thus consist of the following:
\begin{itemize}
\item a diffeological space $X$;
\item a pseudo-metric $g^{\Lambda}$ on $\Lambda^1(X)$;
\item a pseudo-bundle $E$ of Clifford modules over $\cl(\Lambda^1(X),g^{\Lambda})$, with Clifford action $c$ that determines the usual map $C^{\infty}(X,\Lambda^1(X)\otimes E)\to C^{\infty}(X,E)$;
\item a diffeological connection $\nabla^E:C^{\infty}(X,E)\to C^{\infty}(X,\Lambda^1(X)\otimes E)$.
\end{itemize}
The corresponding operator is then, as usual, $D=c\circ\nabla^E$. A lot of what we do consists in considering how all these notions interact with the \emph{diffeological gluing} construction. 

\paragraph{The diffeological gluing} The term \emph{diffeological gluing} (see \cite{pseudobundle}) stands a simple procedure that allows to obtain out of two diffeological spaces a third one; it then gets 
extended to the case of pseudo-bundles, spaces of smooth maps, etc. This notion mimics the usual topological gluing (a classic instance is a wedge of two smooth manifolds). The diffeology that the result 
is endowed with is usually weaker than other natural diffeologies on the same space; an interesting example due to Watts \cite{watts} shows, for instance, that the gluing diffeology on the union of the two 
coordinate axes in $\matR^2$ is strictly weaker that its diffeology as a subset of $\matR^2$. 

The spaces and pseudo-bundles obtained by gluing can also be a useful testing ground for diffeological constructions: they can be quite simple while being different from any space carrying a smooth 
structure in the usual sense, and the weakness of gluing diffeologies increases the likelihood of quickly revealing the impossibility of a such-and-such construction in any one of all potential diffeologies
(of course, on the other hand, it may give rise to false hopes of something being true more often when it actually is). It is also curious to observe that gluing \emph{to} a one-point space provides a natural 
setting for considering the usual $\delta$-functions as plots (so in particular as smooth maps).

\paragraph{The structure} The text naturally splits into three parts. In the first of them (Sections 1-3) we collect the introductory material, such as some basic facts regarding Dirac operators (Section 1), 
diffeological spaces and particularly vector spaces (Section 2), and diffeological differential 1-forms as they have been treated elsewhere (Section 3). The second part (Sections 4-7) deals with pseudo-bundles 
and related notions. The pseudo-bundles themselves, and the diffeological gluing procedure, are discussed in Section 4. We then consider pseudo-metrics on them (Section 5), spaces of smooth sections 
(Section 6), and finally the pseudo-bundles of Clifford algebras and those of the exterior algebras associated to a given pseudo-bundle carrying a pseudo-metric (Section 7; not all of this material is necessary). 
The third part (Sections 8-13) treats the rest: differential 1-forms and particularly their behavior under gluing, with a complete answer being reached only under the assumption of the gluing map being a 
diffeological diffeomorphism, the assumption carried from that point onwards (Section 8), the dual pseudo-bundle $(\Lambda^1(X))^*$ (Section 9), diffeological connections (Section 10), the analogue of 
Levi-Civita connections as connections on $\Lambda^1(X)$ endowed with a pseudo-metric (Section 11). In Section 12 we say what we can about Clifford connections, and in the concluding Section 13 we 
wrap everything together, stating and then illustrating via examples the resulting notion of a diffeological Dirac operator.

\paragraph{What is \emph{not} in here} Here is a very brief and incomplete list of things that we do not even attempt to treat in the present work.
\begin{itemize}
\item We have already mentioned that the gluing diffeology, a rather weak one on its own, is a precursor to stronger and therefore more useful diffeologies. We do not discuss any such extension;
\item we give no applications. In particular, all our examples are for illustrative purposes only and might appear artificial to some;
\item we say almost nothing about the index. This is left for future work;
\item there exists an established notion of the diffeological de Rham cohomology, but we do not really discuss it. Neither do we consider the potential de Rham operator;
\item a great number of other things.
\end{itemize}

\paragraph{Acknowledgments}\footnote{\emph{``Cercare e saper riconoscere chi e cosa, in mezzo all'inferno, non \`e inferno, e farlo durare, e dargli spazio''} (I. Calvino)} This paper is meant to be a collection,
in a single place, and a summary, of other projects carried out separately (all united by the same theme, however). As such, it came out too lengthy, and so its destiny is uncertain.\footnote{``[...] but the delight 
and pride of Aul\"e is in the deed of making, and in the thing made, and neither in possession, nor in his own mastery; wherefore he gives and hoards not, and is free from care, passing ever on to some new 
work.'' J. Tolkien, in ``Silmarillion''.} It also took forever to complete; yet, whatever becomes of it, it has been, and still is, a satisfying process in a way that goes much beyond the satisfaction that one might draw
from having just one more item to add to one's publication list. And, if nothing more, it led to various other papers being written along the way; they would not have come into being otherwise. These are 
among the reasons why completing this paper is of particular significance to me; and its existence is in large part due to contribution from many other people, first of all, Prof. Riccardo Zucchi (who, without 
knowing it, gave me its idea) and Prof. Paolo Piazza (I first learnt the Atiyah-Singer theory from his notes on the subject). Also, quite a few anonymous referees made very useful comments on the papers 
originating from this project, for which I am grateful to all of them.

\section{The Dirac operator}\label{dirac:operator:defn:sect}

In this section (which is rigorously for a non-specialist) we recall some of the main notions regarding the Dirac operator, mostly following the exposition in \cite{heat-kernel}.

\subsection{Clifford algebras and Clifford modules}

These are the most basic constructions that come into play when defining the Dirac operator.

\paragraph{Clifford algebras} There is more than one way to define a Clifford algebra; a more constructive one is as follows.

\begin{defn}
Let $V$ be a vector space equipped with a symmetric bilinear form $q(\, ,\,)$. The \textbf{Clifford algebra} $\cl(V,q)$ associated to $V$ and $q$ is the quotient of $T(V)/I(V)$ of the tensor algebra
$T(V)=\sum_rV^{\otimes r}$ by the ideal $I(V)\subset T(V)$ generated by all the elements of the form $v\otimes w+w\otimes v+2q(v,w)$, where $v,w\in V$.
\end{defn}

The natural projection $\pi_{\cl}:T(V)\to\cl(V,q)$ is a universal map in the following sense: if $\varphi:\cl(V,q)\to A$ is a map from $V$ to an algebra $A$ that satisfies $\varphi(v)^2=-4(v,v)1$ then there
is a unique algebra homomorphism $t:\cl(V,q)\to A$ such that $\varphi=t\circ\pi_{\cl}$. An easy example of a Clifford algebra is the exterior algebra of a given vector space, which corresponds to
the bilinear form being identically zero.

\paragraph{Clifford modules} Let $V$ be a vector space endowed with a symmetric bilinear form $q$.

\begin{defn}
A \textbf{Clifford module} is a vector space $E$ endowed with an action of the algebra $\cl(V,q)$, that is, a unital algebra homomorphism $c:\cl(V,q)\to\mbox{End}(E)$.
\end{defn}

If the space $E$ is Euclidean, \emph{i.e.}, if it is endowed with a scalar product $g$, then there is the notion of a \textbf{unitary action}, as a homomorphism $c:\cl(V,q)\to\mbox{End}(E)$ such that
$c(v)$ is an orthogonal transformation for each $v\in V$, \emph{i.e.},
$$g(c(v_1),c(v_2))=g(v_1,v_2).$$ More precisely, let $a\mapsto a^*$ be the anti-automorphism of $T(V)$ such that $v$ is sent to $-v$; this obviously induces an
automorphism $v\mapsto v^*$ of $\cl(V,q)$.

\begin{defn}
A Clifford module $E$ over $\cl(V,q)$ endowed with a scalar product is \textbf{self-adjoint} if $c(v^*)=c(v)^*$. This is equivalent to the operators $c(v)$ with $v\in V$ being skew-adjoint.
\end{defn}

\paragraph{The exterior algebra as a Clifford module} The exterior algebra $\bigwedge V$ of $V$ is a standard example of a Clifford module over $\cl(V,q)$; let us describe the action of the latter
on the former. Let $\varepsilon(v)\alpha$ denote the exterior product of $v$ with $\alpha$, and let $i(v)\alpha$ stand for the contraction with the covector $q(v,\cdot)\in V^*$:
$$i(v)(w_1\wedge\ldots\wedge w_l)=\sum_{j=1}^l(-1)^{j+1}w_1\wedge\ldots\wedge q(v,w_j)\wedge\ldots\wedge w_l.$$ The Clifford action on $\bigwedge V$ is then defined by the formula:
$$c(v)\alpha=\varepsilon(v)\alpha-i(v)\alpha,$$ which defines a homomorphism $V\to\mbox{End}(\bigwedge V)$; it is extended to a homomorphism defined on $\cl(V,q)$ by linearity
(obviously) and with the tensor product being substituted by the composition.

To see that it is indeed a Clifford module action, it is sufficient to consider the identity $\varepsilon(v)i(w)+i(w)\varepsilon(v)=q(v,w)$ (see \cite{heat-kernel}, p. 101). If $q$ is positive definite (that is,
if it is a scalar product), the operator $i(v)$ is the adjoint of $\varepsilon(v)$, so the Clifford module $\bigwedge V$ is also self-adjoint.

\paragraph{Isomorphism of the graded algebras $\cl(V,q)$ and $\bigwedge V$} Both of these algebras have a natural grading (see below), and there is a standard isomorphism between
them that respects the grading.

\begin{defn} \emph{(\cite{heat-kernel}, Definition 3.4)}
The \textbf{symbol map} $\sigma:\cl(V,q)\to\bigwedge V$ is defined in terms of the Clifford module structure on $\bigwedge V$ by
$$\sigma(v)=c(v)1\in\bigwedge V,$$ where $1\in\bigwedge^0V$ is the unit of the exterior algebra $\bigwedge V$.
\end{defn}

Suppose that $q$ is a scalar product; then the symbol map has an inverse, called the \textbf{quantization map}, which is described as follows. Let $\{e_i\}$ be an orthonormal basis of $V$, and
let $c_i$ be the element of $\cl(V,q)$ corresponding to $e_i$. The quantization map $\mbox{\textbf{c}}:\bigwedge V\to\cl(V,q)$ is given by the formula
$$\mbox{\textbf{c}}(e_{i_1}\wedge\ldots\wedge e_{i_j})=c_{i_1}\ldots c_{i_j}.$$ This preserves the natural $\matZ_2$-grading of the two modules (also see below).

\paragraph{Grading and filtration on $\cl(V,q)$} As has just been alluded to, every Clifford algebra $\cl(V,q)$ carries the following $\matZ_2$-grading:
$$\cl(V,q)=\cl(V,q)^0\oplus\cl(V,q)^1,$$ where $\cl(V,q)^0$ is the subspace generated by the products of an even number of elements of $V$, while $\cl(V,q)^1$ is the subspace
generated by the products of an odd number of elements of $V$; this is well-defined because $I(V)$ is generated by elements of even degree in $T(V)$.

Besides, $\cl(V,q)$ inherits from $T(V)$ its filtration $T(V)=\bigoplus_k(\oplus_{r=0}^kV^{\otimes r})$, via the natural projection. Therefore
$$\cl(V,q)=\oplus_k \cl^k(V,q),\mbox{ where }\cl^k(V,q)=\{v\in\cl(V,q)\,|\,\exists u\in\oplus_{r=0}^kV^{\otimes r}\mbox{ such that }[u]=v\}.$$ The use of natural projections allows
also to define a surjective algebra homomorphism $$V^{\otimes k}\to\cl^k(V,q)/\cl^{k-1}(V,q).$$ Considering the kernel of the latter, one sees that
$$\cl^k(V,q)/\cl^{k-1}(V,q)\cong\bigwedge^kV.$$ This implies that the graded algebra associated to the just-described filtration on $\cl(V,q)$, namely, the algebra
$\oplus_k\cl^k(V,q)/\cl^{k-1}(V,q)$, is isomorphic to the exterior algebra $\bigwedge V$ (in particular, $\dim(\cl(V,q))=2^{\dim V}$). The following statement provides a summary of what has
just been said.

\begin{prop} \emph{(\cite{heat-kernel}, Proposition 3.6)}
The graded algebra $\cl(V,q)$ is naturally isomorphic to the exterior algebra $\bigwedge V$, the isomorphism being given by sending $v_1\wedge\ldots\wedge v_i\in\bigwedge^i V$
to $\sigma_i(v_1\ldots v_i)\in\cl^i(V,q)$.

The symbol map $\sigma$ extends the symbol map $\sigma_i:\cl^i(V,q)\to\cl(V,q)^i\cong\bigwedge^iV$, in the sense that if $a\in\cl^i(V,q)$ then $\sigma(a)_{[i]}=\sigma_i(a)$.
The filtration $\cl^i(V,q)$ may be written
$$\cl^i(V,q)=\oplus_{k=0}^i\cl_k(V),\mbox{ where }\cl_k(V)=\mbox{\textbf{c}}(\bigwedge^kV).$$
\end{prop}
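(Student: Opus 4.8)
The plan is to establish the isomorphism of graded algebras by analyzing the filtration on $\cl(V,q)$ and matching each graded piece with the corresponding exterior power, then to verify that the symbol map realizes this isomorphism concretely. First I would recall that the filtration $\cl^k(V,q)$ is inherited from the tensor algebra filtration via the natural projection $\pi_{\cl}$, so that the associated graded algebra $\oplus_k \cl^k(V,q)/\cl^{k-1}(V,q)$ is what we must identify. The key computational input, already recorded in the excerpt, is the surjective homomorphism $V^{\otimes k}\to\cl^k(V,q)/\cl^{k-1}(V,q)$ together with the identification of its kernel, yielding $\cl^k(V,q)/\cl^{k-1}(V,q)\cong\bigwedge^k V$. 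Summing over $k$ gives the graded-algebra isomorphism with $\bigwedge V$, and the dimension count $\dim\cl(V,q)=2^{\dim V}$ follows at once.

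Next I would pin down the explicit map. The claim is that sending $v_1\wedge\ldots\wedge v_i\in\bigwedge^i V$ to $\sigma_i(v_1\cdots v_i)\in\cl^i(V,q)$ gives the isomorphism. I would verify that $\sigma_i$, viewed as the composite $\cl^i(V,q)\to\cl^i(V,q)/\cl^{i-1}(V,q)\cong\bigwedge^i V$, is well defined and that the assignment respects the grading; the point is simply that a product of $i$ elements of $V$ in $\cl(V,q)$ reduces, modulo lower filtration, to their wedge, which is exactly the statement that the leading symbol of a Clifford product is the exterior product. For the compatibility of $\sigma$ with the graded pieces $\sigma_i$, I would check on elements $a\in\cl^i(V,q)$ that the top-degree component $\sigma(a)_{[i]}$ of the full symbol coincides with $\sigma_i(a)$; this reduces to evaluating $c(v)1=\varepsilon(v)1-i(v)1=v$ for a single generator and then using multiplicativity together with the identity $\varepsilon(v)i(w)+i(w)\varepsilon(v)=q(v,w)$ to see that contraction terms only contribute in degrees strictly below $i$.

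For the last assertion, that $\cl^i(V,q)=\oplus_{k=0}^i \mathbf{c}(\bigwedge^k V)$, I would invoke the quantization map $\mathbf{c}$ (inverse to $\sigma$ when $q$ is a scalar product) and the orthonormal basis $\{e_i\}$. The subspaces $\mathbf{c}(\bigwedge^k V)$ are spanned by products $c_{i_1}\cdots c_{i_k}$ of distinct basis Clifford generators, and each such product lies in $\cl^k(V,q)$; conversely, an induction on $k$ using the anticommutation relations $c_i c_j + c_j c_i = -2q(e_i,e_j)$ rewrites any length-$\le i$ product of generators as a linear combination of such ordered products of length $\le i$, showing $\cl^i(V,q)$ is spanned by $\cup_{k=0}^i\mathbf{c}(\bigwedge^k V)$. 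Directness of the sum then follows from the grading isomorphism established above, since distinct $k$ land in distinct graded components.

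The main obstacle I anticipate is the verification that the leading-symbol assignment is a genuine algebra homomorphism and not merely a linear isomorphism: one must check that the product in the associated graded algebra, induced from Clifford multiplication, matches the wedge product, and that the degree bookkeeping is consistent when Clifford products drop the degree (because of the quadratic term $2q(v,w)$ in the defining relations). This is precisely where the filtration, rather than a naive grading, is essential—the relation $v\otimes w+w\otimes v = -2q(v,w)$ is inhomogeneous, so one can only hope for a grading \emph{on the associated graded object}. Once one is careful that all contraction/quadratic corrections live in strictly lower filtration degree, the homomorphism property and the compatibility of $\sigma$ with the $\sigma_i$ fall out, and the remaining statements are formal consequences.
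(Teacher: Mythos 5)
Your proposal is correct and takes essentially the same route as the paper, which does not prove this proposition itself but cites it from \cite{heat-kernel} and presents it as a summary of the immediately preceding discussion: the filtration inherited from $T(V)$, the surjective homomorphism $V^{\otimes k}\to\cl^k(V,q)/\cl^{k-1}(V,q)$ whose kernel yields $\cl^k(V,q)/\cl^{k-1}(V,q)\cong\bigwedge^kV$, and the symbol/quantization maps. Your write-up simply fills in the details of that same argument (leading-symbol computation via $c(v)1=v$ and the identity $\varepsilon(v)i(w)+i(w)\varepsilon(v)=q(v,w)$, plus the orthonormal-basis rewriting for the decomposition $\cl^i(V,q)=\oplus_{k=0}^i\mbox{\textbf{c}}(\bigwedge^kV)$), all consistent with the paper's sign conventions.
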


Using the symbol map $\sigma$, the Clifford algebra $\cl(V,q)$ may be identified with the exterior algebra $\bigwedge^* V$ with a twisted, or quantized, multiplication $\alpha\cdot_q\beta$.

\subsection{Clifford connections}

As has already been said, the type of the connection that is typically used in constructing a Dirac operator is a Clifford connection. It is defined as follows.

\begin{defn}
Let $E$ be a Clifford module, and let $\nabla^E$ be a connection on it. We say that $\nabla^E$ is a \textbf{Clifford connection} if for any $a\in C^{\infty}(M,Cl(M))$ and $X\in C^{\infty}(M,TM)$ we have
$$[\nabla_X^E,c(a)]=c(\nabla_X(a)),$$ where $\nabla_X$ is the Levi-Civita covariant derivative extended to the bundle $\cl(M)$.
\end{defn}

When associated to a Clifford connection, the Dirac operator described in the Introduction may be written, in local coordinates, as:
$$D=\sum_ic(dx^i)\nabla_{\partial_i}^E.$$ There always exists a Clifford connection on any bundle of unitary Clifford modules. In particular, recall from the previous section
that if $(M,g)$ is a Riemannian manifold then $\bigwedge^*M$ is naturally a bundle of Clifford modules (via the action $c$ of thebundle of Clifford algebras $\cl(T^*M,g^*)$); the connection
$\nabla^{\bigwedge^*M}$ induced on $\bigwedge^*M$ by the Levi-Civita connection on $M$ is a Clifford connection.

\begin{example}
The operator
$$c\circ\nabla^{\bigwedge^*M}:C^{\infty}(M,\bigwedge^*M)\to C^{\infty}(M,\bigwedge^*M)$$ is a Dirac operator. It is called the Gauss-Bonnet operator, or the Euler operator.
\end{example}

\subsection{What does it mean for a given operator to be a Dirac operator?}

Recall that so far we have only given a constructive definition of a Dirac operator. There is however an abstract, and more general definition of this notion, which is as follows:

\begin{defn}\label{dirac:defn} \emph{(\cite{heat-kernel}, Definition 3.36)}
A \textbf{Dirac operator} $D$ on a $\matZ_2$-graded vector bundle $E$ is a first-order differential operator of odd parity on $E$,
$$D:C^{\infty}(M,E^{\pm})\to C^{\infty}(M,E^{\mp}),$$ such that $D^2$ is a generalized Laplacian.
\end{defn}

This way of defining the Dirac operator shows that the relation of such with (bundles of) Clifford modules goes in two directions. To be specific, in the first, constructive, definition, a Dirac
operator is associated to a bundle of Clifford modules. On the other hand, if we are given a Dirac operator, in the sense of the definition just cited, on a $\matZ_2$-graded vector bundle $E$,
then$E$ inherits a natural Clifford module structure, over the bundle of Clifford algebras $\cl(T^*M,g^*)$, whose Clifford action (which, again, it suffices to define on $T^*M$) is described by the
following statement.

\begin{prop}\label{dirac:clifford:prop} \emph{(\cite{heat-kernel}, Proposition 3.38)}
The action of $T^*M$ on $E$ defined by
$$[D,f]=c(df),\mbox{ where }f\mbox{ is a smooth function on }M,$$ is a Clifford action, which is self-adjoint with respect to a metric on $E$ if the operator $D$ is symmetric. Conversely, any
differential operator $D$ such that $[D,f]=c(df)$ for all $f\in C^{\infty}(M)$ is a Dirac operator.
\end{prop}

It can be easily observed that a Dirac operator $D=\sum_ic(dx^i)\nabla_{\partial_i}^E$ (meaning the one obtained by the explicit construction outlined in the Introduction) defined with
respect to a Clifford connection is indeed a Dirac operator in the sense of Definition \ref{dirac:defn}, since it is easy to calculate that $[D,f]=c(df)$.

\subsection{The index space of Dirac operators}

Let $E$ be a $\matZ_2$-graded vector bundle on a compact Riemannian manifold $M$, and let $D:C^{\infty}(M,E)\to C^{\infty}(M,E)$ be a self-adjoint Dirac operator. We denote by $D^{\pm}$
the restrictionsof $D$ to $C^{\infty}(M,E^{\pm})$, that is,
$$D=\left(
\begin{array}{cc}
0 & D^- \\
D^+ & 0
\end{array}
\right),$$ where $D^-=(D^+)^*$. The \emph{index} of $D$ is defined as follows. First, if $E=E^+\oplus E^-$ is a finite-dimensional superspace, define its dimension to be
$$\dim(E)=\dim(E^+)-\dim(E^-);$$ note that the superspace $\mbox{ker}(D)$ is finite-dimensional.

\begin{defn}
The \textbf{index space} of the self-adjoint Dirac operator $D$ is its kernel
$$\mbox{ker}(D)=\mbox{ker}(D^+)\oplus\mbox{ker}(D^-).$$ The \textbf{index} of $D$ is the dimension of the superspace $\mbox{ker}(D)$:
$$\mbox{ind}(D)=\dim(\mbox{ker}(D^+))-\dim(\mbox{ker}(D^-)).$$
\end{defn}

One remarkable property of the index is that it does not really depend on the whole of the operator, but rather on its domain of definition, that is:

\begin{thm} \emph{(\cite{heat-kernel}, Theorem 3.51)}
The index is an invariant of the manifold $M$ and of the Clifford module $E$.
\end{thm}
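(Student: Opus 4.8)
The plan is to show that the index does not depend on any of the choices made in passing from the Clifford module $E$ to an actual Dirac operator (the connection, and ultimately the metric), which is exactly the assertion that it is an invariant of $M$ and $E$. The single mechanism underlying every step is the homotopy invariance of the Fredholm index, i.e.\ the fact that the index of a Fredholm operator is locally constant in the operator topology.

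First I would reduce the statement to the invariance of an honest Fredholm index. Since $D^-=(D^+)^*$ we have $\ker(D^-)=\ker((D^+)^*)=\mbox{coker}(D^+)$, so $\mbox{ind}(D)$ equals the analytic index of the single operator $D^+:C^{\infty}(M,E^+)\to C^{\infty}(M,E^-)$. Because $D$ is a Dirac operator, $D^2$ is a generalized Laplacian, whence $D$ and in particular $D^+$ are elliptic; on the compact manifold $M$ ellipticity guarantees that $D^+$ extends to a Fredholm operator between the appropriate Sobolev completions, with smooth kernel and cokernel by elliptic regularity. Thus the theorem is reduced to the constancy of this Fredholm index under deformation of the data.

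Next I would fix the Clifford module $(E,c)$ over $\cl(T^*M,g^*)$ and vary only the connection. Two Clifford connections differ by an $\mbox{End}(E)$-valued $1$-form commuting with the Clifford action, so the set of Clifford connections is an affine space, hence convex and path-connected; the straight-line path $\nabla^E_t=(1-t)\nabla^E_0+t\nabla^E_1$ again consists of Clifford connections, and the operators $D_t=c\circ\nabla^E_t$ all satisfy $[D_t,f]=c(df)$ and so are Dirac operators by Proposition~\ref{dirac:clifford:prop}. The assignment $t\mapsto D_t^+$ is then a continuous path of Fredholm operators, along which the index is constant, giving $\mbox{ind}(D_0)=\mbox{ind}(D_1)$; the endpoints are self-adjoint, so at each end this Fredholm index indeed coincides with $\mbox{ind}(D)$. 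Hence the index depends only on $(E,c,g)$, not on the chosen connection.

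Finally I would eliminate the dependence on the metric. The space of Riemannian metrics on $M$ is convex, so any $g_0,g_1$ are joined by a path $g_t$; along it the bundles $\cl(T^*M,g_t^*)$ and the induced actions on $E$ vary smoothly, producing once more a continuous family of Dirac operators whose index is constant by local constancy of the Fredholm index. Combining the two deformation arguments yields that $\mbox{ind}(D)$ depends only on $M$ and on the Clifford module $E$. I expect the main obstacle to be the analytic bookkeeping rather than the topological idea: one must check that the families $t\mapsto D_t^+$ are genuinely continuous in the operator norm on Sobolev spaces, so that local constancy of the index applies, and that the Clifford module structure can be transported coherently along the path of metrics. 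These are precisely the points where the otherwise straightforward deformation argument has to be made rigorous.
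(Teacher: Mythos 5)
Your proof is correct in outline, but it takes a genuinely different route from the one the paper follows. The paper (quoting Berline--Getzler--Vergne) deduces invariance from the McKean--Singer formula: for every $t>0$ one has $\mbox{ind}(D)=\mbox{Str}(e^{-tD^2})$, and for a one-parameter family $D^z$ one computes $\frac{d}{dz}\mbox{ind}(D^z)=-t\,\mbox{Str}\left(\left[\frac{dD^z}{dz},D^ze^{-t(D^z)^2}\right]\right)$, which vanishes because the supertrace of a commutator in which one factor has a smooth kernel is zero (\cite{heat-kernel}, Lemma 3.49). Your argument instead reduces $\mbox{ind}(D)$ to the Fredholm index of the elliptic operator $D^+$ and invokes local constancy of the Fredholm index along norm-continuous paths, the paths being supplied by the affine structure of the space of Clifford connections and the convexity of the space of Riemannian metrics, with the Clifford action transported by the canonical isometries $(T^*M,g_t^*)\to(T^*M,g_0^*)$. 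Both mechanisms are sound; note that for the statement as the paper interprets it --- a \emph{given} smooth family $(g^z,c^z,\nabla^z)$ --- homotopy invariance applies to the family $D^z$ directly, so your explicit path constructions are only needed when merely the two endpoint data sets are supplied. What each approach buys is different: yours is more elementary (no heat kernel theory, only standard elliptic estimates plus stability of the Fredholm index, and it tolerates families that are merely continuous), whereas the heat-kernel proof establishes the much stronger identity $\mbox{ind}(D)=\mbox{Str}(e^{-tD^2})$, which is not just a vehicle for invariance but the starting point of the local index theorem, with invariance falling out as a one-line differentiation. The two analytic points you flag --- operator-norm continuity of $t\mapsto D_t^+$ between fixed Sobolev spaces, and coherent transport of the Clifford module structure along the path of metrics --- are indeed exactly the places requiring care, and both are standard.
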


In other words, if $D^z$ is a one-parameter family of operators on $C^{\infty}(M,E)$ which are Dirac operators with respect to a family of metrics $g^z$ on $M$ and Clifford actions $c^z$ of
$\cl(M,g^z)$ on $E$, then the index of $D^z$ is independent of $z$. This follows from the famous McKean-Singer formula, that expresses the index of $D$ as the supertrace of $e^{-tD^2}$,
equal to the integral over $M$ of the supertrace of the heat kernel of the Laplacian $D^2$:

\begin{thm} \emph{(McKean-Singer)}
Let $\langle x|e^{-tD^2}|y\rangle$ be the heat kernel of the operator $D^2$. Then for any $t>0$
$$\mbox{ind}(D)=\mbox{Str}(e^{-tD^2})=\int_M\mbox{Str}(\langle x|e^{-tD^2}|x\rangle)dx.$$
\end{thm}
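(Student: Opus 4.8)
The plan is to prove the two asserted equalities separately, treating the analytic input (the existence, trace-class property, and smooth kernel of the heat operator) as given from the general theory of generalized Laplacians on compact manifolds, and reducing the index identity to the purely spectral cancellation argument of McKean and Singer.

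First I would fix the setup. Since $D$ is odd and self-adjoint it has the block form displayed above with $D^-=(D^+)^*$, so $D^2$ is even and preserves the grading, decomposing as $\Delta^+=D^-D^+$ on $C^{\infty}(M,E^+)$ and $\Delta^-=D^+D^-$ on $C^{\infty}(M,E^-)$. Because $D^2$ is a generalized Laplacian on the compact manifold $M$, each $\Delta^{\pm}$ is elliptic, essentially self-adjoint and nonnegative, with discrete spectrum $0\le\lambda_0\le\lambda_1\le\ldots\to\infty$ and finite-dimensional eigenspaces $E^{\pm}_{\lambda}$; moreover $e^{-t\Delta^{\pm}}$ is a smoothing operator, hence trace-class, with a smooth Schwartz kernel. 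Granting this, $e^{-tD^2}$ is block-diagonal and its supertrace is $\mathrm{Str}(e^{-tD^2})=\mathrm{Tr}(e^{-t\Delta^+})-\mathrm{Tr}(e^{-t\Delta^-})=\sum_{\lambda}e^{-t\lambda}(\dim E^+_{\lambda}-\dim E^-_{\lambda})$, the sum converging for every $t>0$.

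The heart of the argument is the cancellation of the nonzero eigenvalues. For $\lambda\ne 0$ I would observe that $D^+$ maps $E^+_{\lambda}$ into $E^-_{\lambda}$: if $D^-D^+s=\lambda s$ then $\Delta^-(D^+s)=D^+D^-D^+s=\lambda D^+s$. The same computation shows $D^-$ maps $E^-_{\lambda}$ into $E^+_{\lambda}$, and on $E^+_{\lambda}$ the composite $D^-D^+$ acts as $\lambda\cdot\mathrm{id}$ while on $E^-_{\lambda}$ the composite $D^+D^-$ acts as $\lambda\cdot\mathrm{id}$; since $\lambda\ne 0$ these maps are mutually inverse up to the scalar $\lambda$, so $\dim E^+_{\lambda}=\dim E^-_{\lambda}$ and the $\lambda$-term drops out. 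Only $\lambda=0$ survives, giving $\mathrm{Str}(e^{-tD^2})=\dim\ker\Delta^+-\dim\ker\Delta^-$, manifestly independent of $t$. Finally, using self-adjointness, $\ker\Delta^+=\ker(D^-D^+)=\ker D^+$ (from $\langle D^-D^+s,s\rangle=\|D^+s\|^2$) and likewise $\ker\Delta^-=\ker D^-$; since $D^-=(D^+)^*$ is the off-diagonal block, this yields $\mathrm{Str}(e^{-tD^2})=\dim\ker D^+-\dim\ker D^-=\mathrm{ind}(D)$, which is the first equality.

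For the second equality I would invoke the standard fact that a smoothing operator is trace-class and that its (super)trace equals the integral along the diagonal of the (super)trace of its kernel; applied to $e^{-tD^2}$ with kernel $\langle x|e^{-tD^2}|y\rangle$ this gives $\mathrm{Str}(e^{-tD^2})=\int_M\mathrm{Str}(\langle x|e^{-tD^2}|x\rangle)\,dx$. I expect the spectral cancellation step to be entirely elementary; the main obstacle, and the genuinely analytic content, is precisely the input I am taking for granted: that on a compact manifold a generalized Laplacian has the stated discrete spectrum and that $e^{-tD^2}$ is a smoothing operator with a smooth, integrable diagonal kernel whose integral computes the trace. This requires the construction of a heat parametrix and the regularity theory of the heat kernel, and is where all the real work lies.
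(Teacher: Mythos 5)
Your argument is correct, and every step checks out: the grading-preserving splitting of $D^2$ into $\Delta^+=D^-D^+$ and $\Delta^-=D^+D^-$, the intertwining computation showing that $D^+$ maps $E^+_{\lambda}$ to $E^-_{\lambda}$ and is an isomorphism (with inverse $\lambda^{-1}D^-$) when $\lambda\neq 0$, the resulting cancellation of all nonzero eigenvalues in $\mathrm{Str}(e^{-tD^2})=\sum_{\lambda}e^{-t\lambda}\left(\dim E^+_{\lambda}-\dim E^-_{\lambda}\right)$, the identification $\ker\Delta^{\pm}=\ker D^{\pm}$ via $\langle \Delta^+ s,s\rangle=\|D^+s\|^2$, and the expression of the supertrace of a smoothing operator as the integral over the diagonal of the pointwise supertrace of its kernel. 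You should be aware, however, that the paper itself contains no proof of this theorem: Section 1 is expository background following \cite{heat-kernel}, and the statement is quoted there as a known result (the paper says explicitly that most statements are cited without proof). To the extent that the paper indicates any mechanism at all, it is a different one: the remark immediately following the statement differentiates $\mathrm{Str}(e^{-t(D^z)^2})$ in the parameter and invokes the vanishing of the supertrace of a supercommutator when one factor has a smooth kernel (Lemma 3.49 of \cite{heat-kernel}); that is the deformation-theoretic route, whose payoff is the assertion the paper actually needs, namely that the index is constant along a family $D^z$. Your spectral-cancellation argument is the more elementary and self-contained one for a single fixed operator, and it is entirely adequate for the statement as posed; the supercommutator route is what one wants when passing to families, since it makes the $t$- and parameter-independence manifest without re-diagonalizing, and it is the argument your reader would need next if they continued to the invariance theorem that follows this one in the paper.
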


The application of this to a family $D^z$ is then straightforward (see \cite{heat-kernel}, the proof of Theorem 3.50): $\frac{d}{dz}\mbox{ind}(D^z)=\frac{d}{dz}\mbox{Str}(e^{-t(D^z)^2})=
-t\mbox{Str}([\frac{dD^z}{dz},D^ze^{-t(D^z)^2}])$. Then that the latter supertrace is equal to $0$, is a general fact that holds for any two differential operators on $E$ such that the second of them
has a smooth kernel (see \cite{heat-kernel}, Lemma 3.49).

\subsection{Classical examples of the Dirac operator}

For completeness, we now list some classical linear first-order differential operators of differential geometry, which turn out to be Dirac operators.

\paragraph{The De Rham operator} Let $M$ be a Riemannian manifold, let $\mathcal{A}^i(M)$ be its $i$th De Rham cohomology group, and let $d_i:\mathcal{A}^i(M)\to\mathcal{A}^{i+1}(M)$ be
the usual exterior derivative operator. The bundle $\bigwedge(T^*M)$ is a Clifford module via a certain standard action, that we recall in the next Section; the Levi-Civita connection is compatible
with this action. Then the following is well-known.

\begin{prop} (\cite{heat-kernel}, Proposition 3.53)
The Dirac operator associated to the Clifford module $\bigwedge(T^*M)$ and its Levi-Civita connection is the operator $d+d^*$, where
$$d^*:\mathcal{A}^{\bullet}(M)\to\mathcal{A}^{\bullet-1}(M)$$ is the adjoint of the exterior differential $d$.
\end{prop}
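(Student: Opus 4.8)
The plan is to compute the Dirac operator $D=c\circ\nabla^{\bigwedge(T^*M)}$ directly, in a local orthonormal coframe, and to identify its two natural constituents with $d$ and with $d^*$ separately. Fix a point $p\in M$ together with a local orthonormal coframe $\{e^i\}$ of $T^*M$, with dual orthonormal frame $\{e_i\}$ of $TM$. Since the expression $D=\sum_ic(e^i)\nabla_{e_i}$ is simply the Clifford contraction of $\nabla$ and hence does not depend on the chosen frame, and since on the Clifford module $\bigwedge(T^*M)$ the action is $c(e^i)=\varepsilon(e^i)-i(e^i)$, we obtain the decomposition
$$D=\sum_i\varepsilon(e^i)\nabla_{e_i}-\sum_i i(e^i)\nabla_{e_i}.$$
I would then show that the first sum is exactly $d$ and the second is exactly $d^*$.

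For the first sum, the decisive input is that the Levi-Civita connection is torsion-free. Indeed, for any torsion-free connection the intrinsic formula
$$(d\alpha)(X_0,\ldots,X_k)=\sum_{j=0}^k(-1)^j(\nabla_{X_j}\alpha)(X_0,\ldots,\widehat{X_j},\ldots,X_k)$$
holds, the torsion terms cancelling precisely the Lie-bracket terms in the usual Cartan formula for $d$. Rewritten against the coframe, this identity reads $d=\sum_i\varepsilon(e^i)\nabla_{e_i}$, which is the claim for the first summand.

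For the second sum, the inputs are the metric compatibility of $\nabla$ together with the pointwise adjunction $i(v)=\varepsilon(v)^*$ recorded in the excerpt (valid because the induced metric $g^*$ on $T^*M$ is positive definite). By definition $d^*$ is the formal $L^2$-adjoint of $d$; computing $\langle d\alpha,\beta\rangle$ against a compactly supported $\beta$ and integrating by parts, one uses metric compatibility to move each $\nabla_{e_i}$ across the fibrewise inner product and the adjunction $i(e^i)=\varepsilon(e^i)^*$ to convert $\varepsilon(e^i)$ into the contraction $i(e^i)$, arriving at $d^*=-\sum_i i(e^i)\nabla_{e_i}$. Combining the two identities then yields $D=d+d^*$.

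I expect the main obstacle to be this last step, the formal-adjoint computation, since the frame $\{e_i\}$ and the connection coefficients vary over $M$, producing divergence-type terms that must be shown to cancel. The cleanest way to manage this is to verify the identity at each point $p$ in a geodesic (synchronous) normal frame, in which $\nabla_{e_i}e_j$ vanishes at $p$; there $\nabla_{e_i}$ acts on components as the plain directional derivative $e_i$, the offending terms drop out, and both sides reduce to their first-order parts, after which the arbitrariness of $p$ gives the identity globally. As an independent consistency check one can compute $[d+d^*,f]=\varepsilon(df)-i(df)=c(df)$ for every smooth function $f$, which by Proposition \ref{dirac:clifford:prop} already confirms that $d+d^*$ is a Dirac operator carrying the correct Clifford action of $\cl(T^*M,g^*)$.
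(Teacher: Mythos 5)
The paper does not contain its own proof of this statement: it is recalled purely as background and cited directly from \cite{heat-kernel} (Proposition 3.53), consistent with the paper's stated policy of quoting standard facts without proof. Your argument is correct and is essentially the proof found in that reference --- decompose $c(e^i)=\varepsilon(e^i)-i(e^i)$, identify $\sum_i\varepsilon(e^i)\nabla_{e_i}$ with $d$ using torsion-freeness of the Levi-Civita connection, and identify $-\sum_i i(e^i)\nabla_{e_i}$ with $d^*$ via metric compatibility and a divergence computation in a synchronous frame (your closing remark that $[d+d^*,f]=c(df)$ is rightly labelled a consistency check only, since it pins down the symbol but not the zeroth-order part).
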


The square $dd^*+d^*d$ of the operator $d+d^*$ is the so-called Laplace-Beltrami operator and is a generalized Laplacian; for reasons of general interest, we mention that the following is true.

\begin{prop}
The kernel of the Laplace-Beltrami operator $dd^*+d^*d$ on $\mathcal{A}^i(M)$ is naturally isomorphic to the De Rham cohomology space $H^i(M,\matR)$. The index of the Dirac operator $d+d^*$
on $\mathcal{A}(M)$ is equal to the Euler number of the manifold $M$.
\end{prop}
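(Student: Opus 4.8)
The plan is to recognize this proposition as the classical Hodge theorem together with the standard identification of the index of the Euler operator $d+d^*$ with the Euler characteristic, and to organize the argument so that all the analytic content is confined to a single step.

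First I would record the \emph{algebraic} description of the kernel of $\Delta=dd^*+d^*d$. Since $d^*$ is the formal adjoint of $d$ for the $L^2$-inner product on $\mathcal{A}^\bullet(M)$ determined by the Riemannian metric (compactness of $M$ guaranteeing that the integration by parts produces no boundary terms), one has for every smooth form $\alpha$
$$\langle\Delta\alpha,\alpha\rangle=\langle dd^*\alpha,\alpha\rangle+\langle d^*d\alpha,\alpha\rangle=\|d^*\alpha\|^2+\|d\alpha\|^2.$$
Hence $\Delta$ is non-negative and $\Delta\alpha=0$ if and only if $d\alpha=0$ and $d^*\alpha=0$; that is, the kernel of $\Delta$ on $\mathcal{A}^i(M)$ is precisely the space $\mathcal{H}^i(M)$ of \emph{harmonic} $i$-forms, those simultaneously closed and co-closed.

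The decisive (and only genuinely analytic) step is the \emph{Hodge decomposition}. As $\Delta$ is a generalized Laplacian it is elliptic, so on the compact $M$ its kernel is finite-dimensional and there is an orthogonal decomposition
$$\mathcal{A}^i(M)=\mathcal{H}^i(M)\oplus\mathrm{im}(d)\oplus\mathrm{im}(d^*).$$
Granting this, the natural map $\mathcal{H}^i(M)\to H^i(M,\matR)$ sending a harmonic form to its de Rham class is an isomorphism. For surjectivity, write a closed $\alpha$ as $\alpha=h+d\beta+d^*\gamma$; applying $d$ and using that $\alpha$ and $h$ are closed gives $dd^*\gamma=0$, whence $\|d^*\gamma\|^2=\langle dd^*\gamma,\gamma\rangle=0$, so $\alpha=h+d\beta$ is cohomologous to the harmonic form $h$. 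For injectivity, a harmonic form that is exact, say $d\beta$, satisfies $\|d\beta\|^2=\langle\beta,d^*d\beta\rangle=0$ because $d^*(d\beta)=0$, so it vanishes. This proves the first assertion.

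For the second assertion I would pass to the $\matZ_2$-grading under which $D=d+d^*$ is odd, namely $E^{+}=\bigoplus_i\mathcal{A}^{2i}(M)$ and $E^{-}=\bigoplus_i\mathcal{A}^{2i+1}(M)$. Since $D$ is self-adjoint, $\ker D=\ker D^2=\ker\Delta$, and as $\Delta$ preserves degree this kernel is $\bigoplus_i\mathcal{H}^i(M)$. Therefore
$$\mathrm{ind}(D)=\dim\ker(D^+)-\dim\ker(D^-)=\sum_i(-1)^i\dim\mathcal{H}^i(M)=\sum_i(-1)^i\dim H^i(M,\matR),$$
the last equality by the first part. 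The right-hand side is the alternating sum of the Betti numbers, i.e. the Euler number of $M$. The main obstacle is the Hodge decomposition, which rests on the elliptic theory of $\Delta$ (finite-dimensional kernel, closed range, and the resulting orthogonal splitting); once that is available, both claims follow by the formal manipulations above.
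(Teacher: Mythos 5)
Your proposal is correct, and it is the canonical argument. The paper itself gives no proof of this proposition at all — it is stated as classical background material (drawn from \cite{heat-kernel} and standard Hodge theory, consistent with the paper's declared policy of citing such results without proof) — so there is no internal argument to diverge from; what you wrote is exactly the proof the cited literature supplies: the positivity identity $\langle\Delta\alpha,\alpha\rangle=\|d\alpha\|^2+\|d^*\alpha\|^2$ identifying $\ker\Delta$ with the harmonic forms, the Hodge decomposition as the single analytic input, the resulting isomorphism $\ker\Delta|_{\mathcal{A}^i(M)}\cong H^i(M,\matR)$, and the computation of $\mathrm{ind}(d+d^*)$ for the even/odd grading as the alternating sum of Betti numbers, i.e.\ the Euler number of $M$.
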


\paragraph{The signature operator} This operator is constructed as in the previous example (thus the Dirac operator is the same), but the definition of the $\matZ_2$-grading on the Clifford module
$\bigwedge(T^*M)$ is changed; indeed, this $\matZ_2$-grading comes from the Hodge star operator.

\begin{defn}
If $V$ is an oriented Euclidean vector space with complexification $V_{\matC}$, the \textbf{Hodge star} operator $\star$ on $\bigwedge V_{\matC}$ equals to the action of the chirality element
$\Gamma\in\cl(V)\otimes\matC$.\footnote{This definition, that comes from \cite{heat-kernel}, Definition 3.57, differs from the usual one by a power of $i$ so that $\star^2=1$.}
\end{defn}

Applying this operator to each fibre of the complexified exterior bundle $\bigwedge_{\matC}(T^*M)=\bigwedge(T^*M)\otimes_{\matR}\matC$ of an oriented $n$-dimensional Riemannian manifold $M$,
we obtain the Hodge star operator:
$$\star:\Lambda_{\matC}^kT^*M\to\Lambda_{\matC}^{n-k}T^*M.$$ Since $\star^2=1$ and $\star$ anticommutes with the Clifford action $c$, it can be used to define another $\matZ_2$-grading
on the exterior bundle $\bigwedge(T^*M)\otimes\matC$: the differential forms satisfying $\star\alpha=+\alpha$ are called \textbf{self-dual}, and those satisfying $\star\alpha=-\alpha$ are called
\textbf{anti-self-dual}. Note that, in particular, the following is then true:

\begin{prop}
The index of the signature operator $d+d^*$ is equal to the signature $\sigma(M)$ of the manifold $M$.
\end{prop}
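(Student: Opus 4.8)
The plan is to compute the index directly via Hodge theory and then to identify the resulting quantity with the topological signature. Since $D=d+d^*$ is self-adjoint, its kernel coincides with the kernel of $D^2=dd^*+d^*d$, the Laplace-Beltrami operator; by the proposition recalled just above, this kernel is the space $\mathcal{H}^{\bullet}$ of harmonic forms, naturally isomorphic to the de Rham cohomology $H^{\bullet}(M,\matR)\otimes\matC$. Because the $\matZ_2$-grading on $\bigwedge(T^*M)\otimes\matC$ now comes from the Hodge star $\star$ (normalized so that $\star^2=1$), the restrictions $D^{\pm}$ send the $(\pm 1)$-eigenspaces of $\star$ to one another, so that
$$\mbox{ind}(D)=\dim\ker(D^+)-\dim\ker(D^-)=\dim\mathcal{H}^+-\dim\mathcal{H}^-,$$
where $\mathcal{H}^{\pm}$ are the $(\pm 1)$-eigenspaces of $\star$ acting on the harmonic forms. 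Thus the index equals the signature of the self-adjoint involution $\star$ on $\mathcal{H}^{\bullet}$.

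Next I would exploit that $\star$ maps $\mathcal{H}^p$ to $\mathcal{H}^{n-p}$. For each $p\neq n-p$ the pair of summands $\mathcal{H}^p\oplus\mathcal{H}^{n-p}$ is interchanged by $\star$: the vectors $\alpha+\star\alpha$ and $\alpha-\star\alpha$ span the $(+1)$- and $(-1)$-eigenspaces respectively, so on such a pair $\star$ has equally many positive and negative eigenvectors and the contribution to $\dim\mathcal{H}^+-\dim\mathcal{H}^-$ cancels. Consequently only the middle degree $p=n/2$ can contribute, which already forces the index to vanish when $n$ is odd; a sign analysis of $\star$ on the middle degree (where the cup-product pairing becomes skew-symmetric for $n\equiv 2\pmod 4$) will show that the contribution is zero as well unless $n\equiv 0\pmod 4$, matching the convention $\sigma(M)=0$ in those dimensions.

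It then remains to treat $n=4k$ and to identify the signature of $\star$ on $\mathcal{H}^{2k}$ with the signature of the intersection form. Here I would use that, on harmonic forms of middle degree, the Hodge pairing $\langle\alpha,\star\beta\rangle$ reproduces, up to the normalizing power of $i$ built into the definition of $\star$, the cup-product pairing $\int_M\alpha\wedge\beta$ defining the intersection form $Q$ on $H^{2k}(M,\matR)$. Since $Q$ is the symmetric form whose signature is by definition $\sigma(M)$, and since $\dim\mathcal{H}^+-\dim\mathcal{H}^-$ is precisely the signature of this pairing, I conclude $\mbox{ind}(D)=\sigma(M)$.

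The main obstacle will be the bookkeeping in this last step: verifying that the power of $i$ in the chirality normalization $\star^2=1$ is exactly the one that makes $\star$ restrict to a genuine real self-adjoint involution on $\mathcal{H}^{2k}$, whose $(\pm 1)$-eigenspaces are the self-dual and anti-self-dual classes, so that its signature is literally that of $Q$. The cancellation argument in the off-middle degrees is routine once the action of $\star$ on each $\mathcal{H}^p$ is written out, but it is the sign and normalization conventions in the middle degree, where $\star$ is an involution rather than merely an isomorphism, that carry the content of the statement.
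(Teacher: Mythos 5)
The paper itself offers no proof of this proposition: it sits in the survey portion (Section 1, on classical Dirac operators), where, as the abstract warns, statements are cited without proof from \cite{heat-kernel}. So the comparison must be with the classical argument in that reference, and your outline is essentially that argument: on a compact manifold $\ker D=\ker D^2$ identifies the kernel with the harmonic forms $\mathcal{H}^{\bullet}\cong H^{\bullet}(M,\matR)\otimes\matC$; the normalized star anticommutes with $D$, so $\mbox{ind}(D)=\dim\mathcal{H}^+-\dim\mathcal{H}^-$; the degrees $p\neq n-p$ cancel in pairs via $\alpha\mapsto\alpha\pm\star\alpha$; and in the middle degree for $n=4k$ the identity $\int_M\alpha\wedge\alpha=\langle\alpha,\star\alpha\rangle$ on harmonic forms shows the intersection form is positive definite on self-dual and negative definite on anti-self-dual classes, so its signature equals $\dim\mathcal{H}^{2k,+}-\dim\mathcal{H}^{2k,-}$. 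All of this is correct, including the normalization check you flag: for $n=4k$, $p=2k$ the chirality factor $i^{p(p-1)+n/2}=i^{4k^2}=1$, so the normalized star is the genuine (real) Hodge star in the middle degree.

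The one place where your sketch attributes a vanishing to the wrong mechanism is the case $n\equiv 2\pmod 4$. The skew-symmetry of the cup product in odd middle degree is why $\sigma(M)=0$ \emph{by convention}; it does not by itself kill the index. What does is that there the normalized star on middle-degree forms equals $i$ times the real Hodge star (since $\star^2=-1$ on those forms), so complex conjugation on $\mathcal{H}^{n/2}\otimes\matC$ anticommutes with it and interchanges its $(\pm 1)$-eigenspaces, forcing $\dim\mathcal{H}^+=\dim\mathcal{H}^-$ and hence a zero contribution. This is still the "sign analysis" you promise, but it runs through conjugation-equivariance rather than through the symmetry type of the pairing; with that substitution your outline closes into a complete and standard proof.
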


\paragraph{The Dirac operator on a spin manifold} Let $\mathcal{L}$ be the spinor bundle over an even-dimensional manifold $M$. The most basic example of the Dirac operator is the Dirac operator
associated to the Levi-Civita connection $\nabla^{\mathcal{L}}$ on $\mathcal{L}$; this is usually called \textbf{the} Dirac operator. More generally, one can consider the Dirac operator
$D_{W\otimes\mathcal{L}}$ on a twisted spinor bundle $W\otimes\mathcal{L}$ with respect to a Clifford connection of the form
$\nabla^{W\otimes\mathcal{L}}=\nabla^W\otimes 1+1\otimes\nabla^{\mathcal{L}}$. This more general operator, together with the so-called Lichnerowicz formula (see, for instance, \cite{heat-kernel},
Theorem 3.52), allows to obtain the following result, that applies to the above (non-twisted) Dirac operator:

\begin{prop} \emph{(Lichnerowicz)}
If $M$ is a compact spin manifold whose scalar curvature is non-negative, and strictly positive at at least one point, then the kernel of the Dirac operator on the spinor bundle $\mathcal{L}$
vanishes; in particular, its index is zero.
\end{prop}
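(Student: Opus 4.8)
The plan is to deduce the vanishing entirely from the Lichnerowicz formula invoked just above the statement, which expresses the square of the Dirac operator on the spinor bundle $\mathcal{L}$ as
$$D^2 = \nabla^{\mathcal{L}*}\nabla^{\mathcal{L}} + \tfrac{1}{4}\kappa,$$
where $\nabla^{\mathcal{L}*}\nabla^{\mathcal{L}}$ is the connection (rough) Laplacian of the Levi-Civita spinor connection and $\kappa$ denotes the scalar curvature of $M$. First I would take an arbitrary element $\psi$ of $\ker D$, that is, a harmonic spinor, and observe that it automatically satisfies $D^2\psi = 0$.

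Next I would pair this identity with $\psi$ and integrate over the compact manifold $M$, obtaining
$$0 = \int_M \langle D^2\psi, \psi\rangle\, dx = \int_M \langle \nabla^{\mathcal{L}*}\nabla^{\mathcal{L}}\psi, \psi\rangle\, dx + \tfrac{1}{4}\int_M \kappa\, |\psi|^2\, dx.$$
Since $M$ is closed, integration by parts turns the first term into $\int_M |\nabla^{\mathcal{L}}\psi|^2\, dx$ with no boundary contribution, because $\nabla^{\mathcal{L}*}$ is by construction the formal adjoint of $\nabla^{\mathcal{L}}$. Thus
$$0 = \int_M |\nabla^{\mathcal{L}}\psi|^2\, dx + \tfrac{1}{4}\int_M \kappa\, |\psi|^2\, dx,$$
a sum of two terms each of which is non-negative under the hypothesis $\kappa \geq 0$. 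Hence both must vanish separately.

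From $\int_M |\nabla^{\mathcal{L}}\psi|^2 = 0$ I would conclude $\nabla^{\mathcal{L}}\psi = 0$, so that $\psi$ is parallel and $|\psi|^2$ is a constant function on $M$. To finish, I would exploit the strict positivity of $\kappa$ at at least one point $x_0$: by continuity $\kappa > 0$ on a neighborhood of $x_0$, so that $\int_M \kappa\, |\psi|^2 = |\psi|^2 \int_M \kappa > 0$ unless $|\psi|^2 \equiv 0$. As we have just shown that this integral is zero, this forces $\psi \equiv 0$, whence $\ker D = 0$. Since the index of a self-adjoint Dirac operator is $\dim\ker D^+ - \dim\ker D^-$ and both summands are subspaces of $\ker D$, the index vanishes as well.

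The analytically delicate points, rather than the algebra, are where I would expect the real work to lie: one needs elliptic regularity to guarantee that $\ker D$ consists of smooth sections, and one needs the integration-by-parts step to be legitimate, i.e. that on the closed $M$ the rough Laplacian really is realized by the adjoint pairing. Granting the Lichnerowicz formula and standard elliptic theory on compact manifolds, the remainder is just the positivity argument above; the genuine obstacle is the derivation of the Lichnerowicz formula itself, which is assumed here as cited.
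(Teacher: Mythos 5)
Your proposal is correct and is precisely the standard Bochner-type argument that the paper itself invokes: the paper states this result without proof, attributing it to the Lichnerowicz formula $D^2=\nabla^{\mathcal{L}*}\nabla^{\mathcal{L}}+\tfrac14\kappa$ (citing \cite{heat-kernel}, Theorem 3.52), and your integration-by-parts deduction from that formula is exactly the intended route. The positivity argument, the conclusion that a harmonic spinor is parallel with constant norm, and the final vanishing of the index are all carried out correctly.
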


\paragraph{The $\bar{\partial}$-operator on a K\"{a}hler manifold} We limit ourselves to just a few brief remarks about this operator; further details can be found in \cite{heat-kernel}, Section 3.6.
Let $M$ be a K\"{a}hler manifold, and let $W$ be a holomorphic vector bundle with a Hermitian metric over $M$. Then:

\begin{prop} \emph{(\cite{heat-kernel}, Proposition 3.67)}
The tensor product of the Levi-Civita connection with the canonical connection of $W$ is a Clifford connection on the Clifford module
$\bigwedge(T^{0,1}M)^*\otimes W$, with associated Dirac operator $\sqrt2(\bar{\partial}+\bar{\partial}^*)$.
\end{prop}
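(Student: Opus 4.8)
The plan is to verify the two assertions separately---first that the tensor product connection $\nabla^E=\nabla^S\otimes 1+1\otimes\nabla^W$ (with $\nabla^S$ the connection induced by the Levi-Civita connection $\nabla^{LC}$ on the spinor factor $S:=\bigwedge(T^{0,1}M)^*$, and $\nabla^W$ the canonical Chern connection) is a Clifford connection, and then that $D=c\circ\nabla^E$ equals $\sqrt2(\bar\partial+\bar\partial^*)$---by reducing everything to a local computation in a unitary coframe. Fix a local unitary coframe $\{e^i\}$ of $\Lambda^{1,0}=(T^{1,0}M)^*$ with conjugate coframe $\{\bar e^i\}$ of $\Lambda^{0,1}=(T^{0,1}M)^*$, and let $\{E_i,\bar E_i\}$ be the dual frame of $T^{1,0}M\oplus T^{0,1}M$. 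On $S$ the Clifford action determined by the pseudo-metric is $c(\bar e^i)=\sqrt2\,\varepsilon(\bar e^i)$ and $c(e^i)=-\sqrt2\,\iota(e^i)$, where $\varepsilon(\bar e^i)$ is exterior multiplication and $\iota(e^i)$ is its adjoint (contraction via $g$), exactly as in the discussion of $\bigwedge V$ above. The normalization is pinned down by requiring $c(e^i)c(\bar e^j)+c(\bar e^j)c(e^i)=-2\delta^{ij}$, which is the rescaled identity $\varepsilon(\bar e^j)\iota(e^i)+\iota(e^i)\varepsilon(\bar e^j)=\delta^{ij}$; this is where the factor $\sqrt2$ originates.

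For the Clifford connection property I would first observe that for $\alpha\in\Lambda^1(M)$ the operator $c(\alpha)$ acts only on the spinor factor, hence commutes with $1\otimes\nabla^W$; so by the derivation property of both sides of $[\nabla_X^E,c(a)]=c(\nabla_X a)$ over the Clifford filtration, it suffices to check $[\nabla_X^S,c(\alpha)]=c(\nabla_X^{LC}\alpha)$ on $S$. Since the Clifford action is assembled from $\varepsilon(\bar e^i)$ and $\iota(e^i)$, and $\nabla^S$ is the derivation extension of $\nabla^{LC}$, this follows from the Leibniz rules $\nabla_X^S(\varepsilon(\bar e^i)\sigma)=\varepsilon(\nabla_X^{LC}\bar e^i)\sigma+\varepsilon(\bar e^i)\nabla_X^S\sigma$ and the analogous one for $\iota$. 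The essential input is the K\"ahler condition $\nabla^{LC}J=0$, which guarantees that $\nabla_X^{LC}$ preserves the type decomposition, so that $\nabla_X^{LC}\bar e^i$ remains in $\Lambda^{0,1}$ and no cross terms appear; on a merely Hermitian manifold this step would already fail.

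To identify the Dirac operator I would expand $D=c\circ\nabla^E$ in the frame as $D=\sum_i c(e^i)\nabla_{E_i}+\sum_i c(\bar e^i)\nabla_{\bar E_i}$ and substitute the action formulas, obtaining $D=\sqrt2\bigl(\sum_i\varepsilon(\bar e^i)\nabla_{\bar E_i}-\sum_i\iota(e^i)\nabla_{E_i}\bigr)$. The claim then splits into $\sum_i\varepsilon(\bar e^i)\nabla_{\bar E_i}=\bar\partial$ and $-\sum_i\iota(e^i)\nabla_{E_i}=\bar\partial^*$, both Dolbeault operators being coupled to $W$. The first (antiholomorphic) half is the clean one: the $(0,1)$-part of $\nabla^E$ is $\bar\partial_W$, because on the $W$ factor the Chern connection has $(0,1)$-part $\bar\partial_W$ by definition, while on $S$ the K\"ahler identity $\nabla^{LC}=$ Chern connection of $T^{1,0}M$ forces the induced $(0,1)$-part on $\bigwedge\Lambda^{0,1}$ to be $\bar\partial$; thus $\sum_i\varepsilon(\bar e^i)\nabla_{\bar E_i}$ is precisely the Dolbeault operator.

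The hard part will be the second identity $-\sum_i\iota(e^i)\nabla_{E_i}=\bar\partial^*$. Here I would compute the formal $L^2$-adjoint of $\bar\partial$ using that $\iota(e^i)=\varepsilon(\bar e^i)^*$ pointwise for the Hermitian metric, and integrate by parts to trade $\nabla_{\bar E_i}$ for $\nabla_{E_i}$. On a general Hermitian manifold this produces extra terms involving the torsion of the Chern connection and the failure of $d\omega=0$; the crux is that the K\"ahler identities make all of these correction terms cancel, leaving exactly $-\sum_i\iota(e^i)\nabla_{E_i}$. Once both halves match, $D=\sqrt2(\bar\partial+\bar\partial^*)$ follows; as a consistency check one may verify, via the characterization $[D,f]=c(df)$ of Proposition~\ref{dirac:clifford:prop}, that $[\sqrt2(\bar\partial+\bar\partial^*),f]=\sqrt2\varepsilon(\bar\partial f)-\sqrt2\iota((\partial f)^{\sharp})=c(df)$, confirming the principal symbol.
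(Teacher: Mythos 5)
The first thing to say is that the paper contains no proof of this statement for you to be measured against: the proposition sits in the introductory Section 1, which the author explicitly presents as background recalled from \cite{heat-kernel}, and it is cited verbatim as Proposition 3.67 of that book. So the only meaningful comparison is with the standard textbook argument (\cite{heat-kernel}, Section 3.6), and on that score your outline is essentially that argument and is sound. You correctly isolate the two key points: (i) the Clifford-connection property reduces, since both sides of $[\nabla_X^E,c(a)]=c(\nabla_X a)$ are derivations in $a$, to a check on $1$-forms, where it follows from the Leibniz rules for $\varepsilon$ and $\iota$ together with the fact that on a K\"ahler manifold $\nabla^{LC}$ preserves the type decomposition --- and your remark that this step already fails on a general Hermitian manifold is exactly the right diagnosis of where K\"ahlerness enters; (ii) the identification of the operator splits into $\sum_i\varepsilon(\bar e^i)\nabla_{\bar E_i}=\bar\partial$ (torsion-freeness plus type preservation, equivalently $\nabla^{LC}$ being the Chern connection of $T^{1,0}M$) and $-\sum_i\iota(e^i)\nabla_{E_i}=\bar\partial^*$.

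Two caveats keep this from being a complete proof rather than a correct plan. First, the identity $-\sum_i\iota(e^i)\nabla_{E_i}=\bar\partial^*$ --- which you yourself flag as the hard part --- is described but not carried out; to finish one must actually do the integration by parts, most cleanly in a synchronous frame centred at an arbitrary point, where the divergence terms visibly vanish and the cancellations you attribute to the K\"ahler identities are exhibited rather than invoked. Second, the frame expansion $D=\sum_i c(e^i)\nabla_{E_i}+\sum_i c(\bar e^i)\nabla_{\bar E_i}$ and the normalization $c(\bar e^i)=\sqrt2\,\varepsilon(\bar e^i)$, $c(e^i)=-\sqrt2\,\iota(e^i)$ involve convention-dependent factors (how $g$ is extended to the complexification, how $\{E_i,\bar E_i\}$ is normalized against $\{e^i,\bar e^i\}$), and a full write-up must pin these down consistently, since the entire content of the factor $\sqrt2$ lives there. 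Finally, note that your closing check $[\sqrt2(\bar\partial+\bar\partial^*),f]=c(df)$ via Proposition \ref{dirac:clifford:prop} only matches principal symbols, so it shows the two operators agree up to a term of order zero; you correctly present it as a consistency check, but it cannot substitute for the adjoint computation.
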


There is a relation between this operator and the \emph{Dolbeaut cohomology} of the holomorphic bundle $W$, which is somewhat similar to that between the De Rham operator and the
De Rham cohomology. More precisely, the following is true.

\begin{thm} \emph{(Hodge)}
The kernel of the Dirac operator $\sqrt2(\bar{\partial}+\bar{\partial}^*)$ on the Clifford module $\bigwedge(T^{0,1}M)^*\otimes W$ is naturally isomorphic to the sheaf cohomology space
$H^{\bullet}(M,\mathcal{O}(W))$.
\end{thm}

\begin{cor}
The index of $\bar{\partial}+\bar{\partial}^*$ on the Clifford module $\bigwedge(T^{0,1}M)^*\otimes W$ is equal to the Euler number of the holomorphic vector bundle $W$:
$$\mbox{ind}(\bar{\partial}+\bar{\partial}^*)=\mbox{Eul}(W)=\sum(-1)^i\dim(H^i(M,\mathcal{O}(W))).$$
\end{cor}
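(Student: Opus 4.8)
The plan is to deduce the statement almost immediately from the Hodge theorem stated just above, together with the definition of the index as an alternating sum of dimensions; the only genuine work lies in matching up the two gradings involved.

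First I would observe that the scalar factor $\sqrt2$ is irrelevant: since $\sqrt2(\bar{\partial}+\bar{\partial}^*)$ and $\bar{\partial}+\bar{\partial}^*$ have the same kernel, and the same splitting of that kernel into even and odd parts, they have the same index, so it suffices to compute $\mbox{ind}(\bar{\partial}+\bar{\partial}^*)$. Recalling the definition of the index, I write $E=\bigwedge(T^{0,1}M)^*\otimes W$ with its $\matZ_2$-grading $E=E^+\oplus E^-$, where $E^+$ (resp.\ $E^-$) collects the summands $\bigwedge^q(T^{0,1}M)^*\otimes W$ with $q$ even (resp.\ odd); this is exactly the parity grading under which $\bar{\partial}+\bar{\partial}^*$ is an operator of odd parity, since $\bar\partial$ raises and $\bar\partial^*$ lowers the antiholomorphic degree by one. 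Then $\mbox{ind}(\bar{\partial}+\bar{\partial}^*)=\dim(\mbox{ker}(D^+))-\dim(\mbox{ker}(D^-))$, where $D^{\pm}$ denotes the restriction of $\bar{\partial}+\bar{\partial}^*$ to $C^{\infty}(M,E^{\pm})$.

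Next I would invoke the Hodge theorem in its graded form. The isomorphism $\mbox{ker}(\bar{\partial}+\bar{\partial}^*)\cong H^{\bullet}(M,\mathcal{O}(W))$ is, degree by degree, the identification of the space of harmonic $(0,q)$-forms with coefficients in $W$ with the sheaf cohomology group $H^q(M,\mathcal{O}(W))$; in particular it preserves the degree $q$, and hence its parity. Consequently $\mbox{ker}(D^+)\cong\bigoplus_{q\ \mbox{even}}H^q(M,\mathcal{O}(W))$ and $\mbox{ker}(D^-)\cong\bigoplus_{q\ \mbox{odd}}H^q(M,\mathcal{O}(W))$, and taking dimensions yields
$$\mbox{ind}(\bar{\partial}+\bar{\partial}^*)=\sum_{q\ \mbox{even}}\dim H^q(M,\mathcal{O}(W))-\sum_{q\ \mbox{odd}}\dim H^q(M,\mathcal{O}(W))=\sum_q(-1)^q\dim H^q(M,\mathcal{O}(W)),$$
which is by definition $\mbox{Eul}(W)$.

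The one point that requires care — and the only real obstacle — is verifying that the Hodge isomorphism is compatible with the $\matZ_2$-grading, i.e.\ that it genuinely sends the degree-$q$ piece of the kernel to $H^q$ rather than mixing degrees. This amounts to checking that the grading operator defining the $\matZ_2$-structure on the Clifford module $\bigwedge(T^{0,1}M)^*\otimes W$ agrees with the even/odd splitting by antiholomorphic form degree, and that the harmonic projection respects bidegree. Once this compatibility is granted — and it follows from the construction of the Clifford structure on $\bigwedge(T^{0,1}M)^*\otimes W$ recalled earlier — the computation above is purely formal.
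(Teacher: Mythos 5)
Your proof is correct and follows exactly the route the paper intends: the corollary is stated there without proof, as an immediate consequence of the Hodge theorem cited just above it (following \cite{heat-kernel}), and your argument — discarding the factor $\sqrt2$, splitting the kernel by parity of the antiholomorphic degree, and using that the Hodge isomorphism preserves that degree so the index becomes the alternating sum $\sum_q(-1)^q\dim H^q(M,\mathcal{O}(W))$ — is precisely that standard deduction. Your attention to the compatibility of the $\matZ_2$-grading with the Hodge isomorphism is the right point to flag, and it is handled correctly.
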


\section{Diffeology: the main notions}\label{diffeology:defn:sect}

In this section we review (some of) the main notions of diffeology, starting from what a diffeological space is, and ending with the concept of a diffeological bundle (not necessarily a vector bundle)
and that of a connection on it, as this notion appears in \cite{iglesiasBook}. (The other parts of the section are also based on the same source).

\subsection{Diffeological spaces and smooth maps}

We start by giving the precise definitions of these basic objects.

\paragraph{The concept} We first recalling the notion of a diffeological space and that of a smooth map between such spaces.

\begin{defn} \emph{(\cite{So2})}
A \textbf{diffeological space} is a pair $(X,\calD_X)$ where $X$ is a set and $\calD_X$ is a specified collection of maps $U\to X$ (called \textbf{plots}) for each open set $U$ in $\matR^n$ and for
each $n\in\matN$, such that for all open subsets $U\subseteq\matR^n$ and $V\subseteq\matR^m$ the following three conditions are satisfied:
\begin{enumerate}
\item (The covering condition) Every constant map $U\to X$ is a plot;
\item (The smooth compatibility condition) If $U\to X$ is a plot and $V\to U$ is a smooth map (in the usual sense) then the composition $V\to U\to X$ is also a plot;
\item (The sheaf condition) If $U=\cup_iU_i$ is an open cover and $U\to X$ is a set map such that each restriction $U_i\to X$ is a plot then the entire map $U\to X$ is a plot as well.
\end{enumerate}
\end{defn}
Usually, instead of $(X,\calD_X)$ one writes simply $X$ to denote a diffeological space. A standard example of a diffeological space is a smooth manifold $M$, endowed with the diffeology consisting
of all smooth maps into $M$; this diffeology is called the \textbf{standard diffeology} of $M$.

Let now $X$ and $Y$ be two diffeological spaces, and let $f:X\to Y$ be a set map. We say that $f$ is \textbf{smooth} if for every plot $p:U\to X$ of $X$ the composition $f\circ p$ is a plot  of $Y$.
The typical notation $C^{\infty}(X,Y)$ is used to denote the set of all smooth maps from $X$ to $Y$.

\paragraph{The D-topology} There is a canonical topology underlying every diffeological structure on a given set, the so-called D-topology; this notion appeared in \cite{iglFibre}.\footnote{A frequent
restriction on the choice of a diffeology on a given topological space is that the corresponding D-topology coincide with the given one.} It is defined by imposing that a subset $A\subset X$ of a
diffeological space $X$ is open for the D-topology (and is said to be \textbf{D-open}) if and only if $p^{-1}(A)$ is open for every plot $p$ of $X$. In case of a smooth manifold with the standard
diffeology, the D-topology is the same as the usual topology on the manifold; this is frequently the case also for non-standard diffeologies. This is due to the fact that, as established in
\cite{CSW_Dtopology} (Theorem 3.7), the D-topology is completely determined smooth curves, in the sense that a subset $A$ of $X$ is D-open if and only if $p^{-1}(A)$ is open
for every $p\in C^{\infty}(\matR,X)$.

\paragraph{Comparing diffeologies} Given a set $X$, the set of all possible diffeologies on $X$ is partially ordered by inclusion (with respect to which it forms a complete lattice). More precisely, a
diffeology $\calD$ on $X$ is said to be \textbf{finer} than another diffeology $\calD'$ if $\calD\subset\calD'$ (whereas $\calD'$ is said to be \textbf{coarser} than $\calD$). Among all diffeologies,
there is the finest one (the natural \textbf{discrete diffeology}, which consists of all locally constant maps $U\to X$) and the coarsest one (which consists of \emph{all} possible maps $U\to X$,
for all $U\subseteq\matR^n$ and for all $n\in\matN$ and is called the \textbf{coarse diffeology}). Furthermore, due to the above-mentioned structure of a lattice on the set of all diffeologies on a
given $X$, it is frequently possible to claim the existence of the finest, or the coarsest, diffeology possessing a certain desirable property. A number of definitions are of this type.

\paragraph{The generated diffeology} A lot of specific examples are constructed via this simple notion. Given a set $X$ and a set of maps $\mathcal{A}=\{U_i\to X\}$ into $X$, where each $U_i$ is
a domain in some $\matR^{m_i}$, there exists the finest diffeology on $X$ that contains $\mathcal{A}$. This diffeology is called the \textbf{diffeology generated by $\mathcal{A}$}; its plots are
precisely the maps that locally are either constant or filter through a map in $\mathcal{A}$.

\paragraph{Pushforwards and pullbacks of diffeologies} Let $X$ be a diffeological space and let $X'$ be any set. Given an arbitrary map $f:X\to X'$, there exists a finest diffeology on $X'$ such that
$f$ is smooth; this diffeology is called the \textbf{pushforward of the diffeology of $X$ by the map $f$}. Its plots are precisely the compositions of plots of $X$ with the map $f$. If, \emph{vice versa},
we have a map $f:X'\to X$, there is the coarsest diffeology on $X'$ such that $f$ is smooth; it is called the \textbf{pullback of the diffeology of $X$ by the map $f$}. A map $p:U\to X'$ is a plot for
this pullback diffeology if and only if $f\circ p$ is a plot of $X$.

\paragraph{The quotient diffeology} Any quotient of a diffeological space is itself a diffeological space for a canonical choice of the diffeology. Namely, if $X$ is a diffeological space and $\sim$ is
an equivalence relation on $X$, the \textbf{quotient diffeology on $X/\sim$} is the pushforward of the diffeology of $X$ by the natural projection $X\to X/\sim$.

\paragraph{The subset diffeology} Let $X$ be a diffeological space, and let $Y\subseteq X$ be its subset. The \textbf{subset diffeology} on $Y$ is the coarsest diffeology on $Y$ making the inclusion
map $Y\hookrightarrow X$ smooth; it consists of all maps $U\to Y$ such that $U\to Y\hookrightarrow X$ is a plot of $X$ (less formally, we can say that the subset diffeology consists of all plots of $X$
whose image is contained in $Y\subset X$).

\paragraph{Disjoint unions and products of diffeological spaces} Let $\{X_i\}_{i\in I}$ be a collection of diffeological spaces. The \textbf{disjoint union} of $\{X_i\}_{i\in I}$ is the usual disjoint
union $\coprod_{i\in I}X_i=\{(i,x)\,|\,i\in I\mbox{ and }x\in X_i\}$, endowed the so-called \textbf{disjoint union}, or \textbf{sum diffeology} that is the \emph{finest} diffeology such that each natural
injection $X_i\to\coprod_{i\in I}X_i$ is smooth. Locally, every plot of this diffeology is a plot of one of the components of the disjoint union. The \textbf{product diffeology} $\calD$ on the product
$\prod_{i\in I}X_i$ is the \emph{coarsest} diffeology such that for each index $i\in I$ the natural projection $\pi_i:\prod_{i\in I}X_i\to X_i$ is smooth. If the collection of the spaces $X_i$ is finite, then
any plot of the product diffeology on $X_1\times\ldots\times X_n$ is an $n$-tuple of form $(p_1,\ldots,p_n)$, where each $p_i$ is a plot of $X_i$.

\paragraph{The functional diffeology} Let $X$ and $Y$ be two diffeological spaces. The \textbf{functional diffeology} on the set $C^{\infty}(X,Y)$ of all smooth maps from $X$ to $Y$ is the coarsest
diffeology for which the following map, called the \textbf{evaluation map} is smooth:
$$\mbox{\textsc{ev}}:C^{\infty}(X,Y)\times X\to Y\mbox{ and }\mbox{\textsc{ev}}(f,x)=f(x).$$ Occasionally, one speaks of a functional diffeology, which is any diffeology such that $\mbox{\textsc{ev}}$
is smooth.

These are the main notions of diffeology that we will use; occasionally some other term will be needed, at which point we will recall it as we go along.

\subsection{Diffeological vector spaces}

The concept of a diffeological vector space is the obvious one: it is a set $X$ that is both a diffeological space and a vector space such that the operations of addition and scalar multiplication are
smooth (with respect to the diffeology).

\paragraph{The definition} Let $V$ be a vector space (over real numbers and in most cases finite-dimensional, although the definition that follows is more general). A \textbf{vector space
diffeology} on $V$ is any diffeology such that the addition and the scalar multiplication are smooth, that is,
$$[(u,v)\mapsto u+v]\in C^{\infty}(V\times V,V)\mbox{ and }[(\lambda,v)\mapsto\lambda v]\in C^{\infty}(\matR\times V,V),$$ where $V\times V$ and $\matR\times V$ are equipped with the product
diffeology. Equipped with such a diffeology, $V$ is called a \textbf{diffeological vector space}.

The following observation could be useful to make the distinction between $V$ diffeological vector space, and $V$ diffeological space \emph{proper}. Since the constant maps are plots for any
diffeology and the scalar multiplication is smooth with respect to the standard diffeology of $\matR$, any vector space diffeology on a given $V$ includes maps of form $f(x)v$ for any fixed
$v\in V$ and for any smooth map $f:\matR\to\matR$; furthermore, since the addition is smooth, any vector space diffeology includes all finite sums of such maps. This immediately implies, for
instance, that any vector space diffeology on $\matR^n$ includes all usual smooth maps (since they write as $\sum_{i=1}^nf_i(x)e_i$).\footnote{This is not the case for a non-vector space diffeology
of $\matR^n$; the simplest example is the discrete diffeology, which consists of constant maps only. This is not a vector space diffeology, since the scalar multiplication is not smooth. A more intricate
example is that of the so-called \emph{wire diffeology}, one generated by the set $C^{\infty}(\matR,\matR^n)$. For this diffeology, the scalar multiplication is smooth, but the addition is not.}

Given two diffeological vector spaces $V$ and $W$, the space of all smooth linear maps between them is denoted by $L^{\infty}(V,W)=L(V,W)\cap C^{\infty}(V,W)$; it is endowed with the
functional diffeology, with respect to which it becomes a diffeological vector space. A \textbf{subspace} of a diffeological vector space $V$ is a vector subspace of $V$ endowed with the subset
diffeology; it is, again, a diffeological vector space on its own. Finally, if $V$ is a diffeological vector space and $W\leqslant V$ is a subspace of it then the quotient $V/W$ is a diffeological
vector space with respect to the quotient diffeology.

\paragraph{The direct sum of diffeological vector spaces} Let $\{V_i\}_{i\in I}$ be a family of diffeological vector spaces. Consider the usual direct sum $V=\oplus_{i\in I}V_i$ of this family;
then $V$, equipped with the product diffeology, is a diffeological vector space.

\paragraph{Euclidean structure on diffeological vector spaces} A diffeological vector space $V$ is \textbf{Euclidean} if it is endowed with a scalar product that is smooth with respect to the
diffeology of $V$ and the standard diffeology of $\matR$; that is, if there is a fixed map $\langle\cdot,\cdot\rangle:V\times V\to\matR$ that has the usual properties of bilinearity,
symmetricity, and definite-positiveness and that is smooth with respect to the diffeological product structure on $V\times V$ and the standard diffeology on $\matR$. Note that a finite-dimensional
diffeological vector space admits a smooth scalar product if and only if it is diffeomorphic to some $\matR^n$ with the standard diffeology (see \cite{iglesiasBook}, Ex. 70 on p. 74 and its
solution). Thus, a finite-dimensional diffeological vector space (or a bundle of such) is endowed, not with a scalar product, but with a ``minimally degenerate'' smooth symmetric bilinear form,
which we call a \textbf{pseudo-metric} (see Section \ref{pseudometric:sect}).

\paragraph{Fine diffeology on vector spaces} The \textbf{fine diffeology} on a vector space $\matR$ is the \emph{finest} vector space diffeology on it; endowed with such, $V$ is called a
\emph{fine vector space}. Note that \emph{any} linear map between two fine vector spaces is smooth (\cite{iglesiasBook}, 3.9). An example of a fine vector space is $\matR^n$ with its
\textbf{standard diffeology}, \emph{i.e.}, the diffeology that consists of all the usual smooth maps with values in $\matR^n$.\footnote{It is easy to see that this set of maps is indeed a
(vector space) diffeology. Furthermore, it is the finest one, since, as we have already observed above, it is contained in any other vector space diffeology.}

\paragraph{Smooth linear and bilinear maps} In the case of diffeological vector spaces it frequently happens that the space $L^{\infty}(V,\matR)$ of all smooth linear maps $V\to\matR$ (where
$V$ is a diffeological vector space and $\matR$ is considered with its standard diffeology), and more generally, the space $L^{\infty}(V,W)$ of all smooth linear maps $V\to W$, is \emph{a
priori} smaller than the space of all linear maps between the respective spaces; see Example 3.11 of \cite{wu} (and also \cite{multilinear}, Example 3.1). In fact, such examples can easily
be found for all finite-dimensional vector spaces. Accordingly, the same issue presents itself for bilinear maps; given $V$, $W$ two diffeological vector spaces, let $B(V,W)$ be the set of bilinear
maps on $V$ with values in $W$, and let $B^{\infty}(V,W)$ be the set of those bilinear maps that are smooth with respect to the product diffeology on $V\times V$ and the given diffeology on $W$.
Just as for linear maps, the space $B^{\infty}(V,W)$ is frequently a proper subspace of $B(V,W)$, although some of the usual isomorphisms continue to exist.

\paragraph{The dual of a diffeological vector space} The \textbf{diffeological dual} $V^*$ (see \cite{vincent}, \cite{wu}) of a diffeological vector space $V$ is the set $L^{\infty}(V,\matR)$ of
all smooth linear maps $V\to\matR$, endowed with the functional diffeology. As all spaces of smooth linear maps (see above), it is a diffeological vector space, which in general is not isomorphic
to $V$. For one thing, in the finite-dimensional case it almost always has a smaller dimension: as shown in \cite{pseudometric}, the diffeological dual of a finite-dimensional diffeological vector
space is always a standard space (in particular, it is a fine space), so the equality $L^{\infty}(V,\matR)=L(V,\matR)$ holds if and only if $V$ is a standard space.\footnote{Note also that, as
shown in \cite{multilinear}, Proposition 4.4, if $V^*$ and $V$ are isomorphic as vector spaces then they are also diffeomorphic.} The matters become less straightforward in the infinite-dimensional
case, which in this work we do not consider.

\paragraph{The tensor product} The definition of the diffeological tensor product was given first in \cite{vincent} and then in \cite{wu} (see Section 3). Let $V_1$, ..., $V_n$ be diffeological vector
spaces, and let $V_1\times\ldots\times V_n$ be their free product, endowed with the finest vector space diffeology that contains the product diffeology on the Cartesian product of $V_1,\ldots,V_n$.
Let $T:V_1\times\ldots\times V_n\to V_1\otimes\ldots\otimes V_n$ be the universal map onto their tensor product as vector spaces, and let $Z\leqslant V_1\times\ldots\times V_n$ be the kernel of
$T$. The \textbf{tensor product diffeology} on $V_1\otimes\ldots\otimes V_n$ is the quotient diffeology on $V_1\otimes\ldots\otimes V_n=(V_1\times\ldots\times V_n)/Z$ coming from the free product
diffeology on the free product of the spaces $V_1,\ldots,V_n$; the free product diffeology is in turn defined as the finest vector space diffeology on the free product $V_1*\ldots*V_n$ that contains the 
product diffeology. The diffeological tensor product thus defined possesses the usual universal property established in \cite{vincent}, Theorem 2.3.5: for any diffeological vector spaces $V_1,\ldots,V_n,W$ 
the space $L^{\infty}(V_1\otimes\ldots\otimes V_n,W)$ of all smooth linear maps $V_1\otimes\ldots\otimes V_n\to W$ (considered, as usual, with the functional diffeology) is diffeomorphic to the space 
$\mbox{Mult}^{\infty}(V_1\times\ldots\times V_n,W)$ of all smooth (for the product diffeology) multilinear maps $V_1\times\ldots\times V_n\to W$ (also endowed with the functional diffeology).

\paragraph{The spaces $V^*\otimes W^*$ and $(V\otimes W)^*$} The standard diffeomorphism between these two spaces continues to hold, in the sense that that the usual isomorphism
(when it exists) is smooth.

\subsection{Diffeological bundles}

The notion of a \textbf{diffeological fibre bundle} was first studied in \cite{iglFibre}; a more recent exposition appears in \cite{iglesiasBook}, Chapter 8. A smooth surjective map $\pi:T\to B$
is a \textbf{fibration} if there exists a diffeological space $F$ such that the pullback of $\pi$ by any plot $p$ of $B$ is locally trivial, with fibre $F$. The latter condition means that there
exists a cover of $B$ by a family of D-open sets $\{U_i\}_{i\in I}$ such that the restriction of $\pi$ over each $U_i$ is trivial with fibre $F$. There is also another definition of a diffeological fibre
bundle available in \cite{iglesiasBook}, 8.8, involving the notion of a \emph{diffeological groupoid} (we do not recall it since we will not use it).

\paragraph{Principal diffeological fibre bundles} Let $X$ be a diffeological space, and let $G$ be a diffeological group.\footnote{A diffeological group is a group $G$ endowed with a
diffeology such that the group product map $G\times G\to G$ and the inverse element map $G\to G$ are smooth.} Denote by $g\mapsto g_X$ a smooth action of $G$ on $X$, that is, a smooth
homomorphism from $G$ to $\mbox{Diff}(X)$. Let $F$ be the \emph{action map}:
$$F:X\times G\to X\times X\mbox{ with }F(x,g)=(x,g_X(x)).$$ Then the following is true (see the Proposition in Section 8.11 of \cite{iglesiasBook}): if $F$ is an induction\footnote{A map $f:X\to
Y$ between two diffeological spaces is called an induction if $f$ is a diffeomorphism of $X$ with the image $\mbox{Im}(f)$, the latter endowed with the subset diffeology.} then the projection $\pi$
from $X$ to its quotient $X/G$ is a diffeological fibration, with the group $G$ as fibre. In this case we say that the action of $G$ on $X$ is \textbf{principal}. Now, if a surjection $\pi:X\to Q$ is
equivalent to $\mbox{class}:X\to G/H$, that is, if there exists a diffeomorphism $\varphi:G/H\to Q$ such that $\pi=\varphi\circ\mbox{class}$, we shall say that $\pi$ is a \textbf{principal fibration},
or a \textbf{principal fibre bundle}, with structure group $G$.

\paragraph{Associated fibre bundles} Let $\pi:T\to B$ be a principal fibre bundle with structure group $G$, and let $E$ be a diffeological space together with a smooth action of $G$, that is,
a smooth homeomorphism $g\mapsto g_E$ from $G$ to $\mbox{Diff}(E)$. Let $X=T\times_G E$ be the quotient of $T\times E$ by the diagonal action of $G$:
$$g_{T\times E}:(t,e)\mapsto (g_T(t),g_E(e)).$$ Let $p:X\to B$ be the projection $\mbox{class}(t,e)\mapsto\pi(t)$. Then the projection $p$ is a diffeological fibre bundle, with fibre $E$; it is called
the fibre bundle \textbf{associated with $\pi$ by the action of $G$ on $E$}.

\subsection{Connections on diffeological bundles}

As discussed in \cite{iglesiasBook} (see Foreword to 8.32), there is not yet an immutable notion of connection in diffeology. We briefly recall the definition given in the just-cited source, although in
Section 10 we will attempt to develop a notion of diffeological connection, on diffeological vector bundles, or extensions of such, following a different approach (which mimics the standard one).

Let $Y$ be a diffeological space, and let $\mbox{Paths}_{loc}(Y)$ be the space of local paths in $Y$, \emph{i.e.}, the set of $1$-plots of $Y$ defined on open intervals,
$$\mbox{Paths}_{loc}(Y)=\{\tilde{c}\in C^{\infty}((a,b),Y)|a,b\in\matR\},$$ equipped with the functional diffeology induced by the functional diffeology of the $1$-plots of $Y$. Let us denote by
$\mbox{tbPaths}_{loc}(Y)$ equipped with the sub-diffeology of the product diffeology,
$$\mbox{tbPaths}_{loc}(Y)=\{(\tilde{c},t)\in\mbox{Paths}_{loc}(Y)\times\matR|t\in\mbox{def}(\tilde{c})\}.$$ Finally, let $\pi:Y\to X$ be a principal diffeological fibration with the structure group $G$, and
let $(g,y)\mapsto g_Y(y)$ denote the action of $G$ on $Y$.

\begin{defn}
A \textbf{connection} on the $G$-principal fibre bundle $\pi:Y\to X$ is any smooth map
$$\Omega:\mbox{tbPaths}_{loc}(Y)\to\mbox{Paths}_{loc}(Y)$$ satisfying the following series of conditions:
\begin{enumerate}
\item \emph{Domain}. $\mbox{def}(\Omega(\tilde{c},t))=\mbox{def}(\tilde{c})$.
\item \emph{Lifting}. $\pi\circ\Omega(\tilde{c},t)=\pi\circ\tilde{c}$.
\item \emph{Basepoint}. $\Omega(\tilde{c},t)(t)=\tilde{c}(t)$.
\item \emph{Reduction}. $\Omega(\gamma\cdot\tilde{c},t)=\gamma(t)_Y\circ\Omega(\tilde{c},t)$, where $\gamma:\mbox{def}(\tilde{c})\to G$ is any smooth map and
$\gamma\cdot\tilde{c}=[s\mapsto\gamma(s)\gamma(\tilde{c}(s))]$.
\item \emph{Locality}. $\Omega(\tilde{c}\circ f,s)=\Omega(\tilde{c},f(s))\circ f$, where $f$ is any smooth local path defined on an open domain with values in $\mbox{def}(\tilde{c})$.
\item \emph{Projector}. $\Omega(\Omega(\tilde{c},t),t)=\Omega(\tilde{c},t)$.
\end{enumerate}
\end{defn}

The local path $\Omega(\tilde{c},t)$ is the \textbf{horizontal projection} of $\tilde{c}$ pointed at $t$; it is a \textbf{horizontal path} for the connection $\Omega$. It is also denoted by $\Omega_t(\tilde{c})$
or $\Omega(\tilde{c})(t)$.

\section{Differential $k$-forms on a diffeological space}\label{diffeological:forms:sect}

As is observed in the Foreword to Chapter 6 of \cite{iglesiasBook}, the definition of a diffeology on a set $X$ by means of maps to $X$ allows for a relatively simple extension of those standard
constructions that are based on geometric covariant objects. In particular, there exists a well-established notion of diffeological differential forms, even if an agreed-upon concept of tangent
vectors is not there yet.\footnote{The extension just mentioned goes up to by-now standard version of the diffeological De Rham calculus; in this section we give its brief description.}

\subsection{Bundles of differential $k$-forms}

Differential forms on diffeological spaces are defined by their evaluations on the plots, which are regarded as pull-backs of the forms by the plots; these pullbacks are ordinary smooth forms.
The condition on the pullbacks, to represent a differential form of a diffeological space, expresses just the condition of compatibility under composition.

\subsubsection{What is a differential $k$-form}

We now state, and illustrate, the definition of a differential $k$-form on a diffeological space, showing how it admits a sort of finite description via the concept of the generated
diffeology.\footnote{For this description to be finite, that is, to be given by a finite list of usual $k$-forms, the diffeology must be generated by a finite set of plots.}

\paragraph{Differential forms on diffeological spaces} Let $X$ be a diffeological space. A \textbf{differential $k$-form on $X$} is a map $\alpha$ that associates with every plot $p:U\to X$ a smooth
$k$-form, denoted by $\alpha(p)$, defined on $U$ and satisfying the following condition: for every smooth map $g:V\to U$, with $V$ a domain, it must hold that
$$\alpha(p\circ g)=g^*(\alpha(p)).$$ This condition is called the \textbf{smooth compatibility condition}, and we say that $\alpha(p)$ \textbf{represents} the differential form $\alpha$ in the plot $p$.
The set of differential $k$-forms on $X$ is denoted by $\Omega^k(X)$.

\paragraph{The differential of a smooth function} The simplest example of a differential form on a diffeological space is, of course, the differential of a smooth function; it is defined as follows.
Let $X$ be a diffeological space, and let $f:X\to\matR$ be a smooth (for the standard diffeology on $\matR$) function. Then for any plot $p:U\to X$ the composition $f\circ p$ is a smooth function
$U\to\matR$ in the usual sense. Associating to each plot $p$ the $1$-form $d(f\circ p)$, that is
$$\calD_X\ni p\mapsto d(f\circ p)\in\Lambda^1(U),$$ satisfies the smooth compatibility condition and therefore defines a differential $1$-form on the diffeological space $X$;
it is called the \textbf{differential of $f$}.

As an easy example, consider $X=\matR$ with the diffeology generated by the plot $p:\matR\ni x\mapsto |x|$; by the definition of the generated diffeology, this means that locally every non-constant
plot has form $q:U\ni u\mapsto|h(u)|$ for some domain $U\subset\matR^m$ and an ordinary smooth function $h:U\to\matR$. Thus, $f:X\to\matR$ given by $f(x)=x^2$ is a smooth function;
we have $(f\circ q)(u)=(h(u))^2$. The differential $df$ is then the $1$-form given by $df(q)=2h(u)h'(u)du$.

\paragraph{Differential forms through generating families} Let $X$ be a diffeological space, let $\calD$ be its diffeology, and let $\mathcal{A}=\{p_i:U_i\to X\,|\,i\in I\}$ be a set of plots that
generates $\calD$. A collection $\{\alpha(p_i)\,|\,p_i\in\mathcal{A},\,i\in I\}$ of smooth $k$-forms yields the values of a differential form $\alpha\in\Omega^k(X)$ if and only if the following two
conditions are satisfied:
\begin{enumerate}
\item For all $p_i\in\mathcal{A}$, $\alpha(p_i)$ is a smooth $k$-form defined on the domain of definition of $p_i$ (that is, on $U_i$);
\item For all $p_i,p_j\in\mathcal{A}$, for every smooth $g:V\to g(V)=U_i$, and for every smooth $g':V\to g'(V)=U_j$, the following holds:
$$p_i\circ g=p_j\circ g'\Rightarrow g^*(\alpha(p_i))=(g')^*(\alpha(p_j)).$$
\end{enumerate}

\paragraph{A sample differential $1$-form} Let us now construct a differential $1$-form which is not the differential of a smooth function. Let $X$ be $\matR^2$ endowed with the vector space
diffeology generated by the plot $x\mapsto|x|$; this means that a generic (non-constant) plot of it locally has form $u\mapsto(f_1(u),f_2(u)+g_1(u)|h_1(u)|+\ldots+g_k(u)|h_k(u)|)$. It follows that
a generating (in the usual sense, not vector space sense) set for this diffeology can be given by the infinite sequence $\{q_k\}_{k\geqslant 1}$ of plots of the following form:
$$q_k:\matR^{2k+2}\ni(u_1,\ldots,u_k,v_1,\ldots,v_k,z_1,z_2)\mapsto (z_1,z_2+v_1|u_1|+\ldots+v_k|u_k|).$$ Note that any two consecutive plots in this sequence are related by a smooth
substitution $q_k=q_{k+1}\circ g_k$, where $g_k:\matR^{2k+2}\to\matR^{2k+4}$ acts by
$$g_k(u_1,\ldots,u_k,v_1,\ldots,v_k,z_1,z_2)=(u_1,\ldots,u_k,v_1,\ldots,v_k,0,0,z_1,z_2).$$ The smooth compatibility condition means therefore that, for a prospective form $\omega$ on $X$,
we should have
$$\omega(q_k)=g_k^*(\omega(q_{k+1})).$$

This condition suggests the following description of a given differential $1$-form $\omega$ on $X$. Choose an index $k$ and let $\omega_0:=\omega(q_k)$. By the smooth compatibility condition,
the forms $\omega(q_1)$, $\ldots$, $\omega(q_{k-1})$ are then uniquely defined by the form $\omega_0$. On the other hand, we can see that, for instance, $\omega(q_{k+1})=\omega_0+\omega'$,
where $\omega'$ is any $1$-form on $\matR^{2k+4}$ vanishing on the hyperplane of equation $u_{k+1}=v_{k+1}=0$ (this could be the zero form, or any $1$-form in the
$C^{\infty}(\matR^{2k+4},\matR)$-span of $u_{k+1}dv_{k+1},v_{k+1}du_{k+1}$, \emph{et cetera}). Here we consider $\omega_0$ (which is a $1$-form on $\matR^{2k+2}$) as a form on $\matR^{2k+4}$
with respect to the inclusion $\matR^{2k+2}\subset\matR^{2k+4}$ that identifies the former with the hyperplane of equation $u_{k+1}=v_{k+1}=0$; notice that this description suggests that $\omega_0$
can be any $1$-form on $\matR^{2k+2}$; furthermore, it does define the differential $1$-form $\omega$ on $X$ up to quotienting over (some set of) vanishing $1$-forms.

Finally, a specific example of a $1$-form can be (this is probably one of the simplest examples) given by:
$$\omega(q_k)=\sum_{i=1}^k u_idv_i.$$

\paragraph{Closed and exact forms} The meaning of these terms for differential forms on diffeological spaces is the same as in the standard case; we wish to illustrate them. The \emph{exact}
$k$-forms are differentials of $(k-1)$-forms, such as differentials of smooth functions in the case of $k=1$. It is easy to see that the notion of the differential easily extends to the case of arbitrary
$k$; given a differential $k$-form $\alpha$, the $(k+1)$-form $d\alpha$ is given by associating to each plot $p:U\to X$ of $X$ the form $d(\alpha(p))\in\Lambda^{k+1}(U)$, the usual differential of
the form $\alpha(p)$. The fact that this assignment does define a differential form on the diffeological space $X$, \emph{i.e.}, that the smooth compatibility condition still holds, follows from the
standard properties of differential forms (those on domains of Euclidean spaces). The \emph{closed} forms, as usual, are those whose differential is the zero form; recall that for a diffeological
space $X$ this means that $\omega(p)\equiv 0$ for any plot $p$ of $X$ (this trivially means that every form $\omega(p)$ is closed).

One immediate question at this point is the following one. Let $\omega$ be a differential $1$-form on a diffeological space $X$ such that for every plot $p:U\to X$ of $X$ the usual $1$-form
$\omega(p)\in\Lambda^1(U)$ is exact; does this mean that $\omega$ is the differential of some smooth function on $X$? The answer is positive, as follows from Sect. 6.31 and 6.34 of
\cite{iglesiasBook}.

\subsubsection{The vector space $\Omega^k(X)$ and the pseudo-bundle $\Lambda^k(X)$}

As has been said already, $\Omega^k(X)$ denotes the set of all differential $k$-forms on $X$. By analogy with the usual differential forms (of which their diffeological counterparts are generalizations),
one can expect that it is not just a set. Indeed, it has a natural structure of a diffeological vector space.

\paragraph{The functional diffeology on $\Omega^k(X)$} The vector space structure on $\Omega^k(X)$ is given by the following operations: for all $\alpha,\alpha'\in\Omega^k(X)$, for all
$s\in\matR$, and for all plots $p$ of $X$ we have
$$\sistema{(\alpha+\alpha')(p)=\alpha(p)+\alpha'(p)\\ (s\cdot\alpha)(p)=s\cdot\alpha(p)};$$ the sum $\alpha(f)+\alpha'(p)$ and the product by scalar $s\cdot\alpha(p)$ of smooth differential forms are
pointwise. It is also a diffeological vector space, for the functional diffeology, whose characterization is as follows.

Consider the set of all maps $\phi:U'\to\Omega^k(X)$, for all domains $U'\subset\matR^m$ and for all $m\in\matN$, that satisfy the following condition: for every plot $p:U\to X$ the map
$U'\times U\to\Lambda^k(\matR^n)$ given by $(s,r)\mapsto(\phi(s)(p))(r)$ is smooth. This collection of plots forms a vector space diffeology on $\Omega^k(X)$, called the \textbf{standard functional
diffeology}, or simply the \textbf{functional diffeology}, on $\Omega^k(X)$.

\paragraph{The fibre $\Lambda_x^k(X)$} There is a natural quotienting of $\Omega^k(X)$, which gives, at every point $x\in X$, the set of all distinct values, at $x$, of the differential $k$-forms on $X$.
The resulting set is denoted by $\Lambda_x^k(X)$ and is defined as follows.

Let $X$ be a diffeological space, and let $x$ be a point of it. A plot $p:U\to X$ is said to be \textbf{centered at $x$} if $U\ni 0$ and $p(0)=x$. Let $k$ be an integer, and let us consider the following
equivalence relation $\sim_x$: two $k$-forms $\alpha,\beta\in\Omega^k(X)$ are equivalent, $\alpha\sim_x\beta$, and are said to \textbf{have the same value in $x$} if for every plot $p$ centered at
$x$, we have $\alpha(p)(0)=\beta(p)(0)$ ($\Leftrightarrow(\alpha-\beta)(f)(0)=0$). The class of $\alpha$ for the equivalence relation $\sim_x$ is called \textbf{the value of $\alpha$ at the point $x$};
we occasionally denote it by $\alpha_x$. The set of all the values at the point $x$, for all $k$-forms on $X$, is the set $\Lambda_x^k(X)$,
$$\Lambda_x^k(X)=\Omega^k(X)/\sim_x=\{\alpha_x\,|\,\alpha\in\Omega^k(X)\}.$$

An element $\alpha\in\Lambda_x^k(X)$ is said to be a \textbf{$k$-form of $X$ at the point $x$} (and $x$ is said to be the \textbf{basepoint of $\alpha$}). The space $\Lambda_x^k(X)$ is then
called the \textbf{space of $k$-forms of $X$ at the point $x$}.

A form $\alpha$ \textbf{vanishes at the point $x$} if and only if, for every plot $p$ centered at $x$ we have $\alpha(p)(0)=0$ (so $\alpha\sim_x 0$, and we will write $\alpha_x=0_x$). Two $k$-forms
$\alpha$ and $\beta$ have the same value at the point $x$ if and only if their difference vanishes at this point: $(\alpha-\beta)_x=0$. The set $\{\alpha\in\Omega^k(X)\,|\,\alpha_x=0_x\}$ of the
$k$-forms of $X$ vanishing at the point $x$ is a vector subspace of $\Omega^k(X)$; furthermore,
$$\Lambda_x^k(X)=\Omega^k(X)/\{\alpha\in\Omega^k(X)\,|\,\alpha_x=0_x\}.$$ In particular, as a quotient of a diffeological vector space by a vector subspace, the space $\Lambda_x^k(X)$ is
naturally a diffeological vector space; the addition and the scalar multiplication on $\Lambda_x^k(X)$ are given respectively by $\alpha_x+\beta_x=(\alpha+\beta)_x$ and $s(\alpha_x)=(s\alpha)_x$.

\paragraph{The $k$-forms bundle $\Lambda^k(X)$} Let $X$ be a diffeological space; consider the vector space $\Lambda_x^k(X)$ of values of $k$-forms at the point $x\in X$. The \textbf{bundle of
$k$-forms over $X$}, denoted by $\Lambda^k(X)$, is the union of all spaces $\Lambda_x^k(X)$:
$$\Lambda^k(X)=\coprod_{x\in X}\Lambda_x^k(X)=\{(x,\alpha)\,|\,\alpha\in\Lambda_x^k(X)\}.$$ It has an obvious structure of a bundle over $X$. In fact, most often it is a pseudo-bundle and not
a true bundle (we will illustrate later on that in many cases it is not locally trivial); we will tend to call it pseudo-bundle, although in the original sources it is just called a bundle.

The pseudo-bundle $\Lambda^k(X)$ is endowed with the diffeology that is the pushforward of the product diffeology on $X\times\Omega^k(X)$ by the projection
$\chi^{\Lambda}:X\times\Omega^k(X)\to\Lambda^k(X)$ acting by $\chi^{\Lambda}(x,\alpha)=(x,\alpha_x)$.\footnote{We stress again that $\Lambda^k(X)$ is \emph{not} the diffeological disjoint union
of the spaces $\Lambda_x^k(X)$} Note that for this diffeology the natural projection $\pi^{\Lambda}:\Lambda^k(X)\to X$ is a local subduction;\footnote{A surjective map $f:X\to Y$ between two
diffeological spaces is called a \textbf{subduction} if the diffeology of $Y$ coincides with the pushforward of the diffeology of $X$ by $f$.} furthermore, each subspace $(\pi^{\Lambda})^{-1}(x)$ is
smoothly isomorphic to $\Lambda_x^k(X)$.

\paragraph{The plots of $\Lambda^k(X)$} A map $p:u\mapsto(p_1(u),p_2(u))\in\Lambda^k(X)$ defined on some domain $U$ is a plot of $\Lambda^k(X)$ if and only if the following two
conditions are fulfilled:
\begin{enumerate}
\item The map $p_1$ is a plot of $X$;
\item For all $u_0\in U$ there exists an open neighborhood $U'$ of $u_0$ and a plot $q:U'\to\Omega^k(X)$ (recall that $\Omega^k(X)$ is considered with its functional diffeology described above)
such that for all $u'\in U'$ we have $p_2(u')=(q(u'))(p_1)(u')$.
\end{enumerate}

\subsection{The corresponding approach to tangent and cotangent spaces}

At least from the formal point of view, the construction described in the previous two sections allows to define the corresponding concept of the cotangent space and the tangent one. These are not
standardly used definitions (which do not exist yet; see \cite{iglesiasBook}, particularly p. 167, for a very useful discussion of this state of matters), just the most straightforward consequences of
the above construction.

\paragraph{The cotangent bundle $\Lambda^1(X)$} By a formal analogy, a version of the \emph{cotangent bundle of $X$} could be defined as $\Lambda^1(X)$, although it is not determined (at least
not \emph{a priori}) by the duality to some tangent bundle (and its geometric meaning is not clear). The so-called space of tangent $1$-vectors (that we briefly describe below, although we will not make any 
further use of it) can be defined by the diffeological duality to $\Lambda^1(X)$; although in general it provides a finer construction than the entire dual pseudo-bundle $(\Lambda^1(X))^*$.

\paragraph{Tangent $k$-vectors at a point} We describe here the space of so-called \textbf{tangent $k$-vectors} to a diffeological space (see \cite{iglesiasBook}). Let $X$ be a diffeological space, let
$p:\matR^k\supset U\to X$ be a plot of $X$, let $u\in U$, and let $\alpha\in\Omega^k(X)$. Then $\alpha(p)(u)\in\Lambda^k(\matR^k)$, so as a $k$-form on $\matR^k$, it is proportional to the standard
volume form $\mbox{vol}_k$. Let $\dot{p}(u)(\alpha)$ be the coefficient of proportionality, so we have a map $\dot{p}(u):\Omega^k(X)\to\matR$ that acts by the rule:
$$\dot{p}(u):\alpha\mapsto\frac{\alpha(p)(u)}{\mbox{vol}_k}.$$ This map is smooth and so belongs to the dual vector space of $\Omega^k(X)$. Let us state again that such map is associated to
every plot $p$ defined on (a sub-domain of) $\matR^k$ and possesses the following property:
$$\forall\,\,\alpha\in\Omega^k(X)\mbox{ we have }\alpha(p)(u)=\dot{p}(u)\mbox{vol}_k.$$ In particular, for $k=1$ we would just have $\alpha(p)(u)=\dot{p}(u)du$.

\paragraph{The generating set $S_x^k(X)$} The space of tangent $k$-vectors has the following generating set. For an arbitrary $x\in X$, consider $S_x^k(X)\subset(\Omega^k(X))^*$ defined as
$$S_x^k(X)=\{\dot{p}(0)\in(\Omega^k(X))^*\,|\,p:\matR^k\to X\mbox{ is smooth, and }p(0)=x\}.$$ For all $x\in X$ we have $0\in S_x^k(X)$; moreover, for all $x\in X$, for all $\nu\in S_x^k(X)$, and for
all $s\in\matR$ we have $s\cdot\nu\in S_x^k(X)$. Therefore $S_x^k(X)$ is a star-shaped subset of $(\Omega^k(X))^*$ with origin $0$; however, \emph{a priori} it is not a vector subspace.

\paragraph{The spaces $T_x^k(X)$} The space of \textbf{tangent $k$-vectors of $X$ at the point $x$} is then defined as the subspace of $(\Omega^k(X))^*$ generated by the set $S_x^k(X)$, that is,
$$T_x^k(X)=\mbox{Span}(S_x^k(X))=\{\nu=\sum_{i=1}^n\nu_i\,|\,n\in\mathbb{N},\mbox{ and } \forall i=1,\ldots,n\,\,\,\nu_i\in S_x^p(X)\}.$$

\paragraph{The pseudo-bundles $T^k(X)$} For a fixed $k$, the pseudo-bundle composed of all $k$-vector spaces $T_x^k(X)$, when $x$ runs over the whole $X$, defines the \textbf{$k$-vector
pseudo-bundle of $X$}. It is denoted by $T^k(X)$ and is pointwise defined as
$$T^k(X)=\{(x,\nu)\,|\,x\in X\mbox{ and }\nu\in T_x^k(X)\}.$$ This space is endowed with the following diffeology. A map $p:U\to T^k(X)$ with $p(u)=(p_X(u),p_T(u))$ is a plot of $T^k(X)$ if for all
$u\in U$ there exist an open neighbourhood $U'$ of $u$ and a finite family $q_1,\ldots,q_l$ of plots of $\matR^k\supset U'\to X$ such that $q_i(0)=p_x(u)$ for all $i=1,\ldots,l$ and
$p_T(u)=\sum_{i=1}^l\dot{q}_i(0)$.

Note that the restriction of this diffeology to each $T_x^k(X)$ is a vector space diffeology; furthermore, the zero section $X\to T^k(X)$ is smooth. Thus, $T^k(X)$ is a \emph{diffeological vector space over
$X$} in the terminology of \cite{CWtangent} (which is the same object as a \emph{regular vector bundle} in \cite{vincent} and belongs to a wider class of diffeological bundles introduced originally
in \cite{iglFibre}; further on we use the term \emph{diffeological vector pseudo-bundle} for this type of object).

Finally, as suggested in \cite{iglesiasBook}, the pseudo-bundle $T^1(X)$ can be regarded as the tangent bundle of $X$, although we will not do it. More in general and as already mentioned, this is not
the standard theory of the tangent bundle (such theory does not exist yet; there is a recent summary of various attempts to develop one, see \cite{CWtangent}, Section 3.4).

\subsection{The diffeological De Rham calculus}

The content of this section is a brief summary of the exposition of the subject in \cite{iglesiasBook}, that we include for completeness only.

\paragraph{The exterior derivative of forms} We have already spoken about the differentials of $k$-forms, and this is just another name for it. Recall that if $\alpha$ is a $k$-form on a diffeological
space $X$ then the equality $(d\alpha)(p)=d(\alpha(p))$, for all plots $p$ of $X$, yields a well-defined, which is called the \textbf{exterior derivative of $\alpha$}, or the \textbf{differential of $\alpha$}.
This is a smooth linear operator
$$d:\Omega^k(X)\to\Omega^{k+1}(X),$$ where $\Omega^k(X)$ and $\Omega^{k+1}(X)$ are equipped with their usual functional diffeology.

\paragraph{The exterior product} Let $X$ be a diffeological space, and let $\alpha\in\Omega^k(X)$ and $\beta\in\Omega^l(X)$. The \textbf{exterior product} $\alpha\wedge\beta$ is the differential
$(k+l)$-form defined on $X$ by
$$(\alpha\wedge\beta)(p)=\alpha(p)\wedge\beta(p)$$ for all plots $p$ of $X$ (it is easy to see that this is well-defined). Regarded as the map
$$\wedge:\Omega^k(X)\times\Omega^l(X)\to\Omega^{k+l}(X)\mbox{ with }\wedge(\alpha,\beta)=\alpha\wedge\beta,$$ the exterior product is smooth and bilinear with respect to the above-mentioned
functional diffeology of the spaces of forms.

\paragraph{The De Rham cohomology groups} Let $X$ be a diffeological space. The exterior derivative defined above satisfies the \textbf{coboundary condition}
$$d:\Omega^k(X)\to\Omega^{k+1}(X)\mbox{ for }k\geqslant 0\mbox{ and }d\circ d=0.$$ Thus, there is a chain complex of real vector spaces $\Omega^{\star}(X)=\{\Omega^k(X)\}_{k=0}^{\infty}$ with
a coboundary operator $d$, to which the usual construction of a cohomology theory can be applied in an obvious fashion: define the space of \textbf{$k$-cocycles} as the kernel in $\Omega^k(X)$ of
the operator $d$, and the space of \textbf{$k$-boundaries} as the image, in $\Omega^k(X)$, of the operator $d$. These are denoted by
$$Z_{dR}^k(X)=\mbox{Ker}[d:\Omega^k(X)\to\Omega]\,\,\,\,\mbox{ and }\,\,\,\,B_{dR}^k(X)=d(\Omega^{k-1}(X))\subset Z_{dR}^k(X)\mbox{ with }B_{dR}^0(X)=\{0\}.$$
The \textbf{De Rham cohomology groups} of $X$ are then defined as the quotients of the spaces of cocycles by the (respective) spaces of coboundaries:
$$H_{dR}^k(X)=Z_{dR}^k(X)/B_{dR}^k(X).$$ These groups are diffeological vector spaces for the following diffeology: both $Z_{dR}^k(X)$ and $B_{dR}^k(X)$ are endowed with the subset diffeology
coming from the functional diffeology on $\Omega^k(X)$, and the quotient $H_{dR}^k(X)$ is then endowed with the quotient diffeology. If $X$ is a smooth manifold endowed with the standard diffeology,
this construction gives the usual De Rham cohomology of $X$.

\paragraph{The De Rham homomorphism} For the sake of completeness, we also mention the \textbf{De Rham homomorphism}. Let $X$ be a diffeological space, and let $\alpha\in\Omega^k(X)$ for
a positive $k$. The integration of $\alpha$ on the cubic $k$-chains (see \cite{iglesiasBook}, Section 6.65 for details) defines a cubic $k$-cochain $f_{\alpha}$ for the reduced cubic cohomology.
If the form $\alpha$ is closed, \emph{i.e.}, $d\alpha=0$, then $f_{\alpha}$ is closed as a cochain. Thus, the integration of $\alpha$ on chains defines a morphism from $Z_{dR}^k(X)$ to $Z^k(X)$
(the group of cubic cochains on $X$, see \cite{iglesiasBook}, Section 6.63). If $\alpha$ is exact then $f_{\alpha}$ is exact as a cochain. Hence, the integration on a chain defines a linear map,
denoted by $h_{dR}^k$ from $H_{dR}^k(X)$ to $H^k(X)$ (the cubic cohomology of $X$); this is the morphism that is called the De Rham homomorphism.

\subsection{Examples of differential forms on diffeological spaces}

For illustrative purposes, we provide here a few examples illustrating the basic constructions having to do with differential forms on diffeological spaces.

\subsubsection{Forms and differentials of functions}

We start with some simple, but still non-trivial examples, one of a diffeological differential $1$-form and another of the differential of a diffeologically smooth function.

\paragraph{A diffeological differential form on a space with non-standard diffeology} Let $X=\matR^2$ be endowed with the vector space diffeology generated by $p:x\mapsto|x|e_2$. Notice that
it is possible (see \cite{iglesiasBook}, Section 6.41) to describe a differential form on a given diffeological space $(X,\calD)$, with the diffeology $\calD$ generated by some set $\mathcal{A}$,
by assigning the local form $\omega_f$ to all maps $f\in\mathcal{A}$, and only to those; this assignment must of course satisfy some appropriate additional conditions (see \cite{iglesiasBook}):
namely, that if $p:U\to X$ and $q:V\to X$ are two plots of $X$, and $f:U'\to U$, $g:U'\to V$ are usual smooth functions such that $p\circ f=q\circ g$, then $f^*(\omega_p)=g^*(\omega_q)$.

However, $\mathcal{A}$ must generate $\calD$ in the sense of the usual generated diffeology. Whereas in our case $p$ generates the diffeology of (our specific) $X$ in the sense of vector space
diffeology; as a diffeology \emph{proper} it needs a larger generating set.

To find such a set, recall that a plot of $X$ is (locally) a finite linear combination, with smooth functional coefficients, of compositions of $p$ with ordinary smooth functions, namely, it has
the form $U\ni u\mapsto(h(u),f_0(u)+f_1(u)|g_1(u)|+\ldots+f_k(u)|g_k(u)|)$ for some smooth functions $h,f_0,f_1,\ldots,f_k,g_1,\ldots,g_k$. Its generating set, in the sense of just a diffeological space,
can therefore be represented by the following infinite family of plots $\{q_k\}_{k\geqslant 0}$:
$$q_k:\matR^{2k+2}\ni(u_1,\ldots,u_k,v_1,\ldots,v_k,z_1,z_2)\mapsto(z_1,z_2+v_1|u_1|+\ldots+v_k|u_k|).$$

Let us now consider the mutual relationships between these plots. It is quite easy to find that for all $k$ we have
$$q_k=q_{k+1}\circ f_k,$$ where $f_k:\matR^{2k+2}\to\matR^{2k+4}$ is defined by
$$f_k(u_1,\ldots,u_k,v_1,\ldots,v_k,z_1,z_2)=(u_1,\ldots,u_k,v_1,\ldots,v_k,0,0,z_1,z_2),$$ and therefore we must have
$$\omega(q_k)=f_k^*(\omega(q_{k+1})).$$ Thus, the following choice satisfies the smooth compatibility condition:
$$\omega(q_k)=d(z_1z_2)+\sum_{i=1}^kd(u_iv_i)=d(z_1z_2+\sum_{i=1}^ku_iv_i).$$ Indeed,
$$f_k^*(\omega(q_{k+1}))=d((z_1z_2+\sum_{i=1}^{k+1}u_iv_i)\circ g_k)=d(z_1z_2+\sum_{i=1}^ku_iv_i)=\omega(q_k).$$

Do note that in the example just made, the $1$-form constructed assigns to each plot an \emph{exact} form. As is easy to see, this does not mean that, considered as a differential $1$-form on the
diffeological space $X$, it is an exact form in the usual sense, \emph{i.e.}, that it is a differential of some smooth function on $X$ (see below).

\paragraph{Differential of a function} Let us now consider differentials of functions, using as the domain of definition the diffeological space of the previous example, $X=\matR^2$ with the vector space
diffeology generated by $x\mapsto(0,|x|)$. Recall that a generating set for this diffeology is given by by the sequence $\{q_k\}_{k=1}^{\infty}$ of plots, where $q_k:\matR^{2k+2}\to X$
acts by $q_k(u_1,\ldots,u_k,v_1,\ldots,v_k,z_1,z_2)= (z_1,z_2+v_1|u_1|+\ldots+v_k|u_k|)$. If $f:X\to\matR$ is a (diffeologically) smooth function then all $f\circ q_k$ must be smooth
in the usual sense, so $f(x,y)$ for $(x,y)\in X=\matR^2$ must essentially be a usual smooth function depending on $x$ only.\footnote{This implies, in particular, that the $1$-form
$\omega$ on $X$ is not an exact form, although its values on all plots are exact.}

On the other hand, let $X$ have the same underlying space $\matR^2$, but its diffeology be the product diffeology corresponding to $\matR^2=\matR\times\matR$, with the first factor carrying the
standard diffeology, and the second, one generated by $y\mapsto|y|$ (we mean the diffeology \emph{proper}). Thus, the generating set for the diffeology on $X$ is given by a single plot $p:\matR^2\to X$,
$p(x,y)=(x,|y|)$; this is because a generic plot of $X$ has form $(u,v)\mapsto(f(u),|g(v)|)$, for $u\in U\subseteq\matR^m$, $v\in V\subseteq\matR^l$, and $f:U\to\matR$, $g:V\to\matR$ some ordinary
smooth functions.

This description of a generating set for the diffeology on $X$ implies two things. First, any $1$-form is determined by assigning a usual $1$-form on $\matR^2$ to $p$ (and this can be any arbitrary
$1$-form), and, second, a smooth function $f:X\to\matR$ can be any usual smooth function which is even in the second variable. Thus, $f(x,y)=xy$ is not a smooth function for the chosen diffeology on
$X$, while $f(x,y)=xy^2$ is. Its differential from the diffeological point of view is obviously the same as its usual differential, $(df)(p)=y^2dx+2xydy$.

On the other hand, what is interesting to note is that $f$ does \emph{not} have to be smooth in the usual sense; for example, $f(x,y)=\mbox{sgn}(y)$ is a smooth function on $X$, if we define
$\mbox{sgn}(y)=\left\{\begin{array}{ll} 1 & \mbox{if }y\geqslant 0,\\ -1 & \mbox{if }y<0.\end{array}\right.$. Indeed, it suffices to verify that the composition $f\circ p$ is smooth in the usual sense,
and since $f(p(x,y))=f(x,|y|)=\mbox{sgn}(|y|)\equiv 1$, it is a constant function, so it is smooth (its differential does vanish everywhere). Yet, in the usual sense it is not even continuous.

\subsubsection{Spaces of forms}

Let us now consider the whole spaces $\Omega^k(X)$.

\paragraph{$\matR^2$ with the diffeology generated by $x\mapsto|x|e_2$} Recall the space $X$ mentioned in the second of the examples above; its underlying space is $\matR^2$, and its
diffeology is generated by the plot $p:\matR^2\ni(x,y)\mapsto(x,|y|)$. Thus, for $k\geqslant 1$ each $k$-form on $X$ is uniquely defined by a usual $k$-form on $\matR^2$; in fact, due to the local
property of diffeological forms, the correspondence is bijective. In particular, this means that only the spaces $\Omega^0(X)$, $\Omega^1(X)$, and $\Omega^2(X)$ are non-trivial; moreover, the latter
two are the same as the analogous spaces of the standard $\matR^2$.

On the other hand, the space $\Omega^0(X)$ is diffeomorphic to the space $C^{\infty}(X,\matR)$ of \emph{diffeologically} smooth functions (see \cite{iglesiasBook}, Section 6.31). We have already
noted that this is not the same as the space $C^{\infty}(\matR^2,\matR)$ of the usual smooth functions, nor is the former strictly contained in the latter. Let us now consider these statements
in detail.\vspace{2mm}

\noindent\emph{The space $\Omega^1(X)$} As we have just said, if $\omega\in\Omega^1(X)$, it is uniquely defined by the $1$-form $\omega(p)$ on $\matR^2$. The form $\omega(p)$ can be any
usual $1$-form on $\matR^2$, so we have a bijective correspondence $\Omega^1(X)\leftrightarrow C^{\infty}(\matR^2,\Lambda^1(\matR^2))$.

Let us now check that the assignment $\varphi_1:\Omega^1(X)\ni\omega\mapsto\omega(p)\in C^{\infty}(\matR^2,\Lambda^1(\matR^2))$ determines a smooth map for the functional diffeology on
$\Omega^1(X)$. This is a direct consequence of the diffeology's definition; indeed, let $q:U'\to\Omega^1(X)$ be a plot for this functional diffeology, and let $p$ be the above generating plot for the
diffeology on $X$. The map $q$ being a plot means that, just as for any plot of $X$, we have for $p$ that
$$U'\times\matR^2\ni(u',u)\mapsto(q(u')(p))(u)=\varphi_1(q(u'))(u)\in\Lambda^1(\matR^2)$$ is smooth, which is precisely the meaning of the map $\varphi_1$ being smooth.

Let us now consider the smoothness of its inverse; the latter is obtained by assigning to each $2$-form $\alpha$ on $\matR^2$ the form $\omega$ given by $\omega(p)=\alpha$. Let
$q':U'\to C^{\infty}(\matR^2,\Lambda^1(\matR^2))$ be a plot of $C^{\infty}(\matR^2,\Lambda^1(\matR^2))$, where $U'$ is a domain; the composition $q=\varphi^{-1}\circ q'$ yields a map
$q:U'\to\Omega^1(X)$ defined by $q(u')(p)=q'(u')$. Let us also consider a generic plot $p':U\to X$; since the diffeology of $X$ is generated by $p$, locally (thus, we assume that $U$ is small enough)
$p'=p\circ f$ for some ordinary smooth function $f:U\to\matR^2$. Therefore $q(u')(p')=f^*(q(u')(p))=f^*(q'(u'))$, which implies that the requirement for $q$ to be a plot of $\Omega^1(X)$ is indeed
satisfied.\vspace{2mm}

\noindent\emph{The space $\Omega^2(X)$} This case is completely analogous to the case of $\Omega^1(X)$; assigning to each $\omega\in\Omega^2(X)$ the form
$\omega(p)\in C^{\infty}(\matR^2,\Lambda^2(\matR^2))$ yields a diffeomorphism of these two spaces. Furthermore, since every $2$-form on $\matR^2$ is proportional to the volume form,
this implies that the $2$-forms on $X$ are essentially defined by the usual smooth functions of two variables; and this is in contrast with the case that immediately follows.\vspace{2mm}

\noindent\emph{The space $\Omega^0(X)$} As we have said already, the space $\Omega^0(X)$ coincides with the space $C^{\infty}(X,\matR)$ of all smooth functions on $X$; and we have also said,
that there is a (significant, in some sense) difference between $C^{\infty}(X,\matR)$ and $C^{\infty}(\matR^2,\matR)$. In particular, we noted that a usual smooth function on $\matR^2$ is
diffeologically smooth as a function on $X$ if and only if it is even in $y$, that is, if it filters through $(x,y)\mapsto(x,y^2)$. On the other hand, there are function on $\matR^2$ that are not even
continuous, let alone smooth, in the usual sense; yet, they belong to $C^{\infty}(X,\matR)$. We have already given an example of such function, which we called $\mbox{sgn}$ and defined as
$\mbox{sgn}(x,y)=1$ if $y\geqslant 0$ and $\mbox{sgn}(x,y)=-1$ if $y<0$. There is therefore a substantial difference between $\Omega^0(X)$ and $\Omega^2(X)$, since the latter is essentially
defined by the usual smooth functions on $\matR^2$ (see above). This suggests (at least at first glance) that the analogue of the Poincar\'e duality would be more intricate for diffeological spaces
(although our present observation is not meant to be precise).

\paragraph{$\matR^2$ with a non-standard vector space diffeology} We now turn to the space $X$ of that appear in the first example of this Appendix. Unlike the previous case, we do not aim to give
a complete description of its $\Omega^k$'s; rather, we consider a few instances of plots of $\Omega^1(X)$.

So, let $q:U'\to\Omega^1(X)$ be such a plot. It is defined by specifying for each $u'\in U'$ the differential $1$-form $q(u')$ on $X$. Recall that $q(u')$ is determined by an infinite sequence of
$1$-forms $\{q(u')(q_k)\}_{k=0}^{\infty}$, where each $q_k$ is the plot of $X$ defined by
$$q_k:\matR^{2k+2}\to X\mbox{ with }q_k(u_1,\ldots,u_k,v_1,\ldots,v_k,z_1,z_2)=(z_1,z_2+v_1|u_1|+\ldots+v_k|u_k|);$$ for the form $q(u')$ to be well-defined, we must have
$q(u')(q_k)=f_k^*(q(u')(q_{k+1}))$, where $f_k:\matR^{2k+2}\to\matR^{2k+4}$ is the map
$$f_k(u_1,\ldots,u_k,v_1,\ldots,v_k,z_1,z_2)=(u_1,\ldots,u_k,v_1,\ldots,v_k,0,0,z_1,z_2).$$

In general, to obtain a $1$-form $\omega$ on $X$ it suffices to choose, for some fixed $k$, the form $\omega(q_k)$; this uniquely defines the forms $\omega(q_i)$ for $i<k$. The form
$\omega(q_{k+1})$ is far from being unique; however, it differs from $\omega(q_k)$ by adding a term that is a differential $1$-form on the (standard) $2$-plane of coordinates $(u_{k+1},v_{k+1})$
and that vanishes at $0$.\footnote{This leads to the existence of a natural quotient space for $\Omega^1(X)$, where the sequence $\{\omega(q_k)\}$ is uniquely defined by just one term of it;
this is the space $\Lambda^1(X)$ below, whose definition we recall below.} Such a term could be, for instance, $u_{k+1}dv_{k+1}$; so, to give an example of a plot of $\Omega^1(X)$, we could take
$\omega_0\in\Omega_1(X)$ defined by
$$\omega_0(q_k)=z_1dz_2+\sum_{i=1}^ku_idv_i,$$ whose product by any ordinary smooth function $f:U'\to\matR$ would be a plot $q:U'\to\Omega^1(X)$, with
$$q(u')(q_k)=f(u')(z_1dz_2+\sum_{i=1}^ku_idv_i)$$ (sums of such expressions, for varying choice of smooth coefficients $f$'s, would yield other plots). This is because $\Omega^1(X)$ is a
diffeological vector space; this fact, moreover, has further implications in that locally every plot $q:U'\to\Omega^1(X)$ is a finite sum of constant plots (thus essentially of $1$-forms on $X$)
with usually smooth functional coefficients.

To conclude this example, we stress again that up to forms vanishing at $0$, we can represent any $\omega_0\in\Omega^1(X)$ in the following way:
$$\omega_0(q_k)=f_1(z_1,z_2)dz_1+f_2(z_1,z_2)dz_2+\sum_{i=1}^ku_idv_i$$ for some ordinary smooth $f_1,f_2$, thus $\omega_0$ is essentially determined by a usual smooth $1$-form
in variables $z_1,z_2$.

\section{What is a vector bundle in diffeology?}\label{diffeological:vector:bundle:sect}

We now turn to describing the diffeological counterparts of those components of the construction of the Dirac operator, that are perhaps most different from the usual smooth case.
The concept we illustrate first is that of a diffeological vector pseudo-bundle, an analogue of a usual smooth vector bundle obtained from the latter via a couple of rather obvious modifications;
one is that the notion of diffeological smoothness is used instead of the usual smoothness, and the other consists in dropping the requirement of local triviality. As one can expect, it is this
latter change that makes the most difference. Note also that pseudo-bundles appear under different names in the literature (see, for instance, \cite{iglFibre}, \cite{vincent}, \cite{CWtangent}).

\subsection{Why pseudo-bundles?}

As has just been said, this is an existing notion; let us briefly motivate its appearance (as well as our use of it). There exists, of course, an absolutely straightforward notion of a
diffeological vector bundle (see, for instance, \cite{iglFibre}, \cite{vincent}, or \cite{iglesiasBook} for a comprehensive exposition), as a diffeological bundle $\pi:V\to X$ (see above)
such that each fibre is a diffeological vector space for the subset diffeology, with the addition $V\times_X V\to V$ and the scalar multiplication $\matR\times V\to V$ being smooth, and admitting
an atlas of local trivializations. It is then a technicality to extend all the usual considerations regarding vector bundles to the category of diffeological spaces.

This has obviously been done (see the above sources), but for the reasons that we are about to explain, and that were already present in the above-cited works where the extended notion appears,
this is not sufficient. Such reasons (which can of course be described in many ways) can also be illustrated by the work of Christensen-Wu \cite{CWtangent} on internal tangent spaces and
bundles, specifically by Example 4.3 of \cite{CWtangent}, which we now recall.

Let $X=\{(x,y)\in\matR^2\,|\,xy=0\}$, that is, the union of the coordinate axes in $\matR^2$; endow it with the subset diffeology relative to the standard diffeology $\matR^2$. The internal tangent
space at a point $x\in X$ is denoted by $T_x^{dvs}(X)$; the corresponding internal tangent bundle is denoted by $T^{dvs}(X)$. The latter is a rigorous construction of Christensen-Wu, which,
when applied to the specific example of $X$, yields the following: the internal tangent space at the origin is $\matR^2$, while it is $\matR$ elsewhere. Thus, the internal tangent bundle of $X$ is
``almost'' a diffeological vector space bundle: each fibre is a diffeological vector space, and the addition and the scalar multiplication are smooth over $X$. But it does not satisfy the
essential condition to be a \emph{bundle}: it is not locally trivial, since its fibre is not always the same.

The difficulty cannot be dispensed with the naive manner of considering objects that are true vector bundles everywhere except at some isolated points (which would include the example just
made). The reason is that by Proposition 3.6 of \cite{CWtangent} (see also Proposition 4.13(2) of same), the internal tangent bundles respect the direct product of diffeological spaces, that is, if
$X$ and $Y$ are two diffeological spaces, and $x\in X$ and $y\in Y$ are two points, then the tangent space $T_{(x,y)}^{dvs}(X\times Y)$ at the point $(x,y)\in X\times Y$ is isomorphic as a
diffeological vector space to the direct product of the respective single tangent spaces, that is, to $T_x^{dvs}(X)\times T_y^{dvs}(Y)$. Applying this statement to the case of $X=\{(x,y)\in\matR^2\,|\,xy=0\}$
and $Y=\matR^1$, we conclude that the internal tangent space of the direct product $X\times Y$ is $\matR^3$ at any point of the line $\{(0,0)\}\times\matR$, and it is $\matR^2$ elsewhere.

Obviously, just using direct products, one can construct a multitude of similar examples. What we mean here by ``similar example'' is a pair $(X,Y)$, where $X$ is a diffeological space, $Y\subset X$ is
a subset of it, and the internal tangent space behaves as follows: all spaces $T_x^{dvs}(X)$ for $x\in X\setminus Y$ are diffeomorphic to a given diffeological vector space $V_0$, while all spaces
$T_y^{dvs}(Y)$ are distinct from $V_0$. Furthermore, the inequality $T_y^{dvs}(Y)\neq V_0$ would be true already at the level of vector spaces, and the subset $Y$ does not admit a simple
characterization.

We end this section with observing, informally, that even the description in the preceding paragraph does not give a complete picture. Indeed, consider $X\subset\matR^3$ defined as
$X=\{(x,y,z)\in\matR^3\,|\,z=0\}\cup\{(x,y,z)\in\matR^3\,|\,x=y=0\}$, \emph{i.e.}, the union of the horizontal plane $z=0$ and the vertical axis; endow $X$ with the subset diffeology coming from
$\matR^3$. Then it can be deduced from the characterization of internal tangent spaces by Christensen-Wu in \cite{CWtangent}, Proposition 3.3 and Proposition 3.4, that the internal tangent space
at a point of $X$ is $\matR^3$ at the origin, it is $\matR^2$ at any point of the plane which is not the origin, and it is $\matR$ at a point of the vertical line which, again, is not the origin. Note,
finally, that on the connected components of $X\setminus\{(0,0,0)\}$ the corresponding connected components of the internal tangent bundles are however true diffeological vector bundles,
of which, then, $T^{dvs}(X)$ is somehow ``pieced together'' (and this final remark opens the way to the discussion carried out in the next section).

\subsection{Diffeological vector pseudo-bundles}

The considerations in the previous section are probably sufficient to justify focusing our attention on this weaker notion of a vector bundle, which is obtained, in addition to intending smoothness in
the diffeological sense, by dropping the requirement of the existence of an atlas of local trivializations. We obtain what we call a \textbf{diffeological vector pseudo-bundle}; this is precisely the same
object as described by the following definition.

\subsubsection{The notion of a diffeological vector pseudo-bundle}

What we are giving below is the definition a diffeological vector space over a given diffeological space $X$, as it was given in \cite{CWtangent}; a diffeological vector pseudo-bundle is precisely
the same object (we explain the reason for the change in terminology later).

\begin{defn} \emph{(\cite{CWtangent}, Definition 4.5)}
Let $X$ be a diffeological space. A \textbf{diffeological vector space over $X$} is a map $\pi:V\to X$ where $V$ is a diffeological space and $\pi$ is a smooth map $V\to X$ such that each of the
fibres $\pi^{-1}(x)$ is endowed with a vector space structure for which the following properties hold: 1) the addition map $V\times_X V\to V$ is smooth with respect to the diffeology of $V$ and the
subset diffeology on $V\times_X V$ (relative to the product diffeology on $V\times V$); 2) the scalar multiplication map $\matR\times V\to V$ is smooth for the product diffeology on $\matR\times V$;
3) the zero section $X\to V$ is smooth.
\end{defn}

Obviously, all usual smooth vector bundles over smooth manifolds fall under this definition, if both the base space and the total space are considered with the standard diffeology. In the above
section we already described Example 4.3 of \cite{CWtangent}, which is an instance of a diffeological vector pseudo-bundle ($=$ diffeological vector space over $X$) that is not locally trivial.

\paragraph{The choice of terminology} The notion of a diffeological vector space over another diffeological space actually coincides with that of a \emph{regular vector bundle} of \cite{vincent}, and
is a particular instance of a diffeological fibre bundle that  appeared first in \cite{iglFibre}. We will use the term \textbf{diffeological vector pseudo-bundle} to refer to precisely the same object,
in order to avoid confusion between a single diffeological vector space (a $V$ with a vector space structure and a vector space diffeology) and a collection of such, \emph{i.e.}, a diffeological
counterpart of a vector bundle; and we avoid the term of Vincent, as well as that of \cite{iglFibre}, to stress the fact that in general these objects are not really bundles.

\subsubsection{Constructing diffeological vector pseudo-bundles}

As we have already indicated, sometimes diffeological vector pseudo-bundles naturally arise in other contexts, as it occurs in the Christensen-Wu example. Otherwise, there are a few systematic
ways of obtaining them, that we now describe.

\paragraph{The diffeology on fibres} Diffeological vector pseudo-bundles may arise from so-called vector space \emph{pre-bundles} (this term appears in \cite{vincent}). A pre-bundle is a smooth
surjective map $V\to X$ between two diffeological spaces such that the pre-image of each point has vector space structure but the subset diffeology on it might be finer than a vector space diffeology.
That this can actually happen is demonstrated, once again, by the Example 4.3 of \cite{CWtangent}: the internal tangent bundle of the coordinate axes in $\matR^2$ considered with the Hector's
diffeology (see Definition 4.1 of \cite{CWtangent} for details). As is shown in \cite{CWtangent}, the tangent space at the origin is \emph{not} a diffeological vector space for the subset
diffeology.\footnote{The existence of such examples motivates the introduction of the dvs diffeology on internal tangent bundles by the authors.}

In both \cite{vincent} (Theorem 5.1.6) and \cite{CWtangent} (Proposition 4.16), it is shown that the diffeology on the total space can be ``expanded'' to obtain a diffeological vector space bundle.
We now cite the latter result.

\begin{prop}\label{find:vector:bundle:diff-gy:prop} \emph{(\cite{CWtangent}, Proposition 4.6)}
Let $\pi:V\to X$ be a smooth surjective map between diffeological spaces, and suppose that each fibre of $\pi$ has a vector space structure. Then there is a smallest diffeology $\calD$ on $V$
which contains the given diffeology and which makes $V$ into a diffeological vector pseudo-bundle.
\end{prop}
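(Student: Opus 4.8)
The plan is to realize the desired diffeology as the intersection of the family of all competing diffeologies, and then to check that this intersection still makes $V$ a pseudo-bundle. Concretely, let $\mathcal{S}$ denote the collection of all diffeologies $\calD'$ on $V$ that simultaneously contain the given diffeology $\calD_V$ and turn $\pi:V\to X$ into a diffeological vector pseudo-bundle in the sense of Definition 4.5 (that is, $\pi$ is smooth, and the fibrewise addition, the scalar multiplication, and the zero section are smooth). Since the set of all diffeologies on $V$ is a complete lattice under inclusion, the intersection $\calD:=\bigcap_{\calD'\in\mathcal{S}}\calD'$ is again a diffeology, it again contains $\calD_V$ (being an intersection of diffeologies each of which contains $\calD_V$), and it is by construction the finest diffeology contained in every member of $\mathcal{S}$. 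Thus, once I show that $\mathcal{S}\neq\emptyset$ and that $\calD\in\mathcal{S}$, the diffeology $\calD$ will be exactly the smallest (finest) diffeology enjoying the two required properties.

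To see that $\mathcal{S}$ is nonempty I would exhibit a concrete member, namely the pullback diffeology $\pi^*(\calD_X)$, the coarsest diffeology on $V$ for which $\pi$ is smooth, whose plots are precisely the maps $p$ with $\pi\circ p$ a plot of $X$. It contains $\calD_V$ because $\pi$ is smooth for the given data, so every given plot $p$ satisfies $\pi\circ p\in\calD_X$ and is therefore a pullback plot. That $\pi^*(\calD_X)$ makes $V$ a pseudo-bundle rests on the single observation that the vector operations are fibre-preserving: for a pullback plot $q$ one has $\pi(\lambda\cdot q)=\pi(q)$, and for pullback plots $q_1,q_2$ with $\pi q_1=\pi q_2$ one has $\pi(q_1+q_2)=\pi q_1$, so in each case the image under $\pi$ of the composite is again a plot of $X$; the zero section is smooth because its composition with $\pi$ is the identity of $X$.

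The heart of the argument is verifying that $\calD\in\mathcal{S}$, i.e. that the intersection itself is a pseudo-bundle. Smoothness of $\pi$ and of the zero section for $\calD$ is immediate: if $p$ is a $\calD$-plot (respectively, $p$ is a plot of $X$), then $p$ is a plot for every $\calD'\in\mathcal{S}$, each of which makes $\pi$ (respectively, the zero section) smooth, so the composite lands in every $\calD'$ and hence in $\calD$. The delicate points are addition and scalar multiplication, because the diffeologies on their domains $V\times_X V$ and $\matR\times V$ themselves vary with $\calD$. Here I would invoke the finite-product characterization of the product diffeology: a $\calD$-plot of $V\times V$ is exactly a pair $(q_1,q_2)$ of $\calD$-plots of $V$, so a $\calD$-plot of $V\times_X V$ (subset diffeology) is such a pair with $\pi q_1=\pi q_2$, and similarly a $\calD$-plot of $\matR\times V$ is a pair $(r,q)$ with $r$ a standard plot and $q$ a $\calD$-plot. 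Since $\calD\subseteq\calD'$, each component is also a $\calD'$-plot, so the same tuple is a plot of the corresponding domain for $\calD'$; smoothness of addition (respectively, scalar multiplication) for $\calD'$ then forces the composite to be a $\calD'$-plot of $V$. As this holds for every $\calD'\in\mathcal{S}$, the composite lies in $\bigcap_{\calD'\in\mathcal{S}}\calD'=\calD$, which is the required smoothness.

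The main obstacle, and the step I would scrutinize most carefully, is precisely this last preservation of smoothness of addition and scalar multiplication under intersection: naively, making the diffeology finer removes plots from the codomain (which makes smoothness harder to satisfy) while also removing plots from the domain (which makes it easier), so there is no \emph{a priori} monotonicity to exploit. The resolution is that the finite-product diffeology is pinned down plotwise by its factors, which lets me reduce any test plot for $\calD$ to a family of test plots that each individual $\calD'$ already handles, and then intersecting the resulting outputs recovers a $\calD$-plot. Everything else, namely the lattice/intersection formalism and the fibre-preservation of the vector operations, is routine.
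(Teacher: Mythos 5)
Your proof is correct. Every step checks out: the family $\mathcal{S}$ is nonempty (your verification that the pullback diffeology $\pi^*(\calD_X)$ lies in $\mathcal{S}$ is sound, precisely because the fibrewise operations commute with $\pi$), an arbitrary intersection of diffeologies is a diffeology, and your use of the finite-product characterization of plots — a plot of $V\times_X V$ or of $\matR\times V$ for the intersected diffeology is a pair of plots, each of which is then a plot for every $\calD'\in\mathcal{S}$ — correctly resolves the monotonicity worry you raise, so that $\calD\in\mathcal{S}$ and is therefore the smallest member. However, your route differs from the one the paper follows: the paper (citing Christensen--Wu) works with an explicit, bottom-up description of the diffeology, namely the diffeology whose plots are locally of the form $u\mapsto r_1(u)q_1(u)+\ldots+r_k(u)q_k(u)$, where the $r_i$ are ordinary smooth functions and the $q_i$ are plots of the pre-existing diffeology of $V$ all lying over a single plot $p$ of $X$, together with composites of the zero section with plots of $X$. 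Your top-down lattice argument is shorter and avoids the verification that this explicitly described family of maps is in fact a diffeology satisfying all the pseudo-bundle axioms; what it does not give is any description of the plots of $\calD$, and it is exactly that description which the paper relies on later — for instance to define the \emph{pseudo-bundle diffeology generated by a map} and to analyse concrete examples such as the non-locally-trivial pseudo-bundle over $\matR$ generated by $(x,y)\mapsto(x,|xy|)$. So the two arguments are complementary: yours establishes existence abstractly, while the paper's construction identifies the object concretely; a complete treatment in the paper's spirit would follow your existence argument with a proof that the generated diffeology just described coincides with your intersection $\calD$.
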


The diffeology whose existence is affirmed in this proposition can be described explicitly (see \cite{CWtangent}, Remark 4.7). It is the diffeology generated by the linear combinations of plots of $V$
and the composite of the zero section with plots of $X$. More precisely, a map $\matR^m\ni U\to V$ is a plot of $\calD$ if and only if it is locally of form $u\mapsto r_1(u)q_1(u)+\ldots+r_k(u)q_k(u)$,
where $r_1,\ldots,r_k:U\to\matR$ are usual smooth functions (plots for the standard diffeology on $\matR$) and $q_1,\ldots,q_k:U\to V$ are plots for the pre-existing diffeology of $V$ such that there is
a single plot $p:U\to X$ of $X$ for which $\pi\circ q_i=p$ for all $i$.

\paragraph{Generating a vector pseudo-bundle diffeology} The above-mentioned proposition comes in handy, in particular, when constructing specific examples. An instance of this, and one which
we will encounter frequently below, is the case of the standard projection $\pi$ of $\matR^n$ onto its first $k$ coordinates (\emph{i.e.}, on $\matR^k$).

In this situation we will often have a map $p:U\to\matR^n$ such that $\pi\circ p$ is smooth for the chosen diffeology on $\matR^k$ and we wish to put a diffeology on $\matR^n$ that contains  $p$
and makes $\matR^n$ into a diffeological vector space over $\matR^k$. Then the smallest diffeology with this property is the diffeology obtained by applying Proposition \ref{find:vector:bundle:diff-gy:prop}
to the usual diffeology generated by $p$;\footnote{Possibly with whatever assumption allows us to obtain the usual underlying D-topology on $\matR^n$.} we call this diffeology a \textbf{diffeological vector
pseudo-bundle diffeology generated by $p$}.

Note that in general this diffeology is different not only from the usual diffeology generated by $p$, but also from the vector space diffeology on $\matR^n$ generated by it (the reason for the latter
is obviously that the operations on the fibres are not the usual vector space operations of $\matR^n$). Let us state the precise definition.

\begin{defn}
Let $V$ be a set, let $X$ be a diffeological space, and let $\pi:V\to X$ be a set map such that that for every $x\in X$ the pre-image $\pi^{-1}(x)$ carries a vector space structure.\footnote{We could also
say that $V$ is a disjoint union of vector spaces indexed by some $X$, and this indexing set is endowed with some diffeology; $\pi$ assigns to every $v\in V$ the index of the space to which $x$ belongs.}
Let $A$ be any collection of maps $p_i:U_i\to V$ such that each $p_i$ has the property that $\pi\circ p_i$ is a plot of $X$ (which implies, in particular, that every $U_i$ is a domain of some $\matR^{m_i}$),
and let $\calD$ be the diffeology on $V$ generated by $A$. The \textbf{pseudo-bundle diffeology on $V$ generated by $A$} is the smallest diffeology containing $\calD$ and such that the subset diffeology
on each $\pi^{-1}(x)$ is a vector space diffeology.
\end{defn}

Let us illustrate this definition with two simple examples that indicate the differences between the (diffeological space) diffeology, the vector space diffeology, and the pseudo-bundle diffeology, all three
generated by the same map on a vector space acting as the total space of a vector space fibration.

\begin{example}
Let $V=\matR^2$, and let $\pi:V\to\matR$ be the projection onto its first coordinate, so the base space $X$ is $\matR$, which we consider with its standard diffeology. Consider first the map
$p_1:\matR\to V$ defined by $p_1(u)=(0,|u|)$. The usual diffeology (\emph{i.e.}, without taking into account the vector space structure) generated on $V$ by $p_1$ is not a vector space diffeology;
indeed, the subset diffeology on the $x$-axis (which is a vector subspace of $V$) is the discrete diffeology,\footnote{Recall that this is the diffeology that includes constant maps only.} which is not
a vector space diffeology.\footnote{Whereas a vector space diffeology induces, again, a vector space diffeology on any subspace.} The vector space diffeology on $V$ generated by $p_1$ is the
product diffeology corresponding to the presentation $V=\matR\times\matR$, where the first copy of $\matR$ is endowed with the standard diffeology and the second one with the vector
space diffeology on $\matR$ generated by the plot $u\mapsto|u|$.

Let now $p_2:\matR^2\to V$ act by $p_2(u,v)=(u,u|v|)$. Let us first show that the usual diffeology on $V$ generated by $p_2$ is not a vector space diffeology. Indeed, any vector space diffeology on
$\matR^2$, the space underlying $V$, contains all plots of form $y\mapsto(0,f(y))$ for all usual smooth functions $f:\matR\to\matR$; such that a map cannot be obtained by smooth substitution in $p_2$,
because of the first coordinate being $0$. For the same reason it is not a pseudo-bundle diffeology, since the subset diffeology over $0$ would have to contain all smooth functions.

Let us now consider the vector space diffeology generated by $p_2$. A non-constant plot of it locally has form $(u,v)\mapsto(f_1(u)+\ldots+f_k(u),f_1(u)|g_1(v)|+\ldots+f_k(u)|g_k(v)|)$,
for some ordinary smooth functions $f_i,g_i$. Let us consider the subset diffeology on the fibre over some point $(x_0,0)$. A plot of it consists of all maps $(u,v)\mapsto f_1(u)|g_1(v)|+\ldots+f_k(u)|g_k(v)|$
such that $(f_1+\ldots+f_k)(u)\equiv x_0$, \emph{i.e.} $f_k=x_0-f_1-\ldots-f_{k-1}$. Thus, except in the case $x_0=0$, this property is not preserved by the summation. This means that the
diffeology in question is not a pseudo-bundle diffeology.
\end{example}

\paragraph{Constructing a pseudo-bundle from simpler ones} For many examples of diffeological vector pseudo-bundles (see \cite{vincent}, \cite{CWtangent}) such an object can be seen as the
result of a kind of a diffeological gluing of standard vector bundles along subsets composed of whole fibres. In the next section we describe a formalization of this idea, an operation of the
so-called \emph{diffeological gluing} (introduced in \cite{pseudobundle}, where more details appear).

\subsection{In place of local trivializations: diffeological gluing}\label{secn:diff:pseudo-bundles:diff:gluing:sect}

The main difference of diffeological vector pseudo-bundles from their usual counterparts (smooth vector bundles) is the absence of local trivializations; in this section we describe what can be a partial
substitute for these. The line of thinking that we follow is to consider the usual local trivializations as elementary pieces (building blocks) from which the whole bundle is reconstructed by some sort
of assembling (gluing along diffeomorphisms). We therefore adopt the same approach in the case of diffeological vector pseudo-bundles, by expanding the assortment of these building blocks,
as well as (more importantly) admitting gluings along maps that may not be diffeomorphisms and, especially, may not be defined on open\footnote{For D-topology, they are called \emph{D-open}.}
subsets of diffeological spaces being glued. What we obtain is the procedure of diffeological gluing, first between two diffeological spaces, then between two diffeological vector pseudo-bundles, along
smooth maps (in the case of pseudo-bundles, linear on fibres), which \emph{a priori} can be defined on any kind of subset. It should be noted that this procedure does extend the notion of local
trivializations, in the sense that a usual smooth vector bundle admitting a finite atlas of local trivializations, considered with its standard diffeology, is indeed the result of the diffeological gluing of the
trivial bundles that compose the atlas, along the transition maps (see \cite{pseudometric-pseudobundle} for details).

\paragraph{The idea of the construction} Suppose that we have two (locally trivial, so true vector bundles) diffeological vector pseudo-bundles $\pi_1:V_1\to X_1$ and $\pi_2:V_2\to X_2$. We wish to
describe a gluing operation on these, that would give us an object similar in nature to that appearing in the example(s) of Christensen-Wu. This obviously requires, first of all, a smooth map
$f:X_1\supset Y\to X_2$ and its lift to a smooth map $\tilde{f}:\pi_1^{-1}(Y)\to V_2$; this lift should be linear on the fibres.\footnote{It is of course essential that $\tilde{f}$ be defined on the whole fibres;
if not, the gluing can be done, of course, but some fibres of the result would not be vector spaces; thus, the result would not be a vector pseudo-bundle.} The idea is then to perform the usual
gluing (which we will talk more about below, perhaps in more in detail) simultaneously of $X_1$ to $X_2$ via $f$, and of $V_1$ to $V_2$ via $\tilde{f}$; and then to specify the diffeology obtained.

As an illustration (or clarification), we comment right away on how this construction would relate, for instance, to the example of the coordinate axes in $\matR^2$. It is not meant to produce it
immediately; rather, it describes the first step in the construction, by setting $X_1$ one of the axes with its subset diffeology and $V_1$ the corresponding internal tangent bundle (which is the usual
tangent bundle to $\matR$), the subset $Y$ is the origin, and finally $X_2$ is a single point and the corresponding bundle is the map $\pi_2$ that sends (another copy of) $\matR^2$, with the standard
diffeology, to this point. The map $f$ is obvious and sends the origin $(0,0)\in\matR^2$ to the point that composes $X_2$. The lift $\tilde{f}$ must be specified, since the options \emph{a priori} are
numerous. Indeed, $\tilde{f}$ is defined on $\matR=\pi_1^{-1}(0,0)$ and so is a linear map from $\matR$ to $\matR^2=\pi_2^{-1}(X_2)$. It thus can be described by its image, which is either the
origin or any $1$-dimensional linear subspace of $\matR^2$, with an uncountable number of possibilities in the latter case.\footnote{Since the diffeology on $\matR=\pi_1^{-1}(0,0)$ is
standard, it suffices for it to be a linear map; its smoothness then is automatic.}

\paragraph{Gluing of two diffeological spaces} Due to the concept of the quotient diffeology (see the definition in Section \ref{diffeology:defn:sect} and the original paper \cite{iglOrb}), the construction
is quite straightforward. On the level of the underlying topological spaces, it is a usual topological gluing, whose precise definition is as follows. Suppose we have two diffeological spaces $X_1$ and
$X_2$ that are glued together along some smooth map $f:X_1\supset Y\to X_2$ (the smoothness of $f$ is with respect to the subset diffeology of $Y$). We set $X_1\cup_f X_2=(X_1\sqcup X_2)/\sim$,
where $\sim$ is the following equivalence relation: $x_1\sim x_1$ if $x_1\in X_1\setminus Y$, $x_2\sim x_2$ if $x_2\in X_2\setminus f(Y)$, and $x_1\sim x_2$ if $x_1\in X_1$ and $x_2=f(x_1)$.

Now, for $X_1$ and $X_2$ diffeological spaces, there is a natural diffeology on $X_1\sqcup X_2$, namely the disjoint union diffeology (see Section \ref{diffeology:defn:sect}); and for whatever
equivalence relation exists on a diffeological space (which is $X_1\sqcup X_2$ in this case) there is the standard quotient diffeology on the quotient space. This is the \textbf{gluing diffeology}
on $X_1\cup_f X_2$.

\begin{example}\label{axes:by:gluing:wire:ex}
Take $X_1\subset\matR^2$ the $x$-coordinate axis and $X_2\subset\matR^2$ the $y$-coordinate axis; both are considered with the subset diffeology of $\matR^2$ (so it is the standard
diffeology of $\matR$). Gluing them at the origin yields, from the topological point of view, the same space that appears in the Christensen-Wu example. Note that its gluing diffeology, as has just
been described, is strictly finer than the subset diffeology of $\matR^2$, see Example 2.67 in \cite{watts}.
\end{example}

\paragraph{Gluing together diffeological vector pseudo-bundles} The operation of gluing of two diffeological spaces easily extends to the definition of gluing of two diffeological vector pseudo-bundles.
Let $X_1$ and $X_2$ be two diffeological spaces; let $\pi_1:V_1\to X_1$ and $\pi_2:V_2\to X_2$ be two diffeological vector pseudo-bundles over $X_1$ and $X_2$ respectively. Let
$f:X_1\supset Y\to X_2$ be a smooth map (we will frequently assume it to be injective, although this is not always necessary). Let also $\tilde{f}:\pi_1^{-1}(Y)\to V_2$ be a smooth map that is linear on
fibres and such that $\pi_2\circ\tilde{f}=f\circ(\pi_1)|_{\pi_1^{-1}(Y)}$ (namely, $\tilde{f}$ is a smooth fibrewise linear lift of $f$). The latter property yields a well-defined map
$$\pi:V_1\cup_{\tilde{f}}V_2\to X_1\cup_f X_2.$$ This map is indeed a diffeological vector pseudo-bundle (see \cite{pseudobundle}).

\begin{thm}
The map $\pi:V_1\cup_{\tilde{f}}V_2\to X_1\cup_f X_2$ is a diffeological vector pseudo-bundle.
\end{thm}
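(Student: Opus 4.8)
The plan is to check directly the three defining properties of a diffeological vector pseudo-bundle (\cite{CWtangent}, Definition 4.5) for the map $\pi$: that it is smooth, that each fibre carries a vector space structure, and that the fibrewise addition, the scalar multiplication, and the zero section are smooth. Write $q_V:V_1\sqcup V_2\to V_1\cup_{\tilde{f}}V_2$ and $q_X:X_1\sqcup X_2\to X_1\cup_f X_2$ for the two quotient projections; by construction of the gluing diffeology each is a subduction. The relation $\pi_2\circ\tilde{f}=f\circ(\pi_1)|_{\pi_1^{-1}(Y)}$ is exactly what guarantees that $\pi$ is well defined, and it yields $\pi\circ q_V=q_X\circ(\pi_1\sqcup\pi_2)$. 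Since the right-hand side is a composite of smooth maps it is smooth, and because a map out of a space carrying the pushforward (quotient) diffeology is smooth precisely when its precomposition with the projection is smooth, it follows at once that $\pi$ is smooth.

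Next I would analyse the fibres. Over a class represented by a point of $X_1\setminus Y$ or of $X_2\setminus f(Y)$ the fibre is, respectively, $\pi_1^{-1}(x)$ or $\pi_2^{-1}(x)$, already a vector space. The only genuine point is a class meeting the gluing locus: there the fibre is the quotient of the disjoint union of $\pi_2^{-1}(f(y))$ with the fibres $\pi_1^{-1}(y')$ over the glued base point, modulo $v\sim\tilde{f}(v)$. Since no two elements of $V_2$ are ever identified while every $V_1$-representative over $Y$ is absorbed into $V_2$ through $\tilde{f}$, each class has a unique representative in $\pi_2^{-1}(f(y))$, so the fibre is canonically identified with the vector space $\pi_2^{-1}(f(y))$. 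Here the linearity of $\tilde{f}$ on fibres is precisely what makes the induced addition and scalar multiplication independent of the chosen representatives; note that injectivity of $f$ is not needed for this.

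Then I would dispatch scalar multiplication and the zero section. Because the product distributes over the disjoint union, $\matR\times(V_1\sqcup V_2)=(\matR\times V_1)\sqcup(\matR\times V_2)$, the two scalar multiplications assemble into a single smooth map upstairs, and the identity $\tilde{f}(\lambda v)=\lambda\tilde{f}(v)$ shows it descends to a well-defined map on the glued total space. Since $\mathrm{id}_{\matR}\times q_V$ is again a subduction (a product of subductions is a subduction), smoothness of the descended scalar multiplication follows by precomposing with it. The common zero sections of $V_1$ and $V_2$ agree under $\tilde{f}(0)=0$ and hence descend to a section of $\pi$ whose smoothness is verified the same way, by composition with $q_X$.

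The main obstacle is the smoothness of addition, which is defined on the fibre product $P:=(V_1\cup_{\tilde{f}}V_2)\times_{X_1\cup_f X_2}(V_1\cup_{\tilde{f}}V_2)$ endowed with the subset diffeology, and over the gluing locus a single pair of vectors in one fibre may be represented by elements lying in different components. The plan is to exhibit $P$ as the image, under $q_V\times q_V$, of the disjoint union $(V_1\times_{X_1}V_1)\sqcup(V_2\times_{X_2}V_2)$: every pair in a common fibre of $\pi$ admits a representative pair lying wholly in $V_1$ (over $X_1\setminus Y$) or wholly in $V_2$ (over the gluing locus and over $X_2\setminus f(Y)$), using $\tilde{f}$ to push $V_1$-representatives over $Y$ into $V_2$. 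On this disjoint union the fibrewise addition is just $a_1\sqcup a_2$, hence smooth, and $\tilde{f}(v+v')=\tilde{f}(v)+\tilde{f}(v')$ makes it compatible with the identifications, so it descends to $P$. The delicate step, where the weakness of the gluing diffeology must be confronted, is to show that the subset diffeology on $P$ coincides with the pushforward of the diffeology of $(V_1\times_{X_1}V_1)\sqcup(V_2\times_{X_2}V_2)$ along this map — equivalently, that every plot of $P$ lifts locally — so that the descended addition is smooth. This is exactly the technical heart carried out in \cite{pseudobundle}; granting it, addition is smooth and all conditions of (\cite{CWtangent}, Definition 4.5) are met, completing the proof.
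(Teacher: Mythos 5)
There is no in-paper proof to compare your proposal against: this theorem is one of the many statements the paper explicitly cites to \cite{pseudobundle} without reproducing the argument (``This map is indeed a diffeological vector pseudo-bundle (see \cite{pseudobundle})''). Judged on its own terms, your outline has the right structure, and the routine verifications are done correctly and completely: smoothness of $\pi$ via the identity $\pi\circ q_V=q_X\circ(\pi_1\sqcup\pi_2)$ and the universal property of quotient (pushforward) diffeologies; the identification of every fibre over the gluing locus with $\pi_2^{-1}(f(y))$ (correct, and indeed independent of injectivity of $f$); and the descent of scalar multiplication and of the zero section, using that a product of subductions is a subduction.

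The only nontrivial mathematical content of the theorem is the smoothness of addition on the fibre product $P$ with its subset diffeology, and that is precisely the step you do not prove but defer to \cite{pseudobundle}. Two remarks here. First, when $f$ is injective (the hypothesis the paper says it will ``frequently assume''), you could have closed this gap with a tool already stated in the paper, namely the Lemma of Section 4.3 characterizing plots of a gluing diffeology: a plot of $P$ is a pair $(p,q)$ of plots of $V_1\cup_{\tilde{f}}V_2$ with equal projections, and locally each of $p,q$ either factors as $j_2\circ(\mbox{plot of }V_2)$ or lifts to a plot of $V_1$. If both factor through $V_2$, injectivity of $j_2$ and $i_2$ gives a lift to $V_2\times_{X_2}V_2$; if one lifts to a plot $p_1$ of $V_1$ and the other factors through $V_2$, then $\pi_1\circ p_1$ is forced to take values in $Y$, and $\tilde{f}\circ p_1$ (smooth, since $\tilde{f}$ is smooth on $\pi_1^{-1}(Y)$) turns the pair into a plot of $V_2\times_{X_2}V_2$; if both lift to plots of $V_1$, injectivity of $f$ forces their $X_1$-projections to coincide, giving a plot of $V_1\times_{X_1}V_1$. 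Second, for non-injective $f$ --- which the statement of the theorem permits --- the last case is genuinely delicate: equality of projections in $X_1\cup_f X_2$ no longer implies equality in $X_1$, the domain need not decompose into open pieces admitting a single lift, and your set-theoretic observation that every pair has representatives wholly in $V_1$ or wholly in $V_2$ does not by itself produce a local lift of a plot. So the step you outsource is exactly where the difficulty of the theorem sits; since the paper outsources the entire proof to the same reference, your proposal is nevertheless more informative than the text it accompanies, but it is a proof only modulo that citation (or modulo adding the injectivity hypothesis and the case analysis above).
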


\paragraph{The extension from the standard case} As we stated in the beginning of the present section, the operation of diffeological gluing can be seen as an extension to diffeological vector
pseudo-bundles of the usual representation of smooth vector bundles by an atlas of local trivializations (this is also true of representing smooth manifolds by their atlas). The idea is to replace
the atlas of local trivializations by a finite collection of standard bundles of form $\matR^n\to\matR^k$ and with the difference that this collection would generally include bundles of different
dimensions (be of form $\{\matR^{n_i}\to\matR^{k_i}\}_{i=1}^m$, with $n_1,\ldots,n_m$ and $k_1,\ldots,k_m$ being in general not all equal to the same fixed $n$ and the same fixed $k$, respectively)
and the transition charts would consist of smooth injections rather than diffeomorphisms; perhaps calling such a collection a \emph{pseudo-atlas}.

Such an extension has its limits (we do not claim that every diffeological space is a result of diffeological gluings of a reasonable selection of other diffeological spaces; so much less we
make a similar claim regarding pseudo-bundles --- both claims in their full generality are in fact false). On the other hand, it is indeed an extension of the usual notion, in the sense that a smooth
vector bundle $\pi:E\to M$ which admits a finite atlas of local trivializations (for instance, if $M$ is compact) can be seen as a diffeological vector pseudo-bundle, with respect to its standard
diffeology, obtained by a finite number of diffeological gluings of several copies of the standard bundle $\matR^n\times\matR^k\to\matR^k$. The precise form of this statement (which is found in
\cite{pseudometric-pseudobundle}) is as follows.

To specify the notation, recall that a smooth vector bundle of rank $k$ is a smooth map $\pi:E\to M$ between two smooth manifolds $E$ and $M$ (we assume that $M$ admits a finite smooth atlas)
such that
\begin{enumerate}
\item[a)] for every $x\in M$ the fibre $\pi^{-1}(x)$ carries a vector space structure;
\item[b)] $M$ admits a finite atlas $\{(U_i,\varphi_i:U_i\to\matR^n)\}_{i=1}^m$ such that for every index $i=1,\ldots,m$ there is a fixed diffeomorphism $\psi_i:\pi^{-1}(U_i)\to U_i\times\matR^k$ such that
for every $x\in U_i$ the restriction of $\psi_i$ to the fibre $\pi^{-1}(x)$ is an vector space isomorphism $\pi^{-1}(x)\to\{x\}\times\matR^k$.
\end{enumerate}
The transition functions are then defined, for every pair of indices $i,j$ such that the intersection $U_i\cap U_j$ is non-empty, by setting
$g_{ij}(x)=\psi_j|_{\pi^{-1}(x)}\circ\left(\psi_i^{-1}|_{\{x\}\times\matR^k}\right)$ for all $x\in U_i\cap U_j$, which is a vector space isomorphism $\{x\}\times\matR^k\to\{x\}\times\matR^k$. Thus, each map
$g_{ij}$ is a map $U_i\cap U_j\to\mbox{GL}_k(\matR)$. We can then state the following:

\begin{thm}\label{smooth:bundle:equals:diffeol:bundle:thm}
Let $\pi:E\to M$ be a smooth vector bundle of rank $k$ over an $n$-dimensional manifold $M$ that admits a finite atlas of $m$ local trivializations. Then $\pi$ is the result of gluing of $m$
diffeological vector pseudo-bundles $\pi_i:\matR^{n+k}\to\matR^n$, where
$$\pi_i=\varphi_i\circ\pi\circ\psi_i^{-1}\circ(\varphi_i^{-1}\times\mbox{Id}_{\matR^k})\mbox{ for }i=1,\ldots,m,$$ and the gluing $(\tilde{f}_{ij},f_{ij})$ between $\pi_i$ and $\pi_j$ is given by the maps
$$f_{ij}=\varphi_j\circ(\varphi_i|_{U_i\cap U_j})^{-1},\mbox{ and}$$
$$\tilde{f}_{ij}(y,v)=(f_{ij}(y),g_{ij}(\varphi_i^{-1}(y))v),\mbox{ for }y\in U_i\cap U_j\mbox{ and }v\in\pi_i^{-1}(y).$$
\end{thm}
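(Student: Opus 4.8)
The plan is to recognize Theorem~\ref{smooth:bundle:equals:diffeol:bundle:thm} as a diffeological restatement of the classical reconstruction of a vector bundle from its transition cocycle; the genuinely new content lies not in matching the underlying sets (which is the usual cocycle construction) but in verifying that the resulting \emph{gluing diffeology} coincides with the standard diffeology of $E$ as the total space of a smooth bundle. First I would unwind the definition of $\pi_i$: for $(y,v)\in\varphi_i(U_i)\times\matR^k$ one computes $\pi_i(y,v)=\varphi_i(\pi(\psi_i^{-1}(\varphi_i^{-1}(y),v)))=y$, so $\pi_i$ is simply the trivial projection $\varphi_i(U_i)\times\matR^k\to\varphi_i(U_i)$ (the domain being the open set $\varphi_i(U_i)\subseteq\matR^n$, which is what ``$\pi_i:\matR^{n+k}\to\matR^n$'' abbreviates). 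Each $\pi_i$ is therefore a trivial, hence genuine, diffeological vector pseudo-bundle over an open subset of $\matR^n$ with the standard diffeology. Since the definition of gluing in Section~\ref{secn:diff:pseudo-bundles:diff:gluing:sect} is given for a pair, I would interpret the gluing of the $m$ pieces as the simultaneous quotient: endow $\coprod_{i=1}^m(\varphi_i(U_i)\times\matR^k)$ with the sum diffeology and pass to the quotient by the equivalence relation $\sim$ generated by $(y,v)\sim\tilde f_{ij}(y,v)$ for $y\in\varphi_i(U_i\cap U_j)$, with the quotient (gluing) diffeology; the analogous construction on the bases $\coprod_i\varphi_i(U_i)$ produces $M$. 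Iterated pairwise gluing yields the same object, so nothing is lost by this reformulation.

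Next I would check that $(\tilde f_{ij},f_{ij})$ is admissible gluing data. The map $f_{ij}=\varphi_j\circ(\varphi_i|_{U_i\cap U_j})^{-1}$ is a diffeomorphism between the open sets $\varphi_i(U_i\cap U_j)$ and $\varphi_j(U_i\cap U_j)$, hence smooth; and $\tilde f_{ij}(y,v)=(f_{ij}(y),g_{ij}(\varphi_i^{-1}(y))v)$ is smooth, is linear on each fibre because $g_{ij}(\varphi_i^{-1}(y))\in\mbox{GL}_k(\matR)$, and satisfies $\pi_j\circ\tilde f_{ij}=f_{ij}\circ\pi_i$, so it is a smooth fibrewise-linear lift of $f_{ij}$ as required by the gluing construction. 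The only point needing care with more than two charts is that $\sim$ be a genuine (in particular transitive) equivalence relation and that the quotient recover $E$ rather than a degenerate identification; this is exactly the classical cocycle condition relating the $g_{ij}$ (together with the normalizations $g_{ii}=\mbox{Id}$ and $g_{ji}=g_{ij}^{-1}$), which holds because the $g_{ij}$ arise from an honest smooth bundle. Granting this, the standard argument identifies the underlying set of the glued base with $M$ (via the charts $\varphi_i$) and the underlying set of the glued total space with $E$ (via the local trivializations $\psi_i$), compatibly with the projections, so the glued projection $\pi$ agrees set-theoretically with the original $\pi:E\to M$.

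It remains to match the diffeologies, which I regard as the heart of the proof. Let $\Psi_i:=\psi_i^{-1}\circ(\varphi_i^{-1}\times\mbox{Id}_{\matR^k}):\varphi_i(U_i)\times\matR^k\to\pi^{-1}(U_i)\subset E$ denote the inverse trivialization charts; these are diffeomorphisms onto the D-open sets $\pi^{-1}(U_i)$, and they are precisely the maps inducing $\sim$. One inclusion is immediate: each $\Psi_i$ is smooth into the standardly-diffeological $E$ and respects $\sim$, so by the universal property of the quotient diffeology every plot of the gluing diffeology is a plot of the standard diffeology of $E$. For the reverse inclusion I would use the locality (sheaf) axiom of diffeology: given a usual smooth plot $p:W\to E$, every $w\in W$ has $p(w)$ in some $\pi^{-1}(U_i)$, and since $\pi^{-1}(U_i)$ is open and $\Psi_i$ a diffeomorphism onto it, $p$ restricted to a neighbourhood of $w$ factors as $\Psi_i\circ(\Psi_i^{-1}\circ p)$ with $\Psi_i^{-1}\circ p$ an ordinary plot of $\varphi_i(U_i)\times\matR^k$; hence $p$ is locally, therefore globally, a plot of the gluing diffeology.

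The main obstacle I anticipate is precisely this diffeological matching, and within it the subtle risk that gluing produces a \emph{strictly finer} diffeology than the standard one --- exactly the phenomenon exhibited by the coordinate axes in Example~\ref{axes:by:gluing:wire:ex}, where gluing along a single non-open point gives something finer than the subset diffeology. The reason it does not happen here is that all identifications are performed along \emph{open} subsets $\varphi_i(U_i\cap U_j)$ via the \emph{diffeomorphisms} $f_{ij}$ (lifted by fibrewise isomorphisms); this openness is what lets the locality axiom recover every ordinary plot, and it is the feature distinguishing the present (locally trivial, manifold) situation from the genuinely singular gluings that motivate the rest of the paper. I would therefore isolate and emphasize this openness-plus-diffeomorphism hypothesis as the crux, the remaining verifications being the routine cocycle bookkeeping assumed from the classical theory.
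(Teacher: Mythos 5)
The paper itself contains no proof of this theorem: it is stated as a citation from \cite{pseudometric-pseudobundle}, so there is no internal argument to compare yours against. Judged on its own, your proof is correct, and it is the argument that the explicit form of the statement is designed to invoke. The computation $\pi_i(y,v)=y$ is right; the verification that $(\tilde{f}_{ij},f_{ij})$ is admissible gluing data (smooth, fibrewise linear, covering $f_{ij}$) is right; and the two-sided comparison of diffeologies --- smoothness of the charts $\Psi_i=\psi_i^{-1}\circ(\varphi_i^{-1}\times\mbox{Id}_{\matR^k})$ plus the universal property of the quotient diffeology for one inclusion, openness of $\pi^{-1}(U_i)$ plus the sheaf axiom for the other --- is exactly where the hypothesis of gluing along open sets via diffeomorphisms enters. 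Your contrast with Example \ref{axes:by:gluing:wire:ex} identifies the crux correctly: this is precisely the feature whose absence makes the gluing diffeology strictly finer in the singular examples elsewhere in the paper.

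Two steps are asserted rather than proved; both are routine, but worth writing out. First, the identification of the quotient set with $E$ rests on the identity $\Psi_j\circ\tilde{f}_{ij}=\Psi_i$ over $U_i\cap U_j$, which is a one-line computation from the definition $g_{ij}(x)=\psi_j|_{\pi^{-1}(x)}\circ\left(\psi_i^{-1}|_{\{x\}\times\matR^k}\right)$: indeed $\Psi_j(\tilde{f}_{ij}(y,v))=\psi_j^{-1}(\varphi_i^{-1}(y),g_{ij}(\varphi_i^{-1}(y))v)=\psi_i^{-1}(\varphi_i^{-1}(y),v)=\Psi_i(y,v)$. Since this equality is what makes the whole identification commute with the gluings (and with the projections), it should appear explicitly rather than be subsumed under ``classical cocycle bookkeeping.'' Second, your replacement of iterated pairwise gluing --- the only operation the paper actually defines --- by the simultaneous quotient needs the remark that pushforward diffeologies compose: the quotient diffeology of a quotient diffeology is the quotient diffeology for the composed projection, so both procedures install the same diffeology on the same underlying set. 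Under the standing hypotheses (identifications along D-open sets by diffeomorphisms satisfying the cocycle condition) this is straightforward, but it is the step that makes ``the result of gluing of $m$ pseudo-bundles'' well defined in the first place.
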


\paragraph{The motivation for the choice of diffeology} We conclude this section with commenting on the reasons for our specific choice of the diffeology on a diffeological space obtained by gluing of
two other spaces (in particular, when the gluing is between two total spaces of pseudo-bundles). These reasons have to do with the fact that the gluing diffeology as defined above possesses some
important properties that we state below; these properties are heavily used in the proofs of various results that are cited below, although we do not give the proofs themselves. (Another reason, of course,
is that this is a very natural definition).

\begin{lemma}
Let $X_1$ and $X_2$ be two diffeological spaces, and let $f:X_1\supset Y\to X_2$ be a smooth map. Then:\\
1) the obvious inclusions $i_1:X_1\setminus Y\hookrightarrow X_1\cup_f X_2$ and $i_2:X_2\hookrightarrow X_1\cup_f X_2$ are inductions;\\
2) every plot $p:U\to X_1\cup_f X_2$ locally has the following characterization: either there exists a plot $p_2:U\supset U'\to X_2$ of $X_2$ such that $p|_{U'}=i_2\circ p_2$, or there exists a plot
$p_1:U\supset U'\to X_1$ of $X_1$ such that
$$p(u')=\left\{\begin{array}{ll}i_1(p_1(u')) & \mbox{if }p_1(u')\in X_1\setminus Y,\\
i_2(f(p_1(u'))) & \mbox{if }p_1(u')\in Y.\end{array}\right.$$
\end{lemma}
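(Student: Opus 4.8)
The plan is to unwind the definition of the gluing diffeology — a pushforward of the disjoint-union diffeology through the quotient map $\rho:X_1\sqcup X_2\to X_1\cup_f X_2$ — and to combine it with the known local structure of plots of a disjoint union. Recall that the disjoint-union (sum) diffeology on $X_1\sqcup X_2$ has the property that every plot is locally a plot of one of the two components; and that the gluing diffeology is, by definition, the pushforward $\rho_*$ of this sum diffeology. So a plot of $X_1\cup_f X_2$ is, locally, the composition of $\rho$ with a plot of $X_1\sqcup X_2$, hence locally $\rho\circ p_1$ for a plot $p_1$ of $X_1$, or $\rho\circ p_2$ for a plot $p_2$ of $X_2$. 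Tracking what $\rho$ does to each of these two cases is the whole content of part (2), and checking that $\rho$ restricted appropriately is a diffeomorphism onto its image is the content of part (1).

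For part (1), I would prove the two inductions separately. For $i_2:X_2\hookrightarrow X_1\cup_f X_2$: it is injective on the underlying sets because, under $\sim$, no two distinct points of $X_2$ are identified (the relation only glues points of $Y$ to their images $f(Y)\subseteq X_2$, never collapsing $X_2$ internally). Smoothness of $i_2$ is immediate since $i_2=\rho\circ(\text{inclusion of }X_2\text{ into }X_1\sqcup X_2)$ and both factors are smooth. The substantive direction is that $i_2$ is an induction, i.e. that the subset diffeology on $i_2(X_2)$ coincides with the pushforward of $X_2$'s diffeology: given a plot $q$ of $X_1\cup_f X_2$ whose image lands in $i_2(X_2)$, I must produce (locally) a plot of $X_2$ lifting it. This follows from the local structure of pushforward plots together with the observation that the only way a point of $i_1(X_1\setminus Y)$ can coincide with a point of $i_2(X_2)$ is never (they are disjoint in the quotient), so a local component-plot landing in $i_2(X_2)$ must already be (up to $\rho$) a plot of $X_2$, or a plot $p_1$ of $X_1$ whose image lies entirely in $Y$, in which case $f\circ p_1$ is the desired plot of $X_2$. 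The case of $i_1:X_1\setminus Y\hookrightarrow X_1\cup_f X_2$ is cleaner because $X_1\setminus Y$ is disjoint from $f(Y)$ under the identification, so $\rho$ is injective on $X_1\setminus Y$ and no point there is glued to anything; a plot landing in $i_1(X_1\setminus Y)$ cannot locally factor through the $X_2$-component, so it must come from a plot of $X_1$ taking values in $X_1\setminus Y$, whence it lifts.

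For part (2), the two bullet alternatives are exactly the images under $\rho$ of the two component-cases. A local component-plot valued in $X_2$ gives the first alternative, $p|_{U'}=i_2\circ p_2$, verbatim. A local component-plot $p_1$ valued in $X_1$ gives the second alternative: on the part of $U'$ where $p_1(u')\in X_1\setminus Y$ we have $\rho(p_1(u'))=i_1(p_1(u'))$, and on the part where $p_1(u')\in Y$ the quotient relation forces $\rho(p_1(u'))=i_2(f(p_1(u')))$, which is precisely the displayed two-case formula. Conversely any map of either form is a plot because it factors through $\rho$ composed with a plot of $X_1\sqcup X_2$.

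The main obstacle I expect is the induction claim in part (1), and specifically the careful bookkeeping of how the equivalence relation $\sim$ partitions $X_1\cup_f X_2$ into the three disjoint pieces $i_1(X_1\setminus Y)$, $i_2(f(Y))$, and $i_2(X_2\setminus f(Y))$ — and the fact that the local lifting of a plot can require \emph{combining} the two component-cases, because a single plot of $X_1\cup_f X_2$ can, on arbitrarily small neighborhoods, switch between looking like a plot of $X_1$ (some of whose values lie in $Y$) and a genuine plot of $X_2$. The delicate point is ensuring that when a local $X_1$-plot $p_1$ has $p_1(u')\in Y$, the map $f\circ p_1$ is the needed lift to $X_2$, using smoothness of $f$ for the subset diffeology on $Y$; this is where the hypothesis that $f$ is smooth genuinely enters. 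I would handle this by working on the preimages $p^{-1}(i_2(X_2))$ and $p^{-1}(i_1(X_1\setminus Y))$, using that the latter is open (since $i_1(X_1\setminus Y)$ is, by the induction property being established, an image under an open-image induction) to localize, and patching via the sheaf condition for diffeologies.
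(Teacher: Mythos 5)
Your core argument is correct, and it is the natural route; note that the paper itself states this lemma without proof (it is cited from \cite{pseudobundle}), so there is no in-paper argument to compare against, but what you propose is exactly the expected one: a plot of the quotient locally factors as $\rho\circ q$ with $q$ a plot of $X_1\sqcup X_2$, hence locally a plot of $X_1$ or of $X_2$; tracking $\rho$ on each case gives part (2), and part (1) follows from the disjointness of $i_1(X_1\setminus Y)$ and $i_2(X_2)$ in the quotient, the injectivity of $i_1$ and $i_2$, the smoothness of $f$ on $Y$ with its subset diffeology (to convert a local $X_1$-plot with values in $Y$ into the $X_2$-plot $f\circ p_1$), and the sheaf axiom to conclude that the locally constructed lifts assemble into a plot.

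One claim in your final paragraph is false and should be deleted: you assert that $p^{-1}(i_1(X_1\setminus Y))$ is open because $i_1(X_1\setminus Y)$ is the image of an induction. Inductions need not have D-open images, and $i_1(X_1\setminus Y)$ is genuinely not D-open in general: take $X_1=\matR$ standard, $Y=\mathbb{Q}$, and $X_2$ a one-point space; then for the plot $p=\rho\circ\mathrm{id}_{\matR}$ the preimage $p^{-1}(i_1(X_1\setminus Y))$ is the set of irrational numbers, which is not open. Fortunately nothing in your proof requires this openness. Part (2) is a purely local statement: each point $u\in U$ has its own neighborhood on which one of the two alternatives holds, different points may use different alternatives, and no global patching of the alternatives is needed. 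Part (1) needs only that being a plot is a local condition (the sheaf axiom): it suffices that each point of the parameter domain has a neighborhood on which the candidate lift coincides with a plot, and your case analysis (with injectivity of $i_2$ forcing a local $X_1$-plot whose $\rho$-image lies in $i_2(X_2)$ to take values in $Y$, whence $f\circ p_1$ is the local lift) already provides exactly this. So the proof goes through once the openness remark is dropped.
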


Note in particular that $i_1(X_1\setminus Y)$ and $i_2(X_2)$ cover $X_1\cup_f X_2$.

\paragraph{On the alternative choices} The gluing diffeology is perhaps the one most closely related to the initial two diffeologies. There are of course other choices that we now briefly mention (these
do not enjoy the properties indicated in the above Lemma, in particular, they do not have the crucial property 2); crucial in that it much helps with various proofs).

\begin{example}\label{pseudobundle:coarse:ex}
Let $X$ be the diffeological space of the Example \ref{axes:by:gluing:wire:ex}, and let us describe a diffeological vector pseudo-bundle over $X$ which has non-standard fibres. Write $X=X_1\cup X_2$,
where $X_1=\{(x,0)\}$ (the $x$-axis) and $X_2=\{(0,y)\}$ (the $y$-axis). We now take three copies of $\matR^2$, which we denote by $V_1$, $V_2$, and $V_0$; we will identify, as needed, $X_1$ with
the $x$-axis of $V_1$ and $X_2$ with the $y$-axis of $V_2$. Their crossing point, the origin, will be identified with the origin of $V_0$.

Consider the projections $\pi_1:V_1\to X_1$ (the projection on the $x$-axis), $\pi_2:V_2\to X_2$ (the projection to the $y$-axis), and $\pi_0:V_0\to X_1\cap X_2\in X$ (the projection of the whole space
to the origin). Consider also maps $f_1:V_1\supset\{(0,y)\}\to V_0$, where $f_1(0,y)=(0,y)$, and $f_2:V_2\supset\{(x,0)\}\to V_0$, where $f_2(x,0)=(x,0)$; denote by $V$ the result of the usual topological
gluing of $V_1$ and $V_2$ to $V_0$ along the maps $f_1$ and $f_2$ respectively. Clearly, the maps $\pi_1$, $\pi_2$, $\pi_0$ yield a well-defined map $\pi:V\to X$ (which is continuous in the usual
sense). Furthermore, the pre-image $\pi^{-1}(x)$ of every $x\in X$ inherits a vector space structure from one of $V_1$, $V_2$, $V_0$.

The space $X$ already carrying a diffeology $\calD_X$, there is a well-defined pullback, which we denote by $\calD_V$, of it to $V$, via the map $\pi$; let us show that $\calD_V$ induces the coarse
diffeology on fibres. Let $p:U\to V$ be a plot of $\calD_V$; then $\pi\circ p$ is a plot of $\calD_X$, \emph{i.e.}, as has been observed above, it is either a map of form $u\mapsto(p_1(u),0)$ or a
map of form $u\mapsto(0,p_2(u))$, where $p_1$ and $p_2$ are ordinary smooth maps with values in $\matR$. This implies, first of all, that the image of $p$ is contained in either $V_1\cup V_0$ or
$V_2\cup V_0$. If it is entirely contained in $V_0$, it can be any map, since its composition with $\pi$ is always a constant map. This implies (recall that the pullback diffeology is the coarsest diffeology
such that the pulling-back map is smooth) that $V_0$ has coarse diffeology. Furthermore, if the image of $p$ is contained in, say, $V_1$, then writing it as $p(u)=(p_1(u),p_2(u))$, we obtain that
$p_1$ is an ordinarily smooth map, while $p_2$ is any map. Therefore the fibre over any point of $X_1$ has coarse diffeology; an analogous conclusion can be obtained for $X_2$. Thus, the subset
diffeology on any fibre of $V$ is the coarse diffeology, although the diffeology of $V$ as a whole is not the coarse one.\footnote{It is quite clear that the pullback diffeology is way too big; typically, we
would like to at least preserve the ordinary topology of $V$. For this to happen, the subset diffeology on fibres should include continuous maps only.}
\end{example}

\subsection{Constructing a desired fibre}

For diffeological vector pseudo-bundles, there continue to exist all the same operations that are performed with the usual smooth vector bundles, such as taking direct sums, tensor products, and dual
pseudo-bundles. They were probably first described in \cite{vincent} and indeed necessitate a separate description, because they cannot be defined in the classic way, using local trivializations, since
pseudo-bundles do not have them. The typical procedure for the pseudo-bundles is to describe them on individual fibres (where they are just the same operations on diffeological vector spaces ---
these are described in \cite{vincent} and \cite{wu}, and were briefly recalled above), and then explain which diffeology is assigned on the union of the new fibres thus obtained. 

\paragraph{Sub-bundles and quotient pseudo-bundles} It is useful to note that if we are given a pseudo-bundle $\pi:V\to X$, and a vector subspace $W_x\leqslant\pi^{-1}(x)$ of each fibre, then the union
$W=\cup_{x\in X}W_x\subset V$, endowed with the subset diffeology, is always a diffeological vector pseudo-bundle on its own. Likewise, the collection of fibrewise quotient spaces
$V/W=\cup_{x\in X}\pi^{-1}(x)/W_x$ is trivially a quotient space of $V$; the corresponding quotient diffeology induces the same (subset) diffeology on each fibre $\pi^{-1}(x)/W_x$ of it; the latter
diffeology coincides with the quotient diffeology relative to the projection $\pi^{-1}(x)\to \pi^{-1}(x)/W_x$. These properties come in handy when describing the further constructions with pseudo-bundles.

\paragraph{The direct product bundle} Let $\pi_1:V_1\to X$ and $\pi_2:V_2\to X$ be two diffeological vector pseudo-bundles with the same base space. The total space of the product bundle consists
of fibrewise direct products, $V_1\times_X V_2=\cup_{x\in X}\pi_1^{-1}(x)\times\pi_2^{-1}(x)$. The \textbf{product bundle diffeology}\footnote{The result is usual a pseudo-bundle rather than a standard
(locally trivial) bundle; we do call it a product bundle, just as we say \emph{sub-bundle} rather than the cumbersome \emph{sub-pseudo-bundle}, to avoid making the terminology too
complicated.} (see \cite{vincent}, Definition 4.3.1) is the coarsest diffeology such that the fibrewise defined projections are smooth; this diffeology includes, for instance, for each $x\in X$ all maps
of form $(p_1,p_2)$, where $p_i:U\to(V_i)_x$ is a plot of $(V_i)_x$ for $i=1,2$.

\paragraph{The direct sum pseudo-bundle} It suffices to add to the above direct product bundle the operations on all fibres (that are defined in the usual manner), to get a well-defined direct sum
pseudo-bundle $\pi_1\oplus\pi_2:V_1\oplus V_2\to X$. Each fibre of it is the usual direct sum of the corresponding fibres, in the sense of diffeological vector spaces,
$(\pi_1\oplus\pi_2)^{-1}(x)=\pi_1^{-1}(x)\oplus\pi_2^{-1}(x)$ for all $x\in X$.

\paragraph{The tensor product pseudo-bundle} This notion was also described in \cite{vincent} (see Definition 5.2.1); it is again defined fibrewise as the collection of the tensor products of all fibres
over the same point. Its diffeology is defined as follows. Let $\pi_1\times\pi_2:V_1\times V_2\to X$ be the direct product bundle; for each $x\in X$ let
$\phi_x:\pi_1^{-1}(x)\times\pi_2^{-1}(x)\to\pi_1^{-1}(x)\otimes\pi_2^{-1}(x)$ be the universal map onto the corresponding tensor product of diffeological vector spaces. The collection of maps $\phi_x$
defines a map $\phi:V_1\times V_2\to V_1\otimes V_2=:\cup_{x\in X}\pi_1^{-1}(x)\otimes\pi_2^{-1}(x)$. Let also $Z_x$ be the kernel of $\phi_x$, for all $x\in X$; recall that $Z:=\cup_{x\in X}Z_x$ is
a vector sub-bundle for the subset diffeology. The \textbf{tensor product pseudo-bundle diffeology} on the total space space $V_1\otimes V_2$ tensor product pseudo-bundle
$\pi_1\otimes\pi_2:V_1\otimes V_2\to X$ is the pushforward of the diffeology of $V_1\times V_2$ by the map $\phi$. Equivalently, it can be described as the quotient diffeology on the quotient
pseudo-bundle $(V_1\times V_2)/Z$ (the equivalence follows from the above-mentioned properties of quotient pseudo-bundles). Each fibre $(\pi_1\otimes\pi_2)^{-1}(x)$ of the tensor product
pseudo-bundle is diffeomorphic, as a diffeological vector space, to the tensor product $\pi_1^{-1}(x)\otimes\pi_2^{-1}(x)$ of the corresponding fibres.

\paragraph{The dual pseudo-bundle} It remains to define the dual pseudo-bundle, which is the most intricate case. This definition is also available in \cite{vincent}, Definition 5.3.1. Let $\pi:V\to X$
be a diffeological vector pseudo-bundle; the dual (pseudo-)bundle of it is obtained by taking the union $\cup_{x\in X}(\pi^{-1}(x))^*=:V^*$ (where $(\pi^{-1}(x))^*$ is the diffeological dual of the
diffeological vector space $\pi^{-1}(x)$) with the obvious projection, which we denote $\pi^*$. The \textbf{dual bundle diffeology} on $V^*$ is the finest diffeology on $V^*$ such that: 1) the composition
of any plot with $\pi^*$ is a plot of $X$; and 2) the subset diffeology on each fibre coincides with its diffeology as the diffeological dual $(\pi^{-1}(x))^*$ of fibre $\pi^{-1}(x)$.

The proof that such a diffeology exists, and more explicit characterization of its plots were given in \cite{vincent}. This explicit characterization is as follows.

\begin{lemma} \emph{(\cite{vincent}, Definition 5.3.1 and Proposition 5.3.2)}
Let $U$ be a domain of some $\matR^l$. A map $p:U\to V^*$ is a plot for the dual bundle diffeology on $V^*$ if and only if for every plot $q:U'\to V$ the map $Y'\to\matR$ acting by $(u,u')\mapsto p(u)(q(u'))$,
where $Y'=\{(u,u')|\pi^*(p(u))=\pi(q(u'))\in X\}\subset U\times U'$, is smooth for the subset diffeology of $Y'\subset\matR^{l+\dim(U')}$ and the standard diffeology of $\matR$.
\end{lemma}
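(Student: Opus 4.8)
The plan is to identify the dual bundle diffeology with the collection $\mathcal{P}$ of all maps $p\colon U\to V^*$ that simultaneously satisfy (a) $\pi^*\circ p\in\calD_X$ and (b) the pairing test in the statement, and then to establish the two inclusions separately. The device used throughout is the fibrewise evaluation: running the pairing test through an arbitrary plot $s=(s_1,s_2)\colon W\to Y'$ of the subset diffeology of $Y'$ reduces condition (b) to the smoothness of $w\mapsto p(s_1(w))(q(s_2(w)))$. The point of writing it this way is that $p\circ s_1$ is then a plot of $V^*$, $q\circ s_2$ is a plot of $V$, and by the defining relation of $Y'$ both lie over the \emph{common} base plot $\pi^*\circ p\circ s_1=\pi\circ q\circ s_2$ of $X$. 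I would record at the outset that condition (a) is exactly item~1 of the definition of the dual bundle diffeology and that it must be retained as part of the characterization: the pairing does not see the base (a discontinuous base component paired with a smooth fibre component can still pair smoothly), so by itself it cannot force $\pi^*\circ p$ to be a plot of $X$.

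For the forward inclusion (a plot of the dual bundle diffeology lies in $\mathcal{P}$), condition (a) is immediate from item~1. For (b) I would fix a plot $q\colon U'\to V$ and an arbitrary plot $s=(s_1,s_2)$ of $Y'$ and reduce, by smooth precomposition (axiom~2 of a diffeology), to showing that the two plots $p\circ s_1$ and $q\circ s_2$, which sit over a single base plot $r\colon W\to X$, pair smoothly. This is where item~2 enters: over a region of $W$ on which $r$ is controlled, the fibre diffeology is precisely the functional (diffeological dual) diffeology of $(\pi^{-1}(x))^*$, against which evaluation of plots of the fibre is smooth by the very definition of the functional diffeology; the residual task is to promote this fibrewise fact to one in which the base point $r(w)$ is allowed to move, which I would do locally and patch using the sheaf axiom.

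For the reverse inclusion (every $p\in\mathcal{P}$ is a plot of the dual bundle diffeology) I would first verify that $\mathcal{P}$ is itself a diffeology and that it realizes items~1 and~2, so that it can be compared with the dual bundle diffeology through the latter's extremal characterization. Stability of $\mathcal{P}$ under smooth precomposition and under the sheaf condition follows because the subset diffeology of $Y'$ behaves well under pullback and under gluing of open covers, and the covering axiom is checked directly on constants; item~1 is condition (a); and item~2 is recovered by feeding the pairing test only those plots $q$ whose image lies in a single fibre $\pi^{-1}(x)$, which collapses the test to the defining property of the functional diffeology on $(\pi^{-1}(x))^*$. To pin $p$ down as a genuine plot I would then use the explicit description of plots of $V$ as local linear combinations $\sum_i r_i q_i$ of plots over a common base plot, coming from Proposition~\ref{find:vector:bundle:diff-gy:prop}, to produce enough test plots $q$.

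The main obstacle is precisely this reverse/extremal step, and it is sharpened by the absence of local triviality: since the base point of $p(u)$ varies with $u$ and the fibres need not be locally constant, one cannot trivialize and simply quote the single–vector–space duality of Section~2. What really has to be shown is that pairing–smoothness together with (a) is not merely necessary but already sufficient — equivalently, that the dual bundle diffeology is the finest diffeology meeting items~1 and~2 \emph{for which the fibrewise evaluation stays smooth}, this last qualifier being indispensable (without it one can exhibit strictly finer diffeologies still meeting items~1 and~2, so the pairing must be read as encoding evaluation–smoothness). Carrying this out forces one to argue locally on $U$, patch along the sheaf axiom, and juggle the varying–base subset diffeology of $Y'$; I expect that this bookkeeping, rather than any single isolated difficulty, is where the real effort lies.
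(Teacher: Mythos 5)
You should know at the outset that the paper contains no proof of this lemma: it is imported wholesale from Vincent's thesis (Definition 5.3.1 and Proposition 5.3.2 there), so your attempt can only be judged against the paper's definitions and its own later use of the statement. On one point you are clearly right, and it is worth saying so: the pairing test by itself cannot force $\pi^*\circ p$ to be a plot of $X$ (on a trivial bundle take $p(u)=(p_1(u),0)$ with $p_1$ arbitrary and $0$ the zero functional: every evaluation vanishes identically, yet the base component is not a plot), so condition (a) has to be carried along with the statement, exactly as the paper itself does when it applies the lemma in Example~\ref{dual:diff-gy:|xy|:ex}.

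Both halves of your proof, however, have gaps that are fatal rather than technical. In the forward direction you propose to obtain the pairing condition for an arbitrary plot from items 1) and 2) of the definition, proving it fibrewise and then ``promoting'' it to a moving base point by locality and the sheaf axiom. No such promotion can exist, because items 1) and 2) for a diffeology do not imply that its plots pair smoothly: on the trivial bundle $V=\matR^2\to\matR=X$ (all standard), the diffeology on $V^*$ generated by the standard plots together with $p_0(u)=(u,|u|)$ satisfies 1), and satisfies 2) because a plot $p_0\circ g$ with image in a single fibre forces $g$ to be constant, so every fibre still carries exactly the standard ($=$ dual) diffeology; yet $p_0$ evaluated against the plot $q(u')=(u',1)$ of $V$ gives $(u,u)\mapsto|u|$ on the diagonal $Y'$, which is not smooth. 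The obstruction is produced by a plot that moves across fibres, so it is invisible to any local patching. In the reverse direction the extremal argument points the wrong way: if the dual diffeology is the \emph{finest} diffeology satisfying 1) and 2), then showing that your class $\mathcal{P}$ is a diffeology satisfying 1) and 2) yields only that the dual diffeology is contained in $\mathcal{P}$ --- the forward inclusion again --- and the containment is in fact strict, since the finest diffeology satisfying 1) and 2) is the one generated by the fibrewise dual plots, whose plots are locally constant over the base. That diffeology also satisfies your added requirement of evaluation-smoothness, so your amended reading (``finest meeting 1), 2) and the pairing'') selects the same too-fine diffeology and still falsifies the lemma: the plot $u\mapsto(u,0)$ (the zero covector over a moving base point) lies in $\mathcal{P}$ but is not locally constant over the base. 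The only reading under which the lemma is true --- and, in substance, Vincent's --- is the opposite extremal one: $\mathcal{P}$ is itself a diffeology, it is the \emph{coarsest} diffeology making $\pi^*$ and all evaluations against plots of $V$ smooth, and property 2) is then a consequence rather than an input. Under that reading the lemma is essentially definitional, and the entire mathematical content of a proof is the verification, which your sketch defers, that $\mathcal{P}$ satisfies the three diffeology axioms and restricts on each fibre to the functional diffeology of $(\pi^{-1}(x))^*$.
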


Just like it happens with the diffeological duals of diffeological vector spaces, the dual pseudo-bundles can be quite different from what one obtains in the usual smooth case. The following is a more
extreme case of this.

\begin{example}
Let $\pi:V\to X$ be the diffeological vector pseudo-bundle with the base space $X$ of Example \ref{axes:by:gluing:wire:ex}, where $V$ is the space (endowed with the appropriate $\pi$) that we have
constructed in the Example \ref{pseudobundle:coarse:ex}. Since all fibres have coarse diffeology, their diffeological duals are always zero spaces, which means that the dual bundle in this case is just a
trivial covering, in the usual meaning of the term, of $X$ by itself.
\end{example}

On the other hand, if we have a usual smooth vector bundle (of finite rank and dimension) then it can be seen as a diffeological vector pseudo-bundle for its standard diffeology. Then, as long as
it admits a finite atlas of local trivializations, its diffeological dual pseudo-bundle is the same as its usual dual bundle (see \cite{pseudometric-pseudobundle}). In fact, the usual construction
of the dual bundle via local trivializations is mimicked by the concept of the diffeological gluing described in the previous section.

Finally, here is an example which is somewhat in between.

\begin{example}\label{dual:diff-gy:|xy|:ex}
Let $V$ be $\matR^2$ endowed with the vector pseudo-bundle diffeology generated by the plot $q:\matR^2\to\matR^2$, with $q(x,y)=(x,|xy|)$, let $X$ be the standard $\matR$, which we identify
with the $x$-axis of $\matR^2=V$, and let $\pi$ be the projection of the latter onto its first coordinate. Let now $U$ be a domain of some $\matR^m$, and let $p:U\to V^*$ be a plot for the dual
pseudo-bundle diffeology. Note that in some sense we can write $p(u)=(p_1(u),p_2(u))$, where $p_1(u)=\pi^*(p(u))$ determines the fibre (which is given by the first coordinate) and $p_2(u)$
determines a linear map on this fibre; this map can be identified with $p_2(u)e^2$. Now, the smoothness of the projection $\pi^*$ is equivalent to $p_1$ being an ordinary smooth map $U\to\matR$.
As for $p_2$, consider the evaluation of $p(u)$ on the plot $q$; we get that $p(u)(q(x,y))=p_2(u)|xy|$. This implies that outside of the subset $Y'\setminus p_1^{-1}(0)$ the function $p_2$ must be
identically zero (this agrees with the fact that it is $p_2$ that defines the subset diffeology of each fibre; recall that, except for the fibre $(\pi^*)^{-1}(0)$, the dual is trivial and, in particular,
is standard, while, on the other hand, the diffeology generated by the zero function, or any other constant function, is precisely the standard one). Furthermore, if $p_1^{-1}(0)$ has non-empty interior
(which it might well have, since the only restriction on $p_1$ is that it be smooth), then for any open subset $U_1\subset\mbox{Int}(p_1^{-1}(0))$ such that its closure is also contained in
$\mbox{Int}(p_1^{-1}(0))$, and for any smooth function $U_1\to\matR$, we can find $p_2$ that coincides on $U_1$ with this function and satisfies all the required conditions. This also agrees
with the fact that the fibre at $0$ is the standard $\matR$.

To summarize the above discussion, we state that $p=(p_1,p_2)$, where $p_1$ and $p_2$ are two smooth functions such that $p_2^{-1}(0)\supset(Y'\setminus p_1^{-1}(0))$.\footnote{We could
also summarize this as $Y'=p_1^{-1}(0)\cup p_2^{-1}(0)$.} Note also that this condition ensures also the smoothness of any composition of $p$ with $q\circ f$, for any smooth function $f$.
\end{example}

\subsection{Trivial bundles $\matR^n\to\matR^k$}

What at the moment we mean by a trivial bundle is one whose underlying topological map is the projection $\pi_{n,k}$ of $\matR^n$ to its first $k$ coordinates, \emph{i.e.}, to the subspace
that is naturally identified with $\matR^k$. As we illustrate below, there are choices of diffeologies on $\matR^n$ (particularly) and on $\matR^k$ (this seems less important) that make the same bundle
non-trivial from the diffeological point of view, meaning that, although all fibres are isomorphic as vector spaces, they are not diffeomorphic as diffeological (vector) spaces. In this section we
collect several examples of this, but also of other diffeologies that can be put on the two spaces, producing different instances of pseudo-bundles. Finally, we observe that such pseudo-bundles, being
among the simplest ones, can be used as building blocks for assembling, via the gluing construction, a wealth of more complicated pseudo-bundles. It is this class of pseudo-bundles to
which we intend to apply our further considerations; this does leave aside many (even simple) instances of pseudo-bundles that one might consider, but on the other hand it still produces a reasonably
wide class of them.

\paragraph{Diffeologically trivial pseudo-bundles} From the point of view of the above-mentioned building-blocks' idea, these are the natural starting point (we will explain later why we shall avoid
non-locally trivial pseudo-bundles of form $\matR^n\to\matR^k$). For these, we first consider the largest diffeology possible: the pullback diffeology.

\begin{example}\label{coarse:fibres:over:standard:ex}
Consider the projection $\pi$ of $\matR^2$ to its $x$-axis $X\cong\matR$, the latter being endowed with the standard diffeology of $\matR$; the pre-image $\pi^{-1}(x)$ of any point $x\in X$ has an
obvious vector structure.\footnote{This vector space structure is obtained by representing $\matR^2$ as the direct product $\matR\times\matR$ (with respect to the standard coordinates); each
fibre then has form $\{x\}\times\matR$, and the vector space structure is that of the second factor.} Let us endow $\matR^2$ with the pullback of this diffeology by the map $\pi$; let $p:U\to\matR^2$ be
a plot of this pullback diffeology, written as $p(u)=(p_1(u),p_2(u))$. Then $(\pi\circ p)(u)=p_1(u)$, and this has to be an ordinary smooth map. But since no condition is thus imposed on $p_2$, and the
pullback diffeology is defined as the \emph{coarsest} diffeology such that $\pi\circ p$ is a plot of $X$ (\emph{i.e.}, simply smooth in this case), $p_2$ can be \emph{any} map. In particular, every fibre of
$\pi$ has coarse diffeology.
\end{example}

\begin{example}
Consider $V=\matR^n$ (a finite-dimensional diffeological vector space whose underlying space is identified with $\matR^n$) and $X=\matR^k$, which is naturally identified with the subspace of $V$
spanned by the first $k$ coordinate axes. This defines the obvious projection $\pi_{n,k}:V\to X$.

Let $\calD_X$ be any vector space diffeology\footnote{It does not have to be vector space diffeology; our choice is somewhat arbitrary.} on $X$; let $\calD_V$ be its pullback to $V$ by the map $\pi_{n,k}$.
Let $p:U\to V=\matR^n$ be a plot for $\calD_V$ written as $p(u)=(p_1(u),\ldots,p_n(u))$. Then by the definition of pullback diffeology the map $u\mapsto(p_1(u),\ldots,p_k(u))$ is a plot of $\calD_X$, while
the map $u\mapsto(p_{k+1}(u),\ldots,p_n(u))$ is a plot for the coarse diffeology on $\matR^{n-k}$ (and the statement is also \emph{vice versa}). Thus, denoting the coarse diffeology on $\matR^{n-k}$ by
$\hat{\calD}_{n-k}$, we can write $\calD_V=\calD_X\times\hat{\calD}_{n-k}$, with the obvious meaning.
\end{example}

The pullback diffeology is the largest diffeology making the projection $\pi_{n,k}$ smooth,\footnote{The smallest of such diffeologies is obviously the fine (standard) diffeology on $\matR^n$.} but it is not
particularly interesting nor is it desirable (if nothing else, it does not induce the usual topology on $\matR^n$). Many other diffeologies can be constructed, however, by taking any vector space diffeology
$\tilde{\calD}_{n-k}$ on $\matR^{n-k}$ and setting $\calD_V$ to be the product diffeology on $\matR^n=\matR^k\times\matR^{n-k}$ coming from $\calD_X$ on the first factor and $\tilde{\calD}_{n-k}$ on
the second. The biggest ``sensible'' choice for the diffeology $\tilde{\calD}_{n-k}$ seems to be that of the diffeology consisting of all continuous, with respect to the usual topology, maps to $\matR^{n-k}$;
this is the largest diffeology whose underlying D-topology coincides with the usual topology of $\matR^n$; it also seems reasonable to ask the same of the diffeology on the base space $\matR^k$.

\paragraph{A non-locally trivial pseudo-bundle $\matR^2\to\matR$} Although we have said already that we will not include such instances in our main treatment, for reasons of completeness we
describe an example of how such a pseudo-bundle arises (and what it looks like).

\begin{example}
Let us take $V=\matR^2$ and $X=\matR$ identified with the $x$-axis of $V$. Endow $X$ with the standard diffeology and $V$ with the pseudo-bundle diffeology generated by the map $p:\matR^2\to V$
acting by $p(x,y)=(x,|xy|)$. The map $\pi=\pi_{2,1}$ is obviously smooth with respect to this diffeology; let us consider the diffeology on a given fibre $\pi^{-1}(x_0)$ for an arbitrary $x_0\in X=\matR$.
The subset diffeology on this fibre is the vector space diffeology generated by the map $y\mapsto |x_0|\cdot|y|$; this is the standard diffeology if $x_0=0$ (the generating plot is just a constant map,
so the generated diffeology is the finest vector space diffeology, namely, the standard one), and a non-standard one if $x_0\neq 0$. This implies the bundle in question is not trivial as a
diffeological vector pseudo-bundle, although it is so as a topological bundle. In fact, it is not even locally trivial as diffeological pseudo-bundle: its fibres are all isomorphic as usual vector spaces,
but they are not all diffeomorphic, since there is one fibre whose diffeology is different from that of all the others.
\end{example}

\subsection{Gluing and operations: commutativity conditions and diffeomorphisms}

We now turn to considering the behavior of the operation of gluing for pseudo-bundles with respect to the (diffeological counterparts of) usual vector bundles' operations on them (see \cite{pseudobundle}). In 
all cases but one there is a simple description of this behavior --- they commute, --- and the one (expected) exception is the operation of taking the dual pseudo-bundle; the reason stems from the fact that, on 
one hand, the operation of gluing is not symmetric and, on the other, the operation of taking duals is covariant.

Before proceeding with the details of what has just been said, we introduce some further notation notation, that we will use from this point onwards. Since in what follows we will frequently find
ourselves working with more than one pair of pseudo-bundles, each one forming a glued pseudo-bundle, we modify the notation for the corresponding standard inductions. Specifically, if we have
a pseudo-bundle $\xi_1\cup_{(\tilde{h},h)}\xi_2:W_1\cup_{\tilde{h}}W_2\to Z_1\cup_h Z_2$, where $h$ has the domain of definition $Y$, then we write
$j_1^{W_1}:W_1\setminus\tilde{h}^{-1}(Y)\to W_1\cup_{\tilde{h}}W_2$ for the corresponding standard induction (which was previously denoted just by $j_1$). Likewise, we will have
$j_2^{W_2}:W_2\to W_1\cup_{\tilde{h}}W_2$, $i_1^{Z_1}:Z_1\setminus Y\to Z_1\cup_h Z_2$, and $i_2^{Z_2}:Z_2\to Z_1\cup_h Z_2$.

\subsubsection{The switch map}

As follows from its definition, the operation of gluing for diffeological spaces is asymmetric. However, if we assume that gluing map $f$ is a diffeomorphism with its image then obviously, we
can use its inverse to perform the gluing in the reverse order, with the two results, $X_1\cup_f X_2$ and $X_2\cup_{f^{-1}}X_1$, being canonically diffeomorphic via the so-called \textbf{switch map}
$$\varphi_{X_1\leftrightarrow X_2}:X_1\cup_f X_2\to X_2\cup_{f^{-1}}X_1.$$ Using the notation just introduced, this map can be described by
$$\left\{\begin{array}{ll}
\varphi_{X_1\leftrightarrow X_2}(i_1^{X_1}(x))=i_2^{X_1}(x) & \mbox{for }x\in X_1\setminus Y,\\
\varphi_{X_1\leftrightarrow X_2}(i_2^{X_2}(f(x)))=i_2^{X_1}(x) & \mbox{for }x\in Y,\\
\varphi_{X_1\leftrightarrow X_2}(i_2^{X_2}(x))=i_1^{X_2}(x) & \mbox{for }x\in X_2\setminus Y.\end{array}\right.$$ This is well-defined, not only because the maps $i_1^{X_1}$ and $i_2^{X_2}$
are injective with disjoint ranges covering $X_1\cup_f X_2$, but also because $f$ is a diffeomorphism with its image.

\subsubsection{Gluing and operations}

Diffeological gluing of pseudo-bundles is relatively well-behaved with respect to the usual operations on vector bundles. More precisely, it commutes with the direct sum and the tensor product,
while he situation is somewhat more complicated for the dual pseudo-bundles, see \cite{pseudobundles} (the facts needed are recalled below).

\paragraph{Direct sum} Gluing of diffeological vector pseudo-bundles commutes with the direct sum in the following sense. Given a gluing along $(\tilde{f},f)$ of a pseudo-bundle $\pi_1:V_1\to X_1$
to a pseudo-bundle $\pi_2:V_2\to X_2$, as well as a gluing along $(\tilde{f}',f)$ of a pseudo-bundle $\pi_1':V_1'\to X_1$ to a pseudo-bundle $\pi_2':V_2'\to X_2$, there are two natural
pseudo-bundles that can be obtained by applying to them the operations of gluing and direct sum, namely
$$(\pi_1\cup_{(\tilde{f},f)}\pi_2)\oplus(\pi_1'\cup_{(\tilde{f}',f)}\pi_2'):(V_1\cup_{\tilde{f}}V_2)\oplus(V_1'\cup_{\tilde{f}'}V_2')\to X_1\cup_f X_2\mbox{ and }$$
$$(\pi_1\oplus\pi_1')\cup_{(\tilde{f}\oplus\tilde{f}',f)}(\pi_2\oplus\pi_2'):(V_1\oplus V_1')\cup_{\tilde{f}\oplus\tilde{f}'}(V_2\oplus V_2')\to X_1\cup_f X_2;$$ they are diffeomorphic as pseudo-bundles, that is,
there exists a fibrewise linear diffeomorphism
$$\Phi_{\cup,\oplus}:(V_1\cup_{\tilde{f}}V_2)\oplus(V_1'\cup_{\tilde{f}'}V_2')\to(V_1\oplus V_1')\cup_{\tilde{f}\oplus\tilde{f}'}(V_2\oplus V_2')$$
(see below) that covers the identity map on the base $X_1\cup_f X_2$.

\paragraph{Tensor product} What has just been said about the direct sum, applies to the tensor product as well. Specifically, we obtain two \emph{a priori} different pseudo-bundles
$$(\pi_1\cup_{(\tilde{f},f)}\pi_2)\otimes(\pi_1'\cup_{(\tilde{f}',f)}\pi_2'):(V_1\cup_{\tilde{f}}V_2)\otimes(V_1'\cup_{\tilde{f}'}V_2')\to X_1\cup_f X_2\mbox{ and }$$
$$(\pi_1\otimes\pi_1')\cup_{(\tilde{f}\otimes\tilde{f}',f)}(\pi_2\otimes\pi_2'):(V_1\otimes V_1')\cup_{\tilde{f}\otimes\tilde{f}'}(V_2\otimes V_2')\to X_1\cup_f X_2,$$ which turn out to be diffeomorphic via
$$\Phi_{\cup,\otimes}:(V_1\cup_{\tilde{f}}V_2)\otimes(V_1'\cup_{\tilde{f}'}V_2')\to(V_1\otimes V_1')\cup_{\tilde{f}\otimes\tilde{f}'}(V_2\otimes V_2')$$ covering the identity on $X_1\cup_f X_2$.

\paragraph{The dual pseudo-bundle} The case of dual pseudo-bundles is substantially different. For one thing, to even make sense of the commutativity question, we must assume that $f$ is invertible
(and so the above-mentioned switch map is defined). However, even with this assumption, the operation of gluing does \emph{not} commute with that of taking duals, for the following reason.
Let $\pi_1:V_1\to X_1$ and $\pi_2:V_2\to X_2$ be two diffeological vector pseudo-bundles, and let $(\tilde{f},f)$ be a gluing between them; consider the pseudo-bundle
$\pi_1\cup_{(\tilde{f},f)}\pi_2:V_1\cup_{\tilde{f}}V_2\to X_1\cup_f X_2$ and the corresponding dual pseudo-bundle
$$(\pi_1\cup_{(\tilde{f},f)}\pi_2)^*:(V_1\cup_{\tilde{f}}V_2)^*\to X_1\cup_f X_2;$$ compare it with the result of the induced gluing, which is along the pair $(\tilde{f}^*,f)$, of $\pi_2^*:V_2^*\to X_2$ to
$\pi_1^*:V_1^*\to X_1$, that is, the pseudo-bundle
$$\pi_2^*\cup_{(\tilde{f}^*,f)}\pi_1^*:V_2^*\cup_{\tilde{f}^*}V_1^*\to X_2\cup_{f^{-1}}X_1.$$ It then follows from the construction itself that for any $y\in Y$ (recall that $Y$ is the domain of gluing) we have
$$((\pi_1\cup_{(\tilde{f},f)}\pi_2)^*)^{-1}(i_2^{X_2}(f(y)))\cong(\pi_2^{-1}(f(y)))^*\mbox{ and }(\pi_2^*\cup_{(\tilde{f}^*,f)}\pi_1^*)^{-1}(i_2^{X_1}(y))\cong(\pi_1^{-1}(y))^*;$$
since $i_2^{X_2}(f(y))$ and $i_2^{X_1}(y)$ are related by the switch map, for the two pseudo-bundles to be diffeomorphic (in a way that we want them to be) the two vector spaces $(\pi_2^{-1}(f(y)))^*$
and $(\pi_1^{-1}(y))^*$ must be diffeomorphic, and \emph{a priori} they are not.\footnote{They may have different dimensions.}

Thus, we obtain one condition necessary for there being a diffeomorphism $(V_1\cup_{\tilde{f}}V_2)^*\cong V_2^*\cup_{\tilde{f}^*}V_1^*$, which is that $(\pi_2^{-1}(f(y)))^*\cong(\pi_1^{-1}(y))^*$
for all $y\in Y$. We do note right away that this condition may not be sufficient, in the sense that two pseudo-bundles over the same base may have all the respective fibres diffeomorphic, without
being diffeomorphic themselves (this can be illustrated by the standard example of open annulus and open M\"obius strip, both of which, equipped with the standard diffeology\footnote{The one
determined by their usual smooth structure.} can be seen as pseudo-bundles over the circle). Thus, in general we impose a certain \emph{gluing-dual commutativity condition} (see below) as
an assumption, although later on we will also discuss how it correlates with other conditions (see Section 5).

\subsubsection{The commutativity diffeomorphisms}

We now say more about the commutativity diffeomorphisms mentioned in the previous section.

\paragraph{The diffeomorphism $\Phi_{\cup,\oplus}$} As we have stated above, this diffeomorphism always exists and is defined as a map
$$\Phi_{\cup,\oplus}:(V_1\cup_{\tilde{f}}V_2)\oplus(V_1'\cup_{\tilde{f}'}V_2')\to(V_1\oplus V_1')\cup_{\tilde{f}\oplus\tilde{f}'}(V_2\oplus V_2'),$$ that covers the identity map on  $X_1\cup_f X_2$ and
is given by the following identities:
$$\Phi_{\cup,\oplus}\circ(j_1^{V_1}\oplus j_1^{V_1'})=j_1^{V_1\oplus V_1'}\,\,\mbox{ and }\,\, \Phi_{\cup,\oplus}\circ(j_2^{V_2}\oplus j_2^{V_2'})=j_2^{V_2\oplus V_2'}.$$

\paragraph{The diffeomorphism $\Phi_{\cup,\otimes}$} Also in this case, there is always a diffeomorphism
$$\Phi_{\cup,\otimes}:(V_1\cup_{\tilde{f}}V_2)\otimes(V_1'\cup_{\tilde{f}'}V_2')\to(V_1\otimes V_1')\cup_{\tilde{f}\otimes\tilde{f}'}(V_2\otimes V_2')$$ covering the identity map. It is given by
$$\Phi_{\cup,\otimes}\circ(j_1^{V_1}\otimes j_1^{V_1'})=j_1^{V_1\otimes V_1'}\,\,\mbox{ and }\,\, \Phi_{\cup,\otimes}\circ(j_2^{V_2}\otimes j_2^{V_2'})=j_2^{V_2\otimes V_2'}.$$

\paragraph{The gluing-dual commutativity conditions, and diffeomorphism $\Phi_{\cup,*}$} We have already explained that the gluing-dual commutativity is far from being always present. Here we
define what it actually means for this commutativity to occur, without discussing under which conditions it does (later on we discuss some instances, but we do not have a complete answer).
Specifically, we say that the \textbf{gluing-dual commutativity condition} holds, if there exists a diffeomorphism
$$\Phi_{\cup,*}:(V_1\cup_{\tilde{f}}V_2)^*\to V_2^*\cup_{\tilde{f}^*}V_1^*$$ that covers the switch map, that is,
$$(\pi_2^*\cup_{(\tilde{f}^*,f^{-1})}\pi_1^*)\circ\Phi_{\cup,*}=\varphi_{X_1\leftrightarrow X_2}\circ(\pi_1\cup_{(\tilde{f},f)}\pi_2)^*,$$ and such that the following are true:
$$\left\{\begin{array}{ll}
\Phi_{\cup,*}\circ((j_1^{V_1})^*)^{-1}=j_2^{V_1^*} & \mbox{on }(\pi_2^*\cup_{(\tilde{f}^*,f^{-1})}\pi_1^*)^{-1}(i_2^{X_1}(X_1\setminus Y)),\\
\Phi_{\cup,*}\circ((j_2^{V_2})^*)^{-1}=j_2^{V_1^*}\circ\tilde{f}^* & \mbox{on }(\pi_2^*\cup_{(\tilde{f}^*,f^{-1})}\pi_1^*)^{-1}(i_2^{X_1}(Y)),\\
\Phi_{\cup,*}\circ((j_2^{V_2})^*)^{-1}=j_1^{V_2^*} & \mbox{on }(\pi_2^*\cup_{(\tilde{f}^*,f^{-1})}\pi_1^*)^{-1}(i_1^{X_2}(X_2\setminus f(Y))).\end{array}\right.$$

\section{Diffeological pseudo-metrics on diffeological vector pseudo-bundles}\label{pseudo-metrics:on:pseudo-bundles:sect}

To proceed with our discussion we now need a diffeological counterpart of a Riemannian metric, and it is not immediately clear what this should be. In this section we consider a notion of a
\emph{pseudo-metric on a pseudo-bundle}, something which comes as close as possible to the standard notion, although it has its own limitations, the first of which is that it does not always exist.
Specifically, in this section we show that a pseudo-metric, which on any individual fibre is supposed to be the best possible substitute for the scalar product (such a substitute can easily be defined for
any finite-dimensional diffeological vector space, and is called just a pseudo-metric on such), does not always exist on the pseudo-bundle as a whole.

Another item that we point out is that our aim at this moment is to define a pseudo-metric, meant as a diffeological counterpart of a Riemannian metric, on a generic diffeological vector pseudo-bundle,
although the proper analogy would be to put it on a suitable model of the tangent bundle. But, as we already pointed out in the Introduction and in Section 3, there is not yet a standard theory of
tangent spaces and tangent bundles for diffeological spaces, although various attempts to develop such have been made, see \cite{HeTangent}, \cite{HMtangent}, \cite{CWtangent}, and references
therein. We therefore avoid tying ourselves down to a specific construction in favor of a more abstract treatment, applicable to any diffeological vector pseudo-bundle (with finite-dimensional fibres). 
The material in this section is based on \cite{pseudometric-pseudobundle} and in part on \cite{exterior-algebras-pseudobundles}.

\subsection{The case of a single vector space}\label{pseudometric:sect}

As recalled in the Introduction, already in the case of a finite-dimensional\footnote{It is more complicated in the infinite-dimensional case, which we do not consider.} diffeological vector space the
appropriate analogue of the scalar product is not, in fact, a scalar product. What this means that a finite-dimensional diffeological vector space $V$ admits a smooth symmetric definite-positive bilinear
form $V\times V\to\matR$ if and only if $V$ is diffeomorphic to the standard $\matR^n$ for an appropriate $n$ (see \cite{iglesiasBook}, p. 74, Ex. 70). It follows that a generic diffeological vector space
the notion of the scalar product must be replaced by something that, for the given space, comes as close as possible to the scalar product, \emph{i.e.}, a smooth form of the maximal rank possible.
We call such a form a \textbf{pseudo-metric on $V$}; it turns out (not surprisingly) that its rank is the dimension of its diffeological dual.

\paragraph{The absence of scalar products} The following easy example shows why a diffeological vector space typically does not admit a smooth scalar product. We stress how the presence of just
one non-smooth plot is sufficient to prevent the existence of such.

\begin{example}\label{Rn:vector:nontrivial:ex}
Let $V=\matR^n$, and let $v_0\in V$ be any non-zero vector. Let $p:\matR\to V$ be defined as $p(x)=|x|v_0$; let $\calD$ be any vector space diffeology on $V$ that contains $p$ as a plot.\footnote{Such
diffeology does certainly exist; for instance, the coarse diffeology would do.} Suppose that $A$ is a symmetric $n\times n$ matrix, and assume that the bilinear form $\langle v|w\rangle_A=v^tAw$
associated to $A$ is smooth with respect to $\calD$ and the standard diffeology on $\matR$. We claim that $A$ is degenerate.

Indeed, $\langle v|w\rangle_A$ being smooth implies, in particular, that for any two plots $p_1,p_2:\matR\to V$ of $V$ the composition map $\langle \cdot|\cdot\rangle_A\circ (p_1,p_2):\matR\to\matR$
is smooth in the usual sense; this map acts as $\matR\ni x\mapsto(p_1(x))^tAp_2(x)$. Let $w\in V$ be an arbitrary vector; denote by $c_w:\matR\to V$ the constant map that sends everything to $w$,
$c_w(x)=w$ for all $x\in\matR$. Such a map is a plot for any diffeology on $V$. But then $(\langle \cdot|\cdot\rangle_A\circ(p,c_w))(x)=|x|v_0^tAw$; the only way for this to be smooth is to have
$v_0^tAw=0$, and since there was no assumption on $w$, this implies that $\langle v_0|\cdot\rangle_A$ is identically zero on the whole of $V$, \emph{i.e.}, that $A$ is degenerate. In other words,
$V$ does not admit a smooth scalar product.
\end{example}

Note that the above example would work just the same if we had taken $p(x)=f(x)v_0$ with $f(x)$ \emph{any} function $\matR\to\matR$ that is not differentiable (for instance) in at least one point.
We now cite the reverse statement, giving its proof for illustrative purposes.

\begin{prop} \emph{(This is actually a solution to Exercise 70 on p. 74 of \cite{iglesiasBook}.)\footnote{It is similar to the solution of the exercise given on p. 387 of \cite{iglesiasBook}; perhaps it is a
bit more direct.}} Let $V$ be $\matR^n$ endowed with a vector space diffeology $\calD$ such that there exists a smooth scalar product. Then every plot $p$ of $\calD$ is a smooth map in the usual
sense.
\end{prop}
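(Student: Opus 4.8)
The plan is to use the given smooth scalar product to extract, from an arbitrary plot, its coordinate functions and to show that these are smooth in the usual sense, thereby reversing the mechanism of Example \ref{Rn:vector:nontrivial:ex}. Write $g=\langle\cdot,\cdot\rangle$ for the smooth scalar product on $V=\matR^n$. Since $g$ is positive-definite, I can fix a basis $\{e_1,\ldots,e_n\}$ of $V$ that is orthonormal with respect to $g$, so that every $v\in V$ admits the expansion $v=\sum_{i=1}^n g(v,e_i)e_i$.

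First I would observe that for each fixed index $i$ the constant map $c_{e_i}:U\to V$ sending everything to $e_i$ is a plot of $\calD$; this holds for any diffeology by the covering condition. Let now $p:U\to V$ be an arbitrary plot of $\calD$, where $U\subseteq\matR^m$ is a domain. Because the product $V\times V$ carries the product diffeology and the factors are finite in number, the pair $(p,c_{e_i})$ is a plot of $V\times V$. The smoothness of $g$ with respect to the product diffeology on $V\times V$ and the standard diffeology on $\matR$ then forces the composition
$$U\ni u\mapsto g(p(u),c_{e_i}(u))=g(p(u),e_i)=:p_i(u)\in\matR$$
to be smooth in the usual sense, for every $i=1,\ldots,n$.

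Finally, the orthonormal expansion gives $p(u)=\sum_{i=1}^n p_i(u)e_i$ for every $u\in U$, so $p$ is a finite linear combination, with coefficients $p_i$ that are smooth in the usual sense, of the fixed vectors $e_i$; hence $p$ is a smooth map $U\to\matR^n$ in the usual sense (passing from the $g$-orthonormal basis to the standard one amounts only to a fixed linear change of coordinates, which preserves usual smoothness). The key point, and the only place where the hypothesis is genuinely used, is the extraction of the $p_i$: positive-definiteness guarantees both the existence of a $g$-orthonormal basis and that probing $p$ against the constant covectors $g(\cdot,e_i)$ recovers all of its coordinates. I do not expect any real obstacle here beyond verifying that $(p,c_{e_i})$ qualifies as a plot of the product diffeology, which is immediate from the finiteness of the product. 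Note that had $g$ been merely semi-definite (a pseudo-metric rather than a scalar product), this recovery would fail precisely along the kernel of $g$, which is exactly why the statement is specific to scalar products.
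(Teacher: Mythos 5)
Your proof is correct and follows essentially the same route as the paper's: both arguments pair the arbitrary plot $p$ with constant plots via the smooth scalar product to recover usually-smooth coordinate functions, and then conclude by expanding $p$ in a fixed basis. The only cosmetic difference is that you work with a $g$-orthonormal basis (so the probing functionals $g(\cdot,e_i)$ recover the coordinates directly), whereas the paper uses the eigenvector basis of the defining matrix $A$ and the non-vanishing of its eigenvalues to the same effect.
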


\begin{proof}
Let $A$ an $n\times n$ non-degenerate symmetric matrix such that the associated bilinear form on $V$ is smooth with respect to the diffeology $\calD$, and let $\{v_1,\ldots,v_n\}$ be its eigenvector
basis. Let $\lambda_i$ be the eigenvalue relative to the eigenvector $v_i$.

Let $p:U\to V$ be a plot of $\calD$; we wish to show that it is smooth as a map $U\to\matR^n$. Recall that $\langle\cdot|\cdot\rangle_A$ being smooth implies that for any two plots $p_1,p_2:U\to V$
the composition $\langle\cdot|\cdot\rangle_A\circ(p_1,p_2)$ is smooth as a map $U\to\matR$. Let $c_i:U\to V$ be the constant map $c_i(x)=v_i$; this is of course a plot of $\calD$. Set $p_1=p$ and
$p_2=c_i$; then the above composition map writes as $\lambda_i\langle p(x)|v_i\rangle$, where $\langle\cdot|\cdot\rangle$ is the canonical scalar product on $\matR^n$.

Since $A$ is non-degenerate, all $\lambda_i$ are non-zero; this implies that each function $\langle p(x)|v_i\rangle$ is a smooth map. And since $v_1,\ldots,v_n$ form a basis of $V$, this implies
that for any $v\in V$ the function $\langle p(x)|v\rangle$ is a smooth one. In particular, this is true for any $e_j$ in the canonical basis of $\matR^n$; and in the case $v=e_j$ the scalar product
$\langle p(x)|e_j\rangle$ is just the $j$-th component of $p(x)$. Thus, we obtain that all the components of $p$ are smooth functions, therefore $p$ is a smooth map.
\end{proof}

Note also that the example given prior to this proposition can easily be extended to obtain a finite-dimensional diffeological vector space, with not too large a diffeology, such that the only smooth
linear map is the zero map. Namely, it suffices to take the vector space diffeology generated generated by the $n$ maps $x\mapsto|x|e_i$ for $i=1,\ldots,n$. By the same reasoning as in the example,
applied $n$ times, one sees that a linear map, assumed to be smooth, must necessarily be the zero map, although the diffeology in question is a very specific one.

\paragraph{A pseudo-metric: the best possible substitute} This is a very natural notion. A pseudo-metric on a finite-dimensional diffeological vector space $V$ is a smooth symmetric bilinear of
form of maximal rank possible. It is easy to see (\cite{pseudometric}) that this maximal rank is the dimension of the diffeological dual of $V$. Thus, we have the following definition:

\begin{defn}
Let $V$ be a diffeological vector space of finite dimension $n$, and let $\varphi:V\times V\to\matR$ be a smooth symmetric positive semidefinite bilinear form on it. We say that $\varphi$ is a 
\textbf{pseudo-metric} if the multiplicity of its eigenvalue $0$ is equal to $n-\dim(V^*)$.
\end{defn}

Such a pseudo-metric always exists on any finite-dimensional diffeological vector space. One interesting use of it is that it naturally determines, in $V$, its (unique) subspace maximal for the
following two properties: its subset diffeology is the standard one, and it splits off as a smooth direct summand.\footnote{Meaning that the diffeology on $V$ coincides with the corresponding vector
space sum diffeology.} The restriction of the pseudo-metric on this subspace is the usual scalar product.

\paragraph{The dual of a pseudo-metric} We now consider induced pseudo-metrics on the diffeological dual spaces (and then on the dual pseudo-bundles). The situation is rather simple here: for
any finite-dimensional diffeological vector space, a pseudo-metric on it induces a true metric on the diffeological dual, which, in particular, turns out to be a standard diffeological vector space of
the appropriate dimension (see \cite{pseudometric}).

\begin{thm}\label{dual:diff-gy:standard:thm}
Let $V$ be a finite-dimensional diffeological vector space, and let $V^*$ be its diffeological dual. Then the functional diffeology on $V^*$ is standard.
\end{thm}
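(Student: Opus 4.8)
The plan is to fix a linear identification $V^*\cong\matR^k$ with $k=\dim(V^*)$ (this is legitimate because $V^*\leqslant L(V,\matR)\cong\matR^n$, so $V^*$ is a finite-dimensional vector space), and then to prove the two inclusions between the functional diffeology on $V^*$ and the standard diffeology on $\matR^k$; their equality is exactly the assertion that the functional diffeology is standard.

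For the easy inclusion, I would use that the functional diffeology makes $V^*$ a \emph{diffeological vector space}, as recalled earlier in the text; hence it is a vector space diffeology on $\matR^k$. As observed in Section \ref{diffeology:defn:sect} (any vector space diffeology on $\matR^k$ includes all maps of the form $\sum_i f_i(x)e_i$), every vector space diffeology contains all usual smooth maps. Thus the standard diffeology is contained in the functional one.

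For the reverse inclusion I would argue directly that every plot of the functional diffeology is an ordinary smooth map. By the definition of the functional diffeology as the coarsest diffeology making the evaluation map $\mbox{\textsc{ev}}\colon V^*\times V\to\matR$ smooth, a map $p\colon U\to V^*$ (with $U\subseteq\matR^m$) is a plot if and only if for every plot $q\colon U'\to V$ of $V$ the map $(u,u')\mapsto p(u)(q(u'))$ is smooth (in the usual sense) on $U\times U'$. I would now feed in only the cheapest possible plots of $V$: for an arbitrary fixed $v\in V$, the constant map $c_v\colon u'\mapsto v$ is a plot of $V$ by the covering condition, and evaluating against it shows that $u\mapsto p(u)(v)$ is an ordinary smooth function of $u$, for \emph{every} $v\in V$. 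Taking $v=e_1,\ldots,e_n$ in $V=\matR^n$, the functions $u\mapsto p(u)(e_i)$ are precisely the components of $p(u)$ under the identification $L(V,\matR)\cong\matR^n$; since all of them are smooth, $p$ is an ordinary smooth map into $\matR^n$ with image in the linear subspace $V^*$, hence an ordinary smooth map into $V^*\cong\matR^k$. Therefore the functional diffeology is contained in the standard one, and combining with the previous paragraph gives equality.

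The proof is short, so there is no serious obstacle, only two points requiring care. The first is the plot characterization of the functional diffeology (the single-space specialization of the dual-bundle Lemma recalled above), which must be invoked correctly. The second, and genuinely the crux, is recognizing that testing against \emph{constant} plots already suffices to force usual smoothness: this works precisely because $V$ is finite-dimensional, so finitely many evaluations against $e_1,\ldots,e_n$ recover all coordinates of $p(u)$. Notably, the smoothness of the functionals in $V^*$ is irrelevant for this direction — only the finiteness of $\dim V$ is used — which is consistent with the remark in the excerpt that the infinite-dimensional case is genuinely different.
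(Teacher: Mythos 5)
Your proof is correct, but it takes a genuinely different route from the one the paper relies on. The paper obtains the theorem via the pseudo-metric machinery: one fixes a pseudo-metric $g$ on $V$ and uses the pairing $v\mapsto g(v,\cdot)$ to construct an explicit basis of $V^*$, then checks that this basis generates the standard diffeology --- an argument modeled on the smooth case, which presupposes the (not entirely trivial) existence of pseudo-metrics on finite-dimensional diffeological vector spaces. You bypass that machinery entirely: one inclusion is the soft fact that the functional diffeology makes $V^*=L^{\infty}(V,\matR)$ a diffeological vector space, combined with the observation (both stated in Section 2) that any vector space diffeology on a finite-dimensional space contains all ordinarily smooth maps; the reverse inclusion comes from testing the evaluation only against \emph{constant} plots of $V$, which forces each component $u\mapsto p(u)(e_i)$, and hence $p$ itself since $\dim V<\infty$, to be ordinarily smooth as a map into $L(V,\matR)\cong\matR^n$, with image in the linear subspace $V^*$. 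Both steps are legitimate: the plot characterization you invoke is exactly the single-fibre specialization of the dual-bundle lemma quoted in Section 4, and your remark that constant plots suffice is the genuine crux and is where finite-dimensionality enters. What your argument buys is economy and independence from existence results; what the paper's argument buys is the extra output it uses immediately afterwards, namely the explicit basis of $V^*$ and the induced scalar product on it (the corollary that follows the theorem), which your proof does not produce. The only point deserving an explicit sentence is that the identification $V^*\cong\matR^k$ is immaterial, since any two linear identifications differ by a linear automorphism of $\matR^k$, smooth in both directions; this is routine.
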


The proof of this statement is actually carried out using a pseudo-metric to construct a basis of $V^*$ that generates its standard diffeology; thus, it is analogous to what happens in the
usual smooth case. Also in complete analogy with the standard case, to a pseudo-metric on $V$ there corresponds a true metric on $V^*$:

\begin{cor}
Any pseudo-metric $\langle\cdot|\cdot\rangle_A$ on $V$ induces a true metric on the diffeological dual $V^*$ of $V$, via the natural pairing that assigns to each $v\in V$ the smooth linear functional
$\langle\cdot|v\rangle_A$.
\end{cor}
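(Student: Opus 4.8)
The plan is to realize the desired metric as the form on $V^*$ obtained by transporting the pseudo-metric across the pairing map. First I would introduce the linear map $\Phi\colon V\to V^*$ defined by $\Phi(v)=\langle\cdot\,|\,v\rangle_A$. Since the pseudo-metric is smooth and the assignment $w\mapsto(w,v)$ is a smooth map $V\to V\times V$ for each fixed $v$, each $\Phi(v)$ is a smooth linear functional; thus $\Phi$ genuinely lands in $V^*=L^\infty(V,\matR)$, and it is manifestly linear in $v$.

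Next I would identify $\ker\Phi$ with the radical $\{v\in V\mid\langle w\,|\,v\rangle_A=0\ \forall w\in V\}$ of the form, whose dimension is $n-\mathrm{rank}(A)$. By the defining property of a pseudo-metric this rank equals $\dim(V^*)$, so $\ker\Phi$ has dimension $n-\dim(V^*)$, and by rank--nullity the image of $\Phi$ is a subspace of $V^*$ of dimension exactly $\dim(V^*)$; hence $\Phi$ is surjective. This lets me define a bilinear form $\langle\cdot\,|\,\cdot\rangle_{A^*}$ on $V^*$ by $\langle\xi\,|\,\eta\rangle_{A^*}=\langle v\,|\,w\rangle_A$ for any $v,w$ with $\Phi(v)=\xi$ and $\Phi(w)=\eta$. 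I would check well-definedness: replacing $v$ by $v'$ with $\Phi(v')=\xi$ puts $v-v'$ in the radical, so $\langle v\,|\,w\rangle_A=\langle v'\,|\,w\rangle_A$, and symmetrically in the second slot. Symmetry of $\langle\cdot\,|\,\cdot\rangle_{A^*}$ is then inherited from that of $A$.

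It remains to verify that this form is a genuine metric, \emph{i.e.} positive definite and smooth. For definiteness, suppose $\langle\xi\,|\,\xi\rangle_{A^*}=0$ with $\xi=\Phi(v)$; then $\langle v\,|\,v\rangle_A=0$, and since $A$ is positive semidefinite the Cauchy--Schwarz inequality gives $\langle v\,|\,w\rangle_A^2\leqslant\langle v\,|\,v\rangle_A\,\langle w\,|\,w\rangle_A=0$ for every $w$, so $v$ lies in the radical and $\xi=\Phi(v)=0$. For smoothness I would invoke Theorem \ref{dual:diff-gy:standard:thm}: the functional diffeology on $V^*$ is standard, so $V^*\cong\matR^{\dim(V^*)}$ and $V^*\times V^*$ carries the standard diffeology; any bilinear form on a standard finite-dimensional space is a polynomial in the coordinates, hence smooth, so smoothness of $\langle\cdot\,|\,\cdot\rangle_{A^*}$ is automatic. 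The one step carrying real weight is the surjectivity of $\Phi$, which is exactly where the two inputs combine: the rank condition on the pseudo-metric pins down the dimension of the image, while Theorem \ref{dual:diff-gy:standard:thm} guarantees that $V^*$ has precisely that dimension together with the standard diffeology; once surjectivity is in hand, the remaining verifications are the routine algebra just sketched.
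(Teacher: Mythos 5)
Your proof is correct and follows essentially the same route as the paper: the induced metric is defined by transporting the pseudo-metric through the pairing map $\Phi(v)=\langle\cdot\,|\,v\rangle_A$, with well-definedness coming from the fact that the ambiguity lies in the radical, surjectivity of $\Phi$ from the rank condition in the definition of a pseudo-metric, and smoothness from Theorem \ref{dual:diff-gy:standard:thm} (standardness of $V^*$). The verifications you add (rank--nullity for surjectivity, Cauchy--Schwarz for definiteness) are exactly the routine checks the paper leaves implicit.
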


It is quite easy to see that this is a \emph{vice versa} statement: if $\langle\cdot|\cdot\rangle_B$ is a smooth scalar product on $V^*$ then it suffices to take an orthonormal (with respect to the
canonical scalar product associated to the standard structure on $V^*$) basis $\{f_1,\ldots,f_k\}$ of $V^*$ to get a pseudo-metric on $V$ that induces $\langle\cdot|\cdot\rangle_B$: this pseudo-metric
is given by $\sum_{i=1}^k f_i\otimes f_i$.

\paragraph{An example of a pseudo-metric on a diffeological vector space} Let us consider $V=\matR^3$ endowed with the vector space diffeology generated by the plot $p:\matR\to V$ acting by
$p(x)=(0,|x|,|x|)$. We first note that the diffeological dual of $V$ is generated by the maps $e^1$ and $e^2-e^3$ (where $\{e^1,e^2,e^3\}$ is, obviously, the canonical basis of the usual dual of
$\matR^3$). In particular, we have that $\dim(V^*)=2$. It is then easy to see that any smooth symmetric bilinear form on $V$ is given by a matrix of form
${\small{\left(\begin{array}{ccc} c & a & -a \\
                                  a & b & -b \\
                                  -a & -b & b \end{array}\right)}}$
for some $a,b,c\in\matR$. A specific example can be obtained by taking, for instance, $a=1$, $b=c=2$, which gives
$A={\small{\left(\begin{array}{ccc} 2 & 1 & -1 \\
                                  1 & 2 & -2 \\
                                  -1 & -2 & 2 \end{array}\right)}}$.
Finally, the diffeological dual of $V$ is generated by the vectors $e^1$ and $e^2-e^3$, which form its basis; with respect to this basis, the induced metric on $V^*$ has matrix
$\frac19{\small{\left(\begin{array}{cc} 6 & 5 \\
                           5 & 6 \end{array}\right)}}$.

\subsection{Pseudo-metrics on diffeological vector pseudo-bundles}

Given a diffeological vector pseudo-bundle $\pi:V\to X$ such that its fibres are finite-dimensional, there is an obvious way to define a pseudo-metric on $V$ (it extends pretty much \emph{verbatim}
from the definition of a Riemannian metric, just the notion of a scalar product gets replaced by that of a pseudo-metric).

\begin{defn}
A \textbf{pseudo-metric} on the diffeological vector pseudo-bundle $\pi:V\to X$ is any smooth section $g$ of the corresponding pseudo-bundle $\pi^*\otimes\pi^*:V^*\otimes V^*\to X$ such that for
every $x\in X$ $g(x)$ is a pseudo-metric on $\pi^{-1}(x)$.
\end{defn}

Note that by Theorem 2.3.5 of \cite{vincent} $g(x)$ is indeed a bilinear map on $\pi^{-1}(x)$, so the notion is well-defined; on the other hand, it naturally presents existence questions. Indeed, just
as it happens with the non-existence, in general, of a smooth scalar product for diffeological vector spaces, also a pseudo-metric on a diffeological vector pseudo-bundle, in the sense of the definition
just given, might easily fail to exist. We are not able to give a complete answer to this question, but we do observe that the existence of a pseudo-metric on a pseudo-bundle seems to be related
to the pseudo-bundle being, or not, locally trivial. In this section we provide some preliminary observations, as well as some technical remarks; we provide explicit examples of when a pseudo-metric
does not exist in Section \ref{pseudometric:bundles:existence:sect}, while in Section \ref{pseudometric:gluing:bundles:sect}, on the other hand, we discuss the interactions of the gluing construction
with pseudo-metrics, showing how, under certain natural conditions, gluing two pseudo-bundles endowed with a pseudo-metric each allows to obtain again a pseudo-bundle with a pseudo-metric.

\paragraph{A diffeological vector pseudo-bundle $\matR^n\to\matR^k$} Our first examples are based, as underlying topological map, on the standard projection of $\matR^n$ onto its first $k$
coordinates, therefore on $\matR^k$. While in most cases the diffeology on the total space $\matR^n$ is a product diffeology (corresponding to its presentation as $\matR^k\times\matR^{n-k}$),
it is also among such pseudo-bundles, the simplest ones from the topological point of view, that we find non-locally trivial pseudo-bundles, see Example \ref{R2:to:R:nontrivial:ex} above, and
instances of those that do not admit a pseudo-metric (as we will see, this also occurs for the Example \ref{R2:to:R:nontrivial:ex}).

\paragraph{Notation for topologically trivial pseudo-bundles} The notation that we use in the examples below, particularly for pseudo-metrics, is an \emph{ad hoc} choice designed to apply not
more than to the instances being described. Pretty much always we use pseudo-bundles that are based on the projection of some $V=\matR^n$ onto its first $k$ coordinates, and so each fibre is of
form $\{x\}\times\matR^{n-k}$ with $x\in\matR^k$; the vector space structure (but not the diffeology) is that of the factor $\matR^{n-k}$. It follows that the dual of each fibre can be viewed as an
element of $\mbox{Span}(e^{k+1},\ldots,e^n)$, so a generic element of the dual bundle $V^*$ can be written as $((x_1,\ldots,x_k),a_{k+1}e^{k+1}+\ldots+a_ne^n)$, or, more briefly, as
$$(x,a_{k+1}e^{k+1}+\ldots+a_ne^n)\in V^*,$$ although this is not \emph{vice versa}, in the sense that not all such expressions define an element of $V^*$, the diffeological dual being in general
a proper subset of the usual dual. By extension, then, a smooth bilinear form and, more specifically, a prospective pseudo-metric in particular writes as
$$(x,\sum_{i,j=k+1}^n a_{ij}e^i\otimes e^j)\in V^*\otimes V^*$$ (once again, with various restrictions on the coefficients to account for the fact that in general not all such expressions would
define a smooth form on $V$).

\paragraph{A sample pseudo-metric on a trivial non-standard pseudo-bundle} The following is an example of a pseudo-metric on a pseudo-bundle, which is both topologically and diffeologically
trivial, but has non-standard fibre.

\begin{example}\label{R3:to:R:z-nontrivial:ex}
Let $n=3$ and $k=1$, so we have $\pi:\matR^3\to\matR$ given by $\pi(x,y,z)=x$; endow $\matR$ with the standard diffeology and $\matR^3$ with the finest vector space diffeology generated by
the map $p:U=\matR^2\to\matR^3$ acting by $p(u_1,u_2)=(u_1,0,|u_2|)$. This is a diffeology seen before and is a simple example of a non-standard diffeology; recall in particular that the fibre is
diffeomorphic to $\matR^2$ with the vector space diffeology generated by the map $u\mapsto|u|e_2$. Setting $g(x)=(x,(x^2+1)e^2\otimes e^2)$ gives a pseudo-metric on this bundle (if we consider
the latter expression as a bilinear form in the coordinates $y,z$ on the fibre $\pi^{-1}(x)$). In fact, it is easy to see that any pseudo-metric on this pseudo-bundle writes, in global coordinates of
$\matR^3$, as $g(x)=(x,f(x)e^2\otimes e^2)$, where $f:\matR\to\matR$ is a smooth everywhere positive function.
\end{example}

The example is particularly simple in that it admits a constant pseudo-metric (we could take $f$ to be a positive constant). Now, starting from the standard bundles, as well as simple pseudo-bundles
such as the one above, and utilizing the gluing procedure for pseudo-bundles, it is easy to construct more complicated examples of pseudo-bundles carrying a pseudo-metric, in particular, examples
where both the total space and the base space are non-trivial topologically.\footnote{Non-trivial at this moment means first of all not being homeomorphic to $\matR^m$ for some $m$, so includes
contractible spaces; but it is also easy to obtain spaces with non-trivial homotopy.}

\paragraph{A topologically non-trivial pseudo-bundle with a pseudo-metric} We now give an example of a pseudo-bundle which is not a topologically trivial one (meaning that it is different from
all the projections $\matR^n\to\matR^k$). It is obtained by the gluing construction from two standard projections.

\begin{example}
The following pseudo-bundle is obtained by gluing together two copies of the standard (trivial) bundle $\matR^2\to\matR$, with the bundle map $(x,y)\mapsto x$. Namely, denote by $V$ the result of
gluing of $\matR^2$, written in coordinates $(x_1,y_1)$, to $\matR^2$, written in coordinates $(x_2,y_2)$, via the map $\tilde{f}(0,y_1)=(0,y_2)$. From the topological point of view, this space is
homeomorphic to the union of two coordinate planes in $\matR^3$, for instance, $V=\{(0,y,z)\}\cup\{(x,0,z)\}$. Notice that the gluing diffeology on $V$ coming from the first representation
(where the two copies of $\matR^2$ are endowed with the standard diffeology) is the same as the subset diffeology relative to the standard diffeology of $\matR^3$ coming from the second
representation.

The space $V$ is the total space of our pseudo-bundle; define $X$ to be the wedge, at their respective origins, of two copies of standard $\matR$. Once again, it can be represented as the result of gluing
and as the subset $X=\{(0,y,0)\}\cup\{(x,0,0)\}$ of $\matR^3$; just as for $V$, the resulting diffeology is the same. Relative to the first representation, the pseudo-bundle map acts by $(x_1,y_1)\mapsto x_1$
and $(x_2,y_2)\mapsto x_2$ (which is well-defined with respect to the gluing); relative to the second, it is the restriction on $V\subset\matR^3$ of the standard projection of $\matR^3$ onto its first
coordinate.

As for the choice of a pseudo-metric, it is of course natural to choose them separately on each of the two pseudo-bundles. It is also natural that on the two fibres being identified, the choices must be
compatible; we can use the fact that we have actually two identical pseudo-bundles, with gluing by identity (see the first presentation of each), and so choose the same pseudo-metric on each.
Observe finally that any pseudo-metric on the projection $\matR^2\to\matR$, seen as a pseudo-bundle in our sense, writes in the form $x\mapsto(x,f(x)e^2\otimes e^2)$ for any smooth (in the usual
sense) everywhere positive function $f:\matR\to\matR$. Other compatible choices would simply be $f_1$ and $f_2$ with the same properties as just stated and satisfying $f_1(0)=f_2(0)$ (such as
$f_1(x)=e^x$ and $f_2(x)=x^2+1$, for instance).
\end{example}

\subsection{Existence and non-existence of pseudo-metrics}\label{pseudometric:bundles:existence:sect}

It is quite easy to see that there are diffeological vector pseudo-bundles that do not admit pseudo-metrics; below we give an example of such. On the other hand, the construction of gluing
applied to compatible maps frequently allows to obtain pseudo-metrics on many pseudo-bundles that result from gluings between the domains and ranges of these pseudo-metrics. This
suggests that some highly nontrivial pseudo-bundles carrying a pseudo-metric could be obtained starting with a collection of standard bundles (ones modeled on projections $\matR^n\to\matR^k$
and carrying a product diffeology) and performing a multitude of gluings.

\paragraph{Non-existence of pseudo-metrics} Let us consider the following example of a non locally trivial pseudo-bundle (it is one of the simplest examples of such; we have already encountered it
above).

\begin{example}\label{R2:to:R:nontrivial:ex}
Consider the usual projection $\pi$ of $\matR^2$ on its $x$-axis $\matR$; endow $\matR$ with the standard diffeology, and endow $\matR^2$ with the pseudo-bundle diffeology generated by the plot
$p:\matR^2\to\matR^2$ given by $p(x,y)=(x,|xy|)$.
\end{example}

\begin{lemma}
The pseudo-bundle of Example \ref{R2:to:R:nontrivial:ex} does not admit a pseudo-metric.
\end{lemma}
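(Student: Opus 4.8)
The plan is to show that the fibrewise values of any prospective pseudo-metric are completely forced by the fibre diffeologies, and that the resulting section is necessarily discontinuous at the origin, hence cannot be smooth. First I would determine the diffeology of each fibre together with that of its diffeological dual. As was already recorded for this pseudo-bundle, the subset diffeology on the fibre $\pi^{-1}(x_0)=\{x_0\}\times\matR$ is the vector space diffeology generated by the map $y\mapsto|x_0|\cdot|y|$. For $x_0\neq 0$ this is the nonstandard diffeology generated by $y\mapsto|y|$, and a smooth linear functional $\ell(t)=ct$ on it would make $c|y|$ smooth, forcing $c=0$; thus $(\pi^{-1}(x_0))^*=\{0\}$ and $\dim((\pi^{-1}(x_0))^*)=0$. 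For $x_0=0$ the generating plot is constant, so the fibre is the standard $\matR$, whose diffeological dual is $1$-dimensional.

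Next I invoke the definition of a pseudo-metric on a vector space, namely that the multiplicity of its eigenvalue $0$ equals $n-\dim(V^*)$. On each fibre $\pi^{-1}(x)\cong\matR$ we have $n=1$, so this pins down $g(x)$ completely: for $x\neq 0$ the $0$-eigenvalue must have multiplicity $1-0=1$, forcing $g(x)=0$, while for $x=0$ the multiplicity must be $0$, so $g(0)$ is a positive-definite form, in particular $g(0)\neq 0$. Writing $g(x)=a(x)\,e^2\otimes e^2$ in the global fibre coordinate, these constraints read $a(x)=0$ for every $x\neq 0$ and $a(0)>0$. Hence the only candidate for a pseudo-metric is this uniquely determined, discontinuous-looking section.

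I would then derive the contradiction from smoothness. A smooth section is in particular D-continuous, and on $X\setminus\{0\}$ the section $g$ coincides with the zero section of $V^*\otimes V^*$, which is smooth and whose value over the origin is the zero covector $0_0$. Since the fibre of $V^*\otimes V^*$ over $0$ carries the standard (hence Hausdorff) diffeology, the point $g(0)\neq 0_0$ is genuinely separated from $0_0$ inside that fibre; but approaching $0$ through points $x\neq 0$ along the (continuous) zero section forces the value $0_0$ in the limit, which is incompatible with $g(0)\neq 0_0$. Therefore $g$ fails to be continuous at the origin, contradicting smoothness, and no pseudo-metric exists.

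The main obstacle is making this last step fully rigorous: one must show that the single-point discrepancy really destroys continuity, i.e. that the nonzero value $g(0)$ does not lie in the D-closure of the zero section. I would handle this by using the explicit plot characterization of the dual-bundle diffeology (the cited Lemma from \cite{vincent}) together with the fact that the fibre over $0$ is standard, so that distinct points of that fibre admit disjoint D-open neighbourhoods; exhibiting a D-open set containing $g(0)$ but missing the image of the zero section near the origin then shows that $g^{-1}$ of that set equals $\{0\}$, which is not open, so $g$ is not a plot of $V^*\otimes V^*$ and hence not smooth.
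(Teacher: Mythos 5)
Your first two steps coincide with the paper's: the fibrewise analysis (trivial diffeological duals over every $x\neq 0$, standard fibre over $0$) and the resulting forced form $g(x)=(x,\delta(x)\,e^2\otimes e^2)$, with $\delta$ vanishing away from the origin and positive at it, are exactly how the paper's proof begins. The divergence is in the decisive third step, and there your proposal has a genuine gap. The paper never argues topologically: it invokes the characterization of plots of dual (and, by extension, tensor-of-dual) pseudo-bundles by evaluations (the lemma cited from \cite{vincent}), writes the evaluation of $g$ on a pair of plots $q,s$ of $V$ explicitly as $u\mapsto\delta(q_1(u))\,q_2(u)\,s_2(u)$ on the set $\{u:q_1(u)=s_1(u)\}$, and derives the contradiction from the failure of smoothness of this real-valued function against suitable plots of $V$.

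The gap in your route is the separation claim. From the standardness (hence Hausdorffness) of the single fibre $(\pi^{-1}(0))^*\otimes(\pi^{-1}(0))^*$ you conclude that $g(0)$ is ``genuinely separated'' from $0_0$, and you propose to exhibit a D-open subset of the \emph{total space} $V^*\otimes V^*$ containing $g(0)$ and missing the zero section over nearby points. But Hausdorffness of the fibre in its own subset diffeology gives no such separation in the total space: the D-topology of a subset is in general strictly finer than the topology it inherits from the ambient D-topology, so disjoint D-open sets of the fibre need not be traces of disjoint D-open sets of $V^*\otimes V^*$. Whether a separating D-open set exists is governed by which maps passing through the fibre over $0$ are plots of $V^*\otimes V^*$: if, say, a map of the form $x\mapsto(x,\delta(x)\,e^2\otimes e^2)$ were itself a plot, then \emph{every} D-open set containing $g(0)$ would have to contain zero-section points $(x,0)$ with $x\neq 0$ arbitrarily small, and no separating set could exist at all. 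Ruling such plots out is precisely the smoothness question you are trying to settle, so your proposed completion is circular: it presupposes exactly the control over the plots of $V^*\otimes V^*$ that only the evaluation computation (against explicit plots of $V$, whose local form is dictated by the pseudo-bundle diffeology generated by $p(x,y)=(x,|xy|)$) can provide. The limit argument suffers from the same defect: in a possibly non-Hausdorff D-topology, $g(0)\neq 0_0$ is perfectly compatible with $g(0)$ lying in the D-closure of the punctured zero section, so no incompatibility follows from ``approaching the origin along the zero section.'' To make your third step rigorous you would have to carry out the paper's evaluation analysis anyway, at which point the topological detour adds nothing.
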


\begin{proof}
We have already observed that all fibres of $\pi$, except one, have non-standard diffeology; the only standard fibre is $\pi^{-1}(0)$. Now, as vector spaces all these fibres are isomorphic to $\matR$,
so they have dimension one. This allows us to conclude that for any $x\neq 0$ the diffeological dual of $\pi^{-1}(x)$ is trivial; thus, any pseudo-metric, being an element of the tensor product of this
dual with itself, is a zero map. On the other hand, the space $\pi^{-1}(0)$ is standard and so admits a non-zero pseudo-metric (in fact, a true metric). Observe also that the dual bundle $\pi^*$ of
$\pi$ is not a true bundle even from a topological point of view. Indeed, its total space $V^*$ is the union of two copies of $\matR$ joined at the origin; one of these copies projects (trivially) to
$\matR$ which is the base space and the other gets sent to the origin of the base. It is also easy to see that $V^*$ is diffeomorphic, as a diffeological space, to the union of coordinate axes in $\matR^2$
considered with the subset diffeology.

It follows from the remarks above that the potential pseudo-metric $g$ on $V$ writes as $g(x)=(x,\delta(x)e^2\otimes e^2)$, where $\delta:\matR\to\matR$ is a version of the $\delta$-function, one
given by $\delta(x)=0$ for $x\neq 0$ and $\delta(0)=1$ (or any other positive constant).\footnote{This is the only possibility for $g$, up to choosing the specific value of $\delta(0)$.} Let us check
whether such a $g$ defines a smooth section of $V^*\otimes V^*$. By an extension (to the case of tensor products) of our characterization of plots of dual bundles we should check that the evaluation
of $g$ on $q\otimes s$, where $q,s$ with $q,s:U\to V$ are two arbitrary plots of $V$, is a smooth function (on an appropriate domain of definition). Now, if $q(u)=(q_1(u),q_2(u))$ and
$s(u)=(s_1(u),s_2(u))$, the domain of definition is the set of $u$ such that $q_1(u)=s_1(u)$ and the evaluation is the function $u\mapsto\delta(q_1(u))q_2(u)s_2(u)$; for it to be smooth, we must
have $q_1(u)=0\Rightarrow q_2(u)s_2(u)=0$ (otherwise the function would not even be continuous), which does not have to happen. Since we have already observed the $g$ proposed is essentially
the only choice for a pseudo-metric on this pseudo-bundle, we must conclude that in the sense of the definition given this pseudo-bundle does not admit any pseudo-metric.\footnote{That we obtained
zero-maps-only conclusion is an extreme which does not have to happen. Below we will see that in analogous situations with fibres of higher dimension(s), there is an almost-pseudo-metric, meaning
that does give one on most fibres, but, just as in this case, not all of them.}
\end{proof}

Examples similar to the above ones can easily be constructed for any dimension (and the absolute value function can be replaced by any function which is not smooth in at least one point of its domain).
This also allows us to observe that, given $\pi:V\to X$ a  diffeological vector pseudo-bundle whose image under the forgetful functor into the category of topological spaces\footnote{That is, if
we do not take into account the diffeologies of $V$ and $X$.} is a usual vector bundle, the corresponding (diffeological) dual $\pi^*:V^*\to X$ may not be a topological vector bundle. This also
indicates why we cannot limit the discussion to just locally trivial diffeological vector pseudo-bundles (in addition to all the reasons already listed in the previous section and coming from \cite{iglFibre},
\cite{CWtangent}, and others).

\paragraph{Existence} What has been said in the previous paragraph, shows that the existence issue for pseudo-metrics cannot be avoided via simple measures, such as, for instance, imposing some
obvious restrictions on the class of pseudo-bundles under consideration. Furthermore, there does not yet seem to be a complete answer to when a pseudo-metric does or does not exist.\footnote{I am
not aware of existence of one.} Thus, in the rest of this section we attempt to use the gluing construction as an approach to this issue, focusing on the following very natural question: given a gluing
between two pseudo-bundles carrying a pseudo-metric each, under what conditions is there an induced pseudo-metric on the resulting pseudo-bundle?

\subsection{Pseudo-metrics and gluing}\label{pseudometric:gluing:bundles:sect}

We now consider the interaction between pseudo-metrics and diffeological gluing. The starting point is an immediately obvious one: a pseudo-metric on a pseudo-bundle is a collection of
pseudo-metrics on all fibres, and when a gluing is performed, each fibre of the result corresponds to a fibre of one of the two factors of gluing. Thus, the resulting pseudo-bundle comes, it as well, with
a collection of (diffeological vector space) pseudo-metrics on each fibre. The real issue is, is this collection a pseudo-metric on the whole pseudo-bundle, \emph{i.e.}, does it depend smoothly on the
point in the base space?

\subsubsection{Preliminary considerations}

Consider two diffeological vector pseudo-bundles $\pi_1:V_1\to X_1$ and $\pi_2:V_2\to X_2$, each endowed with a pseudo-metric, denoted respectively by $g_1$ and $g_2$. This means that we are
given two smooth maps
$$g_1:X_1\to V_1^*\otimes V_1^*\mbox{ and }g_2:X_2\to V_2^*\otimes V_2^*.$$ Let us also fix a gluing of $V_1$ to $V_2$ along $(\tilde{f},f)$; recall that there is then an induced gluing of
$V_1^*\otimes V_1^*$ to $V_2^*\otimes V_2^*$. Under some conditions, $g_1$ and $g_2$ will be compatible, as smooth maps, with the latter gluing, but the result of gluing of one to the other is
not a pseudo-metric on the pseudo-bundle $\pi_1\cup_{(\tilde{f},f)}\pi_2:V_1\cup_{\tilde{f}}V_2\to X_1\cup_f X_2$ (although it might be used to define one).

Indeed, a pseudo-metric $g$ on the latter pseudo-bundle is first of all a map of form
$$X_1\cup_f X_2\to(V_1\cup_{\tilde{f}}V_2)^*\otimes(V_1\cup_{\tilde{f}}V_2)^*.$$ On the other hand, the above-mentioned induced gluing is along the map $\tilde{f}^*\otimes\tilde{f}^*$, which goes
$V_2^*\otimes V_2^*\to V_1^*\otimes V_1^*$, and, if anything, it covers the inverse of $f$, which therefore we must assume exists (and is smooth). Assuming it does, the gluing along
$(f^{-1},\tilde{f}^*\otimes\tilde{f}^*)$ yields the map
$$g_2\cup_{(f^{-1},\tilde{f}^*\otimes\tilde{f}^*)}g_1:X_2\cup_{f^{-1}}X_1\to(V_2^*\otimes V_2^*)\cup_{\tilde{f}^*\otimes\tilde{f}^*}(V_1^*\otimes V_1^*),$$ which does not have the same shape
as a pseudo-metric should have. Finally, assuming that $f$ is a diffeomorphism of its domain with its image, then the switch map (see above) yields a diffeomorphism between $X_1\cup_f X_2$ and
$X_2\cup_{f^{-1}}X_1$; the main issue then is whether there is a diffeomorphism between the spaces
$$(V_1\cup_{\tilde{f}}V_2)^*\otimes(V_1\cup_{\tilde{f}}V_2)^*\mbox{ and }(V_2^*\otimes V_2^*)\cup_{\tilde{f}^*\otimes\tilde{f}^*}(V_1^*\otimes V_1^*)$$ that covers it. The answer thus depends
on the commutativity conditions, and the corresponding commutativity diffeomorphisms, that were discussed in Section 4.

\subsubsection{Compatible pseudo-metrics}

As has already been mentioned, in order to speak of an induced pseudo-metric on a pseudo-bundle obtained by gluing, the existing pseudo-metrics on the factors of this gluing should satisfy some
natural compatibility condition. Indeed, the basic operation of gluing (between diffeological spaces) is that of identifying $\pi_1^{-1}(y)$ with (a subspace of) $\pi_2^{-1}(f(y))$. Since both are endowed
with a pseudo-metric, $g_1(y)$ the former and $g_2(f(y))$ the latter, it stands to reason that the identification map (the corresponding restriction of $\tilde{f}$) should preserve the pseudo-metrics. 

\begin{defn}\label{compatible:pseudo-metrics:on:two:pseudo-bundles:defn}
Let $g_1$ be a pseudo-metric on $V_1$, and let $g_2$ be a pseudo-metric on $V_2$. We say that $g_1$ and $g_2$ are \textbf{compatible} (with the gluing along $(\tilde{f},f)$) if for all $y\in Y$ and for all 
$v_1,v_2\in\pi_1^{-1}(y)$ we have that
$$g_1(y)(v_1,v_2)=g_2(f(y))(\tilde{f}(v_1),\tilde{f}(v_2)).$$  
\end{defn}

The following is then true.

\begin{lemma}
Suppose that $f$ is invertible. Then the pseudo-metrics $g_1$ and $g_2$ are compatible if and only if they are $(f^{-1},\tilde{f}^*\otimes\tilde{f}^*)$-compatible.
\end{lemma}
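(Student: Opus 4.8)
The plan is to show that, once the definitions are unwound, the two notions of compatibility are literally the same pointwise condition, so the equivalence is a direct verification rather than an estimate. First I would spell out what it means for the sections $g_2$ and $g_1$ to be $(f^{-1},\tilde{f}^*\otimes\tilde{f}^*)$-compatible. Since $f$ is invertible, the induced gluing of $V_2^*\otimes V_2^*$ to $V_1^*\otimes V_1^*$ is along $(\tilde{f}^*\otimes\tilde{f}^*,f^{-1})$, with $f^{-1}$ defined on $f(Y)$, and the bundle map $\tilde{f}^*\otimes\tilde{f}^*$ sends the fibre of $V_2^*\otimes V_2^*$ over $x_2\in f(Y)$ to the fibre of $V_1^*\otimes V_1^*$ over $f^{-1}(x_2)$. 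For $g_2$ and $g_1$ to glue into a well-defined section over $X_2\cup_{f^{-1}}X_1$ they must agree on the identified locus; writing $x_2=f(y)$ with $y\in Y$, this is precisely the requirement that $(\tilde{f}^*\otimes\tilde{f}^*)(g_2(f(y)))=g_1(y)$ for all $y\in Y$. Smoothness of the resulting glued section is automatic from the smoothness of $g_1$, $g_2$ and of the standard inductions, so this pointwise equality is the entire content of $(f^{-1},\tilde{f}^*\otimes\tilde{f}^*)$-compatibility.

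Next I would identify both sides of that equality as bilinear forms on the fibre $\pi_1^{-1}(y)$. By Theorem 2.3.5 of \cite{vincent} (already invoked to make sense of a pseudo-metric), an element of $W^*\otimes W^*$ is the same datum as a bilinear form on $W$, so $g_1(y)$ is a bilinear form on $\pi_1^{-1}(y)$ and $g_2(f(y))$ is one on $\pi_2^{-1}(f(y))$. The key step is to check that $\tilde{f}^*\otimes\tilde{f}^*$ implements, under this identification, the pullback of bilinear forms by the fibrewise linear map $\tilde{f}$: on a decomposable tensor $\phi\otimes\psi$ one has $(\tilde{f}^*\otimes\tilde{f}^*)(\phi\otimes\psi)=(\phi\circ\tilde{f})\otimes(\psi\circ\tilde{f})$, which as a bilinear form sends $(v_1,v_2)$ to $\phi(\tilde{f}(v_1))\psi(\tilde{f}(v_2))$; extending by bilinearity yields
\[
\bigl[(\tilde{f}^*\otimes\tilde{f}^*)(g_2(f(y)))\bigr](v_1,v_2)=g_2(f(y))(\tilde{f}(v_1),\tilde{f}(v_2))
\]
for all $v_1,v_2\in\pi_1^{-1}(y)$.

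Combining the two steps, the condition $(\tilde{f}^*\otimes\tilde{f}^*)(g_2(f(y)))=g_1(y)$, read as an equality of bilinear forms on $\pi_1^{-1}(y)$, holds for all $y\in Y$ if and only if $g_1(y)(v_1,v_2)=g_2(f(y))(\tilde{f}(v_1),\tilde{f}(v_2))$ for all $y\in Y$ and all $v_1,v_2\in\pi_1^{-1}(y)$, which is exactly the compatibility of Definition \ref{compatible:pseudo-metrics:on:two:pseudo-bundles:defn}. Both implications then read off simultaneously, so no separate argument for the two directions is needed. The only real obstacle I anticipate is bookkeeping rather than mathematics: one must keep straight the contravariant reversal (the dual gluing runs from $V_2^*\otimes V_2^*$ to $V_1^*\otimes V_1^*$, covers $f^{-1}$, and has domain $f(Y)$ rather than $Y$), and one must make sure the identification of $W^*\otimes W^*$ with bilinear forms is the same one used both in Definition \ref{compatible:pseudo-metrics:on:two:pseudo-bundles:defn} and in the definition of a pseudo-metric, so that the transpose convention $\tilde{f}^*(\phi)=\phi\circ\tilde{f}$ matches on the nose. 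Once these conventions are pinned down, the verification is immediate.
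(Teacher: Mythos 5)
Your proof is correct and is essentially the only argument this lemma admits: the paper itself states it without proof (deferring to \cite{pseudometric-pseudobundle}), and the content is exactly the definitional unwinding you give, namely that $(f^{-1},\tilde{f}^*\otimes\tilde{f}^*)$-compatibility is the pointwise condition $(\tilde{f}^*\otimes\tilde{f}^*)(g_2(f(y)))=g_1(y)$ for $y\in Y$, while $\tilde{f}^*\otimes\tilde{f}^*$ acts on tensors precisely as pullback of bilinear forms along $\tilde{f}$, so the condition coincides with Definition \ref{compatible:pseudo-metrics:on:two:pseudo-bundles:defn}. The one step you leave tacit is that equality of the associated bilinear forms forces equality of the tensors themselves, i.e., injectivity of the natural map from $(\pi_1^{-1}(y))^*\otimes(\pi_1^{-1}(y))^*$ to $B(\pi_1^{-1}(y),\matR)$; since the fibres are finite-dimensional and their diffeological duals are standard spaces, this is elementary linear algebra and harmless to omit.
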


\subsubsection{Choosing the commutativity diffeomorphisms}

Assuming that $g_1$ and $g_2$ are compatible, we have a well-defined smooth map
$$g_2\cup_{(f^{-1},\tilde{f}^*\otimes\tilde{f}^*)}g_1:X_2\cup_{f^{-1}} X_1\to(V_2^*\otimes V_2^*)\cup_{\tilde{f}^*\otimes\tilde{f}^*}(V_1^*\otimes V_1^*).$$ Since the tensor product always commutes
with gluing, we have
$$\Phi_{\otimes,\cup}^{V_2^*,V_1^*}:(V_2^*\otimes V_2^*)\cup_{\tilde{f}^*\otimes\tilde{f}^*}(V_1^*\otimes V_1^*)\to(V_2^*\cup_{\tilde{f}^*}V_1^*)\otimes(V_2^*\cup_{\tilde{f}^*}V_1^*),$$ where
$\Phi_{\cup,\otimes}^{V_2^*,V_1^*}$ is the version of the commutativity diffeomorphism $\Phi_{\cup,\otimes}$ described in Section 4.7.3, obtained by taking both $V_1$ and $V_1'$ to be $V_2^*$, and
$V_2$ and $V_2'$ to be $V_1^*$ (the upper index that appears in the present notation serves to remind us of this), while switching the lower indexes indicates taking the inverse:
$\Phi_{\otimes,\cup}=(\Phi_{\cup,\otimes})^{-1}$.

The composition $\Phi_{\otimes,\cup}^{V_2^*,V_1^*}\circ(g_2\cup_{(f^{-1},\tilde{f}^*\otimes\tilde{f}^*)}g_1)$ is therefore a map
$$X_2\cup_{f^{-1}}X_1\to(V_2^*\cup_{\tilde{f}^*}V_1^*)\otimes(V_2^*\cup_{\tilde{f}^*}V_1^*).$$ To turn it into the desired form, that is, a map
$$X_1\cup_f X_2\to(V_1\cup_{\tilde{f}}V_2)^*\otimes(V_1\cup_{\tilde{f}}V_2)^*,$$ we obviously need to add the switch map and the tensor square of the appropriate gluing-dual commutativity
diffeomorphism. More precisely, we first pre-compose it with the switch map
$$\varphi_{X_1\leftrightarrow X_2}:X_1\cup_f X_2\to X_2\cup_{f^{-1}}X_1,$$ obtaining the map
$$\Phi_{\otimes,\cup}^{V_2^*,V_1^*}\circ(g_2\cup_{(f^{-1},\tilde{f}^*\otimes\tilde{f}^*)}g_1)\circ(\varphi_{X_1\leftrightarrow X_2}):X_1\cup_f X_2\to(V_2^*\cup_{\tilde{f}^*}V_1^*)\otimes
(V_2^*\cup_{\tilde{f}^*}V_1^*).$$

It is useful to observe at this point that the construction carried out so far does not require any additional assumptions except for $f$ being a diffeomorphism with its image. However, the just-obtained
composition map is not yet a pseudo-metric; for it to be one, we need it to take values in the tensor product of $(V_1\cup_{\tilde{f}^*}V_2)^*$ with itself. This is where the possibility of continuing the
construction depends on whether the gluing-dual commutativity is satisfied. Indeed, this condition is equivalent the existence of a diffeomorphism
$$(\Phi_{\cup,*})^{-1}:V_2^*\cup_{\tilde{f}^*}V_1^*\to(V_1\cup_{\tilde{f}}V_2)^*,$$ the inverse of the commutativity diffeomorphism described in Section 4.7.3. It is easy to check then, that if such exists,
then the composition
$$\left((\Phi_{\cup,*})^{-1}\otimes(\Phi_{\cup,*})^{-1}\right)\circ\Phi_{\otimes,\cup}^{V_2^*,V_1^*}\circ(g_2\cup_{(f^{-1},\tilde{f}^*\otimes\tilde{f}^*)}g_1)\circ(\varphi_{X_1\leftrightarrow X_2})$$
is a pseudo-metric on the pseudo-bundle $V_1\cup_{\tilde{f}}V_2$.

\begin{oss}
Although the above construction might appear complicated at first glance, it corresponds to a very simple idea: since each fibre of $V_1\cup_{\tilde{f}}V_2$ is canonically identified with one of either
$V_1$ or $V_2$, and both of the latter already carry a pseudo-metric, the result of gluing, the pseudo-bundle $V_1\cup_{\tilde{f}}V_2$ is naturally endowed with a collection of pseudo-metrics on its
fibres. If this collection turns out to depend smoothly on the point in the base, it is then a pseudo-metric on $V_1\cup_{\tilde{f}}V_2$; and indeed, this is precisely what the above composition map is.

Now, the same idea can be used in the absence of the gluing-dual commutativity, and indeed, as we say below, it yields the same end result also in that case (via an explicit construction). The main
conceptual difference lies in the fact that in the latter case the smoothness of the induced pseudo-metric depends much on the properties of the gluing diffeology (which we defined to be a rather
weak diffeology, relatively speaking).
\end{oss}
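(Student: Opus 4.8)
Write the composition as $g$; the claim is that, granting that $f$ is a diffeomorphism onto its image, that $g_1$ and $g_2$ are compatible, and that the gluing-dual commutativity condition holds (so that $(\Phi_{\cup,*})^{-1}$ exists and is a diffeomorphism), this $g$ is a pseudo-metric on $V_1\cup_{\tilde{f}}V_2$. The plan is to verify the four defining features of a pseudo-metric in turn: that $g$ is a well-defined map into $(V_1\cup_{\tilde{f}}V_2)^*\otimes(V_1\cup_{\tilde{f}}V_2)^*$, that it is smooth, that it covers the identity on $X_1\cup_f X_2$ (so is a section), and that its value at each point of the base is a pseudo-metric on the corresponding fibre.

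I would first dispose of well-definedness and smoothness together, since under the present hypotheses both are inherited from the factors. The switch map $\varphi_{X_1\leftrightarrow X_2}$ is a diffeomorphism precisely because $f$ is a diffeomorphism with its image; the glued map $g_2\cup_{(f^{-1},\tilde{f}^*\otimes\tilde{f}^*)}g_1$ is well-defined and smooth because $g_1,g_2$ are smooth and, by the preceding Lemma, being compatible is equivalent to being $(f^{-1},\tilde{f}^*\otimes\tilde{f}^*)$-compatible, so the two partial sections agree on the gluing locus and glue to a smooth map; $\Phi_{\otimes,\cup}^{V_2^*,V_1^*}$ is a diffeomorphism because the tensor product always commutes with gluing; and $(\Phi_{\cup,*})^{-1}$ is a diffeomorphism by the gluing-dual commutativity hypothesis. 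Hence $g$, as a composite of smooth maps whose domains and codomains match, is a smooth map $X_1\cup_f X_2\to(V_1\cup_{\tilde{f}}V_2)^*\otimes(V_1\cup_{\tilde{f}}V_2)^*$. I would stress that it is exactly the availability of these commutativity diffeomorphisms that renders smoothness automatic here, in contrast with the harder situation alluded to in the Observation, where it must instead be extracted from the properties of the gluing diffeology.

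For the section property I would track the base maps covered by each factor, reading the composition from right to left: $\varphi_{X_1\leftrightarrow X_2}$ sends $\bar{x}$ to the corresponding point of $X_2\cup_{f^{-1}}X_1$; the glued map $g_2\cup_{(\ldots)}g_1$ covers the identity there (each $g_i$ being a section); $\Phi_{\otimes,\cup}^{V_2^*,V_1^*}$ also covers the identity; and the tensor square of $(\Phi_{\cup,*})^{-1}$ covers the \emph{inverse} switch, since $\Phi_{\cup,*}$ is defined to cover $\varphi_{X_1\leftrightarrow X_2}$. The initial switch and this final inverse switch cancel, so the net base map is the identity on $X_1\cup_f X_2$ and $g$ is a section.

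The substantive step, and the one I expect to be the main obstacle, is the fibrewise verification. I would fix $\bar{x}\in X_1\cup_f X_2$ and use that, via the inductions $j_1^{V_1}$ and $j_2^{V_2}$, the fibre of $V_1\cup_{\tilde{f}}V_2$ over $\bar{x}$ is canonically identified with a fibre of $V_1$ (when $\bar{x}\in i_1^{X_1}(X_1\setminus Y)$) or of $V_2$ (when $\bar{x}\in i_2^{X_2}(X_2)$). Unwinding the composite with the explicit fibre formulas for the switch map, for $\Phi_{\otimes,\cup}$, and for the tensor square of $(\Phi_{\cup,*})^{-1}$ (the latter assembled from the defining relations involving $\tilde{f}^*$), I would show that $g(\bar{x})$ is the transport under the appropriate induction of $g_1(x)$ or of $g_2(x)$; positive semidefiniteness and maximality of rank are then inherited fibrewise, so $g(\bar{x})$ is a pseudo-metric on that fibre. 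The delicate point is the overlap, the fibres over $Y$: there $\bar{x}$ is reached both as $i_2^{X_2}(f(y))$ and, after the switch, is tied to $i_2^{X_1}(y)$, so one must confirm that the value coming from $g_2(f(y))$ pulled back along $\tilde{f}^*$ agrees with the value coming from $g_1(y)$. This agreement is exactly the content of the compatibility condition of Definition~\ref{compatible:pseudo-metrics:on:two:pseudo-bundles:defn} together with the $\tilde{f}^*$-relations defining $\Phi_{\cup,*}$; checking that these cancel correctly is the core computation, and once it is in place the four features together yield that $g$ is a pseudo-metric on $V_1\cup_{\tilde{f}}V_2$.
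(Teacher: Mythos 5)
Your proposal is correct and follows essentially the same route as the paper: smoothness comes from writing $\tilde{g}$ as a composite of the switch map, the gluing of the compatible sections $g_2,g_1$ (using the lemma that compatibility is equivalent to $(f^{-1},\tilde{f}^*\otimes\tilde{f}^*)$-compatibility), and the two commutativity diffeomorphisms, exactly as in the paper's decomposition of the map $\mathcal{P}$ in the commutative case. Your fibrewise unwinding — identifying $\tilde{g}(\bar{x})$ with the transport of $g_1$ over $i_1(X_1\setminus Y)$ and of $g_2$ over $i_2(X_2)$, with the compatibility condition and the $\tilde{f}^*$-relations defining $\Phi_{\cup,*}$ reconciling the two descriptions over the gluing locus — is precisely the paper's pointwise comparison of $\tilde{g}'$ with $\tilde{g}''$.
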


\subsubsection{Constructing a pseudo-metric on $V_1\cup_{\tilde{f}}V_2$}

Below we give full statements regarding the construction of the induced pseudo-metric on the pseudo-bundle $V_1\cup_{\tilde{f}}V_2$ obtained by gluing together of two pseudo-bundles $V_1$ and
$V_2$, each endowed with a pseudo-metric $g_1$ or $g_2$, respectively. As we have said already, the map $f$ that defines the gluing between the base spaces is assumed to be smoothly invertible,
and the pseudo-metrics $g_1$ and $g_2$ are assumed to be compatible with the gluing (see above).

\paragraph{When the gluing-dual commutativity condition is satisfied} This case has already been discussed in detail, so now we give the final statement.

\begin{thm}\label{glued:pseudometric:commutative:thm}
Let $\pi_1:V_1\to X_1$ and $\pi_2:V_2\to X_2$ be two finite-dimensional diffeological vector pseudo-bundles, and let $(\tilde{f},f)$ be a gluing between them given by a smooth invertible map
$f:X_1\supset Y\to X_2$ and its smooth fibrewise linear lift $\tilde{f}$. Let $g_i$ for $i=1,2$ be a pseudo-metric on $V_i$ such that $g_1$ and $g_2$ are compatible with the gluing along $(\tilde{f},f)$.
Finally, assume that $V_1$, $V_2$, and $(\tilde{f},f)$ satisfy the gluing-dual commutativity condition; let $\Phi_{\cup,*}$ be the corresponding commutativity diffeomorphism. Then the map
$$\tilde{g}=\left((\Phi_{\cup,*})^{-1}\otimes(\Phi_{\cup,*})^{-1}\right)\circ\Phi_{\otimes,\cup}^{V_2^*,V_1^*}\circ(g_2\cup_{(f^{-1},\tilde{f}^*\otimes\tilde{f}^*)}g_1)\circ(\varphi_{X_1\leftrightarrow X_2})$$
is a pseudo-metric on the pseudo-bundle $\pi_1\cup_{(\tilde{f},f)}\pi_2:V_1\cup_{\tilde{f}}V_2\to X_1\cup_f X_2$.
\end{thm}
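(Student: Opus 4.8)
The plan is to verify the two defining requirements of a pseudo-metric on the glued pseudo-bundle separately: that $\tilde g$ is a smooth section of $(V_1\cup_{\tilde f}V_2)^*\otimes(V_1\cup_{\tilde f}V_2)^*$ over $X_1\cup_f X_2$, and that its value at each point of the base is a pseudo-metric on the corresponding fibre. Since $\tilde g$ is assembled as a composition of four maps, each of which is either one of the commutativity diffeomorphisms of Section 4.7.3, the switch map of Section 4.7.1, or the glued section $g_2\cup_{(f^{-1},\tilde f^*\otimes\tilde f^*)}g_1$, most of the work reduces to bookkeeping with maps whose behaviour is already understood; the genuine content lies in the pointwise identification at the glued fibres.

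First I would establish that $\tilde g$ is a smooth section. To see that it covers $\mbox{id}_{X_1\cup_f X_2}$, I would track the base map covered by each factor: $\varphi_{X_1\leftrightarrow X_2}$ is the switch $X_1\cup_f X_2\to X_2\cup_{f^{-1}}X_1$; the glued section $g_2\cup_{(f^{-1},\tilde f^*\otimes\tilde f^*)}g_1$ and the diffeomorphism $\Phi_{\otimes,\cup}^{V_2^*,V_1^*}$ both cover the identity on $X_2\cup_{f^{-1}}X_1$; and, since $\Phi_{\cup,*}$ covers $\varphi_{X_1\leftrightarrow X_2}$ by the gluing-dual commutativity condition, the tensor-squared inverse $(\Phi_{\cup,*})^{-1}\otimes(\Phi_{\cup,*})^{-1}$ covers the inverse switch $X_2\cup_{f^{-1}}X_1\to X_1\cup_f X_2$. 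Composing these base maps yields (inverse switch)$\circ$(identity)$\circ$(identity)$\circ$(switch)$=\mbox{id}$, so $\tilde g$ is a section. Smoothness is then immediate: the switch map and the two $\Phi$'s are diffeomorphisms, while $g_2\cup_{(f^{-1},\tilde f^*\otimes\tilde f^*)}g_1$ is smooth because $g_1$ and $g_2$ are smooth and, by the compatibility hypothesis together with the preceding Lemma identifying compatibility with $(f^{-1},\tilde f^*\otimes\tilde f^*)$-compatibility, do glue to a well-defined smooth map; hence the composition $\tilde g$ is smooth.

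Next I would carry out the pointwise verification. Every fibre of $V_1\cup_{\tilde f}V_2$ is canonically identified, via $j_1^{V_1}$ or $j_2^{V_2}$, with a fibre $\pi_1^{-1}(x_1)$ (for $x_1\in X_1\setminus Y$) or $\pi_2^{-1}(x_2)$ (for $x_2\in X_2$). Fixing such a point and unwinding the four maps in $\tilde g$ using the explicit formulas for $\varphi_{X_1\leftrightarrow X_2}$ from Section 4.7.1 together with the identities defining $\Phi_{\cup,\otimes}$ and $\Phi_{\cup,*}$ in Section 4.7.3, I would show that $\tilde g$ at this point equals $g_1(x_1)$, respectively $g_2(x_2)$, transported by the induced isomorphism on dual tensor squares. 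Since $g_1(x_1)$ and $g_2(x_2)$ are pseudo-metrics and each transport is a diffeomorphism of diffeological vector spaces, it preserves symmetry, positive semidefiniteness, and the dimension of the diffeological dual, which is precisely the rank of a pseudo-metric; hence $\tilde g(x)$ is a pseudo-metric on the fibre.

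The hard part will be the consistency at the glued points $i_2^{X_2}(f(y))$ with $y\in Y$. There the fibre is $\pi_2^{-1}(f(y))$, into which $\tilde f$ embeds $\pi_1^{-1}(y)$, and one must check that the pointwise value produced by the composition is genuinely $g_2(f(y))$ and that this description is consistent with the one coming from $g_1(y)$ on the glued-in subspace. This is exactly where compatibility of $g_1$ and $g_2$ enters, guaranteeing that $g_2(f(y))$ pulls back along $\tilde f$ to $g_1(y)$, and where the gluing-dual commutativity condition is indispensable, since without $\Phi_{\cup,*}$ covering the switch map the composition would not even land in $(V_1\cup_{\tilde f}V_2)^*\otimes(V_1\cup_{\tilde f}V_2)^*$. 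Once this consistency is confirmed, the Observation following the construction records the conceptual content: $\tilde g$ is nothing but the tautological fibrewise collection of the pseudo-metrics $g_1$ and $g_2$, reassembled so as to depend smoothly on the base point.
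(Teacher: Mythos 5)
Your proposal is correct and follows essentially the same route as the paper's treatment: it builds $\tilde{g}$ as a composition of smooth maps (the switch map, the glued section $g_2\cup_{(f^{-1},\tilde{f}^*\otimes\tilde{f}^*)}g_1$, and the two commutativity diffeomorphisms), checks that the covered base maps compose to the identity so that $\tilde{g}$ is a smooth section, and then identifies the pointwise values with $g_1(x_1)$ or $g_2(x_2)$ transported by fibrewise linear diffeomorphisms, with compatibility of $g_1$ and $g_2$ resolving the fibres over the domain of gluing. This matches the paper's own decomposition and its fibrewise identification of $\tilde{g}$ with the given pseudo-metrics, so no substantive difference remains to report.
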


A proof (a very direct one) of this statement can be found in \cite{pseudometric-pseudobundle}.

\begin{rem}
Although in the above theorem we are looking for a pseudo-metric on $V_1\cup_{\tilde{f}}V_2$, the construction applies to any pair of compatible smooth bilinear forms on $V_1$ and $V_2$ respectively.
As it follows from the construction (and the proof), for the bilinear form $\tilde{g}$ thus obtained, the rank of $\tilde{g}(x_1)$ over a point $x_1\in i_1(X_1\setminus Y)$ is equal to that of $g_1(x_1)$,
while over a point $x_2\in i_2(X_2)$ it is equal to that of $g_2(x_2)$.
\end{rem}

\paragraph{When the gluing-dual commutativity is absent} As we already mentioned, the absence of commutativity between the operation of gluing and that of taking the dual pseudo-bundle
does not necessarily preclude the existence of a natural pseudo-metric on $V_1\cup_{\tilde{f}}V_2$ induced by the existing pseudo-metrics on the factors. Rather, the flexibility of diffeology might
well allow for an \emph{ad hoc} construction of one --- it just will not be canonical, as it is in the previous case.

Note that the fibrewise construction of the induced pseudo-metric does not present difficulties, provided that the compatibility condition still holds. Indeed, if we are given two diffeological vector
pseudo-bundles $\pi_1:V_1\to X_1$ and $\pi_2:V_2\to X_2$, a gluing of the former to the latter along an appropriate pair $(\tilde{f},f)$ of maps, and two compatible pseudo-metrics $g_1$ and
$g_2$ on $V_1$ and $V_2$ respectively, then we can define a section $\tilde{g}:X_1\cup_f X_2\to(V_1\cup_{\tilde{f}}V_2)^*\otimes(V_1\cup_{\tilde{f}}V_2)^*$ by setting its value on
$x_1\in i_1(X_1\setminus Y)$ to be
$$\tilde{g}(x_1):=g_1(i_1^{-1}(x_1))\circ(j_1^{-1}\otimes j_1^{-1}),$$ and on $x_2\in i_2(X_2)$, to be
$$\tilde{g}(x_2):=g_2(i_2^{-1}(x_2))\circ(j_2^{-1}\otimes j_2^{-1}).$$ The map $\tilde{g}$ is thus well-defined and pointwise yields a pseudo-metric on the relevant fibre. The issue is why it is
smooth; this is established in the following statement, the proof of which is given in \cite{pseudometric-pseudobundle}.

\begin{thm}\label{glued:pseudometric:noncommutative:thm}
Let $\pi_1:V_1\to X_1$ and $\pi_2:V_2\to X_2$ be two finite-dimensional diffeological vector pseudo-bundles, let $(\tilde{f},f)$ be a gluing between them, and let $g_1$ and $g_2$ be
pseudo-metrics on $V_1$ and, respectively, $V_2$ compatible with respect to the gluing. Define $\tilde{g}:X_1\cup_f X_2\to(V_1\cup_{\tilde{f}}V_2)^*\otimes(V_1\cup_{\tilde{f}}V_2)^*$ by setting
$$\tilde{g}(x)=\left\{
\begin{array}{ll}
g_1(i_1^{-1}(x_1))\circ(j_1^{-1}\otimes j_1^{-1}) & \mbox{for }x\in i_1(X_1\setminus Y) \\
g_2(i_2^{-1}(x_2))\circ(j_2^{-1}\otimes j_2^{-1}) & \mbox{for }x\in i_2(X_2).
\end{array}\right.$$ Then $\tilde{g}$ is a pseudo-metric on the pseudo-bundle $\pi_1\cup_{(\tilde{f},f)}\pi_2:V_1\cup_{\tilde{f}}V_2\to X_1\cup_f X_2$.
\end{thm}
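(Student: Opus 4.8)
The plan is to verify directly that $\tilde g$ is a well-defined smooth section whose value on each fibre is a pseudo-metric, testing smoothness against the explicit plot characterizations available for both the glued space and the dual pseudo-bundle. Well-definedness and the pointwise property are immediate: since $i_1(X_1\setminus Y)$ and $i_2(X_2)$ are disjoint and together cover $X_1\cup_f X_2$, the two clauses never conflict; and because each standard induction restricts on fibres to a linear diffeomorphism onto the corresponding fibre of $V_1\cup_{\tilde f}V_2$, the pulled-back forms $g_i(\cdot)\circ(j_i^{-1}\otimes j_i^{-1})$ remain symmetric positive semidefinite of the maximal possible rank, the maximality being controlled by the dimension of the diffeological dual, which is a diffeological invariant. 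Thus $\tilde g(x)$ is a pseudo-metric on its fibre for every $x$.

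The substance is smoothness of $\tilde g$. I would test it through the characterization of plots of a dual pseudo-bundle (the cited Lemma from \cite{vincent}, applied to the tensor square via the standard identification of $V^*\otimes V^*$ with $(\,\cdot\otimes\cdot\,)^*$): it suffices to fix a plot $p$ of the base and two plots $q,s$ of $V_1\cup_{\tilde f}V_2$, and to show that $(u,u')\mapsto\tilde g(p(u))\big(q(u')\otimes s(u')\big)$ is smooth on the set where the three basepoints coincide. Since $V_1\cup_{\tilde f}V_2$ is itself a glued diffeological space, the plot-characterization Lemma for gluings applies to $p$, $q$ and $s$: locally each is either of the form $i_2\circ(\textrm{plot of }X_2)$, respectively $j_2\circ(\textrm{plot of }V_2)$, or it arises from a plot of $X_1$, respectively $V_1$, through the gluing rule.

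I would then run the case analysis on the local forms of $q$ and $s$. When $p$ factors through $X_2$, the basepoint constraint forces $q,s$ to be $j_2$ of plots $q_2,s_2$ of $V_2$ (the complementary possibilities are ruled out because $i_1(X_1\setminus Y)$ and $i_2(X_2)$ have disjoint ranges), so the evaluation collapses to $g_2(\cdot)\big(q_2(u')\otimes s_2(u')\big)$, which is smooth because $g_2$ is a smooth section of $V_2^*\otimes V_2^*$. When $p$ arises from a plot $p_1$ of $X_1$, I split according to whether $q,s$ come from plots $q_1,s_1$ of $V_1$ or from plots $q_2,s_2$ of $V_2$; in every mixed subcase, and in the subcase where both come from $V_2$, the same disjointness empties the region $p_1(u)\notin Y$ from the domain, leaving a single expression again of the smooth form $g_2(\cdot)\big(\cdot\otimes\cdot\big)$.

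The decisive case, and the place where I expect the only real difficulty, is when both $q$ and $s$ come from plots $q_1,s_1$ of $V_1$, since here the domain genuinely spans both the region $p_1(u)\notin Y$ and the region $p_1(u)\in Y$, so the formula for $\tilde g$ switches from $g_1$ to $g_2$ as $u$ varies and one must rule out any discontinuity at the interface. This is exactly where the compatibility hypothesis (Definition \ref{compatible:pseudo-metrics:on:two:pseudo-bundles:defn}) enters: over $p_1(u)\in Y$ the evaluation reads $g_2(f(p_1(u)))\big(\tilde f(q_1(u'))\otimes\tilde f(s_1(u'))\big)$, which compatibility rewrites as $g_1(p_1(u))\big(q_1(u')\otimes s_1(u')\big)$, precisely the expression valid over $p_1(u)\notin Y$. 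Hence the two branches fuse into the \emph{single} function $g_1(p_1(u))\big(q_1(u')\otimes s_1(u')\big)$ on the whole domain, which is manifestly smooth since $g_1$ is a smooth section of $V_1^*\otimes V_1^*$ and $p_1,q_1,s_1$ are plots. Collecting the cases shows that $\tilde g\circ p$ is a plot of the dual tensor pseudo-bundle for every $p$, that is, $\tilde g$ is smooth, which completes the argument.
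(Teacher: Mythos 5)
Your overall strategy --- testing $\tilde g$ against pairs of plots of $V_1\cup_{\tilde f}V_2$ via the plot characterization of dual pseudo-bundles, decomposing those plots through the gluing-diffeology lemma, and letting compatibility fuse the two branches in the only genuinely two-branch case --- is the same approach the paper relies on: the proof itself is deferred to \cite{pseudometric-pseudobundle}, but the surrounding discussion describes exactly this (the disjoint covering of $(V_1\cup_{\tilde f}V_2)^*\otimes(V_1\cup_{\tilde f}V_2)^*$ by the $j_1$- and $j_2$-images corresponds to the two clauses of $\tilde g$ and to the presentation of plots of the gluing diffeology). Your treatment of the decisive $V_1$--$V_1$ case is correct, with one point you must make explicit: rewriting $g_2(f(p_1(u)))\bigl(\tilde f(q_1(u')),\tilde f(s_1(u'))\bigr)$ as $g_1(p_1(u))\bigl(q_1(u'),s_1(u')\bigr)$ via Definition \ref{compatible:pseudo-metrics:on:two:pseudo-bundles:defn} requires $q_1(u')$ and $s_1(u')$ to lie in the fibre over $p_1(u)$ itself, whereas on the region $p_1(u)\in Y$ the basepoint constraint only gives $f(\pi_1(q_1(u')))=f(p_1(u))$. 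You therefore need the standing assumption, stated in the paragraphs immediately preceding the theorem, that $f$ is smoothly invertible; your proposal never invokes it, and without injectivity of $f$ the compatibility hypothesis cannot be applied to vectors sitting over distinct points of $Y$ with the same image.

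The genuine error is in the remaining cases. When $p$ factors through $X_2$ it is \emph{not} true that $q$ and $s$ are forced to be of the form $j_2\circ(\mbox{plot of }V_2)$: a plot of $V_1\cup_{\tilde f}V_2$ arising from a plot $q_1$ of $V_1$ has basepoints in $i_1(X_1\setminus Y)\cup i_2(f(Y))$, and the second of these sets is contained in $i_2(X_2)$, so such plots do contribute to the evaluation domain --- the disjointness of $i_1(X_1\setminus Y)$ and $i_2(X_2)$ only empties the portion lying over $i_1(X_1\setminus Y)$. On the surviving portion one has $q=j_2\circ\tilde f\circ q_1$, and the evaluation reads $g_2(p_2(u))\bigl(\tilde f(q_1(u')),\cdot\bigr)$; to see that this is smooth you must observe that, for any plot $W\to Y'$ of the (subset) domain, the composite $q_1\circ u'$ is a plot of $V_1$ whose image is forced into $\pi_1^{-1}(Y)$, hence a plot of the subset diffeology there, so that $\tilde f\circ q_1\circ u'$ is a plot of $V_2$ by the smoothness of $\tilde f$. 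The same remark applies to your ``mixed subcases'' under a base plot lifting to $X_1$, where the evaluation also carries a $\tilde f$ inside it. In short, the smoothness of $\tilde f$ on $\pi_1^{-1}(Y)$ is an essential ingredient that your proposal never uses; without it these subcases do not reduce to evaluations of $g_2$ against plots of $V_2$, and the verification is incomplete. Once these two points are inserted, the argument goes through and coincides with the intended proof.
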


The end conclusion is that compatible pseudo-metrics on the factors of a gluing always (seem to) induce a pseudo-metric on the result, although the precise way in which it happens follows
different scenarios.

\paragraph{Example of a pseudo-metric in the noncommutative case} We now illustrate the construction that appears in Theorem \ref{glued:pseudometric:noncommutative:thm}
with the following example (\cite{clifford}).

\begin{example} \emph{(\cite{clifford})}
Consider $\pi_i:V_i\to X_i$ for $i=1,2$, where $X_1=X_2=\matR$ with standard diffeology; $V_1$ and $V_2$ also carry the standard diffeology but they have different dimensions. Specifically,
$V_1=\matR^2$ and $V_2=\matR^3$; finally, each of the maps $\pi_1,\pi_2$ is the natural projection onto the corresponding first coordinate. The gluing of these two pseudo-bundles is given by the
identification of the origins of $X_1$ and $X_2$ (so the map $f$ is obvious, $f:\{0\}\to\{0\}$); on the corresponding fibres, which are of form $\pi_1^{-1}(0)=\{(0,y)|y\in\matR\}\subset V_1$ and
$\pi_2^{-1}(0)=\{(0,y,z)|y,z\in\matR\}\subset V_2$, it is defined by $\tilde{f}:\pi_1^{-1}(0)\to\pi_2^{-1}(0)$ with $\tilde{f}(0,y)=(0,y,0)$ (it is obviously linear and smooth).

The two pseudo-bundles are endowed with pseudo-metrics, both corresponding to the canonical scalar product on the relevant Euclidean spaces ($\matR$ and $\matR^2$, which are the fibres);
we have $g_1(x)=(x,e^2\otimes e^2)$ and $g_2(x)=(x,e^2\otimes e^2+e^3\otimes e^3)$. The compatibility, which in this case means that $g_1(0)(v_1,v_2)=g_2(0)(\tilde{f}(v_1),\tilde{f}(v_2))$
for all $v_1=(0,y_1),v_2=(0,y_2)$, is obvious. ADD
\end{example}

\paragraph{Comparison of Theorem \ref{glued:pseudometric:commutative:thm} and Theorem \ref{glued:pseudometric:noncommutative:thm}} What one may naturally wonder at this point is
whether the construction of Theorem \ref{glued:pseudometric:commutative:thm} is in fact a partial case of that of Theorem \ref{glued:pseudometric:noncommutative:thm}. Let us compare the two
maps pointwise, calling the former $\tilde{g}'$ and the latter, $\tilde{g}''$.

Take $x_1\in i_1(X_1\setminus Y)$, and $v_1,v_2\in(\pi_1\cup_{(\tilde{f},f)}\pi_2)^{-1}(x_1)$; note that $v_1,v_2\in j_1(V_1\setminus\pi_1^{-1}(Y))$. Starting with $\tilde{g}'$, we obtain
$$\tilde{g}'(x_1)(v_1,v_2)=g_1(i_1^{-1}(x_1))(j_1^{-1}(v_1),j_1^{-1}(v_2))=\left(g_1(i_1^{-1}(x_1))\circ j_1^{-1}\otimes j_1^{-1}\right)(v_1,v_2)=\tilde{g}''(x_1).$$ Similarly, if $x_2\in i_2(X_2)$ and
$v_1,v_2\in(\pi_1\cup_{(\tilde{f},f)}\pi_2)^{-1}(x_2)$ then $v_1,v_2\in j_2(V_2)$, and we obtain
$$\tilde{g}'(x_2)(v_1,v_2)=g_2(i_2^{-1}(x_2))(j_2^{-1}(v_1),j_2^{-1}(v_2))=\left(g_2(i_2^{-1}(x_2))\circ j_2^{-1}\otimes j_2^{-1}\right)(v_1,v_2)=\tilde{g}''(x_2).$$ Thus, the second construction (that
of Theorem \ref{glued:pseudometric:noncommutative:thm}) does include the first one and is more general; the advantage of the first one (of Theorem \ref{glued:pseudometric:commutative:thm}) is that
it is canonically related to the map $g_2\cup_{(f^{-1},\tilde{f}^*\otimes\tilde{f}^*)}g_1$ (which in itself is obtained via a canonical construction). Another consideration is the already-mentioned one,
namely, that the smoothness of the pseudo-metric that we construct in the non-commutative case, depends very much on the structure of the gluing diffeology, specifically, on it being a very weak
diffeology (in relative terms, at least); one might wonder\footnote{We do not know the answer.} whether it is however the strongest diffeology such that the pseudo-metric thus constructed is the
strongest for which it is true. On the other hand, in the commutative case the construction is a high-level one, thus, the pseudo-metric that we obtain in this case is always smooth, presumably
even if we strengthen the diffeology.

\subsubsection{The spaces of pseudo-metrics with functional diffeology}

The above construction that assigns to two compatible pseudo-metrics on two given pseudo-bundles $\pi_1:V_1\to X_1$ and $\pi_2:V_2\to X_2$ the pseudo-metric $\tilde{g}$ on the result of their
gluing $\pi_1\cup_{(\tilde{f},f)}\pi_2:V_1\cup_{\tilde{f}}V_2\to X_1\cup_f X_2$, can be see as a map on the appropriate subset of $C^{\infty}(X_1,V_1^*\otimes V_1^*)\times C^{\infty}(X_2,V_2^*\otimes V_2^*)$
into the space $C^{\infty}(X_1\cup_f X_2,(V_1\cup_{\tilde{f}}V_2)^*\otimes(V_1\cup_{\tilde{f}}V_2)^*)$. Specifically, for any finite-dimensional diffeological vector pseudo-bundle $\pi:V\to X$, denote
by $\mathcal{G}(V,X)$ the set of all pseudo-metrics on it; endow it with the subset diffeology relative to the functional diffeology on $C^{\infty}(X,V^*\otimes V^*)$. The map just mentioned, that
we denote by $\mathcal{P}$, is a map of form
$$\mathcal{P}:\mathcal{G}(V_1,X_1)\times_{comp}\mathcal{G}(V_2,X_2)\to\mathcal{G}(V_1\cup_{\tilde{f}}V_2,X_1\cup_f X_2),$$ acting by $(g_1,g_2)\mapsto\tilde{g}$, where $g_1$ and $g_2$ are
two compatible pseudo-metrics on $\pi_1:V_1\to X_1$ and $\pi_2:V_2\to X_2$ respectively. It is defined on the subset
$\mathcal{G}(V_1,X_1)\times_{comp}\mathcal{G}(V_2,X_2)\subset\mathcal{G}(V_1,X_1)\times\mathcal{G}(V_2,X_2)$ of the direct product $\mathcal{G}(V_1,X_1)\times\mathcal{G}(V_2,X_2)$ composed
of all pairs of compatible pseudo-metrics; its diffeology is the subset diffeology relative to the product diffeology on $\mathcal{G}(V_1,X_1)\times\mathcal{G}(V_2,X_2)$. Thus, $\mathcal{P}$ acts between
diffeological spaces, and it is not hard to see that it is smooth; below we give some details.

\paragraph{The commutative case} In the case when the gluing-dual commutativity condition is satisfied, that is,
$$\mathcal{P}(g_1,g_2)=\tilde{g}=\left((\Phi_{\cup,*})^{-1}\otimes(\Phi_{\cup,*})^{-1}\right)\circ\Phi_{\otimes,\cup}^{V_2^*,V_1^*}\circ(g_2\cup_{(f^{-1},\tilde{f}^*\otimes\tilde{f}^*)}g_1)\circ
(\varphi_{X_1\leftrightarrow X_2}),$$ the map $\mathcal{P}$ is the composition of the following:\\
1) the map $\mathcal{G}(V_1,X_1)\times_{comp}\mathcal{G}(V_2,X_2)\to\mathcal{G}(V_2,X_2)\times_{comp}\mathcal{G}(V_1,X_1)$, which acts by exchanging the two factors inside the direct product
(this is induced by, or is analogous to, the switch map $\varphi_{X_1\leftrightarrow X_2}$);\\
2) the appropriate restriction of
$$\mathcal{F}_{V_2^*\otimes V_2^*,V_1^*\otimes V_1^*}:C^{\infty}(X_2,V_2^*\otimes V_2^*)\times_{comp}C^{\infty}(X_1,V_1^*\otimes V_1^*)\to C^{\infty}(X_2\cup_{f^{-1}}X_1,(V_2^*\otimes
V_2^*)\cup_{\tilde{f}^*\otimes\tilde{f}^*}(V_1^*\otimes V_1^*))$$ (this is a case of a map obtained by gluing; see above and \cite{pseudometric-pseudobundle} for details); and\\
3) the appropriate restriction of
$$C^{\infty}(X_2\cup_{f^{-1}}X_1,(V_2^*\otimes V_2^*)\cup_{\tilde{f}^*\otimes\tilde{f}^*}(V_1^*\otimes V_1^*))\to C^{\infty}(X_2\cup_{f^{-1}}X_1,(V_1\cup_{\tilde{f}^*}V_2)^*\otimes
(V_1\cup_{\tilde{f}^*}V_2)^*),$$ obtained by the post-composition with the map $(\Phi_{\cup,*})^{-1}\otimes(\Phi_{\cup,*})^{-1}$.

The first of these maps is smooth by definition of the product diffeology, while the third is so because it is a post-composition with a fixed smooth (it is a general fact, that follows from the properties
of functional diffeologies). The smoothness of the map $\mathcal{F}_{V_2^*\otimes V_2^*,V_1^*\otimes V_1^*}$ follows from Theorem 4.6 of \cite{pseudometric-pseudobundle}. Thus, we can
conclude that under the assumption of the gluing-dual commutativity condition, the corresponding map $\mathcal{P}$ is smooth.

\paragraph{The non-commutative case} The conclusion that the map $\mathcal{P}$ is smooth, is true in the non-commutative case as well (see \cite{pseudometric-pseudobundle}); although the
construction of the map $\tilde{g}$ does not fall within the standard procedure of gluing of two smooth maps, the proof is quite similar to this latter setting. In particular, the pseudo-bundle
$(V_1\cup_{\tilde{f}}V_2)^*\otimes(V_1\cup_{\tilde{f}}V_2)^*$ is disjointly covered by the sets $\left(j_1\otimes j_1\right)\left((V_1\setminus\pi_1^{-1}(Y))\otimes_{X_1\setminus Y}
(V_1\setminus\pi_1^{-1}(Y))\right)$ and $\left(j_2\otimes j_2\right)\left(V_2\otimes_{X_2} V_2\right)$, and this corresponds both to the two parts of the definition of $\tilde{g}$ and to the
presentation of plots of the appropriate gluing diffeology.

\subsection{The induced pseudo-metrics on dual pseudo-bundles}

Let us now consider the dual pseudo-metrics, meaning the ones that are --- possibly --- induced in some natural way on the corresponding dual pseudo-bundles. We use the standard pairing to define them, 
noting right away that we are only able to prove their existence under the assumptions that the initial pseudo-bundle is locally trivial,\footnote{Notice that this implies that the dual pseudo-bundle is locally trivial 
itself} although this may not constitute a significant restriction, as we are not aware of any examples of non locally trivial pseudo-bundles that admit pseudo-metrics. This exposition closely follows that in 
\cite{exterior-algebras-pseudobundles}.

\subsubsection{The induced pseudo-metric on $V^*$}

Let $\pi:V\to X$ be a locally trivial finite-dimensional diffeological vector pseudo-bundle, and let $g$ be a pseudo-metric on it. Let us first define the pseudo-bundle map $\Phi:V\to V^*$ (the meaning
of the term \emph{pseudo-bundle map} is the obvious one, $\pi=\pi^*\circ\Phi$), that fibrewise corresponds to the natural pairing map given by the pseudo-metric $g$. Specifically, we define:
$$\Phi(v)=g(\pi(v))(v,\cdot)\mbox{ for all }v\in V.$$ It is easy to show (see \cite{pseudometric} for the case of a single diffeological vector space, and then \cite{pseudometric-pseudobundle} for the case
of pseudo-bundles) that $\Phi$ is surjective, smooth, and linear on each fibre. Furthermore, although in general it is not invertible, we can still use it to correctly define a pseudo-metric on the dual
pseudo-bundle.

Indeed, the \textbf{induced pseudo-metric $g^*$} is defined by
$$g^*(x)(\Phi(v),\Phi(w)):=g(x)(v,w)\mbox{ for all }x\in X\mbox{ and for all }v,w\in V\mbox{ such that }\pi(v)=\pi(w)=x.$$ This is well-defined, because whenever $\Phi(v)=\Phi(v')$ (which obviously
can occur only for $v,v'$ belonging to the same fibre), the vectors $v$ and $v'$ differ by an element of the isotropic subspace of the fibre to which they (both) belong. We can also observe that,
since all fibres of any dual pseudo-bundle carry the standard diffeology,\footnote{Because the dual space of any finite-dimensional diffeological vector space is always standard.} $g^*(x)$ is a
scalar product.

\subsubsection{Existence of compatible pseudo-metrics on diffeological vector spaces}

What we are mostly interested in as far as the dual pseudo-metrics are concerned, is whether a pair of pseudo-metrics dual to a pair of compatible ones is in turn compatible (with what, will become
clear later). However, before considering such induced pseudo-metrics on dual pseudo-bundles, we need to consider the analogous question for the simpler case of individual diffeological vector
spaces; and this requires us to reflect some more on compatibility of pseudo-metrics in general.

Let $V$ and $W$ be finite-dimensional diffeological vector spaces, let $g_V$ be a pseudo-metric on $V$, and let $g_W$ be a pseudo-metric on $W$. Let $f:V\to W$ be a smooth linear map, with
respect to which $g_V$ and $g_W$ are compatible, $g_V(v_1,v_2)=g_W(f(v_1),f(v_2))$. As one can expect by analogy with the standard case, existence of $f$, $g_V$, and $g_W$ has
implications for the spaces $V$ and $W$ themselves, which are best explained in terms of a small preliminary notion.

\paragraph{The characteristic subspaces of $V$ and $W$} Given a pseudo-metric on a finite-dimensional diffeological vector space, the subspace generated by all the eigenvectors of this pseudo-metric
relative to the non-zero eigenvalues is a subspace whose subset diffeology is that of a standard space, and its dimension is maximal for this property. Moreover, this subspace splits off as a smooth
direct summand,\footnote{This means that the direct sum diffeology coincides with $V$'s or $W$'s own diffeology, or, alternatively, that the composition of each plot of $V$ (respectively $W$) with
the projection on $V_0$ (respectively $W_0$) is a plot of the latter.} and among all standard subspaces, it is unique with this property. Thus, the subspace in question does not actually depend on the
choice of a pseudo-metric and is an invariant of the space itself (see \cite{pseudometric}). We call this subspace the \textbf{characteristic subspace} of the diffeological vector space in question.

Let $V_0$ and $W_0$ be the characteristic subspaces of $V$ and $W$ respectively. Let also $V_1\leqslant V$ and $W_1\leqslant W$ be the isotropic subspaces relative to $g_V$ and $g_W$, so that
$V=V_0\oplus V_1$ and $W=W_0\oplus W_1$, with each decomposition being smooth. Recall also \cite{pseudometric} that $V_0$ is diffeomorphic to $V^*$ via (the restriction to $V_0$ of) the natural
pairing map $\Phi_V:v\mapsto g_V(v,\cdot)$, and likewise, $W_0$ is diffeomorphic to $W^*$ via $\Phi_W:w\mapsto g_W(w,\cdot)$. Let us consider the necessary and sufficient conditions for the
compatibility of $g_V$ and $g_W$.

\paragraph{The necessary conditions} These are easily found, also by using the standard reasoning. For instance, let $v\in V$ belong to the kernel of $f$; then by definition of compatibility we have
$$g_V(v,v')=g_W(0,f(v'))=0\mbox{ for any }v'\in V.$$ Thus, the kernel of $f$ is contained in the maximal isotropic subspace $V_1$, therefore the restriction of $f$ to $V_0$ is a bijection with its image.

A number of similar arguments easily yield the following:

\begin{lemma}\label{exist:compatible:pseudo-metrics:f:necessary:lem}
Let $V$ and $W$ be finite-dimensional diffeological vector spaces, and let $f:V\to W$ be a smooth linear map. If $V$ and $W$ admit pseudo-metrics compatible with $f$ then:
\begin{enumerate}
\item $\mbox{Ker}(f)\cap V_0=\{0\}$;
\item The subset diffeology of $f(V_0)$ is the standard one;
\item $\dim(V^*)\leqslant\dim(W^*)$.
\end{enumerate}
\end{lemma}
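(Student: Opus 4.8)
The plan is to exploit the smooth splittings $V=V_0\oplus V_1$ and $W=W_0\oplus W_1$ into characteristic and isotropic subspaces, together with the facts (recalled above) that $g_V$ restricts to a genuine scalar product on $V_0$, that $V_0\cong V^*$ via the pairing map (so $\dim V_0=\dim V^*$, and similarly $\dim W_0=\dim W^*$), and that $V_0$ carries the standard diffeology. The three items then follow in order, each feeding into the next.

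For item (1) I would argue exactly as begun in the text: if $v\in\mbox{Ker}(f)$, then compatibility gives $g_V(v,v')=g_W(f(v),f(v'))=g_W(0,f(v'))=0$ for every $v'\in V$, so $v$ lies in the maximal isotropic subspace $V_1$. Hence $\mbox{Ker}(f)\subseteq V_1$, and since $V_0\cap V_1=\{0\}$ by directness of the splitting, $\mbox{Ker}(f)\cap V_0=\{0\}$. In particular $f|_{V_0}$ is injective, so it is a linear isomorphism of $V_0$ onto the subspace $f(V_0)\leqslant W$, giving $\dim f(V_0)=\dim V_0$.

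Item (2) is the step I expect to be the crux: the image of a standard space under a smooth linear map need not be standard on its own, so injectivity alone does not suffice, and I would instead transport the scalar product. Concretely, for $w_1=f(v_1)$ and $w_2=f(v_2)$ with $v_i\in V_0$, compatibility gives $g_W(w_1,w_2)=g_V(v_1,v_2)$, so the restriction of $g_W$ to $f(V_0)$ is the push-forward under the isomorphism $f|_{V_0}$ of the scalar product $g_V|_{V_0}$, and is therefore positive definite. As $f(V_0)$ is a linear subspace of $W$, its subset diffeology is a vector space diffeology, and the smooth form $g_W$ restricts to it smoothly; thus $f(V_0)$ is a finite-dimensional diffeological vector space carrying a smooth scalar product. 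By the Proposition characterizing such spaces (the solution to Exercise 70 of \cite{iglesiasBook} cited above), every plot of $f(V_0)$ is then smooth in the usual sense, and since the standard diffeology is the finest vector space diffeology, this forces the subset diffeology of $f(V_0)$ to be standard.

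Finally, item (3) is a dimension count resting on (2). We have $\dim V^*=\dim V_0$ and $\dim W^*=\dim W_0$, and by item (2) the subspace $f(V_0)\leqslant W$ is standard of dimension $\dim V_0=\dim V^*$. But the characteristic subspace $W_0$ is, by its very construction, of maximal dimension among all subspaces of $W$ whose subset diffeology is standard; hence $\dim f(V_0)\leqslant\dim W_0$, i.e. $\dim V^*\leqslant\dim W^*$. The one point to verify carefully here is that the maximality of $W_0$ is indeed asserted relative to all subspaces with standard subset diffeology, and not merely those splitting off smoothly, which is precisely what the description of the characteristic subspace recalled above states.
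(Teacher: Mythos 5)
Your proposal is correct and takes essentially the same approach as the paper: item (1) is verbatim the argument the paper writes out, and your completions of items (2) and (3) use exactly the toolkit the paper assembles immediately before the lemma --- the smooth splitting $V=V_0\oplus V_1$, transport of the scalar product $g_V|_{V_0}$ along the injective map $f|_{V_0}$, the cited characterization (Exercise 70 of \cite{iglesiasBook}) of finite-dimensional diffeological vector spaces carrying smooth scalar products, and the maximality of $\dim W_0$ among subspaces with standard subset diffeology. The paper itself dispatches items (2) and (3) with ``a number of similar arguments easily yield the following,'' and your filled-in details are precisely those arguments.
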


In particular, if $\dim(V^*)>\dim(W^*)$, then no two pseudo-metrics on $V$ and $W$ are compatible, whatever the map $f$. This is a reflection of the compatibility being an extension of the standard
situation: there is no isometry from the space of a bigger dimension to one of smaller dimension.\footnote{The choices of $f$ however could be plenty; it suffices to take $V$ the standard $\matR^n$
and $W$ any other diffeological vector space of dimension strictly smaller than $n$. Any linear map from $V$ to $W$ is then going to be smooth (see Section 3.9 in \cite{iglesiasBook}).}

\paragraph{Sufficient conditions} The same type of reasoning allows us to obtain the following statement.

\begin{thm}\label{criterio:exist:pseudometrics:vspaces:thm}
Let $V$ and $W$ be two finite-dimensional diffeological vector spaces, and let $f:V\to W$ be a smooth linear map. Then $V$ and $W$ admit compatible pseudo-metrics if and only if
$\mbox{Ker}(f)\cap V_0=\{0\}$ and $f(V_0)\leqslant W_0$.
\end{thm}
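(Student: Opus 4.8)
The plan is to reduce everything to finite-dimensional linear algebra by exploiting the two smooth splittings $V=V_0\oplus V_1$ and $W=W_0\oplus W_1$ into characteristic and isotropic (radical) parts, together with the identifications $V_0\cong V^*$ and $W_0\cong W^*$ coming from the pairing maps $\Phi_V,\Phi_W$. The organizing observation I would use throughout is that compatibility of $g_V$ and $g_W$ with $f$ is exactly the statement that $g_V$ is the pullback of $g_W$ by $f$, that is $g_V(v_1,v_2)=g_W(f(v_1),f(v_2))$. This reformulation makes one direction nearly automatic and turns the whole question into one about when such a pullback is again a pseudo-metric of maximal rank.

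For necessity, assume a compatible pair $g_V,g_W$ is given. The condition $\mbox{Ker}(f)\cap V_0=\{0\}$ is immediate and is precisely Lemma \ref{exist:compatible:pseudo-metrics:f:necessary:lem}(1): a vector $v\in\mbox{Ker}(f)\cap V_0$ satisfies $g_V(v,v')=g_W(f(v),f(v'))=0$ for every $v'$, so it lies in the radical $V_1$, whence $v\in V_0\cap V_1=\{0\}$. For the inclusion $f(V_0)\leqslant W_0$ I would first note that, since $f|_{V_0}$ is injective by the previous point and $g_V$ restricts to a genuine scalar product on $V_0$, compatibility forces $g_W$ to be positive definite on $f(V_0)$; combined with Lemma \ref{exist:compatible:pseudo-metrics:f:necessary:lem}(2), which tells us $f(V_0)$ carries the standard subset diffeology, one wants to conclude $f(V_0)\leqslant W_0$. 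This last inference is the crux of the theorem and the step I expect to be the main obstacle: being a standard subspace on which $g_W$ is positive definite does not, by itself, force a subspace inside the eigenspace $W_0$, so the argument must genuinely exploit the way $W_0$ is singled out (its maximality among standard smooth direct summands, and the isomorphism $W_0\cong W^*$ via $\Phi_W$) rather than positive-definiteness alone.

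For sufficiency I would avoid fixing $g_V$ first and instead manufacture it by pullback. Choose any pseudo-metric $g_W$ on $W$ --- concretely, a scalar product on the standard space $W_0$ extended by zero across the radical $W_1$ --- and set $g_V(v_1,v_2):=g_W(f(v_1),f(v_2))$. Then $g_V$ is smooth, symmetric and positive semidefinite automatically, and compatibility holds by construction; the only thing left to verify is that $g_V$ has maximal rank $\dim(V^*)$, that is, that it is indeed a pseudo-metric. Here both hypotheses enter: $\mbox{Ker}(f)\cap V_0=\{0\}$ gives that $f|_{V_0}$ is injective, so $\dim f(V_0)=\dim V_0=\dim(V^*)$, while $f(V_0)\leqslant W_0$ guarantees that $g_W$ restricts to a positive-definite form on $f(V_0)$. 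Consequently $g_V$ restricts to a positive-definite form on $V_0$, so $\mbox{rank}(g_V)\geqslant\dim(V^*)$; since $\dim(V^*)$ is the maximal possible rank of a symmetric form on $V$, equality holds and $g_V$ is a pseudo-metric.

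I would close by recording that the pullback construction makes transparent why these two conditions are exactly the right ones: the first prevents $f$ from collapsing any of the ``metric'' directions of $V$, while the second keeps the image of those directions inside the part of $W$ where $g_W$ is nondegenerate --- precisely the two ways in which an isometric-type embedding can fail, in analogy with the standard case. The genuinely delicate point, as flagged above, remains the necessity of $f(V_0)\leqslant W_0$, where the special status of the characteristic subspace, as opposed to an arbitrary standard positive-definite subspace, must be used in an essential way.
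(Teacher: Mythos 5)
Your ``if'' direction is complete and correct, and it is the efficient way to do it: take any pseudo-metric $g_W$ on $W$ (one always exists, and every pseudo-metric is positive definite on $W_0$, since its radical is exactly the isotropic part), define $g_V:=g_W(f(\cdot),f(\cdot))$, and note that smoothness, symmetry, positive semidefiniteness and compatibility are automatic, while $\mbox{Ker}(f)\cap V_0=\{0\}$ and $f(V_0)\leqslant W_0$ together make $g_V$ positive definite on $V_0$, so that its rank is at least $\dim(V_0)=\dim(V^*)$, hence exactly $\dim(V^*)$, the maximal possible value. Likewise, the first necessary condition is handled correctly: it is item 1 of Lemma \ref{exist:compatible:pseudo-metrics:f:necessary:lem}, reproved in the same way.

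The genuine gap is the necessity of $f(V_0)\leqslant W_0$. You call it ``the crux'' and ``the main obstacle'', you correctly diagnose that standardness of $f(V_0)$ plus positive definiteness of $g_W$ on it cannot suffice, and then you stop: no argument is offered, so the ``only if'' half of the theorem is simply not proved. The missing ingredient is the one indicated in the Remark following the theorem in the paper: compatibility forces $f(V_0)$ to \emph{split off as a smooth direct summand} of $W$. Concretely, since $g_V$ is positive definite on $V_0$ and $f|_{V_0}$ is injective, compatibility makes $g_W$ positive definite on $f(V_0)$; picking a $g_W$-orthonormal basis $f(e_1),\ldots,f(e_k)$ of $f(V_0)$, the linear map $P:w\mapsto\sum_{i=1}^{k}g_W(w,f(e_i))\,f(e_i)$ is a projection of $W$ onto $f(V_0)$, and both $P$ and $\mathrm{id}_W-P$ are smooth because $g_W$ is; hence $W=f(V_0)\oplus\mbox{Ker}(P)$ with the sum diffeology equal to the diffeology of $W$. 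Only at this point does one invoke the property of the characteristic subspace cited from \cite{pseudometric}, namely that a standard subspace which splits off as a smooth direct summand must lie in $W_0$; it is the smooth splitting, produced by the pseudo-metric itself rather than by any property of the diffeology of $f(V_0)$ alone, that carries the argument --- precisely the ingredient your proposal asks for but never supplies.
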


\begin{rem}
The fact that $f(V_0)\leqslant W_0$ is not entirely obvious (without there being pseudo-metrics compatible with $f$ it does not have to occur), since in general $W$ might contain many subspaces with
standard diffeology, which are not contained in its characteristic subspace. The reason why $f(V_0)$ is contained in it, follows from the fact it splits off smooth in $W$ (and this is a direct consequence
of $V$ and $W$ admitting pseudo-metrics compatible with $f$).
\end{rem}

\subsubsection{The compatibility of the induced pseudo-metrics on the duals of diffeological vector spaces}

Consider now the dual pseudo-metrics in the case of finite-dimensional diffeological vector spaces. Let $V$ be such a space.

\paragraph{The induced pseudo-metric on $V^*$} Recall (\cite{pseudometric}) that if $g$ is a pseudo-metric on $V$, the diffeological dual of $V$ carries the induced pseudo-metric $g^*$ (actually, a
scalar product, since the diffeological dual of any finite-dimensional diffeological vector space is standard) defined by
$$g^*(v_1^*,v_2^*):=g(v_1,v_2),$$ where $v_i\in V$ is any element such that $v_i^*(\cdot)=g(v_i,\cdot)$ for $i=1,2$. This is well-defined, in the sense that the result does not depend on the choice
(that in general is not unique) of $v_i$, as long as $g(v_i,\cdot)$ remains the same, and furthermore. $v_i^*$ always admits such a form.

\paragraph{The compatibility condition for $g_V^*$ and $g_W^*$} Let now $V$ and $W$ be two diffeological vector spaces, and let $g_V$ and $g_W$ be pseudo-metrics on $V$ and $W$ respectively,
compatible with respect to $f$. Let $w_1^*,w_2^*\in W^*$; then there exist $w_1,w_2\in W$, defined up to the cosets of the isotropic subspace of $g_W$, such that $w_i^*(\cdot)=g_W(w_i,\cdot)$
for $i=1,2$; for any such choice $g_W^*(w_1^*,w_2^*)=g_W(w_1,w_2)$. Furthermore, by the usual definition of the dual map $f^*(w_i^*)(\cdot)=w_i^*(f(\cdot))=g_W(w_i,f(\cdot))$. The \textbf{compatibility
condition} for $g_V^*$ and $g_W^*$ then takes the following form:
$$g_W^*(w_1^*,w_2^*)=g_V^*(f^*(w_1^*),f^*(w_2^*)).$$

\paragraph{When the dual pseudo-metrics are, or are not, compatible} Let us now consider the pseudo-metrics induced by a pair of compatible ones. \emph{A priori}, the dual pseudo-metrics may
easily not be compatible; it suffices to observe that the duals of finite-dimensional diffeological vector spaces are standard spaces, so pseudo-metrics on them are usual scalar products,  while
the notion of compatibility translates into $f^*$ being a usual isometry. This last point is a matter of additional assumptions on the original $V$, $W$, and $f$.

\begin{example}
Let $V$ be the standard $\matR^n$, with the canonical basis denoted by $e_1,\ldots,e_n$, and let $W$ be the standard $\matR^{n+k}$, with the canonical basis denoted by
$u_1,\ldots,u_n,u_{n+1},\ldots,u_{n+k}$. Let $f:V\to W$ be the embedding of $V$ via the identification of $V$ with the subspace generated by $u_1,\ldots,u_n$, given by $e_i\mapsto u_i$ for $i=1,\ldots,n$.
Let $g_V$ be any scalar product on $\matR^n$; this trivially induces a scalar product on $f(V)=\mbox{Span}(u_1,\ldots,u_n)\leqslant W_0$, and let $g_W$ be any extension of it to a scalar product
on the whole $W$.

Let us consider the dual map on the dual the standard complement of the subspace $\mbox{Span}(u_1,\ldots,u_n)$, that is, on the dual of $\mbox{Span}(u_{n+1},\ldots,u_{n+k})$. This dual is the usual
dual, so it is $\mbox{Span}(u^{n+1},\ldots,u^{n+k})$. Let $v$ be any element of $V$; since $f(v)\in\mbox{Span}(u_1,\ldots,u_n)$, we have
$$f^*(u^{n+i})(v)=u^{n+i}(f(v))=0,$$ so in the end we obtain that $\mbox{Ker}(f^*)=\mbox{Span}(u^{n+1},\ldots,u^{n+k})$.

Finally, let us consider the compatibility condition. We observe that
$$g_W^*(u^{n+i},u^{n+i})=g_W(u_{n+i},u_{n+i})>0,$$ since $g_W$ is a scalar product, while, of course,
$$g_V^*(f^*(u^{n+i}),f^*(u^{n+i}))=0.$$ Quite evidently, the compatibility condition cannot be satisfied (unless $k=0$).
\end{example}

The criterion for compatibility of the induced pseudo-metrics with the dual map has the following form.

\begin{thm}
Let $V$ and $W$ be two finite-dimensional diffeological vector spaces, and let $f:V\to W$ be a smooth linear map such that $\mbox{Ker}(f)\cap V_0=\{0\}$ and $f(V_0)\leqslant W_0$. Let $g_V$
and $g_W$ be compatible pseudo-metrics on $V$ and $W$ respectively. Then the induced pseudo-metrics $g_W^*$ and $g_V^*$ are compatible with $f^*$ if and only if $W^*$ and $V^*$ are
diffeomorphic.
\end{thm}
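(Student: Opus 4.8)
The plan is to exploit the fact that, by Theorem \ref{dual:diff-gy:standard:thm}, both $V^*$ and $W^*$ are standard spaces; hence they are diffeomorphic if and only if they have the same dimension, and moreover the induced forms $g_V^*$ and $g_W^*$ are honest scalar products. Under this reduction, the compatibility of $g_V^*$ and $g_W^*$ with $f^*$ is nothing but the statement that $f^*\colon W^*\to V^*$ is a linear isometry, and the whole theorem becomes an equivalence between ``$f^*$ is an isometry'' and ``$\dim(V^*)=\dim(W^*)$''. I would first record the two isometries that make this reduction precise: by the very definition of the induced pseudo-metrics, the pairing maps $\Phi_V\colon V_0\to V^*$ and $\Phi_W\colon W_0\to W^*$ are isometric isomorphisms between the characteristic subspaces (carrying the genuine scalar products $g_V|_{V_0}$, $g_W|_{W_0}$) and the standard duals (carrying $g_V^*$, $g_W^*$).

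For the ``only if'' direction, assume the dual compatibility $g_W^*(w_1^*,w_2^*)=g_V^*(f^*(w_1^*),f^*(w_2^*))$. First I would note that $f^*$ must be injective: since $g_W^*$ is positive definite, $f^*(w^*)=0$ forces $g_W^*(w^*,w^*)=g_V^*(0,0)=0$, hence $w^*=0$. Injectivity of $f^*\colon W^*\to V^*$ gives $\dim(W^*)\leqslant\dim(V^*)$. On the other hand, because $g_V$ and $g_W$ are compatible with $f$, part $(3)$ of Lemma \ref{exist:compatible:pseudo-metrics:f:necessary:lem} supplies the reverse inequality $\dim(V^*)\leqslant\dim(W^*)$. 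Combining the two yields $\dim(V^*)=\dim(W^*)$, and since both duals are standard this means they are diffeomorphic.

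For the ``if'' direction, assume $\dim(V^*)=\dim(W^*)$. The hypotheses $\mbox{Ker}(f)\cap V_0=\{0\}$ and $f(V_0)\leqslant W_0$ make $f|_{V_0}\colon V_0\to W_0$ injective, and since $\dim(V_0)=\dim(V^*)=\dim(W^*)=\dim(W_0)$ it is in fact a linear isomorphism onto $W_0$. Compatibility of $g_V$ and $g_W$ shows at once that $f|_{V_0}$ is an isometry from $(V_0,g_V|_{V_0})$ onto $(W_0,g_W|_{W_0})$. I would then pin down the dual map by the explicit identity $f^*\circ\Phi_W=\Phi_V\circ(f|_{V_0})^{-1}$: for $w\in W_0$ and $v:=(f|_{V_0})^{-1}(w)$ one checks $f^*(\Phi_W(w))=g_W(w,f(\cdot))=g_W(f(v),f(\cdot))=g_V(v,\cdot)=\Phi_V(v)$, where the middle equality is precisely the compatibility of $g_V$ with $g_W$. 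Thus $f^*=\Phi_V\circ(f|_{V_0})^{-1}\circ\Phi_W^{-1}$ is a composite of three isometries, hence an isometry, which is exactly the required compatibility of $g_V^*$ and $g_W^*$ with $f^*$.

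The one computation to carry out carefully is the displayed identity $f^*\circ\Phi_W=\Phi_V\circ(f|_{V_0})^{-1}$, but this is short. The main obstacle is conceptual rather than technical: one must see clearly that the equality $\dim(V^*)=\dim(W^*)$ is exactly what upgrades the injection $f|_{V_0}$ to an isomorphism \emph{onto} $W_0$, so that $(f|_{V_0})^{-1}$ is defined on all of $W_0$. Without this equality $f|_{V_0}$ lands in a proper subspace of $W_0$, and the $g_W^*$-lengths of the directions of $W_0$ missed by its image cannot be reproduced by $f^*$ --- which is precisely the failure mechanism exhibited in the preceding Example.
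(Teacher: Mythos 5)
Your proof is correct, and it follows precisely the route the paper indicates: the paper states this theorem without including a proof (deferring to the cited references), but its surrounding material — the lemma giving $\dim(V^*)\leqslant\dim(W^*)$ as a necessary consequence of compatibility, the preceding example showing the failure when dimensions differ, and the remark immediately after the theorem that it is precisely $f^*$ which realizes the diffeomorphism $W^*\cong V^*$ — corresponds exactly to the two halves of your argument. Your identity $f^*=\Phi_V\circ(f|_{V_0})^{-1}\circ\Phi_W^{-1}$, exhibiting $f^*$ as a composite of isometries between standard spaces, is the natural way to make the ``if'' direction precise, and your ``only if'' direction (injectivity of $f^*$ from positive definiteness of $g_W^*$, plus the dimension inequality from the lemma) is sound.
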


\begin{rem}
The condition $f(V_0)\leqslant W_0$ implies implies in particular that it is precisely the map $f^*$ that yields a diffeomorphism between $W^*$ and $V^*$. We also notice that the existence of a
diffeomorphism between $W^*$ and $V^*$ does not mean that $W$ and $V$ are themselves diffeomorphic; only their characteristic subspaces are.
\end{rem}

\subsubsection{The induced gluing and compatibility of the induced pseudo-metrics: diffeological pseudo-bundles}

We now consider the same question for diffeological pseudo-bundles. Namely, let $g_1$ and $g_2$ be pseudo-metrics on $\pi_1:V_1\to X_1$ and $\pi_2:V_2\to X_2$, compatible with respect to the
gluing along a given pair of maps $(\tilde{f},f)$, when is it true that $g_2^*$ and $g_1^*$ are compatible with the gluing of $\pi_2^*:V_2^*\to X_2$ and $\pi_1^*:V_1^*\to X_1$ along  $(\tilde{f}^*,f^{-1})$?

By the general definition, the compatibility of $g_2^*$ and $g_1^*$ means the following. Recall first that  $g_2^*:X_2\to(V_2^*)^*\otimes(V_2^*)^*$ and $g_1^*:X_1\to(V_1^*)^*\otimes(V_1^*)^*$, and
that the induced gluing of the dual pseudo-bundles produces the pseudo-bundle $\pi_2^*\cup_{(\tilde{f}^*,f^{-1})}\pi_1^*:V_2^*\cup_{\tilde{f}^*}V_1^* \to X_2\cup_{f^{-1}}X_1$. The compatibility condition
is then that there be
$$g_1^*(f^{-1}(y'))(\tilde{f}^*(v^*),\tilde{f}^*(w^*))=g_2^*(y')(v^*,w^*)$$
$$\mbox{for all }y'\in Y'=f(Y)\mbox{ and for all }v^*,w^*\in(\pi_2^{-1}(y'))^*.$$

\paragraph{The necessary condition} The compatibility between $g_1$ and $g_2$ implies in particular that for all $y\in Y$ the pseudo-metrics $g_1(y)$ and $g_2(f(y))$ are compatible with the
smooth linear map $\tilde{f}|_{\pi_1^{-1}(y)}$ between diffeological vector spaces $\pi_1^{-1}(y)$ and $\pi_2^{-1}(f(y))$. Therefore the following statement is a direct consequence of the results stated in
the previous section.

\begin{prop}
Let $\pi_1:V_1\to X_1$ and $\pi_2:V_2\to X_2$ be diffeological vector pseudo-bundles with finite-dimensional fibres, and let $(\tilde{f},f)$ be a gluing between them such that $f$ is smoothly
invertible. Let $g_1$ and $g_2$ be two pseudo-metrics on these pseudo-bundles compatible with the gluing along $(\tilde{f},f)$. If the induced pseudo-metrics $g_2^*$ and $g_1^*$ are compatible with
the gluing along $(\tilde{f}^*,f^{-1})$ then for every $y\in Y$ the restriction of $\tilde{f}$ on the fibre $\pi_1^{-1}(y)$ yields a diffeomorphism between the characteristic subspaces of
$\pi_1^{-1}(y)$ and $\pi_2^{-1}(f(y))$.
\end{prop}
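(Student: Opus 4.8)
The plan is to reduce the statement to the single-vector-space results of the previous subsections, applied fibrewise over each $y\in Y$. Fix $y\in Y$ and abbreviate $V:=\pi_1^{-1}(y)$, $W:=\pi_2^{-1}(f(y))$, and $\phi:=\tilde{f}|_{\pi_1^{-1}(y)}\colon V\to W$; write $V_0,W_0$ for the respective characteristic subspaces. Since the assertion is pointwise in $y$, it suffices to show that $\phi|_{V_0}\colon V_0\to W_0$ is a linear diffeomorphism; and since both $V_0$ and $W_0$ carry the standard diffeology (being characteristic subspaces), it is enough to exhibit $\phi|_{V_0}$ as a well-defined linear bijection onto $W_0$.

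First I would extract the two fibrewise compatibility facts. By Definition \ref{compatible:pseudo-metrics:on:two:pseudo-bundles:defn}, the compatibility of $g_1$ and $g_2$ along $(\tilde{f},f)$ says exactly that $g_V:=g_1(y)$ and $g_W:=g_2(f(y))$ are compatible with the smooth linear map $\phi$. Since these compatible pseudo-metrics on $V$ and $W$ exist, Theorem \ref{criterio:exist:pseudometrics:vspaces:thm} yields $\mbox{Ker}(\phi)\cap V_0=\{0\}$ and $\phi(V_0)\leqslant W_0$; in particular $\phi|_{V_0}$ is injective and takes values in $W_0$. Next I would translate the hypothesis on $g_1^*$ and $g_2^*$: reading the pseudo-bundle compatibility identity at $y'=f(y)$, and using that the gluing map $\tilde{f}^*$ of the dual pseudo-bundles restricts on the fibre $(\pi_2^{-1}(f(y)))^*=W^*$ to the dual map $\phi^*=(\tilde{f}|_V)^*$, while $g_i^*$ restricts on each fibre to the dual pseudo-metric of the fibre form $g_i$, I obtain that $g_W^*:=g_2^*(f(y))$ and $g_V^*:=g_1^*(y)$ are compatible with $\phi^*$ in the sense of the single-space compatibility condition of the previous subsection.

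Finally I would invoke the criterion immediately preceding this proposition, which characterizes when the induced dual pseudo-metrics are compatible. Its hypotheses $\mbox{Ker}(\phi)\cap V_0=\{0\}$ and $\phi(V_0)\leqslant W_0$ have just been verified, so the compatibility of $g_W^*$ and $g_V^*$ with $\phi^*$ forces $V^*$ and $W^*$ to be diffeomorphic. Since $V_0$ is diffeomorphic to $V^*$ and $W_0$ to $W^*$ via the respective pairing maps, this gives $\dim V_0=\dim V^*=\dim W^*=\dim W_0$. An injective linear map $\phi|_{V_0}\colon V_0\to W_0$ between standard spaces of equal dimension is a linear isomorphism, hence a diffeomorphism, which is precisely the asserted diffeomorphism of characteristic subspaces.

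The main obstacle I anticipate is the fibrewise bookkeeping in the middle step: one must check carefully that the pseudo-bundle-level data restrict on fibres to exactly the single-space data, that is, that $\tilde{f}^*|_{W^*}=\phi^*$ and that the dual pseudo-metric $g_i^*$ evaluated at a point agrees with the induced dual of the fibre pseudo-metric $g_i$ at that point, so that the gluing-compatibility identity becomes the vector-space compatibility condition verbatim with the directions of all maps matching. These identifications follow from the fibrewise definitions of the dual pseudo-bundle and of the induced pseudo-metric $g^*$ recalled earlier; once they are in place, the remainder is a direct citation of the single-space criterion together with an elementary dimension count.
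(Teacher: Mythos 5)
Your proof is correct and follows essentially the same route as the paper: the paper's own argument consists precisely of observing that compatibility of $g_1,g_2$ (and of $g_2^*,g_1^*$) restricts fibrewise to the single-vector-space compatibility notions, and then citing the results of the preceding subsection (the existence criterion giving $\mbox{Ker}(\tilde{f}|_{\pi_1^{-1}(y)})\cap V_0=\{0\}$ and $\tilde{f}(V_0)\leqslant W_0$, and the criterion that dual compatibility forces $V^*\cong W^*$). Your concluding dimension count, using $V_0\cong V^*$ and $W_0\cong W^*$ via the pairing maps to upgrade the injection $\tilde{f}|_{V_0}$ to a bijection onto $W_0$, is a legitimate filling-in of the step the paper leaves implicit, so there is no substantive difference to report.
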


Assuming furthermore that the two pseudo-bundles are locally trivially, we can then get more than just fibrewise diffeomorphism. Indeed, the collection of all the characteristic subspaces in the
total space $V$ of some pseudo-bundle (like any other collection of subspaces, one per fibre, see \cite{pseudobundle}), forms a sub-bundle of $V$, called its \textbf{characteristic sub-bundle};
$\tilde{f}$ gives a pseudo-bundle diffeomorphism between the characteristic sub-bundles of $V_1$ and $V_2$ over $Y$ and $f(Y)$.

\paragraph{Criterion of compatibility} The above statement can easily be reversed to obtain a criterion of when the induced pseudo-metrics on the dual pseudo-bundles are compatible with the dual
map $\tilde{f}^*$. This criterion is quite close to the standard one, asking for $\tilde{f}^*$ to be a usual fibrewise isometry and as close as possible to a usual smooth bundle map, although it is not one
exactly, due to the fact that it does not have to be defined on usual open sets.

\begin{thm}\label{when:dual:pseudometrics:compatible:bundles:thm}
Let $\pi_1:V_1\to X_1$ and $\pi_2:V_2\to X_2$ be two diffeological vector pseudo-bundles, locally trivial and with finite-dimensional fibres, let $(\tilde{f},f)$ be a gluing between them, and let $g_1$
and $g_2$ be pseudo-metrics on $V_1$ and $V_2$ respectively, that are compatible with the gluing along $(\tilde{f},f)$. Then the induced pseudo-metrics $g_2^*$ and $g_1^*$ on the corresponding
dual pseudo-bundles are compatible with the gluing along $(\tilde{f}^*,f^{-1})$ if and only if $\tilde{f}^*$ is a pseudo-bundle diffeomorphism of its domain with its image.
\end{thm}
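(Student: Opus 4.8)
The plan is to reduce the statement to the fibrewise situation already settled for individual diffeological vector spaces, and then to use local triviality to pass from a collection of fibrewise diffeomorphisms to a genuine pseudo-bundle diffeomorphism. Throughout, fix $y\in Y$ and write $V=\pi_1^{-1}(y)$, $W=\pi_2^{-1}(f(y))$, and $\phi=\tilde{f}|_V\colon V\to W$; let $V_0,W_0$ be the respective characteristic subspaces. Since $g_1$ and $g_2$ are compatible with the gluing along $(\tilde{f},f)$, restricting Definition \ref{compatible:pseudo-metrics:on:two:pseudo-bundles:defn} to this fibre shows that $g_1(y)$ and $g_2(f(y))$ are compatible pseudo-metrics with respect to $\phi$, and Theorem \ref{criterio:exist:pseudometrics:vspaces:thm} (together with Lemma \ref{exist:compatible:pseudo-metrics:f:necessary:lem}) then guarantees that $\mbox{Ker}(\phi)\cap V_0=\{0\}$ and $\phi(V_0)\leqslant W_0$.

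First I would establish the fibrewise equivalence. By the vector-space criterion for the duals proved in the previous subsection, $g_2(f(y))^*$ and $g_1(y)^*$ are compatible with $\phi^*$ if and only if $W^*$ and $V^*$ are diffeomorphic. By Theorem \ref{dual:diff-gy:standard:thm} both duals are standard, so this reduces to the numerical condition $\dim(V^*)=\dim(W^*)$. Under the identifications $V^*\cong V_0$ and $W^*\cong W_0$ furnished by the pairing maps $\Phi_V,\Phi_W$, and using $\phi(V_0)\leqslant W_0$, the map $\phi^*$ corresponds to the adjoint of the injection $\phi|_{V_0}\colon V_0\hookrightarrow W_0$ relative to the restricted scalar products; being the adjoint of an injection it is surjective. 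Hence $\phi^*$ is bijective --- equivalently a linear diffeomorphism of standard spaces --- exactly when $\dim(V_0)=\dim(W_0)$, that is, exactly when $\dim(V^*)=\dim(W^*)$. Therefore, fibrewise, the induced dual pseudo-metrics are compatible if and only if $\phi^*$ is a linear diffeomorphism.

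The direction $(\Leftarrow)$ is then immediate: if $\tilde{f}^*$ is a pseudo-bundle diffeomorphism of its domain with its image, it is in particular a fibrewise diffeomorphism, so the fibrewise equivalence yields compatibility of $g_2^*$ and $g_1^*$ over every $y'\in Y'=f(Y)$, which is precisely the required compatibility with the gluing along $(\tilde{f}^*,f^{-1})$. For $(\Rightarrow)$, compatibility gives, via the fibrewise equivalence, that $\tilde{f}^*$ restricts to a diffeomorphism on every fibre; since $\tilde{f}^*$ is smooth (being the dual of the smooth lift $\tilde{f}$), it remains only to verify that its fibrewise inverse assembles into a smooth pseudo-bundle map. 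Here I would invoke local triviality: over a trivializing neighbourhood the pairing map $\Phi_i$ identifies $V_i^*$ with the characteristic sub-bundle of $V_i$ as a pseudo-bundle (this identification is a pseudo-bundle diffeomorphism, by the construction of the induced pseudo-metric on $V^*$), and under these identifications $\tilde{f}^*$ becomes the fibrewise adjoint of the restriction of $\tilde{f}$ to the characteristic sub-bundles. The Proposition of the previous subsection, strengthened under local triviality by its Remark, shows that this restriction is a pseudo-bundle diffeomorphism between the characteristic sub-bundles over $Y$ and $f(Y)$; inverting the adjoint over each chart then furnishes a smooth inverse for $\tilde{f}^*$, so $\tilde{f}^*$ is a pseudo-bundle diffeomorphism of its domain with its image.

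The main obstacle I anticipate is this last step: promoting the pointwise linear diffeomorphisms $\phi^*$ to a genuinely smooth inverse of $\tilde{f}^*$ across fibres. This is exactly where the hypothesis of local triviality is indispensable, since without a local product structure there is no guarantee that the fibrewise adjoints, let alone their inverses, vary smoothly. The argument must therefore be carried out chart by chart, using that over each trivializing set the characteristic sub-bundle and its dual are honest (standard) bundles on which the adjoint can be inverted smoothly, and that the resulting local inverses patch together because the inductions $j_i^{V_i^*}$ cover the glued dual pseudo-bundle.
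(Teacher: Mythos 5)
Your proof is correct and follows essentially the same route as the paper: reduce to the fibrewise vector-space criterion (the dual pseudo-metrics are compatible with $\phi^*=\tilde{f}^*|_{\mbox{\scriptsize fibre}}$ if and only if the duals are diffeomorphic, the diffeomorphism being realized by $\phi^*$ itself), then use local triviality, the pairing maps $\Phi_i$, and the characteristic sub-bundles to pass between fibrewise diffeomorphisms and a genuine pseudo-bundle diffeomorphism for $\tilde{f}^*$. The paper only sketches this step (``the above statement can easily be reversed''), deferring details to the cited reference, and your chart-by-chart assembly of the smooth inverse via $(\tilde{f}^*)^{-1}=\Phi_2\circ\tilde{f}|_{\mbox{\scriptsize char}}\circ\Phi_1^{-1}$ is exactly the argument it has in mind.
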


Do notice that $\tilde{f}^*$ being a diffeomorphism does \emph{not} imply that $\tilde{f}$ itself is a diffeomorphism, only that its restriction to the characteristic sub-bundle is so.

\paragraph{Compatibility of $g_2^*$ and $g_1^*$ implies the gluing-dual commutativity} It follows from Theorem \ref{when:dual:pseudometrics:compatible:bundles:thm}, the remark that follows it,
and the criterion of the compatibility of pseudo-metrics $g_1$ and $g_2$, that the gluing-dual commutativity condition for $V_1$, $V_2$, and $(\tilde{f},f)$ is closely related to the compatibility of the dual
pseudo-metrics. In fact, under the assumptions we have already imposed, they are equivalent, as the next theorem shows.

\begin{thm}
Let $\pi_1:V_1\to X_1$ and $\pi_2:V_2\to X_2$ be diffeological vector pseudo-bundles, locally trivial and with finite-dimensional fibres, and let $(\tilde{f},f)$ be a gluing of $V_1$ to $V_2$, with
a smoothly invertible $f$. Suppose that $V_1$ and $V_2$ admit pseudo-metrics compatible with this gluing, and let $g_1$ and $g_2$ be a fixed choice of such pseudo-metrics. Then the induced
pseudo-metrics $g_2^*$ and $g_1^*$ on the dual pseudo-bundles $V_2^*$ and $V_1^*$ are compatible with the gluing along $(\tilde{f}^*,f^{-1})$ if and only if $V_1$, $V_2$, and $(\tilde{f},f)$ satisfy
the gluing-dual commutativity condition.
\end{thm}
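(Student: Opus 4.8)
The plan is to route everything through Theorem~\ref{when:dual:pseudometrics:compatible:bundles:thm}, which, under exactly the standing hypotheses, already identifies the compatibility of $g_2^*$ and $g_1^*$ with the gluing along $(\tilde{f}^*,f^{-1})$ with the single geometric condition that $\tilde{f}^*$ be a pseudo-bundle diffeomorphism of its domain with its image. Consequently the theorem reduces to a statement that no longer mentions the pseudo-metrics explicitly: that, for locally trivial pseudo-bundles with finite-dimensional fibres and smoothly invertible $f$, the property ``$\tilde{f}^*$ is a pseudo-bundle diffeomorphism of its domain with its image'' is equivalent to the gluing-dual commutativity condition. Before proving this I would record the fibrewise bookkeeping forced by the switch map: since $\varphi_{X_1\leftrightarrow X_2}$ sends $i_2^{X_2}(f(y))$ to $i_2^{X_1}(y)$ for $y\in Y$, any diffeomorphism $\Phi_{\cup,*}$ covering it must carry the fibre $(\pi_2^{-1}(f(y)))^*$ of $(V_1\cup_{\tilde{f}}V_2)^*$ onto the fibre $(\pi_1^{-1}(y))^*$ of $V_2^*\cup_{\tilde{f}^*}V_1^*$, and the defining identities of $\Phi_{\cup,*}$ pin down this fibrewise identification to be precisely $\tilde{f}^*$.

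For the implication from the gluing-dual commutativity to the diffeomorphism property, I would read $\tilde{f}^*$ off the middle defining identity $\Phi_{\cup,*}\circ((j_2^{V_2})^*)^{-1}=j_2^{V_1^*}\circ\tilde{f}^*$, valid over $i_2^{X_1}(Y)$. Solving for $\tilde{f}^*$ expresses it as the inverse induction $(j_2^{V_1^*})^{-1}$ composed with $\Phi_{\cup,*}$ composed with $((j_2^{V_2})^*)^{-1}$; since $j_2^{V_1^*}$ is an induction, hence a diffeomorphism onto its image, the two remaining maps on the right are diffeomorphisms onto their images, and $\Phi_{\cup,*}$ is a diffeomorphism by hypothesis, it follows that $\tilde{f}^*$ is a pseudo-bundle diffeomorphism of its domain with its image.

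For the converse I would construct $\Phi_{\cup,*}$ by declaring it, on the three pieces of $(V_1\cup_{\tilde{f}}V_2)^*$ lying over $i_1(X_1\setminus Y)$, over $i_2(f(Y))$, and over $i_2(X_2\setminus f(Y))$, to satisfy the three defining identities verbatim. Well-definedness is immediate on the outer two pieces, while on the middle one it uses that $\tilde{f}^*$ is a fibrewise linear isomorphism onto its image together with the necessary fibrewise matching $(\pi_2^{-1}(f(y)))^*\cong(\pi_1^{-1}(y))^*$. The real work, and the step I expect to be the main obstacle, is checking that the resulting $\Phi_{\cup,*}$ and its set-theoretic inverse are both smooth. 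Because the gluing diffeology is deliberately weak, I would verify smoothness plot-by-plot, feeding into each map an arbitrary plot and invoking the local characterization of plots of a gluing diffeology (every plot of $X_1\cup_f X_2$ locally lifts through a plot of $X_1$ or of $X_2$) together with the explicit description of plots of a dual pseudo-bundle from \cite{vincent}. Local triviality is what makes this go through: it upgrades the collection of characteristic subspaces to an honest characteristic sub-bundle on which $\tilde{f}$, and hence $\tilde{f}^*$, is a genuine pseudo-bundle diffeomorphism, so that the three fibrewise prescriptions patch into a globally smooth map whose inverse is smooth for the same reason. Chaining this equivalence with Theorem~\ref{when:dual:pseudometrics:compatible:bundles:thm} then yields the asserted equivalence between compatibility of the dual pseudo-metrics and the gluing-dual commutativity condition.
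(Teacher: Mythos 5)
Your proposal is correct and is essentially the paper's own argument: the paper likewise reduces the statement, via Theorem \ref{when:dual:pseudometrics:compatible:bundles:thm}, to the equivalence ``$\tilde{f}^*$ is a pseudo-bundle diffeomorphism of its domain with its image $\Leftrightarrow$ gluing-dual commutativity'', reading $\tilde{f}^*$ off the defining identities of $\Phi_{\cup,*}$ in one direction and, in the other, running the explicit three-piece construction of Theorem \ref{gluing:along:diffeo:implies:gluing:dual:commute:thm} through the characteristic sub-bundles. The only point to phrase more carefully is that, in your converse direction, the fact that $\tilde{f}$ restricts to a diffeomorphism of the characteristic sub-bundles is not a consequence of local triviality alone (local triviality merely assembles the characteristic subspaces into a sub-bundle), but of the hypothesis that $\tilde{f}^*$ is a diffeomorphism together with the existence of compatible $g_1$ and $g_2$ --- exactly the ingredient the paper's remark following Theorem \ref{when:dual:pseudometrics:compatible:bundles:thm} supplies.
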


We notice the \emph{only if} part of the statement uses explicitly the assumption that $V_1$ and $V_2$ admit a choice of compatible pseudo-metrics, and specifically, the implications of their
existence for the behavior of $\tilde{f}$ on the corresponding characteristic sub-bundles.

\subsubsection{The pseudo-metrics $\tilde{g}^*$ and $\widetilde{g^*}$}

Assuming the gluing-dual commutativity for $V_1$, $V_2$, and $(\tilde{f},f)$, not only implies that $(V_1\cup_{\tilde{f}}V_2)^*\cong V_2^*\cup_{\tilde{f}^*}V_1^*$; it also allows us to consider two
pseudo-metrics on it. Indeed, there is a natural pseudo-metric corresponding to the presentation of this space by the left-hand side expression, and there is one corresponding to the  right-hand side.

Specifically, the pseudo-bundle on the left carries the pseudo-metric $\tilde{g}^*$ that is induced by the pseudo-metric $\tilde{g}$. The pseudo-bundle on the right is obtained by gluing of
two pseudo-bundles carrying compatible pseudo-metrics each; it therefore carries a pseudo-metric $\widetilde{g^*}$ corresponding to this gluing. They are, respectively, maps
$$\tilde{g}^*:X_1\cup_f X_2\to(V_1\cup_{\tilde{f}}V_2)^{**}\otimes(V_1\cup_{\tilde{f}}V_2)^{**}\,\,\mbox{ and }\,\,\widetilde{g^*}:X_2\cup_{f^{-1}}X_1\to(V_2^*\cup_{\tilde{f}^*}V_1^*)^*\otimes
(V_2^*\cup_{\tilde{f}^*}V_1^*)^*.$$ It turns out that they are related by the natural diffeomorphisms between their domains and their ranges.

\paragraph{The pseudo-metric $\tilde{g}^*$} It is defined as the pseudo-metric dual to the pseudo-metric $\tilde{g}$ on $(V_1\cup_{\tilde{f}}V_2)^*$. Specifically, if
$\Psi:V_1\cup_{\tilde{f}}V_2\to(V_1\cup_{\tilde{f}}V_2)^*$ is the pairing map relative to $\tilde{g}$, that is, $\Psi(v)=\tilde{g}((\pi_1\cup_{(\tilde{f},f)}\pi_2)(v))(v,\cdot)$, then we have, for any
$x\in X_1\cup_f X_2$ and any $v^*,w^*\in((\pi_1\cup_{(\tilde{f},f)}\pi_2)^*)^{-1}(x)$, that
$$\tilde{g}^*(x)(v^*,w^*)=\tilde{g}(x)(v,w),$$ where $v,w$ are such that $\Psi(v)=v^*$ and $\Psi(w)=w^*$. We can also write in more detail that
$$\tilde{g}^*(x)(v^*,w^*)=\tilde{g}(x)(v,w)=\left\{\begin{array}{ll}
g_1((i_1^{X_1})^{-1}(x))((j_1^{V_1})^{-1}(v),(j_1^{V_1})^{-1}(w)) & \mbox{if }x\in\mbox{Range}(i_1^{X_1}),\\
g_2((i_2^{X_2})^{-1}(x))((j_2^{V_2})^{-1}(v),(j_2^{V_2})^{-1}(w)) & \mbox{if }x\in\mbox{Range}(i_2^{X_2}).
\end{array}\right.$$

\paragraph{The pseudo-metric $\widetilde{g^*}$} This one is defined on the pseudo-bundle $V_2^*\cup_{\tilde{f}^*}V_1^*$ fibrewise, by imposing it to coincide with $g_2^*$ or $g_1^*$, as appropriate.
Specifically, let $\Psi_i:V_i\to V_i^*$, for $i=1,2$, be the natural pairing maps associated to $g_1$ and $g_2$; for all $x\in X_2\cup_{f^{-1}}X_1$ and for all
$v^*,w^*\in(\pi_2^*\cup_{(\tilde{f}^*,f^{-1})}\pi_1^*)^{-1}(x)$ we have
$$\widetilde{g^*}(x)(v^*,w^*)=\left\{\begin{array}{ll}
g_2^*((i_1^{X_2})^{-1}(x))((j_1^{V_2^*})^{-1}(v^*),(j_1^{V_2^*})^{-1}(w^*))=g_2((i_1^{X_2})^{-1}(x))(v,w) & \mbox{if }x\in\mbox{Range}(i_1^{X_2}) \\
g_1^*((i_2^{X_1})^{-1}(x))((j_2^{V_1^*})^{-1}(v^*),(j_2^{V_1^*})^{-1}(w^*))=g_1((i_2^{X_1})^{-1}(x))(v,w) & \mbox{if }x\in\mbox{Range}(i_2^{X_1}),
\end{array}\right.$$ where $v,w$ are determined\footnote{Not uniquely, unless we assume to take them in the characteristic subspace of the corresponding fibre.} in the following way. If
$x\in\mbox{Range}(i_1^{X_2})$ then $v^*=j_1^{V_2^*}(\Psi_2(v))$ and $w^*=j_1^{V_2^*}(\Psi_2(w))$; and if $x\in\mbox{Range}(i_2^{X_1})$ then $v^*=j_2^{V_1^*}(\Psi_1(v))$ and
$w^*=j_2^{V_1^*}(\Psi_1(w))$.

\paragraph{Comparing $\tilde{g}^*$ and $\widetilde{g^*}$} To say that they define the same pseudo-metric means to claim the existence of a diffeomorphism
$$\Psi':(V_1\cup_{\tilde{f}}V_2)^{**}\otimes(V_1\cup_{\tilde{f}}V_2)^{**}\to(V_2^*\cup_{\tilde{f}^*}V_1^*)^*\otimes(V_2^*\cup_{\tilde{f}^*}V_1^*)^*$$ such that
$$\Psi'\circ\tilde{g}^*=\widetilde{g^*}\circ(\varphi_{X_1\leftrightarrow X_2}).$$ It suffices to take the tensor square of the inverse of the conjugate of the gluing-dual commutativity diffeomorphism, that is,
$$\left((\Phi_{\cup,*})^*\right)^{-1}:(V_1\cup_{\tilde{f}}V_2)^{**}\to(V_2^*\cup_{\tilde{f}^*}V_1^*)^*;$$ the desired equality
$$\left(\left((\Phi_{\cup,*})^*\right)^{-1}\otimes\left((\Phi_{\cup,*})^*\right)^{-1}\right)\circ\tilde{g}^*=\widetilde{g^*}\circ(\varphi_{X_1\leftrightarrow X_2})$$ follows from the above presentations of
$\tilde{g}^*$ and $\widetilde{g^*}$.

\subsection{More on gluing-dual commutativity}

We now consider the specific instances of the gluing-dual commutativity. Actually, the main result in this direction has already been stated as Theorem 5.25. Here we add some other specific
instances of when the gluing-dual commutativity is satisfied, including one that is the main auxiliary tool in proving the just-mentioned theorem.

\subsubsection{The gluing-dual commutativity and gluing along diffeomorphisms}

Gluing along a diffeomorphism is not a strictly necessary condition for the gluing-dual commutativity, but it is a sufficient one, see \cite{pseudobundle} and then \cite{exterior-algebras-pseudobundles}
for an explicit construction. That it is not necessary, is due to the fact that the dual pseudo-bundles are essentially determined by the characteristic sub-bundles; $\tilde{f}$ may behave to its liking
outside of these. On the other hand, it is sufficient for it to be a diffeomorphism of its domain of definition with its image, in order for the gluing-commutativity condition to be satisfied:

\begin{thm}\label{gluing:along:diffeo:implies:gluing:dual:commute:thm}
\emph{(\cite{exterior-algebras-pseudobundles}, Theorem 3.3)} Let $\pi_1:V_1\to X_1$ and $\pi_2:V_2\to X_2$ be two finite-dimensional diffeological vector pseudo-bundles, let $f:X_1\supseteq Y\to X_2$
be a diffeomorphism with its image, and let $\tilde{f}$ be its fibrewise linear lift that is also a diffeomorphism with its image. Then the map
$$\Phi_{\cup,*}:(V_1\cup_{\tilde{f}}V_2)^*\to V_2^*\cup_{\tilde{f}^*}V_1^*$$ defined by
$$\Phi_{\cup,*}=\left\{\begin{array}{ll}
j_2^{V_1^*}\circ(j_1^{V_1})^* & \mbox{on }((\pi_1\cup_{(\tilde{f},f)}\pi_2)^*)^{-1}(i_1^{X_1}(X_1\setminus Y))\\
j_2^{V_1^*}\circ\tilde{f}^*\circ(j_2^{V_2})^* & \mbox{on }((\pi_1\cup_{(\tilde{f},f)}\pi_2)^*)^{-1}(i_2^{X_2}(f(Y))\\
j_1^{V_2^*}\circ(j_2^{V_2})^* & \mbox{on }((\pi_1\cup_{(\tilde{f},f)}\pi_2)^*)^{-1}(i_2^{X_2}(X_2\setminus f(Y)))
\end{array}\right.$$ is a pseudo-bundle diffeomorphism covering the switch map $\varphi_{X_1\leftrightarrow X_2}$.
\end{thm}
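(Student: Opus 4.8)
The plan is to verify in turn the properties that together make $\Phi_{\cup,*}$ a pseudo-bundle diffeomorphism covering the switch map: that it is a well-defined set map, that it covers $\varphi_{X_1\leftrightarrow X_2}$, that it is fibrewise linear, and that it is a diffeomorphism. First I would record that the base $X_1\cup_f X_2$ is the disjoint union of $i_1^{X_1}(X_1\setminus Y)$, $i_2^{X_2}(f(Y))$, and $i_2^{X_2}(X_2\setminus f(Y))$, so the three clauses of the definition are prescribed on fibres lying over three disjoint pieces covering the whole base; there are no overlaps on which to check agreement, and well-definedness reduces to checking that each clause produces an element of the correct fibre of $V_2^*\cup_{\tilde f^*}V_1^*$. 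Over a gluing point $i_2^{X_2}(f(y))$ the fibre of $V_1\cup_{\tilde f}V_2$ is $\pi_2^{-1}(f(y))$ (since $j_2^{V_2}$ is defined on all of $V_2$ and $\tilde f$ embeds $\pi_1^{-1}(y)$ into it), so $(j_2^{V_2})^*$ is the identification of the dual fibre with $(\pi_2^{-1}(f(y)))^*$ and $\tilde f^*$ sends it into $(\pi_1^{-1}(y))^*$, which is exactly the target fibre over $i_2^{X_1}(y)=\varphi_{X_1\leftrightarrow X_2}(i_2^{X_2}(f(y)))$; the other two clauses are handled the same way using that $j_1^{V_1}$ and $j_2^{V_2}$ are inductions. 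Matching the three clauses against the three cases in the definition of the switch map then yields the covering identity $(\pi_2^*\cup_{(\tilde f^*,f^{-1})}\pi_1^*)\circ\Phi_{\cup,*}=\varphi_{X_1\leftrightarrow X_2}\circ(\pi_1\cup_{(\tilde f,f)}\pi_2)^*$ by inspection, and fibrewise linearity is automatic, each clause being a composition of duals of fibrewise-linear maps together with the fibrewise-linear $\tilde f^*$.

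For bijectivity I would argue fibrewise. Away from the gluing locus each clause is, under the identifications furnished by the inductions, essentially the identity: over $i_1^{X_1}(X_1\setminus Y)$ and over $i_2^{X_2}(X_2\setminus f(Y))$ the dual inductions $(j_1^{V_1})^*$ and $(j_2^{V_2})^*$ identify the relevant dual fibres, while $j_2^{V_1^*}$ and $j_1^{V_2^*}$ are the corresponding inclusions into the target, so these pieces are bijective with evident inverses. The only substantive point is the fibre over the gluing locus, where $\Phi_{\cup,*}$ equals $j_2^{V_1^*}\circ\tilde f^*$ and bijectivity reduces to $\tilde f^*\colon(\pi_2^{-1}(f(y)))^*\to(\pi_1^{-1}(y))^*$ being a bijection rather than merely the surjection that fibrewise injectivity of $\tilde f$ guarantees. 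Here I would invoke the hypothesis that $\tilde f$ is a diffeomorphism with its image together with the facts recalled in Section \ref{pseudometric:sect} — that the diffeological dual of a finite-dimensional diffeological vector space is standard and is detected by its characteristic subspace — to conclude that $\tilde f^*$ restricts to an isomorphism of the dual fibres; assembling the fibrewise inverses then gives a two-sided inverse for $\Phi_{\cup,*}$.

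The genuinely technical part, and the one I expect to be the main obstacle, is smoothness of $\Phi_{\cup,*}$ and of its inverse. The plan is to combine the explicit description of the plots of a gluing diffeology (the Lemma of Section \ref{secn:diff:pseudo-bundles:diff:gluing:sect}) with the characterization of the plots of a dual pseudo-bundle diffeology (the Lemma of Section \ref{diffeological:vector:bundle:sect}). Concretely, a plot $p$ of $(V_1\cup_{\tilde f}V_2)^*$ is tested by pairing it with arbitrary plots $q$ of $V_1\cup_{\tilde f}V_2$; since such $q$ locally factor either through a plot of $V_2$ or through a plot of $V_1$, one can localize the pairing so that on each piece the value of $\Phi_{\cup,*}\circ p$ is computed by the smooth maps $(j_1^{V_1})^*$, $(j_2^{V_2})^*$, and $\tilde f^*$, and then read off that $\Phi_{\cup,*}\circ p$ again satisfies the dual-plot criterion for $V_2^*\cup_{\tilde f^*}V_1^*$; the same argument run in reverse handles the inverse. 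The crux is controlling the behaviour across the gluing locus: one must check that a plot passing from the $V_1$-part to the $V_2$-part maps to a plot of the target with the passage governed by $\tilde f^*$, and it is precisely the weakness of the gluing diffeology — every plot being locally supported in a single piece — that makes this local bookkeeping sufficient rather than forcing a global compatibility that could otherwise fail.
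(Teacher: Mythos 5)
You should first be aware that the paper contains no proof of this theorem: it is quoted from \cite{exterior-algebras-pseudobundles} (Theorem 3.3 there), and the only indication the text gives is the remark following the statement, to the effect that bijectivity of $\Phi_{\cup,*}$ is obvious from the definition of a dual pseudo-bundle, while the genuinely nontrivial point is that concatenating three dishomogeneous pieces yields a diffeologically smooth map. Your overall architecture agrees with that assessment, but your bijectivity step contains a real error. Over a point of the gluing locus, $\Phi_{\cup,*}$ is $\tilde{f}^*$ up to the identifications $(j_2^{V_2})^*$ and $j_2^{V_1^*}$, and you propose to deduce that $\tilde{f}^*\colon(\pi_2^{-1}(f(y)))^*\to(\pi_1^{-1}(y))^*$ is an isomorphism from the hypothesis that $\tilde{f}$ is a diffeomorphism \emph{with its image}, plus standardness of finite-dimensional diffeological duals. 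No such deduction is possible: if $\tilde{f}(\pi_1^{-1}(y))$ is a proper subspace of $\pi_2^{-1}(f(y))$, then $\tilde{f}^*$ annihilates every smooth functional vanishing on that image and fails to be injective. Concretely, let $X_1=X_2$ be a one-point space, $V_1=\matR$ and $V_2=\matR^2$ standard, and $\tilde{f}(x)=(x,0)$; every hypothesis as you read it holds, yet $(V_1\cup_{\tilde{f}}V_2)^*\cong(\matR^2)^*$ is two-dimensional while $V_2^*\cup_{\tilde{f}^*}V_1^*\cong\matR^*$ is one-dimensional, so no fibrewise linear diffeomorphism exists at all. (The paper itself records, in its discussion of duals and gluing in Section 4, that $(\pi_2^{-1}(f(y)))^*\cong(\pi_1^{-1}(y))^*$ is a necessary condition that does not come for free.) The statement is correct only under the reading that $\tilde{f}$ maps $\pi_1^{-1}(Y)$ \emph{onto} $\pi_2^{-1}(f(Y))$ diffeomorphically; with that reading the bijectivity of $\tilde{f}^*$ is immediate, with inverse $(\tilde{f}^{-1})^*$, and your appeal to characteristic subspaces is unnecessary. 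Your side remark that injectivity of $\tilde{f}$ already guarantees surjectivity of $\tilde{f}^*$ is also unwarranted in diffeology, where a smooth functional on a subspace need not extend smoothly to the ambient space.

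Second, your smoothness plan tests the wrong thing on the target side. The pseudo-bundle $V_2^*\cup_{\tilde{f}^*}V_1^*$ carries a \emph{gluing} diffeology, not a dual-pseudo-bundle diffeology, so there is no ``dual-plot criterion for $V_2^*\cup_{\tilde{f}^*}V_1^*$'' available to you --- the agreement of the two descriptions is precisely what the theorem asserts, and invoking it would be circular. What must be shown is that for every plot $p$ of $(V_1\cup_{\tilde{f}}V_2)^*$ the composite $\Phi_{\cup,*}\circ p$ is \emph{locally of the form prescribed by the gluing diffeology}: either $j_1^{V_2^*}\circ q_2^*$ for a plot $q_2^*$ of $V_2^*$, or the two-case combination of a plot $q_1^*$ of $V_1^*$ with $j_2^{V_1^*}$. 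The dual-plot criterion enters one level down, in verifying that the candidate lifts $(j_1^{V_1})^*\circ p$, $\tilde{f}^*\circ(j_2^{V_2})^*\circ p$, and $(j_2^{V_2})^*\circ p$ are indeed plots of $V_1^*$, respectively $V_2^*$: one pairs them against arbitrary plots of $V_1$, respectively $V_2$, transported into $V_1\cup_{\tilde{f}}V_2$ by $j_2^{V_2}$ and by the total map $\tilde{j}_1$ (equal to $j_1^{V_1}$ off $\pi_1^{-1}(Y)$ and to $j_2^{V_2}\circ\tilde{f}$ on it), and uses that $p$ satisfies the dual criterion for $(V_1\cup_{\tilde{f}}V_2)^*$. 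For the inverse map the roles reverse --- there the target is a genuine dual pseudo-bundle and the pairing criterion is the correct test --- and the computation again needs $\tilde{f}$ fibrewise invertible in order to cancel $\tilde{f}^*$ against $(\tilde{f}^{-1})^*$ inside the pairings. With these two corrections your outline does become a proof.
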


The proof of the above-stated Theorem 5.25 is actually based on this statement, together with the implications of the existence of compatible $g_1$ and $g_2$ whose dual pseudo-metrics $g_2^*$ and
$g_1^*$ are compatible as well. We also note that the surprising thing about the above statement is not the existence itself of a bijective map $\Phi_{\cup,*}$ --- this is quite obvious from the definition
of a dual pseudo-bundle, but the fact that defining it by concatenating some rather disomogenous pieces does yield a diffeologically smooth map.

\subsubsection{The gluing-dual commutativity condition for $V_2^*$ and $V_1^*$}

Let us consider the existence of the gluing-dual commutativity condition for $V_2^*$, $V_1^*$, and $(\tilde{f}^*,f^{-1})$, under the assumption that such condition holds for $V_1$, $V_2$, and $(\tilde{f},f)$.
For the duals, this condition takes form of the existence of a diffeomorphism
$$\Phi_{\cup,*}^{(*)}:(V_2^*\cup_{\tilde{f}^*}V_1^*)^*\to V_1^{**}\cup_{\tilde{f}^{**}}V_2^{**}$$ covering the inverse of the switch map $\varphi_{X_1\leftrightarrow X_2}$. Its existence is stated in the
following theorem (see \cite{exterior-algebras-pseudobundles} for the proof) and is a direct consequence of Theorem \ref{gluing:along:diffeo:implies:gluing:dual:commute:thm}.

\begin{thm}
Let $\pi_1:V_1\to X_1$ and $\pi_2:V_2\to X_2$ be two locally trivial finite-dimensional diffeological vector pseudo-bundles, let $(\tilde{f},f)$ be a pair of smooth maps that defines a gluing of
the former pseudo-bundle to the latter, and let $\Phi_{\cup,*}:(V_1\cup_{\tilde{f}}V_2)^*\to V_2^*\cup_{\tilde{f}^*}V_1^*$ be the diffeomorphism fulfilling the gluing-dual commutativity condition.
Let $g_1$ and $g_2$ be pseudo-metrics on $V_1$ and $V_2$ respectively, compatible with respect to the gluing. Then there exists a diffeomorphism
$$\Phi_{\cup,*}^{(*)}:(V_2^*\cup_{\tilde{f}^*}V_1^*)^*\to V_1^{**}\cup_{\tilde{f}^{**}}V_2^{**}$$ covering the map $(\varphi_{X_1\leftrightarrow X_2})^{-1}:X_2\cup_{f^{-1}}X_1\to X_1\cup_f X_2$.
\end{thm}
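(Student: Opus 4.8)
The plan is to recognize the desired map $\Phi_{\cup,*}^{(*)}$ as an instance of the gluing-dual commutativity diffeomorphism of Theorem \ref{gluing:along:diffeo:implies:gluing:dual:commute:thm}, applied not to the original gluing $(\tilde{f},f)$ but to the induced gluing of the dual pseudo-bundles along $(\tilde{f}^*,f^{-1})$. Concretely, I would set $W_1=V_2^*$ over $Z_1=X_2$ and $W_2=V_1^*$ over $Z_2=X_1$, with gluing data $\tilde{h}=\tilde{f}^*$ and $h=f^{-1}$. With this identification the conclusion of Theorem \ref{gluing:along:diffeo:implies:gluing:dual:commute:thm} reads
$$(W_1\cup_{\tilde{h}}W_2)^*\to W_2^*\cup_{\tilde{h}^*}W_1^*,\quad\mbox{i.e.}\quad(V_2^*\cup_{\tilde{f}^*}V_1^*)^*\to V_1^{**}\cup_{\tilde{f}^{**}}V_2^{**},$$
which is exactly the map we want; moreover that theorem guarantees it covers the switch map $\varphi_{X_2\leftrightarrow X_1}$ of the gluing of $X_2$ to $X_1$ along $f^{-1}$, and this switch map is precisely $(\varphi_{X_1\leftrightarrow X_2})^{-1}:X_2\cup_{f^{-1}}X_1\to X_1\cup_f X_2$, as the switch map of a gluing and of its reverse are mutually inverse.

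To be allowed to invoke Theorem \ref{gluing:along:diffeo:implies:gluing:dual:commute:thm} in this situation, I must verify its two hypotheses for the pair $(\tilde{f}^*,f^{-1})$: that the base map $f^{-1}$ is a diffeomorphism with its image, and that the fibrewise linear lift $\tilde{f}^*$ is a diffeomorphism with its image. The first is immediate, since $f$ is assumed to be a diffeomorphism with its image, hence so is its inverse. The second is where the hypotheses on pseudo-metrics enter, and this is the crux. Since $\Phi_{\cup,*}$ is assumed to exist, the gluing-dual commutativity condition holds for $V_1$, $V_2$, and $(\tilde{f},f)$; combining this with the assumed existence of the compatible pseudo-metrics $g_1,g_2$ and applying the preceding theorem (the equivalence of the gluing-dual commutativity condition with the compatibility of the dual pseudo-metrics $g_2^*$ and $g_1^*$), I obtain that $g_2^*$ and $g_1^*$ are compatible with the gluing along $(\tilde{f}^*,f^{-1})$. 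Then Theorem \ref{when:dual:pseudometrics:compatible:bundles:thm} supplies exactly the property needed, namely that $\tilde{f}^*$ is a pseudo-bundle diffeomorphism of its domain with its image.

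Once these hypotheses are in place, the proof is finished by quoting Theorem \ref{gluing:along:diffeo:implies:gluing:dual:commute:thm}, which yields both the diffeomorphism $\Phi_{\cup,*}^{(*)}$ and its explicit piecewise description (concatenating $j_2^{V_2^{**}}\circ(j_1^{V_2^*})^*$, then $j_2^{V_2^{**}}\circ\tilde{f}^{**}\circ(j_2^{V_1^*})^*$, then $j_1^{V_1^{**}}\circ(j_2^{V_1^*})^*$ over the three natural pieces of the base $X_2\cup_{f^{-1}}X_1$). I would also record that $V_2^*$ and $V_1^*$ are finite-dimensional and locally trivial (indeed fibrewise standard, by Theorem \ref{dual:diff-gy:standard:thm}), so all standing dimensional and local-triviality assumptions are met. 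The main obstacle is the middle step: establishing that $\tilde{f}^*$ is a diffeomorphism with its image is not formal, since the gluing-dual commutativity of $V_1,V_2$ alone does not force it — it is precisely the role of the compatible pseudo-metrics, routed through the two prior theorems, to deliver it (the remark following Theorem \ref{when:dual:pseudometrics:compatible:bundles:thm} warns that $\tilde{f}^*$ may be a diffeomorphism while $\tilde{f}$ is not). The only care required is to confirm that this use of the preceding theorem is not circular, but since all earlier results may be assumed, the chain is legitimate.
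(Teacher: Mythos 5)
Your proposal is correct and follows essentially the same route as the paper, which presents this theorem as a direct consequence of Theorem \ref{gluing:along:diffeo:implies:gluing:dual:commute:thm} applied to the dual gluing $(\tilde{f}^*,f^{-1})$, remarking that ``the main point here is that the assumptions imply that $\tilde{f}^*$ is a diffeomorphism.'' Your chain for establishing that key point --- gluing-dual commutativity plus compatible $g_1,g_2$ gives compatibility of $g_2^*,g_1^*$ via the equivalence theorem, and then Theorem \ref{when:dual:pseudometrics:compatible:bundles:thm} gives that $\tilde{f}^*$ is a pseudo-bundle diffeomorphism with its image --- is exactly the intended justification, and your verification of the remaining hypotheses (invertibility of $f^{-1}$, finite-dimensionality of the duals, identification of the covering map with $(\varphi_{X_1\leftrightarrow X_2})^{-1}$) is sound.
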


The main point here is that the assumptions imply that $\tilde{f}^*$ is a diffeomorphism.

\section{The space of sections of a diffeological vector pseudo-bundle}

Let $\pi:V\to X$ be a finite-dimensional diffeological vector pseudo-bundle; we now consider the space $C^{\infty}(X,V)$ of its sections, under two main respects. For one thing, it is quite easy
to observe that this space may easily turn out to be (locally) infinite-dimensional, even for very simple examples of a pseudo-bundle, which of itself has finite dimension; this we illustrate
immediately via a specific example of such. After that, we turn to a more general treatment of the behavior of the spaces of sections under diffeological gluing; this implies, in particular,
that if we glue together two pseudo-bundles $\pi_1:V_1\to X_1$ and $\pi_2:V_2\to X_2$ such that $C^{\infty}(X_1,V_1)$ and $C^{\infty}(X_2,V_2)$ are (locally) finite-dimensional then the space of
sections of the resulting pseudo-bundle is finite-dimensional as well,\footnote{This is an expected finding, but given the flexibility of the diffeology with what can be considered as a smooth map,
it is not entirely trivial to establish formally.} but this is not quite \emph{vice versa}. The proofs of statements in this section appear in \cite{connections-pseudobundles}.

\subsection{A pseudo-bundle with no local basis}

We now illustrate the issue of there possibly not being a local basis of smooth sections with coefficients in $C^{\infty}(X,\matR)$. This easily occurs as soon as we have fibres with non-standard
diffeology, as it does in the following example.

\begin{example}
Let $\pi:V\to X$ be the projection of $V=\matR^3$ onto its first coordinate, so $X$ is $\matR$, which we endow with the standard diffeology. Endow $V$ with the pseudo-bundle diffeology generated
by the plot $\matR^2\ni(u,v)\mapsto(u,0,|v|)$; recall that this diffeology, already seen before, is a product diffeology for the decomposition $\matR^3=\matR\times\matR^2$ into the direct product
of the standard $\matR$ with $\matR^2$ carrying the vector space diffeology generated by the plot $v\mapsto(0,|v|)$.
\end{example}

\begin{oss}
The space $C^{\infty}(X,V)$ of smooth sections of the pseudo-bundle $\pi$ is not finitely generated over $C^{\infty}(X,\matR)$.
\end{oss}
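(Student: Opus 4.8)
The plan is to reduce the statement to the non--finite-generation of a single module of functions on the base, and then to separate admissible sections by an invariant that detects first-order ``kinks''. First I would identify the module of sections explicitly. Since the diffeology on $V=\matR^3$ is the product of the standard diffeology on the base factor $\matR$ (coordinate $x$) with the vector space diffeology on the fibre factor $\matR^2$ (coordinates $y,z$) generated by $v\mapsto(0,|v|)$, a plot of the fibre factor is locally of the form $x\mapsto(a(x),\,b(x)+\sum_{i=1}^k g_i(x)|h_i(x)|)$ with $a,b,g_i,h_i$ ordinary smooth functions, so the ``kinky'' absolute-value terms are confined to the $z$-coordinate. Consequently a smooth section $s(x)=(x,s_2(x),s_3(x))$ has $s_2\in C^{\infty}(\matR)$ arbitrary and $s_3$ lying in the $C^{\infty}(\matR)$-module
$$S=\{\,f:\matR\to\matR \mid f\text{ is locally of the form } b+\textstyle\sum_{i} g_i|h_i|,\ b,g_i,h_i\in C^{\infty}(\matR)\,\}.$$
There are no cross-constraints between $s_2$ and $s_3$, so $C^{\infty}(X,V)\cong C^{\infty}(\matR)\oplus S$ as $C^{\infty}(\matR)$-modules; in particular the third-coordinate assignment $r:s\mapsto s_3$ is a surjective $C^{\infty}(\matR)$-linear map onto $S$ (its section-valued inverse on $S$ is $f\mapsto[x\mapsto(x,0,f(x))]$). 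Since a quotient of a finitely generated module is finitely generated, it suffices to prove that $S$ is not finitely generated.

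The key tool I would introduce is the \emph{kink functional}: for $f$ continuous with one-sided derivatives at $p$, set $\kappa_p(f)=f'(p^+)-f'(p^-)$. Every element of $S$ has one-sided derivatives everywhere, so $\kappa_p$ is defined on $S$. It is additive, vanishes on smooth functions, satisfies $\kappa_a(|x-a|)=2$, and --- crucially --- is pointwise semilinear over $C^{\infty}(\matR)$: by the product rule for one-sided derivatives $(\varphi f)'(p^{\pm})=\varphi'(p)f(p)+\varphi(p)f'(p^{\pm})$, whence $\kappa_p(\varphi f)=\varphi(p)\kappa_p(f)$.

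Now I would suppose $S=\langle \tau_1,\dots,\tau_N\rangle_{C^{\infty}(\matR)}$ and derive a contradiction. Writing $|x-a|=\sum_j\varphi_j\tau_j$ and applying $\kappa_a$ gives $2=\sum_j\varphi_j(a)\kappa_a(\tau_j)$, so for every $a\in\matR$ there is some $j$ with $\kappa_a(\tau_j)\neq0$; that is, the kink sets $K_j=\{a:\kappa_a(\tau_j)\neq0\}$ cover $\matR$. The final step is to show each $K_j$ is countable. Locally $\tau_j=b+\sum_i g_i|h_i|$, and $g_i|h_i|$ has a nonzero kink only where $|h_i|$ fails to be differentiable, i.e. at a zero of $h_i$ with $h_i'\neq0$; such zeros are isolated, hence countable on any bounded interval. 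Covering $\matR$ by countably many intervals (using second countability) shows each $K_j$, and hence $\bigcup_j K_j$, is countable, contradicting the uncountability of $\matR$. I expect the main obstacle to be precisely this countability step: the singular support of a function in $S$ need \emph{not} be globally discrete (absolute-value kinks can accumulate, e.g. for $h(x)=x^2\sin(1/x)$), so one cannot simply argue that finitely many discrete sets fail to cover $\matR$; the clean route is the cardinality argument above, together with the care needed to check that $\kappa_p$ is well defined and pointwise $C^{\infty}$-semilinear on all of $S$.
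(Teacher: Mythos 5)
Your proof is correct, and it follows the same guiding idea as the paper's own verification --- using points of non-differentiability to separate sections from any candidate finite generating family --- but where the paper stops at a sketch, you give the argument that actually closes it. The paper's proof records the local form $s(x)=(x,f(x),g_0(x)+\sum_i g_i(x)|h_i(x)|)$ of a smooth section, observes that sections non-differentiable precisely at any prescribed finite set of points exist, and then simply asserts that such sections cannot all be $C^{\infty}(\matR,\matR)$-linear combinations of one fixed finite family of continuous maps; the mechanism behind that impossibility is left unstated. You supply its two missing ingredients: the pointwise semilinearity $\kappa_p(\varphi f)=\varphi(p)\,\kappa_p(f)$ of the kink functional, which shows that smooth coefficients may destroy kinks of the generators but can never create kinks at new points, and the countability of the kink set of any single element of $S$, which is exactly what guarantees that the singular sets of finitely many generators cannot exhaust $\matR$ (so that, say, $|x-a|$ for $a$ outside their union escapes the span). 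Your warning that these kink sets need not be discrete --- they can accumulate, as for $h(x)=x^2\sin(1/x)$ --- is well placed: it is the reason the naive ``finitely many discrete sets cannot cover $\matR$'' shortcut is unavailable and the cardinality argument is the right tool. Your preliminary splitting $C^{\infty}(X,V)\cong C^{\infty}(\matR,\matR)\oplus S$ and reduction to the third-coordinate module $S$ is a clean packaging the paper does not use (it works with sections of $V$ directly); it is convenient but not essential, since one could apply $\kappa_p$ to third coordinates of sections without isolating $S$. In short, the paper gains brevity (it explicitly defers to ``some standard analytic argument''), while your version is the one that constitutes a complete proof.
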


This observation (which certainly can be obtained by some standard analytic argument) can be verified directly, recalling that any arbitrary plot of $V$ has form
$$\matR^{l+m+n}\supseteq U\ni(x,y,z)\mapsto(f_1(x),f_2(y),g_0(z)+g_1(z)|h_1(z)|+\ldots+g_k(z)|h_k(z)|),$$ where $U$ is a domain, and $f_1:\matR^l\subseteq U_x\to\matR$,
$f_2:\matR^m\supseteq U_y\to\matR$ and $g_0,g_1,\ldots,g_k,h_1,\ldots,h_k:\matR^n\supseteq U_z\to\matR$ are some ordinary smooth functions. Thus, any smooth section $s\in C^{\infty}(X,V)$ has
(at least locally) form
$$s(x)=(x,f(x),g_0(x)+g_1(x)|h_1(x)|+\ldots+g_k(x)|h_k(x)|)$$ for some ordinary smooth functions $f,g_0,g_1,\ldots,g_k,h_1,\ldots,h_k:\matR\supseteq U\to\matR$; and \emph{vice versa} every such
expression corresponds (at least, locally) to a smooth section $X\to V$. But since $g_i$ and $h_i$ are any smooth functions at all, and they can be in any finite number, for any finite arbitrarily long
collection $x_1,\ldots,x_k\in\matR$ there is a diffeologically smooth section $s$ that, seen as a usual map $\matR\to\matR^3$, is non-differentiable precisely at the points $x_1,\ldots,x_k$ (and smooth
outside of them). Thus, it is impossible that all such sections be linear combinations over $C^{\infty}(\matR,\matR)$ of the same finite set of continuous\footnote{That all sections in $C^{\infty}(X,V)$ are
continuous in the usual sense follows from their explicit description given above.} functions $\matR\to\matR^3$.

\subsection{The $(f,\tilde{f})$-invariance of a section, and compatibility of two sections}

What we obtain from the example to which the previous section is dedicated is that for a given pseudo-bundle the space of sections can \emph{a priori} be infinite-dimensional, and whether it is, or
it is not, depends on the specific pseudo-bundle at hand. So in particular, we relate the matter of (finiteness of) the dimension of $C^{\infty}(X,V)$ in terms of its interaction with the gluing procedure.
Accordingly, we concentrate on how the space of sections $C^{\infty}(X_1\cup_f X_2,V_1\cup_{\tilde{f}}V_2)$ of some pseudo-bundle obtained by gluing is related to the spaces of sections of
the factors of that gluing.

\paragraph{$(f,\tilde{f})$-invariant sections $X_1\to V_1$} Let $s_1\in C^{\infty}(X_1,V_1)$ be a section. It is said to be \textbf{$(f,\tilde{f})$-invariant} if for all $y,y'\in Y$ such that $f(y)=f(y')$ we have
$\tilde{f}(s_1(y))=\tilde{f}(s_1(y'))$. As is obvious from this definition, if the map $f$ is injective, any section is automatically $(f,\tilde{f})$-invariant.

The subset of $C^{\infty}(X_1,V_1)$ that consists of all $(f,\tilde{f})$-invariant sections is denoted by $C_{(f,\tilde{f})}^{\infty}(X_1,V_1)$. This subset is closed with respect to the summation of sections,
and with respect to the multiplication by \textbf{$f$-invariant} smooth functions, that is, functions $h:X_1\to\matR$ that possess the following property: for all $y,y'\in Y$ such that $f(y)=f(y')$ we have
$h(y)=h(y')$. Thus, if we denote the latter set of functions by $C_f^{\infty}(X_1)$ (once again, this is simply $C^{\infty}(X)$ if $f$ is injective) then $C_{(f,\tilde{f})}^{\infty}(X_1,V_1)$ is a module over the
ring $C_f^{\infty}(X)$, with the module structure obviously inherited from that of $C^{\infty}(X_1,V_1)$ as a module over $C^{\infty}(X)$.

The need for the notion of $(f,\tilde{f})$-invariance of sections will be illustrated below through the notion of compatibility of sections.

\paragraph{Compatibility of a section $s_1\in C^{\infty}(X_1,V_1)$ with a section $s_2\in C^{\infty}(X_2,V_2)$} Suppose that we have two diffeological vector pseudo-bundles $\pi_1:V_1\to X_1$ and
$\pi_2:V_2\to X_2$, a gluing between them given by maps $f:X_1\supset Y\to X_2$ and $\tilde{f}:\pi_1^{-1}(Y)\to V_2$, and a pair of sections $s_i\in C^{\infty}(X_i,V_i)$ for $i=1,2$.  We say that the sections
$s_1$ and $s_2$ are \textbf{$(f,\tilde{f})$-compatible}, or simply \textbf{compatible}, if for all $y\in Y$ we have
$$\tilde{f}(s_1(y))=s_2(f(y)).$$ It is now trivial to observe the following:

\begin{lemma}
Let $s_1\in C^{\infty}(X_1,V_1)$ be such that there exists $s_2\in C^{\infty}(X_2,V_2)$ compatible with $s_1$. Then $s_1$ is $(f,\tilde{f})$-invariant.
\end{lemma}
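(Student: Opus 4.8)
The plan is to chain together the two defining equalities directly, since the statement follows immediately from the definitions of compatibility and of $(f,\tilde{f})$-invariance. First I would fix a section $s_2\in C^{\infty}(X_2,V_2)$ that is $(f,\tilde{f})$-compatible with $s_1$, and then take arbitrary $y,y'\in Y$ satisfying $f(y)=f(y')$; the goal is to show $\tilde{f}(s_1(y))=\tilde{f}(s_1(y'))$, which is exactly what $(f,\tilde{f})$-invariance of $s_1$ demands.

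The key step is to apply the compatibility relation $\tilde{f}(s_1(y))=s_2(f(y))$ at both points, obtaining $\tilde{f}(s_1(y))=s_2(f(y))$ and $\tilde{f}(s_1(y'))=s_2(f(y'))$. Since $s_2$ is a genuine single-valued map and $f(y)=f(y')$ by assumption, the right-hand sides coincide, $s_2(f(y))=s_2(f(y'))$. Transitivity of equality then gives $\tilde{f}(s_1(y))=\tilde{f}(s_1(y'))$. As $y,y'$ were arbitrary points of $Y$ with $f(y)=f(y')$, this establishes the invariance and completes the argument.

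There is no real obstacle here: the entire content sits in how the two notions were set up, which is presumably why the authors flag the statement as trivial. What the proof does make transparent is \emph{why} the notion of $(f,\tilde{f})$-invariance was introduced in the first place. It is precisely the minimal consistency condition on $s_1$ forced once one insists that $\tilde{f}\circ s_1$ factor through $f$, as it must if it is to agree fibrewise with some honest section $s_2$ of $V_2$. The only point worth flagging is that the converse fails in general: $(f,\tilde{f})$-invariance of $s_1$ guarantees only that $\tilde{f}\circ s_1$ is well defined as a map on $f(Y)$, and to produce a compatible $s_2$ one would additionally need this assignment to extend to a globally smooth section of $V_2$.
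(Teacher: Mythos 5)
Your proof is correct and is exactly the argument the paper has in mind: it states the lemma as a triviality, and chaining the compatibility identity $\tilde{f}(s_1(y))=s_2(f(y))$ at two points $y,y'$ with $f(y)=f(y')$ through the single-valuedness of $s_2$ is precisely that trivial observation. Your closing remark on why the converse fails is also consistent with the paper's subsequent discussion.
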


Thus, if we consider the subset
$$C^{\infty}(X_1,V_1)\times_{comp}C^{\infty}(X_2,V_2)\subseteq C^{\infty}(X_1,V_1)\times C^{\infty}(X_2,V_2)$$ consisting of all pairs $(s_1,s_2)$ with $s_i\in C^{\infty}(X_i,V_i)$ for $i=1,2$ such
that $s_1$ and $s_2$ are $(f,\tilde{f})$-compatible, then in fact we obtain a subset of $C_{(f,\tilde{f})}^{\infty}(X_1,V_1)\times C^{\infty}(X_2,V_2)$. Thus, we will denote the set of all pairs of
compatible sections by
$$C_{(f,\tilde{f})}^{\infty}(X_1,V_1)\times_{comp}C^{\infty}(X_2,V_2)=\{(s_1,s_2)\,|\,s_i\in C^{\infty}(X_i,V_i),\,\tilde{f}(s_1(y))=s_2(f(y))\,\forall y\in Y\}.$$

\paragraph{Compatibility of sections $s_i\in C^{\infty}(X_i,V_i)$, and sections in $C^{\infty}(X_1\cup_f X_2,V_1\cup_{\tilde{f}}V_2)$} Our motivation for introducing a separate compatibility notion
for sections of pseudo-bundles will be clarified immediately below, but here we provide an initial indication to that effect.

\begin{lemma}
Let $s\in C^{\infty}(X_1\cup_f X_2,V_1\cup_{\tilde{f}}V_2)$ be such that there exist $s_i\in C^{\infty}(X_i,V_i)$, with $i=1,2$, for which the following is true:
$$\tilde{j}_1\circ s_1=s\circ\tilde{i}_1\,\,\,\mbox{ and }\,\,\,j_2\circ s_2=s\circ i_2,$$ where $\tilde{i}_1:X_1\hookrightarrow X_1\sqcup X_2\to X_1\cup_f X_2$ is the composition of the obvious
inclusion $X_1\hookrightarrow X_1\sqcup X_2$ with the quotient projection onto $X_1\cup_f X_2$, and $\tilde{j}_1:V_1\to V_1\cup_{\tilde{f}}V_2$ is analogously defined. Then $s_1$ and $s_2$
are compatible.
\end{lemma}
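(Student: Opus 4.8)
The plan is to prove compatibility directly, by evaluating everything at a single fixed point $y \in Y$ and chasing the two hypotheses through the gluing identifications. The entire argument rests on one elementary observation: in the glued base $X_1 \cup_f X_2$ the points $\tilde{i}_1(y)$ and $i_2(f(y))$ coincide, since the defining equivalence relation identifies $y \in Y \subseteq X_1$ with $f(y) \in X_2$. Hence the section $s$ takes literally the same value on these two (equal) points, and unwinding that single equality through the two compatibility hypotheses will force $\tilde{f}(s_1(y)) = s_2(f(y))$, which is exactly the compatibility of $s_1$ and $s_2$.

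Concretely, I would first fix $y \in Y$ and record $\tilde{i}_1(y) = i_2(f(y))$ in $X_1 \cup_f X_2$. Evaluating $s$ at this common point and using $\tilde{j}_1 \circ s_1 = s \circ \tilde{i}_1$ on one side and $j_2 \circ s_2 = s \circ i_2$ on the other, I obtain
$$\tilde{j}_1(s_1(y)) = s(\tilde{i}_1(y)) = s(i_2(f(y))) = j_2(s_2(f(y)))$$
inside the glued total space $V_1 \cup_{\tilde{f}} V_2$. Next, since $s_1$ is a section of $\pi_1$, one has $\pi_1(s_1(y)) = y \in Y$, so $s_1(y) \in \pi_1^{-1}(Y)$ lies in the part of $V_1$ along which the gluing is performed. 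The defining identification $\tilde{j}_1(v) = j_2(\tilde{f}(v))$ valid for all $v \in \pi_1^{-1}(Y)$ then gives $\tilde{j}_1(s_1(y)) = j_2(\tilde{f}(s_1(y)))$, and combining this with the displayed chain yields $j_2(\tilde{f}(s_1(y))) = j_2(s_2(f(y)))$.

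The only step that carries the real content — and the one I expect to be the main (if mild) obstacle — is the cancellation of $j_2$. Both $\tilde{f}(s_1(y))$ and $s_2(f(y))$ are genuine elements of $V_2$, and I must argue that equal images under $j_2$ force equality in $V_2$ itself, i.e.\ that the gluing does not identify two distinct points of $V_2$. This is precisely part (1) of the gluing Lemma recalled earlier, applied to the gluing of the total spaces $V_1$ and $V_2$ along $\tilde{f}$: the natural map $j_2$ is an induction, hence injective. Invoking this injectivity gives $\tilde{f}(s_1(y)) = s_2(f(y))$, and since $y \in Y$ was arbitrary this is exactly the required compatibility. I would close by remarking that no smoothness hypothesis beyond the section property of $s_1$ enters; once the induction Lemma supplies injectivity of $j_2$, the argument is purely set-theoretic.
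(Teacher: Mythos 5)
Your proof is correct, and it is the natural argument this lemma admits: identify $\tilde{i}_1(y)=i_2(f(y))$ in the glued base, chase both hypotheses to get $\tilde{j}_1(s_1(y))=j_2(s_2(f(y)))$, use the defining identification $\tilde{j}_1(v)=j_2(\tilde{f}(v))$ on $\pi_1^{-1}(Y)$, and cancel $j_2$ by its injectivity (guaranteed by the fact that $j_2$ is an induction, or simply because the gluing equivalence relation never identifies two distinct points of $V_2$). The paper itself states this lemma without proof, deferring to the cited reference, and your set-theoretic unwinding is exactly the intended reasoning, consistent with the paper's remark that $s_1$ and $s_2$ are "essentially the restrictions of $s$" to the two pieces.
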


Notice that if our gluing is along a pair of diffeomorphisms then $s_1$ and $s_2$ are essentially the restrictions of $s$ onto appropriate subsets of $X_1\cup_f X_2$. In the general case, $s_2$ always
exists, but $s_1$ \emph{a priori} may not, since in general $V_1$ does not inject in $V_1\cup_{\tilde{f}}V_2$, nor does $X_1$ in $X_1\cup_f X_2$. This is also the reason why the statement of
the above lemma, simple in essence, becomes rather convoluted.

\subsection{The gluing of compatible sections}

As we just stated, it is not \emph{a priori} clear (but it is true; we will establish this later) that every section $X_1\cup_f X_2\to V_1\cup_{\tilde{f}}V_2$ determines a pair of compatible ones.
The reverse, on the other hand, is easily seen; indeed, the compatibility of two section is a partial case of compatibility of two smooth maps with respect to the gluings of, respectively,
their domains and their ranges (see \cite{pseudometric-pseudobundle}).

\paragraph{From two compatible sections to a section $X_1\cup_f X_2\to V_1\cup_{\tilde{f}}V_2$} Given two compatible sections $s_1\in C^{\infty}(X_1,V_1)$ and $s_2\in C^{\infty}(X_2,V_2)$,
we define a map, that we denote by $s_1\cup_{(f,\tilde{f})}s_2$ and that is a section in $C^{\infty}(X_1\cup_f X_2,V_1\cup_{\tilde{f}}V_2)$. It is determined by the following formula:
$$(s_1\cup_{(f,\tilde{f})}s_2)(x)=\left\{\begin{array}{ll} j_1^{V_1}(s_1(x)) & \mbox{if }x\in i_1^{X_1}(X_1\setminus Y),\mbox{ and}\\
j_2^{V_2}(s_2(x)) & \mbox{if }x\in i_2^{X_2}(X_2) \end{array}\right.$$ Since $i_1^{X_1}(X_1\setminus Y)$ and $i_2^{X_2}(X_2)$ cover $X_1\cup_f X_2$ and are disjoint, $s_1\cup_{(f,\tilde{f})}s_2$
is well-defined. Finally, it follows from Proposition 4.2 of \cite{pseudometric-pseudobundle} that it is smooth, \emph{i.e.}, it does belong to $C^{\infty}(X_1\cup_f X_2,V_1\cup_{\tilde{f}}V_2)$.

\paragraph{Gluing of compatible sections and operations} The compatibility of sections is a property that is well-behaved with respect to the operations. More precisely, this means the following.

\begin{lemma}
Let $\pi_1:V_1\to X_1$ and $\pi_2:V_2\to X_2$ be two finite-dimensional diffeological vector pseudo-bundles, and let $(\tilde{f},f)$ be a gluing between them. Let $\pi_1':V_1'\to X_1$
and $\pi_2':V_2'\to X_2$ be two other pseudo-bundles over the same $X_1$ and $X_2$ respectively, and let $(\tilde{f}',f)$ be a gluing between these. Then:
\begin{enumerate}
\item If $s_1,t_1\in C^{\infty}(X_1,V_1)$ and $s_2,t_2\in C^{\infty}(X_2,V_2)$ are such that both $(s_1,s_2)$ and $(t_1,t_2)$ are $(f,\tilde{f})$-compatible pairs, then also $(s_1+t_1,s_2+t_2)$
is a $(f,\tilde{f})$-compatible pair, and
$$(s_1+t_1)\cup_{(f,\tilde{f})}(s_2+t_2)=s_1\cup_{(f,\tilde{f})}s_2+t_1\cup_{(f,\tilde{f})}t_2;$$

\item If $s_1\in C^{\infty}(X_1,V_1)$ and $s_2\in C^{\infty}(X_2,V_2)$ are $(f,\tilde{f})$-compatible sections, and $h_1\in C^{\infty}(X_1,\matR)$ and $h_2\in C^{\infty}(X_2,\matR)$ are such that
$h_2(f(y))=h_1(y)$ for all $y\in Y$, then $h_1s_1$ and $h_2s_2$ are also $(f,\tilde{f})$-compatible and
$$(h_1\cup_f h_2)\left(s_1\cup_{(f,\tilde{f})}s_2\right)=(h_1s_1)\cup_{(f,\tilde{f})}(h_2s_2),$$ where $h_1\cup_f h_2\in C^{\infty}(X_1\cup_f X_2,\matR)$ is defined by
$$(h_1\cup_f h_2)(x)=\left\{\begin{array}{ll} h_1((i_1^{X_1})^{-1}(x)) & \mbox{for }x\in i_1^{X_1}(X_1\setminus Y)\\
h_2((i_2^{X_2})^{-1}(x)) & \mbox{for }x\in i_2^{X_2}(X_2); \end{array}\right.$$

\item If $s_i\in C^{\infty}(X_i,V_i)$ for $i=1,2$ are $(f,\tilde{f})$-compatible, and $s_i'\in C^{\infty}(X_i,V_i')$ are $(f,\tilde{f}')$-compatible, then $s_1\otimes s_1'\in C^{\infty}(X_1,V_1\otimes V_1')$
and $s_2\otimes s_2'\in C^{\infty}(X_2,V_2\otimes V_2')$ are $(f,\tilde{f}\otimes\tilde{f}')$-compatible, and
$$(s_1\otimes s_1')\cup_{(f,\tilde{f}\otimes\tilde{f}')}(s_2\otimes s_2')=\left(s_1\cup_{(f,\tilde{f})}s_2\right)\otimes\left(s_1'\cup_{(f,\tilde{f}')}s_2'\right).$$
\end{enumerate}
\end{lemma}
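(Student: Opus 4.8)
The plan is to verify each of the three items by the same two-step pattern: first check that the relevant pair of transformed sections is again compatible, then establish the stated identity of glued sections by a pointwise computation on the two covering pieces $i_1^{X_1}(X_1\setminus Y)$ and $i_2^{X_2}(X_2)$. Throughout I would rely on three structural facts already available: the lift $\tilde{f}$ (and $\tilde{f}'$) is linear on each fibre; the standard inductions $j_1^{V_1},j_2^{V_2}$ and their tensor counterparts are fibrewise linear pseudo-bundle morphisms; and the vector operations, the scalar multiplication, and the tensor product of sections are all defined fibrewise. Smoothness of every glued section is not in question here, being guaranteed by Proposition 4.2 of \cite{pseudometric-pseudobundle}; only the algebraic identities and the compatibility conditions require checking.

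For item 1, the compatibility of $(s_1+t_1,s_2+t_2)$ follows at once from the fibrewise linearity of $\tilde{f}$: for $y\in Y$ one has $\tilde{f}(s_1(y)+t_1(y))=\tilde{f}(s_1(y))+\tilde{f}(t_1(y))=s_2(f(y))+t_2(f(y))$. The identity of glued sections is then obtained by evaluating both sides at a point of $i_1^{X_1}(X_1\setminus Y)$, where each reduces to $j_1^{V_1}(s_1(x)+t_1(x))$ and the linearity of $j_1^{V_1}$ splits the sum, and similarly at $i_2^{X_2}(X_2)$ with $j_2^{V_2}$. Item 2 proceeds identically, the only new ingredient being the hypothesis $h_2(f(y))=h_1(y)$, which is exactly what is needed so that $\tilde{f}(h_1(y)s_1(y))=h_1(y)s_2(f(y))=h_2(f(y))s_2(f(y))$; the glued-section identity again follows from the fibrewise linearity of the inductions together with the fibrewise definition of $h_1\cup_f h_2$.

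Item 3 is where I expect the real work. The compatibility of $(s_1\otimes s_1',s_2\otimes s_2')$ with respect to $(f,\tilde{f}\otimes\tilde{f}')$ rests on the fact that the tensor-product lift acts fibrewise as $(\tilde{f}\otimes\tilde{f}')(v\otimes w)=\tilde{f}(v)\otimes\tilde{f}'(w)$, giving $\tilde{f}(s_1(y))\otimes\tilde{f}'(s_1'(y))=s_2(f(y))\otimes s_2'(f(y))$. The delicate point is the final identity, because its two sides \emph{a priori} live in different pseudo-bundles: $(s_1\otimes s_1')\cup(s_2\otimes s_2')$ is a section of $(V_1\otimes V_1')\cup_{\tilde{f}\otimes\tilde{f}'}(V_2\otimes V_2')$, whereas $(s_1\cup s_2)\otimes(s_1'\cup s_2')$ is a section of $(V_1\cup_{\tilde{f}}V_2)\otimes(V_1'\cup_{\tilde{f}'}V_2')$. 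The identity is therefore to be read modulo the commutativity diffeomorphism $\Phi_{\cup,\otimes}$ of Section 4.7.3. The main step is thus to apply $\Phi_{\cup,\otimes}$ to the second expression and invoke the defining relations $\Phi_{\cup,\otimes}\circ(j_1^{V_1}\otimes j_1^{V_1'})=j_1^{V_1\otimes V_1'}$ and $\Phi_{\cup,\otimes}\circ(j_2^{V_2}\otimes j_2^{V_2'})=j_2^{V_2\otimes V_2'}$: on $i_1^{X_1}(X_1\setminus Y)$ this carries $j_1^{V_1}(s_1(x))\otimes j_1^{V_1'}(s_1'(x))$ to $j_1^{V_1\otimes V_1'}(s_1(x)\otimes s_1'(x))$, which is exactly the value of the first expression, and analogously on $i_2^{X_2}(X_2)$.

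The principal obstacle is bookkeeping rather than conceptual: one must keep scrupulous track of which pseudo-bundle each section inhabits and invoke $\Phi_{\cup,\otimes}$ at the right place, since without this identification the tensor-product identity is literally a statement about objects in distinct spaces. Once the identification is made explicit, all three verifications collapse to the fibrewise linearity of the lifts and of the inductions, so no genuinely analytic difficulty arises.
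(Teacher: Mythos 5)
Your proof is correct and follows essentially the same route as the paper's: compatibility comes from the fibrewise linearity of the lifts, and each identity is verified by a pointwise check on the two covering pieces $i_1^{X_1}(X_1\setminus Y)$ and $i_2^{X_2}(X_2)$ using the fibrewise linearity of the inductions. The paper writes out only item 1, leaving 2 and 3 as analogous, so your explicit treatment of item 3 --- in particular reading the tensor identity through the commutativity diffeomorphism $\Phi_{\cup,\otimes}$, which the paper's statement uses tacitly --- is a sound and slightly more careful rendering of the same argument.
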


Let us illustrate the proof (which is very simple) of the first item.

\begin{proof}
Let $y\in Y$; then
$$\tilde{f}(s_1(y)+t_1(y))=\tilde{f}(s_1(y))+\tilde{f}(t_1(y))=s_2(f(y))+t_2(f(y)),$$ so $s_1+t_1$ and $s_2+t_2$ are $(f,\tilde{f})$-compatible. Now, by definition
\begin{flushleft}
$\left((s_1+t_1)\cup_{(f,\tilde{f})}(s_2+t_2)\right)(x)=\left\{\begin{array}{l}(s_1+t_1)((i_1^{X_1})^{-1}(x))=s_1((i_1^{X_1})^{-1}(x))+t_1((i_1^{X_1})^{-1}(x))\\
(s_2+t_2)((i_2^{X_2})^{-1}(x))=s_2((i_2^{X_2})^{-1}(x))+t_2((i_2^{X_2})^{-1}(x))\end{array}\right.=$
\end{flushleft}
\begin{flushright}
$=\left\{\begin{array}{l}s_1((i_1^{X_1})^{-1}(x))\\ s_2((i_2^{X_2})^{-1}(x))\end{array}\right.
+\left\{\begin{array}{l}t_1((i_1^{X_1})^{-1}(x))\\ t_2((i_2^{X_2})^{-1}(x))\end{array}\right.=(s_1\cup_{(f,\tilde{f})}s_2)(x)+(t_1\cup_{(f,\tilde{f})}t_2)(x)$,
\end{flushright} where in each two-part formula the first line applies to $x\in i_1^{X_1}(X_1\setminus Y)$ and the second line, to $x\in i_2^{X_2}(X_2)$. The final equality that we obtain is
precisely the first item in the statement of the lemma, so we are done.
\end{proof}

\subsection{The space $C^{\infty}(X_1\cup_f X_2,V_1\cup_{\tilde{f}}V_2)$}

Let again $\pi_i:V_i\to X_i$ for $i=1,2$ be two pseudo-bundles, and let $(\tilde{f},f)$ be a gluing between them. We now consider thespaces $C^{\infty}(X_1,V_1)$, $C^{\infty}(X_2,V_2)$, and
$C^{\infty}(X_1\cup_f X_2,V_1\cup_{\tilde{f}}V_2)$, and how they are related.

\subsubsection{The induced map $\mathcal{S}:C^{\infty}(X_1,V_1)\times_{comp}C^{\infty}(X_2,V_2) \to C^{\infty}(X_1\cup_f X_2,V_1\cup_{\tilde{f}}V_2)$}

The construction of gluing of two compatible sections considered above obviously defines a map
$$\mathcal{S}:C^{\infty}(X_1,V_1)\times_{comp}C^{\infty}(X_2,V_2)\to C^{\infty}(X_1\cup_f X_2,V_1\cup_{\tilde{f}}V_2)$$ acting by
$$\mathcal{S}(s_1,s_2)=s_1\cup_{(f,\tilde{f})}s_2.$$ We have already seen that $\mathcal{S}$ is well-defined. We have also seen that
$$C^{\infty}(X_1,V_1)\times_{comp}C^{\infty}(X_2,V_2)=C_{(f,\tilde{f})}^{\infty}(X_1,V_1)\times_{comp}C^{\infty}(X_2,V_2),$$ so we will actually consider $\mathcal{S}$ as defined on
the latter space.

We now consider further properties of $\mathcal{S}$. We discover, first of all, that $\mathcal{S}$ is smooth for some natural diffeologies on its domain and its range; that it may or may not be
injective, which depends on the maps $f$ and $\tilde{f}$ being so; and finally, that it always surjective and in fact, it is a subduction.

\subsubsection{The map $\mathcal{S}$ is smooth}

The range of $\mathcal{S}$, the space $C^{\infty}(X_1\cup_f X_2,V_1\cup_{\tilde{f}}V_2)$, is endowed with its usual functional diffeology, while its domain carries the subset diffeology relative
to its inclusion into $C^{\infty}(X_1,V_1)\times C^{\infty}(X_2,V_2)$. The latter space carries the product diffeology relative to the functional diffeologies of $C^{\infty}(X_1,V_1)$ and $C^{\infty}(X_2,V_2)$.
The map $\mathcal{S}$ is smooth for these diffeologies (see \cite{pseudometric-pseudobundle}, Theorem 4.6).

\begin{thm}
The map $\mathcal{S}:C_{(f,\tilde{f})}^{\infty}(X_1,V_1)\times_{comp}C^{\infty}(X_2,V_2)\to C^{\infty}(X_1\cup_f X_2,V_1\cup_{\tilde{f}}V_2)$ is smooth.
\end{thm}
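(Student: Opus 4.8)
The plan is to unwind the two functional diffeologies involved and reduce the smoothness of $\mathcal{S}$ to the smoothness of the evaluation maps of the two factors, using the local description of plots of the gluing diffeology recorded in Section~\ref{secn:diff:pseudo-bundles:diff:gluing:sect}. Concretely, let $\phi=(\phi_1,\phi_2):U'\to C_{(f,\tilde{f})}^{\infty}(X_1,V_1)\times_{comp}C^{\infty}(X_2,V_2)$ be a plot of the domain. Since the domain carries the subset diffeology coming from the product of the functional diffeologies, $\phi_1$ is a plot of $C^{\infty}(X_1,V_1)$ and $\phi_2$ is a plot of $C^{\infty}(X_2,V_2)$, which by definition of the functional diffeology means that the two evaluation maps
$$U'\times X_1\ni(u',y)\mapsto\phi_1(u')(y)\in V_1\mbox{ and }U'\times X_2\ni(u',y)\mapsto\phi_2(u')(y)\in V_2$$
are smooth; moreover, for each $u'$ the pair $(\phi_1(u'),\phi_2(u'))$ is compatible. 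To prove that $\mathcal{S}\circ\phi$ is a plot of $C^{\infty}(X_1\cup_f X_2,V_1\cup_{\tilde{f}}V_2)$ it suffices, again by definition of the functional diffeology, to show that the map
$$\mbox{\textsc{ev}}:U'\times(X_1\cup_f X_2)\to V_1\cup_{\tilde{f}}V_2,\quad(u',x)\mapsto\left(\phi_1(u')\cup_{(f,\tilde{f})}\phi_2(u')\right)(x)$$
is smooth.

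To test the smoothness of $\mbox{\textsc{ev}}$ I would compose it with an arbitrary plot of its domain, that is, with a pair $(r,p)$ where $r:W\to U'$ is an ordinary smooth map and $p:W\to X_1\cup_f X_2$ is a plot of the glued base. By the local characterization of plots of the gluing diffeology (the Lemma of Section~\ref{secn:diff:pseudo-bundles:diff:gluing:sect}), every point of $W$ has a neighbourhood $W'$ on which either there is a plot $p_2:W'\to X_2$ with $p|_{W'}=i_2\circ p_2$, or there is a plot $p_1:W'\to X_1$ for which $p|_{W'}$ is given by the piecewise formula built from $p_1$, which amounts to $p|_{W'}=\tilde{i}_1\circ p_1$. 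Since smoothness is a local condition, it is enough to treat these two cases separately.

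In the first case, on $\mbox{Range}(i_2)$ the glued section coincides by definition with $j_2\circ\phi_2(u')$, so on $W'$ the composite reads $w\mapsto j_2\left(\phi_2(r(w))(p_2(w))\right)$; this is smooth because $(u',y)\mapsto\phi_2(u')(y)$ is smooth, $(r,p_2)$ is a plot, and the induction $j_2:V_2\to V_1\cup_{\tilde{f}}V_2$ is smooth. In the second case I would invoke the relation $\tilde{j}_1\circ s_1=s\circ\tilde{i}_1$ (recorded earlier for $s=s_1\cup_{(f,\tilde{f})}s_2$), valid on all of $X_1$ including the gluing locus $Y$, to rewrite the composite on $W'$ as $w\mapsto\tilde{j}_1\left(\phi_1(r(w))(p_1(w))\right)$; this is smooth for the same reasons, now using the smoothness of $(u',y)\mapsto\phi_1(u')(y)$ and of $\tilde{j}_1:V_1\to V_1\cup_{\tilde{f}}V_2$. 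Hence $\mbox{\textsc{ev}}$ is smooth, and therefore $\mathcal{S}$ is smooth.

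The main obstacle, and the place where the compatibility hypothesis and the specific choice of the gluing diffeology genuinely enter, is the consistency of the two local descriptions over the gluing locus: a plot $p$ whose image meets $i_2(f(Y))$ can be presented in either of the two forms, and one must verify that the resulting expressions agree there. This is exactly ensured by the identification $\tilde{j}_1(v)=j_2(\tilde{f}(v))$ for $v\in\pi_1^{-1}(Y)$ in $V_1\cup_{\tilde{f}}V_2$, combined with the compatibility $\phi_2(u')(f(y))=\tilde{f}(\phi_1(u')(y))$ for $y\in Y$. It is precisely the weakness of the gluing diffeology (property~2 of the cited Lemma) that makes the reduction to these two purely local cases available, so that no genuinely global computation is required.
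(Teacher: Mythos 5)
Your proof is correct, and it is essentially the argument the paper intends: the paper omits the details, declaring the statement ``straightforward from the definitions of the diffeologies involved'' (with a citation to an external reference), and your unwinding of the two functional diffeologies, combined with the local lifting property of plots of the gluing diffeology (property 2 of the Lemma in Section~\ref{secn:diff:pseudo-bundles:diff:gluing:sect}), is precisely that argument carried out. Your closing observation is also the right one: the only point where anything nontrivial happens is over $i_2(f(Y))$, where the identification $\tilde{j}_1(v)=j_2(\tilde{f}(v))$ for $v\in\pi_1^{-1}(Y)$ together with the compatibility $\phi_2(u')(f(y))=\tilde{f}(\phi_1(u')(y))$ makes the two local expressions agree, so the case analysis is consistent.
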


The proof of this statement is straightforward from the definitions of the diffeologies involved.

\subsubsection{$\mathcal{S}$ is not in general injective}

As can be expected, the map $\mathcal{S}$ may easily fail to be injective. Indeed, this has to do with $\tilde{f}$ having a non-trivial kernel within at least one fibre. As a trivial example,
one could consider, for $\pi_1:V_1\to X_1$ the trivial fibering of the standard $\matR^3$ over the standard $\matR$ (its $x$-axis), for $\pi_2:V_2\to X_2$, the trivial fibering of the standard $\matR^2$
over $\matR$ (also the $x$-axis), and for the gluing, the maps $f:\matR\to\matR$ which is just the identity map, and $\tilde{f}:\matR^3\to\matR^2$ acting by $\tilde{f}(x,y,z)=(x,z)$. The result of this gluing
is trivially identified with the second factor, the pseudo-bundle $\pi_2:V_2\to X_2$.

Now, if $s_1,s_1'\in C^{\infty}(X_1,V_1)$ are given by $s_1(x)=(x,0,f(x))$ and $s_1'(x)=(x,1,f(x))$ for any usual smooth $f$, and $s_2\in C^{\infty}(X_2,V_2)$ acts by $s_2(x)=(x,f(x))$ (for the same $f$)
then $s_1\cup_{(f,\tilde{f})}s_2$ and $s_1'\cup_{(f,\tilde{f})}s_2$ are well-defined and equal to each other. On the other hand, $s_1\neq s_1'$, and so the pairs $(s_1,s_2)$ and $(s_1',s_2)$ are
distinct elements of $C_{(f,\tilde{f})}^{\infty}(X_1,V_1)\times_{comp}C^{\infty}(X_2,V_2)$.

Now, the gluing just described is a degenerate case, in the sense that its result is simply the second factor. However, it can easily be extended to a non-trivial one in the following way. Consider any
other pseudo-bundle $\pi_0:V_0\to X_0$, let $x_0\in X_0$ be a point, and let $(\tilde{f}_0,f_0)$ be such that $f_0:\{x_0\}\to\{0\}\subset X_1$ and $\tilde{f}_0:\pi_0^{-1}(x_0)\to\{(0,y,z)\}$ be any linear
map. Define $V_1'=V_0\cup_{\tilde{f}_0}V_1$ and $X_1'=X_0\cup_{f_0}X_1$; then there is an obvious gluing of $\pi_0\cup_{(\tilde{f}_0,f_0)}\pi_1:V_1'\to X_1'$ to $\pi_2:V_2\to X_2$ induced by
the above maps $\tilde{f}$ and $f$. The result of this gluing has the same property, that the corresponding $\mathcal{S}$ is not injective, and it is also non-trivial, in the sense that its result (provided
that $X_0$ is not simply a one-point set) does not coincide with either of its factors.

Finally, we can obtain a more abstract result. Denote by $\mbox{Ker}(\tilde{f})$ the following subset of $V_1$:
$$\mbox{Ker}(\tilde{f})=\cup_{y\in\mbox{Domain}(f)}\mbox{ker}(\tilde{f}|_{\pi_1^{-1}(y)})\cup\mbox{Range}(s_0),$$ where $s_0:X_1\to V_1$ is the zero section.

\begin{lemma}
Let $\pi_i:V_i\to X_i$ for $i=1,2$ be two diffeological vector pseudo-bundles, and let $(\tilde{f},f)$ be a gluing between them. If $\mbox{Ker}(\tilde{f})$ admits a non-zero section and splits off as
a smooth direct summand then $\mathcal{S}$ is not injective.
\end{lemma}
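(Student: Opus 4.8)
The plan is to exhibit two distinct elements of the domain $C_{(f,\tilde{f})}^{\infty}(X_1,V_1)\times_{comp}C^{\infty}(X_2,V_2)$ that $\mathcal{S}$ sends to the same section of $V_1\cup_{\tilde{f}}V_2$. The economical choice is the pair of zero sections $(0_1,0_2)$, with $0_i\in C^{\infty}(X_i,V_i)$, together with the pair $(\sigma,0_2)$, where $\sigma$ is the non-zero section of $\mbox{Ker}(\tilde{f})$ supplied by the hypothesis. The two assumptions are used precisely to justify that $(\sigma,0_2)$ genuinely lies in the domain of $\mathcal{S}$: since $\mbox{Ker}(\tilde{f})$ splits off as a smooth direct summand of $V_1$ (and carries the subset diffeology, for which the inclusion into $V_1$ is smooth), the assumed non-zero section $\sigma$ is a bona fide smooth section $\sigma\in C^{\infty}(X_1,V_1)$ with $\sigma\neq 0$.

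Next I would record the two structural features of $\sigma$ that drive the argument. By the very definition $\mbox{Ker}(\tilde{f})=\cup_{y\in Y}\mbox{ker}(\tilde{f}|_{\pi_1^{-1}(y)})\cup\mbox{Range}(s_0)$, the fibre of $\mbox{Ker}(\tilde{f})$ over any $x\in X_1\setminus Y$ is just $\{0\}$, so $\sigma$ vanishes identically on $X_1\setminus Y$, while $\sigma(y)\in\mbox{ker}(\tilde{f}|_{\pi_1^{-1}(y)})$ for every $y\in Y$. The latter gives $\tilde{f}(\sigma(y))=0=0_2(f(y))$ for all $y\in Y$, so $\sigma$ and $0_2$ are $(f,\tilde{f})$-compatible (in particular $\sigma$ is $(f,\tilde{f})$-invariant), and the pair $(0_1,0_2)$ is compatible for the same trivial reason. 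Hence both pairs lie in $C_{(f,\tilde{f})}^{\infty}(X_1,V_1)\times_{comp}C^{\infty}(X_2,V_2)$, and they are distinct because $\sigma\neq 0$.

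It then remains to compute the two images using the defining formula for $s_1\cup_{(f,\tilde{f})}s_2$. Clearly $\mathcal{S}(0_1,0_2)$ is the zero section of $V_1\cup_{\tilde{f}}V_2$. For $\mathcal{S}(\sigma,0_2)=\sigma\cup_{(f,\tilde{f})}0_2$, on the part of the base of the form $i_2^{X_2}(X_2)$ the value is $j_2^{V_2}(0_2)=0$, and on the part of the form $i_1^{X_1}(X_1\setminus Y)$ the value is $j_1^{V_1}(\sigma)$, which also vanishes because $\sigma\equiv 0$ on $X_1\setminus Y$. As $i_1^{X_1}(X_1\setminus Y)$ and $i_2^{X_2}(X_2)$ cover $X_1\cup_f X_2$, this shows $\mathcal{S}(\sigma,0_2)$ is again the zero section, so $\mathcal{S}(\sigma,0_2)=\mathcal{S}(0_1,0_2)$ while $(\sigma,0_2)\neq(0_1,0_2)$, proving non-injectivity. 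The one genuinely delicate point is the first step — extracting from the hypothesis an honest non-zero element $\sigma$ of $C^{\infty}(X_1,V_1)$ rather than a mere fibrewise choice of kernel vectors — and this is exactly what the combination of \emph{admitting a non-zero section} and \emph{splitting off smoothly} is there to guarantee; once $\sigma$ is in hand, the remaining verifications are immediate from the definitions of compatibility and of the section-gluing map, and I expect no further obstacle.
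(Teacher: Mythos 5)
Your proof is correct and takes essentially the approach the paper itself indicates in the example immediately preceding the lemma (two sections of $V_1$ differing by a section of $\mbox{Ker}(\tilde{f})$, paired with one and the same section of $V_2$, glue to the same section of $V_1\cup_{\tilde{f}}V_2$): you just specialize to the pair $(\sigma,0_2)$ versus $(0_1,0_2)$, and all the verifications (vanishing of $\sigma$ outside $Y$, compatibility, and the computation of the glued sections) are carried out correctly. The only remark worth making is that the splitting hypothesis does no real work in your argument --- as your own parenthetical notes, smoothness of $\sigma$ as a section of $V_1$ already follows from the subset diffeology on $\mbox{Ker}(\tilde{f})$ --- so your proof actually establishes the statement under weaker assumptions than those stated.
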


Notice that, when we say that $\mbox{Ker}(\tilde{f})$ is non-trivial, we mean that it is strictly bigger than the range of the zero section.

\subsubsection{The map $\mathcal{S}$ is always surjective}

We can actually say more: $\mathcal{S}$ turns out in fact to be a subduction.

\paragraph{If $\tilde{f}$ and $f$ are diffeomorphisms, then so is $\mathcal{S}$} This is quite obvious and is due to the fact that, when the gluing is performed along a pair of diffeomorphisms, then
both $X_1$ and $X_2$ smoothly embed into $X_1\cup_f X_2$ (recall that in general, only $X_2$ does), and the same is true of $V_1$, $V_2$, and $V_1\cup_{\tilde{f}}V_2$. The embeddings of
$X_2$ into $X_1\cup_f X_2$ and of $V_2$ into $V_1\cup_{\tilde{f}}V_2$ are given by the usual inductions $i_2$ and $j_2$ respectively, while the embeddings $X_1\hookrightarrow X_1\cup_f X_2$
and $V_1\hookrightarrow V_1\cup_{\tilde{f}}V_2$, that we denote by $\tilde{i}_1$ and $\tilde{j}_1$ respectively, are defined as
$$\tilde{i}_1:X_1\hookrightarrow X_1\sqcup X_2\to X_1\cup_f X_2,$$
$$\tilde{j}_1:V_1\hookrightarrow V_1\sqcup V_2\to V_1\cup_{\tilde{f}}V_2,$$ \emph{i.e.} in the manner exactly similar to that of $i_2$ and $j_2$. Notice also that they are extensions of the always-present
inductions $i_1$ and $j_1$.

The claim made in the title of the paragraph is based on the explicit construction, for any given section $s\in C^{\infty}(X_1\cup_f X_2,V_1\cup_{\tilde{f}}V_2)$, of two compatible sections
$$s_1\in C_{(f,\tilde{f})}^{\infty}(X_1,V_1),\,\,s_2\in C^{\infty}(X_2,V_2)\,\,\,\mbox{ such that }\,\,\,\mathcal{S}(s_1,s_2)=s.$$ The two sections have the obvious definition:
$$s_1=\tilde{j}_1^{-1}\circ s\circ\tilde{i}_1,\,\,\,s_2=j_2^{-1}\circ s\circ i_2.$$ Furthermore, it is rather easy to prove the following.

\begin{lemma}
For any $s\in C^{\infty}(X_1\cup_f X_2,V_1\cup_{\tilde{f}}V_2)$ the above-defined sections $s_1$ and $s_2$ are compatible, and the assignment $s\mapsto(s_1,s_2)$ determines the smooth inverse of
$\mathcal{S}$.
\end{lemma}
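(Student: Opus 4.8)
The plan is to verify, in order, that the maps $s_1=\tilde{j}_1^{-1}\circ s\circ\tilde{i}_1$ and $s_2=j_2^{-1}\circ s\circ i_2$ are well-defined smooth sections, that they are $(f,\tilde{f})$-compatible, that $s\mapsto(s_1,s_2)$ is a two-sided inverse of $\mathcal{S}$, and finally that this inverse is smooth for the functional diffeologies. The whole argument rests on the structural facts available precisely because $f$ and $\tilde{f}$ are diffeomorphisms with their images: the maps $\tilde{i}_1,i_2$ and $\tilde{j}_1,j_2$ are inductions defined on all of $X_1,X_2$ and $V_1,V_2$; the lifts cover the base inclusions, $\pi\circ\tilde{j}_1=\tilde{i}_1\circ\pi_1$ and $\pi\circ j_2=i_2\circ\pi_2$; over the gluing domain one has the identification relation $\tilde{j}_1=j_2\circ\tilde{f}$ on $\pi_1^{-1}(Y)$; and $j_1^{V_1}$ agrees with $\tilde{j}_1$ on $\pi_1^{-1}(X_1\setminus Y)$.

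First I would check well-definedness and compatibility. Since $s$ is a section, $\pi\circ s=\mbox{id}$, so for $x_1\in X_1$ the value $s(\tilde{i}_1(x_1))$ lies in the fibre over $\tilde{i}_1(x_1)$; because $\tilde{j}_1$ covers $\tilde{i}_1$ and the gluing is along diffeomorphisms, this fibre is exactly $\tilde{j}_1(\pi_1^{-1}(x_1))$, whence $\tilde{j}_1^{-1}$ applies and $s_1(x_1)\in\pi_1^{-1}(x_1)$, so $\pi_1\circ s_1=\mbox{id}_{X_1}$; the same reasoning gives $s_2$. Each is a composition of $s$ with fixed smooth maps (the inductions $\tilde{i}_1,i_2$ on the right and $\tilde{j}_1^{-1},j_2^{-1}$, smooth on the images of the inductions with their subset diffeologies, on the left), hence smooth. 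For compatibility, take $y\in Y$: the gluing identifies $\tilde{i}_1(y)=i_2(f(y))$, so $s(\tilde{i}_1(y))=s(i_2(f(y)))=:w$, and $s_1(y)=\tilde{j}_1^{-1}(w)$, $s_2(f(y))=j_2^{-1}(w)$. Writing $v=\tilde{j}_1^{-1}(w)\in\pi_1^{-1}(y)$ and using $\tilde{j}_1=j_2\circ\tilde{f}$ over $Y$ gives $w=\tilde{j}_1(v)=j_2(\tilde{f}(v))$, so $j_2^{-1}(w)=\tilde{f}(v)=\tilde{f}(s_1(y))$, which is exactly $\tilde{f}(s_1(y))=s_2(f(y))$.

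Next I would confirm that the assignment inverts $\mathcal{S}$ on both sides. That $\mathcal{S}(s_1,s_2)=s$ follows by evaluating the defining formula for $s_1\cup_{(f,\tilde{f})}s_2$: on $i_1^{X_1}(X_1\setminus Y)$ the value is $j_1^{V_1}(s_1(\cdot))=\tilde{j}_1(\tilde{j}_1^{-1}(s(\cdot)))=s(\cdot)$, and on $i_2^{X_2}(X_2)$ it is $j_2(j_2^{-1}(s(\cdot)))=s(\cdot)$; as these ranges cover $X_1\cup_f X_2$, the glued map equals $s$ everywhere. Conversely, starting from a compatible pair $(t_1,t_2)$ and setting $s=\mathcal{S}(t_1,t_2)$, the same computations, together with the relation $\tilde{f}(t_1(y))=t_2(f(y))$ to handle $y\in Y$, yield $\tilde{j}_1^{-1}\circ s\circ\tilde{i}_1=t_1$ and $j_2^{-1}\circ s\circ i_2=t_2$. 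Thus $s\mapsto(s_1,s_2)$ is a genuine inverse. Its smoothness is the pre/post-composition argument of the first step lifted to functional diffeologies: $s\mapsto s\circ\tilde{i}_1$ is precomposition with a fixed smooth map and $t\mapsto\tilde{j}_1^{-1}\circ t$ is postcomposition with one, both smooth; since a map into the product $C_{(f,\tilde{f})}^{\infty}(X_1,V_1)\times_{comp}C^{\infty}(X_2,V_2)$ is smooth iff its two components are, the inverse is smooth.

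The main obstacle, I expect, is the well-definedness step: everything hinges on knowing that $\tilde{i}_1$ and $\tilde{j}_1$ are honest inductions defined on all of $X_1$ and $V_1$ and that they cover one another, which fails for a general gluing and holds here only because $f$ and $\tilde{f}$ are diffeomorphisms with their images. Once this is secured, the compatibility identity reduces to the single relation $\tilde{j}_1=j_2\circ\tilde{f}$ over $Y$, and both the inverse property and the smoothness claim are formal consequences of functional-diffeology bookkeeping.
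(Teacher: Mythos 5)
Your proof is correct and takes essentially the same approach the paper intends: the paper defines $s_1=\tilde{j}_1^{-1}\circ s\circ\tilde{i}_1$ and $s_2=j_2^{-1}\circ s\circ i_2$ immediately before the lemma and leaves the verification to the reader, and your three steps (well-definedness via the fibre identification, compatibility via the relation $\tilde{j}_1=j_2\circ\tilde{f}$ over $Y$, the two-sided inverse computation, and smoothness by pre- and post-composition in the functional diffeologies) are exactly that verification. The only point worth making explicit is one your well-definedness step already relies on: ``diffeomorphism'' must here be read as $\tilde{f}$ being a diffeomorphism of $\pi_1^{-1}(Y)$ onto all of $\pi_2^{-1}(f(Y))$, since without fibrewise surjectivity the fibre of $V_1\cup_{\tilde{f}}V_2$ over $\tilde{i}_1(y)$, $y\in Y$, strictly contains $\tilde{j}_1(\pi_1^{-1}(y))$ and $s_1$ (hence the lemma itself) would fail to be defined.
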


\paragraph{The pseudo-bundle $\pi_1^{(\tilde{f},f)}:V_1^{\tilde{f}}\to X_1^f$ of $(f,\tilde{f})$-equivalence classes} We introduce this  construction in order to reduce the case of gluing along an
arbitrary pair of maps, to the case of gluing along two diffeomorphisms. The two spaces involved, $V_1^{\tilde{f}}$ and $X_1^f$, are similarly defined; the space $X_1^f$ (which we will also encounter
in the section dedicated to diffeological forms) is the space $X_1$ quotiented by the following equivalence relation: $y\sim_f y'$ if and only if $f(y)=f(y')$ (this applies only to points in $Y$, of course;
points outside of $Y$ are equivalent to themselves only). The space $V_1^{\tilde{f}}$ is the quotient of $V_1$ by the analogous equivalence relation, only defined with respect to the map $\tilde{f}$. Both
spaces are endowed with the respective quotient diffeologies; we will denote the two quotient projections by $\chi_1^f$ and $\chi_1^{\tilde{f}}$.

Since $\tilde{f}$ is a lift of $f$, the pseudo-bundle projection $\pi_1$ induces the map $\pi_1^{\tilde{f},f}:V_1^{\tilde{f}}\to X_1^f$ such that $\chi_1^f\circ\pi_1=\pi_1^{\tilde{f},f}\circ\chi_1^{\tilde{f}}$.
It is clear from the construction that $\pi_1^{\tilde{f},f}$ defines a pseudo-bundle, of which $V_1^{\tilde{f}}$ and $X_1^f$ are respectively the total and the base space. In particular, the vector
space structure on each fibre $(\pi_1^{\tilde{f},f})^{-1}(\chi_1^f(x))$ is inherited from such structures on the fibres of $V_1$; that it is well-defined follows from the linearity of $\tilde{f}$.

Finally, the new pseudo-bundle $\pi_1^{\tilde{f},f}:V_1^{\tilde{f}}\to X_1^f$ comes with the two induced maps that define its gluing to $\pi_2:V_2\to X_2$. These are the maps
$$f_{\sim}:\chi_1^f(Y)\to X_2\,\,\,\mbox{ and }\tilde{f}_{\sim}:\chi_1^{\sim}(\pi_1^{-1}(Y))\to V_2$$  that are determined respectively by $f=f_{\sim}\circ\chi_1^f$ and $\tilde{f}=\tilde{f}_{\sim}\circ\chi_1^{\tilde{f}}$.
The following is then an obvious consequence of the construction itself.

\begin{lemma}
If $f$ or $\tilde{f}$ is a subduction then either $f_{\sim}$ or, respectively, $\tilde{f}_{\sim}$ is a diffeomorphism. Furthermore, for any pair $(\tilde{f},f)$ we have
$$V_1\cup_{\tilde{f}}V_2\cong V_1^{\tilde{f}}\cup_{\tilde{f}_{\sim}}V_2\,\,\,\mbox{ and }\,\,\,X_1\cup_f X_2\cong X_1^f\cup_{f_{\sim}}X_2.$$
\end{lemma}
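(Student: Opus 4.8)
The plan is to treat the two assertions separately, handling first the claim that a subduction $f$ (or $\tilde{f}$) descends to a diffeomorphism $f_\sim$ (or $\tilde{f}_\sim$), and then establishing the two diffeomorphisms of glued spaces by a direct comparison of quotient diffeologies. Throughout I will use only three standard facts: that a quotient projection is a subduction, that a composition (and a disjoint union) of subductions is again a subduction, and the universal property by which a map out of the target of a subduction is smooth exactly when its precomposition with the subduction is smooth.

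For the first assertion, observe first that $Y$ is saturated for $\sim_f$ (if $y\in Y$ and $y\sim_f y'$ then $y'\in Y$), so the restriction $\chi_1^f|_Y\colon Y\to\chi_1^f(Y)$ is itself a subduction onto its image endowed with the subset diffeology. The map $f_\sim$ is always injective, since $f_\sim(\chi_1^f(y))=f_\sim(\chi_1^f(y'))$ forces $f(y)=f(y')$, hence $y\sim_f y'$; and if $f$ is a subduction it is in particular surjective, so $f_\sim$ is a bijection onto $X_2$. Smoothness of $f_\sim$ is immediate from $f_\sim\circ\chi_1^f=f$ and the universal property of the subduction $\chi_1^f|_Y$. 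For smoothness of $f_\sim^{-1}$ I would take an arbitrary plot $q$ of $X_2$; since $f$ is a subduction, $q$ locally lifts to a plot $p$ of $X_1$ with image in $Y$ and $f\circ p=q$, and then $f_\sim^{-1}\circ q=\chi_1^f\circ p$ is a plot of $\chi_1^f(Y)$. The argument for $\tilde{f}_\sim$ is verbatim the same, with $\chi_1^{\tilde{f}}$ in place of $\chi_1^f$ and $\pi_1^{-1}(Y)$ in place of $Y$.

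For the second assertion I would compare two subductions out of $X_1\sqcup X_2$: the defining gluing projection $q\colon X_1\sqcup X_2\to X_1\cup_f X_2$, and the composite $r\colon X_1\sqcup X_2\xrightarrow{\chi_1^f\sqcup\mathrm{Id}}X_1^f\sqcup X_2\to X_1^f\cup_{f_\sim}X_2$, the latter arrow being the gluing projection for $f_\sim$. Both $q$ and $r$ are subductions by the standard facts recalled above. It then suffices to check that $q$ and $r$ induce the same equivalence relation on $X_1\sqcup X_2$, for then the first isomorphism theorem for quotient diffeologies furnishes the unique diffeomorphism $X_1\cup_f X_2\cong X_1^f\cup_{f_\sim}X_2$ intertwining $q$ and $r$. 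This verification is purely combinatorial: two points of $X_1$ are $q$-equivalent iff they coincide or both lie in $Y$ with equal $f$-image, which is exactly $\sim_f$, hence exactly $r$-equivalence; two points of $X_2$ are identified by either projection only when equal (using that $f_\sim$ is injective); and a point $x_1\in X_1$ is identified with $x_2\in X_2$ iff $x_1\in Y$ and $x_2=f(x_1)=f_\sim(\chi_1^f(x_1))$, the same condition for both.

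The total-space isomorphism $V_1\cup_{\tilde{f}}V_2\cong V_1^{\tilde{f}}\cup_{\tilde{f}_\sim}V_2$ is obtained by the identical comparison of quotients, now with $\chi_1^{\tilde{f}}$ and $\tilde{f}_\sim$; the resulting diffeomorphism intertwines the two total-space projections and, because the vector space structure on each fibre of $V_1^{\tilde{f}}$ was defined (in the preceding construction of $\pi_1^{\tilde{f},f}$) precisely so as to descend through $\chi_1^{\tilde{f}}$ using the linearity of $\tilde{f}$, it is automatically fibrewise linear and covers the base diffeomorphism just established. I expect the only genuinely delicate point to be the smoothness of $f_\sim^{-1}$ (and of $\tilde{f}_\sim^{-1}$): it is exactly here that the subduction hypothesis is used, through the local lifting of plots, and where one must be careful that the lifts can be arranged to land in the saturated set $Y$ (respectively $\pi_1^{-1}(Y)$), so that applying $\chi_1^f$ (respectively $\chi_1^{\tilde{f}}$) returns a plot of the subspace $\chi_1^f(Y)$. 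Everything else reduces to bookkeeping with equivalence relations and the universal property of subductions.
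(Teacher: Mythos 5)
Your proof is correct. The paper gives no argument for this lemma (it is stated as ``an obvious consequence of the construction itself,'' with proofs for this section deferred to \cite{connections-pseudobundles}), and your formalization --- the universal property of the subduction $\chi_1^f|_Y$ (resp.\ $\chi_1^{\tilde{f}}|_{\pi_1^{-1}(Y)}$) for the first claim, together with uniqueness of quotients under two subductions of $X_1\sqcup X_2$ (resp.\ $V_1\sqcup V_2$) inducing the same equivalence relation for the second --- is precisely the routine verification being invoked, including the one genuinely diffeological point, namely that a subduction restricted to a saturated subset remains a subduction onto its image, which is what makes the plot-lifting argument for $f_{\sim}^{-1}$ and $\tilde{f}_{\sim}^{-1}$ go through.
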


The latter two diffeomorphisms mentioned in the lemma in particular commute with the two pseudo-bundle projections, $\pi_1$ and $\pi_1^{\tilde{f},f}$, so we actually have a pseudo-bundle
diffeomorphism. The most important, at the moment, consequence of it is the following statement.

\begin{cor}
There is the following diffeomorphism:
$$C^{\infty}(X_1\cup_f X_2,V_1\cup_{\tilde{f}}V_2)\cong C^{\infty}(X_1^f\cup_{f_{\sim}}X_2,V_1^{\tilde{f}}\cup_{\tilde{f}_{\sim}}V_2).$$ Furthermore, if $f$ and $\tilde{f}$ are both subductions,
$$C^{\infty}(X_1\cup_f X_2,V_1\cup_{\tilde{f}}V_2)\cong C^{\infty}(X_1^f,V_1^{\tilde{f}}) \times_{comp}C^{\infty}(X_2,V_2),$$ where in this last case the compatibility is with respect to the maps
$(f_{\sim},\tilde{f}_{\sim})$.
\end{cor}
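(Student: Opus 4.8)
The plan is to obtain both assertions by chaining two diffeomorphisms, the second statement being the first followed by an application of the earlier analysis of the section-gluing map $\mathcal{S}$. The only ingredients needed are the preceding Lemma (which furnishes, for any pair $(\tilde{f},f)$, a pseudo-bundle diffeomorphism between $\pi_1\cup_{(\tilde{f},f)}\pi_2$ and $\pi_1^{\tilde{f},f}\cup_{(\tilde{f}_{\sim},f_{\sim})}\pi_2$, commuting with the projections) together with the earlier fact that $\mathcal{S}$ is a diffeomorphism whenever the gluing is performed along diffeomorphisms.

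First I would record the general principle that a pseudo-bundle diffeomorphism induces a diffeomorphism of the associated spaces of smooth sections, both carrying their functional diffeologies. Concretely, if $\Phi$ is a fibrewise-linear diffeomorphism of total spaces covering a diffeomorphism $\phi$ of base spaces, so that $\pi'\circ\Phi=\phi\circ\pi$, then $s\mapsto\Phi\circ s\circ\phi^{-1}$ sends smooth sections to smooth sections, with evident inverse $t\mapsto\Phi^{-1}\circ t\circ\phi$; both are smooth for the functional diffeologies, being pre- and post-compositions with fixed smooth maps, which is a standard stability property of functional diffeologies. Applying this to the base and total-space diffeomorphisms $X_1\cup_f X_2\cong X_1^f\cup_{f_{\sim}}X_2$ and $V_1\cup_{\tilde{f}}V_2\cong V_1^{\tilde{f}}\cup_{\tilde{f}_{\sim}}V_2$ of the preceding Lemma (which were noted to commute with $\pi_1$ and $\pi_1^{\tilde{f},f}$, hence assemble into a pseudo-bundle diffeomorphism) yields at once the first asserted diffeomorphism
$$C^{\infty}(X_1\cup_f X_2,V_1\cup_{\tilde{f}}V_2)\cong C^{\infty}(X_1^f\cup_{f_{\sim}}X_2,V_1^{\tilde{f}}\cup_{\tilde{f}_{\sim}}V_2).$$

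Second, under the hypothesis that $f$ and $\tilde{f}$ are both subductions, the preceding Lemma gives that $f_{\sim}$ and $\tilde{f}_{\sim}$ are both diffeomorphisms (with their images). Thus the gluing of $\pi_1^{\tilde{f},f}:V_1^{\tilde{f}}\to X_1^f$ to $\pi_2:V_2\to X_2$ along $(\tilde{f}_{\sim},f_{\sim})$ is a gluing along diffeomorphisms, and I would invoke the earlier result that in this situation the section-gluing map is a diffeomorphism. Applied here it gives that
$$\mathcal{S}:C^{\infty}(X_1^f,V_1^{\tilde{f}})\times_{comp}C^{\infty}(X_2,V_2)\to C^{\infty}(X_1^f\cup_{f_{\sim}}X_2,V_1^{\tilde{f}}\cup_{\tilde{f}_{\sim}}V_2)$$
is a diffeomorphism, the compatibility in the domain being by definition with respect to $(f_{\sim},\tilde{f}_{\sim})$. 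Composing its inverse with the first diffeomorphism produces the second claim.

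The content is essentially inherited from the two cited results, so there is no deep obstacle; the one point demanding care is the functoriality verification of the first step, namely the smoothness (in both directions) of the induced maps on section spaces for the functional diffeologies, and the attendant check that the Lemma's diffeomorphism is genuinely fibrewise linear and base-covering, so that section transport is well-defined. A secondary point to confirm is that the earlier ``$\mathcal{S}$ is a diffeomorphism'' result is stated for gluings along diffeomorphisms onto their images, matching the fact that $f_{\sim}$ and $\tilde{f}_{\sim}$ are of exactly this type; once that is aligned, the compatibility condition in the target of $\mathcal{S}$ is precisely the one appearing in the statement, and the composition closes up without further work.
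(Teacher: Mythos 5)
Your proposal is correct and follows exactly the route the paper intends: the first diffeomorphism comes from transporting sections along the pseudo-bundle diffeomorphism $V_1\cup_{\tilde{f}}V_2\cong V_1^{\tilde{f}}\cup_{\tilde{f}_{\sim}}V_2$ covering $X_1\cup_f X_2\cong X_1^f\cup_{f_{\sim}}X_2$ furnished by the preceding Lemma, and the second follows because the subduction hypothesis makes $f_{\sim}$ and $\tilde{f}_{\sim}$ diffeomorphisms, so the earlier result that $\mathcal{S}$ is a diffeomorphism for gluings along diffeomorphisms applies to the reduced pseudo-bundles. The paper states the corollary as an immediate consequence of these two facts, which is precisely the composition you carried out.
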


In particular, the second diffeomorphism filters through the first one and is due to the fact that under the assumptions made the maps $f_{\sim}$ and $\tilde{f}_{\sim}$.

\paragraph{The map $\mathcal{S}_1:C_{(f,\tilde{f})}^{\infty}(X_1,V_1)\to C_{(f_{\sim},\tilde{f}_{\sim})}^{\infty}(X_1^f,V_1^{\tilde{f}})$} The corollary stated immediately above allows for  a sort of
splitting of any section $s:X_1\cup_f X_2\to V_1\cup_{\tilde{f}}V_2$ into a section $s_1^{f,\tilde{f}}:X_1^f\to V_1^{\tilde{f}}$ and a section $s_2:X_2\to V_2$. The word \emph{splitting} means precisely
that
$$s=s_1^{f,\tilde{f}}\cup_{(f_{\sim},\tilde{f}_{\sim})}s_2.$$ What we however would like to do is to split it as a section of  $V_1$ and one of $V_2$, that is to find $s_1\in C_{(f,\tilde{f})}^{\infty}(X_1,V_1)$
and $s_2\in C_{(f,\tilde{f})}^{\infty}(X_2,V_2)$ such that $s=s_1\cup_{(f,\tilde{f})}s_2$.

For generic maps $f$ and $\tilde{f}$ the existence of such an $s_1$ is not immediately clear, therefore we need to consider first the relation between the spaces $C_{(f,\tilde{f})}^{\infty}(X_1,V_1)$
and $C_{(f_{\sim},\tilde{f}_{\sim})}^{\infty}(X_1^f,V_1^{\tilde{f}})$. To this end we define the map
$$\mathcal{S}_1:C_{(f,\tilde{f})}^{\infty}(X_1,V_1)\to C_{(f_{\sim},\tilde{f}_{\sim})}^{\infty}(X_1^f,V_1^{\tilde{f}})$$ via the condition
$$\mathcal{S}_1(s_1)\circ\chi_1^f=\chi_1^{\tilde{f}}\circ s_1\,\,\,\mbox{ for any }\,\,\,s_1\in C_{(f,\tilde{f})}^{\infty}(X_1,V_1).$$

Although this definition of $\mathcal{S}_1$ is an indirect one, it is rather easy to check that defines it univocally, and that $\mathcal{S}_1(s_1)$ is always a smooth section $X_1^f\to V_1^{\tilde{f}}$.
Furthermore, $\mathcal{S}_1$ enjoys several natural properties, that are listed in the paragraphs that follow.

\paragraph{The map $\mathcal{S}_1$ is additive and smooth} A straightforward reasoning allows first of all to show that $\mathcal{S}_1$ preserves the structure of $C_{(f,\tilde{f})}^{\infty}(X_1,V_1)$
as a module over the ring of $f$-invariant functions, as well as that of $C^{\infty}(X_1^f,V_1^{\tilde{f}})$ (which is a module over the ring of $f_{\sim}$-invariant functions). The following is shown
in \cite{connections-pseudobundles} (the proof is straightforward).

\begin{thm}
The map $\mathcal{S}_1$ is additive. Furthermore, for any $s_1\in C_{(f,\tilde{f})}^{\infty}(X_1,V_1)$ and $f$-invariant (and smooth) function $h:X_1\to\matR$ we have
$\mathcal{S}_1(hs_1)=h^f\mathcal{S}_1(s_1)$, where $h^f:X_1^f\to\matR$ is determined by $h=h^f\circ\chi_1^f$.
\end{thm}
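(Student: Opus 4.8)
The plan is to exploit the fact that the defining relation $\mathcal{S}_1(s_1)\circ\chi_1^f=\chi_1^{\tilde{f}}\circ s_1$ characterizes $\mathcal{S}_1(s_1)$ uniquely, because $\chi_1^f:X_1\to X_1^f$ is a surjective quotient projection; hence any identity of the form $A\circ\chi_1^f=B\circ\chi_1^f$ between sections $A,B:X_1^f\to V_1^{\tilde{f}}$ forces $A=B$. Thus both claims reduce to verifying the corresponding identities after precomposition with $\chi_1^f$, where everything can be computed pointwise in $V_1$. The single structural ingredient I would invoke is that, by the construction of $\pi_1^{\tilde{f},f}:V_1^{\tilde{f}}\to X_1^f$, the quotient projection $\chi_1^{\tilde{f}}$ is linear on each fibre $\pi_1^{-1}(y)$: the vector-space structure on the fibres of $V_1^{\tilde{f}}$ is precisely the one inherited from $V_1$ via $\tilde{f}$, so $\chi_1^{\tilde{f}}$ maps $\pi_1^{-1}(y)$ linearly into $(\pi_1^{\tilde{f},f})^{-1}(\chi_1^f(y))$.

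For additivity I would take $s_1,t_1\in C_{(f,\tilde{f})}^{\infty}(X_1,V_1)$. Evaluating at $y\in X_1$ and using that $s_1(y),t_1(y)$ lie in the common fibre $\pi_1^{-1}(y)$, fibrewise linearity of $\chi_1^{\tilde{f}}$ gives $\chi_1^{\tilde{f}}\bigl((s_1+t_1)(y)\bigr)=\chi_1^{\tilde{f}}(s_1(y))+\chi_1^{\tilde{f}}(t_1(y))$. Rewriting each summand through the defining relation, this equals $\bigl(\mathcal{S}_1(s_1)+\mathcal{S}_1(t_1)\bigr)(\chi_1^f(y))$, so that $\mathcal{S}_1(s_1+t_1)\circ\chi_1^f=\bigl(\mathcal{S}_1(s_1)+\mathcal{S}_1(t_1)\bigr)\circ\chi_1^f$. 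Surjectivity of $\chi_1^f$ then yields $\mathcal{S}_1(s_1+t_1)=\mathcal{S}_1(s_1)+\mathcal{S}_1(t_1)$. One first notes that $s_1+t_1$ is again $(f,\tilde{f})$-invariant, hence lies in the domain, which is part of the already-established module structure of $C_{(f,\tilde{f})}^{\infty}(X_1,V_1)$.

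For the module identity, let $h\in C_f^{\infty}(X_1)$ be $f$-invariant and smooth; then $h^f:X_1^f\to\matR$ with $h=h^f\circ\chi_1^f$ is well-defined exactly because $h$ is constant on the $\sim_f$-classes, and $hs_1$ lies in $C_{(f,\tilde{f})}^{\infty}(X_1,V_1)$ by closure of this module under multiplication by $f$-invariant functions. Evaluating at $y$ and using fibrewise linearity (here, compatibility with scalar multiplication) of $\chi_1^{\tilde{f}}$, I obtain $\chi_1^{\tilde{f}}\bigl(h(y)s_1(y)\bigr)=h(y)\,\chi_1^{\tilde{f}}(s_1(y))=h^f(\chi_1^f(y))\,\mathcal{S}_1(s_1)(\chi_1^f(y))=\bigl(h^f\mathcal{S}_1(s_1)\bigr)(\chi_1^f(y))$. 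Hence $\mathcal{S}_1(hs_1)\circ\chi_1^f=\bigl(h^f\mathcal{S}_1(s_1)\bigr)\circ\chi_1^f$, and again surjectivity of $\chi_1^f$ gives $\mathcal{S}_1(hs_1)=h^f\mathcal{S}_1(s_1)$.

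The computations themselves are routine; the only point requiring genuine care, and the one I would regard as the crux, is the fibrewise linearity of $\chi_1^{\tilde{f}}$, that is, that passing to the quotient $V_1^{\tilde{f}}$ is compatible with the fibre operations. This is not automatic for an arbitrary quotient, but it holds here because $\tilde{f}$ is fibrewise linear, so the equivalence relation it induces identifies vectors only within single fibres and along linear subspaces, making the induced vector-space structure on $V_1^{\tilde{f}}$, and thus the linearity of $\chi_1^{\tilde{f}}$, well-defined; this was exactly the content of the construction of $\pi_1^{\tilde{f},f}$. With that in hand, both identities follow from the surjectivity-of-$\chi_1^f$ cancellation principle.
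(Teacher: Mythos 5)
Your proof is correct and is essentially the argument the paper intends: the paper itself offers no proof of this theorem, deferring it to \cite{connections-pseudobundles} with the remark that it is straightforward, and that straightforward route is exactly yours --- the defining relation $\mathcal{S}_1(s_1)\circ\chi_1^f=\chi_1^{\tilde{f}}\circ s_1$, the surjectivity of the quotient projection $\chi_1^f$, and the fibrewise linearity of $\chi_1^{\tilde{f}}$ reduce both identities to pointwise computations in a single fibre. One side remark of yours is inaccurate, though harmlessly so: the equivalence relation defining $V_1^{\tilde{f}}$ does identify vectors lying in \emph{different} fibres of $V_1$ over $f$-equivalent base points (that is precisely its purpose when $f$ is not injective), not ``only within single fibres''; your argument is unaffected because it only ever adds or rescales classes admitting representatives $s_1(y)$, $t_1(y)$ in the common fibre $\pi_1^{-1}(y)$, and on such classes the inherited operations are well defined exactly because $\tilde{f}$ is linear on each fibre.
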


It is also quite straightforward to show that

\begin{thm}
The map $\mathcal{S}_1$ is smooth for the functional diffeologies on its domain and its range.
\end{thm}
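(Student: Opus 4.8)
The plan is to verify smoothness directly from the definition, by showing that $\mathcal{S}_1$ carries plots to plots. Since both the domain $C_{(f,\tilde{f})}^{\infty}(X_1,V_1)$ and the range $C_{(f_{\sim},\tilde{f}_{\sim})}^{\infty}(X_1^f,V_1^{\tilde{f}})$ carry the subset diffeology inherited from the ambient functional diffeologies on $C^{\infty}(X_1,V_1)$ and $C^{\infty}(X_1^f,V_1^{\tilde{f}})$, and since $\mathcal{S}_1$ visibly lands in the invariant-section subspace, it suffices to take an arbitrary plot $p:U\to C_{(f,\tilde{f})}^{\infty}(X_1,V_1)$ and check that $\mathcal{S}_1\circ p$ is a plot of $C^{\infty}(X_1^f,V_1^{\tilde{f}})$ for the functional diffeology. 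By the characterization of the functional diffeology (the same one recalled for $\Omega^k(X)$ in Section \ref{diffeological:forms:sect}), this amounts to showing that for every plot $\rho:U'\to X_1^f$ the evaluation map $(u,u')\mapsto(\mathcal{S}_1(p(u)))(\rho(u'))$ is smooth into $V_1^{\tilde{f}}$.

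First I would exploit the fact that $\chi_1^f:X_1\to X_1^f$ is a subduction, being the projection onto a quotient diffeology. Consequently every plot $\rho$ of $X_1^f$ locally lifts: fixing $u_0'\in U'$ and shrinking $U'$ to a neighborhood of it, there is a plot $r:U'\to X_1$ of $X_1$ with $\rho=\chi_1^f\circ r$. This reduces the problem to the spaces upstairs, where the plot $p$ supplies the needed smoothness. Then I would substitute and invoke the defining relation $\mathcal{S}_1(s_1)\circ\chi_1^f=\chi_1^{\tilde{f}}\circ s_1$, valid for every invariant section $s_1$, hence in particular for $s_1=p(u)$. This gives, for all $(u,u')$ in the shrunken domain,
\[
(\mathcal{S}_1(p(u)))(\rho(u'))=(\mathcal{S}_1(p(u)))(\chi_1^f(r(u')))=\chi_1^{\tilde{f}}\bigl(p(u)(r(u'))\bigr).
\]

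Now $p$ being a plot of the functional diffeology on $C^{\infty}(X_1,V_1)$ means precisely that $(u,u')\mapsto p(u)(r(u'))$ is a smooth map $U\times U'\to V_1$; post-composing with the (always smooth) quotient projection $\chi_1^{\tilde{f}}:V_1\to V_1^{\tilde{f}}$ shows that $(u,u')\mapsto\chi_1^{\tilde{f}}(p(u)(r(u')))$ is smooth into $V_1^{\tilde{f}}$. Since smoothness is a local condition (the sheaf axiom of the diffeology) and $u_0'$ was arbitrary, the evaluation map is smooth on all of $U\times U'$, so $\mathcal{S}_1\circ p$ is indeed a plot, and $\mathcal{S}_1$ is smooth.

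The main obstacle I anticipate is purely bookkeeping around the local lifting: one must be sure that the computation does not depend on the chosen local lift $r$ of $\rho$, and that the pieces obtained on overlapping neighborhoods agree so as to assemble into a single globally defined smooth evaluation map. Independence of the lift is automatic here, because the right-hand side $\chi_1^{\tilde{f}}(p(u)(r(u')))$ equals the left-hand side $(\mathcal{S}_1(p(u)))(\rho(u'))$, which is manifestly defined in terms of $\rho$ alone; equivalently it reflects the already-established well-definedness of $\mathcal{S}_1$, which in turn rests on the $(f,\tilde{f})$-invariance of the sections involved. Hence no genuine difficulty arises, and the verification reduces to the smoothness of $p$ together with the smoothness of the quotient projection $\chi_1^{\tilde{f}}$.
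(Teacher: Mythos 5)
Your proof is correct, and it is precisely the ``straightforward'' verification that the paper omits (the paper only cites the result, deferring the proof to an external reference): one checks the plot condition for the functional diffeology by locally lifting a plot $\rho$ of $X_1^f$ along the subduction $\chi_1^f$, rewriting the evaluation via the defining relation $\mathcal{S}_1(s_1)\circ\chi_1^f=\chi_1^{\tilde{f}}\circ s_1$, and concluding from smoothness of the quotient projection $\chi_1^{\tilde{f}}$ together with the sheaf axiom. Your attention to independence of the local lift and to the fact that the subset diffeology lets you test plots in the ambient space $C^{\infty}(X_1^f,V_1^{\tilde{f}})$ closes the only points where care is needed, so nothing is missing.
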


\paragraph{The map $\mathcal{S}_1$ has smooth right inverses} A reasoning analogous to that carried out for $\mathcal{S}$ shows that in general $\mathcal{S}_1$ is not injective. Therefore we cannot
expect it to be invertible, of course. On the other hand, it turns out that it admits right inverses, and that these inverses are smooth.

To construct one of them, recall the sub-bundle $\mbox{Ker}(\tilde{f})$ of $V_1$. Fix any decomposition $V_1=\mbox{Ker}(\tilde{f})\oplus V_1^0$ of $V_1$ into a direct sum of its sub-bundles.

Notice that over a point of $X_1\setminus Y$ the decomposition is trivial, \emph{i.e.} the fibre of $V_1^0$ coincides with that of $V_1$, while over a point $y\in Y$ it is any direct complement of
$\mbox{ker}(\tilde{f}|_{\pi_1^{-1}(y)})$. For any such choice $V_1^0$ is of course a sub-bundle (for the subset diffeology), however the resulting decomposition does not have to be smooth, and
frequently is not so, \emph{i.e.} the direct sum diffeology on $\mbox{Ker}(\tilde{f})\oplus V_1^0$ may be strictly finer than the diffeology of $V_1$. Surprisingly, the following construction
produces a right inverse of $\mathcal{S}_1$ which is smooth independently of the smoothness of the decomposition $V_1=\mbox{Ker}(\tilde{f})\oplus V_1^0$.

Let $s_1^{f,\tilde{f}}$ be any smooth section $X_1^f\to V_1^{\tilde{f}}$. Define $\mathcal{S}_1^{-1}(s_1^{f,\tilde{f}})$ by the following two conditions:
$$\left\{\begin{array}{l} \mathcal{S}_1^{-1}(s_1^{f,\tilde{f}})(x)\in V_1^0\mbox{ for all }x\in X_1,\\
\chi_1^{\tilde{f}}\circ\mathcal{S}_1^{-1}(s_1^{f,\tilde{f}})=s_1^{f,\tilde{f}}\circ\chi_1^f. \end{array}\right.$$ These two conditions guarantee, first of all, that $\mathcal{S}_1^{-1}(s_1^{f,\tilde{f}})$ is well-defined
as a map $X_1\to V_1$ (this is based simply on $\mbox{Ker}(\tilde{f})\oplus V_1^0$ being a direct sum). Furthermore, the smoothness of the section $\mathcal{S}_1^{-1}(s_1^{f,\tilde{f}})$, as a map
$X_1\to V_1$, follows from the second condition and the definition of a pushforward diffeology.

Next, it is straightforward to check that $\mathcal{S}_1^{-1}(s_1^{f,\tilde{f}})$ is $(f,\tilde{f})$-invariant. Finally, a direct calculation shows that
$$\mathcal{S}_1(\mathcal{S}_1^{-1}(s_1^{f,\tilde{f}}))=s_1^{f,\tilde{f}},$$ so indeed we have a right inverse of $\mathcal{S}_1$.

\begin{rem}
It is also clear from the construction that $\mathcal{S}_1$ admits many right inverses, one for each choice of a direct sum decomposition $V_1=\mbox{Ker}(\tilde{f})\oplus V_1^0$.
\end{rem}

\paragraph{The map $\mathcal{S}_1$ is surjective} This is a direct consequence of the existence of right inverses, so of the previous paragraph.

\begin{thm}\label{sections:first:factor:surjective:thm}
The map $\mathcal{S}_1$ is surjective as a map $$C_{(f,\tilde{f})}^{\infty}(X_1,V_1)\to C^{\infty}(X_1^f,V_1^{\tilde{f}}).$$
\end{thm}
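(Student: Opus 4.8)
The plan is to obtain surjectivity as an immediate corollary of the smooth right inverses of $\mathcal{S}_1$ constructed in the preceding paragraph: once a right inverse is exhibited, every element of the codomain is in the image. So the substance of the argument is not surjectivity \emph{per se}, but the verification that the prescribed construction really does produce a section lying in $C_{(f,\tilde{f})}^{\infty}(X_1,V_1)$ and that it composes correctly with $\mathcal{S}_1$. (I note in passing that the codomain $C^{\infty}(X_1^f,V_1^{\tilde{f}})$ of the theorem coincides with the $(f_{\sim},\tilde{f}_{\sim})$-invariant version used to define $\mathcal{S}_1$, since $f_{\sim}$ is injective by construction of $X_1^f$.)

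First I would fix a direct sum decomposition $V_1=\mbox{Ker}(\tilde{f})\oplus V_1^0$ into sub-bundles, as above; recall that over a point of $X_1\setminus Y$ the fibre of $V_1^0$ is the whole fibre of $V_1$, while over $y\in Y$ it is a chosen complement of $\mbox{ker}(\tilde{f}|_{\pi_1^{-1}(y)})$. Given an arbitrary smooth section $s_1^{f,\tilde{f}}\colon X_1^f\to V_1^{\tilde{f}}$, I would set $\sigma:=\mathcal{S}_1^{-1}(s_1^{f,\tilde{f}})$ to be the unique map determined by the two conditions that $\sigma(x)\in V_1^0$ for all $x\in X_1$ and that $\chi_1^{\tilde{f}}\circ\sigma=s_1^{f,\tilde{f}}\circ\chi_1^f$.

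The verification then splits into three points. First, $\sigma$ is well-defined as a set map $X_1\to V_1$: the restriction of $\chi_1^{\tilde{f}}$ to $V_1^0$ is fibrewise injective (its fibrewise kernel meets $V_1^0$ only in $0$ by the direct-sum property), so the value $s_1^{f,\tilde{f}}(\chi_1^f(x))$ in the quotient fibre has exactly one preimage inside the fibre of $V_1^0$. Second, $\sigma$ is $(f,\tilde{f})$-invariant: if $y,y'\in Y$ with $f(y)=f(y')$ then $\chi_1^f(y)=\chi_1^f(y')$, whence $\chi_1^{\tilde{f}}(\sigma(y))=\chi_1^{\tilde{f}}(\sigma(y'))$, which is precisely $\tilde{f}(\sigma(y))=\tilde{f}(\sigma(y'))$. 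Third, $\sigma$ is smooth, and this I expect to be the only delicate step. Finally I would read off the right-inverse identity from the defining relation of $\mathcal{S}_1$: since $\mathcal{S}_1(\sigma)\circ\chi_1^f=\chi_1^{\tilde{f}}\circ\sigma=s_1^{f,\tilde{f}}\circ\chi_1^f$ and $\chi_1^f$ is surjective, we get $\mathcal{S}_1(\sigma)=s_1^{f,\tilde{f}}$; as $s_1^{f,\tilde{f}}$ was arbitrary, this establishes surjectivity.

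The main obstacle is the smoothness of $\sigma$ in the third point, because the decomposition $V_1=\mbox{Ker}(\tilde{f})\oplus V_1^0$ need not be a smooth direct sum, so one cannot simply project a plot of $V_1$ onto $V_1^0$ and expect smoothness. The resolution, already indicated in the construction, is to avoid using the splitting for smoothness altogether: $V_1^{\tilde{f}}$ carries the pushforward (quotient) diffeology of $\chi_1^{\tilde{f}}$, and the second defining condition exhibits $\sigma$ as a lift through $\chi_1^{\tilde{f}}$ of the smooth map $s_1^{f,\tilde{f}}\circ\chi_1^f$; smoothness of $\sigma$ then follows from the universal property of the pushforward diffeology rather than from any property of the decomposition. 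Thus the potential non-smoothness of $V_1=\mbox{Ker}(\tilde{f})\oplus V_1^0$ is irrelevant to the argument, and the right inverse — hence surjectivity — holds for \emph{every} choice of complement $V_1^0$.
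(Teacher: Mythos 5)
Your proposal is correct and coincides with the paper's own route: the paper proves this theorem precisely by constructing, for each choice of a decomposition $V_1=\mbox{Ker}(\tilde{f})\oplus V_1^0$, a right inverse of $\mathcal{S}_1$ determined by the same two conditions you state (values in $V_1^0$, and $\chi_1^{\tilde{f}}\circ\mathcal{S}_1^{-1}(s_1^{f,\tilde{f}})=s_1^{f,\tilde{f}}\circ\chi_1^f$), with well-definedness from the direct-sum property, $(f,\tilde{f})$-invariance checked as you do, and smoothness deduced from the second condition together with the quotient (pushforward) diffeology of $V_1^{\tilde{f}}$ rather than from any smoothness of the decomposition. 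Surjectivity is then read off from the existence of these right inverses exactly as in your final step, so the two arguments agree in substance and in detail.
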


To this we add that, if $\mbox{Ker}(\tilde{f})$ is trivial, then there exists a unique right inverse of the map $\mathcal{S}_1$, which is then a true inverse of it; it is also easy to check that in this case
$\mathcal{S}_1^{-1}$ is smooth as a map $C^{\infty}(X_1^f,V_1^{\tilde{f}})\to C_{(f,\tilde{f})}^{\infty}(X_1,V_1)$. This allows us to obtain the following statement.

\begin{prop}
If $\mbox{Ker}(\tilde{f})$ is trivial then $C_{(f,\tilde{f})}^{\infty}(X_1,V_1)$ and $C^{\infty}(X_1^f,V_1^{\tilde{f}})$ are diffeomorphic.
\end{prop}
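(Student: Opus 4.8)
The plan is to upgrade the surjectivity of Theorem~\ref{sections:first:factor:surjective:thm} into a genuine bijection by establishing the injectivity of $\mathcal{S}_1$ under the hypothesis that $\mbox{Ker}(\tilde{f})$ is trivial, and then to package this together with the already-known smoothness of $\mathcal{S}_1$ and of its right inverse to conclude that $\mathcal{S}_1$ is a diffeomorphism. First I would note that when $\mbox{Ker}(\tilde{f})$ is trivial, i.e.\ coincides with the range of the zero section, the only direct sum decomposition $V_1=\mbox{Ker}(\tilde{f})\oplus V_1^0$ is the one with $V_1^0=V_1$. Hence the construction of right inverses carried out above yields exactly one map, which I continue to denote $\mathcal{S}_1^{-1}$; its smoothness as a map $C^{\infty}(X_1^f,V_1^{\tilde{f}})\to C_{(f,\tilde{f})}^{\infty}(X_1,V_1)$ has already been obtained from the second of its two defining conditions together with the definition of the pushforward diffeology on $V_1^{\tilde{f}}$, and by construction it satisfies $\mathcal{S}_1\circ\mathcal{S}_1^{-1}=\mbox{id}$.

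Next I would prove that $\mathcal{S}_1$ is injective. Suppose $s_1,s_1'\in C_{(f,\tilde{f})}^{\infty}(X_1,V_1)$ satisfy $\mathcal{S}_1(s_1)=\mathcal{S}_1(s_1')$. Since $\chi_1^f$ is surjective, the defining relation $\mathcal{S}_1(s_1)\circ\chi_1^f=\chi_1^{\tilde{f}}\circ s_1$ forces $\chi_1^{\tilde{f}}\circ s_1=\chi_1^{\tilde{f}}\circ s_1'$. For $x\in X_1\setminus Y$ the equivalence class under $\chi_1^{\tilde{f}}$ is a singleton on $\pi_1^{-1}(x)$, so this already gives $s_1(x)=s_1'(x)$; for $x\in Y$ it gives $\tilde{f}(s_1(x))=\tilde{f}(s_1'(x))$, that is $s_1(x)-s_1'(x)\in\mbox{ker}(\tilde{f}|_{\pi_1^{-1}(x)})$, which is zero by the hypothesis on $\mbox{Ker}(\tilde{f})$. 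Thus $s_1=s_1'$, and $\mathcal{S}_1$ is injective. Together with the previous step this shows that the unique right inverse $\mathcal{S}_1^{-1}$ is in fact a two-sided inverse.

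Finally, since $\mathcal{S}_1$ is smooth (by the theorem above asserting the smoothness of $\mathcal{S}_1$) and $\mathcal{S}_1^{-1}$ is smooth, and the two are mutually inverse bijections, $\mathcal{S}_1$ is a diffeomorphism between $C_{(f,\tilde{f})}^{\infty}(X_1,V_1)$ and its codomain. To match the statement exactly, I would remark that $f_{\sim}$ is injective---it satisfies $f_{\sim}\circ\chi_1^f=f$ while $\chi_1^f$ collapses precisely the fibres of $f$---so every section $X_1^f\to V_1^{\tilde{f}}$ is automatically $(f_{\sim},\tilde{f}_{\sim})$-invariant, whence $C_{(f_{\sim},\tilde{f}_{\sim})}^{\infty}(X_1^f,V_1^{\tilde{f}})=C^{\infty}(X_1^f,V_1^{\tilde{f}})$ and the claimed diffeomorphism is with $C^{\infty}(X_1^f,V_1^{\tilde{f}})$. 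The only point requiring any care---the closest thing to an obstacle---is the injectivity argument, specifically phrasing the fibrewise comparison correctly on the two separate pieces $X_1\setminus Y$ and $Y$; the rest is a recombination of results established in the preceding paragraphs.
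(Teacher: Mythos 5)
Your proof is correct and follows essentially the same route as the paper, which simply observes that a trivial $\mbox{Ker}(\tilde{f})$ leaves only one choice of decomposition (hence a unique right inverse of $\mathcal{S}_1$, already known to be smooth), so that this right inverse is a true inverse and $\mathcal{S}_1$ is a diffeomorphism. Your explicit fibrewise injectivity argument, and the remark identifying $C_{(f_{\sim},\tilde{f}_{\sim})}^{\infty}(X_1^f,V_1^{\tilde{f}})$ with $C^{\infty}(X_1^f,V_1^{\tilde{f}})$ via the injectivity of $f_{\sim}$, merely spell out details the paper leaves implicit.
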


\paragraph{$\mathcal{S}_1$ is a subduction} Even in more general case, the map $\mathcal{S}_1$ turns out to be not only surjective, but also a subduction. This follows from the existence of right
inverses, and more precisely, we can obtain the following statement.

\begin{lemma}
Let $q^{f,\tilde{f}}:U\to C^{\infty}(X_1^f,V_1^{\tilde{f}})$ be a plot of $C^{\infty}(X_1^f,V_1^{\tilde{f}})$ (for its standard functional diffeology), and let $\mathcal{S}_1^{-1}$ be any choice of a right inverse
of $\mathcal{S}_1$. Then $\mathcal{S}_1^{-1}\circ q^{f,\tilde{f}}$ is a plot of $C_{(f,\tilde{f})}^{\infty}(X_1,V_1)$.
\end{lemma}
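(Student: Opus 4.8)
The plan is to unwind the functional diffeology and reduce the statement to a parametrized version of the single-section smoothness already established for the right inverse $\mathcal{S}_1^{-1}$. Write $\Sigma:=\mathcal{S}_1^{-1}\circ q^{f,\tilde f}:U\to C^{\infty}(X_1,V_1)$. Since every section in the image of $\mathcal{S}_1^{-1}$ is $(f,\tilde f)$-invariant (as recorded in the construction of $\mathcal{S}_1^{-1}$), the values of $\Sigma$ automatically lie in $C_{(f,\tilde f)}^{\infty}(X_1,V_1)$; hence it suffices to prove that $\Sigma$ is a plot of $C^{\infty}(X_1,V_1)$ for the functional diffeology. By definition of the latter this amounts to showing that for every plot $p:U'\to X_1$ of $X_1$ the evaluation map
$$\Psi:U\times U'\to V_1,\qquad \Psi(u,u')=\big(\mathcal{S}_1^{-1}(q^{f,\tilde f}(u))\big)(p(u')),$$
is smooth.

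First I would record what the hypothesis on $q^{f,\tilde f}$ provides. Since $\chi_1^f$ is smooth, $\chi_1^f\circ p:U'\to X_1^f$ is a plot of $X_1^f$; and since $q^{f,\tilde f}$ is a plot of $C^{\infty}(X_1^f,V_1^{\tilde f})$ for the functional diffeology, the map $(u,u')\mapsto q^{f,\tilde f}(u)\big(\chi_1^f(p(u'))\big)$ is a plot of $V_1^{\tilde f}$, i.e.\ smooth into $V_1^{\tilde f}$. By the second defining condition of $\mathcal{S}_1^{-1}$, namely $\chi_1^{\tilde f}\circ\mathcal{S}_1^{-1}(s)=s\circ\chi_1^f$, this map is exactly $\chi_1^{\tilde f}\circ\Psi$. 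Thus $\chi_1^{\tilde f}\circ\Psi$ is smooth in $(u,u')$, while the first defining condition tells us that $\Psi$ takes values in the sub-bundle $V_1^0$.

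The remaining and essential step is to upgrade ``$\chi_1^{\tilde f}\circ\Psi$ smooth and $\Psi$ valued in $V_1^0$'' to ``$\Psi$ smooth into $V_1$''. This is precisely the parametrized analogue of the argument already used to show that each individual section $\mathcal{S}_1^{-1}(s)$ is smooth: the pair $(u,u')$ now plays the role of the single evaluation variable there, and the only new ingredient, the dependence on $u$, enters solely through the already-granted smoothness of $(u,u')\mapsto q^{f,\tilde f}(u)\big(\chi_1^f(p(u'))\big)$. Concretely, I would invoke that $\chi_1^{\tilde f}:V_1\to V_1^{\tilde f}$ carries the pushforward (quotient) diffeology, so that the smooth map $\chi_1^{\tilde f}\circ\Psi$ lifts locally through $\chi_1^{\tilde f}$ to a plot of $V_1$; the $V_1^0$-valuedness together with the fibrewise bijection $\chi_1^{\tilde f}|_{V_1^0}$ onto $V_1^{\tilde f}$ used to define $\mathcal{S}_1^{-1}$ pins down $\Psi$ as the unique such lift, and the pushforward relation then forces its smoothness uniformly in the parameters.

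The main obstacle is exactly this last step, and it is the same subtlety flagged in the construction of $\mathcal{S}_1^{-1}$: the direct-sum decomposition $V_1=\mbox{Ker}(\tilde f)\oplus V_1^0$ need not be smooth, so one cannot simply post-compose $\chi_1^{\tilde f}\circ\Psi$ with a smooth projection onto $V_1^0$ to recover $\Psi$. The smoothness must instead be extracted from the second defining condition together with the definition of the pushforward diffeology, and the point is that this extraction is insensitive to whether the decomposition splits smoothly. Since relative to the single-section statement nothing changes except the harmless presence of $u$, that argument transfers without modification. Once $\Psi$ is seen to be smooth for every plot $p$, $\Sigma$ is a plot of the functional diffeology, and being valued in $(f,\tilde f)$-invariant sections it is a plot of $C_{(f,\tilde f)}^{\infty}(X_1,V_1)$, as required; combined with $\mathcal{S}_1\circ\mathcal{S}_1^{-1}=\mathrm{id}$ this is what exhibits $\mathcal{S}_1$ as a subduction.
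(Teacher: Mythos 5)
Your reduction to evaluation maps is correct, and so is the identification $\chi_1^{\tilde f}\circ\Psi\colon(u,u')\mapsto q^{f,\tilde f}(u)(\chi_1^f(p(u')))$, which shows $\chi_1^{\tilde f}\circ\Psi$ is a plot of $V_1^{\tilde f}$; up to that point you are following exactly the route the paper gestures at (the paper itself gives no proof, deferring to \cite{connections-pseudobundles}, and asserts only that smoothness ``follows from the second condition and the definition of a pushforward diffeology''). The genuine gap is in your final step, which is precisely the point you flagged and then waved through. The quotient (pushforward) diffeology on $V_1^{\tilde f}$ gives you, locally, the existence of \emph{some} plot $\tilde q$ of $V_1$ with $\chi_1^{\tilde f}\circ\tilde q=\chi_1^{\tilde f}\circ\Psi$; it says nothing about the particular lift $\Psi$. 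Your claim that ``$V_1^0$-valuedness \ldots pins down $\Psi$ as the unique such lift'' conflates two different lifts: $\Psi$ is indeed the unique lift that is $V_1^0$-valued and lies over $p$, but the lift $\tilde q$ supplied by the quotient diffeology need not be $V_1^0$-valued, and need not even lie over $p$ (only over a plot pointwise $f$-equivalent to $p$); so $\tilde q\neq\Psi$ in general. To pass from $\tilde q$ to $\Psi$ one must project fibrewise onto $V_1^0$ along $\mbox{Ker}(\tilde f)$ --- exactly the operation that fails to be smooth when the decomposition $V_1=\mbox{Ker}(\tilde f)\oplus V_1^0$ does not split smoothly. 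Uniqueness does not transfer smoothness, and ``the pushforward relation then forces its smoothness'' is a non sequitur.

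Worse, the implication you need --- $\chi_1^{\tilde f}\circ\Psi$ smooth and $\Psi$ valued in $V_1^0$ over $p$, hence $\Psi$ smooth --- is false, already for a single section (i.e., for $U$ a point), so no argument of this shape can close the gap without further hypotheses. Take $V_1=\matR^3$ with coordinates $(x,a,b)$ and the standard diffeology, $\pi_1(x,a,b)=x$, $X_1=\matR$ standard, $Y=\{0\}$, $X_2$ a one-point space, $V_2=\matR$, and $\tilde f(0,a,b)=a$. Then $\mbox{Ker}(\tilde f)$ over $0$ is the $b$-axis of that fibre; choose $V_1^0$ over $0$ to be the line $\{(a,a)\}$ (and whole fibres elsewhere). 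The section $\sigma$ of $V_1^{\tilde f}$ obtained by projecting the smooth section $x\mapsto(x,1,0)$ of $V_1$ is smooth, but its unique $V_1^0$-valued lift over the identity plot equals $(x,1,0)$ for $x\neq0$ and $(0,1,1)$ at $x=0$: it is not even continuous, hence not a plot of the standard $\matR^3$. So smoothness genuinely requires extra input --- a trivial $\mbox{Ker}(\tilde f)$, a smooth splitting, or a continuity-and-density argument recovering the value over $Y$ from the values off $Y$ --- none of which appears in your proof. (The example also shows the difficulty is already present in the quoted construction of $\mathcal{S}_1^{-1}$ itself, whose values need not be smooth sections for a badly chosen $V_1^0$; your parametrized version inherits, rather than creates, this problem.)
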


This lemma states in a detailed form that the diffeology of $C^{\infty}(X_1^f,V_1^{\tilde{f}})$ is the pushforward, by $\mathcal{S}_1$, of the diffeology of $C_{(f,\tilde{f})}^{\infty}(X_1,V_1)$.

\paragraph{$\mathcal{S}_1$ preserves compatibility} In the next paragraph we will explain how the map $\mathcal{S}_1$ relates to the map $\mathcal{S}$; therefore we should now consider its
interaction with compatibility. More precisely, recall that $X_1$ and $V_1$ are equipped with, respectively, the maps $f$ and $\tilde{f}$, with respect to which compatibility is defined, and $X_1^f$ and
$V_1^{\tilde{f}}$ are equipped with the maps $f_{\sim}$ and $\tilde{f}_{\sim}$. The following then is true.

\begin{prop}\label{compatible:to:compatible:prop}
Let $s_1\in C_{(f,\tilde{f})}^{\infty}(X_1,V_1)$ and $s_2\in C^{\infty}(X_2,V_2)$. Then $s_1$ and $s_2$ are $(f,\tilde{f})$-compatible if and only if $\mathcal{S}_1(s_1)$ and $s_2$ are
$(f_{\sim},\tilde{f}_{\sim})$-compatible.
\end{prop}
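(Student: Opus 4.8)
The plan is to prove the proposition by directly unwinding the three defining relations that tie the quotient construction together: the relation $\mathcal{S}_1(s_1)\circ\chi_1^f=\chi_1^{\tilde{f}}\circ s_1$ defining $\mathcal{S}_1$, together with $f=f_{\sim}\circ\chi_1^f$ and $\tilde{f}=\tilde{f}_{\sim}\circ\chi_1^{\tilde{f}}$ defining $f_{\sim}$ and $\tilde{f}_{\sim}$. Both implications will turn out to follow from a single chain of equalities read in the two directions, so there is in essence only one computation to carry out.

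First I would fix an arbitrary $y\in Y$ and set $\bar{y}=\chi_1^f(y)\in\chi_1^f(Y)$. Since $y\in Y$ and $s_1$ is a section, $\pi_1(s_1(y))=y\in Y$, so $s_1(y)\in\pi_1^{-1}(Y)$ and hence both $\tilde{f}(s_1(y))$ and $\chi_1^{\tilde{f}}(s_1(y))$ are defined. The key step is then the identity
$$\tilde{f}_{\sim}\bigl(\mathcal{S}_1(s_1)(\bar{y})\bigr)=\tilde{f}_{\sim}\bigl(\chi_1^{\tilde{f}}(s_1(y))\bigr)=\tilde{f}(s_1(y)),$$
where the first equality uses the defining relation of $\mathcal{S}_1$ evaluated at $y$ (recall $\mathcal{S}_1(s_1)(\chi_1^f(y))=\chi_1^{\tilde{f}}(s_1(y))$) and the second uses $\tilde{f}=\tilde{f}_{\sim}\circ\chi_1^{\tilde{f}}$. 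Dually, $s_2(f_{\sim}(\bar{y}))=s_2(f_{\sim}(\chi_1^f(y)))=s_2(f(y))$ by $f=f_{\sim}\circ\chi_1^f$. Comparing the two right-hand sides, the $(f_{\sim},\tilde{f}_{\sim})$-compatibility equation $\tilde{f}_{\sim}(\mathcal{S}_1(s_1)(\bar{y}))=s_2(f_{\sim}(\bar{y}))$ is literally equivalent to the $(f,\tilde{f})$-compatibility equation $\tilde{f}(s_1(y))=s_2(f(y))$.

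To finish both directions I would match the ranges of quantification. For the \emph{only if} part, assume $s_1,s_2$ are $(f,\tilde{f})$-compatible; every $\bar{y}\in\chi_1^f(Y)$ is of the form $\chi_1^f(y)$ for some $y\in Y$ (as $\chi_1^f$ restricts to a surjection $Y\to\chi_1^f(Y)$), and applying the displayed chain to such a $y$ gives the required identity at $\bar{y}$. For the \emph{if} part, assume $\mathcal{S}_1(s_1),s_2$ are $(f_{\sim},\tilde{f}_{\sim})$-compatible; then for each $y\in Y$ the same chain applied at $\bar{y}=\chi_1^f(y)$ yields $\tilde{f}(s_1(y))=s_2(f(y))$, which is compatibility of $s_1$ and $s_2$.

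The computation itself is routine; the only points that warrant a word of care, rather than constituting a genuine obstacle, are the well-definedness of the value $\mathcal{S}_1(s_1)(\bar{y})$ and the legitimacy of choosing a representative $y$ of $\bar{y}$. Both are handled by the hypothesis $s_1\in C_{(f,\tilde{f})}^{\infty}(X_1,V_1)$: the $(f,\tilde{f})$-invariance of $s_1$ guarantees that $y\mapsto\chi_1^{\tilde{f}}(s_1(y))$ descends to $X_1^f$, so $\mathcal{S}_1(s_1)(\bar{y})$ is independent of the representative $y$, and consequently the chain of equalities above is independent of that choice as well. I do not expect any deeper difficulty, since the proposition is by design a compatibility-bookkeeping statement whose whole content is captured by the functoriality of passing to the $(f,\tilde{f})$-equivalence classes.
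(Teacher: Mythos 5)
Your proof is correct, and it is precisely the intended argument: the paper states this proposition without an in-text proof (deferring to the cited reference) because it is, as you observe, a direct bookkeeping consequence of the three defining relations $\mathcal{S}_1(s_1)\circ\chi_1^f=\chi_1^{\tilde{f}}\circ s_1$, $f=f_{\sim}\circ\chi_1^f$, and $\tilde{f}=\tilde{f}_{\sim}\circ\chi_1^{\tilde{f}}$, together with the surjectivity of $\chi_1^f$ on $Y$. Your care about well-definedness of $\mathcal{S}_1(s_1)(\bar{y})$ via the $(f,\tilde{f})$-invariance of $s_1$ is exactly the point the paper settles when it asserts that the indirect definition of $\mathcal{S}_1$ determines it univocally, so nothing is missing.
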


\paragraph{The splitting of $\mathcal{S}$ as $(\mathcal{S}_1,\mbox{Id}_{C^{\infty}(X_2,V_2)})$} The proposition just stated allows thus to consider, given compatible sections
$s_1\in C_{(f,\tilde{f})}^{\infty}(X_1,V_1)$ and $s_2\in C^{\infty}(X_2,V_2)$, to consider both
$$s_1\cup_{(f,\tilde{f})}s_2\,\,\,\mbox{ and }\,\,\,\mathcal{S}_1(s_1)\cup_{(f_{\sim},\tilde{f}_{\sim})}s_2.$$ These sections are identical under the already-mentioned diffeomorphisms
$$X_1\cup_f X_2\cong X_1^f\cup_{f_{\sim}}X_2,\,\,\,V_1\cup_{\tilde{f}}V_2\cong V_1^{\tilde{f}}\cup_{\tilde{f}_{\sim}}V_2,$$ which allows us to identify the map
$$\mathcal{S}:C_{(f,\tilde{f})}^{\infty}(X_1,V_1)\times_{comp}C^{\infty}(X_2,V_2)\to C^{\infty}(X_1\cup_f X_2,V_1\cup_{\tilde{f}}V_2)$$ with the map
$$(\mathcal{S}_1,\mbox{Id}_{C^{\infty}(X_2,V_2)}):C_{(f,\tilde{f})}^{\infty}(X_1,V_1)\times_{comp}C^{\infty}(X_2,V_2)\to C^{\infty}(X_1^f\cup_{f_{\sim}}X_2,V_1^{\tilde{f}}\cup_{\tilde{f}_{\sim}}V_2).$$

It now follows from Theorem \ref{sections:first:factor:surjective:thm} and Proposition \ref{compatible:to:compatible:prop} that $(\mathcal{S}_1,\mbox{Id}_{C^{\infty}(X_2,V_2)})$ is in particular
surjective. Thus, we obtain

\begin{cor}
The map $\mathcal{S}$ is surjective.
\end{cor}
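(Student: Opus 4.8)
The plan is to transport the whole question through the quotient pseudo-bundle $\pi_1^{\tilde{f},f}:V_1^{\tilde{f}}\to X_1^f$ and thereby reduce surjectivity of $\mathcal{S}$ to the combination of surjectivity of $\mathcal{S}_1$ (Theorem \ref{sections:first:factor:surjective:thm}) with the compatibility-preservation of $\mathcal{S}_1$ (Proposition \ref{compatible:to:compatible:prop}). Using the diffeomorphisms $X_1\cup_f X_2\cong X_1^f\cup_{f_{\sim}}X_2$ and $V_1\cup_{\tilde{f}}V_2\cong V_1^{\tilde{f}}\cup_{\tilde{f}_{\sim}}V_2$, together with the induced diffeomorphism of section spaces, it suffices to prove that the associated map $(\mathcal{S}_1,\mbox{Id}_{C^{\infty}(X_2,V_2)})$ is surjective onto $C^{\infty}(X_1^f\cup_{f_{\sim}}X_2,V_1^{\tilde{f}}\cup_{\tilde{f}_{\sim}}V_2)$, since surjectivity is invariant under these identifications.

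First I would fix an arbitrary section $s\in C^{\infty}(X_1^f\cup_{f_{\sim}}X_2,V_1^{\tilde{f}}\cup_{\tilde{f}_{\sim}}V_2)$ and split it. The crucial point here is that $f_{\sim}$ and $\tilde{f}_{\sim}$ are injective by the very construction of the quotients $X_1^f$ and $V_1^{\tilde{f}}$, so the inclusions $\tilde{i}_1:X_1^f\hookrightarrow X_1^f\cup_{f_{\sim}}X_2$ and $\tilde{j}_1:V_1^{\tilde{f}}\hookrightarrow V_1^{\tilde{f}}\cup_{\tilde{f}_{\sim}}V_2$ are inductions on the whole of $X_1^f$ and $V_1^{\tilde{f}}$, not merely on the complements of the gluing locus. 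I would then set $\bar{s}_1=\tilde{j}_1^{-1}\circ s\circ\tilde{i}_1$ and $s_2=j_2^{-1}\circ s\circ i_2$, which are well-defined smooth sections of $V_1^{\tilde{f}}$ and $V_2$ respectively; exactly as in the case of gluing along diffeomorphisms, they are $(f_{\sim},\tilde{f}_{\sim})$-compatible and satisfy $\bar{s}_1\cup_{(f_{\sim},\tilde{f}_{\sim})}s_2=s$. Next, by Theorem \ref{sections:first:factor:surjective:thm} the map $\mathcal{S}_1$ is surjective, so I can choose $s_1\in C_{(f,\tilde{f})}^{\infty}(X_1,V_1)$ with $\mathcal{S}_1(s_1)=\bar{s}_1$; and by Proposition \ref{compatible:to:compatible:prop}, since $\mathcal{S}_1(s_1)=\bar{s}_1$ and $s_2$ are $(f_{\sim},\tilde{f}_{\sim})$-compatible, the pair $(s_1,s_2)$ is $(f,\tilde{f})$-compatible. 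Then $(\mathcal{S}_1,\mbox{Id}_{C^{\infty}(X_2,V_2)})(s_1,s_2)=\mathcal{S}_1(s_1)\cup_{(f_{\sim},\tilde{f}_{\sim})}s_2=s$, which yields surjectivity and hence, via the identification, surjectivity of $\mathcal{S}$.

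I expect the main obstacle to be the splitting step, namely verifying that an arbitrary $s$ over the quotient base genuinely decomposes as $\bar{s}_1\cup_{(f_{\sim},\tilde{f}_{\sim})}s_2$. This is where injectivity of $f_{\sim}$ and $\tilde{f}_{\sim}$ is essential: it guarantees that $X_1^f$ and $V_1^{\tilde{f}}$ embed entirely (so that $\tilde{j}_1^{-1}\circ s\circ\tilde{i}_1$ makes sense on all of $X_1^f$, using that the fibres of $V_1^{\tilde{f}}$ over the gluing locus map isomorphically onto the corresponding fibres of $V_2$), and that every section of $V_1^{\tilde{f}}$ is automatically $(f_{\sim},\tilde{f}_{\sim})$-invariant, so that $C_{(f_{\sim},\tilde{f}_{\sim})}^{\infty}(X_1^f,V_1^{\tilde{f}})=C^{\infty}(X_1^f,V_1^{\tilde{f}})$, matching the target of $\mathcal{S}_1$ in Theorem \ref{sections:first:factor:surjective:thm}. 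The remaining verifications — smoothness of $\bar{s}_1$ and $s_2$, their compatibility, and the reconstruction identity — are routine and follow the pattern already used for the case in which $f$ and $\tilde{f}$ are diffeomorphisms.
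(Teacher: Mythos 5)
Your overall route is exactly the paper's: identify $\mathcal{S}$ with $(\mathcal{S}_1,\mbox{Id}_{C^{\infty}(X_2,V_2)})$ through the reduced pseudo-bundle $\pi_1^{\tilde{f},f}:V_1^{\tilde{f}}\to X_1^f$, then combine surjectivity of $\mathcal{S}_1$ (Theorem \ref{sections:first:factor:surjective:thm}) with preservation of compatibility (Proposition \ref{compatible:to:compatible:prop}). The gap is in the step you yourself single out as crucial: the splitting of an arbitrary section $s$ of $V_1^{\tilde{f}}\cup_{\tilde{f}_{\sim}}V_2$. You claim that injectivity of $f_{\sim}$ and $\tilde{f}_{\sim}$ (which does hold by construction of the quotients) already makes $\tilde{i}_1$ and $\tilde{j}_1$ inductions on the whole of $X_1^f$ and $V_1^{\tilde{f}}$, and that the fibres of $V_1^{\tilde{f}}$ over the gluing locus map isomorphically \emph{onto} the corresponding fibres of $V_2$. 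Neither follows from injectivity. In diffeology a smooth injection need not be an induction (the inverse on the image need not be smooth), and, more fatally, injectivity says nothing about fibrewise surjectivity: $\tilde{f}_{\sim}$ carries the fibre $(\pi_1^{\tilde{f},f})^{-1}(y^f)$ onto a subspace of $\pi_2^{-1}(f_{\sim}(y^f))$ that can be proper, in which case $\bar{s}_1=\tilde{j}_1^{-1}\circ s\circ\tilde{i}_1$ is not even defined set-theoretically, because $s(\tilde{i}_1(y^f))$ lies in the $V_2$-fibre but possibly outside $\mbox{Range}(\tilde{j}_1)$.

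Concretely, take $X_1=X_2=Y$ a one-point space, $V_1=\matR$ and $V_2=\matR^2$ standard, and $\tilde{f}(t)=(t,0)$. Then $f$ and $\tilde{f}$ are injective (even diffeomorphisms with their images), $X_1^f=X_1$, $V_1^{\tilde{f}}=V_1$, $\mathcal{S}_1=\mbox{Id}$, and yet $\mathcal{S}$ is not surjective: compatible pairs produce precisely the sections with values in $\matR\times\{0\}$, whereas the space of sections of the glued pseudo-bundle is all of $\matR^2$. So the argument cannot be repaired while retaining only injectivity; one needs a hypothesis forcing $f_{\sim}$ and $\tilde{f}_{\sim}$ to be diffeomorphisms onto $f(Y)$ and onto the whole of $\pi_2^{-1}(f(Y))$, respectively. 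This is precisely what the paper invokes: $f$ and $\tilde{f}$ are assumed to be subductions, so that $f_{\sim}$ and $\tilde{f}_{\sim}$ become diffeomorphisms, the reduced gluing is a gluing along diffeomorphisms, and the splitting $C^{\infty}(X_1^f\cup_{f_{\sim}}X_2,V_1^{\tilde{f}}\cup_{\tilde{f}_{\sim}}V_2)\cong C^{\infty}(X_1^f,V_1^{\tilde{f}})\times_{comp}C^{\infty}(X_2,V_2)$ holds; from that point on your remaining steps (choice of $s_1$ by surjectivity of $\mathcal{S}_1$, compatibility of $(s_1,s_2)$, and reconstruction of $s$) coincide with the paper's and are correct.
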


Thus, given a section $s\in C^{\infty}(X_1\cup_f X_2,V_1\cup_{\tilde{f}}V_2)$, there always exists $(s_1,s_2)\in C^{\infty}(X_1,V_1)\times_{comp}C^{\infty}(X_2,V_2)$ such that
$s=s_1\cup_{(f,\tilde{f})}s_2$. Furthermore, $s_2$ is uniquely determined by $s$, while $s_1$ is not.

\section{Pseudo-bundles of Clifford algebras and Clifford modules}

In this section we consider diffeological pseudo-bundles of Clifford algebras and those of Clifford modules, with a particular emphasis on the interactions between Clifford algebra/module structure,
and the operation of gluing. Most of these interactions do turn out in the end to be of the expected form, due to the various commutativity diffeomorphisms considered in the previous two sections. In
addition to the case of abstract Clifford modules, we consider in detail the pseudo-bundles of exterior algebras, which, as in the standard case, carry the natural Clifford action. The material of this section 
is based on \cite{clifford} and \cite{exterior-algebras-pseudobundles}, in particular, all proofs can be found therein (some bits are also cited here for illustration).

\subsection{Gluing of pseudo-bundles of Clifford algebras and those of Clifford modules}

We now recall some facts regarding diffeological gluing of two given pseudo-bundles of Clifford algebras, or two pseudo-bundles of Clifford modules.

\subsubsection{The pseudo-bundle $\cl(V,g)$}

Let $\pi:V\to X$ be a finite-dimensional diffeological vector pseudo-bundle endowed with a pseudo-metric $g:X\to V^*\otimes V^*$. The construction of the corresponding pseudo-bundle of Clifford
algebras is the immediate one, since all the operations involved (direct sum, tensor product, and taking quotients), and their relevant properties have already been described.

The \textbf{pseudo-bundle of Clifford algebras $\pi^{\cl}:\cl(V,g)\to X$} is given by
$$\cl(V,g):=\cup_{x\in X}\cl(\pi^{-1}(x),g(x))$$ endowed with the following diffeology. Consider first the \textbf{pseudo-bundle of tensor algebras} $\pi^{T(V)}:T(V)\to X$, where
$$T(V):=\cup_{x\in X}T(\pi^{-1}(x)),$$ with each $T(\pi^{-1}(x))=\bigoplus_{r}(\pi^{-1}(x))^{\otimes r}$ being the usual tensor algebra of the diffeological vector space $\pi^{-1}(x)$; the collection $T(V)$
of the tensor algebras of individual fibres is endowed with the vector space direct sum diffeology relative to the tensor product diffeology\footnote{We define the tensor product diffeology as the
quotient diffeology, with respect to the kernel of the universal map, relative to the free product diffeology. The latter in turn is the finest vector space diffeology on the free product of the factors,
containing the product diffeology.} on each factor.

By the properties of these diffeologies, the subset diffeology on each fibre of $T(V)$ is that of the tensor algebra of the diffeological vector space $\pi^{-1}(x)$. Now, in each fibre
$(\pi^{T(V)})^{-1}(x)=T(\pi^{-1}(x))$ of $T(V)$ we choose the subspace $W_x$ that is the kernel of the universal map $T(\pi^{-1}(x))\to\cl(\pi^{-1}(x),g(x))$. Then, as is generally the case,
$W=\cup_{x\in X}W_x\subset T(V)$, with the subset diffeology relative this inclusion, is a sub-bundle of $T(V)$. The corresponding quotient pseudo-bundle has $\cl(\pi^{-1}(x),g(x))$ as the fibre at $x$,
both as an algebra and from the diffeological point of view (by the properties of quotient pseudo-bundles). This quotient is $\cl(V,g)$ that we defined above and carries the quotient diffeology; by the
aforementioned properties, the subset diffeology on each fibre is the diffeology of the Clifford algebra of the corresponding fibre $\pi^{-1}(x)$ of $V$.

\begin{rem}
We denote the pseudo-bundle projection of $\cl(V,g)$ by $\pi^{\cl}$, when it is clear from the context which initial pseudo-bundle $V$ we are referring to. When dealing with more than one pseudo-bundle
at a time, we might use the extended notation $\pi^{\cl(V,g)}$ to distinguish between them.
\end{rem}

\subsubsection{The pseudo-bundle $\cl(V_1\cup_{\tilde{f}}V_2,\tilde{g})$ as the result of a gluing}

The main result, that we immediately state and that appears in \cite{clifford}, is the following one.

\begin{thm}
Let $\pi_1:V_1\to X_1$ and $\pi_2:V_2\to X_2$ be two finite-dimensional diffeological vector pseudo-bundles, let $(\tilde{f},f)$ be a pair of smooth maps, each of which is a diffeomorphism, defining
a gluing between them, let $Y\subseteq X_1$ be the domain of definition of $f$, and let $g_1$ and $g_2$ be pseudo-metrics on $V_1$ and $V_2$, compatible with the gluing along
$(\tilde{f},f)$. Then there exists a map
$$\tilde{F}^{\cl}:(\pi^{\cl(V_1,g_1)})^{-1}(Y)\to\cl(V_2,g_2)$$ such that $(\tilde{F}^{\cl},f)$ defines a gluing of $\cl(V_1,g_1)$ to $\cl(V_2,g_2)$, and a diffeomorphism
$$\Phi^{\cl}:\cl(V_1,g_1)\cup_{\tilde{F}^{\cl}}\cl(V_2,g_2)\to\cl(V_1\cup_{\tilde{f}}V_2,\tilde{g})$$ that covers the identity map on $X_1\cup_f X_2$.
\end{thm}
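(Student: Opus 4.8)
The plan is to build everything out of the two operations from which $\cl(V,g)$ is assembled, namely the tensor-algebra construction and the quotient by the Clifford ideal, exploiting the fact that both the tensor product and the direct sum of pseudo-bundles commute with gluing (via the diffeomorphisms $\Phi_{\cup,\otimes}$ and $\Phi_{\cup,\oplus}$ of Section 4.7), as does the formation of quotient pseudo-bundles. Since $f$ is a diffeomorphism, the gluing-dual commutativity condition holds by Theorem \ref{gluing:along:diffeo:implies:gluing:dual:commute:thm}, so the glued pseudo-metric $\tilde{g}$ on $V_1\cup_{\tilde{f}}V_2$ is well-defined by Theorem \ref{glued:pseudometric:commutative:thm}; by its explicit form $\tilde{g}$ pulls back to $g_1$ (respectively $g_2$) along the canonical induction $j_1^{V_1}$ (respectively $j_2^{V_2}$) on each fibre.

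First I would define $\tilde{F}^{\cl}$. The lift $\tilde{f}\colon\pi_1^{-1}(Y)\to V_2$ is fibrewise linear, and compatibility of the pseudo-metrics gives $g_1(y)(v,w)=g_2(f(y))(\tilde{f}(v),\tilde{f}(w))$ for all $y\in Y$, so $\tilde{f}$ is fibrewise an isometry over $Y$. Applying $\tilde{f}$ on each tensor factor produces a smooth fibrewise algebra homomorphism of the tensor-algebra pseudo-bundles, smooth because it is induced by the maps used to build the tensor-product and direct-sum diffeologies. Because $\tilde{f}$ is an isometry, it sends the generator $v\otimes w+w\otimes v+2g_1(v,w)$ of the Clifford ideal over $y$ to $\tilde{f}(v)\otimes\tilde{f}(w)+\tilde{f}(w)\otimes\tilde{f}(v)+2g_2(\tilde{f}(v),\tilde{f}(w))$, a generator of the Clifford ideal over $f(y)$; hence the tensor-algebra map descends to the quotients and yields the desired $\tilde{F}^{\cl}$, which is fibrewise an algebra isomorphism, smooth, and satisfies $\pi^{\cl(V_2,g_2)}\circ\tilde{F}^{\cl}=f\circ\pi^{\cl(V_1,g_1)}$. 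Thus $(\tilde{F}^{\cl},f)$ is a legitimate gluing datum.

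Next I would construct $\Phi^{\cl}$ in stages, mirroring the construction of $\cl$. At the tensor-algebra level, iterating $\Phi_{\cup,\otimes}$ identifies each glued tensor power with the corresponding tensor power of the glued bundle, and then $\Phi_{\cup,\oplus}$ on the direct sum over $r$ gives a pseudo-bundle diffeomorphism between the glued tensor-algebra bundle and the tensor-algebra bundle of $V_1\cup_{\tilde{f}}V_2$, covering the identity on $X_1\cup_f X_2$. Under this identification, the glued ideal sub-bundle (the gluing, along the lifted map, of the two fibrewise Clifford-ideal sub-bundles of $T(V_1)$ and $T(V_2)$) corresponds exactly to the Clifford-ideal sub-bundle of $V_1\cup_{\tilde{f}}V_2$ with respect to $\tilde{g}$; this is precisely where I use that $\tilde{f}$ is an isometry over $Y$ and that $\tilde{g}$ restricts to $g_1$ and $g_2$ as noted above, so that the ideals match both fibrewise and along the glued region. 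Since the formation of quotient pseudo-bundles commutes with gluing, passing to the quotient by these matching sub-bundles produces the sought diffeomorphism $\Phi^{\cl}\colon\cl(V_1,g_1)\cup_{\tilde{F}^{\cl}}\cl(V_2,g_2)\to\cl(V_1\cup_{\tilde{f}}V_2,\tilde{g})$ covering the identity.

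The hard part will be the bookkeeping at the diffeological level: showing that the gluing (quotient) diffeology on $\cl(V_1,g_1)\cup_{\tilde{F}^{\cl}}\cl(V_2,g_2)$ coincides with the intrinsically defined diffeology on $\cl(V_1\cup_{\tilde{f}}V_2,\tilde{g})$, rather than merely matching the two up fibrewise and as topological objects. Concretely, one must check that the successive commutativity diffeomorphisms $\Phi_{\cup,\otimes}$ and $\Phi_{\cup,\oplus}$ assemble into a single smooth map with smooth inverse, compatible with the two quotient projections defining the Clifford pseudo-bundles, so that it descends to a diffeomorphism on the quotients. Finite-dimensionality of the fibres, together with the hypothesis that both $f$ and $\tilde{f}$ are diffeomorphisms (which makes the relevant inductions into embeddings and guarantees gluing-dual commutativity), is what keeps this assembly under control.
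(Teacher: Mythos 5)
Your proposal is correct and takes essentially the same approach as the paper: $\tilde{F}^{\cl}$ is the fibrewise map induced by $\tilde{f}$ via the universal property of Clifford algebras (your tensor-algebra descent is precisely this construction, with compatibility of $g_1$ and $g_2$ ensuring the Clifford ideals are preserved), and $\Phi^{\cl}$ is the natural fibrewise identification, realized smoothly through the commutativity diffeomorphisms $\Phi_{\cup,\otimes}$ and $\Phi_{\cup,\oplus}$ together with the compatibility of gluing and quotient pseudo-bundles --- the same assembly the paper carries out explicitly for the exterior-algebra pseudo-bundles in Section 7.2. One minor notational caution: as the paper itself warns, the $n$-factor map $\Phi_{\cup,\otimes}^{(\otimes n)}$ is a separately defined diffeomorphism rather than a literal iterate of $\Phi_{\cup,\otimes}$, but this does not affect the substance of your argument.
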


The construction of the map $\tilde{F}^{\cl}$ is the immediately obvious one. It is defined on each fibre over a point $y\in Y$ as the map
$$\cl(\pi_1^{-1}(y),g_1|_{\pi_1^{-1}(y)})\to\cl(\pi_2^{-1}(f(y)),g_2|_{\pi_2^{-1}(f(y))})$$ induced by $\tilde{f}$ via the universal property of Clifford algebras. That this is well-defined follows from
the compatibility of pseudo-metrics $g_1$ and $g_2$.\footnote{As we have said in the previous section, the compatibility of pseudo-metrics is an extension of the concept of an isometry map.} The
diffeomorphism $\Phi^{\cl}$ is then the natural identification; essentially, it follows from the gluing construction that over a point of form $x=i_1^{X_1}(x_1)$ the fibres of both
$\cl(V_1,g_1)\cup_{\tilde{F}^{\cl}}\cl(V_2,g_2)$ and $\cl(V_1\cup_{\tilde{f}}V_2,\tilde{g})$ are naturally identified with $\cl(\pi_1^{-1}(x),g_1|_{\pi_1^{-1}(x)})$, while over any point of
form $x=i_2(x_2)$ these fibres are identified with $\cl(\pi_2^{-1}(x),g_2|_{\pi_2^{-1}(x)})$.

It is also quite clear that this theorem naturally extends to any finite sequence of consecutive gluings, as long as they are all done along diffeomorphisms. The main point is that the Clifford algebras'
pseudo-bundle of the result is obtained by some natural gluing of the initial pseudo-bundles; and if the latter are standard, the end result is also a gluing of some standard bundles of Clifford
algebras, with each fibre of this result (which might well be non-standard itself) inherited from one of the factors.

\subsubsection{Gluing of Clifford modules $E_1$ and $E_2$}

Let us now consider the behavior of the pseudo-bundles of Clifford modules under the operation of gluing. In what immediately follows, we consider a gluing of two abstract pseudo-bundles of
Clifford modules over two given pseudo-bundles of Clifford algebras whose gluing is also fixed, and define what it means for the two actions to be compatible with respect to the gluing of the Clifford
modules. There is then a natural induced action of the result-of-gluing (of algebras) on the result-of-gluing (of modules), which turns out to be smooth.

\paragraph{The setting} Let $X_1$ and $X_2$ be two diffeological spaces, and let $f:X_1\supset Y\to X_2$ be a smooth map. Consider two finite-dimensional diffeological vector pseudo-bundles
over them, $\pi_1:V_1\to X_1$ and $\pi_2:V_2\to X_2$, and a smooth fibrewise-linear lift $\tilde{f}$ of $f$ to a map $\pi_1^{-1}(Y)\to\pi_2^{-1}(f(Y))$. Suppose that each of these pseudo-bundles
carries a pseudo-metric, $g_1$ and $g_2$ respectively, and suppose that these pseudo-metrics are compatible for the gluing along $(\tilde{f},f)$; consider the corresponding pseudo-bundles
of Clifford algebras, $\pi_1^{\cl}:\cl(V_1,g_1)\to X_1$ and $\pi_2^{\cl}:\cl(V_2,g_2)\to X_2$, their gluing along the map $$\tilde{F}^{\cl}:(\pi^{\cl(V_1,g_1)})^{-1}(Y)\to\cl(V_2,g_2),$$as well as the
resulting pseudo-bundle
$$\cl(V_1,g_1)\cup_{\tilde{F}^{\cl}}\cl(V_2,g_2)=\cl(V_1\cup_{\tilde{f}}V_2,\tilde{g})$$ over the space $X_1\cup_f X_2$; the above equality actually stands for the diffeomorphism $\Phi^{\cl}$.

Assume also that we are given two pseudo-bundles of Clifford modules, $\chi_1:E_1\to X_1$ and $\chi_2:E_2\to X_2$, over $\cl(V_1,g_1)$ and $\cl(V_2,g_2)$ respectively; this means there is a
smooth pseudo-bundle map $c_i:\cl(V_i,g_i)\to\mathcal{L}(E_i,E_i)$ that covers the identity on the bases. Suppose further that there is a smooth fibrewise linear map
$\tilde{f}':\chi_1^{-1}(Y)\to\chi_2^{-1}(f(Y))$ that covers $f$. We wish to specify under which conditions $E_1\cup_{\tilde{f}'}E_2$ is a Clifford module over $\cl(V_1\cup_{\tilde{f}}V_2,\tilde{g})$, via
an action induced by $c_1$ and $c_2$.

Notice that we will avail ourselves of the extended notation for the standard inductions $j_*$, while still writing $i_1$ for $i_1^{X_1}$, and $i_2$ for $i_2^{X_2}$; the base space is the same
for all pseudo-bundles throughout the section, so this shall not create confusion, while allowing for simpler formulae. We will also use, whenever it is reasonable to do so (but not at the expense
of clarity), the notation $j_*^{\cl}$, rather than the full form such as, for example, $j_1^{\cl(V_1,g_1)}$.

\paragraph{Compatibility of $c_1$ and $c_2$} Let $y\in Y$, and let $v\in(\pi_1^{\cl})^{-1}(y)$. Consider
$$\Phi^{\cl}(j_2^{\cl}(\tilde{F}^{\cl}(v)))\in((\pi_1\cup_{(\tilde{f},f)}\pi_2)^{\cl})^{-1}(i_2(f(y)))\subseteq\cl(V_1\cup_{\tilde{f}}V_2,\tilde{g}).$$ Compare the following two:
$$c_1(v):\chi_1^{-1}(y)\to\chi_1^{-1}(y),\,\,\,\mbox{ and }\,\,\,c_2(\tilde{F}^{\cl}(v)):\chi_2^{-1}(f(y))\to\chi_2^{-1}(f(y));$$ compare also
$$\tilde{f}'\circ(c_1(v))\,\,\,\mbox{ and }\,\,\,(c_2(\tilde{F}^{\cl}(v)))\circ\tilde{f}'.$$

In order to define the induced action on $E_1\cup_{\tilde{f}'}E_2$, we essentially need to specify it for elements of form $\Phi^{\cl}(j_2^{\cl(V_2,g_2)}(\tilde{F}^{\cl}(v)))$. An element of the
latter form acts on the fibre $(\chi_1\cup_{(\tilde{f'},f)}\chi_2)^{-1}(i_2(f(y))$. Its action therefore could be described by
$$\Phi^{\cl}(j_2^{\cl(V_2,g_2)}(\tilde{F}^{\cl}(v)))(\tilde{f}'(e_1))=\tilde{f}'(c_1(v)(e_1))\,\,\,\mbox{ for an arbitrary }\,\,\,e_1\in\chi_1^{-1}(y).$$ On the other hand, for any given element $e_2\in\chi_2^{-1}(f(y))$
(whether it does or does not belong to the image of $\tilde{f}'$) we might have
$$\Phi^{\cl}(j_2^{\cl}(\tilde{F}^{\cl}(v)))(e_2)=c_2(\tilde{F}^{\cl}(v))(e_2).$$ In order to obtain a smooth induced action, we wish to ensure that these expressions are compatible with each other. We thus
obtain the following notion.

\begin{defn}
The actions $c_1$ and $c_2$ are \textbf{compatible} if for all $y\in Y$, for all $v\in(\pi_1^{\cl})^{-1}(y)$, and for all $e_1\in\chi_1^{-1}(y)$ we have
$$\tilde{f}'(c_1(v)(e_1))=c_2(\tilde{F}^{\cl}(v))(\tilde{f}'(e_1)).$$
\end{defn}

We remark that compatibility of two Clifford actions in the sense just stated is \emph{not} an instance of $(f,g)$-compatibility of smooth maps; see \cite{clifford} for explanation.

\paragraph{The induced action} Assuming that $c_1$ and $c_2$ are compatible in the above sense allows us to define the corresponding induced action, which first of all is a homomorphism
$$c:\cl(V_1\cup_{\tilde{f}}V_2,\tilde{g})\to\mathcal{L}(E_1\cup_{\tilde{f}'}E_2,E_1\cup_{\tilde{f}'}E_2).$$ Using the diffeomorphism
$(\Phi^{\cl})^{-1}:\cl(V_1\cup_{\tilde{f}}V_2,\tilde{g})=\cl(V_1,g_1)\cup_{\tilde{F}^{\cl}}\cl(V_2,g_2)$, we can describe the action $c$ as:
\begin{flushleft}
$c(v)(e)=$
\end{flushleft}
$$\left\{\begin{array}{ll}
j_1^{E_1}\left(c_1((j_1^{\cl(V_1,g_1)})^{-1}((\Phi^{\cl})^{-1}(v)))((j_1^{E_1})^{-1}(e))\right) & \mbox{if }(\Phi^{\cl})^{-1}(v)\in\mbox{Im}(j_1^{\cl(V_1,g_1)})\Rightarrow e\in\mbox{Im}(j_1^{E_1}), \\
j_2^{E_2}\left(c_2((j_2^{\cl(V_2,g_2)})^{-1}((\Phi^{\cl})^{-1}(v)))((j_2^{E_2})^{-1}(e))\right) & \mbox{if }(\Phi^{\cl})^{-1}(v)\in\mbox{Im}(j_2^{\cl(V_2,g_2)})\Rightarrow e\in\mbox{Im}(j_2^{E_2}).
\end{array}\right. $$
Since the images of the inductions $\Phi^{\cl}\circ j_1^{\cl(V_1,g_1)}$, $\Phi^{\cl}\circ j_2^{\cl(V_2,g_2)}$ are disjoint and cover $\cl(V_1\cup_{\tilde{f}}V_2,\tilde{g})$, and those of
$j_1^{E_1},j_2^{E_2}$ cover $E_1\cup_{\tilde{f}'}E_2$ (and are disjoint as well), this action is well-defined. Furthermore, each $c(v)$ is an endomorphism of the corresponding fibre, because both
$c_1((j_1^{\cl})^{-1}((\Phi^{\cl})^{-1}(v)))$ and $c_2((j_2^{\cl})^{-1}((\Phi^{\cl})^{-1}(v)))$ (whichever is relevant) are so. Furthermore, the following is true (see \cite{clifford}).

\begin{thm}
The action $c$ is smooth as a map $\cl(V_1\cup_{\tilde{f}}V_2,\tilde{g})\to\mathcal{L}(E_1\cup_{\tilde{f}'}E_2,E_1\cup_{\tilde{f}'}E_2)$.
\end{thm}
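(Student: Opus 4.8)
The plan is to reduce smoothness of $c$ to a plot-by-plot verification, exploiting the fact that the domain $\cl(V_1\cup_{\tilde{f}}V_2,\tilde{g})$ is, via $\Phi^{\cl}$, itself a glued pseudo-bundle $\cl(V_1,g_1)\cup_{\tilde{F}^{\cl}}\cl(V_2,g_2)$, and then splitting into the two pieces of the gluing, with the compatibility of $c_1$ and $c_2$ doing all the work at the seam. Concretely, the diffeology on the operator pseudo-bundle $\mathcal{L}(E_1\cup_{\tilde{f}'}E_2,E_1\cup_{\tilde{f}'}E_2)$ is characterized, exactly as for dual pseudo-bundles, by requiring that a map $p$ into it be smooth if and only if, for every plot $q$ of $E_1\cup_{\tilde{f}'}E_2$, the evaluation $(u,u')\mapsto c(p(u))(q(u'))$ is smooth on the matching-base locus. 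So I would fix an arbitrary plot $p$ of $\cl(V_1\cup_{\tilde{f}}V_2,\tilde{g})$ together with such a $q$, and aim to show this evaluation is smooth; since smoothness is local, I may work in a neighborhood of an arbitrary point.

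Next I would apply the local characterization of plots of a gluing (the Lemma of Section \ref{diffeological:vector:bundle:sect}) simultaneously to $p$ and to $q$. Because both project to the same base $X_1\cup_f X_2$ and the images $i_1(X_1\setminus Y)$ and $i_2(X_2)$ are disjoint and cover it, the factorizations of $p$ and $q$ can be chosen over a common neighborhood and in matching branches. In the branch where $p$ factors as $\Phi^{\cl}\circ j_2^{\cl}\circ p_2^{\cl}$ for a plot $p_2^{\cl}$ of $\cl(V_2,g_2)$, the value $c(p(u))(q(u'))$ is, by the definition of $c$, of the form $j_2^{E_2}\left(c_2(p_2^{\cl}(u))((j_2^{E_2})^{-1}(q(u')))\right)$; this is smooth because $c_2$ is smooth by hypothesis, $j_2^{E_2}$ is an induction, and $q$ correspondingly factors through $E_2$.

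The heart of the argument is the branch where $p$ factors through a plot $p_1^{\cl}$ of $\cl(V_1,g_1)$ with the branching rule, its image lying over $X_1\setminus Y$ on part of the neighborhood and over the seam $i_2(f(Y))$ on the rest. Over $X_1\setminus Y$ the evaluation is $j_1^{E_1}\left(c_1(p_1^{\cl}(u))(\cdots)\right)$, visibly smooth; over the seam the base point lies in $i_2(X_2)$, so the value is prescribed by the $c_2$-branch, and here I would invoke the compatibility identity $\tilde{f}'(c_1(v)(e_1))=c_2(\tilde{F}^{\cl}(v))(\tilde{f}'(e_1))$ to rewrite it as $j_2^{E_2}\left(\tilde{f}'(c_1(p_1^{\cl}(u))(\cdots))\right)$. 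Both sub-branches are then seen to be the composite of the single $E_1$-valued, manifestly smooth assignment $u\mapsto c_1(p_1^{\cl}(u))(\cdots)$ with the natural quotient projection $E_1\sqcup E_2\to E_1\cup_{\tilde{f}'}E_2$ defining the gluing; since that projection is smooth and the gluing diffeology on $E_1\cup_{\tilde{f}'}E_2$ is its pushforward, the evaluation is smooth over the whole neighborhood.

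The main obstacle I anticipate is precisely this seam: one must verify that the piecewise-defined evaluation is genuinely smooth across the transition, not merely continuous, and this hinges entirely on the compatibility of $c_1$ and $c_2$ together with the pushforward nature of the gluing diffeology (so that a map is smooth as soon as its composite with the quotient projection is). A secondary technical point is the consistent choice of matching local factorizations for $p$ and $q$; since $\tilde{f}'$ need not be injective this requires some care, but it is handled by working over the matching-base locus and using that the relevant inductions identify the fibres over $Y$ via $\tilde{f}'$. Once these two points are settled, the smoothness of $c$ follows.
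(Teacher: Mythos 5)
Your argument is correct in its essentials, and it is the same scheme the paper relies on: the manuscript states this theorem without proof, deferring to \cite{clifford}, but it proves the directly analogous statement --- smoothness of the induced action of $\cl(\Lambda^1(X_1\cup_f X_2),g^{\Lambda})$ on $E_1\cup_{\tilde{f}'}E_2$, Theorem~\ref{clifford:action:of:lambda:glued:thm} --- by precisely your method: evaluate the action against a pair of plots on the matching-base locus, factor both plots locally through the two sides of the gluing, and at the seam use the compatibility of $c_1$ and $c_2$ together with the map $\tilde{j}_1:E_1\to E_1\cup_{\tilde{f}'}E_2$ (inclusion into $E_1\sqcup E_2$ followed by the quotient projection) to exhibit the two-branch evaluation as $\tilde{j}_1$ composed with the evaluation of $c_1$, hence smooth. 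Your adaptation is in fact the right one for the present setting: in Section 12 the map $\tilde{f}'$ is assumed to be a diffeomorphism, so there the paper may write $\tilde{j}_1^{-1}\circ p$, whereas here $\tilde{f}'$ is only smooth and fibrewise linear, and lifting $q$ via the characterization of plots of a glued space, as you do, avoids inverting anything.

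One point needs repair. The claim that the factorizations of $p$ and $q$ \emph{can be chosen in matching branches} is not correct as stated: the branch type is dictated by each plot separately, and mixed combinations do occur --- $p$ may lift to $\cl(V_1,g_1)$ with the branching rule while $q$ factors through $E_2$, or conversely. These cases are harmless: their matching-base locus lies entirely over $i_2(f(Y))$, so there is no seam to cross, and smoothness there follows from the smoothness of $c_2$, of $\tilde{F}^{\cl}$, and of $\tilde{f}'$ on the relevant subsets (equipped with their subset diffeologies), with no appeal to compatibility. But they must be enumerated rather than assumed away; the compatibility identity genuinely enters only in the case where both plots lift with the branching rule, exactly as in the heart of your argument.
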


\subsubsection{Unitary Clifford modules}

The definition of a unitary Clifford module in the diffeological context is just a \emph{verbatim} extension of the usual one. Let $\pi:V\to X$ be a diffeological vector pseudo-bundle endowed with a
pseudo-metric $g$, let $\pi^{\cl}:\cl(V,g)\to X$ be the corresponding pseudo-bundle of Clifford algebras, and let $\chi:E\to X$ be a finite-dimensional diffeological vector pseudo-bundle endowed with
a pseudo-metric $g_E$ and such that each fibre $\chi^{-1}(x)$ carries the standard diffeology.

\begin{defn}
The pseudo-bundle $\chi:E\to X$ is said to be a \textbf{unitary Clifford module} over $V$ if there exists a smooth pseudo-bundle map $c:\cl(V,g)\to\mathcal{L}(E,E)$ such that its restriction onto each
fibre is an algebra homomorphism and for all $x\in X$, for all unitary $v_1,v_2\in\pi^{-1}(x)$, and for all $w_1,w_2\in\chi^{-1}(x)$, we have
$$g_E(x)(c(v_1)(w_1),c(v_2)(w_2))=g_E(x)(w_1,w_2).$$
\end{defn}

We note that this definition makes sense when applied to dual pseudo-bundles (whose fibres are all standard, and so pseudo-metrics give scalar products on them), not so much for arbitrary
pseudo-bundle.

\subsection{Gluing of pseudo-bundles of exterior algebras: contravariant version}

In this section we consider the contravariant\footnote{To justify the distinction from the covariant case, recall that for diffeological vector spaces there almost never is an isomorphism
between the space itself and its dual.} version of the exterior algebra (first of a diffeological vector space, then of a diffeological vector pseudo-bundle), by which we mean the following.
Let $V$ be a finite-dimensional diffeological vector space; for each tensor degree of $V$ consider the usual alternating operator
$$\mbox{Alt}:\underbrace{V\otimes\ldots\otimes V}_n\to\underbrace{V\otimes\ldots\otimes V}_n,\mbox{ where }\mbox{Alt}(v_1\otimes\ldots\otimes)=
\frac{1}{n!}\sum_{\sigma}(-1)^{\mbox{sgn}(\sigma)}v_{\sigma(1)}\otimes\ldots\otimes v_{\sigma(n)}$$ is extended by linearity. The contravariant $n$-th exterior algebra of $V$ is the image
$$\bigwedge_n(V)=\mbox{Alt}(\underbrace{V\otimes\ldots\otimes V}_n)\mbox{ with }\bigwedge_0(V):=\matR;$$ the whole exterior algebra $\bigwedge_*(V)$ is the direct sum of all
such terms,
$$\bigwedge_*V=\bigoplus_{n\geqslant 0}\bigwedge_n(V).$$ We obtain a pseudo-bundle of exterior algebras by employing the same operations in the pseudo-bundle version, and defining
the alternating operator fibrewise.

What we consider in this section is the behavior of such objects under gluing. Specifically, having assumed that we are, as usual, given two pseudo-bundles $\pi_1:V_1\to X_1$ and $\pi_2:V_2\to X_2$,
and a gluing of the former to the latter along the maps $(\tilde{f},f)$, we extend this gluing to one of the pseudo-bundle $\bigwedge_*(V_1)$ to the pseudo-bundle $\bigwedge_*(V_2)$, along the
natural induced map $\tilde{f}_*^{\bigwedge}$ (and the same map $f$ on the bases); and then show that the result is diffeomorphic to $\bigwedge(V_1\cup_{\tilde{f}}V_2)$.

\subsubsection{Gluing of $\bigwedge_*(V_1)$ and $\bigwedge_*(V_2)$ along the induced map $\tilde{f}_*^{\bigwedge}$}

If $V_1$ and $V_2$ are just two diffeological vector spaces, and $\tilde{f}_*:V_1\to V_2$ is any smooth linear map between them, then it extends, by linearity and tensor product multiplicativity, to a
smooth linear map between the respective tensor degrees of these spaces; thus, to the smooth linear map
$(\tilde{f}_*)^{\otimes n}:\underbrace{V_1\otimes\ldots\otimes V_1}_n\to\underbrace{V_2\otimes\ldots\otimes V_2}_n$. Such extension commutes with the corresponding alternating operators,
that is,
$$\mbox{Alt}_2^{(n)}\circ(\tilde{f}_*)^{\otimes n}=(\tilde{f}_*)^{\otimes n}\circ\mbox{Alt}_1^{(n)},$$ where $\mbox{Alt}_i^{(n)}$ is the $n$-th degree alternating operator on the space $V_i$. This yields
a smooth linear map $\tilde{f}_*^{\bigwedge}:\bigwedge_*(V_1)\to\bigwedge_*(V_2)$.

Let now $\pi_1:V_1\to X_1$ and $\pi_2:V_2\to X_2$ be two vector pseudo-bundles, and let $(\tilde{f}_*,f)$ be a pair of maps defining a gluing between them; let $Y\subset X_1$ be the domain of
definition of $f$. Then the corresponding collection of maps
$$\bigcup_{y\in Y}\left(\tilde{f}_*|_{\pi_1^{-1}(y)}\right)^{\bigwedge},$$ yields the smooth and fibrewise linear map $\tilde{f}_*^{\bigwedge}$ between the corresponding subsets of $\bigwedge_*(V_1)$
and $\bigwedge_*(V_2)$. Thus, it defines a gluing between the corresponding pseudo-bundles of contravariant exterior algebras $\pi_1^{\bigwedge_*}:\bigwedge_*(V_1)\to X_1$ and
$\pi_2^{\bigwedge_*}:\bigwedge_*(V_2)\to X_2$, with the gluing on the base spaces given by the same map $f$.\footnote{To go into a bit more detail, for each of $T(V_1)$, $T(V_2)$ there is the
fibrewise-defined alternating operator $\mbox{Alt}_i$, for $i=1,2$; it is a map $T(V_i)\to T(V_i)$ that covers the identity on the base space $X_i$ and that is defined, on each fibre, as the usual
alternating operator associated to the fibre. The image $\bigwedge_*(V_i)$ of each $\mbox{Alt}_i$ is a sub-bundle of $T(V_i)$, consisting of fibres of form $\bigwedge_*(\pi_i^{-1}(x))$; it has both
the sub-bundle diffeology (the usual subset diffeology) and the pushforward diffeology (relative to $\mbox{Alt}_i$), with the two diffeologies easily shown to coincide. Between these two
pseudo-bundles, $\bigwedge_*(V_1)$ and $\bigwedge_*(V_2)$, there is the map $\tilde{f}_*^{\bigwedge}$ just-mentioned; the result of gluing of
$\bigwedge_*(V_1)$ to $\bigwedge_*(V_2)$ along it is the pseudo-bundle $\bigwedge_*(V_1)\cup_{\tilde{f}_*^{\bigwedge}}\bigwedge_*(V_2)$.}

\subsubsection{The pseudo-bundles $\bigwedge_*(V_1\cup_{\tilde{f}_*}V_2)$ and $\bigwedge_*(V_1)\cup_{\tilde{f}_*^{\bigwedge}}\bigwedge_*(V_2)$}

We have just seen that a given gluing map $\tilde{f}_*$ extends to a map $\tilde{f}_*^{\bigwedge}$ that defines a gluing between $\bigwedge_*(V_1)$ and $\bigwedge_*(V_2)$; on the hand, the
same map $\tilde{f}_*$ can be used to first perform the gluing of $V_1$ to $V_2$, and then construct the contravariant exterior algebra of the resulting pseudo-bundle $V_1\cup_{\tilde{f}_*}V_2$.

Indeed, as any diffeological vector pseudo-bundle, $V_1\cup_{\tilde{f}_*}V_2$ has its own alternating operator $\mbox{Alt}$, whose image is the pseudo-bundle $\bigwedge_*(V_1\cup_{\tilde{f}_*}V_2)$.
Since each fibre of the pseudo-bundle of tensor algebras $T(V_1\cup_{\tilde{f}_*}V_2)$ coincides with either a fibre of $T(V_1)$ or one of $T(V_2)$, and fibrewise each alternating operator is the
usual one of a (diffeological) vector space, it makes sense to wonder whether the pseudo-bundles $\bigwedge_*(V_1\cup_{\tilde{f}_*}V_2)$ and
$\bigwedge_*(V_1)\cup_{\tilde{f}_*^{\bigwedge}}\bigwedge_*(V_2)$ are diffeomorphic in a canonical way.

\paragraph{The alternating operators $\mbox{Alt}$, $\mbox{Alt}_1$, and $\mbox{Alt}_2$} The fact that there indeed is such a diffeomorphism, follows essentially from the commutativity of gluing
with the operations of tensor product and the direct sum, as well as the definition of the operator $\mbox{Alt}$, and more precisely, the fact that its restriction to any given fibre coincides with either
$\mbox{Alt}_1$ or $\mbox{Alt}_2$. Indeed, it is a matter of a technicality to observe that there is the following relation between the $n$-th components of these three operators:
$$\mbox{Alt}^{(n)}=\Phi_{\otimes,\cup}^{(\otimes n)}\circ\left(\mbox{Alt}_1^{(n)}\cup_{\left(\tilde{f}^{\otimes n},\tilde{f}^{\otimes}\right)}\mbox{Alt}_2^{(n)}\right)\circ\Phi_{\cup,\otimes}^{(\otimes n)},$$
where $\Phi_{\cup,\otimes}^{(\otimes n)}$ and $\Phi_{\otimes,\cup}^{(\otimes n)}$ are the two mutually inverse commutativity diffeomorphisms for the operations of gluing and tensor
product.\footnote{Informally we could just say that $\mbox{Alt}$ is obtained by, or that it splits as, gluing together $\mbox{Alt}_1$ and $\mbox{Alt}_2$.} More precisely,
$$\Phi_{\cup,\otimes}^{(\otimes n)}:(V_1\cup_{\tilde{f}_*}V_2)^{\otimes n}\to(V_1)^{\otimes n}\cup_{(\tilde{f}_*)^{\otimes n}}(V_2)^{\otimes n}$$ and $\Phi_{\otimes,\cup}^{(\otimes n)}$ is its inverse.
Do note that the notation $\Phi_{\cup,\otimes}^{(\otimes n)}$ might be misleading, since we are \emph{not} referring to the $n$-th tensor degree of the diffeomorphism $\Phi_{\cup,\otimes}$, but rather
a new diffeomorphism that is defined from the beginning on the $n$-th tensor degree of $V_1\cup_{\tilde{f}}V_2$ (we avoid giving further details here, but see \cite{exterior-algebras-pseudobundles}).
For the moment, we just rewrite the same expression as
$$\Phi_{\cup,\otimes}^{(\otimes n)}\circ\mbox{Alt}^{(n)}=\left(\mbox{Alt}_1^{(n)}\cup_{\left(\tilde{f}^{\otimes n},\tilde{f}^{\otimes}\right)}\mbox{Alt}_2^{(n)}\right)\circ\Phi_{\cup,\otimes}^{(\otimes n)}.$$

\paragraph{The diffeomorphism $\Phi^{\bigwedge_*}:\bigwedge_*(V_1\cup_{\tilde{f}_*}V_2)\to\bigwedge_*(V_1)\cup_{\tilde{f}_*^{\bigwedge}}\bigwedge_*(V_2)$} Having essentially defined the map
$\Phi^{\bigwedge_*}$ on each tensor degree, we now set
$$\Phi^{\bigwedge_*}=\bigoplus_n\Phi_{\cup,\otimes}^{(\otimes n)}\mid_{\bigwedge_*(V_1\cup_{\tilde{f}_*}V_2)}.$$ This expression is actually abbreviated, since the commutativity of gluing with the
direct sum is only implicit therein. Indeed, $\Phi^{\bigwedge_*}$ is defined on the subspace of antisymmetric tensors in $\bigoplus_n\left(V_1\cup_{\tilde{f}_*}V_2\right)^{\otimes n}$, that
is, on $\bigwedge_*(V_1\cup_{\tilde{f}_*}V_2)$ (as wanted), but it takes values in the space $\bigoplus_n\left(V_1^{\otimes n}\cup_{\tilde{f}_*^{\otimes n}}V_2^{\otimes n}\right)$, whereas we
need it to take values in $\left(\oplus_nV_1^{\otimes n}\right)\cup_{\oplus_n\tilde{f}_*^{\otimes  n}}\left(\oplus_nV_2^{\otimes n}\right)$. For this to happen, $\Phi^{\bigwedge_*}$ must be
post-composed with the appropriate diffeomorphism between the latter two pseudo-bundles, specifically with the diffeomorphism
$$\Phi_{\cup,\oplus}:\bigoplus_n\left(V_1^{\otimes n}\cup_{\tilde{f}_*^{\otimes n}}V_2^{\otimes n}\right)\to\left(\oplus_nV_1^{\otimes n}\right)\cup_{\oplus_n\tilde{f}_*^{\otimes
n}}\left(\oplus_nV_2^{\otimes n}\right).$$ The full form of $\Phi^{\bigwedge_*}$ therefore is
$$\Phi^{\bigwedge_*}=\Phi_{\cup,\oplus}\circ\left(\bigoplus_n\Phi_{\cup,\otimes}^{(\otimes n)}\mid_{\bigwedge_*(V_1\cup_{\tilde{f}_*}V_2)}\right);$$ its inverse is given by
$$\left(\Phi^{\bigwedge_*}\right)^{-1}=\bigoplus_n\Phi_{\otimes,\cup}^{(\otimes n)}\mid_{\bigwedge_*(V_1)\cup_{\tilde{f}_*^{\bigwedge}}\bigwedge_*(V_2)}$$ (in the abbreviated form) and by
$$\left(\Phi^{\bigwedge_*}\right)^{-1}=\left(\bigoplus_n\Phi_{\otimes,\cup}^{(\otimes n)}\right)\circ\left(\Phi_{\oplus,\cup}\mid_{\bigwedge_*(V_1)\cup_{\tilde{f}_*^{\bigwedge}}\bigwedge_*(V_2)}\right),$$
where $\Phi_{\oplus,\cup}$ is the inverse of $\Phi_{\cup,\oplus}$.

\subsection{Gluing pseudo-bundles of covariant exterior algebras}

We now turn to the (more usual) covariant version of the exterior algebra. This case is somewhat trickier than the contravariant one, due to a complicated behavior of the gluing operation with respect
to taking dual pseudo-bundles.

\subsubsection{The covariant exterior algebra}

Let $V$ be a diffeological vector space; the covariant alternating operator $\mbox{Alt}$ is defined just as the contravariant one, but it acts on each space $\underbrace{V^*\otimes\ldots\otimes V^*}_n$;
by definition, $\bigwedge^n(V)$ is $\mbox{Alt}(\underbrace{V^*\otimes\ldots\otimes V^*}_n)$, and the direct sum of all $\bigwedge^n(V)$ is the exterior algebra $\bigwedge(V)$, with respect to the exterior product. It is smooth for the pushforward diffeology by $\mbox{Alt}$, so $\bigwedge(V)$ is a diffeological algebra.

If $V_1$ and $V_2$ are two diffeological vector spaces and $\tilde{f}:V_1\to V_2$ is a smooth linear map, there is a natural  induced map $\tilde{f}^{\bigwedge}:\bigwedge(V_2)\to\bigwedge(V_1)$,
which is smooth and linear. On each space $\underbrace{V_2^*\otimes\ldots\otimes V_2^*}_n$ it acts as $(\tilde{f}^*)^{\otimes n}$. This commutes with the alternating operator, in the sense that
$\mbox{Alt}_1\circ(\tilde{f}^*)^{\otimes n}=(\tilde{f}^*)^{\otimes n}\circ\mbox{Alt}_2$, where $\mbox{Alt}_1$ is the alternating operator for the space $V_1$, and $\mbox{Alt}_2$ is one for the space $V_2$.
Hence the direct sum of all maps of form $(\tilde{f}^*)^{\otimes n}$ is a well-defined map between $\bigwedge(V_2)$ and $\bigwedge(V_1)$.

Let now $\pi:V\to X$ be a finite-dimensional diffeological vector  pseudo-bundle. The collection $\mbox{Alt}_V$ of the covariant alternating operators associated to each fibre yields a pseudo-bundle map
of $T(V^*)$ into itself, whose image is, by definition, the \textbf{pseudo-bundle of covariant exterior algebras}, which we denote by $\bigwedge(V)$, with the corresponding pseudo-bundle projection denote
by $\pi^{\bigwedge}:\bigwedge(V)\to X$. (Obviously, the construction of $\mbox{Alt}_V$ is that the contravariant case, it is just applied to the dual pseudo-bundle $\pi^*:V^*\to X$). Also in this case,
$\bigwedge(V)$ carries \emph{a priori} two natural diffeologies: one as a subset of $T(V^*)$, the other obtained by pushing forward the diffeology of $T(V^*)$ by the alternating operator $\mbox{Alt}_V$.
Once again, these two diffeologies coincide.

\subsubsection{The induced gluing of $\bigwedge(V_2)$ to $\bigwedge(V_1)$}

Let now $\pi_1:V_1\to X_1$ and $\pi_2:V_2\to X_2$ be two finite-dimensional diffeological vector pseudo-bundles, and let $(\tilde{f},f)$ be a gluing between them such that $f$ is smoothly
invertible; let $Y\subset X_1$ be the domain of definition of $f$, and let $Y'$ stand for $f(Y)$. There is then the natural induced map
$$\tilde{f}^{\bigwedge}:\bigwedge(V_2)\supset\bigwedge(\pi_2^{-1}(Y'))\to\bigwedge(\pi_1^{-1}(Y))\subset\bigwedge(V_1),$$ which is defined on each fibre by taking the already-defined map
$\bigwedge(\pi_2^{-1}(y'))\to\bigwedge(\pi_1^{-1}(f^{-1}(y')))$ between the exterior algebras of diffeological vector spaces. This map is smooth for the subset diffeologies on $\bigwedge(\pi_2^{-1}(Y'))$
and $\bigwedge(\pi_1^{-1}(Y))$, and, together with the map $f^{-1}$, it defines a gluing of $\bigwedge(V_2)$ to $\bigwedge(V_1)$, whose result is the pseudo-bundle
$$\pi_2^{\bigwedge}\cup_{(\tilde{f}^{\bigwedge},f^{-1})}\pi_1^{\bigwedge}:\bigwedge(V_2)\cup_{\tilde{f}^{\bigwedge}}\bigwedge(V_1)\to X_2\cup_{f^{-1}}X_1.$$

\subsubsection{Comparison of $\bigwedge(V_1\cup_{\tilde{f}}V_2)$ with $\bigwedge(V_2)\cup_{\tilde{f}^{\bigwedge}}\bigwedge(V_1)$}

There is another possibility for the interplay between the operation of gluing and one of building the pseudo-bundle of exterior algebras, and specifically, that of first gluing the pseudo-bundle
$V_1$ to $V_2$ along $(\tilde{f},f)$, thus obtaining
$$\pi_1\cup_{(\tilde{f},f)}\pi_2:V_1\cup_{\tilde{f}}V_2\to X_1\cup_f X_2,$$ and then considering the corresponding pseudo-bundle
$$(\pi_1\cup_{(\tilde{f},f)}\pi_2)^{\Lambda}:\Lambda(V_1\cup_{\tilde{f}}V_2)\to X_1\cup_f X_2.$$ It is quite natural then to ask under which assumptions the two pseudo-bundles thus obtained
($\bigwedge(V_1\cup_{\tilde{f}}V_2)$ and $\bigwedge(V_2)\cup_{\tilde{f}^{\bigwedge}}\bigwedge(V_1)$) are diffeomorphic, and it is also quite clear that the necessary conditions should include
the gluing-dual commutativity; this turns out to be a sufficient condition as well.

Let $\Phi_{\cup,*}:(V_1\cup_{\tilde{f}}V_2)^*\to V_2^*\cup_{\tilde{f}^*}V_1^*$ be the gluing-dual commutativity diffeomorphism. It can be extended to a diffeomorphism
$$\Phi^{\bigwedge}:\bigwedge(V_1\cup_{\tilde{f}}V_2)\to\bigwedge(V_2)\cup_{\tilde{f}^{\bigwedge}}\bigwedge(V_1)$$ that covers the switch map, is linear on the fibres, and is smooth, in the following
way. If we omit the pre- and post-compositions with the gluing-direct sum and the gluing-tensor product commutativity diffeomorphisms, $\Phi^{\bigwedge}$ is simply
$$\Phi^{\bigwedge}=\left(\bigoplus_n\Phi_{\cup,*}^{\otimes n}\right)\mid_{\bigwedge(V_1\cup_{\tilde{f}}V_2)}.$$ Note that that this time, by $\Phi_{\cup,*}^{\otimes n}$ we do mean the $n$-th tensor
degree of $\Phi_{\cup,*}$.

\paragraph{Adding $\Phi_{\cup,\otimes}$} We need the $n$-th tensor degree component of $\Phi^{\bigwedge}$ to be a map of form
$$\left((V_1\cup_{\tilde{f}}V_2)^*\right)^{\otimes n}\to(V_2^*)^{\otimes n}\cup_{(\tilde{f}^*)^{\otimes n}}(V_1^*)^{\otimes n},$$ while each $\Phi_{\cup,*}^{\otimes n}$ has form
$$\Phi_{\cup,*}^{\otimes n}:\left((V_1\cup_{\tilde{f}}V_2)^*\right)^{\otimes n}\to\left(V_2^*\cup_{\tilde{f}^*}V_1^*\right)^{\otimes n}.$$ The diffeomorphism that we need to add to this  component is
$$\Phi_{\cup,\otimes}^{(\otimes n)}:\left(V_2^*\cup_{\tilde{f}^*}V_1^*\right)^{\otimes n}\to(V_2^*)^{\otimes n}\cup_{(\tilde{f}^*)^{\otimes n}}(V_1^*)^{\otimes n},$$ the already-mentioned extension of
the gluing-tensor product commutativity diffeomorphism to the case of $n$ factors.\footnote{Recall that it is \emph{not} the $n$-th tensor degree.} Thus, the full form of the $n$-th degree component of
$\Phi^{\bigwedge}$ is
$$\left(\Phi_{\cup,\otimes}^{\otimes n}\right)\circ\left(\Phi_{\cup,*}^{\otimes n}\right)\mid_{\bigwedge(V_1\cup_{\tilde{f}}V_2)}.$$

\paragraph{Adding $\Phi_{\cup,\oplus}$} It now suffices to add the gluing-direct sum commutativity diffeomorphism, that is, the map
$$\Phi_{\cup,\oplus}:\bigoplus_n\left((V_2^*)^{\otimes n}\cup_{(\tilde{f}^*)^{\otimes n}}(V_1^*)^{\otimes n}\right)\to\left(\bigoplus_n(V_2^*)^{\otimes n}\right)\cup_{\bigoplus_n(\tilde{f}^*)^{\otimes
n}}\left(\sum_n(V_1^*)^{\otimes n}\right).$$ Thus, the entire diffeomorphism $\Phi^{\bigwedge}$ is the following map:
$$\Phi^{\bigwedge}=\Phi_{\cup,\oplus}\circ\bigoplus_n\left(\left(\Phi_{\cup,\otimes}^{\otimes n}\right)\circ\left(\Phi_{\cup,*}^{\otimes n}\right)\mid_{\bigwedge(V_1\cup_{\tilde{f}}V_2)}\right).$$
Its inverse is obtained by taking the inverse of $\Phi_{\cup,\oplus}$ and then inverting (separately) each component under the sum.

\paragraph{The range of $\Phi^{\bigwedge}$} Finally, we mention why the range of $\Phi^{\bigwedge}$ is indeed the space $\bigwedge(V_2)\cup_{\tilde{f}^{\Lambda}}\bigwedge(V_1)$. This follows
from the properties of alternating operators
$$\mbox{Alt}_{\cup,*}:\left((V_1\cup_{\tilde{f}}V_2)^*\right)^{\otimes n}\to\left((V_1\cup_{\tilde{f}}V_2)^*\right)^{\otimes n},\,\,\,\mbox{Alt}_2:(V_2^*)^{\otimes n}\to(V_2^*)^{\otimes n},\,\,\,
\mbox{Alt}_1:(V_1^*)^{\otimes n}\to(V_1^*)^{\otimes n};$$ in particular, up to adding the appropriate commutativity diffeomorphisms, we have
$$\Phi_{\cup,*}^{\otimes n}\circ\mbox{Alt}_{\cup,*}=\left(\mbox{Alt}_2\cup_{\left((\tilde{f}^*)^{\otimes n},(\tilde{f}^*)^{\otimes n}\right)}\mbox{Alt}_1\right)\circ\Phi_{\cup,*}^{\otimes n},$$ where
$$\mbox{Alt}_2\cup_{\left((\tilde{f}^*)^{\otimes n},(\tilde{f}^*)^{\otimes n}\right)}\mbox{Alt}_1$$ is the result of the gluing of maps $\mbox{Alt}_2$ and $\mbox{Alt}_1$ along the pair
$\left((\tilde{f}^*)^{\otimes n},(\tilde{f}^*)^{\otimes n}\right)$. Since it has range $\bigwedge(V_2)\cup_{\tilde{f}^{\bigwedge}}\bigwedge(V_1)$, so does $\mbox{Alt}_{\cup,*}$.

\subsection{The Clifford actions on $\bigwedge_*(V_1\cup_{\tilde{f}}V_2)$ and on $\bigwedge(V_1\cup_{\tilde{f}}V_2)$}

Let $\pi_1:V_1\to X_1$ and $\pi_2:V_2\to X_2$ be two diffeological vector pseudo-bundles, and let $(\tilde{f},f)$ be a gluing between them. Suppose that $V_1$ and $V_2$ are equipped with
pseudo-metrics $g_1$ and $g_2$ respectively, compatible with this gluing; let $\tilde{g}$ stand for the pseudo-metric obtained from the gluing of $g_1$ and $g_2$. In this case each fibre of
$\bigwedge_*(V_1\cup_{\tilde{f}}V_2)$ at a point $x\in X_1\cup_f X_2$ is naturally a Clifford module over $\cl((\pi_1\cup_{(\tilde{f},f)}\pi_2)^{-1}(x),\tilde{g}(x))$, and the same is true for fibres
$\bigwedge_*(V_i)$ at $x\in X_i$ and $\cl(\pi_i^{-1}(x),g_i(x))$ for $i=1,2$. Thus, we have three pseudo-bundles of Clifford modules; indeed, for all the three fibres usual action, which is smooth
on each fibre, turns out to be smooth across the fibres (this is something that is true in its maximal generality). On the other hand, we have seen that the pseudo-bundle
$\bigwedge_*(V_1\cup_{\tilde{f}}V_2)$ is obtained by gluing $\bigwedge_*(V_1)$ to $\bigwedge_*(V_2)$, so next, we consider the interaction of the Clifford action with this gluing, showing that
the natural induced action on $\bigwedge_*(V_1\cup_{\tilde{f}}V_2)$ coincides with the standard one.

All the same is true also in the covariant case, although, as we have already noted (in the case of the dual pseudo-metrics especially), it requires more intricate assumptions.

\subsubsection{The Clifford action of $\cl(V,g)$ on $\bigwedge_*(V)$ is smooth}

Let $\pi:V\to X$ be a finite-dimensional diffeological vector pseudo-bundle that admits a pseudo-metric $g$, let $\pi^{\cl}:\cl(V,g)\to X$ be the corresponding pseudo-bundle of Clifford algebras,
and let $\pi^{\bigwedge_*}:\bigwedge_*(V)\to X$ be the corresponding pseudo-bundle of contravariant exterior algebras. The standard Clifford action $c$ of $\cl(V,g)$ on $\bigwedge_*(V)$
(see, for instance, \cite{heat-kernel}, Ch. 3.1) is defined by setting, for all $x\in X$ and for all $v\in\pi^{-1}(x)$, that $c(v)=\varepsilon(v)-i(v)\in\mbox{End}(\pi^{-1}(x))$, where
$$\varepsilon(v)(v_1\wedge\ldots\wedge v_k)=v\wedge v_1\wedge\ldots\wedge v_k\mbox{ and }i(v)(v_1\wedge\ldots\wedge v_k)=\sum_{j=1}^k(-1)^{j+1}v_1\wedge\ldots\wedge g(x)(v,v_j)\wedge
\ldots\wedge v_k;$$ this extends to the rest of the Clifford algebra by linearity and substituting the tensor product with the composition. Considered on each single fibre, that is, on a
finite-dimensional diffeological vector space, this fibrewise action is smooth (see \cite{clifford}). This quite easily extends to the case of locally trivial pseudo-bundles:

\begin{prop}
Let $\pi:V\to X$ be a locally trivial finite-dimensional diffeological vector pseudo-bundle that admits a pseudo-metric $g$. Then the fibrewise Clifford action $c$ is smooth as a map
$\cl(V,g)\to\mathcal{L}(\bigwedge_*(V),\bigwedge_*(V))$.
\end{prop}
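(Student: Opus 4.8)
The plan is to reduce the global smoothness statement to a local, and ultimately a fibrewise, one, exploiting the local triviality hypothesis. First I would fix a point $x_0\in X$ and, using local triviality, choose a trivializing neighborhood $U\ni x_0$ so that $\pi^{-1}(U)\cong U\times F$, where $F=\pi^{-1}(x_0)$ is a fixed finite-dimensional diffeological vector space. Over such a $U$, the pseudo-bundle of Clifford algebras restricts to $U\times\cl(F,g_{x})$ and the exterior algebra pseudo-bundle to $U\times\bigwedge_*(F)$, with the pseudo-metric $g$ becoming a smooth family $x\mapsto g(x)$ of bilinear forms on $F$ (smoothness being exactly the statement that $g$ is a smooth section of $V^*\otimes V^*$). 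Since smoothness of a map is a local condition (plots are checked locally, by the sheaf condition in the definition of a diffeological space), it suffices to prove that $c$ is smooth when restricted over each such $U$.

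Next I would make the formula for $c$ explicit in these trivializing coordinates and identify exactly where $x$-dependence enters. The wedge-multiplication operator $\varepsilon(v)$ depends only on $v\in F$ (not on $g$), so the map $(x,v)\mapsto\varepsilon(v)$ is manifestly smooth into $\mathcal{L}(\bigwedge_*(F),\bigwedge_*(F))$. The contraction operator $i(v)$, on the other hand, carries all the $x$-dependence through the factor $g(x)(v,v_j)$ appearing in its defining sum; the key point is that $i(v)(v_1\wedge\ldots\wedge v_k)$ is built by linear operations and a \emph{finite} number of applications of the smooth family $x\mapsto g(x)$. Concretely, I would check that the assignment $(x,v)\mapsto i(v)$, viewed as landing in $\mathcal{L}(\bigwedge_*(F),\bigwedge_*(F))$ with its functional diffeology, is smooth because it is obtained by composing the smooth map $g:U\to F^*\otimes F^*$ with fixed smooth multilinear contraction-and-wedge operations on the finite-dimensional space $\bigwedge_*(F)$. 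Then $c(v)=\varepsilon(v)-i(v)$ is smooth as a difference of two smooth maps, and its extension to all of $\cl(V,g)$ is a composition with the (fibrewise) algebra structure, which is itself smooth by the construction of the pseudo-bundle $\cl(V,g)$ recalled in Section~7.

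The main technical point to handle carefully — and the one I expect to be the real obstacle — is verifying smoothness \emph{across} fibres rather than just on each individual fibre (which is already granted by the cited fibrewise result of \cite{clifford}). This means one must take an arbitrary plot $p:W\to\cl(V,g)$ over $U$, write it as $p(w)=(p_X(w),a(w))$ with $p_X:W\to U$ a plot and $a(w)\in\cl(F,g_{p_X(w)})$, and confirm that $w\mapsto c(a(w))$ is a plot of $\mathcal{L}(\bigwedge_*(V),\bigwedge_*(V))$; by definition of the functional diffeology this amounts to showing that $(w,w')\mapsto c(a(w))(\beta(w'))$ is smooth for every plot $\beta$ of $\bigwedge_*(V)$ on the matching domain. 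Unwinding the formula, this composite is a finite sum of terms each involving $g(p_X(w))$ evaluated on components of $a(w)$ and $\beta(w')$, all of which are smooth by assumption and by the finite-dimensionality of $F$ (so that all the relevant tensor and exterior powers are again finite-dimensional standard considerations). Local triviality is what guarantees that $F$, and hence $\bigwedge_*(F)$ and $\cl(F,g_x)$, can be taken as a \emph{fixed} model space over all of $U$, so the finitely many operations are uniform in $x$; without it the varying fibre dimension would obstruct this uniform description. I would then invoke the coincidence of the subset and pushforward diffeologies on $\bigwedge_*(V)$ (noted in Section~7) to pass freely between the two natural descriptions of plots, completing the argument.
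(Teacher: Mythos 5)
Your overall strategy is the right one, and it is the route the paper itself intends: localize over a trivializing neighborhood, reduce smoothness to evaluating $c$ on pairs of plots via the functional diffeology on $\mathcal{L}(\bigwedge_*(V),\bigwedge_*(V))$, and trace all the $x$-dependence to the pseudo-metric, whose smoothness as a section of $V^*\otimes V^*$ is what makes the contraction terms smooth. Note that the paper gives no proof of this proposition at all --- it states that the fibrewise smoothness (cited to \cite{clifford}) ``quite easily extends'' to locally trivial pseudo-bundles and defers all proofs to \cite{clifford} and \cite{exterior-algebras-pseudobundles} --- so you are filling in an omitted argument rather than diverging from a given one.

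There is, however, one step you assert rather than prove, and it sits exactly where the difficulty is. You claim the evaluation $(w,w')\mapsto c(a(w))(\beta(w'))$ ``is a finite sum of terms each involving $g(p_X(w))$ evaluated on components of $a(w)$ and $\beta(w')$,'' justified ``by the finite-dimensionality of $F$.'' Finite-dimensionality of $F$ is not what produces this finite sum. The diffeology of $\cl(V,g)$ is by construction a quotient of that of the tensor-algebra pseudo-bundle $T(V)=\bigcup_{x}\bigoplus_r(\pi^{-1}(x))^{\otimes r}$, which is infinite-dimensional whatever $\dim F$ is; moreover, the fibre of $\cl(V,g)$ at $x$ is the quotient of $T(F)$ by an ideal that depends on $g(x)$, so $\cl(V,g)|_U$ does \emph{not} canonically trivialize as $U\times(\mbox{fixed space})$ --- only $T(V)|_U\cong U\times T(F)$ does. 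Hence, to speak of ``components of $a(w)$'' at all, you must first lift the plot $p$ locally through the quotient projection to a plot of $T(V)$ (legitimate, by definition of the quotient diffeology), and then invoke two structural facts about its plots: a plot of $V^{\otimes r}$ over $U$ locally decomposes as a finite sum $\sum_l h_l\cdot q_{l,1}\otimes\cdots\otimes q_{l,r}$ with $h_l$ ordinary smooth functions and $q_{l,j}$ plots of $V$ over a common plot of the base, and a plot of the direct sum involves, locally, only finitely many tensor degrees. The first fact comes from the definition of the tensor-product diffeology (a quotient of the free-product diffeology), not from $\dim F<\infty$; the second is sensitive to how the direct-sum diffeology on $T(V)$ is read --- if one takes literally the product diffeology restricted to the direct sum, there exist plots whose nonzero tensor-degree components accumulate near a point of the domain, the finite-sum claim fails, and the evaluation map can fail to be smooth even for the trivial bundle with standard fibre. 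Once the lift to $T(V)$ and the local finite decomposition are stated, the rest of your argument does close: each elementary term is built from $\varepsilon$ and $i$ applied to plots of $V$, the $\varepsilon$ terms are smooth because the wedge of plots is a plot, the $i$ terms are smooth because $g$ evaluated on pairs of plots over matching fibres is smooth, and $c$ of a decomposable tensor is a composition of degree-one operators, which is smooth for the functional diffeology. So: replace the appeal to finite-dimensionality by the explicit lifting-and-decomposition of plots; that is the actual content of the proposition.
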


\subsubsection{The case of $\bigwedge_*(V_1\cup_{\tilde{f}}V_2)=\bigwedge_*(V_1)\cup_{\tilde{f}_*^{\bigwedge}}\bigwedge_*(V_2)$}

As we already indicated, in the case where we are given two pseudo-bundles $\pi_1:V_1\to X_1$ and $\pi_2:V_2\to X_2$, and a gluing $(\tilde{f}_*,f)$ between them,\footnote{Suppose for
simplicity that both are diffeomorphisms of their respective domains with their images.} there are essentially two ways of seeing the same pseudo-bundle
$\bigwedge_*(V_1\cup_{\tilde{f}}V_2)=\bigwedge_*(V_1)\cup_{\tilde{f}_*^{\bigwedge}}\bigwedge_*(V_2)$. In particular, the left-hand side of this expression is by construction a pseudo-bundle of
exterior algebras and comes immediately with the standard Clifford action of $\cl(V_1\cup_{\tilde{f}}V_2,\tilde{g})$, while the right-hand side is the result of gluing of two pseudo-bundles of exterior
algebras,\footnote{Which at the moment we consider as its primary structure, while its identification via the diffeomorphism $\Phi^{\bigwedge_*}$ with the left-hand side pseudo-bundle is
secondary to that.} each of which comes with its own smooth Clifford action, $c_1:\cl(V_1,g_1)\to\mathcal{L}(\bigwedge_*(V_1),\bigwedge_*(V_1))$ and
$c_2:\cl(V_2,g_2)\to\mathcal{L}(\bigwedge_*(V_2),\bigwedge_*(V_2))$, respectively. It thus suffices to show that they are compatible, in order to obtain the induced Clifford action on
$\bigwedge_*(V_1\cup_{\tilde{f}}V_2)$; which can then be compared to the standard Clifford action $c:\cl(V_1\cup_{\tilde{f}}V_2,\tilde{g})\to\mathcal{L}(\bigwedge_*(V_1\cup_{\tilde{f}}V_2),
\bigwedge_*(V_1\cup_{\tilde{f}}V_2))$.

\paragraph{Compatibility of the two actions} It is of course sufficient to check the compatibility condition for elements $v\in V_i\subset\cl(V_i,g_i)$ and for individual exterior products
$v_1\wedge\ldots\wedge v_k$ in $\bigwedge_*(\pi_1^{-1}(y))$ and their images in $\bigwedge_*(\pi_2^{-1}(f(y)))$. For them, the condition is
$$\tilde{f}_*^{\bigwedge}(c_1(v)(v_1\wedge\ldots\wedge v_k))=c_2(\tilde{f}(v))(\tilde{f}_*^{\bigwedge}(v_1\wedge\ldots\wedge v_k)),$$ where $y\in Y$ is any element in the domain of gluing,
$v\in\pi_1^{-1}(y)\subset\cl(V_1,g_1)$, and $v_1\wedge\ldots\wedge v_k\in(\pi_1^{\bigwedge_*})^{-1}(y)$ (in particular, $v_1,\ldots,v_k\in\pi_1^{-1}(y)$).

Let us consider the left-hand part of the smoothness condition. We have, first of all,
$$c_1(v)(v_1\wedge\ldots\wedge v_k)=v\wedge v_1\wedge\ldots\wedge v_k-\sum_{j=1}^k(-1)^{j+1}v_1\wedge\ldots\wedge g_1(v,v_j)\wedge\ldots\wedge v_k,$$ therefore by the definition of
$\tilde{f}_*^{\bigwedge}$ and by the linearity of it we have
$$\tilde{f}_*^{\bigwedge}(c_1(v)(v_1\wedge\ldots\wedge v_k))=$$
$$=\tilde{f}(v)\wedge\tilde{f}(v_1)\wedge\ldots\wedge\tilde{f}(v_k)-
\sum_{j=1}^k(-1)^{j+1}g_1(y)(v,v_j)\tilde{f}(v_1)\wedge\ldots\wedge\tilde{f}(v_{j-1})\wedge\tilde{f}(v_{j+1})\wedge\ldots\wedge\tilde{f}(v_k).$$ Furthermore, and again by the definition of
$\tilde{f}_*^{\bigwedge}$, the right-hand side of the compatibility condition is
\begin{flushleft}
$c_2(\tilde{f}(v))(\tilde{f}_*^{\bigwedge}(v_1\wedge\ldots\wedge v_k))=c_2(\tilde{f}(v))(\tilde{f}(v_1)\wedge\ldots\wedge\tilde{f}(v_k))=$
\end{flushleft}
$$=\tilde{f}(v)\wedge\tilde{f}(v_1)\wedge\ldots\wedge\tilde{f}(v_k)-
\sum_{j=1}^k(-1)^{j+1}g_2(f(y))(\tilde{f}(v),\tilde{f}(v_j))\tilde{f}(v_1)\wedge\ldots\wedge\tilde{f}(v_{j-1})\wedge\tilde{f}(v_{j+1})\wedge\ldots\wedge\tilde{f}(v_k).$$ The two expressions clearly
coincide, since the compatibility of pseudo-metrics $g_1$ and $g_2$ means precisely that $g_1(y)(v,v_j)=g_2(f(y))(\tilde{f}(v),\tilde{f}(v_j))$ for all $y$, $v$, and $v_j$. We therefore conclude
that the standard Clifford actions $c_1$ and $c_2$ of $\cl(V_1,g_1)$ and $\cl(V_2,g_2)$ on $\bigwedge_*(V_1)$ and $\bigwedge_*(V_2)$, respectively, are compatible the gluing along
$(\tilde{f}_*^{\bigwedge},f)$.

\paragraph{The induced action on $\bigwedge_*(V_1\cup_{\tilde{f}}V_2)$} It remains to comment on the action of $\cl(V_1\cup_{\tilde{f}}V_2,\tilde{g})$ on $\bigwedge_*(V_1\cup_{\tilde{f}}V_2)$.
This is an instance of Theorem 6.3 (more precisely, of the induced action $c$ described immediately prior to its statement), and once again, it is sufficient to specify this action for
$v\in V_1\cup_{\tilde{f}}V_2$ and $v_1\wedge\ldots\wedge v_k\in\bigwedge_*(V_1\cup_{\tilde{f}}V_2)$ with $(\pi_1\cup_{\tilde{f}}\pi_2)(v)=\pi^{\bigwedge_*}(v_1\wedge\ldots\wedge v_k)$.
Furthermore, the formulae are the already-seen ones, and we just add the appropriate standard inclusions, obtaining
\begin{flushleft}
$c(v)(v_1\wedge\ldots\wedge v_k)=$
\end{flushleft}
\begin{flushright}
$=j_1^{\bigwedge_*(V_1\cup_{\tilde{f}}V_2)}\left(c_1((j_1^{V_1})^{-1}(v))((j_1^{V_1})^{-1}(v_1)\wedge\ldots\wedge(j_1^{V_1})^{-1}(v_k))\right)$,
if $(\pi_1\cup_{\tilde{f}}\pi_2)(v)\in\mbox{Range}(i_1^{X_1})$,
\end{flushright}
\begin{flushleft}
$c(v)(v_1\wedge\ldots\wedge v_k)=$
\end{flushleft}
\begin{flushright}
$=j_2^{\bigwedge_*(V_1\cup_{\tilde{f}}V_2)}\left(c_2((j_2^{V_2})^{-1}(v))((j_2^{V_2})^{-1}(v_1)\wedge\ldots\wedge(j_2^{V_2})^{-1}(v_k))\right)$,
if $(\pi_1\cup_{\tilde{f}}\pi_2)(v)\in\mbox{Range}(i_2^{X_2})$.
\end{flushright}

\subsubsection{The action of $\cl(V_2^*,g_2^*)\cup_{(\tilde{F}^*)^{\cl}}\cl(V_1^*,g_1^*)$ on $\bigwedge(V_2)\cup_{\tilde{f}^{\bigwedge}}\bigwedge(V_1)$: compatibility of the actions
on the two factors}

We now consider the covariant case, which is a more complicated one, mainly due to the existence of various ways of presenting the pseudo-bundles involved (that of Clifford algebras and that of
exterior algebras), although, due to the diffeomorphisms described in the previous sections, essentially there is only one of each. In this section we treat the two respective presentations given by
gluing.

\paragraph{The Clifford actions $c_i$ and $c_i^*$} Let now $c_2^*:\cl(V_2^*,g_2^*)\to\mathcal{L}(\bigwedge(V_2),\bigwedge(V_2))$ and
$c_1^*:\cl(V_1^*,g_1^*)\to\mathcal{L}(\bigwedge(V_1),\bigwedge(V_1))$ be the standard Clifford actions.\footnote{The $*$ in $c_i^*$ is just a choice of notation; obviously, we do not mean the map dual
to$c_i$.} Then for $v^*\in V_i^*$ and for $v^1\wedge\ldots\wedge v^k\in(\pi_i^{\bigwedge})^{-1}(\pi_i^*(v^*))$ we have
$$c_i^*(v^*)(v^1\wedge\ldots\wedge v^k)=v^*\wedge v^1\wedge\ldots\wedge v^k-\sum_{j=1}^k(-1)^{j+1}v^1\wedge\ldots\wedge g_i^*(\pi_i^*(v^*))(v^*,v_j^*)\wedge\ldots\wedge v^k.$$
This action turns out to be closely related to $c_i$ and the natural pairing map $\Phi_i:V_i\to V_i^*$, that acts by $v\mapsto g_i(\pi_i(v))(v,\cdot)$. The map $\Phi_i$ is always smooth and (in the
finite-dimensional case) surjective; since the pseudo-bundles we are considering are assumed to be locally trivial, it also has a smooth inverse onto the characteristic sub-bundle of $V_i$. Finally,
each $\Phi_i$ extends to a smooth fibrewise linear map $\bigwedge_*(V_i)\to\bigwedge_*(V_i^*)=\bigwedge(V_i)$, that is well-behaved with respect to the exterior product:
$$\Phi_i(v_1\wedge\ldots\wedge v_k)=\Phi_i(v_1)\wedge\ldots\wedge\Phi_i(v_k).$$ The natural implication is the following statement.

\begin{prop}
For $i=1,2$ and for all $v,v_1,\ldots,v_k\in V_i$ such that $\pi_i(v)=\pi_i(v_1)=\ldots=\pi_i(v_k)$ we have
$$\Phi_i(c_i(v)(v_1\wedge\ldots\wedge v_k))=c_i^*(\Phi_i(v))(\Phi_i(v_1)\wedge\ldots\wedge\Phi_i(v_k)).$$
\end{prop}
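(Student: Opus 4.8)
The plan is to verify the identity by a direct computation on each side, exploiting the fact that the natural pairing map $\Phi_i$ is a fibrewise algebra homomorphism with respect to the exterior product, together with the defining relation between $g_i$ and its dual $g_i^*$. First I would expand the left-hand side using the definition of the Clifford action on the contravariant exterior algebra: for $v,v_1,\ldots,v_k\in\pi_i^{-1}(x)$ (writing $x=\pi_i(v)$) we have
$$c_i(v)(v_1\wedge\ldots\wedge v_k)=v\wedge v_1\wedge\ldots\wedge v_k-\sum_{j=1}^k(-1)^{j+1}g_i(x)(v,v_j)\,v_1\wedge\ldots\wedge\widehat{v_j}\wedge\ldots\wedge v_k,$$
where $\widehat{v_j}$ indicates omission. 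Applying $\Phi_i$ and using its multiplicativity $\Phi_i(v_1\wedge\ldots\wedge v_k)=\Phi_i(v_1)\wedge\ldots\wedge\Phi_i(v_k)$ together with its linearity (so that the scalar coefficients $g_i(x)(v,v_j)$ pass outside), I would obtain
$$\Phi_i(c_i(v)(v_1\wedge\ldots\wedge v_k))=\Phi_i(v)\wedge\Phi_i(v_1)\wedge\ldots\wedge\Phi_i(v_k)-\sum_{j=1}^k(-1)^{j+1}g_i(x)(v,v_j)\,\Phi_i(v_1)\wedge\ldots\wedge\widehat{\Phi_i(v_j)}\wedge\ldots\wedge\Phi_i(v_k).$$

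Next I would expand the right-hand side using the definition of $c_i^*$ applied to the element $v^*:=\Phi_i(v)$ acting on $\Phi_i(v_1)\wedge\ldots\wedge\Phi_i(v_k)$, namely
$$c_i^*(\Phi_i(v))(\Phi_i(v_1)\wedge\ldots\wedge\Phi_i(v_k))=\Phi_i(v)\wedge\Phi_i(v_1)\wedge\ldots\wedge\Phi_i(v_k)-\sum_{j=1}^k(-1)^{j+1}g_i^*(x)(\Phi_i(v),\Phi_i(v_j))\,\Phi_i(v_1)\wedge\ldots\wedge\widehat{\Phi_i(v_j)}\wedge\ldots\wedge\Phi_i(v_k).$$
Comparing the two expansions term by term, the leading exterior products agree trivially, and the identity reduces to the single scalar equality $g_i(x)(v,v_j)=g_i^*(x)(\Phi_i(v),\Phi_i(v_j))$ for each $j$. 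This is precisely the defining property of the induced dual pseudo-metric recalled earlier in the excerpt: since $\Phi_i(v)=g_i(x)(v,\cdot)$ and $\Phi_i(v_j)=g_i(x)(v_j,\cdot)$, the definition $g_i^*(v_1^*,v_2^*):=g_i(v_1,v_2)$ (where $v_\ell^*=g_i(v_\ell,\cdot)$) gives exactly $g_i^*(x)(\Phi_i(v),\Phi_i(v_j))=g_i(x)(v,v_j)$.

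The computation is essentially routine, so there is no serious obstacle; the only point requiring a little care is the well-definedness of the scalar-matching step, i.e.\ ensuring that the coefficients produced by $c_i^*$ really are evaluated at the images $\Phi_i(v),\Phi_i(v_j)$ rather than at some other lifts. This is exactly the content of the remark that $g_i^*$ is well-defined independently of the choice of preimage under $\Phi_i$, so the argument goes through without needing $\Phi_i$ to be injective. The identity then holds for all $v,v_1,\ldots,v_k$ lying in a common fibre, which is the stated claim; extension to general elements of $\bigwedge_*(V_i)$ follows by bilinearity, though the statement as phrased already concerns only decomposable elements.
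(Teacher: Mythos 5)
Your proof is correct and follows exactly the route the paper intends: the paper presents this proposition as the "natural implication" of the multiplicativity $\Phi_i(v_1\wedge\ldots\wedge v_k)=\Phi_i(v_1)\wedge\ldots\wedge\Phi_i(v_k)$ (deferring details to the cited source), and your computation makes that precise by expanding $c_i=\varepsilon-i$ and $c_i^*=\varepsilon-i$ on both sides and matching the contraction coefficients via the defining relation $g_i^*(x)(\Phi_i(v),\Phi_i(v_j))=g_i(x)(v,v_j)$ of the dual pseudo-metric. Your remark on well-definedness of that scalar identity independently of the choice of preimages is the right point to flag, and it disposes of the non-injectivity of $\Phi_i$.
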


In other words, there is a natural commutativity between each $c_i$, $c_i^*$, and the corresponding $\Phi_i$ (which is actually two-way if restricted to the characteristic subspace).

\paragraph{The actions of $\cl(V_2^*,g_2^*)$ and $\cl(V_1^*,g_1^*)$ on $\bigwedge(V_2)$ and $\bigwedge(V_1)$ are compatible} The above allows to easily check that the standard actions
$c_2^*$ and $c_1^*$ are compatible with the respect to the gluing of $\cl(V_2^*,g_2^*)$ to $\cl(V_1^*,g_1^*)$, which is along $((\tilde{F}^*)^{\cl},f^{-1})$, and the gluing of $\bigwedge(V_2)$ to
$\bigwedge(V_1)$, that is along $(\tilde{f}^{\bigwedge},f^{-1})$. Specifically, this means the following.

\begin{prop}
For all $v^*,v^1,\ldots,v^k\in V_2^*$ such that $\pi_2^*(v^*)=\pi_2^*(v^1)=\ldots=\pi_2^*(v^k)\in f(Y)$ we have
$$\tilde{f}^{\bigwedge}(c_2^*(v^*)(v^1\wedge\ldots\wedge v^k))=c_1^*((\tilde{F}^*)^{Cl}(v^*))(\tilde{f}^{\Lambda}(v^1\wedge\ldots\wedge v^k))=
c_1^*(\tilde{f}^*(v^*))(\tilde{f}^*(v^1)\wedge\ldots\wedge\tilde{f}^*(v^k)).$$
\end{prop}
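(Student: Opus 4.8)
The statement asserts the compatibility of the standard covariant Clifford actions $c_2^*$ and $c_1^*$ with respect to the two gluings: that of $\cl(V_2^*,g_2^*)$ to $\cl(V_1^*,g_1^*)$ along $((\tilde{F}^*)^{\cl},f^{-1})$, and that of $\bigwedge(V_2)$ to $\bigwedge(V_1)$ along $(\tilde{f}^{\bigwedge},f^{-1})$. The second equality in the display is essentially a definition-unwinding: $(\tilde{F}^*)^{\cl}$ restricted to a fibre is the Clifford-algebra map induced by $\tilde{f}^*$ via the universal property, so it sends $v^*\in V_2^*$ to $\tilde{f}^*(v^*)$, and $\tilde{f}^{\bigwedge}$ acts on an exterior product $v^1\wedge\ldots\wedge v^k$ by $\tilde{f}^*(v^1)\wedge\ldots\wedge\tilde{f}^*(v^k)$ (since on each $n$-th degree it is $(\tilde{f}^*)^{\otimes n}$ commuting with the alternating operator). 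Hence the real content is the first equality, $\tilde{f}^{\bigwedge}(c_2^*(v^*)(v^1\wedge\ldots\wedge v^k))=c_1^*(\tilde{f}^*(v^*))(\tilde{f}^*(v^1)\wedge\ldots\wedge\tilde{f}^*(v^k))$.

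First I would reduce everything to a fibrewise statement. Fix $y\in Y$, write $y'=f(y)\in f(Y)$, and take $v^*,v^1,\ldots,v^k\in(\pi_2^*)^{-1}(y')\subset V_2^*$. On a single fibre the covariant exterior algebra and its Clifford action are purely algebraic objects, so the whole problem collapses to a linear-algebra identity about the dual linear map $\tilde{f}^*:(\pi_2^{-1}(y'))^*\to(\pi_1^{-1}(y))^*$. I would then expand $c_2^*(v^*)(v^1\wedge\ldots\wedge v^k)$ using the explicit formula
$$c_2^*(v^*)(v^1\wedge\ldots\wedge v^k)=v^*\wedge v^1\wedge\ldots\wedge v^k-\sum_{j=1}^k(-1)^{j+1}v^1\wedge\ldots\wedge g_2^*(y')(v^*,v^j)\wedge\ldots\wedge v^k,$$
apply $\tilde{f}^{\bigwedge}=(\tilde{f}^*)^{\otimes(\cdot)}$ termwise using its linearity and multiplicativity, and separately expand the right-hand side $c_1^*(\tilde{f}^*(v^*))(\tilde{f}^*(v^1)\wedge\ldots\wedge\tilde{f}^*(v^k))$ by the same formula but with $g_1^*$. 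The exterior-product parts match verbatim, so the identity reduces to matching the scalar coefficients, i.e. to the equality $g_2^*(y')(v^*,v^j)=g_1^*(y)(\tilde{f}^*(v^*),\tilde{f}^*(v^j))$.

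The one genuine ingredient, and the step I expect to be the main obstacle, is precisely this last scalar identity: it is exactly the compatibility of the \emph{dual} pseudo-metrics $g_2^*$ and $g_1^*$ with respect to $\tilde{f}^*$. This is not automatic — as the preceding material stresses, the duals of compatible pseudo-metrics need not be compatible in general. The clean way to secure it is to invoke the commutativity between $c_i$, $c_i^*$ and the pairing maps $\Phi_i:V_i\to V_i^*$ (the Proposition immediately preceding), together with the established compatibility of the \emph{original} actions $c_1,c_2$ on the contravariant side (shown earlier via $g_1(y)(v,v_j)=g_2(f(y))(\tilde{f}(v),\tilde{f}(v_j))$). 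Concretely, I would choose preimages $w^*=\Phi_2(w)$, $w^j=\Phi_2(w_j)$ under the surjective pairing map, use $g_i^*(\Phi_i(w),\Phi_i(w_j))=g_i(w,w_j)$ by definition of the induced dual pseudo-metric, and then transport through the already-known contravariant compatibility together with the identity $\tilde{f}^*\circ\Phi_2=\Phi_1\circ\tilde{f}$ (valid on the characteristic sub-bundle, where $\Phi_i$ is invertible, which is where local triviality is used). Assembling these reductions, the scalar coefficients coincide, the exterior parts coincide, and the two displayed expressions are equal; the second displayed equality then follows by unwinding the definitions of $(\tilde{F}^*)^{\cl}$ and $\tilde{f}^{\bigwedge}$ as indicated. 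I would close by remarking that the fibrewise identity, holding for all generating $v^*$ and all decomposable $v^1\wedge\ldots\wedge v^k$, extends to the whole pseudo-bundle by linearity, which is the full compatibility claim.
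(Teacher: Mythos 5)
Your proposal is correct, and its skeleton is sound: reduce to a single fibre, expand both sides via the explicit formula for the covariant Clifford action, observe that the exterior-product terms match because $\tilde{f}^{\bigwedge}$ is multiplicative, and reduce the whole claim to the scalar identity $g_2^*(f(y))(v^*,v^j)=g_1^*(y)(\tilde{f}^*(v^*),\tilde{f}^*(v^j))$, which is exactly the compatibility of the dual pseudo-metrics with the gluing along $(\tilde{f}^*,f^{-1})$. Note that this compatibility is in fact part of the standing hypotheses of this section --- it is what makes the gluing map $(\tilde{F}^*)^{\cl}$ exist in the first place, and it holds under the assumptions in force by Theorem \ref{when:dual:pseudometrics:compatible:bundles:thm} --- so you could legitimately cite it instead of re-deriving it. Your route does differ from the paper's: the paper performs the direct expansion only once, in the \emph{contravariant} case (where it reduces to the compatibility $g_1(y)(v,v_j)=g_2(f(y))(\tilde{f}(v),\tilde{f}(v_j))$ of the original pseudo-metrics), and then obtains the covariant statement by transporting the entire action-compatibility through the intertwining $\Phi_i(c_i(v)(v_1\wedge\ldots\wedge v_k))=c_i^*(\Phi_i(v))(\Phi_i(v_1)\wedge\ldots\wedge\Phi_i(v_k))$ of the proposition immediately preceding the statement. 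You instead redo the expansion on the covariant side and use the pairing maps only at the level of the metrics. Both arguments rest on the same two ingredients (pairing maps plus contravariant compatibility); yours is more self-contained, while the paper's makes the covariant case a one-line corollary of the contravariant one.

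One correction is needed in your final step: the intertwining identity you invoke, $\tilde{f}^*\circ\Phi_2=\Phi_1\circ\tilde{f}$, is ill-typed, since $\tilde{f}$ takes values in $V_2$ and so cannot be followed by $\Phi_1$. What the compatibility of $g_1$ and $g_2$ actually gives is $\tilde{f}^*\circ\Phi_2\circ\tilde{f}=\Phi_1$ on $\pi_1^{-1}(Y)$, because $(\tilde{f}^*\Phi_2\tilde{f}(u))(\cdot)=g_2(f(y))(\tilde{f}(u),\tilde{f}(\cdot))=g_1(y)(u,\cdot)=\Phi_1(u)(\cdot)$; equivalently $\tilde{f}^*\circ\Phi_2=\Phi_1\circ\bigl(\tilde{f}|_{\mathrm{char}}\bigr)^{-1}$ on the characteristic subspace of $\pi_2^{-1}(f(y))$, where it is the invertibility of $\tilde{f}$ restricted to characteristic subspaces (not that of $\Phi_i$) that local triviality and the dual-compatibility hypotheses secure. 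With this corrected form, your chain $g_1^*(\tilde{f}^*(v^*),\tilde{f}^*(v^j))=g_1^*(\Phi_1(u),\Phi_1(u_j))=g_1(u,u_j)=g_2(\tilde{f}(u),\tilde{f}(u_j))=g_2^*(v^*,v^j)$ closes as intended, and the rest of the argument stands.
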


The corollary of this is that there is the induced action $c_{*,\cup}$ of $\cl(V_2^*,g_2^*)\cup_{(\tilde{F}^*)^{\cl}}\cl(V_1^*,g_1^*)$ on $\bigwedge(V_2)\cup_{\tilde{f}^{\bigwedge}}\bigwedge(V_1)$.
Since the presentation $\cl(V_2^*,g_2^*)\cup_{(\tilde{F}^*)^{\cl}}\cl(V_1^*,g_1^*)$ does not automatically imply that it is a pseudo-bundle of Clifford algebras, we cannot yet say that
$c_{\cup,*}$ is a Clifford action; although it is one, as we explain in the section that follows.

\subsubsection{The diffeomorphism $\bigwedge(V_1\cup_{\tilde{f}}V_2)\cong\bigwedge(V_2)\cup_{\tilde{f}^{\bigwedge}}\bigwedge(V_1)$: comparing the Clifford actions}

The summary of the situation as it appears now, is that we have three pseudo-bundles of Clifford algebras
$$\cl((V_1\cup_{\tilde{f}}V_2)^*,\tilde{g}^*)\cong\cl(V_2^*\cup_{\tilde{f}^*}V_1^*,\widetilde{g^*})\cong\cl(V_2^*,g_2^*)\cup_{(\tilde{F}^*)^{\cl}}\cl(V_1^*,g_1^*)$$ and two pseudo-bundles of
exterior algebras
$$\bigwedge(V_1\cup_{\tilde{f}}V_2)\cong\bigwedge(V_2)\cup_{\tilde{f}^{\bigwedge}}\bigwedge(V_1),$$ with various identifications and Clifford-type actions between them. All of this
therefore reduces to a just one pseudo-bundle of Clifford algebras acting on just one pseudo-bundle of exterior algebras; let us see how exactly this happens for each case.

\paragraph{The Clifford algebra pseudo-bundle $\cl(V_2^*\cup_{\tilde{f}^*}V_1^*,\widetilde{g^*})$} The pseudo-metric $\widetilde{g^*}$ was described in Section 5.5.5; essentially,
its meaning is that on a fibre over a point in $i_1^{X_2}(X_2\setminus f(Y))$ it coincides with the pseudo-metric $g_2^*$, while elsewhere it coincides with the pseudo-metric $g_1^*$. The
construction of $\cl(V_2^*\cup_{\tilde{f}^*}V_1^*,\widetilde{g^*})$ allows for a ready identification of it with $\cl(V_2^*,g_2^*)\cup_{(\tilde{F}^*)^{\cl}}\cl(V_1^*,g_1^*)$.

Furthermore, $\widetilde{g^*}$ is related to $\tilde{g}^*$ in the way described in the same section, that is, by the formula
$$\left(\left((\Phi_{\cup,*})^*\right)^{-1}\otimes \left((\Phi_{\cup,*})^*\right)^{-1}\right)\circ\tilde{g}^*=\widetilde{g^*}\circ(\varphi_{X_1\leftrightarrow X_2});$$ this allows us to see that
$\cl(V_2^*\cup_{\tilde{f}^*}V_1^*,\widetilde{g^*})\cong\cl((V_1\cup_{\tilde{f}}V_2)^*,\tilde{g}^*)$ in a natural way.

\paragraph{Summary of diffeomorphisms: Clifford algebras} In this section we have mostly discussed diffeomorphisms for the pseudo-bundles of exterior algebras, mentioning only briefly
(Theorem 6.2) the diffeomorphism
$$\Phi^{\cl}:\cl(V_1,g_1)\cup_{\tilde{F}^{\cl}}\cl(V_2,g_2)\to\cl(V_1\cup_{\tilde{f}}V_2,\tilde{g}).$$ Furthermore, written in this  form it appears to refer to the contravariant case; of course, it
suffices to substitute $(V_2^*,g_2^*)$ for $(V_1,g_1)$, and $(V_1^*,g_1^*)$ for $(V_2,g_2)$, to obtain the diffeomorphism, that we denote by $\Phi^{\cl(*)}$, and that goes
$$\Phi^{\cl(*)}:\cl(V_2^*,g_2^*)\cup_{(\tilde{F}^*)^{\cl}}\cl(V_1^*,g_1^*)\to\cl(V_2^*\cup_{\tilde{f}^*}V_1^*,\widetilde{g^*}),$$ one of the diffeomorphisms that we referred to at the beginning of
this section.

What we need now is a diffeomorphism
$$\cl((V_1\cup_{\tilde{f}}V_2)^*,\tilde{g}^*)\cong\cl(V_2^*\cup_{\tilde{f}^*}V_1^*,\widetilde{g^*}).$$ This is essentially recovered from the gluing-dual commutativity diffeomorphism
$\Phi_{\cup,*}:(V_1\cup_{\tilde{f}}V_2)^*\to V_2^*\cup_{\tilde{f}^*}V_1^*$, in a way somewhat similar to how it was used in the case exterior algebras. Specifically, we first extend it by the
tensor product multiplicativity to the tensor algebras pseudo-bundle and then take its pushforward along the two projections onto the respective Clifford algebras pseudo-bundles.
That this is well-defined (that is, that the defining relation of a Clifford algebra is preserved by $\Phi_{\cup,*}$) easily follows from the above formula that relates $\tilde{g}^*$ and
$\widetilde{g^*}$. We introduce the following notation for the diffeomorphism thus obtained:
$$\Phi_{\cup,*}^{\cl}:\cl((V_1\cup_{\tilde{f}}V_2)^*,\tilde{g}^*)\to\cl(V_2^*\cup_{\tilde{f}^*}V_1^*,\widetilde{g^*}).$$

We note that possibly the main advantage that comes from the discussion carried out so far is possibly the existence of the composite diffeomorphism
$$\left(\Phi^{\cl(*)}\right)^{-1}\circ\Phi_{\cup,*}^{\cl}:\cl((V_1\cup_{\tilde{f}}V_2)^*,\tilde{g}^*)\to\cl(V_2^*,g_2^*)\cup_{(\tilde{F}^*)^{\cl}}\cl(V_1^*,g_1^*),$$ which allows to view the (covariant version
of the) Clifford algebra of a pseudo-bundle obtained by gluing as itself being the result of gluing of the Clifford algebras of the factors. Let us finally give a unique list of the diffeomorphisms for
Clifford algebras:
\begin{itemize}
\item $\Phi_{\cup,*}^{\cl}:\cl((V_1\cup_{\tilde{f}}V_2)^*,\tilde{g}^*)\to\cl(V_2^*\cup_{\tilde{f}^*}V_1^*,\widetilde{g^*})$;

\item $\Phi^{\cl(*)}:\cl(V_2^*,g_2^*)\cup_{(\tilde{F}^*)^{\cl}}\cl(V_1^*,g_1^*)\to\cl(V_2^*\cup_{\tilde{f}^*}V_1^*,\widetilde{g^*})$;

\item $\left(\Phi^{\cl(*)}\right)^{-1}\circ\Phi_{\cup,*}^{\cl}:\cl((V_1\cup_{\tilde{f}}V_2)^*,\tilde{g}^*)\to\cl(V_2^*,g_2^*)\cup_{(\tilde{F}^*)^{\cl}}\cl(V_1^*,g_1^*)$.
\end{itemize}

\paragraph{Summary of diffeomorphisms: covariant exterior algebras} For pseudo-bundles of exterior algebras, there are only two options,
$$\bigwedge(V_1\cup_{\tilde{f}}V_2)\cong\bigwedge(V_2)\cup_{\tilde{f}^{\bigwedge}}\bigwedge(V_1).$$ As we have seen shortly before (Section 6.3.3), there is a natural diffeomorphism
between them:
$$\Phi^{\bigwedge}:\bigwedge(V_1\cup_{\tilde{f}}V_2)\to\bigwedge(V_2)\cup_{\tilde{f}^{\bigwedge}}\bigwedge(V_1).$$

\paragraph{Summary of actions} Turning now to the Clifford actions, we outline first which Clifford algebra (or the result of gluing of such) has natural action on which pseudo-bundle of exterior
algebras:
\begin{itemize}
\item $\cl((V_1\cup_{\tilde{f}}V_2)^*,\tilde{g}^*)$ acts on $\bigwedge(V_1\cup_{\tilde{f}}V_2)$ via the standard Clifford action $c$;

\item $\cl(V_2^*,g_2^*)\cup_{(\tilde{F}^*)^{\cl}}\cl(V_1^*,g_1^*)$ acts on $\bigwedge(V_2)\cup_{\tilde{f}^{\bigwedge}}\bigwedge(V_1)$ via the action $c_{\cup,*}$ (see Proposition 6.8) induced by
the standard Clifford actions $c_2^*$ and $c_1^*$ of $\cl(V_2^*,g_2^*)$ and $\cl(V_1^*,g_1^*)$ on $\bigwedge(V_2)$ and $\bigwedge(V_1)$ respectively;

\item $\cl(V_2^*\cup_{\tilde{f}^*}V_1^*,\widetilde{g^*})$ has, again, the standard Clifford action, which we have not mentioned yet and which we now denote by $c_{*,\cup}$, on
$\bigwedge_*(V_2^*\cup_{\tilde{f}^*}V_1^*)$. Notice that the latter is the contravariant exterior algebra of the pseudo-bundle $V_2^*\cup_{\tilde{f}^*}V_1^*$; it is naturally diffeomorphic to
$\bigwedge(V_1\cup_{\tilde{f}}V_2)$ via the obvious extension $\Phi_{\cup,*}^{\bigwedge}$ of the gluing-dual commutativity diffeomorphism; this diffeomorphism goes
$$\Phi_{\cup,*}^{\bigwedge}:\bigwedge(V_1\cup_{\tilde{f}}V_2)\to\bigwedge_*(V_2^*\cup_{\tilde{f}^*}V_1^*).$$
\end{itemize}

\paragraph{The equivalence of actions} Let us now specify how the diffeomorphisms $\Phi^{\cl(*)}$, $\Phi_{\cup,*}^{\cl}$, and $\left(\Phi^{\cl(*)}\right)^{-1}\circ\Phi_{\cup,*}^{\cl}$, as well as
the actions $c$, $c_{\cup,*}$, and $c_{*,\cup}$, are related to each other (it is quite clear that they are). The most natural, or the most immediate, relations to check are:
\begin{itemize}
\item the action $c$ of $\cl((V_1\cup_{\tilde{f}}V_2)^*,\tilde{g}^*)$ on $\bigwedge(V_1\cup_{\tilde{f}}V_2)$ should be compared to the action $c_{*,\cup}$ of $\cl(V_2^*\cup_{\tilde{f}^*}V_1^*,\widetilde{g^*})$
on $\bigwedge_*(V_2^*\cup_{\tilde{f}^*}V_1^*)$, with respect to the diffeomorphisms $\Phi_{\cup,*}^{\cl}$ and $\Phi_{\cup,*}^{\bigwedge}$;

\item the action $c_{\cup,*}$ of $\cl(V_2^*,g_2^*)\cup_{(\tilde{F}^*)^{\cl}}\cl(V_1^*,g_1^*)$ on $\bigwedge(V_2)\cup_{\tilde{f}^{\bigwedge}}\bigwedge(V_1)$ should be compared, again, to the action
$c_{*,\cup}$ of $\cl(V_2^*\cup_{\tilde{f}^*}V_1^*,\widetilde{g^*})$ on $\bigwedge_*(V_2^*\cup_{\tilde{f}^*}V_1^*)$, with respect to the diffeomorphisms $\Phi^{\cl(*)}$ and
$\Phi_{\cup,*}^{\bigwedge}\circ(\Phi^{\bigwedge})^{-1}$.
\end{itemize}
Notice that these equivalences imply the equivalence of $c$ and $c_{\cup,*}$ automatically.

Let us consider first the actions $c$ and $c_{*,\cup}$. We give a down-to-earth description of their interrelation, which is as follows. Let $v\in\cl((V_1\cup_{\tilde{f}}V_2)^*,\tilde{g}^*)$, and
let $e\in\bigwedge(V_1\cup_{\tilde{f}}V_2)$ be such that $\pi^{\bigwedge}(e)=\pi^{\cl}(v)$ (that is, $e$ belongs to the fibre on which $c(v)$ acts). Then we have
$$\Phi_{\cup,*}^{\bigwedge}(c(v)(e))=c_{*,\cup}(\Phi_{\cup,*}^{\cl}(v))(\Phi_{\cup,*}^{\bigwedge}(e)).$$

Let us now consider $c_{\cup,*}$ and $c_{*,\cup}$. The same kind of relation holds, \emph{i.e.}, if $v\in\cl(V_2^*,g_2^*)\cup_{(\tilde{F}^*)^{\cl}}\cl(V_1^*,g_1^*)$ and
$e\in\bigwedge(V_2)\cup_{\tilde{f}^{\bigwedge}}\bigwedge(V_1)$ are such that $c_{\cup,*}(v)(e)$ is well-defined (since the base space is the same for both pseudo-bundles, this means that they are
taken in the two fibres over the same point), we have the following:
$$\left(\Phi_{\cup,*}^{\bigwedge}\circ(\Phi^{\bigwedge})^{-1}\right)(c_{\cup,*}(v)(e))=c_{*,\cup}\left(\Phi^{\cl(*)}(v)\right)\left((\Phi_{\cup,*}^{\bigwedge}\circ(\Phi^{\bigwedge})^{-1})(e)\right).$$

Finally, for completeness we give the equivalence formula for the action $c$ (of $\cl((V_1\cup_{\tilde{f}}V_2)^*,\tilde{g}^*)$ on $\bigwedge(V_1\cup_{\tilde{f}}V_2)$) and $c_{\cup,*}$
(of $\cl(V_2^*,g_2^*)\cup_{(\tilde{F}^*)^{\cl}}\cl(V_1^*,g_1^*)$ on $\bigwedge(V_2)\cup_{\tilde{f}^{\bigwedge}}\bigwedge(V_1)$), with respect to $\left(\Phi^{\cl(*)}\right)^{-1}\circ\Phi_{\cup,*}^{\cl}$
and $\Phi^{\bigwedge}$. For $v\in\cl((V_1\cup_{\tilde{f}}V_2)^*,\tilde{g}^*)$ and $e\in\bigwedge(V_1\cup_{\tilde{f}}V_2)$ that project to the same point in the base space $X_1\cup_f X_2$, we have
$$\Phi^{\bigwedge}(c(v)(e))=c_{\cup,*}(((\Phi^{\cl(*)})^{-1}\circ\Phi_{\cup,*}^{\cl})(v))(\Phi^{\bigwedge}(e)).$$

\subsection{Some examples}

In selecting examples to illustrate the constructions described in the present section, there are two main considerations to keep in mind. First of all, most of our constructions ask for $\tilde{f}^*$
to be a diffeomorphism; we note that this does not imply that $\tilde{f}$ itself is so, only that its restriction on the characteristic sub-bundle should be one. The distinction is particularly important
in the covariant case. Notice also that in this case the purely diffeological side of matters has less to do with some unusual \emph{function} being regarded as smooth (all fibres having standard
diffeologies), and more to do with some unusual \emph{spaces} being looked at, as if they were smooth manifold.

\subsubsection{The by-now-classic: two planes over a cross}

We dedicate this section to considering a rather simple, but not entirely trivial, example that illustrates the above abstract constructions. The basic object for it is the trivial fibering of the standard
$\matR^2$ over the standard $\matR$ (via the projection onto the first coordinate). The easiest way to glue together two copies of such, is to take the wedge of the two copies of the base
$\matR$ at their respective origins, and then identify the lines over them in some obvious fashion.

\paragraph{The pseudo-bundles $\pi_1:V_1\to X_1$ and $\pi_2:V_2\to X_2$, the gluing $(\tilde{f},f)$, and the pseudo-metrics $g_1$ and $g_2$} Thus, as we just said, we have $V_1=V_2=\matR^2$
with its standard diffeology, $X_1=X_2=\matR$, also standard, and the two standard projections on the $x$-axis, $\pi_1:V_1\ni(x,y)\mapsto x\in\matR=X_1$ and $\pi_2:V_2\ni(x,y)\mapsto x\in\matR=X_2$.
The pseudo-bundle structure is given by imposing on each fibre $(x,y_1)+(x,y_2)\mapsto(x,y_1+y_2)$ and $\lambda(x,y_1)\mapsto(x,\lambda y_1)$ (recall once again that this is different from
the standard operations on $\matR^2$). Next, we have the gluing of these two pseudo-bundles along $(\tilde{f},f)$, where $f$ is defined on the origin $\{0\}\subset X_1$ of $\matR$ via
$f(0)=0\in X_2=\matR$; its lift $\tilde{f}$ acts on the fibre $\{(0,y)\}\subset V_1=\matR^2$, mapping it to the fibre $\{(0,y)\}\subset V_2=\matR^2$ via a usual linear transformation. Thus, $\tilde{f}$ is
uniquely defined by some constant $a\in\matR$ via the rule
$$\tilde{f}(0,1)=(0,a)\in V_2=\matR^2.$$

Notice that since all fibres have standard diffeology, the characteristic subspace of any fibre coincides with the fibre itself; the implication of this, that is relevant at the moment, is that, for there to exist
compatible pseudo-metrics $g_1$ and $g_2$, we must have $a\neq 0$. Then, said in simpler terms, any pseudo-metric $g_1$ on $V_1$ is determined by a usual smooth and everywhere positive
function $f_1:\matR\to\matR$ by setting
$$g_1(x)(v,w)=f_1(x)\cdot e^2(v)\cdot e^2(w),$$ where $e^2$ is the second element of the canonical dual basis (in other words, $e^2(v)$ is just the $y$-coordinate of $v$). Likewise, $g_2$ can be
written as
$$g_2(x)(v,w)=f_2(x)\cdot e^2(v)\cdot e^2(w),$$ and the compatibility means that $f_1(0)=a^2f_2(0)$. Indeed, the compatibility condition means that we must have
$$g_1(0)\left((0,y_1),(0,y_2)\right)=g_2(0)\left(\tilde{f}(0,y_1),\tilde{f}(0,y_2)\right)\Leftrightarrow f_1(0)y_1y_2=g_2(0)\left((0,ay_1),(0,ay_2)\right)=a^2f_2(0)y_1y_2.$$

\paragraph{The result of gluing} The pseudo-bundle that results from the gluing described in the previous paragraph can be described by representing $V_1\cup_{\tilde{f}}V_2$ as the union
$\{(x,0,z)\}\cup\{(0,y,z)\}$ of two planes in $\matR^3$, and, accordingly, $X_1\cup_f X_2$ as the union $\{(x,0,0)\}\cup\{(0,y,0)\}$ of the two axes, with the projection $\pi_1\cup_{(\tilde{f},f)}\pi_2$
acting by $(x,0,z)\mapsto(x,0,0)$, $(0,y,z)\mapsto(0,y,0)$. Notice that this is more of a topological representation than a diffeological one, in the sense that the subset diffeology on the two subsets
relative to the standard one on $\matR^3$ is coarser than the gluing diffeology (see \cite{watts}, Example 2.61).

The pseudo-metric $\tilde{g}$ (recall that in the gluing-dual commutative case, such as the one we are considering at the moment, it does not depend of the specific way of constructing it) on
$V_1\cup_{\tilde{f}}V_2$ thus presented can be described as
$$\tilde{g}(x,y,0)=\left\{\begin{array}{ll} f_1(x)dz^2 & \mbox{if }y=0\mbox{ and }x\neq 0,\\ f_2(y)dz^2 & \mbox{if }x=0. \end{array}\right.$$

\paragraph{The pairing maps $\Psi_{g_1}$, $\Psi_{g_2}$, and $\Psi_{\tilde{g}}$} Since all fibres are standard, the characteristic sub-bundles coincide with the pseudo-bundles themselves,
so all three maps are automatically invertible. Written explicitly, they act by:
$$\Psi_{g_1}(x,y)=f_1(x)ye^2,\,\,\,\Psi_{g_2}(x,y)=f_2(x)ye^2,$$ and then, using the just-mentioned presentation of  $V_1\cup_{\tilde{f}}V_2$ as the subset of $\matR^3$ given by the
equation $xy=0$, we have
$$\Psi_{\tilde{g}}(x,y,z)=\left\{\begin{array}{ll} f_1(x)zdz & \mbox{if }y=0,\\ f_2(y)zdz & \mbox{if }x=0 \end{array}\right.$$

\paragraph{The dual pseudo-metrics} The dual pseudo-metrics are therefore described in the same manner as $g_1$ and $g_2$, but the coefficients are inverted:
$$g_2^*(x)(v^*,w^*)=\frac{1}{f_2(x)}\cdot v^*(e_2)\cdot w^*(e_2)\,\,\,\mbox{ and }\,\,\,g_1^*(x)(v^*,w^*)=\frac{1}{f_1(x)}\cdot v^*(e_2)\cdot w^*(e_2).$$ Indeed, let $v^*=(x,y^ve^2)$ and $w^*=(x,y^we^2)$;
then
$$g_2^*(x)(v^*,w^*)=g_2(x)(\Psi_{g_2}^{-1}(v^*),\Psi_{g_2}^{-1}(w^*))=f_2(x)e^2(\Psi_{g_2}^{-1}(v^*))e^2(\Psi_{g_2}^{-1}(w^*))$$ by definition. If $(x,y_v)=\Psi_{g_2}^{-1}(v^*)$, we have
$y^ve^2=g_2(x)(v,\cdot)=f_2(x)y_ve^2$, so in the end
$$e^2(\Psi_{g_2}^{-1}(v^*))=\frac{1}{f_2(x)}v^*(e_2),\,\,\,\,e^2(\Psi_{g_2}^{-1}(w^*))=\frac{1}{f_2(x)}w^*(e_2)\Rightarrow$$
$$\Rightarrow g_2^*(x)(v^*,w^*)=f_2(x)\cdot\frac{1}{f_2(x)}v^*(e_2)\cdot\frac{1}{f_2(x)}w^*(e_2),$$ as we claimed.

Notice also that by definition of the dual map $\tilde{f}^*$ we have $\tilde{f}^*(0,e^2)=(0,ae^2)$. By direct calculation we obtain
$$g_2^*(0)(e^2,e^2)=\frac{1}{f_2(0)},\,\,g_1^*(0)(\tilde{f}(e^2),\tilde{f}(e^2))=\frac{a^2}{f_1(0)};$$ since the assumption that we have made already, that of $f_1(0)=a^2f_2(0)$, is equivalent to
$\frac{1}{f_2(0)}=\frac{a^2}{f_1(0)}$, we indeed obtain that the compatibility of $g_1$ with $g_2$ implies the compatibility of $g_2^*$ with $g_1^*$, by the sole assumption that $\tilde{f}=\tilde{f}_0$ be
a diffeomorphism (as it should be by one of the results cited in this section).

Finally, we can write the pseudo-metric $\tilde{g}^*\equiv\widetilde{g^*}$ as
$$\tilde{g}^*(x',y',0)=\left\{\begin{array}{ll}
\frac{1}{f_1(x')}\frac{\partial}{\partial z}\otimes\frac{\partial}{\partial z} & \mbox{if }y'=0,\\
\frac{1}{f_2(x')}\frac{\partial}{\partial z}\otimes\frac{\partial}{\partial z} & \mbox{if }x'=0.
\end{array}\right.$$ This is of course a rather artificial choice, that we make in order to stay as close as possible to the standard notation (in particular, when writing $\frac{\partial}{\partial z}$ we implicitly
identify the second dual of $\matR^3$, with $\matR^3$ itself; we have already done in describing the dual pseudo-metrics).

\paragraph{The pseudo-bundles of Clifford algebras} All fibres in our case are $1$-dimensional, so as a vector space, the Clifford algebra of any of them coincides with the direct of the fibre itself
with a copy of $\matR$, which is the span of the unit. Let $x_i\in X_i$ be a fixed point; then the fibre $\pi_i^{-1}(x_i)$ is the set $\{(x_i,y)\}$. The Clifford relation is then $(x_i,1)\otimes(x_i,1)=-f_i(x_i)$,
so the multiplication in the Clifford algebra $\cl(\pi_i^{-1}(x_i),g_i(x_i))$ is given by
$$(x_i,y_1)\cdot_{\cl}(x_i,y_2)=-f_i(x_i)y_1y_2.$$ The gluing of $\cl(\pi_1^{-1}(0),g_1(0))$ to $\cl(\pi_2^{-1}(0),g_2(0))$ is given by the direct sum of $\tilde{f}$ with the identity on the scalar part,
$\tilde{F}^{\cl}=\tilde{f}\oplus\mbox{Id}_{\matR}$, and we have
$$\tilde{F}^{\cl}\left((0,y_1)\cdot_{\cl}(0,y_2)\right)=-f_1(0)y_1y_2\mbox{ and }$$
$$\tilde{f}(0,y_1)\cdot_{\cl}\tilde{f}(0,y_2)=(0,ay_1)\cdot_{\cl}(0,ay_2)=-a^2f_2(0)y_1y_2=-f_1(0)y_1y_2.$$

Each of the two individual Clifford algebras' pseudo-bundles is thus a trivial fibering of $\matR^3$ over $\matR$; the result of their gluing can be described as the subset in $\matR^4$ given by the
equation $xy=0$, so that
$$\cl(V_1,g_1)\cup_{\tilde{F}^{\cl}}\cl(V_2,g_2)=\{(x,0,z,w),\mbox{ where }x\neq 0\}\cup\{(0,y,z,w)\},$$ with the pseudo-bundle projection given by
$$(\pi_1^{\cl}\cup_{(\tilde{F}^{\cl},f)}\pi_2^{\cl})(x,y,z,w)=\left\{\begin{array}{ll} (x,0,0,0), & \mbox{if }y=0\mbox{ and }x\neq 0\\
(0,y,0,0), & \mbox{if }x=0, \end{array}\right.$$ and the Clifford multiplication being defined by
$$(x,0,z_1,w_1)\cdot_{\cl}(x,0,z_2,w_2)=(x,0,z_1w_2+z_2w_1,-f_1(x)z_1z_2+w_1w_2),$$
$$(0,y,z_1,w_1)\cdot_{\cl}(0,y,z_2,w_2)=(0,y,z_1w_2+z_2w_1,-f_2(y)z_1z_2+w_1w_2).$$

From this, it is also quite evident that the result trivially coincides with $\cl(V_1\cup_{\tilde{f}}V_2,\tilde{g})$, so much in fact, that we can only distinguish between the two by choosing two
slightly different forms of designating the same subset in $\matR^4$. Specifically, in the case of $\cl(V_1\cup_{\tilde{f}}V_2,\tilde{g})$ we describe the set of its points as
$$\{(x,y,z,w),\mbox{ where }xy=0\}.$$ Obviously, this is the same set as we described as the set of points of $\cl(V_1,g_1)\cup_{\tilde{F}^{\cl}}\cl(V_2,g_2)$; the chosen
presentation of the latter emphasizes its structure as the result of a gluing.

Notice also that, viewing $V_1\cup_{\tilde{f}}V_2$ as the subset of  $\matR^3$ given by the equation $xy=0$, the Clifford relation, for elements of $V_1\cup_{\tilde{f}}V_2$, yields
$$\left\{\begin{array}{l}
(x,0,z_1)\cdot_{\cl}(x,0,z_2)=-f_1(x)z_1z_2\\
(0,y,z_1)\cdot_{\cl}(0,y,z_2)=-f_2(y)z_1z_2,
\end{array}\right.$$ the Clifford multiplication in $\cl(V_1\cup_{\tilde{f}}V_2,\tilde{g})$ (obtained by adding the fourth coordinate) is given by precisely the same formula as in the case of
$\cl(V_1,g_1)\cup_{\tilde{F}^{\cl}}\cl(V_2,g_2)$.

\paragraph{The pseudo-bundles of covariant Clifford algebras} The case of the Clifford algebras associated to the dual pseudo-bundles of $V_1^*$, $V_2^*$, and $(V_1\cup_{\tilde{f}}V_2)^*$ is
the same as that in the contravariant case. There is one formal distinction that we can make in order to stress the difference between $(V_1\cup_{\tilde{f}}V_2)^*$ and $V_2^*\cup_{\tilde{f}^*}V_1^*$,
which we indicate immediately for the Clifford algebras as a whole.

Specifically, consider again the subset in $\matR^4$ given by the equation $xy=0$. This is the subset that is identified with all three (shapes of) the Clifford algebra, in accordance with the fact
that all three are diffeomorphic. For all three possibilities, we identify the copy of $V_1^*$ contained in either of them,\footnote{Recall that our gluing is along a diffeomorphism, so all our
pseudo-bundles admit natural inclusions of both $V_1^*$ and $V_2^*$; in general, this would not be the case for one of them.} with the hyperplane $\{(x,0,z,0)\}$, and the copy of $V_2^*$, with
the hyperplane $\{(0,y,z,0)\}$ (once again, the fourth coordinate $w$ corresponds to the scalar part of the Clifford algebra of a fibre). The distinction mentioned above consists in the following.
When this subset is viewed as $\cl((V_1\cup_{\tilde{f}}V_2)^*,\tilde{g}^*)$, we describe the Clifford multiplication as
$$\left\{\begin{array}{l}
(x,0,z_1,w_1)\cdot_{\cl}(x,0,z_2,w_2)=(x,0,z_1w_2+z_2w_1,-\frac{1}{f_1(x)}z_1z_2+w_1w_2)\mbox{ for }x\neq 0,\\
(0,y,z_1,w_1)\cdot_{\cl}(0,y,z_2,w_2)=(0,y,z_1w_2+z_2w_1,-\frac{1}{f_2(2)}z_1z_2+w_1w_2)\mbox{ otherwise}.
\end{array}\right.$$ On the other hand, when we view the same subset as either $\cl(V_2^*,g_2^*)\cup_{(\tilde{F}^*)^{\cl}}\cl(V_1^*,g_1^*)$ or $\cl(V_2^*\cup_{\tilde{f}^*}V_1^*,\widetilde{g^*})$,
we describe the corresponding product by
$$\left\{\begin{array}{l}
(x,0,z_1,w_1)\cdot_{\cl}(x,0,z_2,w_2)=(x,0,z_1w_2+z_2w_1,-\frac{1}{f_1(x)}z_1z_2+w_1w_2)\mbox{ for all }x,\\
(0,y,z_1,w_1)\cdot_{\cl}(0,y,z_2,w_2)=(0,y,z_1w_2+z_2w_1,-\frac{1}{f_2(2)}z_1z_2+w_1w_2)\mbox{ for }y\neq 0.
\end{array}\right.$$ As we see the difference, albeit minimal, is found precisely over the domain of gluing (a single point in our case). It is also clear how the compatibility condition $f_1(0)=a^2f_2(0)$
ensures that the two expressions coincide, so that the presentation reflects the diffeomorphism of the two objects.

\paragraph{The pseudo-bundles of exterior algebras} These can be presented in exactly the same way as those of Clifford algebras (but the multiplication works differently). Namely, in the
contravariant case we have a unique presentation immediately, which is, again, as a subset of $\matR^4$ given by the equation $xy=0$, with the exterior product
$$\left\{\begin{array}{l}
(x,0,z_1,w_1)\wedge(x,0,z_2,w_2)=(x,0,z_1w_2+z_2w_1,w_1w_2),\\
(0,y,z_1,w_1)\wedge(0,y,z_2,w_2)=(0,y,z_1w_2+z_2w_1,w_1w_2)
\end{array}\right.$$ (this is simply because all fibres are $1$-dimensional). Notice that if the exterior algebras $\bigwedge_*(V_1)$ and $\bigwedge_*(V_2)$ are seen as the hyperplanes of the
equations $y=0$ and $x=0$, the gluing map $\tilde{f}_*^{\bigwedge}$ acts by
$$\tilde{f}_*^{\bigwedge}:\{y=0\}\ni(0,0,z,w)\mapsto(0,0,az,w).$$

The case of the three exterior algebras relative to the dual pseudo-bundles is analogous, and the expression that defines the exterior product is exactly the same. There is a slight formal
difference in describing the gluing map $(\tilde{f}^*)^{\bigwedge}$, which acts by
$$\{x=0\}\ni(0,0,z,w)\mapsto(0,0,az,w);$$ the formula is exactly the same, and the difference consists in considering its domain of definition (which remains the same) as a subset of the hyperplane
$\{x=0\}$ rather than one of $\{y=0\}$.

\paragraph{The Clifford actions} Finally, we use the same presentations to describe the Clifford actions. These are of course standard, since all fibres are standard, so we shall see them only
over the domain of gluing.

In the contravariant case, we have two exterior algebras, $\bigwedge_*(V_1\cup_{\tilde{f}}V_2)$ and $\bigwedge_*(V_1)\cup_{\tilde{f}_*^{\bigwedge}}\bigwedge_*(V_2)$, with the actions $c$ and
$\tilde{c}$ of, respectively, $\cl(V_1\cup_{\tilde{f}}V_2,\tilde{g})$ and $\cl(V_1,g_1)\cup_{\tilde{F}^{\cl}}\cl(V_2,g_2)$. In the former case, we have
\begin{flushleft}
$c((0,0,z_2,w_2))(0,0,z,w)=(0,0,z_2,w_2)\wedge(0,0,z,w)-\left(0,0,0,\tilde{g}(0,0,0)((0,0,z_2),(0,0,z))\right)=$
\end{flushleft}
\begin{flushright}
$=(0,0,z_2w+w_2z,w_2w+f_2(0)z_2z)$;
\end{flushright} in the latter case, the only thing that changes with respect to the formula just given, is that the term $\tilde{g}(0,0,0)((0,0,z_2),(0,0,z)$ is replaced by the term $g_2(0,0)((0,z_2),(0,z))$,
whose value however is exactly the same.

The covariant case is in fact analogous, although in principle we have three exterior algebras, $\bigwedge(V_1\cup_{\tilde{f}}V_2)$, $\bigwedge_*(V_2^*\cup_{\tilde{f}^*}V_1^*)$, and
$\bigwedge(V_2)\cup_{(\tilde{f}^*)^{\bigwedge}}\bigwedge(V_1)$, with the actions $c$, $c_{*,\cup}$, and $c_{\cup,*}$ of, respectively, $\cl((V_1\cup_{\tilde{f}}V_2)^*,\tilde{g}^*)$,
$Cl(V_2^*\cup_{\tilde{f}^*}V_1^*,\widetilde{g^*})$, and $\cl(V_2^*,g_2^*)\cup_{(\tilde{F}^*)^{\cl}}\cl(V_1^*,g_1^*)$. Once again, these actions have the same form everywhere except over the
point of gluing (the origin), where we would formally write the formulae for $c((0,0,z_2,w_2))(0,0,z,w)$, $c_{*,\cup}((0,0,z_2,w_2))(0,0,z,w)$, and $c_{\cup,*}((0,0,z_2,w_2))(0,0,z,w)$ with respect to
$\tilde{g}^*$, $\widetilde{g^*}$, or $g_1^*$, respectively, with the compatibility condition ensuring the same result in all three cases. Thus, we have
\begin{flushleft}
$c((0,0,z_2,w_2))(0,0,z,w)=(0,0,z_2,w_2)\wedge(0,0,z,w)-\left(0,0,0,\tilde{g}^*(0,0,0)((0,0,z_2),(0,0,z))\right)=$
\end{flushleft}
\begin{flushright}
$=(0,0,z_2w+w_2z,w_2w+\frac{1}{f_2(0)}z_2z)$,
\end{flushright}
\begin{flushleft}
$c_{*,\cup}((0,0,z_2,w_2))(0,0,z,w)=(0,0,z_2,w_2)\wedge(0,0,z,w)-\left(0,0,0,\widetilde{g^*}(0,0,0)((0,0,z_2),(0,0,z))\right)=$
\end{flushleft}
\begin{flushright}
$=(0,0,z_2w+w_2z,w_2w-g_1^*(0,0)((0,z_2),(0,z)))=(0,0,z_2w+w_2z,w_2w+\frac{1}{f_1(0)}z_2z)$,
\end{flushright}
\begin{flushleft}
$c_{\cup,*}((0,0,z_2,w_2))(0,0,z,w)=(0,0,z_2,w_2)\wedge(0,0,z,w)-\left(0,0,0,g_1^*(0,0)((0,z_2),(0,z))\right)=$
\end{flushleft}
\begin{flushright}
$=(0,0,z_2w+w_2z,w_2w+\frac{1}{f_1(0)}z_2z)$.
\end{flushright} In particular, we see from the last two expressions that even written with extreme formality, the difference between the latter two pseudo-bundles is very slight, reflecting the fact that
the diffeomorphisms involved are very natural indeed.

\subsubsection{An example when $\tilde{f}$ is not a diffeomorphism, while $\tilde{f}^*$ is}

As we have noted quite a few times already, this possibility is a peculiarity of diffeology. It can be easily illustrated by extending the example considered in the previous section, in the following way.

Let $\pi_2:V_2\to X_2$ be the same, \emph{i.e.}, the standard projection $\matR^2\to\matR$; define $\pi_1:V_1\to X_1$ to be the projection of $V_1=\matR^3$ to $X_1=\matR$, where $X_1$ carries
the standard diffeology, and $V_1=\matR\times\matR\times\matR$ carries the product diffeology relative to the standard diffeologies on the first two factors and the vector space diffeology generated
by the plot $\matR\ni x\mapsto|x|$ on the third factor.\footnote{In fact, any non-standard vector space diffeology would be sufficient for our purposes.} The projection $\pi_1$ is just the projection onto
the first factor. The gluing map $f$ for the bases is the same, $\{0\}\to\{0\}$, and the one for the total spaces is almost the  same, specifically, $\tilde{f}(0,y,z)=(0,ay)$ with $a\neq 0$ (again, notice that
zeroing out the third coordinate is necessary for $\tilde{f}$ to be smooth). The pseudo-bundle $\pi_2:V_2\to X_2$ carries the same pseudo-metric $g_2$ as in the previous example, while the
pseudo-metric $g_1$ on $\pi_1:V_1\to X_1$ extends the previous one in a trivial manner:
$$g_1(x)((x,y_1,z_1),(x,y_2,z_2))=f_1(x)y_1y_2.$$ The compatibility condition remains the same.

It should also be noted right away that the entire covariant case coincides with that of the example treated in the previous section. We only consider the pseudo-bundle $V_1\cup_{\tilde{f}}V_2$ and
the corresponding contravariant constructions.

\paragraph{The pseudo-bundle $V_1\cup_{\tilde{f}}V_2$} We represent it as a subset in $\matR^4$, specifically as the union of the plane given by the equations $x=0$ and $w=0$ (the part corresponding
to $V_2$), and of the set $\{y=0\}\setminus\{x=0,y=0,w=0\}$; this is the part corresponding to $V_1$, where excising the line $\{x=0,y=0,w=0\}$ reflects how $V_1\cup_{\tilde{f}}V_2$ contains
$V_1\setminus\pi_1^{-1}(Y)$, and not the entire $V_1$. Thus, the entire set can be described as
$$\left\{\begin{array}{ll} (x,0,z,w) & \mbox{except the points }(0,0,z,0)\\ (0,y,z,0) & \mbox{for all }y,z. \end{array}\right.$$

\paragraph{The two Clifford algebras} The Clifford algebra of $V_2$ is the already seen one; relative to the presentation of $V_1\cup_{\tilde{f}}V_2$ given above, we could describe it as a
subset of $\matR^5$, adding the 5th coordinate $u_1$ for the scalar part of $\cl(V_2,g_2)\cong\matR\oplus V_2$. Thus,
$$\cl(V_2,g_2)=\{(0,y,z,0,u_1)\},$$ with the Clifford multiplication given by
$$(0,y,z',0,u_1')\cdot_{\cl}(0,y,z'',0,u_1'')=(0,y,u_1''z'+u_1'z'',0,u_1'u_1''-f_2(y)z'z'').$$

The Clifford algebra $\cl(V_1,g_1)$ is bigger; since the fibres of $V_1$ have dimension $2$, each fibre of $\cl(V_1,g_1)$ has dimension $4$. Thus, we represent it as a subset in $\matR^6$, by adding
the coordinates $u_1,u_2$, where $u_1$ corresponds to the scalar part and $u_2$ corresponds to the degree $2$ vector part. Thus,
$$\cl(V_1,g_1)=\{(x,0,z,w,u_1,u_2)\},$$ with the Clifford multiplication given by
\begin{flushleft}
$(x,0,z',w',u_1',u_2')\cdot_{\cl}(x,0,z'',w'',u_1'',u_2'')=$
\end{flushleft}
\begin{flushright}
$=(x,0,z'u_1''+z''u_1',w'u_1''+w''u_1'+f_1(x)w'',u_1'u_1''-f_1(x)z'z'',u_1'u_2''+u_1''u_2')$.
\end{flushright} Finally, $\cl(V_1\cup_{\tilde{f}}V_2,\tilde{g})$ can be described as the following subset in $\matR^6$:
$$\{(x,y,z,w,u_1,u_2)\mbox{ such that }xy=0,\,x=0\Rightarrow w=u_2=0\},$$ while $\cl(V_1,g_1)\cup_{\tilde{F}^{\cl}}\cl(V_2,g_2)$ is presented as the subset in $\matR^6$ of the following form:
$$\{(x,0,z,w,u_1,u_2)\mbox{ such that }x\neq 0\}\cup\{(0,y,z,0,u_1,0)\mbox{ for all }y,z\}.$$ The fibrewise multiplication is given by
\begin{flushleft}
$(x,0,z',w',u_1',u_2')\cdot_{Cl}(x,0,z'',w'',u_1'',u_2'')=$
\end{flushleft}
\begin{flushright}
$=(x,0,z'u_1''+z''u_1',w'u_1''+w''u_1'+f_1(x)w'',u_1'u_1''-f_1(x)z'z'',u_1'u_2''+u_1''u_2')$,
\end{flushright}
\begin{flushleft}
$(0,y,z',0,u_1',0)\cdot_{\cl}(0,y,z'',0,u_1'',0)=(0,y,u_1''z'+u_1'z'',0,u_1'u_1''-f_2(y)z'z'',0)$.
\end{flushleft} The distinction between the two shapes of the Clifford algebra could be made by referring to $\tilde{g}$ in the former case, and to $g_1$ and $g_2$ in the latter case; the end result
would immediately be the same, specifically the one just indicated.

\paragraph{The contravariant exterior algebras} Likewise, the exterior algebras $\bigwedge_*(V_1)$ and $\bigwedge_*(V_2)$ are given by the same sets. Both of these we immediately represent as
subsets of $\matR^6$, with the $5$-th coordinate being the scalar part and the $6$-th coordinate being the exterior product corresponding to the exterior product relative to the $3$-rd and the $4$-th
coordinates; in the case of $V_2$, this part is obviously trivial. Thus, we have
$$\bigwedge_*(V_1)=\{(x,0,z,w,u_1,u_2)\},\,\,\,\bigwedge_*(V_2)=\{(0,y,z,0,u_1,0)\},$$ with the exterior product given by
\begin{flushleft}
$(x,0,z',w',u_1',u_2')\wedge(x,0,z'',w'',u_1'',u_2'')=$
\end{flushleft}
\begin{flushright}
$=(x,0,u_1''z'+u_1'z'',u_1''w'+u_1'w'',u_1'u_1'',u_1''u_2'+u_1'u_2''+z'w''-z''w')$,
\end{flushright}
\begin{flushleft}
$(0,y,z',0,u_1',0)\wedge(0,y,z'',0,u_1'',0)=(0,y,u_1''z'+u_1'z'',0,u_1'u_1'',0)$.
\end{flushleft}
The exterior algebras $\bigwedge_*(V_1\cup_{\tilde{f}}V_2)$ and $\bigwedge_*(V_1)\cup_{\tilde{f}^{\bigwedge_*}}\bigwedge_*(V_2)$ are then represented respectively by the sets
$$\bigwedge_*(V_1\cup_{\tilde{f}}V_2)=\{(x,y,z,w,u_1,u_2),\mbox{ where }xy=0,\,x=0\Rightarrow w=u_2=0\},$$
$$\bigwedge_*(V_1)\cup_{\tilde{f}^{\bigwedge_*}}\bigwedge_*(V_2)=\{(x,0,z,w,u_1,u_2)\mbox{ such that }x\neq 0\}\cup\{(0,y,z,0,u_1,0)\}.$$ It is obvious that the two presentations determine
the same set, with the second one possibly giving a better idea of the structure of the set, and the first one allowing for the uniform description of the exterior product, in the following way:
\begin{flushleft}
$(x,y,z',w',u_1',u_2')\wedge(x,y,z'',w'',u_1'',u_2'')=$
\end{flushleft}
\begin{flushright}
$=(x,y,u_1''z'+u_1'z'',u_1''w'+u_1'w'',u_1'u_1'',u_1''u_2'+u_1'u_2''+z'w''-z''w')$.
\end{flushright}

\paragraph{The Clifford actions} It remains to describe the corresponding Clifford actions. As is standard, in the case of $\cl(V_1,g_1)$, it suffices to consider the action of elements of
form $(x,0,z,0,0,0)$ and $(x,0,0,w,0,0)$ on elements of form $(x,0,z,0,0,0)$, $(x,0,0,w,0,0)$, $(x,0,0,0,u_1,0)$, and $(x,0,0,0,0,u_2)$.

For these elements the multiplication is determined as follows
$$\left\{\begin{array}{l}
c_1(x,0,z,0,0,0)(x,0,z',0,0,0)=(x,0,0,0,-f_1(x)z^2,0)\\
c_1(x,0,z,0,0,0)(x,0,0,w,0,0)=(x,0,0,0,0,zw)\\
c_1(x,0,z,0,0,0)(x,0,0,0,u_1,0)=(x,0,u_1z,0,0,0)\\
c_1(x,0,z,0,0,0)(x,0,0,0,0,u_2)=(x,0,0,-u_2f_1(x)z,0,0)\\
c_1(x,0,0,w,0,0)(x,0,z,0,0,0)=(x,0,0,0,0,-zw)\\
c_1(x,0,0,w,0,0)(x,0,0,w',0,0)=(x,0,0,0,-f_1(x)ww',0)\\
c_1(x,0,0,w,0,0)(x,0,0,0,u_1,0)=(x,0,0,u_1w,0,0)\\
c_1(x,0,0,w,0,0)(x,0,0,0,0,u_2)=(x,0,0,0,0,0).
\end{array}\right.$$ In the case of $\cl(V_2,g_2)$, it suffices to consider the action of $(0,y,z,0,0,0)$ on elements of form $(0,y,z,0,0,0)$ and $(0,y,0,0,u_1,0)$, and we have
$$\left\{\begin{array}{l} c_2(0,y,z,0,0,0)(0,y,z',0,0,0)=(0,y,0,0,-f_2(y)zz',0) \\
c_2(0,y,z,0,0,0)(0,y,0,0,u_1,0)=(0,y,u_1z,0,0,0) \end{array}\right.$$
Finally, the Clifford action on both $\bigwedge_*(V_1\cup_{\tilde{f}}V_2)$ and $\bigwedge_*(V_1)\cup_{\tilde{f}^{\bigwedge_*}}\bigwedge_*(V_2)$ is obtained by concatenating the two lists;
the difference between the two pseudo-bundles is not seen on the level of defining the action, but rather in how we determine the two sets of points (as already been indicated above), underlying
the commutativity between the gluing and the exterior product.

\subsubsection{Remarks on other examples}

The two examples considered in the previous sections were necessarily (for reasons of length) among the simplest possible without being entirely trivial. Many similar ones can be constructed
by taking other pairs of standard bundles; we note that the difference would be in length and not in substance. On the other hand, a substantially difference might be obtained by considering a
non-simply-connected domain of gluing, more interestingly, one with a non-trivial homotopy (in the sense of the standard topology). Presumably, this would influence the possibility of finding
compatible pseudo-metrics; we make no further comments on this, concluding the section at this point.

\section{The pseudo-bundle of diffeological $1$-forms $\Lambda^1(X_1\cup_f X_2)$}

In Section 3 we recalled the abstract definition of the pseudo-bundle $\Lambda^1(X)$ of diffeological $1$-forms on a given diffeological space $X$. We now consider how it behaves with respect to gluing. 
Namely, given two diffeological spaces $X_1$ and $X_2$, and a gluing map $f:X_1\supset Y\to X_2$ between them, there are three pseudo-bundles of $1$-forms, $\Lambda^1(X_1)$, $\Lambda^1(X_2)$, 
and $\Lambda^1(X_1\cup_f X_2)$; we wonder how the latter pseudo-bundle is related to the former two. It turns out that it is not the result of any kind of gluing between $\Lambda^1(X_1)$ and
$\Lambda^1(X_2)$, but rather a partially defined direct sum of them. To illustrate what is meant by this,  take a one-point gluing, \emph{i.e.} a wedge of $X_1$ and $X_2$ at some $x_0$. Then the fibre over 
$x_0$ of the pseudo-bundle $\Lambda^1(X_1\vee_{x_0}X_2)$ is the direct sum of $\Lambda_{x_0}^1(X_1)$ with $\Lambda_{x_0}^1(X_2)$, while elsewhere the fibre is inherited from one of them, as 
appropriate. For instance, if $X_1$ and $X_2$ are the coordinate axes in $\matR^2$, then $\Lambda_{x_0}^1(X_1\vee_{x_0}X_2)$ has fibre $\matR$ everywhere except at the origin, where it is $\matR^2$ 
(disregarding for the moment its diffeology). Note also that it is not even locally trivial, although the two initial ones are so. The section is based on the results of \cite{differential-forms-gluing}.

\subsection{The space $\Omega^1(X_1\cup_f X_2)$}

The diffeological vector space $\Omega^1(X_1\cup_f X_2)$, which is the main precursor to the pseudo-bundle $\Lambda^1(X_1\cup_f X_2)$, has a rather simple description in terms of $\Omega^1(X_1)$ 
and $\Omega^1(X_2)$. It is based on the description of the image of the pullback map
$$\pi^*:\Omega^1(X_1\cup_f X_2)\to\Omega^1(X_1\sqcup X_2)\cong\Omega^1(X_1)\times\Omega^1(X_2),$$ where $\pi:X_1\sqcup X_2\to X_1\cup_f X_2$ is the quotient projection that appears in
the definition of diffeological gluing. The diffeomorphism $\Omega^1(X_1\sqcup X_2)\cong\Omega^1(X_1)\times\Omega^1(X_2)$ is based on the following property of the disjoint union diffeology: for every 
plot $p:U\to X_1\sqcup X_2$ there is a disjoint union decomposition $U=U_1\sqcup U_2$, where each $U_i$ is either empty or a domain, and if $U_i\neq\emptyset$ then $p_i=p|_{U_i}$ is a plot of $X_1$.
The pullback map itself is almost never surjective; its image, under the assumption that $f$ is a subduction, can be determined using the fact that the space $X_1\cup_f X_2$ admits an alternative 
representation, which is in terms of gluing along a diffeomorphism. This is considered in the section immediately following.

\subsubsection{The space $\Omega^1(X_1\cup_f X_2)$ as $\Omega_f^1(X_1)\times_{comp}\Omega^1(X_2)$}

The set $\Omega_f^1(X_1)\times_{comp}\Omega^1(X_2)$ appearing in the title of this is a (generally proper) subset of $\Omega^1(X_1)\times\Omega^1(X_2)$. If the latter is given the structure of the direct 
sum $\Omega^1(X_1)\oplus\Omega^1(X_2)$, which is consistent with its identification with $\Omega^1(X_1\sqcup X_2)$, this set is also a vector subspace. Its composition is determined by the map $f$.

\paragraph{The space $\Omega_f^1(X_1)$ of $f$-invariant forms} Let $p_1,p_1':U\to X_1$ be two plots of $X_1$; we say that they are \textbf{$f$-equivalent} if for any $u\in U$ such that $p_1(u)\neq p_1'(u)$ 
we have that $p_1(u),p_1'(u)\in Y$ and $f(p_1(u))=f(p_1'(u))$. A form $\omega_1\in\Omega^1(X_1)$ is said to be \textbf{$f$-invariant} if for any two $f$-equivalent plots $p_1,p_1'$ we have 
$\omega_1(p_1)=\omega_1(p_1')$. The subset $\Omega_f^1(X_1)$ of $\Omega^1(X_1)$ that consists of all $f$-invariant forms is obviously a vector subspace of $\Omega^1(X_1)$. It is easy to observe that 
if we denote $\tilde{i}_1:X_1\hookrightarrow(X_1\sqcup X_2)\to X_1\cup_f X_2$ then the image of the pullback map $\tilde{i}_1^*$ is contained in $\Omega_f^1(X_1)$; therefore the image of the pullback 
map $\pi^*$ is contained in $\Omega_f^1(X_1)\times\Omega^1(X_2)$, which in general is a proper subset of $\Omega^1(X_1)\times\Omega^1(X_2)$. However, the image of $\pi^*$ is still smaller than 
$\Omega_f^1(X_1)\times\Omega^1(X_2)$, as we explain immediately below.

\paragraph{Compatibility of a form $\omega_1\in\Omega^1(X_1)$ with a form $\omega_2\in\Omega^1(X_2)$} It is rather easy to find that a pair
$(\omega_1,\omega_2)\in\Omega^1(X_1)\times\Omega^1(X_2)$ that belongs to the image of $\pi^*$ must satisfy the following condition: for every plot $p_1$ of the subset diffeology on $Y$
(the domain of gluing), we have
$$\omega_1(p_1)=\omega_2(f\circ p_1).$$ Two forms $\omega_1$ and $\omega_2$ that satisfy this condition are said to be \textbf{compatible} (the full term would be,
\emph{compatible with $f$}; we omit indicating the gluing map whenever it is clear from the context).

\paragraph{The image of the pullback map} Denote
$$\Omega_f^1(X_1)\times_{comp}\Omega^1(X_2)=\{(\omega_1,\omega_2)\,|\,\omega_1\in\Omega_f^1(X_1),\,\omega_2\in\Omega^1(X_2),\,\omega_1\mbox{ and }\omega_2\mbox{ are compatible}\}.$$
It is relatively easy to show (see \cite{differential-forms-gluing}) that $\pi^*$ is a diffeomorphism
$$\pi^*:\Omega^1(X_1\cup_f X_2)\to\Omega_f^1(X_1)\times_{comp}\Omega^1(X_2).$$ Its inverse is given by the following formula:
$$(\omega_1,\omega_2)\mapsto\omega_1\cup_f\omega_2\mbox{ such that }(\omega_1\cup_f\omega_2)(p)=\left\{\begin{array}{ll}
\omega_1(p_1) & \mbox{if }p\mbox{ lifts to a plot }p_1\mbox{ of }X_1\mbox{ and}\\
\omega_2(p_2) & \mbox{if }p\mbox{ lifts to a plot }p_2\mbox{ of }X_2 \end{array}\right.$$ for every plot $p$ of $X_1\cup_f X_2$ that has a connected domain of definition. Obviously, the values
of $\omega_1\cup_f\omega_2$ on plots with connected domains uniquely determine it. The form $\omega_1\cup_f\omega_2$ is well-defined, since any plot of $X_1\cup_f X_2$ has at most one lift
to $X_2$; whenever it has more than one lift to $X_1$, all such lifts are $f$-equivalent, and finally, if a given $p$ has lifts $p_1$ and $p_2$ to both $X_1$ and $X_2$, the equality
$\omega_1(p_1)=\omega_2(p_2)$ follows from the compatibility condition.

\paragraph{A criterion for compatibility of forms} Let $i:Y\hookrightarrow X_1$ and $j:f(Y)\hookrightarrow X_2$ be the natural inclusions, and let 
$$i^*:\Omega^1(X_1)\to\Omega^1(Y)\,\,\,\mbox{ and }\,\,\,j^*:\Omega^1(X_2)\to\Omega^1(f(Y))$$  be the corresponding pullback maps. It is quite easy to show the following.

\begin{lemma}\label{compatible:forms:in:omega:lem}
Two forms $\omega_1\in\Omega^1(X_1)$ and $\omega_2\in\Omega^1(X_2)$ are compatible if and only if 
$$i^*\omega_1=f^*(j^*\omega_2). $$ 
\end{lemma}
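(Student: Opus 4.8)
The plan is to reduce the claimed equivalence to a plot-by-plot comparison, relying on the basic fact (recalled in Section 3) that a diffeological $1$-form is completely determined by its values on the plots of its domain, so that two forms in $\Omega^1(Y)$ coincide precisely when they agree on every plot $p\colon U\to Y$ for the subset diffeology. Both sides of the asserted identity $i^*\omega_1=f^*(j^*\omega_2)$ live in $\Omega^1(Y)$, so it will suffice to evaluate each of them on an arbitrary plot of $Y$ and to match the outcome with the defining condition of compatibility.

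First I would record the smoothness of the maps involved, which is what makes the pullbacks well-defined: $i$ and $j$ are inclusions and hence smooth for the respective subset diffeologies, while $f\colon Y\to X_2$ is smooth by hypothesis, being the gluing map (smooth for the subset diffeology on $Y$). Moreover, since the image of $f$ is $f(Y)$ with its subset diffeology, the corestriction $f\colon Y\to f(Y)$ is again smooth, so that $f^*\colon\Omega^1(f(Y))\to\Omega^1(Y)$ is defined; and one has the factorization $j\circ f=f$ as maps $Y\to X_2$, where the left-hand $f$ is the corestriction. Throughout I would use the standard formula for the pullback of a diffeological form, $(g^*\alpha)(p)=\alpha(g\circ p)$, valid for any smooth $g$ and any plot $p$ of the source.

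The core step will then be a short evaluation. Fixing a plot $p_1\colon U\to Y$ of the subset diffeology on $Y$ (which, as a set map, is the same as the plot $i\circ p_1$ of $X_1$), I would compute
\[
(i^*\omega_1)(p_1)=\omega_1(i\circ p_1)=\omega_1(p_1),
\]
which is exactly the quantity on the left of the compatibility condition, and
\[
\bigl(f^*(j^*\omega_2)\bigr)(p_1)=(j^*\omega_2)(f\circ p_1)=\omega_2(j\circ f\circ p_1)=\omega_2(f\circ p_1),
\]
where the last step uses $j\circ f=f$ to read $j\circ f\circ p_1$ as the plot $f\circ p_1$ of $X_2$. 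Hence the equality of the two values on $p_1$ is literally the equation $\omega_1(p_1)=\omega_2(f\circ p_1)$.

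To finish, I would invoke the determination of forms by their values on plots: $i^*\omega_1=f^*(j^*\omega_2)$ holds if and only if these two forms agree on every plot $p_1$ of $Y$, i.e. if and only if $\omega_1(p_1)=\omega_2(f\circ p_1)$ for all such $p_1$, which is the definition of compatibility. There is no genuinely hard step here; the only point demanding care is the bookkeeping of codomains — keeping straight when a given set map is being regarded as a plot of $Y$, of $f(Y)$, of $X_1$, or of $X_2$ — and in particular verifying the factorization $j\circ f=f$, so that the pullback along the inclusion $j$ cancels against the codomain enlargement implicit in reading $f\circ p_1$ as a plot of $X_2$.
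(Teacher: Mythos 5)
Your proof is correct, and it is precisely the direct verification the paper alludes to when it states the lemma with ``it is quite easy to show'' (the paper itself omits the argument, deferring to the companion paper on differential forms and gluing): unwinding the pullback formula $(g^*\alpha)(p)=\alpha(g\circ p)$ on an arbitrary plot of the subset diffeology of $Y$, using the factorization of $f$ through its corestriction to $f(Y)$, and invoking the fact that a diffeological form is by definition determined by its values on plots. No gaps; the codomain bookkeeping you flag is indeed the only point requiring care, and you handle it correctly.
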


\subsubsection{The natural projections $\Omega^1(X_1\cup_f X_2)\to\Omega^1(X_1)$ and $\Omega^1(X_1\cup_f X_2)\to\Omega^1(X_2)$}

The two projections are the pullback maps associated to the compositions $X_i\hookrightarrow X_1\sqcup X_2\to X_1\cup_f X_2$ of  the natural inclusions $X_i\hookrightarrow X_1\sqcup X_2$ with
the quotient projection $\pi$. 

\paragraph{The reduced space $X_1^f$} The construction that we are about to describe allows us to consider, instead of an arbitrary gluing map, only the case when $f$ is a diffeomorphism. This is 
achieved by first replacing $X_1$ by its reduction by $f$-equivalence, that is, by the space
$$X_1^f:=X_1/\sim,\mbox{ where }y_1\sim y_2\Leftrightarrow f(y_1)=f(y_2);$$ $X_1^f$ is endowed with the quotient diffeology, as well as with the quotient projection $\pi_1^f:X_1\to X_1^f$ and the 
pushforward $f_{\sim}:\pi_1^f(Y)\to X_2$ of the map $f$. The following properties then hold:
\begin{itemize}
\item $\Omega^1(X_1^f)\cong\Omega_f^1(X_1)$ via the pullback map $(\pi_1^f)^*$;

\item $\pi_1^f$ preserves the compatibility, in the sense that if $\omega_1\in\Omega_f^1(X_1)$ is $f$-compatible with some $\omega_2\in\Omega^1(X_2)$ then $((\pi_1^f)^*)^{-1}(\omega_1)$ is
$f_{\sim}$-compatible with the same $\omega_2$;

\item if $f$ is a subduction then $f_{\sim}$ is a diffeomorphism of its domain with its image;

\item there is a diffeomorphism $X_1^f\cup_{f_{\sim}}X_2\cong X_1\cup_f X_2$, that commutes with the natural inductions.
\end{itemize}
The properties just listed allow in many cases to consider, instead of the gluing of $X_1$ to $X_2$ along an arbitrary smooth map, a gluing  of $X_1^f$ to $X_2$ along a diffeomorphism, with the obvious
advantages of the latter. 

\paragraph{The images of the two projections} Assume, by the reasoning just made, that $f$ is a diffeomorphism. Let us consider the images of the two projections 
$$\mbox{pr}_1:\Omega^1(X_1)\times_{comp}\Omega^1(X_2)\to\Omega^1(X_1)\,\,\mbox{ and }\,\,\mbox{pr}_2:\Omega^1(X_1)\times_{comp}\Omega^1(X_2)\to\Omega^1(X_2);$$ since $f$ is a diffeomorphism, 
the two cases are symmetric.

By definition of $\Omega^1(X_1)\times_{comp}\Omega^1(X_2)$, we have
$$\mbox{Im}(\mbox{pr}_1)=\{\omega_1\in\Omega^1(X_1)\,|\,\mbox{there exists }\omega_2\in\Omega^1(X_2)\mbox{ s. t. }\omega_1,\omega_2\mbox{ are compatible}\}.$$ By Lemma 
\ref{compatible:forms:in:omega:lem} this is equivalent to
$$\mbox{Im}(\mbox{pr}_1)=\{\omega_1\in\Omega^1(X_1)\,|\,i^*\omega_1\in\mbox{Im}(f^*j^*)\}.$$ We thus obtain that 
$$\mbox{Im}(\mbox{pr}_1)=(i^*)^{-1}(\mbox{Im}(f^*j^*))=(i^*)^{-1}\left(f^*j^*(\Omega^1(X_2))\right).$$ Likewise,
$$\mbox{Im}(\mbox{pr}_2)=(j^*)^{-1}\left((f^*)^{-1}i^*(\Omega^1(X_1))\right).$$

\paragraph{The surjectivity of $\mbox{pr}_1$ and $\mbox{pr}_2$} We will mostly treat the case when the two projections are surjective, the condition that can be expressed as, 
$$i^*(\Omega^1(X_1))=(f^*j^*)(\Omega^1(X_2)).$$ We will usually put it in as an assumption, noting that it is quite frequently satisfied (such as for gluings along one-point sets and usual open domains). For 
the rest, we shall avoid discussing when it is, or is not satisfied; given the breadth of what can be considered a diffeological space (pretty much anything, in relative terms), leaving it as an assumption just 
stated seems a reasonable thing to do.

\subsection{The fibres of the pseudo-bundle $\Lambda^1(X_1\cup_f X_2)$}

In this section and further on, we mostly assume that the gluing map $f$ is a diffeomorphism with its image, although some of the statements can be given without this assumption.

\subsubsection{Preliminary considerations on $\Lambda^1(X)$: compatible elements, and pullback maps}

We start our consideration of the pseudo-bundle $\Lambda^1(X_1\cup_f X_2)$ by collecting in this subsection the preliminary notions, regarding the pseudo-bundle $\Lambda^1(X)$, that are relevant to its 
behavior under gluing. Apart from stating our main viewpoint on $\Lambda^1(X)$ as a quotient pseudo-bundle, two main items that we consider are the compatibility notion and a version of the pullback map.

\paragraph{The pseudo-bundle $\Lambda^1(X)$ as a quotient pseudo-bundle} Let $X$ be any diffeological space. By the original definition, each fibre of $\Lambda^1(X)$ is a diffeological quotient of form 
$\Omega^1(X)/\Omega_x^1(X)$, where $x\in X$ is an arbitrary point and $\Omega_x^1(X)$ is the subspace of all $1$-forms on $X$ vanishing at $x$ (see Section 3). 

\paragraph{Compatibility of elements in $\Lambda^1(X_1)$ with those in $\Lambda^1(X_2)$} The compatibility notion for elements of $\Lambda^1(X_1)$ and $\Lambda^1(X_2)$ is almost immediate from 
that of compatible forms in $\Omega^1(X_1)$ and $\Omega^1(X_2)$ and is as follows (it does not require any particular assumption on $f$).

\begin{defn}\label{compatible:elements:of:lambda:defn}
Let $y\in Y$, and let $\alpha_1=\omega_1+\Omega_y^1(X_1)\in\Lambda_y^1(X_1)$ and $\alpha_2=\omega_2+\Omega_{f(y)}^1(X_2)\in\Lambda_{f(y)}^1(X_2)$. We say that $\alpha_1$ and $\alpha_2$ are 
\textbf{compatible} if for every $\omega_1'\in\alpha_1$ and for every $\omega_2'\in\alpha_2$ the forms $\omega_1'$ and $\omega_2'$ are compatible.
\end{defn}

This definition is consistent with the alternative definition of $\Lambda_y^1(X_1)\oplus_{comp}\Lambda_{f(y)}^1(X_2)$ as the image of $\Omega^1(X_1)\oplus_{comp}\Omega^1(X_2)$ in the quotient 
$$\left(\Omega^1(X_1)\oplus\Omega^1(X_2)\right)/\left(\Omega_y^1(X_1)\oplus\Omega_{f(y)}^1(X_2)\right).$$ We will use the notation $\Lambda^1(X_1)\oplus_{comp}\Lambda^1(X_2)$ to mean the 
collection $\bigcup_{y\in Y}\left(\Lambda_y^1(X_1)\oplus_{comp}\Lambda_{f(y)}^1(X_2)\right)$ of all such fibres, for all $y\in Y$. This can also be described as the image of 
$Y\times\left(\Omega^1(X_1)\oplus_{comp}\Omega^1(X_2)\right)$ in the quotient
$$\left(Y\times(\Omega^1(X_1)\oplus\Omega^1(X_2))\right)/\bigcup_{y\in Y}\left(\{y\}\times(\Omega_y^1(X_1)\oplus\Omega_{f(y)}^1(X_2))\right).$$

\paragraph{The pseudo-bundle version of a pullback map} The pullback map can also be defined for elements of pseudo-bundles of form $\Lambda^1(X)$, although the construction that we are about to 
describe is not always applicable and, when it comes to smooth maps between proper subsets, it gets somewhat cumbersome. It is however sufficient for the uses that we will make of it. 

Let first $f:X_1\to X_2$ be a diffeomorphism between two diffeological spaces. Then the map $(f^{-1},f^*):X_2\times\Omega^1(X_2)\to X_1\times\Omega^1(X_1)$, where $f^*:\Omega^1(X_2)\to\Omega^1(X_1)$ 
is the already-seen pullback map, descends to a well-defined map $f_{\Lambda}^*:\Lambda^1(X_2)\to\Lambda^1(X_1)$. In particular, if two diffeological spaces $X_1$ and $X_2$ are glued along a 
diffeomorphism $f:X_1\supseteq Y\to X_2$, then there is a pullback map 
$$f_{\Lambda}^*:\Lambda^1(f(Y))\to\Lambda^1(Y).$$ However, $\Lambda^1(Y)$ and $\Lambda^1(f(Y))$ do not, in general, embed in, respectively, $\Lambda^1(X_1)$ and $\Lambda^1(X_2)$; the best that 
we can do is the following.

Let
$$\pi_Y^{\Omega,\Lambda}:Y\times\Omega^1(Y)\to\Lambda^1(Y)\,\,\,\mbox{ and }\,\,\,\pi_{f(Y)}^{\Omega,\Lambda}:f(Y)\times\Omega^1(f(Y))\to\Lambda^1(f(Y))$$ be the defining projections of $\Lambda^1(Y)$ 
and $\Lambda^1(f(Y))$ respectively. Then it is easy to obtain the following statement.

\begin{lemma}
The following is true:
\begin{enumerate}
\item The map $(i^{-1},i^*):i(Y)\times\Omega^1(X_1)\to Y\times\Omega^1(Y)$ descends to a well-defined map
$$i_{\Lambda}^*:\Lambda^1(X_1)\supset(\pi_1^{\Lambda})^{-1}(Y)\to\Lambda^1(Y)\,\,\mbox{ such that }\,
\pi_Y^{\Omega,\Lambda}\circ(i^{-1},i^*)=i_{\Lambda}^*\circ\pi_1^{\Omega,\Lambda}|_{i(Y)\times\Omega^1(X_1)};$$
\item The map $(j^{-1},j^*):j(f(Y))\times\Omega^1(X_2)\to f(Y)\times\Omega^1(f(Y))$ descends to a well-defined map
$$j_{\Lambda}^*:\Lambda^1(X_2)\supset(\pi_2^{\Lambda})^{-1}(f(Y))\to\Lambda^1(f(Y))\,\,\mbox{ such that }\,
\pi_{f(Y)}^{\Omega,\Lambda}\circ(j^{-1},j^*)=j_{\Lambda}^*\circ\pi_2^{\Omega,\Lambda}|_{j(f(Y))\times\Omega^1(X_2)}.$$
\end{enumerate}
\end{lemma}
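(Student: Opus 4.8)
The plan is to prove both items in parallel, since they are completely symmetric (item 2 is item 1 with the substitutions $X_1\rightsquigarrow X_2$, $Y\rightsquigarrow f(Y)$, $i\rightsquigarrow j$). The statement asks us to descend a map defined on the product level to one on the quotient level, so the natural strategy is the universal property of the quotient (pushforward) diffeology: a map out of a quotient $Q=Z/\!\sim$ is well-defined and smooth precisely when the given map on $Z$ is smooth and constant on equivalence classes. Here $Z=i(Y)\times\Omega^1(X_1)$ carries the product diffeology, the quotient is (the restriction to $Y$ of) $\Lambda^1(X_1)$ via $\pi_1^{\Omega,\Lambda}$, and the map to be descended is $(i^{-1},i^*)$ followed by $\pi_Y^{\Omega,\Lambda}$. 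So first I would reformulate the desired identity $\pi_Y^{\Omega,\Lambda}\circ(i^{-1},i^*)=i_\Lambda^*\circ\pi_1^{\Omega,\Lambda}|_{i(Y)\times\Omega^1(X_1)}$ as exactly the condition that defines $i_\Lambda^*$ through the universal property, thereby reducing the whole claim to two verifications: compatibility with the fibrewise equivalence relations, and smoothness.

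The compatibility step is where the real content lies. Recall that a point of $\Lambda^1(X_1)$ over $y\in Y$ is a class $\omega_1+\Omega_y^1(X_1)$, while a point of $\Lambda^1(Y)$ over $y$ is a class $\eta+\Omega_y^1(Y)$; passing through $(i^{-1},i^*)$ replaces $\omega_1$ by its pullback $i^*\omega_1$. What I must check is that if two forms $\omega_1,\omega_1'\in\Omega^1(X_1)$ have the same value at $y$, i.e. $\omega_1-\omega_1'\in\Omega_y^1(X_1)$, then their pullbacks have the same value at $y$ as forms on $Y$, i.e. $i^*\omega_1-i^*\omega_1'=i^*(\omega_1-\omega_1')\in\Omega_y^1(Y)$. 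This is the key lemma, and it follows from the definition of ``vanishing at a point'' via plots centered at $y$ together with the fact that the inclusion $i:Y\hookrightarrow X_1$ is smooth: every plot $p$ of $Y$ centered at $y$ pushes forward to a plot $i\circ p$ of $X_1$ centered at $y$, so that $(i^*\omega)(p)(0)=(\omega(i\circ p))(0)$, and if $\omega$ vanishes at $y$ in $X_1$ then the right-hand side is $0$. Hence $i^*$ maps $\Omega_y^1(X_1)$ into $\Omega_y^1(Y)$, which is exactly the well-definedness of $i_\Lambda^*$ on each fibre. I would state this pullback-of-vanishing-forms fact explicitly as the crux, as it is both the conceptual core and the thing that makes the descended map land in the correct fibre $\Lambda_y^1(Y)$.

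The smoothness step I would handle by the standard pushforward argument. Since $\Lambda^1(X_1)$ over $Y$ carries the pushforward of the product diffeology on $i(Y)\times\Omega^1(X_1)$ (restricted appropriately to plots whose base component lands in $Y$), it suffices to show that the composite $\pi_Y^{\Omega,\Lambda}\circ(i^{-1},i^*)$ is smooth as a map out of $i(Y)\times\Omega^1(X_1)$; then $i_\Lambda^*$ is smooth by the universal property of the pushforward. For this I would note that $i^{-1}$ is smooth by hypothesis (the inclusion is an induction onto its image, so its inverse on $i(Y)$ is smooth), that $i^*:\Omega^1(X_1)\to\Omega^1(Y)$ is smooth for the functional diffeologies (this is the pullback map recalled in Section 3, whose smoothness is standard because pulling back a plot of $\Omega^1$ amounts to precomposing plots of the base with $i$), and that the defining projection $\pi_Y^{\Omega,\Lambda}$ is smooth by construction of the diffeology on $\Lambda^1(Y)$. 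Composing smooth maps and using that the product map $(i^{-1},i^*)$ is smooth componentwise for the product diffeology completes it.

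The main obstacle I anticipate is purely bookkeeping rather than deep: carefully matching the domains of definition. The map is only defined over $(\pi_1^\Lambda)^{-1}(Y)$, not all of $\Lambda^1(X_1)$, and $i^{-1}$ lives only on $i(Y)$, so I must be scrupulous that every plot considered has its base component taking values in $Y$, and that the restriction of the pushforward diffeology to $(\pi_1^\Lambda)^{-1}(Y)$ is the one genuinely being used. In particular the subtlety is that $\Lambda^1(Y)$ is built from $\Omega^1(Y)$, which is generally a proper quotient-type object distinct from the restriction of $\Omega^1(X_1)$, so the pullback $i^*$ really does change the ambient form space; keeping $\Omega^1(X_1)$, $\Omega^1(Y)$, $\Lambda^1(X_1)$ and $\Lambda^1(Y)$ notationally separate throughout is what prevents the argument from collapsing into a false triviality. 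Once the vanishing-forms lemma is in place and the domains are tracked, everything else is a routine application of the universal properties of quotient and pushforward diffeologies.
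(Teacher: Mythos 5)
Your proof is correct. The paper states this lemma without proof (deferring to \cite{differential-forms-gluing}), and your argument is exactly the natural one it treats as immediate: the crux is your vanishing-forms observation that $i^*$ carries $\Omega_y^1(X_1)$ into $\Omega_y^1(Y)$ — since any plot of $Y$ centered at $y$ composes with $i$ to a plot of $X_1$ centered at $y$ — which gives fibrewise well-definedness, while your smoothness step via the universal property of the pushforward diffeology is justified because, by the paper's description of plots of $\Lambda^1(X)$, every plot of $\Lambda^1(X_1)$ with values in $(\pi_1^{\Lambda})^{-1}(Y)$ locally lifts to a plot of $i(Y)\times\Omega^1(X_1)$, so the subset diffeology there is indeed the pushforward you use.
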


\paragraph{Compatibility in terms of pullback maps} We have already related the compatibility notion for forms $\omega_1\in\Omega^1(X_1)$ and $\omega_2\in\Omega^1(X_2)$ to the pullback maps $i^*$ 
and $j^*$, corresponding to the natural inclusions $i:Y\hookrightarrow X_1$ and $j:f(Y)\hookrightarrow X_2$ (Lemma \ref{compatible:forms:in:omega:lem}).  The analogous statement is also true for elements 
of the pseudo-bundles $\Lambda^1(X_1)$ and $\Lambda^1(X_2)$.

\begin{prop}\label{compatible:in:lambda:via:pullback:maps:prop}
Two elements $\alpha_1\in\Lambda^1(X_1)$ and $\alpha_2\in\Lambda^1(X_2)$ are compatible if and only if $\pi_1^{\Lambda}(\alpha_1)\in Y$, $\pi_2^{\Lambda}(\alpha_2)=f(\pi_1^{\Lambda}(\alpha_1))$, 
and 
$$i_{\Lambda}^*\alpha_1=f_{\Lambda}^*(j_{\Lambda}^*\alpha_2).$$
\end{prop}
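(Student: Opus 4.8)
The plan is to reduce the statement about the fibrewise quotient pseudo-bundle $\Lambda^1(X)$ to the already-established statement about $\Omega^1(X)$, namely Lemma \ref{compatible:forms:in:omega:lem}, by carefully tracking how the compatibility condition passes to the quotients defining the fibres. First I would unwind the definitions. By Definition \ref{compatible:elements:of:lambda:defn}, the elements $\alpha_1=\omega_1+\Omega_y^1(X_1)$ and $\alpha_2=\omega_2+\Omega_{f(y)}^1(X_2)$ (where $y=\pi_1^{\Lambda}(\alpha_1)\in Y$ and $\pi_2^{\Lambda}(\alpha_2)=f(y)$) are compatible precisely when \emph{every} representative $\omega_1'\in\alpha_1$ is compatible with \emph{every} representative $\omega_2'\in\alpha_2$ in the sense of forms. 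Using Lemma \ref{compatible:forms:in:omega:lem}, the latter condition reads $i^*\omega_1'=f^*(j^*\omega_2')$ for all such pairs of representatives.

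Next I would translate the desired right-hand side, the equation $i_{\Lambda}^*\alpha_1=f_{\Lambda}^*(j_{\Lambda}^*\alpha_2)$, into a statement about representatives using the defining relations for the pullback maps on $\Lambda^1$. By the Lemma preceding this proposition, $i_{\Lambda}^*$ and $j_{\Lambda}^*$ are characterized by $\pi_Y^{\Omega,\Lambda}\circ(i^{-1},i^*)=i_{\Lambda}^*\circ\pi_1^{\Omega,\Lambda}$ and the analogous identity for $j$, while $f_{\Lambda}^*$ is the descent of $(f^{-1},f^*)$. Concretely this means $i_{\Lambda}^*\alpha_1$ is the class of $i^*\omega_1$ in $\Lambda_y^1(Y)=\Omega^1(Y)/\Omega_y^1(Y)$, and $f_{\Lambda}^*(j_{\Lambda}^*\alpha_2)$ is the class of $f^*(j^*\omega_2)$ in $\Lambda_y^1(Y)$. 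Thus the right-hand equation is equivalent to $i^*\omega_1-f^*(j^*\omega_2)\in\Omega_y^1(Y)$, i.e.\ to the single equality $i^*\omega_1=f^*(j^*\omega_2)$ holding only \emph{at the point $y$} (after projecting to $\Lambda_y^1(Y)$), for the chosen representatives $\omega_1,\omega_2$.

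The crux of the argument, and what I expect to be the main obstacle, is reconciling the apparent asymmetry between the two sides: compatibility of $\alpha_1$ and $\alpha_2$ demands the form-level equality for \emph{all} representatives, whereas the pullback equation is a quotient (pointwise-at-$y$) equality for \emph{some} chosen representatives. The forward direction ($\Rightarrow$) is then immediate: if all representatives satisfy $i^*\omega_1'=f^*(j^*\omega_2')$, then in particular the chosen ones do, and projecting to $\Lambda_y^1(Y)$ yields the pullback equation. The reverse direction ($\Leftarrow$) is the delicate step. Here I would argue that it suffices to work with elements of the subspaces $\Omega_y^1(X_1)$ and $\Omega_{f(y)}^1(X_2)$: writing any other representatives as $\omega_1'=\omega_1+\eta_1$ and $\omega_2'=\omega_2+\eta_2$ with $\eta_1\in\Omega_y^1(X_1)$, $\eta_2\in\Omega_{f(y)}^1(X_2)$, the key point to verify is that a form vanishing at $y$ (resp.\ $f(y)$) pulls back under $i^*$ (resp.\ $f^*j^*$) to a form vanishing at $y\in Y$. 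This follows because $i$ is the inclusion of $Y$, so a plot of $Y$ centered at $y$ composes with $i$ to a plot of $X_1$ centered at $y$, and vanishing at $y$ is tested precisely on such centered plots; the analogous claim for $f^*j^*$ uses that $f$ is a diffeomorphism sending $y$ to $f(y)$, so centered plots correspond. Once this lemma-within-the-proof is in hand, the difference $i^*\omega_1'-f^*(j^*\omega_2')$ equals $\bigl(i^*\omega_1-f^*(j^*\omega_2)\bigr)+\bigl(i^*\eta_1-f^*(j^*\eta_2)\bigr)$, where the first bracket vanishes at $y$ by hypothesis and the second vanishes at $y$ by the centered-plot argument; hence the form-level compatibility holds at $y$ for all representatives, which by Definition \ref{compatible:elements:of:lambda:defn} is exactly what compatibility of $\alpha_1$ and $\alpha_2$ requires. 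I would close by noting that the whole equivalence lives over the single point $y$, so no additional smoothness or cross-fibre conditions enter, and the proof reduces cleanly to the pointwise bookkeeping just described.
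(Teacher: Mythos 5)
Your setup is fine: the translation of the displayed equation into ``$i^*\omega_1-f^*(j^*\omega_2)$ vanishes at the point $y$'', and the observation that forms vanishing at $y$ (resp.\ at $f(y)$) pull back to forms vanishing at $y$ (which is precisely why $i_{\Lambda}^*$, $j_{\Lambda}^*$, $f_{\Lambda}^*$ descend to the quotients), are both correct, and the forward direction is indeed immediate. The gap is in the closing step of your reverse direction. What you actually establish there is that for \emph{every} pair of representatives $\omega_1'\in\alpha_1$, $\omega_2'\in\alpha_2$ the form $i^*\omega_1'-f^*(j^*\omega_2')\in\Omega^1(Y)$ vanishes \emph{at the single point} $y$. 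But compatibility of the forms $\omega_1'$ and $\omega_2'$ means, by Lemma \ref{compatible:forms:in:omega:lem}, the equality $i^*\omega_1'=f^*(j^*\omega_2')$ \emph{in} $\Omega^1(Y)$, that is, on all of $Y$; a $1$-form on $Y$ can perfectly well vanish at $y$ without being the zero form. So you prove a strictly weaker property than what Definition \ref{compatible:elements:of:lambda:defn} asks for, and your sentence ``which by Definition \ref{compatible:elements:of:lambda:defn} is exactly what compatibility of $\alpha_1$ and $\alpha_2$ requires'' is where the argument breaks: you have silently replaced ``compatible'' (a condition over all of $Y$) by ``compatible at $y$'' (a condition you introduced).

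This gap is not patchable within your reading of the definition, because under the literal ``for every representative'' reading the reverse implication is false. Take $X_1=X_2=\matR^2$ standard, glued along the $x$-axis $Y$ by the identity, $y$ the origin, $\alpha_1=0_y$ and $\alpha_2=0_{f(y)}$: the displayed equation holds trivially, but the representative $x\,dx\in\Omega_y^1(X_1)$ of $\alpha_1$ is not compatible with the representative $0$ of $\alpha_2$, since $i^*(x\,dx)=x\,dx\neq 0$ on $Y$. The proposition is really a statement about the weaker notion indicated immediately after Definition \ref{compatible:elements:of:lambda:defn} (namely that $(\alpha_1,\alpha_2)$ lies in the image of $\Omega^1(X_1)\oplus_{comp}\Omega^1(X_2)$ in the quotient, i.e.\ that \emph{some} pair of representatives is compatible), and with that notion the real content of the reverse direction --- entirely absent from your proposal --- is an extension argument: from $i^*\omega_1-f^*(j^*\omega_2)\in\Omega_y^1(Y)$ one must \emph{construct} $\eta_1\in\Omega_y^1(X_1)$ and $\eta_2\in\Omega_{f(y)}^1(X_2)$ such that $i^*(\omega_1+\eta_1)=f^*(j^*(\omega_2+\eta_2))$ holds identically on $Y$. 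This is where the standing hypotheses of this part of the paper (gluing along a diffeomorphism and $i^*(\Omega^1(X_1))=(f^*j^*)(\Omega^1(X_2))$, as treated in \cite{differential-forms-gluing}) must be used; the centered-plot argument alone does not produce such representatives.
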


\paragraph{The conditions $i^*(\Omega^1(X_1))=(f^*j^*)(\Omega^1(X_2))$ and $i_{\Lambda}^*((\pi_1^{\Lambda})^{-1}(Y))=(f_{\Lambda^*j_{\Lambda^*}})((\pi_2^{\Lambda})^{-1}(f(Y)))$} As we previously 
said, from this section onwards we will carry forward also the assumption that the two direct product projections $\mbox{pr}_1:\Omega^1(X_1)\times_{comp}\Omega^1(X_2)\to\Omega^1(X_1)$ and 
$\mbox{pr}_2:\Omega^1(X_1)\times_{comp}\Omega^1(X_2)\to\Omega^1(X_2)$, expressed equivalently as the equality $i^*(\Omega^1(X_1))=(f^*j^*)(\Omega^1(X_2))$. The basic meaning of this condition 
is that for every form $\omega_1\in\Omega^1(X_1)$ there is at least one form $\omega_2\in\Omega^1(X_2)$ such that $\omega_1$ and $\omega_2$ are compatible, and \emph{vice versa}. The obvious 
question then is whether this assumption implies the analogous equality for the pseudo-bundles $\Lambda^1(X_1)$ and $\Lambda^1(X_2)$, that is, is it true that 
$$i^*(\Omega^1(X_1))=(f^*j^*)(\Omega^1(X_2))\,\Rightarrow\,i_{\Lambda}^*((\pi_1^{\Lambda})^{-1}(Y))=(f_{\Lambda^*j_{\Lambda^*}})((\pi_2^{\Lambda})^{-1}(f(Y)))?$$
That this is indeed the case easily follows from the definition of the map $f_{\Lambda}^*$.

\subsubsection{The characteristic maps $\tilde{\rho}_1^{\Lambda}$ and $\tilde{\rho}_2^{\Lambda}$: definition}

As we will see in the sections that follow, the pseudo-bundle $\Lambda^1(X_1\cup_f X_2)$ does not admit any description in terms of standard constructions applied to $\Lambda^1(X_1)$ and 
$\Lambda^1(X_2)$, not even via the gluing operation. On the other hand, it is of course strongly related to them; to describe this relation, we need the two auxiliary maps $\tilde{\rho}_1^{\Lambda}$ and 
$\tilde{\rho}_2^{\Lambda}$ defined in this section.

As any pseudo-bundle of diffeological 1-forms, $\Lambda^1(X_1\cup_f X_2)$ is defined as a specific pseudo-bundle quotient of
$$(X_1\cup_f X_2)\times\left(\Omega^1(X_1)\times_{comp}\Omega^1(X_2)\right).$$ Since $X_1\cup_f X_2$ is a diffeological quotient of $X_1\sqcup X_2$,  $\Lambda^1(X_1\cup_f X_2)$ is also a 
pseudo-bundle quotient of 
\begin{flushleft}
$(X_1\sqcup X_2)\times\left(\Omega^1(X_1)\times_{comp}\Omega^1(X_2)\right)\cong$
\end{flushleft}
\begin{flushright}
$\cong\left(X_1\times\left(\Omega^1(X_1)\times_{comp}\Omega^1(X_2)\right)\right)\sqcup\left(X_2\times\left(\Omega^1(X_1)\times_{comp}\Omega^1(X_2)\right)\right)$.
\end{flushright} Let now
\begin{flushleft}
$\rho_1:X_1\times\left(\Omega^1(X_1)\times_{comp}\Omega^1(X_2)\right)\to X_1\times\Omega^1(X_1)$ and 
\end{flushleft}
\begin{flushright}
$\rho_2:X_2\times\left(\Omega^1(X_1)\times_{comp}\Omega^1(X_2)\right)\to X_2\times\Omega^1(X_2)$
\end{flushright} be the maps acting by identity on $X_1$ or $X_2$, as appropriate, and by the projection on the first, respectively, the second factor on $\Omega^1(X_1)\times_{comp}\Omega^1(X_2)$. It is 
rather easy to find that these maps preserve the vanishing of $1$-forms and therefore descend to well-defined and smooth maps
\begin{flushleft}
$\tilde{\rho}_1^{\Lambda}:\Lambda^1(X_1\cup_f X_2)\supset(\pi^{\Lambda})^{-1}(i_1(X_1)\cup i_2(f(Y)))\to\Lambda^1(X_1)$ and
\end{flushleft}
\begin{flushright}
$\tilde{\rho}_2^{\Lambda}:\Lambda^1(X_1\cup_f X_2)\supset(\pi^{\Lambda})^{-1}(i_2(X_2))\to\Lambda^1(X_2)$.
\end{flushright}

\subsubsection{The fibrewise structure of $\Lambda^1(X_1\cup_f X_2)$}

We shall now consider the fibres of $\Lambda^1(X_1\cup_f X_2)$.

\begin{thm}\label{fibres:of:lambda:thm}
Let $X_1$ and $X_2$ be two diffeological spaces, and let $f:X_1\supseteq Y\to X_2$ be a gluing diffeomorphism such that $i^*(\Omega^1(X_1))=(f^*j^*)(\Omega^1(X_2))$, where $i:Y\hookrightarrow X_1$ 
and $j:f(Y)\hookrightarrow X_2$ are the natural inclusions. Let $x\in X_1\cup_f X_2$, Then: 
\begin{enumerate}
\item If $x\in i_1(X_1\setminus Y)$ then $\Lambda_x^1(X_1\cup_f X_2)\cong\Lambda_{\tilde{x}}^1(X_1)$, where $\tilde{x}=i_1^{-1}(x)$;
\item If $x\in i_2(X_2\setminus f(Y))$ then $\Lambda_x^1(X_1\cup_f X_2)\cong\Lambda_{\tilde{x}}^1(X_2)$, where $\tilde{x}=i_2^{-1}(x)$;
\item If $x\in i_2(f(Y))$ then $\Lambda_x^1(X_1\cup_f X_2)\cong\Lambda_y^1(X_1)\times_{comp}\Lambda_{f(y)}^1(X_2)$, where $y=(f^{-1}\circ i_2^{-1})(x)$.
\end{enumerate}
\end{thm}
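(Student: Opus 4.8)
The plan is to compute each fibre $\Lambda_x^1(X_1\cup_f X_2)$ by passing through the global description of $\Omega^1(X_1\cup_f X_2)$ already established, and then quotienting by the subspace of forms vanishing at $x$. Recall that each fibre is by definition the quotient
$$\Lambda_x^1(X_1\cup_f X_2)=\Omega^1(X_1\cup_f X_2)/\{\omega\mid\omega_x=0_x\},$$
and that $\pi^*$ gives a diffeomorphism $\Omega^1(X_1\cup_f X_2)\cong\Omega_f^1(X_1)\times_{comp}\Omega^1(X_2)$. So the first step is to identify, for a point $x$ in each of the three loci, precisely which compatible pairs $(\omega_1,\omega_2)$ represent forms vanishing at $x$. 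The key observation is that vanishing of a form at $x\in X_1\cup_f X_2$ is a local condition tested on plots centered at $x$, and by the local characterization of plots of $X_1\cup_f X_2$ (the Lemma in Section~\ref{secn:diff:pseudo-bundles:diff:gluing:sect} giving the two-case lift), such plots lift either to plots of $X_1$ or of $X_2$. Hence $(\omega_1,\omega_2)_x=0_x$ translates into a condition on $(\omega_1)_{\tilde x}$, on $(\omega_2)_{\tilde x}$, or on both, depending on where $x$ sits.

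For case 1, where $x=i_1(\tilde x)$ with $\tilde x\in X_1\setminus Y$, I would use that $i_1$ is an induction (property~1 of the Lemma cited above) and that near such $x$ every plot is, up to the inclusion, a plot of $X_1$ avoiding $Y$. Thus the value of $\omega_1\cup_f\omega_2$ at $x$ depends only on $\omega_1$, and moreover on the germ of $\omega_1$ at $\tilde x$; the $\omega_2$-component is irrelevant. The map $\tilde\rho_1^{\Lambda}$ from Section~7.2.2 is exactly the tool that realizes this: it descends from projection onto the first factor, is smooth, and I would show it restricts to a fibrewise isomorphism $\Lambda_x^1(X_1\cup_f X_2)\to\Lambda_{\tilde x}^1(X_1)$. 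The surjectivity assumption $i^*(\Omega^1(X_1))=(f^*j^*)(\Omega^1(X_2))$ guarantees that every $\omega_1\in\Omega^1(X_1)$ (in particular every $f$-invariant one, but here $\tilde x\notin Y$ so invariance is vacuous locally) extends to a compatible pair, so $\tilde\rho_1^{\Lambda}$ is surjective on this fibre; injectivity follows because two pairs agreeing in the first component and representing the same class differ by a form vanishing at $x$. Case 2 is entirely symmetric using $\tilde\rho_2^{\Lambda}$ and the induction $i_2$, and is in fact simpler since $i_2$ is always an induction and $X_2$ embeds globally.

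Case 3 is the substantive one and I expect it to be the main obstacle. Here $x=i_2(f(y))$ with $y\in Y$, and a plot centered at $x$ may lift to $X_1$ (through $y$) or to $X_2$ (through $f(y)$), so both components $\omega_1$ and $\omega_2$ contribute to the value at $x$. The claim is that the fibre is the compatible direct sum $\Lambda_y^1(X_1)\times_{comp}\Lambda_{f(y)}^1(X_2)$, which by the alternative description given just before this theorem is the image of $\Omega^1(X_1)\oplus_{comp}\Omega^1(X_2)$ in the quotient by $\Omega_y^1(X_1)\oplus\Omega_{f(y)}^1(X_2)$. The plan is to exhibit the natural map sending the class of $\omega_1\cup_f\omega_2$ to the pair $((\omega_1)_y,(\omega_2)_{f(y)})$ and check it is a well-defined diffeomorphism. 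Well-definedness requires that if $\omega_1\cup_f\omega_2$ vanishes at $x$ then both $(\omega_1)_y=0_y$ and $(\omega_2)_{f(y)}=0_{f(y)}$, which I would extract by testing separately on plots lifting to $X_1$ and on plots lifting to $X_2$. The delicate point is compatibility: I must verify that the target really is the compatible sum and not the full direct sum, i.e.\ that the images $(\omega_1)_y$ and $(\omega_2)_{f(y)}$ are themselves compatible elements in the sense of Definition~\ref{compatible:elements:of:lambda:defn}. This should follow from the compatibility of the representatives $\omega_1,\omega_2$ together with Proposition~\ref{compatible:in:lambda:via:pullback:maps:prop}, translating the form-level identity $i^*\omega_1=f^*(j^*\omega_2)$ into the pullback identity $i_\Lambda^*(\omega_1)_y=f_\Lambda^*(j_\Lambda^*(\omega_2)_{f(y)})$; surjectivity onto the compatible sum again uses the standing assumption $i^*(\Omega^1(X_1))=(f^*j^*)(\Omega^1(X_2))$ in its $\Lambda$-form, which the excerpt notes is implied. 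The remaining care is purely diffeological: confirming the algebraic bijection is a diffeomorphism for the respective quotient diffeologies, which I would handle by chasing plots through the defining projections $\chi^\Lambda$, invoking that quotient diffeologies push forward and that the maps $\tilde\rho_i^\Lambda$ are already known to be smooth.
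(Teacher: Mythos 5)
Your proposal is correct and takes essentially the same approach as the paper: the paper (whose detailed proof is deferred to \cite{differential-forms-gluing}) likewise computes each fibre as the quotient of $\Omega^1(X_1\cup_f X_2)\cong\Omega_f^1(X_1)\times_{comp}\Omega^1(X_2)$ by the forms vanishing at $x$, identifies the vanishing condition by testing on the two local types of plots of the gluing diffeology, and realizes the resulting isomorphisms through the characteristic maps $\tilde{\rho}_1^{\Lambda}$, $\tilde{\rho}_2^{\Lambda}$ together with the pullback-map criterion of Proposition \ref{compatible:in:lambda:via:pullback:maps:prop}. The one point to keep straight in case 3 is that surjectivity onto $\Lambda_y^1(X_1)\times_{comp}\Lambda_{f(y)}^1(X_2)$ is automatic once you take the image description of the compatible sum as the target (this is exactly the consistency stated after Definition \ref{compatible:elements:of:lambda:defn}), rather than something extracted from the $\Lambda$-form of the assumption $i^*(\Omega^1(X_1))=(f^*j^*)(\Omega^1(X_2))$, which is what guarantees surjectivity in cases 1 and 2.
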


\begin{rem}
If $f$ is not a diffeomorphism with its image, the first two items above may hold still provided that the equality $i^*(\Omega^1(X_1))=(f^*j^*)(\Omega^1(X_2))$ is maintained; if it is not, they are probably not 
true. 
\end{rem}

\subsection{The pseudo-bundle decomposition of $\Lambda^1(X_1\cup_f X_2)$}

Throughout this section we again assume that the gluing map $f$ is a diffeomorphism and is such that $i^*(\Omega^1(X_1))=(f^*j^*)(\Omega^1(X_2))$. Then Theorem \ref{fibres:of:lambda:thm} applies, 
suggesting the following decomposition of $\Lambda^1(X_1\cup_f X_2)$.

\subsubsection{The main statement}

The following is, together with Theorem \ref{diffeology:lambda:in:terms:of:rho:thm}, the best description that we can give of $\Lambda^1(X_1\cup_f X_2)$ as a diffeological pseudo-bundle (in addition to 
some concrete observations regarding its plots, immediately after).

\begin{thm}\label{three:piece:diffeology:in:lambda:thm}
Let $X_1$, $X_2$, and the gluing diffeomorphism $f$ be such that $i^*(\Omega^1(X_1))=(f^*j^*)(\Omega^1(X_2))$. Then the following is true:
\begin{enumerate}
\item The map 
$$\tilde{\rho}_1^{\Lambda}|_{(\pi^{\Lambda})^{-1}(i_1(X_1\setminus Y))}:\Lambda^1(X_1\cup_f X_2)\supseteq(\pi^{\Lambda})^{-1}(i_1(X_1\setminus Y))\to
(\pi_1^{\Lambda})^{-1}(X_1\setminus Y)\subseteq\Lambda^1(X_1)$$ is a diffeomorphism for the subset diffeologies on $(\pi^{\Lambda})^{-1}(i_1(X_1\setminus Y))\subseteq\Lambda^1(X_1\cup_f X_2)$ and 
$(\pi_1^{\Lambda})^{-1}(X_1\setminus Y)\subseteq\Lambda^1(X_1)$;
\item The map 
$$\tilde{\rho}_2^{\Lambda}|_{(\pi^{\Lambda})^{-1}(i_2(X_2\setminus f(Y)))}:\Lambda^1(X_1\cup_f X_2)\supseteq(\pi^{\Lambda})^{-1}(i_2(X_2\setminus f(Y)))\to
(\pi_2^{\Lambda})^{-1}(X_2\setminus f(Y))\subseteq\Lambda^1(X_2)$$ is a diffeomorphism for the subset diffeologies on $(\pi^{\Lambda})^{-1}(i_2(X_2\setminus f(Y)))\subseteq\Lambda^1(X_1\cup_f X_2)$ 
and $(\pi_2^{\Lambda})^{-1}(X_2\setminus f(Y))\subseteq\Lambda^1(X_2)$;
\item The map 
$$\tilde{\rho}_1^{\Lambda}|_{(\pi^{\Lambda})^{-1}(i_2(f(Y)))}\oplus\tilde{\rho}_2^{\Lambda}|_{(\pi^{\Lambda})^{-1}(i_2(f(Y)))}:(\pi^{\Lambda})^{-1}(i_2(f(Y)))\to
(\pi_1^{\Lambda})^{-1}(i_2(f(Y)))\oplus_{comp}(\pi_2^{\Lambda})^{-1}(i_2(f(Y)))$$ is a diffeomorphism for the subset diffeology on $(\pi^{\Lambda})^{-1}(i_2(f(Y)))\subseteq\Lambda^1(X_1\cup_f X_2)$ and 
the subset diffeology on $(\pi_1^{\Lambda})^{-1}(i_2(f(Y)))\oplus_{comp}(\pi_2^{\Lambda})^{-1}(i_2(f(Y)))$ relative to the direct sum diffeology on the direct sum of $(\pi_1^{\Lambda})^{-1}(i_2(f(Y)))$ and 
$(\pi_2^{\Lambda})^{-1}(i_2(f(Y)))$ considered as pseudo-bundles over $Y\cong f(Y)$.
\end{enumerate}
\end{thm}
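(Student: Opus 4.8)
The plan is to establish each of the three diffeomorphisms by combining the fibrewise identifications already provided by Theorem \ref{fibres:of:lambda:thm} with a verification that these identifications are globally smooth with smooth inverses, for which the maps $\tilde{\rho}_1^{\Lambda}$ and $\tilde{\rho}_2^{\Lambda}$ are precisely the right tools. First I would recall that each fibre of $\Lambda^1(X_1\cup_f X_2)$ is a quotient of $\Omega^1(X_1)\times_{comp}\Omega^1(X_2)$ by the subspace of forms vanishing at the relevant point, and that $\tilde{\rho}_1^{\Lambda}$ and $\tilde{\rho}_2^{\Lambda}$ descend from the first- and second-factor projections $\rho_1$ and $\rho_2$. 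The key preliminary observation, already noted in the definition of these maps, is that $\rho_1$ and $\rho_2$ preserve the vanishing of $1$-forms, so the induced fibrewise maps are well-defined linear maps between the appropriate quotients; smoothness of $\tilde{\rho}_i^{\Lambda}$ is inherited from the pushforward characterization of the diffeologies on $\Lambda^1(X_1\cup_f X_2)$, $\Lambda^1(X_1)$, and $\Lambda^1(X_2)$.

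For items (1) and (2), I would argue that over a point $x\in i_1(X_1\setminus Y)$ (respectively $x\in i_2(X_2\setminus f(Y))$) the compatibility constraint is vacuous on that fibre, so the second-factor component of a compatible pair is unconstrained and $\tilde{\rho}_1^{\Lambda}$ (respectively $\tilde{\rho}_2^{\Lambda}$) is a fibrewise \emph{bijection} onto $\Lambda_{\tilde x}^1(X_1)$ (respectively $\Lambda_{\tilde x}^1(X_2)$), which is exactly the content of the first two items of Theorem \ref{fibres:of:lambda:thm}. To upgrade this fibrewise bijection to a diffeomorphism of the restricted pseudo-bundles, I would construct the inverse explicitly: over $X_1\setminus Y$, the assumption $i^*(\Omega^1(X_1))=(f^*j^*)(\Omega^1(X_2))$ guarantees that every $\omega_1$ occurring has \emph{some} compatible partner $\omega_2$, and since the base point lies outside $Y$, the choice of partner does not affect the resulting class; this lets me lift a plot of $(\pi_1^{\Lambda})^{-1}(X_1\setminus Y)$ to a plot of $\Lambda^1(X_1\cup_f X_2)$. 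The smoothness of the inverse then follows from the definition of the gluing (quotient) diffeology together with property (1) of the Lemma characterizing plots of $X_1\cup_f X_2$, namely that over $X_1\setminus Y$ such plots lift to plots of $X_1$.

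For item (3), the fibre over $x\in i_2(f(Y))$ is $\Lambda_y^1(X_1)\times_{comp}\Lambda_{f(y)}^1(X_2)$ by Theorem \ref{fibres:of:lambda:thm}, and the combined map $\tilde{\rho}_1^{\Lambda}\oplus\tilde{\rho}_2^{\Lambda}$ sends a class to the pair of its two projected classes; this is a fibrewise \emph{linear isomorphism} onto the compatible direct sum by the very definition of $\Lambda_y^1(X_1)\oplus_{comp}\Lambda_{f(y)}^1(X_2)$ as the image of $\Omega^1(X_1)\oplus_{comp}\Omega^1(X_2)$ in the appropriate quotient. Here I would use Proposition \ref{compatible:in:lambda:via:pullback:maps:prop} to identify the compatibility condition intrinsically in terms of the pullback maps $i_{\Lambda}^*$, $j_{\Lambda}^*$, and $f_{\Lambda}^*$, ensuring that the target is exactly the subspace cut out inside the full direct sum. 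The remaining task is to match the subset diffeology on $(\pi^{\Lambda})^{-1}(i_2(f(Y)))$ with the subset diffeology, relative to the direct sum diffeology over $Y\cong f(Y)$, on the compatible sum; both sides I would express as pushforwards from $(f(Y))\times(\Omega^1(X_1)\times_{comp}\Omega^1(X_2))$, so that the map is a pushforward of the identity and hence a diffeomorphism.

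I expect the main obstacle to be item (3), and specifically the diffeological (not merely fibrewise-linear-algebraic) coincidence of the two diffeologies on the overlap $i_2(f(Y))$. Matching vector-space structures fibrewise is routine, but the two candidate diffeologies arise from genuinely different constructions — one as a restriction of the single quotient pseudo-bundle $\Lambda^1(X_1\cup_f X_2)$, the other as a subset of an externally-formed direct sum of two pseudo-bundles glued only set-theoretically over $Y\cong f(Y)$ — and the delicate point is that the gluing diffeology is weak, so I must check that no plots are lost or gained. I would handle this by tracing both diffeologies back to the common pushforward source $(f(Y))\times(\Omega^1(X_1)\times_{comp}\Omega^1(X_2))$ and invoking the characterization of plots of $\Lambda^1$ together with the plot description of the gluing diffeology, thereby reducing the equality of diffeologies to the already-established diffeomorphism $\Omega^1(X_1\cup_f X_2)\cong\Omega_f^1(X_1)\times_{comp}\Omega^1(X_2)$.
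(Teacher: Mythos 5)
Your skeleton — fibrewise identification via Theorem \ref{fibres:of:lambda:thm}, smoothness of the restricted $\tilde{\rho}_i^{\Lambda}$, then smoothness of the inverses — is the right shape, and the forward directions (bijectivity on fibres, smoothness of the restrictions) are fine. The gap is in the inverse direction, in all three items, at the point where you pass from single forms to \emph{plots} of forms. A plot of $\Lambda^1(X_1\cup_f X_2)$ must locally factor through a plot of $(X_1\cup_f X_2)\times\Omega^1(X_1\cup_f X_2)$, and under the identification $\Omega^1(X_1\cup_f X_2)\cong\Omega^1(X_1)\times_{comp}\Omega^1(X_2)$ its $\Omega^1$-component is a \emph{pair} $(q_1^{\Omega},q_2^{\Omega})$ of plots of $\Omega^1(X_1)$ and of $\Omega^1(X_2)$ whose values are compatible for every parameter $u$ (this is exactly the paper's description of plots in the ``General observations'' subsection). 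So to invert $\tilde{\rho}_1^{\Lambda}$ on a plot of $(\pi_1^{\Lambda})^{-1}(X_1\setminus Y)$, lifted locally to $(p_1,p_1^{\Omega})$, you must produce a plot $q_2^{\Omega}$ of $\Omega^1(X_2)$ and a plot $q_1^{\Omega}$ with $q_1^{\Omega}(u)\sim_{p_1(u)}p_1^{\Omega}(u)$, satisfying $i^*(q_1^{\Omega}(u))=(f^*j^*)(q_2^{\Omega}(u))$ for all $u$ \emph{simultaneously}. Your justification — each individual $\omega_1$ has some compatible partner, and over $X_1\setminus Y$ the choice does not affect the class — only yields a partner $\omega_2^u$ chosen separately for each $u$; the resulting map $u\mapsto\omega_2^u$ has no reason whatsoever to be a plot of $\Omega^1(X_2)$, and the hypothesis $i^*(\Omega^1(X_1))=(f^*j^*)(\Omega^1(X_2))$ is a statement about single forms, not about smooth families of them. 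Item (3) has the same untreated problem on the direct-sum side: a plot of $(\pi_1^{\Lambda})^{-1}(Y)\oplus_{comp}(\pi_2^{\Lambda})^{-1}(f(Y))$ lifts to a pair of plots of forms whose \emph{classes} are pointwise compatible, but those particular representatives need not be compatible as forms, so asserting that both sides are ``pushforwards of the same source'' assumes precisely what must be proved — that the representatives can be corrected within their classes, smoothly in $u$.

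This is not a removable formality; it is where the entire content of the theorem sits, as the paper's own apparatus makes plain. Immediately after this statement the paper introduces the pushforward diffeologies $\calD_1^{\Omega}$ and $\calD_2^{\Omega}$ on $\Omega^1(Y)$ exactly in order to express when such smooth families of partners exist; the unlabeled lemma on extending a plot of $\Lambda^1(X_1)$ to one of $\Lambda^1(X_1\cup_f X_2)$ requires $\calD_1^{\Omega}\subseteq\calD_2^{\Omega}$ to perform the lift you assert, and Propositions \ref{tilde-rho-1:as:a:subduction:prop} and \ref{tilde-rho-2:as:a:subduction:prop} show that the closely related subduction property of $\tilde{\rho}_i^{\Lambda}$ is \emph{equivalent} to such an inclusion of diffeologies — a condition that is not a consequence of the pointwise hypothesis you (and the theorem) assume. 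Note also that this paper does not itself prove the theorem; it is cited from \cite{differential-forms-gluing}. So a complete argument must either carry out the smooth-correction step under the stated hypothesis (this is the real work, exploiting the freedom to change representatives within the classes over $X_1\setminus Y$, respectively over $Y$), or else invoke the stronger condition $\calD_1^{\Omega}=\calD_2^{\Omega}$ under which the paper's subsequent results, such as Theorem \ref{diffeology:lambda:in:terms:of:rho:thm}, are formulated. As written, your proposal establishes fibrewise bijectivity and smoothness of the three forward maps, but the smoothness of none of the three inverses.
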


Theorem \ref{three:piece:diffeology:in:lambda:thm} provides a partial description of diffeology of $\Lambda^1(X_1\cup_f X_2)$. It is more direct than that given by Theorem 
\ref{diffeology:lambda:in:terms:of:rho:thm}, but it has the disadvantage of not explaining how the diffeology behaves in between the different pieces. Below we make some comments to that effect.

\subsubsection{General observations}

Every plot $p:U\to\Lambda^1(X_1\cup_f X_2)$ of $\Lambda^1(X_1\cup_f X_2)$ locally lifts to a plot $\tilde{p}$ of $(X_1\cup_f X_2)\times\Omega^1(X_1\cup_f X_2)$. Furthermore, we can assume
right away that $U$ is small enough so that $\tilde{p}$ has form $(p_{\cup},p^{\Omega})$, where $p_{\cup}$ is a plot of $X_1\cup_f X_2$ and $p^{\Omega}$ is a plot of $\Omega^1(X_1\cup_f X_2)$.
Assuming in addition that $U$ is connected, we obtain that $p_{\cup}$ lifts to either a plot $p_1$ of $X_1$ or a plot $p_2$ of $X_2$, so that it has one of the following forms:
$$p_{\cup}=\left\{\begin{array}{ll}i_1\circ p_1 & \mbox{on }p_1^{-1}(X_1\setminus Y) \\ i_2\circ f\circ p_1 & \mbox{on }p_1^{-1}(Y) \end{array}\right.\,\,\, \mbox{ or }\,\,\,p_{\cup}=i_2\circ p_2.$$
Restricting $U$ even further, if necessary, we obtain that the composition $\pi^*\circ p^{\Omega}$ (has form $\pi^*\circ p^{\Omega}=(p_1^{\Omega},p_2^{\Omega})$, where each $p_i^{\Omega}$ is a plot 
of $\Omega^1(X_i)$, for $i=1,2$.

Thus, we can summarize the discussion carried out so far by saying that, for every plot $p:U\to\Lambda^1(X_1\cup_f X_2)$ of $\Lambda^1(X_1\cup_f X_2)$ there exists a sub-domain $U'$
such that the lift $\tilde{p}$ of $p|_{U'}$ to $(X_1\cup_f X_2)\times\left(\Omega^1(X_1)\times_{comp}\Omega^1(X_2)\right)$ has either form
$$\tilde{p}=\left\{\begin{array}{ll}
(i_1\circ p_1,(p_1^{\Omega},p_2^{\Omega})) & \mbox{on }(\pi^{\Lambda}\circ p|_{U'})^{-1}(X_1\setminus Y)\\
(i_2\circ f\circ p_1,(p_1^{\Omega},p_2^{\Omega})) & \mbox{on }(\pi^{\Lambda}\circ p|_{U'})^{-1}(Y)
\end{array}\right.$$ or form
$$\tilde{p}=(i_2\circ p_2,(p_1^{\Omega},p_2^{\Omega}))\mbox{ if }\mbox{Range}(\pi^{\Lambda}\circ p|_{U'})\subseteq i_2(X_2).$$

\subsubsection{The two pushforward diffeologies on $\Omega^1(Y)$}

Let $\calD_1^{\Omega}$ be the diffeology on $\Omega^1(Y)$ that is the pushforward of the diffeology on $\Omega^1(X_1)$ by the map $i^*$. Similarly, let $\calD_2^{\Omega}$ be the diffeology
on $\Omega^1(Y)$ that is the pushforward of the diffeology on $\Omega^1(X_2)$ by the map $f^*\circ j^*$. The two diffeologies $\calD_1^{\Omega}$ and $\calD_2^{\Omega}$ are \emph{a
priori} different and, since all pullback maps are smooth, both are contained in the standard functional diffeology of $\Omega^1(Y)$.

As we have said before, if $p_1^{\Omega}:U\to\Omega^1(X_1)$ is a plot of $\Omega^1(X_1)$ and $p_2^{\Omega}:U'\to\Omega^1(X_2)$ is any plot of $\Omega^1(X_2)$, then the pair
$(p_1^{\Omega}(u),p_2^{\Omega}(u'))$ is compatible if and only if $i^*(p_1^{\Omega}(u))=(f^*j^*)(p_2^{\Omega}(u'))$. For simplicity we will consider such pairs for plots of $\Omega^1(X_i)$ having 
the same domain of definition $U$, as this can be done without loss of generality. The following two statements are then a direct consequence of the definition of a pushforward diffeology:
\begin{itemize}
\item $\calD_1^{\Omega}\subseteq\calD_2^{\Omega}$ $\Leftrightarrow$ for any plot $p_1^{\Omega}:U\to\Omega^1(X_1)$ there exists a plot $p_2^{\Omega}:U\to\Omega^1(X_2)$ such that 
$p_1^{\Omega}(u)$ and $p_2^{\Omega}(u)$ are compatible for all $u\in U$, and

\item $\calD_2^{\Omega}\subseteq\calD_1^{\Omega}$ $\Leftrightarrow$ for any plot $p_2^{\Omega}:U\to\Omega^1(X_2)$ there exists a plot $p_1^{\Omega}:U\to\Omega^1(X_1)$ such that 
$p_1^{\Omega}(u)$ and $p_2^{\Omega}(u)$ are compatible for all $u\in U$.
\end{itemize}

\subsubsection{The plots of $\Lambda^1(X_1\cup_f X_2)$ and those of $\Lambda^1(X_1)$}

As follows from the definition of gluing diffeology (and as has been said above), the local shapes of plots of $\Lambda^1(X_1\cup_f X_2)$ naturally separate into two groups: those whose compositions
with the projection $\pi^{\Lambda}:\Lambda^1(X_1\cup_f X_2)\to X_1\cup_f X_2$ lift to plots of $X_1$, and those for which such compositions lift to plots of $X_2$. In this section we consider plots from
the first group.

\paragraph{From a plot of $\Lambda^1(X_1\cup_f X_2)$ to one of $\Lambda^1(X_1)$} Let $p:U\to\Lambda^1(X_1\cup_f X_2)$ be a plot with $U$ connected and small enough so that $p$ lifts to a plot
$\tilde{p}=(p_{\cup},p^{\Omega}):U\to(X_1\cup_f X_2)\times\left(\Omega^1(X_1)\times_{comp}\Omega^1(X_2)\right)$ of $(X_1\cup_f X_2)\times\left(\Omega^1(X_1)\times_{comp}\Omega^1(X_2)\right)$, 
the component $p_{\cup}=\pi^{\Lambda}\circ p$ lifts to a plot $p_1$ of $X_1$, and $p^{\Omega}$ has form $p^{\Omega}=(p_1^{\Omega},p_2^{\Omega})$, where $p_1^{\Omega}$ and $p_2^{\Omega}$ 
are plots of $\Omega^1(X_1)$ and $\Omega^1(X_2)$ respectively. The fact that $p$ is a plot of $\Lambda^1(X_1\cup_f X_2)$ means, in particular, that $p_1^{\Omega}(u)$ and $p_2^{\Omega}(u)$ are 
compatible for any $u\in U$. Moreover, $p_{\cup}=\pi^{\Lambda}\circ p$ is a plot of $X_1\cup_f X_2$, and by the assumption that it lifts to a plot of $X_1$, it has form 
$p_{\cup}=\left\{\begin{array}{ll}i_1\circ p_1 & \mbox{on }p_1^{-1}(X_1\setminus Y) \\ i_2\circ f\circ p_1 & \mbox{on }p_1^{-1}(Y) \end{array}\right.$, whereas $p=\pi^{\Omega,\Lambda}\circ\tilde{p}$,
and by definition
$$p(u)=(p_1^{\Omega}(u),p_2^{\Omega}(u))+\pi^*\left(\Omega_{p_{\cup}(u)}^1(X_1\cup_f X_2)\right).$$ Altogether we have:
$$p(u)=\left\{\begin{array}{ll}
(p_1^{\Omega}(u),p_2^{\Omega}(u))+\left(\Omega_{p_1(u)}^1(X_1)\times_{comp}\Omega^1(X_2)\right) & \mbox{if }\pi^{\Lambda}\circ p=i_1\circ p_1 \\
(p_1^{\Omega}(u),p_2^{\Omega}(u))+\left(\Omega_{p_1(u)}^1(X_1)\times_{comp}\Omega_{f(p_1(u))}^1(X_2)\right) & \mbox{if }\pi^{\Lambda}\circ p=i_2\circ f\circ p_1.
\end{array}\right.$$
In particular, there is a plot $p^1$ of $\Lambda^1(X_1)$ naturally associated to $p$, that is given by,
$$p^1(u)=p_1^{\Omega}(u)+\Omega_{p_1(u)}^1(X_1)\,\mbox{ for all }\,u\in U.$$ It is also clear from this form that the same $p^1$ may correspond to many different $p$'s, as will also be evidenced by the 
discussion in the next paragraph. 

\paragraph{From plots of $\Lambda^1(X_1)$ to those of $\Lambda^1(X_1\cup_f X_2)$} Let $p:U\to\Lambda^1(X_1)$ be a plot of $\Lambda^1(X_1)$, and let $\tilde{p}=(p_1,p_1^{\Omega})$
be its lift to a plot of $X_1\times\Omega^1(X_1)$. For $p$ to extend to a plot of $\Lambda^1(X_1\cup_f X_2)$ it is necessary and sufficient that there exist a plot $p_2^{\Omega}:U\to\Omega^1(X_2)$ of 
$\Omega^1(X_2)$ such that $p_1^{\Omega}(u)$ and $p_2^{\Omega}(u)$ are compatible for all $u\in U$, \emph{i.e.} such that $i^*p_1^{\Omega}(u)=(f^*j^*)(p_2^{\Omega}(u))$. In other words, 
$i^*\circ p_1^{\Omega}$ must be a plot of $\calD_2^{\Omega}$. This leads to the following statement.

\begin{lemma}
Let $p:U\to\Lambda^1(X_1)$ be a plot of $\Lambda^1(X_1)$, and let $\tilde{p}_1=(p_1,p_1^{\Omega})$ be its lift to a plot of $X_1\times\Omega^1(X_1)$. If $\calD_1^{\Omega}\subseteq\calD_2^{\Omega}$
then there exists a lift
$$\tilde{p}:U\to(X_1\cup_f X_2)\times\left(\Omega^1(X_1)\times_{comp}\Omega^1(X_2)\right)$$ of a plot of $\Lambda^1(X_1\cup_f X_2)$, that has form
$$\tilde{p}=\left\{\begin{array}{l} \left(i_1\circ p_1,(p_1^{\Omega},p_2^{\Omega})\right),\\ \left(i_2\circ f\circ p_1,(p_1^{\Omega},p_2^{\Omega})\right) \end{array}\right.$$
for an appropriate plot $p_2^{\Omega}$ of $\Omega^1(X_2)$.
\end{lemma}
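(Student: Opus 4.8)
The plan is to trace through the explicit description of plots of $\Lambda^1(X_1\cup_f X_2)$ developed in the preceding paragraphs, and to show that, under the hypothesis $\calD_1^{\Omega}\subseteq\calD_2^{\Omega}$, any plot of $\Lambda^1(X_1)$ can be completed to such a plot by supplying a compatible second component $p_2^{\Omega}$. First I would take the given lift $\tilde{p}_1=(p_1,p_1^{\Omega})$ of the plot $p:U\to\Lambda^1(X_1)$ to $X_1\times\Omega^1(X_1)$, and form the composite $i^*\circ p_1^{\Omega}$, which is a plot of $\Omega^1(Y)$ for the pushforward diffeology $\calD_1^{\Omega}$ (by definition of $\calD_1^{\Omega}$ as the pushforward along $i^*$). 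The inclusion $\calD_1^{\Omega}\subseteq\calD_2^{\Omega}$ then says precisely, via the first of the two equivalences established in the subsection on the two pushforward diffeologies, that there exists a plot $p_2^{\Omega}:U\to\Omega^1(X_2)$ such that $p_1^{\Omega}(u)$ and $p_2^{\Omega}(u)$ are compatible for every $u\in U$, i.e.\ $i^*p_1^{\Omega}(u)=(f^*j^*)(p_2^{\Omega}(u))$.

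Next I would assemble $\tilde{p}$ from the data $(p_1,p_1^{\Omega},p_2^{\Omega})$. The base component is obtained by pushing $p_1$ forward through the gluing, giving $p_{\cup}=i_1\circ p_1$ on $p_1^{-1}(X_1\setminus Y)$ and $p_{\cup}=i_2\circ f\circ p_1$ on $p_1^{-1}(Y)$, which is a genuine plot of $X_1\cup_f X_2$ by the plot-characterization lemma for the gluing diffeology (the second clause of that lemma). The fibre component is $(p_1^{\Omega},p_2^{\Omega})$, which lands in $\Omega^1(X_1)\times_{comp}\Omega^1(X_2)$ exactly because of the pointwise compatibility just secured. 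Thus $\tilde{p}=(p_{\cup},(p_1^{\Omega},p_2^{\Omega}))$ has the stated two-branch form and is a plot of $(X_1\cup_f X_2)\times(\Omega^1(X_1)\times_{comp}\Omega^1(X_2))$ for the product diffeology, since each factor is a plot of its respective space.

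It then remains to check that the image $\pi^{\Omega,\Lambda}\circ\tilde{p}$ is in fact a plot of $\Lambda^1(X_1\cup_f X_2)$ and that it extends $p$. The former is automatic: $\Lambda^1(X_1\cup_f X_2)$ carries the pushforward of the product diffeology by the defining projection $\pi^{\Omega,\Lambda}$, so the composition of any plot of the product with that projection is a plot of the quotient. For the latter, I would compare the general plot formula derived earlier,
$$p(u)=p_1^{\Omega}(u)+\Omega_{p_1(u)}^1(X_1),$$
with the first-factor component $\tilde{\rho}_1^{\Lambda}$ of $\pi^{\Omega,\Lambda}\circ\tilde{p}$, and observe that they agree by construction, using the identity $\pi_Y^{\Omega,\Lambda}\circ(i^{-1},i^*)=i_{\Lambda}^*\circ\pi_1^{\Omega,\Lambda}$ from the pullback-map lemma to match the fibre data. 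The main obstacle, and the only genuinely non-routine step, is the very first one: extracting the compatible companion $p_2^{\Omega}$. This is where the hypothesis $\calD_1^{\Omega}\subseteq\calD_2^{\Omega}$ is indispensable, since without it the pointwise-solvable system $i^*p_1^{\Omega}(u)=(f^*j^*)(p_2^{\Omega}(u))$ need not admit a choice of solutions depending smoothly (i.e.\ plot-wise) on $u$; the inclusion of diffeologies is exactly the statement that such a smooth selection exists, so the care lies in invoking it correctly rather than in any computation.
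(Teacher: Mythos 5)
Your proof is correct and follows essentially the same route as the paper: the paper likewise reduces the lemma to the observation that $i^*\circ p_1^{\Omega}$ is a plot of $\calD_1^{\Omega}$, invokes the inclusion $\calD_1^{\Omega}\subseteq\calD_2^{\Omega}$ (via the equivalence stated in the subsection on the two pushforward diffeologies) to produce the pointwise-compatible companion plot $p_2^{\Omega}$, and then assembles the two-branch lift $\tilde{p}$ exactly as you do. The additional verifications you spell out (that the base component is a plot of the gluing diffeology and that the projection to $\Lambda^1(X_1\cup_f X_2)$ restricts to $p$ on the first factor) are left implicit in the paper but are accurate.
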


This means that $p=\tilde{\rho}_1^{\Lambda}\circ(\pi^{\Omega,\Lambda}\circ\tilde{p})$, that is, $p$ belongs to the pushforward by $\tilde{\rho}_1^{\Lambda}$ of the subset diffeology on 
$(\pi^{\Lambda})^{-1}(i_1(X_1\setminus Y)\cup i_2(f(Y)))\subseteq\Lambda^1(X_1\cup_f X_2)$.

\paragraph{The map $\tilde{\rho}_1^{\Lambda}$ as a subduction} We now describe under which conditions $\tilde{\rho}_1^{\Lambda}$ is a subduction.  

\begin{prop}\label{tilde-rho-1:as:a:subduction:prop}
Let $X_1$ and $X_2$ be two diffeological spaces, and let $f:X_1\supseteq Y\to X_2$ be a gluing diffeomorphism. The map 
$$\tilde{\rho}_1^{\Lambda}:\Lambda^1(X_1\cup_f X_2)\supseteq (\pi^{\Lambda})^{-1}(i_1(X_1\setminus Y)\cup i_2(f(Y)))\to\Lambda^1(X_1)$$ is a subduction onto its range if and 
only if $\calD_1^{\Omega}\subseteq\calD_2^{\Omega}$. In particular, if $i^*(\Omega^1(X_1))=(f^*j^*)(\Omega^1(X_2))$ and $\calD_1^{\Omega}=\calD_2^{\Omega}$ then $\tilde{\rho}_1^{\Lambda}$ is 
surjective and a subduction onto $\Lambda^1(X_1)$.
\end{prop}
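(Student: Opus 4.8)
The plan is to prove the equivalence by unwinding the definition of a subduction and relating it to the two pushforward diffeologies $\calD_1^{\Omega}$ and $\calD_2^{\Omega}$ on $\Omega^1(Y)$. Recall that a surjective map is a subduction precisely when the diffeology of the target equals the pushforward of the source diffeology. So I must show two things: first, that $\tilde{\rho}_1^{\Lambda}$ is surjective onto its range, and second, that every plot of $\Lambda^1(X_1)$ (lying in that range) lifts locally to a plot of $(\pi^{\Lambda})^{-1}(i_1(X_1\setminus Y)\cup i_2(f(Y)))$ through $\tilde{\rho}_1^{\Lambda}$, and conversely that $\tilde{\rho}_1^{\Lambda}$ is smooth (the latter is already recorded when $\tilde{\rho}_1^{\Lambda}$ was defined, so I may cite it).

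First I would establish the ``if'' direction. Assume $\calD_1^{\Omega}\subseteq\calD_2^{\Omega}$. The key computational input is the lemma immediately preceding the proposition: given a plot $p:U\to\Lambda^1(X_1)$ with lift $\tilde{p}_1=(p_1,p_1^{\Omega})$, the condition $\calD_1^{\Omega}\subseteq\calD_2^{\Omega}$ guarantees the existence of a plot $p_2^{\Omega}$ of $\Omega^1(X_2)$ with $p_1^{\Omega}(u)$ and $p_2^{\Omega}(u)$ compatible for all $u$, hence a lift $\tilde{p}$ of a plot of $\Lambda^1(X_1\cup_f X_2)$ with $p=\tilde{\rho}_1^{\Lambda}\circ(\pi^{\Omega,\Lambda}\circ\tilde{p})$. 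This is exactly the statement that every plot of $\Lambda^1(X_1)$ is, locally, in the pushforward by $\tilde{\rho}_1^{\Lambda}$ of the subset diffeology on the relevant subset. Combined with the smoothness of $\tilde{\rho}_1^{\Lambda}$, this gives that the diffeology of (the range inside) $\Lambda^1(X_1)$ is the pushforward, which is the defining property of a subduction onto the range.

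Next I would prove the ``only if'' direction by contraposition, or directly: if $\tilde{\rho}_1^{\Lambda}$ is a subduction onto its range, then in particular every plot of its range $\Lambda^1(X_1)$ must arise, locally, as $\tilde{\rho}_1^{\Lambda}$ composed with a plot of $\Lambda^1(X_1\cup_f X_2)$. Unwinding such a plot as above produces, for a given $p_1^{\Omega}$, an accompanying compatible $p_2^{\Omega}$; the compatibility condition $i^*p_1^{\Omega}(u)=(f^*j^*)(p_2^{\Omega}(u))$ means precisely that $i^*\circ p_1^{\Omega}$, an arbitrary plot of $\calD_1^{\Omega}$, is also a plot of $\calD_2^{\Omega}$. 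Since $p_1^{\Omega}$ was an arbitrary plot of $\Omega^1(X_1)$, this yields $\calD_1^{\Omega}\subseteq\calD_2^{\Omega}$, using the characterization of $\calD_1^{\Omega}\subseteq\calD_2^{\Omega}$ recorded in the subsection on the two pushforward diffeologies. For the final sentence, I note that $i^*(\Omega^1(X_1))=(f^*j^*)(\Omega^1(X_2))$ guarantees surjectivity onto all of $\Lambda^1(X_1)$ (every element of $\Omega^1(X_1)$ has a compatible partner, so the range is the whole pseudo-bundle), and $\calD_1^{\Omega}=\calD_2^{\Omega}$ gives both inclusions, hence the subduction statement onto $\Lambda^1(X_1)$.

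The main obstacle I anticipate is the careful bookkeeping connecting the fibrewise quotient structure (plots of $\Lambda^1$ expressed as $p_i^{\Omega}(u)+\Omega^1_{\bullet}(\cdot)$ modulo vanishing forms) with the plot-level compatibility condition on the $\Omega^1$ level; I must ensure that passing to the quotient $\Lambda^1$ does not lose or create compatibility relations, i.e.\ that compatibility of the chosen representatives $p_1^{\Omega},p_2^{\Omega}$ is both necessary and sufficient for the lift to descend. Here I would lean on Proposition \ref{compatible:in:lambda:via:pullback:maps:prop} and the equivalence $i^*(\Omega^1(X_1))=(f^*j^*)(\Omega^1(X_2))\Rightarrow i_{\Lambda}^*((\pi_1^{\Lambda})^{-1}(Y))=(f_{\Lambda^*j_{\Lambda^*}})((\pi_2^{\Lambda})^{-1}(f(Y)))$ already established, which transfers the $\Omega$-level statements to the $\Lambda$-level cleanly.
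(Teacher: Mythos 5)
Your ``if'' direction is correct and is exactly the paper's route: it is the combination of the lemma immediately preceding the proposition (under $\calD_1^{\Omega}\subseteq\calD_2^{\Omega}$, every plot of $\Lambda^1(X_1)$ admits a lift $\tilde{p}$ with a compatible partner plot $p_2^{\Omega}$) with the observation $p=\tilde{\rho}_1^{\Lambda}\circ(\pi^{\Omega,\Lambda}\circ\tilde{p})$, plus the smoothness of $\tilde{\rho}_1^{\Lambda}$.

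The ``only if'' direction, however, has a genuine gap, in two places. First, you write ``every plot of its range $\Lambda^1(X_1)$'', silently identifying the range with all of $\Lambda^1(X_1)$. In this direction neither $i^*(\Omega^1(X_1))=(f^*j^*)(\Omega^1(X_2))$ nor $\calD_1^{\Omega}\subseteq\calD_2^{\Omega}$ is available as a hypothesis, so the range may be a proper subset, and the plot of $\Lambda^1(X_1)$ determined by an arbitrary plot $p_1^{\Omega}$ of $\Omega^1(X_1)$ need not take values in it; the subduction hypothesis then cannot even be applied to that plot. Second, and more seriously: even where the subduction hypothesis does apply, what it produces is a local lift $P$, through $\tilde{\rho}_1^{\Lambda}$, of the plot $u\mapsto p_1^{\Omega}(u)+\Omega^1_{p_1(u)}(X_1)$. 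Unwinding $P$ yields a compatible pair of plots $(q_1^{\Omega},q_2^{\Omega})$ in which $q_1^{\Omega}$ is constrained only by $q_1^{\Omega}(u)-p_1^{\Omega}(u)\in\Omega^1_{p_1(u)}(X_1)$ for each $u$; it is a pointwise representative of the same classes, not the plot $p_1^{\Omega}$ you started with. From $i^*\circ q_1^{\Omega}=(f^*j^*)\circ q_2^{\Omega}$ you only get
$$i^*\circ p_1^{\Omega}=(f^*j^*)\circ q_2^{\Omega}-i^*\circ\left(q_1^{\Omega}-p_1^{\Omega}\right),$$
and the correction term does not vanish: a $1$-form on $X_1$ vanishing at the single point $p_1(u)$ does not have zero pullback to $Y$ (its pullback merely vanishes at that one point of $Y$). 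So membership of $i^*\circ q_1^{\Omega}$ in $\calD_2^{\Omega}$ does not transfer to $i^*\circ p_1^{\Omega}$, which is exactly what the characterization of $\calD_1^{\Omega}\subseteq\calD_2^{\Omega}$ (``for any plot $p_1^{\Omega}$ there is a plot $p_2^{\Omega}$ with $p_1^{\Omega}(u)$ and $p_2^{\Omega}(u)$ compatible for all $u$'') requires. Your anticipated remedy does not close this hole: Proposition \ref{compatible:in:lambda:via:pullback:maps:prop} and the implication from $i^*(\Omega^1(X_1))=(f^*j^*)(\Omega^1(X_2))$ to its $\Lambda$-level analogue both run from the $\Omega$-level (or from hypotheses you do not have here) toward the $\Lambda$-level, whereas the missing step is the reverse passage from classes in $\Lambda^1$ back to a statement about the specific representative plots in $\Omega^1$. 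A correct proof of the ``only if'' part must control this choice of representatives in the lift, and that control is precisely what is absent from your argument.
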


The \emph{vice versa} of the second statement of this Proposition is also true: if $\tilde{\rho}_1^{\Lambda}$ is a subduction onto $\Lambda^1(X_1)$ then both $i^*(\Omega^1(X_1))=(f^*j^*)(\Omega^1(X_2))$ 
and $\calD_1^{\Omega}=\calD_2^{\Omega}$ are satisfied. Observe also that, since each point of $\Omega^1(X_1)$ can be (non uniquely) represented by a constant plot, elements of $i^*(\Omega^1(X_1))$ 
can be seen as forming a subset of $\calD_1^{\Omega}$, and similarly there is an inclusion $(f^*j^*)(\Omega^1(X_2))\subset\calD_2^{\Omega}$. 

\begin{rem}
The equality $\calD_1^{\Omega}=\calD_2^{\Omega}$ implies $i^*(\Omega^1(X_1))=(f^*j^*)(\Omega^1(X_2))$. Indeed, let $i^*(\omega_1)\in i^*(\Omega^1(X_1))$, where $\omega_1\in\Omega^1(X_1)$; 
choose any constant map $p_1^{\Omega}:U\to\{\omega_1\}\subset\Omega^1(X_1)$, defined on a domain $U$ in some $\matR$. This is a plot of $\Omega^1(X_1)$ since all constant maps are so. Thus, 
$i^*\circ p_1^{\Omega}\in\calD_1^{\Omega}=\calD_2^{\Omega}$. Since $\calD_2^{\Omega}$ is defined as the pushforward of the diffeology of $\Omega^1(X_2)$ by the map $f^*j^*$, there exists a plot 
$p_2^{\Omega}:U\to\Omega^1(X_2)$ of $\Omega^1(X_2)$ such that $i^*\circ p_1^{\Omega}=(f^*j^*)\circ p_2^{\Omega}$. Let $\omega_2\in\Omega^1(X_2)$ be any form in the range of $p_2^{\Omega}$.
Then $i^*(\omega_1)=(f^*j^*)(\omega_2)$. Since $\omega_1\in\Omega^1(X_1)$ is arbitrary, this means that $i^*(\Omega^1(X_1))\subseteq(f^*j^*)(\Omega^1(X_2))$. The reverse inclusion is proved in 
exactly the same way.
\end{rem}

\subsubsection{The plots of $\Lambda^1(X_1\cup_f X_2)$ and those of $\Lambda^1(X_2)$}

The consideration of this case is entirely analogous to the previous one. Apart from the fact that gluing along a diffeomorphism is by nature symmetric, this is also due to the symmetric behavior 
of $\Lambda^1(X_1\cup_f X_2)$ over the domain of gluing.

\paragraph{From a plot of $\Lambda^1(X_1\cup_f X_2)$ to a plot of $\Lambda^1(X_2)$} As in the case of the factor $X_1$, a plot $p:U\to\Lambda^1(X_1\cup_f X_2)$ whose range lies over $i_2(X_2)$, is 
described by
$$p(u)=\left\{\begin{array}{ll}
(p_1^{\Omega}(u),p_2^{\Omega}(u))+\left(\Omega^1(X_1)\times_{comp}\Omega_{p_2(u)}^1(X_2)\right), & \mbox{for }u\mbox{ s. t. }\pi^{\Lambda}(p(u))\in i_2(X_2\setminus f(Y)), \\
(p_1^{\Omega}(u),p_2^{\Omega}(u))+\left(\Omega_{f^{-1}(p_2(u))}^1(X_1)\times_{comp}\Omega_{p_2(u)}^1(X_2)\right) & \mbox{for }u\in\mbox{ s. t. }\pi^{\Lambda}(p(u))\in i_2(f(Y)),
\end{array} \right.$$ assuming that $U$ is small enough so that there is a lift of $p$ to a plot of $(X_1\cup_f X_2)\times\left(\Omega^1(X_1)\times_{comp}\Omega^1(X_2)\right)$, and that this lift has form 
$(i_2\circ p_2,(p_1^{\Omega},p_2^{\Omega}))$, where $p_2$ is a plot of $X_2$ and $(p_1^{\Omega},p_2^{\Omega})$ is a pair of plots of $\Omega^1(X_1)$ and $\Omega^1(X_2)$ respectively. These plots 
are again such that $p_1^{\Omega}(u)$ and $p_2^{\Omega}(u)$ are compatible for all $u\in U$, that is,
$$i^*(p_1^{\Omega}(u))=(f^*j^*)(p_2^{\Omega}(u))\,\,\,\mbox{ for all }u\in U.$$ In particular, the pair $(p_2,p_2^{\Omega})$ is a plot of $X_2\times\Omega^1(X_2)$, and therefore descends to a plot of 
$\Lambda^1(X_2)$.

\paragraph{The \emph{vice versa}: from a plot of $\Lambda^1(X_2)$ to a plot of $\Lambda^1(X_1\cup_f X_2)$} By exactly the same reasoning as in the previous section (\emph{i.e.}, in the case of a plot of 
$\Lambda^1(X_1)$), we obtain the following statement.

\begin{lemma}
Let $p:U\to\Lambda^1(X_2)$ be a plot of $\Lambda^1(X_2)$, and let $\tilde{p}_2=(p_2,p_2^{\Omega})$ be its lift to a plot of $X_2\times\Omega^1(X_2)$. If $f$ is such that
$\calD_1^{\Omega}=\calD_2^{\Omega}$ then there exists a lift $\tilde{p}:U\to(X_1\cup_f X_2)\times\left(\Omega^1(X_1)\times_{comp}\Omega^1(X_2)\right)$ of some plot of $\Lambda^1(X_1\cup_f X_2)$
that has form $\tilde{p}=(i_2\circ p_2,(p_1^{\Omega},p_2^{\Omega}))$, where $p_1^{\Omega}$ is a plot of $\Omega^1(X_1)$.
\end{lemma}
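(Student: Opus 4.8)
The plan is to prove this by direct mimicry of the argument given for the analogous statement about plots of $\Lambda^1(X_1)$ (the immediately preceding \textbf{From plots of $\Lambda^1(X_1)$ to those of $\Lambda^1(X_1\cup_f X_2)$} paragraph and its Lemma), exploiting the fact that gluing along a diffeomorphism is symmetric. First I would start from a plot $p\colon U\to\Lambda^1(X_2)$ of $\Lambda^1(X_2)$ together with its lift $\tilde{p}_2=(p_2,p_2^{\Omega})$ to a plot of $X_2\times\Omega^1(X_2)$, which exists at least locally by the definition of the diffeology on $\Lambda^1(X_2)$ (shrinking $U$ to a sub-domain as needed). The goal is to produce a plot $p_1^{\Omega}$ of $\Omega^1(X_1)$ such that $(p_1^{\Omega},p_2^{\Omega})$ takes values in $\Omega^1(X_1)\times_{comp}\Omega^1(X_2)$, i.e. $i^*(p_1^{\Omega}(u))=(f^*j^*)(p_2^{\Omega}(u))$ for all $u\in U$; granting this, the pair $(i_2\circ p_2,(p_1^{\Omega},p_2^{\Omega}))$ is the desired lift to $(X_1\cup_f X_2)\times\left(\Omega^1(X_1)\times_{comp}\Omega^1(X_2)\right)$ of a plot of $\Lambda^1(X_1\cup_f X_2)$, since $i_2\circ p_2$ is manifestly a plot of $X_1\cup_f X_2$ (the induction $i_2$ being smooth) and compatibility of the $\Omega$-components is exactly the condition that the pair descends to the correct quotient.

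The key step is the production of $p_1^{\Omega}$, and here I would invoke the hypothesis $\calD_1^{\Omega}=\calD_2^{\Omega}$ directly. By definition $f^*\circ j^*\circ p_2^{\Omega}$ is, up to the pushforward along $f^*j^*$, a plot of $\calD_2^{\Omega}$ on $\Omega^1(Y)$. Since $\calD_2^{\Omega}\subseteq\calD_1^{\Omega}$ (one inclusion from the assumed equality), this same curve is a plot of $\calD_1^{\Omega}$, which is precisely the pushforward of the diffeology of $\Omega^1(X_1)$ by $i^*$. Unwinding the definition of a pushforward diffeology, this means that (locally in $U$, shrinking once more) there is a plot $p_1^{\Omega}\colon U\to\Omega^1(X_1)$ with $i^*\circ p_1^{\Omega}=f^*\circ j^*\circ p_2^{\Omega}$, which is exactly the pointwise compatibility condition required. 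This is the same mechanism already made explicit in the two bullet characterizations of $\calD_1^{\Omega}\subseteq\calD_2^{\Omega}$ and $\calD_2^{\Omega}\subseteq\calD_1^{\Omega}$ in the subsection on the two pushforward diffeologies, read here in the direction $\calD_2^{\Omega}\subseteq\calD_1^{\Omega}$.

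The one genuine subtlety, and the step I expect to be the main obstacle, is the locality bookkeeping. A pushforward diffeology only guarantees that a plot \emph{locally} factors through the pushing map, so the lift $p_1^{\Omega}$ obtained above exists only on a possibly smaller sub-domain, and a priori on an open cover of $U$ by such sub-domains the local choices of $p_1^{\Omega}$ need not agree. However, this does not cause trouble for the statement as phrased, because the conclusion only asserts the \emph{existence of a lift} $\tilde{p}$ of \emph{some} plot of $\Lambda^1(X_1\cup_f X_2)$ having the indicated form; since being a plot of $\Lambda^1(X_1\cup_f X_2)$ is itself a local condition (the sheaf condition for diffeologies), it suffices to exhibit the lift on each member of the cover and note that the resulting maps into $\Lambda^1(X_1\cup_f X_2)$ glue regardless of whether the auxiliary $p_1^{\Omega}$'s glue, because the value $p_1^{\Omega}(u)+\Omega^1(X_1)$-coset contribution is irrelevant over $i_2(X_2\setminus f(Y))$ and is pinned down over $i_2(f(Y))$ by $p_2^{\Omega}$ through the compatibility relation. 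I would therefore carry out the argument on a connected sub-domain, as is done throughout this section, and remark that the general case follows by the sheaf property. Finally I would note explicitly that the entire argument is word-for-word the mirror of the $\Lambda^1(X_1)$ case under the interchange $X_1\leftrightarrow X_2$, $i\leftrightarrow j$, $f\leftrightarrow f^{-1}$, which is legitimate precisely because $f$ is assumed to be a diffeomorphism, thereby justifying the phrase ``by exactly the same reasoning as in the previous section.''
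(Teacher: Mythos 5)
Your proof is correct and takes essentially the same route as the paper: the paper establishes the $\Lambda^1(X_1)$ analogue by reducing the compatibility requirement to $(f^*j^*)\circ p_2^{\Omega}$ (respectively $i^*\circ p_1^{\Omega}$) being a plot of the other pushforward diffeology and then disposes of the present lemma with ``by exactly the same reasoning,'' which is precisely your mirrored argument using the inclusion $\calD_2^{\Omega}\subseteq\calD_1^{\Omega}$ and the bullet characterization, legitimate since $f$ is a diffeomorphism. Your extra bookkeeping about local factorization through the pushforward is consistent with the paper's standing convention of shrinking $U$ to a suitable sub-domain, so it introduces no divergence from the paper's argument.
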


\paragraph{The map $\tilde{\rho}_2^{\Lambda}$ is a subduction} This obviously holds under the same conditions as Proposition \ref{tilde-rho-1:as:a:subduction:prop}, and the claim is fully analogous. 

\begin{prop}\label{tilde-rho-2:as:a:subduction:prop}
Let $X_1$ and $X_2$ be two diffeological spaces, and let $f:X_1\supseteq Y\to X_2$ be a diffeomorphism of its domain with its image. The map 
$$\tilde{\rho}_2^{\Lambda}:\Lambda^1(X_1\cup_f X_2)\supseteq(\pi^{\Lambda})^{-1}(i_2(X_2))\to\Lambda^1(X_2)$$ is a subduction onto its range if and only if 
$\calD_2^{\Omega}\subseteq\calD_1^{\Omega}$. In particular, $\tilde{\rho}_2^{\Lambda}$ is a subduction onto $\Lambda^1(X_2)$ if and only if $\calD_1^{\Omega}=\calD_2^{\Omega}$.
\end{prop}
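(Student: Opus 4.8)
The plan is to deduce the statement from Proposition \ref{tilde-rho-1:as:a:subduction:prop} by exploiting the symmetry that gluing along a diffeomorphism enjoys, rather than repeating the (entirely parallel) direct argument. Since $f:X_1\supseteq Y\to X_2$ is a diffeomorphism with its image, the switch map $\varphi_{X_1\leftrightarrow X_2}:X_1\cup_f X_2\to X_2\cup_{f^{-1}}X_1$ is a diffeomorphism, and it induces a pseudo-bundle diffeomorphism $\varphi_{\Lambda}^*:\Lambda^1(X_2\cup_{f^{-1}}X_1)\to\Lambda^1(X_1\cup_f X_2)$ covering it (the pullback map for a diffeomorphism, as in the earlier lemma). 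First I would check, by tracing the inductions through the definition of $\varphi_{X_1\leftrightarrow X_2}$, that this diffeomorphism carries the domain $(\pi^{\Lambda})^{-1}(i_2^{X_2}(X_2))$ of $\tilde{\rho}_2^{\Lambda}$ onto the domain of the first-factor projection $\tilde{\rho}_1^{\Lambda,\mathrm{rev}}$ for the reversed gluing $X_2\cup_{f^{-1}}X_1$, and that under it $\tilde{\rho}_2^{\Lambda}$ becomes exactly $\tilde{\rho}_1^{\Lambda,\mathrm{rev}}$ (both land in $\Lambda^1(X_2)$, which plays the role of the second factor for the original gluing and of the first factor for the reversed one).

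The second step is to identify how the two auxiliary diffeologies transform. For the reversed gluing the relevant diffeologies on $\Omega^1(f(Y))$ are $\mathcal{E}_1$, the pushforward of the diffeology of $\Omega^1(X_2)$ by $j^*$, and $\mathcal{E}_2$, the pushforward of the diffeology of $\Omega^1(X_1)$ by $(f^*)^{-1}\circ i^*$. Since $f^*:\Omega^1(f(Y))\to\Omega^1(Y)$ is a diffeomorphism (as $f$ is one), pushing $\mathcal{E}_1$ and $\mathcal{E}_2$ forward by $f^*$ yields precisely $\calD_2^{\Omega}$ (pushforward of $\Omega^1(X_2)$ by $f^*\circ j^*$) and $\calD_1^{\Omega}$ (pushforward of $\Omega^1(X_1)$ by $i^*$), respectively. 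Hence $\mathcal{E}_1\subseteq\mathcal{E}_2$ if and only if $\calD_2^{\Omega}\subseteq\calD_1^{\Omega}$. Applying Proposition \ref{tilde-rho-1:as:a:subduction:prop} to the reversed gluing then gives that $\tilde{\rho}_1^{\Lambda,\mathrm{rev}}$ --- equivalently $\tilde{\rho}_2^{\Lambda}$ --- is a subduction onto its range if and only if $\calD_2^{\Omega}\subseteq\calD_1^{\Omega}$, which is the first assertion.

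For the ``in particular'' clause I would argue likewise. A subduction onto all of $\Lambda^1(X_2)$ is in particular surjective, and surjectivity over the glued fibres forces $(f^*j^*)(\Omega^1(X_2))\subseteq i^*(\Omega^1(X_1))$ through the image computation for $\mathrm{pr}_2$; together with the \emph{vice versa} noted after Proposition \ref{tilde-rho-1:as:a:subduction:prop}, read for the reversed gluing, this yields $\mathcal{E}_1=\mathcal{E}_2$, that is, $\calD_1^{\Omega}=\calD_2^{\Omega}$. Conversely, if $\calD_1^{\Omega}=\calD_2^{\Omega}$, the Remark following that proposition gives $i^*(\Omega^1(X_1))=(f^*j^*)(\Omega^1(X_2))$, whence $\tilde{\rho}_2^{\Lambda}$ is surjective onto $\Lambda^1(X_2)$, while $\calD_2^{\Omega}\subseteq\calD_1^{\Omega}$ upgrades the subduction onto the range to a subduction onto the whole of $\Lambda^1(X_2)$.

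As a fallback, and to keep the argument self-contained, I would also be ready to give the direct proof mirroring the $X_1$ case: the necessary-and-sufficient lifting criterion for a plot of $\Lambda^1(X_2)$ to extend to a plot of $\Lambda^1(X_1\cup_f X_2)$ is exactly that the associated plot $p_2^{\Omega}$ of $\Omega^1(X_2)$ admit a pointwise $f$-compatible plot $p_1^{\Omega}$ of $\Omega^1(X_1)$, i.e.\ that $(f^*j^*)\circ p_2^{\Omega}$ lie in $\calD_1^{\Omega}$; quantifying over all plots $p_2^{\Omega}$ is precisely $\calD_2^{\Omega}\subseteq\calD_1^{\Omega}$. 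The main obstacle in either route is purely bookkeeping: verifying that the identifications over the gluing domain $i_2(f(Y))$ are compatible with the two auxiliary diffeologies, and --- in the symmetry route --- confirming that the switch map really intertwines $\tilde{\rho}_2^{\Lambda}$ with $\tilde{\rho}_1^{\Lambda,\mathrm{rev}}$ on the nose, including on the glued part of the base.
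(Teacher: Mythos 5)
Your proposal is correct, and it reaches the result by a genuinely different route from the paper's. The paper handles this proposition purely by declared analogy: its discussion of the $X_2$ side re-runs the plot-lifting analysis of the $X_1$ side with the factors exchanged --- exactly your fallback argument --- and the symmetry of gluing along a diffeomorphism is only mentioned in passing, never exploited formally. Your primary argument instead makes that symmetry do the work, reducing the statement to Proposition \ref{tilde-rho-1:as:a:subduction:prop} applied to the reversed gluing $X_2\cup_{f^{-1}}X_1$. Both pillars of the reduction are sound. First, since $f$ is injective, every form on $X_1$ is $f$-invariant and compatibility with respect to $f$ and with respect to $f^{-1}$ are the same condition, so the pullback along the switch map simply exchanges the two components under the identifications $\Omega^1(X_1\cup_f X_2)\cong\Omega^1(X_1)\times_{comp}\Omega^1(X_2)$ and $\Omega^1(X_2\cup_{f^{-1}}X_1)\cong\Omega^1(X_2)\times_{comp}\Omega^1(X_1)$; consequently it does intertwine $\tilde{\rho}_2^{\Lambda}$ with $\tilde{\rho}_1^{\Lambda,\mathrm{rev}}$, also over the glued fibres (one pedantic point: the pullback diffeomorphism covers the \emph{inverse} of the switch map rather than the switch map itself, which is harmless here since both are diffeomorphisms). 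Second, the two auxiliary diffeologies of the reversed gluing, which live on $\Omega^1(f(Y))$, push forward under the diffeomorphism $f^*$ to exactly $\calD_2^{\Omega}$ and $\calD_1^{\Omega}$, so the inclusion hypothesis of Proposition \ref{tilde-rho-1:as:a:subduction:prop} for the reversed gluing translates verbatim into $\calD_2^{\Omega}\subseteq\calD_1^{\Omega}$, giving the first assertion; the ``in particular'' clause then follows as you say, from the \emph{vice versa} statement and the Remark following Proposition \ref{tilde-rho-1:as:a:subduction:prop}, both read for the reversed gluing. As for what each approach buys: yours avoids duplicating the plot analysis and turns the paper's informal appeal to symmetry into an actual deduction, at the cost of the intertwining bookkeeping you rightly flag; the paper's route (your fallback) stays self-contained and never introduces the reversed gluing, but leaves the ``fully analogous'' verification entirely implicit.
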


\subsubsection{Characterizing the diffeology of $\Lambda^1(X_1\cup_f X_2)$ via the maps $\tilde{\rho}_1^{\Lambda}$ and $\tilde{\rho}_2^{\Lambda}$}

These maps allow for the following characterization of the diffeology of $\Lambda^1(X_1\cup_f X_2)$.

\begin{thm}\label{diffeology:lambda:in:terms:of:rho:thm}
Let $X_1$ and $X_2$ be two diffeological spaces, and let $f:X_1\supseteq Y\to X_2$ be a gluing diffeomorphism such that $\calD_1^{\Omega}=\calD_2^{\Omega}$. Then the diffeology of 
$\Lambda^1(X_1\cup_f X_2)$ is the coarsest one such that both $\tilde{\rho}_1^{\Lambda}$ and $\tilde{\rho}_2^{\Lambda}$ are smooth.
\end{thm}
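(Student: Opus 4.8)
The plan is to identify the actual diffeology $\calD^\Lambda$ of $\Lambda^1(X_1\cup_f X_2)$ with the coarsest diffeology $\calD^{init}$ making the two partial maps $\tilde\rho_1^\Lambda$ and $\tilde\rho_2^\Lambda$ smooth, proving the two inclusions separately. I would first record that, for maps out of a space, the family of diffeologies making a given (partial) map smooth is closed under passing to finer diffeologies, so a coarsest such diffeology $\calD^{init}$ exists; concretely, a map $p:U\to\Lambda^1(X_1\cup_f X_2)$ is a plot of $\calD^{init}$ precisely when, on the preimage of the domain of each $\tilde\rho_i^\Lambda$, the composition $\tilde\rho_i^\Lambda\circ p$ is a plot of $\Lambda^1(X_i)$. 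Since the domains of $\tilde\rho_1^\Lambda$ and $\tilde\rho_2^\Lambda$ cover $\Lambda^1(X_1\cup_f X_2)$ (they are the preimages of $i_1(X_1\setminus Y)\cup i_2(f(Y))$ and of $i_2(X_2)$ respectively), these conditions constrain $p$ everywhere on $U$.

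The easy inclusion is $\calD^\Lambda\subseteq\calD^{init}$. Under the hypothesis $\calD_1^\Omega=\calD_2^\Omega$, Propositions \ref{tilde-rho-1:as:a:subduction:prop} and \ref{tilde-rho-2:as:a:subduction:prop} assert that $\tilde\rho_1^\Lambda$ and $\tilde\rho_2^\Lambda$ are subductions, in particular smooth, for $\calD^\Lambda$. Hence $\calD^\Lambda$ belongs to the family of diffeologies making both maps smooth, and is therefore contained in the coarsest member $\calD^{init}$.

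For the reverse inclusion $\calD^{init}\subseteq\calD^\Lambda$ I would take a plot $p$ of $\calD^{init}$ and show it is a plot of $\calD^\Lambda$; working locally, I restrict to a connected sub-domain $U'$ small enough to invoke the local analysis of the General Observations, so that $\pi^\Lambda\circ p$ lifts to a plot of one of $X_1$ or $X_2$. Using smoothness of $\tilde\rho_1^\Lambda\circ p$ where defined I would produce a local lift $(p_1,p_1^\Omega)$ with $p_1^\Omega$ a plot of $\Omega^1(X_1)$, and from $\tilde\rho_2^\Lambda\circ p$ a local lift $(p_2,p_2^\Omega)$ with $p_2^\Omega$ a plot of $\Omega^1(X_2)$; over the gluing region $i_2(f(Y))$ both are available and, by Theorem \ref{three:piece:diffeology:in:lambda:thm}(3) and Definition \ref{compatible:elements:of:lambda:defn}, together they recover the full value of $p$ in the compatible direct sum. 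The goal is then to assemble from these pieces a single lift $\tilde p=(p_\cup,(p_1^\Omega,p_2^\Omega))$ into $(X_1\cup_f X_2)\times\left(\Omega^1(X_1)\times_{comp}\Omega^1(X_2)\right)$ with $\pi^{\Omega,\Lambda}\circ\tilde p=p$, which by the plot characterization of $\Lambda^1(X_1\cup_f X_2)$ exhibits $p$ as a plot of $\calD^\Lambda$.

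The main obstacle is exactly this assembly across the three fibre-types of Theorem \ref{fibres:of:lambda:thm}. Over $i_1(X_1\setminus Y)$ only $p_1^\Omega$ is directly given, and over $i_2(X_2\setminus f(Y))$ only $p_2^\Omega$; one must supply the missing form-component so as to obtain a genuinely compatible pair of plots $(p_1^\Omega,p_2^\Omega)$ of $\Omega^1(X_1)$ and $\Omega^1(X_2)$ that agree with the given data on the overlap and depend smoothly on $u\in U'$. This is where the hypothesis $\calD_1^\Omega=\calD_2^\Omega$ is indispensable: it guarantees, exactly as in the lemmas preceding Propositions \ref{tilde-rho-1:as:a:subduction:prop} and \ref{tilde-rho-2:as:a:subduction:prop}, that $i^*\circ p_1^\Omega$ and $(f^*j^*)\circ p_2^\Omega$ lie in the same pushforward diffeology on $\Omega^1(Y)$ and can be chosen to coincide, which is the compatibility condition of Proposition \ref{compatible:in:lambda:via:pullback:maps:prop}. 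Verifying that the resulting $\tilde p$ is smooth into the product (rather than merely fibrewise correct) and independent of the local choices, so that the local plots glue to the global $p$, is the technical heart of the argument.
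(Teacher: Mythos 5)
Your easy inclusion is fine: by Propositions \ref{tilde-rho-1:as:a:subduction:prop} and \ref{tilde-rho-2:as:a:subduction:prop} (and Remark 8.14) both maps are smooth, indeed subductions, for the actual diffeology $\calD^{\Lambda}$ of $\Lambda^1(X_1\cup_f X_2)$, so $\calD^{\Lambda}$ lies in the family you consider. The trouble starts with the family itself. Since $\tilde{\rho}_1^{\Lambda}$ and $\tilde{\rho}_2^{\Lambda}$ are only partially defined, ``making them smooth'' has to mean smoothness for the subset diffeology on their domains; the closure of the family under passing to finer diffeologies, which is all you invoke, makes it a down-set but does not produce a maximum, so the existence of $\calD^{init}$ requires a separate argument. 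Your concrete description of $\calD^{init}$ is moreover ill-posed: $p^{-1}$ of the domain of $\tilde{\rho}_i^{\Lambda}$ need not be open in $U$, so ``$\tilde{\rho}_i^{\Lambda}\circ p$ is a plot there'' has no meaning; read correctly (smoothness for the subset diffeology of $p^{-1}(\cdot)\subseteq U$), the condition is nearly vacuous wherever that preimage has empty interior, and this is precisely what sinks the second half of your plan.

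The decisive gap is in the reverse inclusion, which is circular as you set it up: the General Observations (that $\pi^{\Lambda}\circ p$ locally lifts to a plot of $X_1$ or of $X_2$) describe plots of $\calD^{\Lambda}$, which is exactly what you are trying to prove $p$ is; for a plot of an arbitrary diffeology making the two partial maps smooth, nothing forces $\pi^{\Lambda}\circ p$ to be a plot of $X_1\cup_f X_2$ at all. And the failure is not merely one of presentation. Take $X_1=X_2=\matR$ glued at the origins (here $f$ is a diffeomorphism and $\calD_1^{\Omega}=\calD_2^{\Omega}$ hold trivially, and the fibre over the wedge point is all of $\Lambda_0^1(X_1)\oplus\Lambda_0^1(X_2)$). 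Let $\phi(u)=e^{-1/u^2}\sin^2(1/u)$, $\phi(0)=0$, so $\phi^{-1}(0)=\{0\}\cup\{1/(\pi n)\}_{n\geqslant 1}$, and define $p$ over the base plot $u\mapsto i_2(\phi(u))$ to be the zero covector where $\phi(u)\neq 0$ and to be $(a_u,0)\in\Lambda_0^1(X_1)\oplus\Lambda_0^1(X_2)$ where $\phi(u)=0$, with $a_{1/(\pi n)}=1$ and $a_0=0$. Then $\tilde{\rho}_2^{\Lambda}\circ p$ is the zero section along $\phi$, hence a plot of $\Lambda^1(X_2)$, while any smooth $h$ for which $p\circ h$ lands in the domain of $\tilde{\rho}_1^{\Lambda}$ has image in $\phi^{-1}(0)$ and is therefore locally constant; consequently the diffeology generated by $\calD^{\Lambda}$ and $p$ still makes $\tilde{\rho}_1^{\Lambda}$, $\tilde{\rho}_2^{\Lambda}$ (and even $\pi^{\Lambda}$) smooth. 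Yet $p$ is not a plot of $\calD^{\Lambda}$: a local lift near $u=0$ through $(X_1\cup_f X_2)\times\left(\Omega^1(X_1)\times_{comp}\Omega^1(X_2)\right)$ would make $u\mapsto$ (value at $0\in X_1$ of the $\Omega^1(X_1)$-component) continuous, contradicting $a_{1/(\pi n)}=1$, $a_0=0$. So the inclusion $\calD^{init}\subseteq\calD^{\Lambda}$ cannot be obtained by assembling lifts plot-by-plot from the compositions $\tilde{\rho}_i^{\Lambda}\circ p$ alone; any correct argument has to pin down a stronger reading of the statement (restricting the comparison to diffeologies whose plots already project to plots of the base and behave coherently over the gluing locus), and your proposal contains no mechanism that does this.
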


\subsection{Endowing $\Lambda^1(X_1\cup_f X_2)$ with a pseudo-metric}

As we already mentioned in the dedicated section, many pseudo-bundles do not admit pseudo-metrics. We therefore need to consider whether requiring a pseudo-bundle of form $\Lambda^1(X)$ to carry 
one is a sensible assumption. The specific approach to this question, that we follow in this section, is to assume that $\Lambda^1(X_1)$ and $\Lambda^1(X_2)$ do admit pseudo-metrics and to introduce 
conditions allowing to obtain out of them a pseudo-metric on $\Lambda^1(X_1\cup_f X_2)$.

\subsubsection{The starting point: a straightforward construction}

The most obvious way to go about constructing a pseudo-metric on $\Lambda^1(X_1\cup_f X_2)$ is the following one. Assume that $f$ is a diffeomorphism with its image, let it, for simplicity, be such that 
$\calD_1^{\Omega}=\calD_2^{\Omega}$, and suppose that $\Lambda^1(X_1)$ and $\Lambda^1(X_2)$ admit pseudo-metrics; denote them by $g_1^{\Lambda}$ and $g_2^{\Lambda}$ respectively.

All fibres of $\Lambda^1(X_1\cup_f X_2)$ coincide with either a fibre of $\Lambda^1(X_1)$ or $\Lambda^1(X_2)$, or with a subset of their direct sum. Thus, the most obvious way to define a (prospective)
pseudo-metric $g^{\Lambda}$ on $\Lambda^1(X_1\cup_f X_2)$ is to set that:
\begin{itemize}
\item on $(\pi^{\Lambda})^{-1}(i_1(X_1\setminus Y))$, $g^{\Lambda}$ coincides with $g_1^{\Lambda}$;
\item on $(\pi^{\Lambda})^{-1}(i_2(X_2\setminus f(Y)))$, $g^{\Lambda}$ coincides with $g_2^{\Lambda}$; 
\item for any given point $x\in i_2(f(Y))$, $g^{\Lambda}(x)$ should be a bilinear form on a subspace of the direct sum $\Lambda_{f^{-1}(i_2^{-1}(x))}^1(X_1)\oplus\Lambda_{i_2^{-1}(x)}^1(X_2)$ of two vector 
spaces, each of which is already endowed with a pseudo-metric, $g_1^{\Lambda}(f^{-1}(i_2^{-1}(x)))$ and $g_2^{\Lambda}(i_2^{-1}(x))$ respectively. The definition of $g^{\Lambda}(x)$ is then a standard 
construction, carried out by requiring the two direct summands to be orthogonal and the restriction of $g^{\Lambda}(x)$ to either of them to coincide with the already existing pseudo-metric on that summand. 
\end{itemize}
This construction is not applicable always; the two pseudo-metrics should be well-behaved with respect to each other over the domain of gluing (this is still another version of compatibility, described 
immediately below). Even for a well-chosen pair of pseudo-metrics $g_1^{\Lambda}$ and $g_2^{\Lambda}$, it needs to be adjusted somewhat in order to ensure the smoothness across fibres.

\subsubsection{The compatibility for $g_1^{\Lambda}$ and $g_2^{\Lambda}$}

Two pseudo-metrics on $\Lambda^1(X_1)$ and $\Lambda^1(X_2)$ are defined to be compatible (with a given gluing of the base spaces $X_1$ and $X_2$) if they behave in the same way on pairs of 
compatible forms. The precise definition is as follows. 

\begin{defn}\label{compatible:pseudo-metrics:on:lambda:defn}
Let $X_1$ and $X_2$ be two diffeological spaces, and let $f:X_1\supseteq Y\to X_2$ be a smooth map. Let $g_1^{\Lambda}$ and $g_2^{\Lambda}$ be pseudo-metrics on $\Lambda^1(X_1)$ and 
$\Lambda^1(X_2)$ respectively. We say that $g_1^{\Lambda}$ and $g_2^{\Lambda}$ are \textbf{compatible}, for any $y\in Y$ and for any two compatible pairs $(\omega',\omega'')$ and
$(\mu',\mu'')$, where $\omega',\mu'\in\Lambda^1(X_1)$ and $\omega'',\mu''\in\Lambda^1(X_2)$, we have
$$g_1^{\Lambda}(y)(\omega',\mu')=g_2^{\Lambda}(f(y))(\omega'',\mu'').$$
\end{defn}

The above notion is stated for an arbitrary smooth gluing map, although we will only use it in the case when this map is a diffeomorphism, usually satisfying one of our extra conditions. To apply it to a 
construction of a pseudo-metric on $\Lambda^1(X_1\cup_f X_2)$, we observe the following.

\begin{prop}\label{compatible:pseudo-metrics:on:lambda:and:maps:rho:prop}
Let $X_1$ and $X_2$ be diffeological spaces, let $f$ be a gluing map, and let $g_1^{\Lambda}$ and $g_2^{\Lambda}$ be pseudo-metrics on $\Lambda^1(X_1)$ and $\Lambda^1(X_2)$. If $g_1^{\Lambda}$ 
and $g_2^{\Lambda}$ are compatible then if for all $y\in Y$ and for all $\omega,\mu\in(\pi^{\Lambda})^{-1}(i_2(f(y)))$ we have
$$g_1^{\Lambda}(y)(\tilde{\rho}_1^{\Lambda}(\omega),\tilde{\rho}_1^{\Lambda}(\mu))=g_2^{\Lambda}(f(y))(\tilde{\rho}_2^{\Lambda}(\omega),\tilde{\rho}_2^{\Lambda}(\mu)).$$
If $f$ is a diffeomorphism such that $\calD_1^{\Omega}=\calD_2^{\Omega}$ then this is an if and only if condition.
\end{prop}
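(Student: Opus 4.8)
The plan is to unwind both statements to the level of the forms $\omega_i^\Omega$ underlying the elements of the pseudo-bundles and to exploit the explicit descriptions of $\tilde\rho_1^\Lambda$ and $\tilde\rho_2^\Lambda$ together with the fibre structure of Theorem~\ref{fibres:of:lambda:thm}. First I would fix $y\in Y$ and take $\omega,\mu\in(\pi^\Lambda)^{-1}(i_2(f(y)))$. By the third case of Theorem~\ref{fibres:of:lambda:thm} the fibre over $i_2(f(y))$ is $\Lambda_y^1(X_1)\times_{comp}\Lambda_{f(y)}^1(X_2)$, and by the definition of $\tilde\rho_1^\Lambda$ and $\tilde\rho_2^\Lambda$ (as the descents of the two projections $\rho_1,\rho_2$ of $\Omega^1(X_1)\times_{comp}\Omega^1(X_2)$ onto its factors) the images $\tilde\rho_1^\Lambda(\omega)\in\Lambda_y^1(X_1)$ and $\tilde\rho_2^\Lambda(\omega)\in\Lambda_{f(y)}^1(X_2)$ are precisely the two summands of $\omega$ — and, crucially, they form a compatible pair in the sense of Definition~\ref{compatible:elements:of:lambda:defn}. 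The same holds for $\mu$. Thus the forward implication is an immediate reading: applying the definition of compatibility of $g_1^\Lambda$ and $g_2^\Lambda$ (Definition~\ref{compatible:pseudo-metrics:on:lambda:defn}) to the compatible pairs $(\tilde\rho_1^\Lambda(\omega),\tilde\rho_2^\Lambda(\omega))$ and $(\tilde\rho_1^\Lambda(\mu),\tilde\rho_2^\Lambda(\mu))$ yields exactly $g_1^\Lambda(y)(\tilde\rho_1^\Lambda(\omega),\tilde\rho_1^\Lambda(\mu))=g_2^\Lambda(f(y))(\tilde\rho_2^\Lambda(\omega),\tilde\rho_2^\Lambda(\mu))$.

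For the converse under the stronger hypotheses ($f$ a diffeomorphism and $\calD_1^\Omega=\calD_2^\Omega$), I would show that every compatible pair $(\alpha_1,\alpha_2)$ with $\alpha_1\in\Lambda_y^1(X_1)$, $\alpha_2\in\Lambda_{f(y)}^1(X_2)$ arises as $(\tilde\rho_1^\Lambda(\omega),\tilde\rho_2^\Lambda(\omega))$ for some $\omega\in(\pi^\Lambda)^{-1}(i_2(f(y)))$. This is where the condition $\calD_1^\Omega=\calD_2^\Omega$ (equivalently, by the Remark following Proposition~\ref{tilde-rho-1:as:a:subduction:prop}, $i^*(\Omega^1(X_1))=(f^*j^*)(\Omega^1(X_2))$) is needed: it guarantees that $\tilde\rho_1^\Lambda$ and $\tilde\rho_2^\Lambda$ are jointly surjective onto compatible pairs, so the map $\omega\mapsto(\tilde\rho_1^\Lambda(\omega),\tilde\rho_2^\Lambda(\omega))$ hits exactly $\Lambda_y^1(X_1)\times_{comp}\Lambda_{f(y)}^1(X_2)$. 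Given such an $\omega$, I would lift $\alpha_1,\alpha_2$ to compatible forms $\omega_1\in\Lambda^1(X_1)$, $\omega_2\in\Lambda^1(X_2)$ (these exist by the very definition of $\oplus_{comp}$), observe these descend from a point of $\Omega^1(X_1)\times_{comp}\Omega^1(X_2)$, and then run the stated $\tilde\rho$-equation backward to recover the defining equality of Definition~\ref{compatible:pseudo-metrics:on:lambda:defn} for arbitrary compatible pairs.

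The one step that requires care, and which I expect to be the main obstacle, is the surjectivity needed for the converse — namely that under $\calD_1^\Omega=\calD_2^\Omega$ every compatible pair of fibre-elements is realized by a single element $\omega$ of the glued pseudo-bundle, at the level of \emph{values} (classes modulo vanishing subspaces) rather than merely at the level of forms. The subtlety is that compatibility was defined on forms in $\Omega^1$, so one must check that the passage to the quotients $\Lambda_y^1(X_1)=\Omega^1(X_1)/\Omega_y^1(X_1)$ and $\Lambda_{f(y)}^1(X_2)=\Omega^1(X_2)/\Omega_{f(y)}^1(X_2)$ respects both compatibility and the joint image of $(\tilde\rho_1^\Lambda,\tilde\rho_2^\Lambda)$; here I would lean on the consistency of Definition~\ref{compatible:elements:of:lambda:defn} with the quotient presentation of $\Lambda_y^1(X_1)\oplus_{comp}\Lambda_{f(y)}^1(X_2)$ stated just after it, and on Proposition~\ref{compatible:in:lambda:via:pullback:maps:prop} to translate compatibility of values into the pullback equality $i_\Lambda^*\alpha_1=f_\Lambda^*(j_\Lambda^*\alpha_2)$, which is exactly what the hypothesis on $\calD_i^\Omega$ controls. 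Once surjectivity onto compatible pairs is secured, the biconditional follows by reading the single displayed identity in both directions.
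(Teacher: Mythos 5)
Your proposal is correct and follows essentially the same route as the paper's own proof: the forward implication is read off from the fact that $\tilde{\rho}_1^{\Lambda}(\omega)$ and $\tilde{\rho}_2^{\Lambda}(\omega)$ are compatible by construction, and the converse rests on the joint surjectivity onto compatible pairs supplied by Propositions \ref{tilde-rho-1:as:a:subduction:prop} and \ref{tilde-rho-2:as:a:subduction:prop} under the hypothesis $\calD_1^{\Omega}=\calD_2^{\Omega}$. The extra care you take in passing from forms in $\Omega^1$ to their values in the quotient fibres is a detail the paper leaves implicit, but it is not a departure from its argument.
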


\begin{proof}
Indeed, for any $\omega\in\Lambda^1(X_1\cup_f X_2)$ the forms $\tilde{\rho}_1^{\Lambda}(\omega)$ and $\tilde{\rho}_2^{\Lambda}(\omega)$ are compatible by construction. To establish the equivalence 
with the extra assumption of $\calD_1^{\Omega}=\calD_2^{\Omega}$, it suffices to observe that for any pair of compatible forms $(\omega_1,\omega_2)$, where $\omega_i\in\Lambda^1(X_i)$, there exists 
$\omega\in\Lambda^1(X_1\cup_f X_2)$ such that $\tilde{\rho}_i^{\Lambda}(\omega)=\omega_i$, which follows from Propositions \ref{tilde-rho-1:as:a:subduction:prop} and 
\ref{tilde-rho-2:as:a:subduction:prop}.
\end{proof}

\subsubsection{The definition of $g^{\Lambda}$}

Given two compatible pseudo-metrics $g_1^{\Lambda}$ and $g_2^{\Lambda}$, we can now combine them in the following manner:
$$g^{\Lambda}(x)(\cdot,\cdot)=\left\{\begin{array}{ll}
g_1^{\Lambda}(i_1^{-1}(x))(\tilde{\rho}_1^{\Lambda}(\cdot),\tilde{\rho}_1^{\Lambda}(\cdot)), & \mbox{if }x\in i_1(X_1\setminus Y),\\
\frac12\left(g_1^{\Lambda}(f^{-1}(i_2^{-1}(x)))(\tilde{\rho}_1^{\Lambda}(\cdot),\tilde{\rho}_1^{\Lambda}(\cdot))+g_2^{\Lambda}(i_2^{-1}(x))(\tilde{\rho}_2^{\Lambda}(\cdot),\tilde{\rho}_2^{\Lambda}(\cdot))\right),
& \mbox{if }x\in i_2(f(Y)), \\ 
g_2^{\Lambda}(i_2^{-1}(x))(\tilde{\rho}_2^{\Lambda}(\cdot),\tilde{\rho}_2^{\Lambda}(\cdot)), & \mbox{if }x\in i_2(X_2\setminus f(Y)).
\end{array}\right.$$ It is not hard to see that the result is indeed a pseudo-metric on $\Lambda^1(X_1\cup_f X_2)$; the coefficient $\frac12$ is added to ensure the smoothness, as 
discussed below.

Observe that the compatibility condition ensures (and it was chosen precisely for this reason) that
\begin{flushleft}
$\frac12\left(g_1^{\Lambda}(f^{-1}(i_2^{-1}(x)))(\tilde{\rho}_1^{\Lambda}(\cdot),\tilde{\rho}_1^{\Lambda}(\cdot))+
g_2^{\Lambda}(i_2^{-1}(x))(\tilde{\rho}_2^{\Lambda}(\cdot),\tilde{\rho}_2^{\Lambda}(\cdot))\right)=$
\end{flushleft}
\begin{flushright}
$=\frac12\left(g_2^{\Lambda}(i_2^{-1}(x))(\tilde{\rho}_2^{\Lambda}(\cdot),\tilde{\rho}_2^{\Lambda}(\cdot))+g_2^{\Lambda}(i_2^{-1}(x))(\tilde{\rho}_2^{\Lambda}(\cdot),\tilde{\rho}_2^{\Lambda}(\cdot))\right)=
g_2^{\Lambda}(i_2^{-1}(x))(\tilde{\rho}_2^{\Lambda}(\cdot),\tilde{\rho}_2^{\Lambda}(\cdot))$.
\end{flushright} This occurs over every point in $i_2(f(Y))$ and would have happened if the two summands were given different coefficients, as long as these coefficients sum up to $1$. We choose $\frac12$ 
because the gluing is symmetric in the present case.

We also mention why we do not define, over such points, $g^{\Lambda}$ to simply be equal to $g_2^{\Lambda}(i_2^{-1}(x))(\tilde{\rho}_2^{\Lambda}(\cdot),\tilde{\rho}_2^{\Lambda}(\cdot))$ from the start (as 
is the usual way of going about maps defined on spaces obtained by gluing). This is simply to stress the symmetricity between the two factors, that exists for $\Lambda^1(X_1\cup_f X_2)$ (such symmetricity 
usually is absent from the gluing diffeology) and, although the two-part (rather than three-part) definition would emphasize the smoothness property of $g^{\Lambda}$, it would make it harder to see why it 
has the desired rank.

\subsubsection{Evaluating $g^{\Lambda}$ on plots}\label{g-lambda:is:smooth:sect}

In general, the smoothness of a prospective pseudo-metric $g$ on some pseudo-bundle $V\to X$ is verified by considering its evaluation on an arbitrary plot of $X$ and two plots of $V$. Let us consider such 
evaluation for the prospective pseudo-metric $g^{\Lambda}$. Let $p:U\to X_1\cup_f X_2$ be a plot of $X_1\cup_f X_2$, and let $q,s:U'\to\Lambda^1(X_1\cup_f X_2)$ be some plots of 
$\Lambda^1(X_1\cup_f X_2)$ (we can, without loss of generality, assume them to be defined on the same domain, and also assume that $U$ and $U'$ are connected). Consider the evaluation map
$$(u,u')\mapsto g^{\Lambda}(p(u))(q(u'),s(u'))\in\matR,$$ defined on the set of $(u,u')$ such that $\pi^{\Lambda}(q(u'))=\pi^{\Lambda}(s(u'))=p(u)$. 

Since $U$ is connected, $p$ lifts to either a plot $p_1$ of $X_1$ or a plot $p_2$ of $X_2$. Restrict, if necessary, $U'$ so that each of $q,s$ lifts to a plot of 
$(X_1\cup_f X_2)\times\left(\Omega^1(X_1)\times_{comp}\Omega^1(X_2)\right)$, and that these lifts have form
$$\tilde{q}=(q_{\cup},(q_1^{\Omega},q_2^{\Omega})),\,\,\,\tilde{s}=(s_{\cup},(s_1^{\Omega},s_2^{\Omega})).$$ Notice that $q_{\cup},s_{\cup}$ are both plots of $X_1\cup_f X_2$, and, since by assumption they
have the same domain of definition, $q_{\cup}\equiv s_{\cup}$ on the entire $U'$. Moreover, for all $(u,u')$ in the domain of definition of the evaluation map we have $q_{\cup}(u')=s_{\cup}(u')=p(u)$. Thus, if 
$p$ lifts to a plot of $X_1$ then so do $q_{\cup}$ and $s_{\cup}$, and the same occurs in the case when $p$ lifts to a plot of $X_2$.

If $p$ lifts to a plot $p_1$ of $X_1$, we shall have
\begin{flushleft}
$g^{\Lambda}(p(u))(q(u'),s(u'))=$
\end{flushleft}
\begin{flushright}
$=\left\{\begin{array}{ll}
g_1^{\Lambda}(p_1(u))(\tilde{\rho}_1^{\Lambda}(q(u')),\tilde{\rho}_1^{\Lambda}(s(u'))) & \mbox{on }p_1^{-1}(X_1\setminus Y) \\
\frac12\left(g_1^{\Lambda}(p_1(u))(\tilde{\rho}_1^{\Lambda}(q(u')),\tilde{\rho}_1^{\Lambda}(s(u')))+g_2^{\Lambda}(f(p_1(u)))(\tilde{\rho}_2^{\Lambda}(q(u')),\tilde{\rho}_2^{\Lambda}(s(u')))\right) & 
\mbox{on }p_1^{-1}(Y).
\end{array}\right.$
\end{flushright} In the case when $p$ lifts to a plot $p_2$ of $X_2$, we obtain 
\begin{flushleft}
$g^{\Lambda}(p(u))(q(u'),s(u'))=$
\end{flushleft}
\begin{flushright}
$=\left\{\begin{array}{ll}
g_2^{\Lambda}(p_2(u))(\tilde{\rho}_2^{\Lambda}(q(u')),\tilde{\rho}_2^{\Lambda}(s(u'))) & \mbox{on }p_2^{-1}(X_2\setminus f(Y)) \\
\frac12\left(g_1^{\Lambda}(f^{-1}(p_2(u)))(\tilde{\rho}_1^{\Lambda}(q(u')),\tilde{\rho}_1^{\Lambda}(s(u')))+g_2^{\Lambda}(p_2(u))(\tilde{\rho}_2^{\Lambda}(q(u')),\tilde{\rho}_2^{\Lambda}(s(u')))\right) & 
\mbox{on }p_2^{-1}(f(Y)).
\end{array}\right.$
\end{flushright}
It follows then from Proposition \ref{compatible:pseudo-metrics:on:lambda:and:maps:rho:prop} that for $p$ that lifts to a plot of $X_1$ the evaluation function coincides with one for $g_1^{\Lambda}$, while 
if $p$ lifts to a plot of $p_2$, it coincides with one for $g_2^{\Lambda}$. All such evaluation functions are smooth, because $g_1^{\Lambda}$ and $g_2^{\Lambda}$ are pseudo-metrics to begin with, and 
therefore so is any arbitrary evaluation function for $g^{\Lambda}$.

\subsubsection{Proving that $g^{\Lambda}$ is a pseudo-metric on $\Lambda^1(X_1\cup_f X_2)$}

We can now collect everything together to justify the claim in the title. We have already explained in Section \ref{compatible:pseudo-metrics:on:lambda:and:maps:rho:prop} why $g^{\Lambda}$ is smooth, 
so it remains to consider its rank, \emph{i.e.}, to show that it is the largest possible for any given fibre. This is entirely obvious for points not in $i_2(f(Y))$, so let $x\in i_2(f(Y))$. 

The fibre $\Lambda_x^1(X_1\cup_f X_2)$ has form $\Lambda_{f^{-1}(i_2^{-1}(x))}^1(X_1)\times_{comp}\Lambda_{i_2^{-1}(x)}^1(X_2)$, which in particular is a subspace in 
$\Lambda_{f^{-1}(i_2^{-1}(x))}^1(X_1)\times\Lambda_{i_2^{-1}(x)}^1(X_2)$. The definition of $g^{\Lambda}$ obviously extends to that of a bilinear form on the latter space; furthermore, this extension is a 
multiple, with a constant and positive coefficient, to the standard direct sum bilinear form on a direct sum of vector spaces. It follows that the extension itself has the maximal rank possible, and therefore so do 
its restrictions to all vector subspaces, of which $\Lambda_{f^{-1}(i_2^{-1}(x))}^1(X_1)\times_{comp}\Lambda_{i_2^{-1}(x)}^1(X_2)$ is an instance.

\begin{thm}
Let $X_1$ and $X_2$ be two diffeological spaces such that $\Lambda^1(X_1)$ and $\Lambda^1(X_2)$ admit pseudo-metrics, and let $f:X_1\supseteq Y\to X_2$ be a gluing map such that 
$\calD_1^{\Omega}=\calD_2^{\Omega}$. Let $g_1^{\Lambda}$ and $g_2^{\Lambda}$ be pseudo-metrics on $\Lambda^1(X_1)$ and $\Lambda^1(X_2)$ respectively compatible in the sense of 
Definition \ref{compatible:pseudo-metrics:on:lambda:defn}. Then the corresponding $g^{\Lambda}$ is a pseudo-metric on $\Lambda^1(X_1\cup_f X_2)$.
\end{thm}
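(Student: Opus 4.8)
The plan is to verify, for the section $g^{\Lambda}$ constructed above, the three defining properties of a pseudo-metric on the pseudo-bundle $\Lambda^1(X_1\cup_f X_2)$: that it is a well-defined smooth section of $(\Lambda^1(X_1\cup_f X_2))^*\otimes(\Lambda^1(X_1\cup_f X_2))^*$, that each $g^{\Lambda}(x)$ is symmetric and positive semidefinite, and that each $g^{\Lambda}(x)$ has the maximal rank possible on its fibre. Most of the groundwork has already been laid in the preceding subsections, so the proof will largely consist of assembling those pieces in order.

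First I would dispose of well-definedness over the gluing locus. The only point at which the piecewise definition could be ambiguous is $x\in i_2(f(Y))$, where both $\tilde{\rho}_1^{\Lambda}$ and $\tilde{\rho}_2^{\Lambda}$ are defined; here I would invoke Proposition \ref{compatible:pseudo-metrics:on:lambda:and:maps:rho:prop}, whose conclusion that $g_1^{\Lambda}(y)(\tilde{\rho}_1^{\Lambda}(\cdot),\tilde{\rho}_1^{\Lambda}(\cdot))$ equals $g_2^{\Lambda}(f(y))(\tilde{\rho}_2^{\Lambda}(\cdot),\tilde{\rho}_2^{\Lambda}(\cdot))$ guarantees that the two summands in the middle branch coincide, so that the coefficient $\frac12$ produces exactly that common value (as computed just before the statement). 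Symmetry and positive semidefiniteness are then immediate, since each branch is a nonnegative linear combination of the forms $g_i^{\Lambda}$ precomposed with the fibrewise-linear maps $\tilde{\rho}_i^{\Lambda}$, and $g_1^{\Lambda}$, $g_2^{\Lambda}$ are themselves symmetric and positive semidefinite.

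Next I would establish smoothness, which is essentially the content of Section \ref{g-lambda:is:smooth:sect}. The key point is that for an arbitrary plot $p$ of $X_1\cup_f X_2$ and arbitrary plots $q,s$ of $\Lambda^1(X_1\cup_f X_2)$, the evaluation function $(u,u')\mapsto g^{\Lambda}(p(u))(q(u'),s(u'))$ reduces, after restricting to a connected subdomain on which $p$ lifts to a plot $p_1$ of $X_1$ or a plot $p_2$ of $X_2$, to the corresponding evaluation function for $g_1^{\Lambda}$ or $g_2^{\Lambda}$; the matching of the two middle-branch terms over the domain of gluing again rests on Proposition \ref{compatible:pseudo-metrics:on:lambda:and:maps:rho:prop}. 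Since $g_1^{\Lambda}$ and $g_2^{\Lambda}$ are pseudo-metrics to begin with, these evaluation functions are smooth, hence so is the one for $g^{\Lambda}$. The hypothesis $\calD_1^{\Omega}=\calD_2^{\Omega}$ enters here through Propositions \ref{tilde-rho-1:as:a:subduction:prop} and \ref{tilde-rho-2:as:a:subduction:prop}, which ensure that $\tilde{\rho}_1^{\Lambda}$ and $\tilde{\rho}_2^{\Lambda}$ are subductions onto the respective $\Lambda^1(X_i)$ and that every compatible pair of forms is realized by an element of $\Lambda^1(X_1\cup_f X_2)$.

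Finally, and this is the step I expect to be the main obstacle, I would verify that $g^{\Lambda}(x)$ has maximal rank on each fibre. Over points of $i_1(X_1\setminus Y)$ and $i_2(X_2\setminus f(Y))$ this is immediate from Theorem \ref{three:piece:diffeology:in:lambda:thm}: the restriction of $\tilde{\rho}_i^{\Lambda}$ identifies the fibre diffeomorphically with a fibre of $\Lambda^1(X_i)$, and $g^{\Lambda}(x)$ is exactly the transport of the pseudo-metric $g_i^{\Lambda}$ there, so it keeps its maximal rank. The delicate case is $x\in i_2(f(Y))$, where by Theorem \ref{fibres:of:lambda:thm} the fibre is $\Lambda_y^1(X_1)\times_{comp}\Lambda_{f(y)}^1(X_2)$, a subspace of the full direct sum $\Lambda_y^1(X_1)\oplus\Lambda_{f(y)}^1(X_2)$. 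Here I would extend $g^{\Lambda}(x)$ to the whole direct sum by the same formula, namely the orthogonal sum of $\frac12 g_1^{\Lambda}(y)$ and $\frac12 g_2^{\Lambda}(f(y))$; this extension is a positive multiple of the standard direct-sum pseudo-metric, whose characteristic subspace is the direct sum of the characteristic subspaces of the two summands, so it attains maximal rank on the direct sum. The crux is then to argue that restricting this maximal-rank form to the subspace preserves maximality of rank relative to the dual of that subspace; I would do this by checking that the characteristic subspace of $\Lambda_y^1(X_1)\times_{comp}\Lambda_{f(y)}^1(X_2)$ sits inside the characteristic subspace of the direct sum, so that the restricted form is a genuine scalar product there and vanishes on a complement, which is precisely the kind of splitting-off argument used for pseudo-metrics on diffeological vector spaces in Section \ref{pseudometric:sect}. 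Once this is in place, the three properties together give that $g^{\Lambda}$ is a pseudo-metric, completing the proof.
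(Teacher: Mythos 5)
Your proposal is correct and follows essentially the same route as the paper's own proof: well-definedness of the middle branch and the role of the coefficient $\frac12$ via Proposition \ref{compatible:pseudo-metrics:on:lambda:and:maps:rho:prop}, smoothness by reducing the evaluation on plots to the evaluation functions of $g_1^{\Lambda}$ and $g_2^{\Lambda}$ (Section \ref{g-lambda:is:smooth:sect}), and maximality of rank over $i_2(f(Y))$ by extending $g^{\Lambda}(x)$ to the full direct sum $\Lambda_{f^{-1}(i_2^{-1}(x))}^1(X_1)\oplus\Lambda_{i_2^{-1}(x)}^1(X_2)$ and then restricting. Your added care about why the restriction to $\Lambda_{f^{-1}(i_2^{-1}(x))}^1(X_1)\times_{comp}\Lambda_{i_2^{-1}(x)}^1(X_2)$ retains maximal rank merely makes explicit what the paper leaves implicit: the finite-dimensional fibres of $\Lambda^1$ are standard, so the direct-sum form is a genuine scalar product and its restriction to any subspace is again one.
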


\subsection{The case of a conical gluing}

We now consider a specific case when $f$ is not a diffeomorphism, namely that of an arbitrary subset $Y\subseteq X_1$ being glued to a one-point space $X_2$. Let $x\in X_1\cup_f X_2$ be the image 
$i_2(X_2)$; we will refer to $x$ as the conical point. As always, the fibre at $x$ of $\Lambda^1(X_1\cup_f X_2)$ is the quotient
$$\Omega^1(X_1\cup_f X_2)/\Omega_x^1(X_1\cup_f X_2);$$ since any diffeological form assigns the zero form to any constant plot, $\Omega^1(X_2)$ is the zero space, and the compatibility condition is 
empty. Therefore $\Omega^1(X_1\cup_f X_2)=\Omega_f^1(X_1)$ as a set (the space of all $f$-invariant forms, see Section 8.1.1), and it can be checked that
$$\Omega_x^1(X_1\cup_f X_2)=\bigcap_{y\in Y}(\Omega_f^1)_y(X_1);$$ see Proposition 6.8 in \cite{differential-forms-gluing}.

We therefore conclude that the fibres of $\Lambda^1(X_1\cup_f X_2)$ outside of the conical point coincide with those of $\Lambda^1(X_1)$. The fibre at the conical point has form
$$\Lambda_x^1(X_1\cup_f X_2)=\Omega_f^1(X_1)/\bigcap_{y\in Y}(\Omega_f^1)_y(X_1).$$ It is thus distinct from a fibre of  $\Lambda^1(X_1)$, unless $Y$ is a one-point set. It furthermore admits a 
surjection on any fibre of form $\Lambda_y^1(X_1)$.

Let us consider the plots of $X_1\cup_f X_2$ and those of $\Lambda^1(X_1\cup_f X_2)$ in the vicinity of (the fibre over) the conical point $x$. Let first $p:U\to X_1\cup_f X_2$ be a plot whose range contains 
$x$. By definition of the gluing diffeology, this means that there exists a plot $p_1$ of $X_1$ such that 
$p(u)=\left\{\begin{array}{ll} i_1(p_1(u)) & \mbox{for }u\in U\setminus p^{-1}(x) \\ x & \mbox{for }u\in p^{-1}(x) \end{array}\right.$. This means that $p$ is essentially a plot of the reduced space $X_1^f$ (see 
Section 8.1.2), so we have 
$$\Omega^1(X_1\cup_f X_2)\cong\Omega^1(X_1^f)\cong\Omega_f^1(X_1).$$ The definition of $f$-invariance in the present case takes form $\omega_1(p_1)=\omega_1(p_1')$ for all $f$-equivalent plots 
$p_1$ and $p_1'$ of $X_1$, and the $f$-equivalence in this case means that $p_1$ and $p_1'$ have the same domain of definition $U$, and for all $u$ such that $p_1(u)\neq p_1'(u)$, we have that 
$p_1(u),p_1'(u)\in Y$. Since $X_1$ and $Y$ can be anything, this is quite likely a non-trivial condition. 

Let us now consider the plots of the pseudo-bundle $\Lambda^1(X_1\cup_f X_2)$ in the vicinity of the fibre over the conical point. At first approximation at least, these plots can be characterized by their 
lifts to plots $(X_1\cup_f X_2)\times\Omega^1(X_1\cup_f X_2)$ (this is the general case), and thus in our specific case it lifts to a plot of $X_1^f\times\Omega^1(X_1^f)$. This in fact is true of any plot of 
$\Lambda^1(X_1\cup_f X_2)$, whether its range intersects $Y$ or not. The final conclusion (and this is likely the shortest way to put it) that we thus can draw is that 
$$\Lambda^1(X_1\cup_f X_2)\cong\Lambda^1(X_1^f),$$ with a somewhat more explicit description being as follows: every plot is (locally) a projection of a map of form $(p_1,p_1^{\Omega})$, where $p_1$ 
can be taken to be any plot of $X_1$ and $p_1^{\Omega}$ is a plot of $\Omega^1(X_1)$ such its value at every point is an $f$-invariant form.

\section{The dual pseudo-bundle $(\Lambda^1(X))^*$}

There is not yet a fully established standard version of the tangent pseudo-bundle of a diffeological space. The most promising, and prominent, version appears at the moment to be that of the \emph{internal 
tangent bundle} (see \cite{CWtangent}); a number of other constructions have been proposed over time, such as the \emph{external tangent bundle}, see again \cite{CWtangent}, and the pseudo-bundle
$T^1(X)$ of $1$-tangent vectors, see \cite{iglesiasBook}, Chapter 6. In the present work we adopt probably a simpler version with respect to those, by using the dual pseudo-bundle $(\Lambda^1(X))^*$ of the 
pseudo-bundle $\Lambda^1(X)$ of diffeological $1$-forms. The reason for this is entirely obvious; indeed, the elements of $(\Lambda^1(X))^*$ have a natural pairing with elements of $\Lambda^1(X)$, and 
this is sufficient within the scope of the present work.

Throughout this section we will carry on the assumptions from the previous section, that:
\begin{itemize}
\item the gluing map $f$ is a diffeomorphism of its domain with its image and is such that $\calD_1^{\Omega}=\calD_2^{\Omega}$, and (this may not be always needed)
\item the pseudo-bundles $\Lambda^1(X_1)$ and $\Lambda^1(X_2)$ have only finite-dimensional fibres.
\end{itemize}
The second assumption, and an application of results based on the first one, imply in particular that also $\Lambda^1(X_1\cup_f X_2)$ has finite-dimensional fibres, and therefore, by the definition of
the dual pseudo-bundle, so does $(\Lambda^1(X_1\cup_f X_2))^*$.

\subsection{General considerations on $(\Lambda^1(X))^*$}

We first make several observations regarding the pseudo-bundle $(\Lambda^1(X))^*$ on its own; some of them are not specific to it, but rather apply to any dual pseudo-bundle $V^*$ where $V$ is a 
pseudo-bundle with finite-dimensional fibres that admits a pseudo-metric.

\subsubsection{Embedding $(\Lambda^1(X))^*$ in $X\times(\Omega^1(X))^*$}\label{dual:lambda:embeds:into:dual:omega:times:X:sect}

Let $\pi:V\to X$ be a diffeological vector pseudo-bundle. We can describe $(\Lambda^1(X))^*$ in the following way. By definition, $\Lambda^1(X)$ is a quotient pseudo-bundle, and more precisely the 
following one:
$$\Lambda^1(X):=(X\times\Omega^1(X))/\left(\bigcup_{x\in X}\{x\}\times\Omega_x^1(X)\right).$$ We can view $X\times\Omega^1(X)$ as the total space of the trivial pseudo-bundle over $X$ with fibre 
$\Omega^1(X)$. Then $\bigcup_{x\in X}\{x\}\times\Omega_x^1(X)$ is a sub-bundle of it,and $\Lambda^1(X)$ is the corresponding quotient pseudo-bundle.

Consider the quotient projection 
$$\pi^{\Omega,\Lambda}:X\times\Omega^1(X)\to\Lambda^1(X).$$ The corresponding dual map
$$(\pi^{\Omega,\Lambda})^*:(\Lambda^1(X))^*\to X\times(\Omega^1(X))^*$$ between the duals of the pseudo-bundles $\Lambda^1(X)$ and $X\times\Omega^1(X)$ is of course smooth. Furthermore,
since $\pi^{\Lambda}$is surjective, the map dual to it is injective. Moreover, it is an induction, so on $(\pi^{\Lambda})^*((\Lambda^1(X))^*)$ it has asmooth inverse. Thus, $(\pi^{\Lambda})^*$ is a
natural diffeomorphism of $(\Lambda^1(X))^*$ with a subset of $X\times(\Omega^1(X))^*$.

\begin{rem}
Since $(\pi^{\Lambda})^*:(\Lambda^1(X))^*\to X\times(\Omega^1(X))^*$ is an induction, $(\Lambda^1(X))^*$ can be identified with a sub-bundle of the trivial pseudo-bundle $X\times(\Omega^1(X))^*$.
Notice that in the diffeological context this does \emph{not} imply that $(\Lambda^1(X))^*$ is trivial, or even locally trivial, itself. Indeed, every collection of vector subspaces of fibres of a diffeological 
pseudo-bundle, one per fibre, determines a sub-bundle, so any trivial diffeological bundle contains many non-trivial sub-bundles. This is specifically the case of pseudo-bundles of form $\Lambda^1(X)$, 
as it follows from the description of $(\Lambda^1(X_1\cup_f X_2))^*$ that we develop in this section.
\end{rem}

\subsubsection{If $\Lambda^1(X)$ admits pseudo-metrics then $(\Lambda^1(X))^*\cong\Lambda^1(X)$}

The reasoning below applies to any finite-dimensional diffeological vector pseudo-bundle $\pi:V\to X$ that admits a pseudo-metric. For such a pseudo-bundle and for the chosen pseudo-metric $g$ on it, 
recall the corresponding pairing map $\Phi_g:V\to V^*$ given by
$$\Phi_g(v)(\cdot)=g(\pi(v))(v,\cdot).$$ It is a general consequence of the definition of the dual pseudo-bundle diffeology that $\Phi_g$ is a subduction. Furthermore, it can be rather easily deduced from 
the basic construction of the pseudo-bundle $\Lambda^1(X)$ that the subset diffeology on any finite-dimensional fibre is standard, so restricted to that fibre, $\Phi_g$ is a diffeomorphism. In particular, we
have the following.

\begin{prop}\label{fin-dim:lambda:implies:standard:fibres:prop}
If $\Lambda^1(X)$ has only finite-dimensional fibres then the subset diffeology on each fibre is the standard one. In particular, if $\Lambda^1(X)$ admits a pseudo-metric $g^{\Lambda}$ then 
$\Phi_{g^{\Lambda}}$ is a diffeomorphism $\Lambda^1(X)\to(\Lambda^1(X))^*$.
\end{prop}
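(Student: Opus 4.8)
The plan is to prove the two assertions of Proposition \ref{fin-dim:lambda:implies:standard:fibres:prop} in order, the first being the essential input to the second. For the first assertion, I would fix a point $x \in X$ and consider the fibre $\Lambda_x^1(X)$, which by the construction recalled in Section \ref{diffeological:forms:sect} is the quotient $\Omega^1(X)/\{\alpha \in \Omega^1(X) \mid \alpha_x = 0_x\}$ endowed with the quotient diffeology. The task is to show that, under the hypothesis that this quotient is finite-dimensional, its subset diffeology (equivalently, the quotient diffeology coming from $\Omega^1(X)$) is the standard diffeology of $\matR^n$, where $n = \dim \Lambda_x^1(X)$. The key tool here is Theorem \ref{dual:diff-gy:standard:thm}, which asserts that the diffeological dual of any finite-dimensional diffeological vector space is standard. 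I would apply this to $V = \Lambda_x^1(X)$ itself: since $\Lambda_x^1(X)$ is finite-dimensional by hypothesis, the double-dual argument (together with the fact, cited after the proof of Theorem \ref{dual:diff-gy:standard:thm}, that a pseudo-metric on $V$ induces a true metric on $V^*$, hence a standard structure) forces $\Lambda_x^1(X)$ to carry a smooth scalar product whenever it is diffeomorphic to its own dual, and one must check that this is indeed the situation for fibres of $\Lambda^1(X)$.

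First I would verify the more delicate point that a finite-dimensional fibre $\Lambda_x^1(X)$ actually admits a smooth scalar product, which by the Proposition following Example \ref{Rn:vector:nontrivial:ex} is equivalent to the subset diffeology being standard. The natural route is to invoke the existence of a pseudo-metric on $\Lambda_x^1(X)$ as an individual diffeological vector space (guaranteed for every finite-dimensional diffeological vector space, as stated after the Definition of a pseudo-metric in Section \ref{pseudometric:sect}), together with the fact that the characteristic subspace of $\Lambda_x^1(X)$ — the span of the eigenvectors of nonzero eigenvalue — coincides with the whole space precisely when no nonstandard plot survives. The cleanest argument is that the values of $1$-forms at $x$ are detected through their pullbacks by plots centered at $x$, and these pullbacks are ordinary smooth forms on Euclidean domains; this locally Euclidean source of plots prevents the appearance of the kind of non-smooth plot (as in Example \ref{Rn:vector:nontrivial:ex}) that would degenerate a scalar product. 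I expect this to be the main obstacle, because it requires translating the abstract finite-dimensionality hypothesis into the concrete statement that every plot of $\Lambda_x^1(X)$ is smooth in the ordinary sense.

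Once the first assertion is established, the second follows readily. Given a pseudo-metric $g^{\Lambda}$ on $\Lambda^1(X)$, I would consider the pairing map $\Phi_{g^{\Lambda}} \colon \Lambda^1(X) \to (\Lambda^1(X))^*$ defined fibrewise by $\Phi_{g^{\Lambda}}(v)(\cdot) = g^{\Lambda}(\pi^{\Lambda}(v))(v,\cdot)$, exactly as in the construction of the induced pseudo-metric on $V^*$ in Section \ref{pseudometric:gluing:bundles:sect}. By the general facts cited there, $\Phi_{g^{\Lambda}}$ is smooth, fibrewise linear, and a subduction. On each fibre $\Lambda_x^1(X)$, the pseudo-metric $g^{\Lambda}(x)$ is nondegenerate: its isotropic subspace is trivial precisely because, by the first assertion, the fibre is standard and hence carries an honest scalar product of full rank. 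Therefore the fibrewise restriction of $\Phi_{g^{\Lambda}}$ is a linear isomorphism onto $(\Lambda_x^1(X))^*$, and since both source and target fibres are standard finite-dimensional diffeological vector spaces, this fibrewise linear isomorphism is automatically a diffeomorphism. Combining the fibrewise diffeomorphisms with the global smoothness and subduction properties of $\Phi_{g^{\Lambda}}$ yields that $\Phi_{g^{\Lambda}}$ is a diffeomorphism $\Lambda^1(X) \to (\Lambda^1(X))^*$, which is the claim.
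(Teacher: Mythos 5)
Your handling of the second assertion is correct and is essentially the paper's own argument: $\Phi_{g^{\Lambda}}$ is a subduction by the general properties of the dual pseudo-bundle diffeology, a pseudo-metric on a standard fibre is an honest scalar product, so $\Phi_{g^{\Lambda}}$ restricts to a linear isomorphism between standard fibres (hence a fibrewise diffeomorphism), and a fibrewise bijective subduction covering the identity is a global diffeomorphism. The gap is in the first assertion, which is where all the content of the proposition lies. Your opening move is circular: applying Theorem \ref{dual:diff-gy:standard:thm} to $V=\Lambda_x^1(X)$ only tells you that $(\Lambda_x^1(X))^*$ is standard, and to transfer anything back to $\Lambda_x^1(X)$ you would need it to be (linearly, hence by the cited result diffeomorphically) isomorphic to its dual — which, for a finite-dimensional diffeological vector space, is exactly equivalent to the standardness you are trying to prove. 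The same is true of the remark about the characteristic subspace coinciding with the whole space: that is a restatement of the goal, not a step toward it. You then correctly identify the crux — one must show that every plot of $\Lambda_x^1(X)$ is ordinarily smooth — but what you offer for it ("the locally Euclidean source of plots prevents non-smooth plots") is a heuristic, not an argument, and you explicitly defer it as "the main obstacle". So the central claim is never proved.

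Here is the missing argument, which is short once set up correctly. For every plot $p:U\to X$ centered at $x$ there is an evaluation map $\mathrm{ev}_p:\Lambda_x^1(X)\to(\matR^{\dim U})^*$ given by $\alpha_x\mapsto\alpha(p)(0)$. It is well defined and linear, and it is smooth: a plot of the fibre locally lifts, by definition of the quotient diffeology, to a plot $q$ of $\Omega^1(X)$, and the functional diffeology of $\Omega^1(X)$ makes $(u',u)\mapsto(q(u')(p))(u)$ smooth, so $u'\mapsto(q(u')(p))(0)$ is smooth. By the very definition of the value of a form at $x$, the family of all such $\mathrm{ev}_p$ separates the points of $\Lambda_x^1(X)$. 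Hence the components of these evaluations are smooth linear functionals spanning the full linear dual of $\Lambda_x^1(X)$, i.e. $L^{\infty}(\Lambda_x^1(X),\matR)=L(\Lambda_x^1(X),\matR)$; since the fibre is finite-dimensional by hypothesis, the criterion recalled in Section 2.2 from \cite{pseudometric} (this equality holds if and only if the space is standard) gives that $\Lambda_x^1(X)$ is standard. Equivalently, a finite separating subfamily yields a smooth linear bijection $\Lambda_x^1(X)\to\matR^n$ whose inverse is smooth because every vector space diffeology contains all ordinary smooth maps. Finite-dimensionality and the separating family of evaluations are the two inputs that must be combined; your sketch names both but never joins them, and without that the first assertion — and with it the nondegeneracy of $g^{\Lambda}(x)$ on which your second assertion relies — remains unproved. (Note that the paper itself dispatches this step with "it can be rather easily deduced from the basic construction", so your proposal reproduces the paper's sketch rather than supplying the proof it omits.)
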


\subsection{The compatibility notion for elements of $(\Lambda^1(X_1))^*$ and $(\Lambda^1(X_2))^*$}

As in the case of elements of $\Lambda^1(X_1)$ and $\Lambda^1(X_2)$, the compatibility notion for $(\Lambda^1(X_1))^*$ and $(\Lambda^1(X_2))^*$ is relevant only for fibres over the domain of gluing, 
and only over the points related by $f$. That is to say, for $\alpha_1^*\in(\Lambda^1(X_1))^*$ and $\alpha_2^*\in(\Lambda^1(X_2))^*$ to be compatible it is necessary that 
$\alpha_1^*\in(\Lambda^1(X_1))^*_y$ and $\alpha_2^*\in(\Lambda^1(X_2))^*_{f(y)}$ for some $y\in Y$. Furthermore, this compatibility notion explicitly refers to a choice of compatible pseudo-metrics 
on (fibres of) $(\Lambda^1(X_1))^*$ and $(\Lambda^1(X_2))^*$, in addition to its dependance on the gluing map $f$.

Let $y\in Y$, and let $g_1^{\Lambda}(y)$ and $g_2^{\Lambda}(f(y))$ be compatible pseudo-metrics on $\Lambda_y^1(X_1)$ and $\Lambda_{f(y)}^1(X_2)$ respectively (presumably coming from 
pseudo-metrics $g_1^{\Lambda}$ and $g_2^{\Lambda}$ on the entire pseudo-bundles $\Lambda^1(X_1)$ and $\Lambda^1(X_2)$, although this is not essential at the moment). Since by assumption 
$\Lambda_y^1(X_1)$ and $\Lambda_{f(y)}^1(X_2)$ are just standard spaces, $g_1^{\Lambda}(y)$ and $g_2^{\Lambda}(f(y))$ are scalar product. Let $\pi_{1,\Lambda}^{ort,y}$ be the orthogonal 
projection of $\Lambda_y^1(X_1)$ onto the orthogonal complement of $\Lambda_y^1(X_1)\cap\mbox{Ker}(i_{\Lambda}^*)$,
$$\pi_{1,\Lambda}^{ort,y}:\Lambda_y^1(X_1)\to\left(\Lambda_y^1(X_1)\cap\mbox{Ker}(i_{\Lambda}^*)\right)^{\perp}.$$ Let $\pi_{2,\Lambda}^{ort,f(y)}$ be the analogous orthogonal projection of 
$\Lambda_{f(y)}^1(X_2)$ onto the orthogonal complement of $\Lambda_{f(y)}^1(X_2)\cap\mbox{Ker}(j_{\Lambda}^*)$,
$$\pi_{2,\Lambda}^{ort,f(y)}:\Lambda_{f(y)}^1(X_2)\to\left(\Lambda_{f(y)}^1(X_2)\cap\mbox{Ker}(j_{\Lambda}^*)\right)^{\perp}.$$

\begin{defn}\label{compatible:elements:of:dual:lambda:defn}
Two elements $\alpha_1^*\in(\Lambda_y^1(X_1))^*$ and $\alpha_2^*\in(\Lambda_{f(y)}^1(X_2))^*$ are \textbf{compatible} (with respect to $f$, $g_1^{\Lambda}(y)$, and $g_2^{\Lambda}(f(y))$) if for every 
compatible pair $\beta_1\in\Lambda_y^1(X_1)$ and $\beta_2\in\Lambda_{f(y)}^1(X_2)$ we have that
$$\alpha_1^*(\pi_{1,\Lambda}^{ort,y}(\beta_1))=\alpha_2^*(\pi_{2,\Lambda}^{ort,f(y)}(\beta_2)).$$
\end{defn}

Assuming that the two respective fibres of $\Lambda^1(X_1)$ and $\Lambda^1(X_2)$, \emph{i.e.} those of form $\Lambda_y^1(X_1)$ and $\Lambda_{f(y)}^1(X_2)$ for $y\in Y$, admit compatible 
pseudo-metrics $g_1^{\Lambda}(y)$ and $g_2^{\Lambda}(f(y))$ (which is not strictly necessary for stating the above definition), there is a natural correspondence between pairs of compatible functions 
in $(\Lambda_y^1(X_1))^*$ and $(\Lambda_{f(y)}^1(X_2))^*$, and pairs of compatible forms in $\Lambda_y^1(X_1)$ and $\Lambda_{f(y)}^1(X_2)$. It is expressed by the following statement.

\begin{lemma}\label{comp:functions:correspond:comp:forms:lem}
Let $X_1$ and $X_2$ be two diffeological spaces, let $f:X_1\supseteq Y\to X_2$ be a diffeomorphism such that $i^*(\Omega^1(X_1))=(f^*j^*)(\Omega^1(X_2))$, and let $y\in Y$ be a point. Suppose 
furthermore that $\Lambda_y^1(X_1)$ and $\Lambda_{f(y)}^1(X_2)$ are finite-dimensional and admit compatible pseudo-metrics $g_1^{\Lambda}(y)$ and $g_2^{\Lambda}(f(y))$ respectively. Then for any 
$\alpha_1\in\Lambda_y^1(X_1)$ and $\alpha_2\in\Lambda_{f(y)}^1(X_2)$ the functions $\alpha_1^*(\cdot):=g_1^{\Lambda}(y)(\alpha_1,\cdot)\in(\Lambda_y^1(X_1))^*$ and 
$\alpha_2^*(\cdot):=g_2^{\Lambda}(f(y))(\alpha_2,\cdot)\in(\Lambda_{f(y)}^1(X_2))^*$ are compatible if and only if $\alpha_1$ and $\alpha_2$ are compatible as elements of $\Lambda_y^1(X_1)$ and 
$\Lambda_{f(y)}^1(X_2)$.
\end{lemma}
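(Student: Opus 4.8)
The plan is to unwind both compatibility notions — the one for forms in $\Lambda^1$ (Definition \ref{compatible:elements:of:lambda:defn}, rephrased via pullback maps in Proposition \ref{compatible:in:lambda:via:pullback:maps:prop}) and the one for functionals in $(\Lambda^1)^*$ (Definition \ref{compatible:elements:of:dual:lambda:defn}) — and to exploit the key structural fact, supplied by the finite-dimensionality hypothesis, that the fibres $\Lambda_y^1(X_1)$ and $\Lambda_{f(y)}^1(X_2)$ are \emph{standard} diffeological vector spaces (Proposition \ref{fin-dim:lambda:implies:standard:fibres:prop}), so that $g_1^{\Lambda}(y)$ and $g_2^{\Lambda}(f(y))$ are honest scalar products and $\alpha_i^* = g_i^{\Lambda}(\alpha_i,\cdot)$ are genuine orthogonal pairings. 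The compatibility of the \emph{metrics} $g_1^{\Lambda}(y)$ and $g_2^{\Lambda}(f(y))$ (Definition \ref{compatible:pseudo-metrics:on:lambda:defn}) will be the engine connecting the two sides.

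First I would set up notation: write $K_1 = \Lambda_y^1(X_1)\cap\mbox{Ker}(i_{\Lambda}^*)$ and $K_2 = \Lambda_{f(y)}^1(X_2)\cap\mbox{Ker}(j_{\Lambda}^*)$, and let $\pi_{1,\Lambda}^{ort,y}$, $\pi_{2,\Lambda}^{ort,f(y)}$ be the orthogonal projections onto $K_1^{\perp}$, $K_2^{\perp}$ as in the definition. The crucial preliminary observation I would establish is that, via Proposition \ref{compatible:in:lambda:via:pullback:maps:prop}, the subspace of $\Lambda_y^1(X_1)$ consisting of elements that admit a compatible partner is exactly governed by $i_{\Lambda}^*$, so that $\beta_1\mapsto\beta_2$ sets up (under the standing assumption $i^*(\Omega^1(X_1))=(f^*j^*)(\Omega^1(X_2))$) a linear correspondence that identifies $K_1^{\perp}$ with $K_2^{\perp}$ through $f_{\Lambda}^*$ and $j_{\Lambda}^*$; indeed for a compatible pair $(\beta_1,\beta_2)$ the conditions $i_{\Lambda}^*\beta_1 = f_{\Lambda}^*(j_{\Lambda}^*\beta_2)$ show that the components of $\beta_1,\beta_2$ lying in $K_1,K_2$ are ``invisible'' to compatibility, which is precisely why the definition inserts the orthogonal projections.

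Next, for the forward direction, assuming $\alpha_1,\alpha_2$ are compatible as elements of $\Lambda^1$, I would take any compatible pair $(\beta_1,\beta_2)$ and compute both sides of the functional-compatibility equation. Writing $\alpha_1^*(\pi_{1,\Lambda}^{ort,y}(\beta_1)) = g_1^{\Lambda}(y)(\alpha_1,\pi_{1,\Lambda}^{ort,y}(\beta_1))$ and using that $\pi_{1,\Lambda}^{ort,y}$ is the orthogonal projection for the scalar product $g_1^{\Lambda}(y)$, this equals $g_1^{\Lambda}(y)(\pi_{1,\Lambda}^{ort,y}(\alpha_1),\beta_1)$ up to the part of $\alpha_1$ in $K_1$; the compatibility of $\alpha_1$ with $\alpha_2$ (again read through Proposition \ref{compatible:in:lambda:via:pullback:maps:prop}) forces the relevant $K_1^{\perp}$-components of $\alpha_1$ and $\alpha_2$ to correspond, and then the metric-compatibility $g_1^{\Lambda}(y)(\omega',\mu')=g_2^{\Lambda}(f(y))(\omega'',\mu'')$ on compatible pairs transports the equality to the $X_2$ side, giving $\alpha_2^*(\pi_{2,\Lambda}^{ort,f(y)}(\beta_2))$. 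For the converse I would run the same chain backwards, using that the functional-compatibility equation holding for \emph{all} compatible $(\beta_1,\beta_2)$, together with nondegeneracy of the scalar products on $K_1^{\perp}, K_2^{\perp}$, pins down the $K_1^{\perp}$-component of $\alpha_1$ in terms of that of $\alpha_2$, which is exactly the statement that $\alpha_1$ and $\alpha_2$ are compatible forms.

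The main obstacle I expect is bookkeeping the orthogonal-complement decompositions correctly: I must verify that $f_{\Lambda}^*\circ j_{\Lambda}^*$ restricts to a well-defined isomorphism $K_2^{\perp}\to K_1^{\perp}$ (not merely a map of the ambient fibres) and that this isomorphism is an \emph{isometry} for the two scalar products — this is where metric-compatibility is genuinely used, rather than just the abstract form-compatibility. A subtle point to watch is that $\pi_{i,\Lambda}^{ort}$ depends on the chosen metric, so I should confirm that swapping $\alpha_i$ into its $K_i^{\perp}$-component is self-adjoint for $g_i^{\Lambda}$, i.e. $\alpha_i^*(\pi_i^{ort}(\beta)) = g_i^{\Lambda}(\pi_i^{ort}(\alpha_i),\beta)$; once this self-adjointness and the isometry of the identification are in hand, the equivalence follows by a direct comparison. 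I would keep the argument fibrewise throughout, since no smoothness-across-fibres is asserted here — only the pointwise equivalence at a single $y\in Y$.
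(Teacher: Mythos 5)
Your proposal is correct and follows essentially the same route as the paper's proof: the forward direction rests on the fact that orthogonal projections of compatible pairs remain compatible together with metric-compatibility (your use of self-adjointness of $\pi_{i,\Lambda}^{ort}$ is just a trivial rearrangement of the paper's direct computation with $z_i=\pi_{i,\Lambda}^{ort}(\beta_i)$), and the converse rests on exactly the key fact you flagged as the main obstacle, namely that $(i_{\Lambda}^*)^{-1}\circ f_{\Lambda}^*\circ j_{\Lambda}^*$ is an isometric isomorphism $K_2^{\perp}\to K_1^{\perp}$ (isomorphism from the assumption $i^*(\Omega^1(X_1))=(f^*j^*)(\Omega^1(X_2))$, isometry from metric-compatibility), which the paper exploits via an orthonormal basis and coefficient comparison where you invoke nondegeneracy.
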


\begin{proof}
Let first $\alpha_1$ and $\alpha_2$ be compatible, and let us show that the corresponding $\alpha_1^*$ and $\alpha_2^*$ are compatible in the sense of Definition 
\ref{compatible:elements:of:dual:lambda:defn}. Let $\beta_1$ and $\beta_2$ be any compatible pair, and let $z_1:=\pi_{1,\Lambda}^{ort,y}(\beta_1)$ and $z_2:=\pi_{2,\Lambda}^{ort,f(y)}(\beta_2)$, so that 
in particular, $\beta_1=w_1+z_1$ and $\beta_2=w_2+z_2$ with $w_1\in\mbox{Ker}(i_{\Lambda}^*)$ and $w_2\in\mbox{Ker}(j_{\Lambda}^*)$. Thus, $z_1$ and $z_2$ are compatible as well, and the desired 
equality
$$\alpha_1^*(z_1)=g_1^{\Lambda}(y)(\alpha_1,z_1)=g_2^{\Lambda}(f(y))(\alpha_2,z_2)=\alpha_2^*(z_2)$$ follows from the compatibility of the pseudo-metrics $g_1^{\Lambda}(y)$ and $g_2^{\Lambda}(f(y))$. 

\emph{Vice versa}, let $\alpha_1^*\in(\Lambda_y^1(X_1))^*$ and $\alpha_2^*\in(\Lambda_{f(y)}^1(X_2))^*$ be compatible, and let $\alpha_1\in\Lambda_y^1(X_1)$ and $\alpha_2\in\Lambda_{f(y)}^1(X_2)$ 
be such that $\alpha_1^*(\cdot)=g_1^{\Lambda}(y)(\alpha_1,\cdot)$ and $\alpha_2^*(\cdot)=g_2^{\Lambda}(f(y))(\alpha_2,\cdot)$; we need to show that $\alpha_1$ and $\alpha_2$ are compatible, which 
is equivalent to $i_{\Lambda}^*\alpha_1=f_{\Lambda}^*(j_{\Lambda}^*\alpha_2)\Leftrightarrow i_{\Lambda}^*z_1=f_{\Lambda}^*(j_{\Lambda}^*z_2)$, where $z_1=\pi_{1,\Lambda}^{ort,y}(\alpha_1)$ and 
$z_2=\pi_{2,\Lambda}^{ort,f(y)}(\alpha_2)$.

Observe that the restriction of $i_{\Lambda}^*$ to $\left(\Lambda_y^1(X_1)\cap\mbox{Ker}(i_{\Lambda}^*)\right)^{\perp}$ is obviously invertible; for brevity we denote this inverse simply by 
$(i_{\Lambda}^*)^{-1}$ (since $\Lambda_y^1(X_1)$ has standard diffeology, this inverse is smooth). Let now $u_1,\ldots,u_k$ be an orthonormal basis of 
$\left(\Lambda_{f(y)}^1(X_2)\cap\mbox{Ker}(j_{\Lambda}^*)\right)^{\perp}$. We claim that its image under $(i_{\Lambda}^*)^{-1}\circ f_{\Lambda}^*\circ j_{\Lambda}^*$ is an orthonormal basis of 
$\left(\Lambda_y^1(X_1)\cap\mbox{Ker}(i_{\Lambda}^*)\right)^{\perp}$. To show this, it suffices to observe that $(i_{\Lambda}^*)^{-1}\circ f_{\Lambda}^*\circ j_{\Lambda}^*$ is an isomorphism , which follows 
from the condition $i^*(\Omega^1(X_1))=(f^*j^*)(\Omega^1(X_2))$ (that implies the corresponding equality $i_{\Lambda}^*(\Lambda_y^1(X_1))=(f_{\Lambda}^*j_{\Lambda}^*)(\Lambda_{f(y)}^1(X_2))$), and 
also an isometry, which is implied by the compatibility of $g_1^{\Lambda}(y)$ and $g_2^{\Lambda}(f(y))$. 

Let now $v_i:=((i_{\Lambda}^*)^{-1}\circ f_{\Lambda}^*\circ j_{\Lambda}^*)(u_i)$ for $i=1,\ldots,k$; write $z_1=\sum_{i=1}^ka_iv_i$ and $z_2=\sum_{i=1}^kb_iu_i$. It suffices to show that $a_i=b_i$ for all 
$i=1,\ldots,k$. Indeed, $a_i=\alpha_1^*(v_i)=\alpha_2^*(u_i)=b_i$; the middle equality is true because $v_i$ and $u_i$ are compatible by construction and are their own orthogonal projections on 
$\left(\Lambda_y^1(X_1)\cap\mbox{Ker}(i_{\Lambda}^*)\right)^{\perp}$ and $\left(\Lambda_{f(y)}^1(X_2)\cap\mbox{Ker}(j_{\Lambda}^*)\right)^{\perp}$ respectively, so the equality follows from compatibility 
of $\alpha_1^*$ and $\alpha_2^*$. 
\end{proof}

Recall, as has already been observed, that whenever $\Lambda_y^1(X_1)$ has finite dimension, for any pseudo-metric $g_1^{\Lambda}(y)$ on it and for any $\alpha_1^*\in(\Lambda_y^1(X_1))^*$ there 
exists $\alpha_1\in\Lambda_y^1(X_1)$ such that $\alpha_1^*(\cdot)=g_1^{\Lambda}(y)(\alpha_1,\cdot)$ (that is, $\alpha_1^*=\Phi_{g_1^{\Lambda}(y)}(\alpha_1)$), and the analogous statement is, of course, 
true for $\Lambda_{f(y)}^1(X_2)$. We therefore conclude the following.

\begin{cor}
Let the gluing diffeomorphism $f$ be such that $i^*(\Omega^1(X_1))=(f^*j^*)(\Omega^1(X_2))$. Then the set of all $\alpha_1^*\in(\Lambda_y^1(X_1))^*$ such that there exists at least one 
$\alpha_2^*\in(\Lambda_{f(y)}^1(X_2))^*$ compatible with $\alpha_1^*$ coincides with $(\Lambda_y^1(X_1))^*$. Similarly, for every $\alpha_2^*\in(\Lambda_{f(y)}^1(X_2))^*$ there exists 
$\alpha_1^*\in(\Lambda_y^1(X_1))^*$ compatible with $\alpha_2^*$.
\end{cor}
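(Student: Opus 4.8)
The plan is to derive this Corollary as a direct consequence of Lemma \ref{comp:functions:correspond:comp:forms:lem}, together with the surjectivity of the pairing maps $\Phi_{g_1^{\Lambda}(y)}$ and $\Phi_{g_2^{\Lambda}(f(y))}$ that was recorded in the discussion preceding it. The essential observation is that compatibility of functionals, as set up in Definition \ref{compatible:elements:of:dual:lambda:defn}, is exactly matched (via the chosen pseudo-metrics) to compatibility of elements of $\Lambda^1(X_1)$ and $\Lambda^1(X_2)$; so the claim about functionals reduces to the already-established symmetric statement about the forms themselves.

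First I would fix $y\in Y$ and choose compatible pseudo-metrics $g_1^{\Lambda}(y)$ on $\Lambda_y^1(X_1)$ and $g_2^{\Lambda}(f(y))$ on $\Lambda_{f(y)}^1(X_2)$; these exist because $\Lambda^1(X_1)$ and $\Lambda^1(X_2)$ are assumed to admit pseudo-metrics, and by Proposition \ref{fin-dim:lambda:implies:standard:fibres:prop} the fibres are standard spaces, so the pseudo-metrics are genuine scalar products. Given an arbitrary $\alpha_1^*\in(\Lambda_y^1(X_1))^*$, I would invoke the surjectivity of $\Phi_{g_1^{\Lambda}(y)}$ to produce $\alpha_1\in\Lambda_y^1(X_1)$ with $\alpha_1^*(\cdot)=g_1^{\Lambda}(y)(\alpha_1,\cdot)$. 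The next step is to exhibit a form $\alpha_2\in\Lambda_{f(y)}^1(X_2)$ compatible with $\alpha_1$: this is precisely the content of the Corollary to Theorem \ref{fibres:of:lambda:thm}-type statements, following from $i^*(\Omega^1(X_1))=(f^*j^*)(\Omega^1(X_2))$, which guarantees (at the level of fibres, via $i_{\Lambda}^*(\Lambda_y^1(X_1))=(f_{\Lambda}^*j_{\Lambda}^*)(\Lambda_{f(y)}^1(X_2))$) that every element of $\Lambda_y^1(X_1)$ has a compatible partner. Setting $\alpha_2^*(\cdot):=g_2^{\Lambda}(f(y))(\alpha_2,\cdot)$ and applying Lemma \ref{comp:functions:correspond:comp:forms:lem}, the compatibility of $\alpha_1$ and $\alpha_2$ yields the compatibility of $\alpha_1^*$ and $\alpha_2^*$, establishing the first assertion. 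The second assertion follows by the identical argument with the roles of $X_1$ and $X_2$ exchanged, using that $f$ is a diffeomorphism so the hypothesis $i^*(\Omega^1(X_1))=(f^*j^*)(\Omega^1(X_2))$ is symmetric in the two factors.

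The one point that requires care — and the only candidate for a genuine obstacle — is ensuring that every form admits a compatible partner at the fibre level, rather than merely at the level of $\Omega^1$. The passage from the global condition $i^*(\Omega^1(X_1))=(f^*j^*)(\Omega^1(X_2))$ to its fibrewise counterpart $i_{\Lambda}^*(\Lambda_y^1(X_1))=(f_{\Lambda}^*j_{\Lambda}^*)(\Lambda_{f(y)}^1(X_2))$ is exactly the implication already used inside the proof of Lemma \ref{comp:functions:correspond:comp:forms:lem}, so I would simply cite that implication rather than reprove it. With this in hand, the existence of $\alpha_2$ compatible with a given $\alpha_1$ is immediate: choose any $\alpha_2$ whose image under $j_{\Lambda}^*$ pulls back under $f_{\Lambda}^*$ to $i_{\Lambda}^*\alpha_1$, which is possible precisely by that equality of images, and the compatibility criterion of Proposition \ref{compatible:in:lambda:via:pullback:maps:prop} then certifies that $\alpha_1$ and $\alpha_2$ are compatible. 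Thus the whole argument is short, and its only substantive input is the surjectivity of the pairing maps together with the already-proved Lemma, the rest being a formal transcription between forms and functionals.
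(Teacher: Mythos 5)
Your proposal is correct and follows essentially the same route as the paper: the paper likewise deduces the corollary from the surjectivity of the fibrewise pairing maps $\Phi_{g_1^{\Lambda}(y)}$, $\Phi_{g_2^{\Lambda}(f(y))}$ (every functional is of the form $g(\alpha,\cdot)$), the equivalence of Lemma \ref{comp:functions:correspond:comp:forms:lem} between compatibility of functionals and compatibility of forms, and the fibrewise consequence $i_{\Lambda}^*(\Lambda_y^1(X_1))=(f_{\Lambda}^*j_{\Lambda}^*)(\Lambda_{f(y)}^1(X_2))$ of the hypothesis, which guarantees a compatible partner form. Your handling of the symmetric second assertion and of the passage from the global to the fibrewise condition matches the paper's reasoning.
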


\subsection{The fibres of $(\Lambda^1(X_1\cup_f X_2))^*$}

In this section we state and prove the following result.

\begin{thm}\label{dual:lambda:fibres:thm}
Let $X_1$ and $X_2$ be two diffeological spaces such that both $\Lambda^1(X_1)$ and $\Lambda^1(X_2)$ admit pseudo-metrics, and let $f:X_1\supseteq Y\to X_2$ be a diffeomorphism such that 
$i^*(\Omega^1(X_1))=(f^*j^*)(\Omega^1(X_2))$. Assume that there exists a choice of pseudo-metrics on $\Lambda^1(X_1)$ and $\Lambda^1(X_2)$ that are compatible with respect to $f$. Then the fibre of 
$(\Lambda^1(X_1\cup_f X_2))^*$ at any arbitrary $x\in X_1\cup_f X_2$ has the following form:
$$(\Lambda^1(X_1\cup_f X_2))^*\cong\left\{\begin{array}{cl} 
(\Lambda^1(X_1))^*_{i_1^{-1}(x)} & \mbox{if }x\in i_1(X_1\setminus Y), \\
(\Lambda^1(X_1))^*_{f^{-1}(i_2^{-1}(x))}\oplus_{comp}(\Lambda^1(X_2))^*_{i_2^{-1}(x)} & \mbox{if }x\in i_2(f(Y)), \\
(\Lambda^1(X_2))^*_{i_2^{-1}(x)} & \mbox{if }x\in i_2(X_2\setminus f(Y)),
\end{array}\right.$$ where $(\Lambda^1(X_1))^*_{f^{-1}(i_2^{-1}(x))}\oplus_{comp}(\Lambda^1(X_2))^*_{i_2^{-1}(x)}\subseteq(\Lambda^1(X_1))^*_{f^{-1}(i_2^{-1}(x))}\oplus(\Lambda^1(X_2))^*_{i_2^{-1}(x)}$ 
is the subset of all compatible pairs in $(\Lambda^1(X_1))^*_{f^{-1}(i_2^{-1}(x))}\oplus(\Lambda^1(X_2))^*_{i_2^{-1}(x)}$.
\end{thm}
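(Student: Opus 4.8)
The plan is to dualize the fibrewise decomposition of $\Lambda^1(X_1\cup_f X_2)$ already established in Theorem \ref{fibres:of:lambda:thm}. That theorem tells us, at each point $x$, the precise shape of the fibre $\Lambda_x^1(X_1\cup_f X_2)$: it is $\Lambda_{i_1^{-1}(x)}^1(X_1)$, or $\Lambda_{i_2^{-1}(x)}^1(X_2)$, or $\Lambda_y^1(X_1)\times_{comp}\Lambda_{f(y)}^1(X_2)$ with $y=(f^{-1}\circ i_2^{-1})(x)$, according to where $x$ lies. Since all three fibres are finite-dimensional (by the standing assumption that $\Lambda^1(X_1)$ and $\Lambda^1(X_2)$ have finite-dimensional fibres, together with Theorem \ref{fibres:of:lambda:thm}), Proposition \ref{fin-dim:lambda:implies:standard:fibres:prop} applies: each fibre carries the standard diffeology, so its diffeological dual is just the ordinary linear-algebraic dual. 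Hence the whole problem reduces to pure finite-dimensional linear algebra: taking the vector-space dual of each of the three cases and identifying the outcome with the claimed right-hand side.

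For the two non-gluing cases ($x\in i_1(X_1\setminus Y)$ and $x\in i_2(X_2\setminus f(Y))$) the identification is immediate: dualizing $\Lambda_{i_1^{-1}(x)}^1(X_1)$ gives $(\Lambda^1(X_1))^*_{i_1^{-1}(x)}$ by definition of the dual pseudo-bundle, and symmetrically for the $X_2$ case. The content is entirely in the third case, over the gluing locus $x\in i_2(f(Y))$, where the fibre is the compatibility subspace $\Lambda_y^1(X_1)\times_{comp}\Lambda_{f(y)}^1(X_2)$ of the direct sum. The first step here would be to fix compatible pseudo-metrics $g_1^\Lambda$ and $g_2^\Lambda$ (which exist by hypothesis), giving scalar products on each standard fibre, and to use the pairing maps $\Phi_{g_i^\Lambda}$ to identify each fibre with its dual. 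The key structural input is Lemma \ref{comp:functions:correspond:comp:forms:lem}, which says that under these pairing identifications, compatibility of the dual elements $\alpha_1^*,\alpha_2^*$ in the sense of Definition \ref{compatible:elements:of:dual:lambda:defn} corresponds exactly to compatibility of the forms $\alpha_1,\alpha_2$. This is precisely the dictionary needed to translate ``dual of the compatibility subspace'' into ``compatibility subspace of the duals''.

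The technical core, which I expect to be the main obstacle, is verifying that the dual of the subspace $\Lambda_y^1(X_1)\times_{comp}\Lambda_{f(y)}^1(X_2)$ of compatible pairs is naturally isomorphic to the subspace $(\Lambda^1(X_1))^*_{y}\oplus_{comp}(\Lambda^1(X_2))^*_{f(y)}$ of compatible dual pairs, rather than merely having the same dimension. The point is that dualizing a subspace and forming the ``compatible pairs'' subspace of the dual are a priori different operations, and one must check they agree here. My approach would be to use the orthogonal projections $\pi_{1,\Lambda}^{ort,y}$ and $\pi_{2,\Lambda}^{ort,f(y)}$ onto the orthogonal complements of the kernels of $i_\Lambda^*$ and $j_\Lambda^*$, exactly as set up before Definition \ref{compatible:elements:of:dual:lambda:defn}. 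The isometry $(i_\Lambda^*)^{-1}\circ f_\Lambda^*\circ j_\Lambda^*$ between these complements, shown to be well-defined in the proof of Lemma \ref{comp:functions:correspond:comp:forms:lem} using the condition $i^*(\Omega^1(X_1))=(f^*j^*)(\Omega^1(X_2))$, should give a clean basis-level decomposition: the compatibility constraint identifies the ``shared'' directions while leaving the kernel directions of each factor free. Counting these gives the dimension, and tracking the pairing through the orthonormal basis argument of that lemma upgrades the dimension count to a canonical isomorphism.

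Finally I would assemble the three cases into the single piecewise statement, being careful that the isomorphisms are compatible with the pseudo-bundle projection (they cover the identity on the base) so that the fibrewise identifications are the genuine fibres of $(\Lambda^1(X_1\cup_f X_2))^*$ and not just abstract vector spaces. The embedding of $(\Lambda^1(X))^*$ into $X\times(\Omega^1(X))^*$ from Section \ref{dual:lambda:embeds:into:dual:omega:times:X:sect} can be invoked if needed to make the fibrewise statements rigorous as statements about the actual dual pseudo-bundle. I do not expect the diffeological subtleties to intervene at the level of this theorem, precisely because all relevant fibres are standard; the entire weight falls on the linear-algebraic compatibility bookkeeping over the gluing locus.
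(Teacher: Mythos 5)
Your proposal is correct and follows essentially the same route as the paper's proof: reduce to the fibrewise statement via Theorem \ref{fibres:of:lambda:thm} (the two cases away from the gluing locus being immediate from the definition of the dual pseudo-bundle), and then establish the linear-algebraic isomorphism $\bigl(\Lambda_y^1(X_1)\oplus_{comp}\Lambda_{f(y)}^1(X_2)\bigr)^*\cong(\Lambda_y^1(X_1))^*\oplus_{comp}(\Lambda_{f(y)}^1(X_2))^*$ using exactly the setup you name --- the orthogonal decompositions relative to the compatible pseudo-metrics and the isometry $(i_{\Lambda}^*)^{-1}\circ f_{\Lambda}^*\circ j_{\Lambda}^*$ between the complements of the kernels, all fibres being standard so that diffeological duals are ordinary duals. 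The only divergence is the finishing move: the paper explicitly constructs the inverse of the natural restriction map, extending a functional on the compatibility subspace to the whole direct sum via the four-term decomposition (with $\frac12$ coefficients on the identified directions) and verifying that the resulting split pair is compatible, whereas you close with pairing-map identification and a dimension count upgraded by injectivity of the natural map --- an equally valid conclusion in this finite-dimensional setting.
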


\begin{proof}
The claim is entirely obvious for fibres over points outside of the domain of gluing, \emph{i.e.}, points in $i_1(X_1\setminus Y)$ or in $i_2(X_2\setminus f(Y))$; in fact, it is a direct consequence of the definition 
of a dual pseudo-bundle and of Theorem 1.6. On the contrary, it is not as obvious for fibres over points of form $i_2(f(y))$, \emph{i.e.}, those in the domain of gluing.

Let $y\in Y$ be a point. By Theorem 1.6, we have that $\Lambda_{i_2(f(y))}^1(X_1\cup_f X_2)$ can be identified with $\Lambda_y^1(X_1)\oplus_{comp}\Lambda_{f(y)}^1(X_2)$. We need to prove that there is 
an (automatically smooth, since all diffeologies are standard) isomorphism
$$\left(\Lambda_y^1(X_1)\oplus_{comp}\Lambda_{f(y)}^1(X_2)\right)^*\cong(\Lambda_y^1(X_1))^*\oplus_{comp}(\Lambda_{f(y)}^1(X_2))^*.$$ Defining such a prospective isomorphism is straightforward. 
Let $\alpha_1^*+\alpha_2^*\in(\Lambda_y^1(X_1))^*\oplus_{comp}(\Lambda_{f(y)}^1(X_2))^*$; define $\alpha^*\in\left(\Lambda_y^1(X_1)\oplus_{comp}\Lambda_{f(y)}^1(X_2)\right)^*$ by setting 
$\alpha^*(\alpha_1+\alpha_2):=\alpha_1^*(\alpha_1)+\alpha_2^*(\alpha_2)$. In order to show that this is an isomorphism, we construct its inverse. 

To describe the inverse, we essentially need to define a way to split a given function 
$$\alpha^*\in\left(\Lambda_y^1(X_1)\oplus_{comp}\Lambda_{f(y)}^1(X_2)\right)^*$$ into the sum $\alpha^*=\alpha_1^*+\alpha_2^*$ of two compatible functions $\alpha_1^*\in(\Lambda_y^1(X_1))^*$ and 
$\alpha_2^*\in(\Lambda_{f(y)}^1(X_2))^*$. We do this by first extending $\alpha^*$ to a function $\tilde{\alpha}^*$ on the entire direct sum $\Lambda_y^1(X_1)\oplus\Lambda_{f(y)}^1(X_2)$, splitting the 
function thus obtained into the sum of \emph{some} functions on $\Lambda_y^1(X_1)$ and $\Lambda_{f(y)}^1(X_2)$ (the obvious standard step), and then showing that the two resulting functions are 
compatible in the sense of Definition \ref{compatible:elements:of:dual:lambda:defn}. 

In order to extend $\alpha^*$ to a linear function on $\Lambda_y^1(X_1)\oplus\Lambda_{f(y)}^1(X_2)$, let us write, as in the proof of Lemma \ref{comp:functions:correspond:comp:forms:lem}, 
$$\Lambda_y^1(X_1)=\left(\Lambda_y^1(X_1)\cap\mbox{Ker}(i_{\Lambda}^*)\right)^{\perp}\oplus\left(\Lambda_y^1(X_1)\cap\mbox{Ker}(i_{\Lambda}^*)\right),$$ 
$$\Lambda_{f(y)}^1(X_2)=\left(\Lambda_{f(y)}^1(X_2)\cap\mbox{Ker}(j_{\Lambda}^*)\right)^{\perp}\oplus\left(\Lambda_{f(y)}^1(X_2)\cap\mbox{Ker}(j_{\Lambda}^*)\right),$$ where the orthogonal complements 
are with respect to the pseudo-metrics $g_1^{\Lambda}(y)$ and $g_2^{\Lambda}(f(y))$. Thus write their direct sum as
$$\left(\Lambda_y^1(X_1)\cap\mbox{Ker}(i_{\Lambda}^*)\right)^{\perp}\oplus\left(\Lambda_y^1(X_1)\cap\mbox{Ker}(i_{\Lambda}^*)\right)\oplus
\left(\Lambda_{f(y)}^1(X_2)\cap\mbox{Ker}(j_{\Lambda}^*)\right)\oplus \left(\Lambda_{f(y)}^1(X_2)\cap\mbox{Ker}(j_{\Lambda}^*)\right)^{\perp},$$ and observe that for this presentation 
$\Lambda_y^1(X_1)\oplus_{comp}\Lambda_{f(y)}^1(X_2)$ becomes the subset of all elements of form 
$$\left(((i_{\Lambda}^*)^{-1}f_{\Lambda}^*j_{\Lambda}^*)(\beta_2'),\beta_1,\beta_2,\beta_2'\right),$$ where $\alpha_1\in\Lambda_y^1(X_1)\cap\mbox{Ker}(i_{\Lambda}^*)$, 
$\alpha_2\in\Lambda_{f(y)}^1(X_2)\cap\mbox{Ker}(j_{\Lambda}^*)$, and $\beta_2'\in\left(\Lambda_{f(y)}^1(X_2)\cap\mbox{Ker}(j_{\Lambda}^*)\right)^{\perp}$ are all arbitrary. 

For this four-term decomposition of the direct sum $\Lambda_y^1(X_1)\oplus\Lambda_{f(y)}^1(X_2)$, let $p_1,p_2,p_3,p_4$ be the projections onto the respective terms. We define $\tilde{\alpha}^*$ by 
setting, for any arbitrary $\alpha\in\Lambda_y^1(X_1)\oplus\Lambda_{f(y)}^1(X_2)$, that 
$$\tilde{\alpha}^*(\alpha)=\frac12\alpha^*(((i_{\Lambda}^*)^{-1}f_{\Lambda}^*j_{\Lambda}^*)(p_4(\alpha))+p_4(\alpha))+
\frac12\alpha^*(p_1(\alpha)+((j_{\Lambda}^*)^{-1}(f_{\Lambda}^*)^{-1}i_{\Lambda}^*)(p_1(\alpha)))+\alpha^*(p_2(\alpha)+p_3(\alpha)).$$ It is clear in particular that if 
$\alpha\in\Lambda_y^1(X_1)\oplus\Lambda_{f(y)}^1(X_2)$ then $\tilde{\alpha}^*(\alpha)=\alpha^*(\alpha)$.

The natural presentation of $\tilde{\alpha}^*$ as an element of $(\Lambda_y^1(X_1))^*\oplus(\Lambda_{f(y)}^1(X_2))^*$ is by the sum $\alpha_1^*+\alpha_2^*$, where $\alpha_1^*\in(\Lambda_y^1(X_1))^*$ 
acts on an arbitrary $\alpha_1\in\Lambda_y^1(X_1)$ by 
$$\alpha_1^*(\alpha_1)=\frac12\alpha^*(p_1(\alpha_1)+((j_{\Lambda}^*)^{-1}(f_{\Lambda}^*)^{-1}i_{\Lambda}^*)(p_1(\alpha_1)))+\alpha^*(p_2(\alpha_1)),$$ and $\alpha_2^*\in(\Lambda_{f(y)}^1(X_2))^*$ 
acts on $\alpha_2\in\Lambda_{f(y)}^1(X_2)$ by 
$$\alpha_2^*(\alpha_2)=\frac12\alpha^*(((i_{\Lambda}^*)^{-1}f_{\Lambda}^*j_{\Lambda}^*)(p_4(\alpha_2))+p_4(\alpha_2))+\alpha^*(p_3(\alpha_2)).$$ It now remains to check that $\alpha_1^*$ and 
$\alpha_2^*$ are compatible. Since $\alpha_1$ and $\alpha_2$ are compatible if and only if $p_1(\alpha_1)=(i_{\Lambda}^*)^{-1}f_{\Lambda}^*j_{\Lambda}^*)(p_4(\alpha_2))$, the compatibility of 
$\alpha_1^*$ is the direct consequence of their construction. It is then obvious that the assignment $\alpha^*\mapsto\alpha_1^*+\alpha_2^*$ is the desired inverse, so we obtain the final claim.
\end{proof}

The diffeology of $(\Lambda^1(X_1\cup_f X_2))^*$ can be described via the pseudo-metric $g^{\Lambda}$ on $\Lambda^1(X_1\cup_f X_2)$ induced by $g_1^{\Lambda}$ and $g_2^{\Lambda}$, and the 
corresponding pairing map. The end description can be summarized as follows.

\begin{oss}
Let $p:U\to(\Lambda^1(X_1\cup_f X_2))^*$ be a plot of $(\Lambda^1(X_1\cup_f X_2))^*$. Then, up to replacing $U$ with its smaller sub-domain, the following is true: either there exists a plot 
$q_1:U\to\Lambda^1(X_1)$ of $\Lambda^1(X_1)$ such that
$$p(u)(\cdot)=g_1^{\Lambda}(\pi_1^{\Lambda}(q_1(u)))(q_1(u),\tilde{\rho}_1^{\Lambda}(\Phi_{g^{\Lambda}}^{-1}(\cdot)))\,\,\mbox{ for all }u\in U,$$ or else there exists a plot $q_2:U\to\Lambda^1(X_2)$ of 
$\Lambda^1(X_2)$ such that
$$p(u)(\cdot)=g_2^{\Lambda}(\pi_2^{\Lambda}(q_2(u)))(q_2(u),\tilde{\rho}_1^{\Lambda}(\Phi_{g^{\Lambda}}^{-1}(\cdot)))\,\,\mbox{ for all }u\in U.$$
\end{oss}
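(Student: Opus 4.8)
The plan is to reduce the statement to the already-established description of the plots of $\Lambda^1(X_1\cup_f X_2)$ by transporting $p$ through the pairing isomorphism attached to $g^{\Lambda}$. First I would record that the hypotheses put us exactly in the situation of Proposition \ref{fin-dim:lambda:implies:standard:fibres:prop}: by the standing assumptions of this section the fibres of $\Lambda^1(X_1)$ and $\Lambda^1(X_2)$ are finite-dimensional, hence (by the opening remarks of Section 9, together with Theorem \ref{fibres:of:lambda:thm} and Theorem \ref{dual:lambda:fibres:thm} for the $\oplus_{comp}$ fibres over the gluing locus) so are those of $\Lambda^1(X_1\cup_f X_2)$; and since $g_1^{\Lambda}$ and $g_2^{\Lambda}$ are compatible, the construction of Section 8.5 produces a genuine pseudo-metric $g^{\Lambda}$ on $\Lambda^1(X_1\cup_f X_2)$. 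Consequently $\Phi_{g^{\Lambda}}\colon\Lambda^1(X_1\cup_f X_2)\to(\Lambda^1(X_1\cup_f X_2))^*$ is a diffeomorphism, so $\hat{p}:=\Phi_{g^{\Lambda}}^{-1}\circ p$ is a plot of $\Lambda^1(X_1\cup_f X_2)$, and by the very definition of the pairing map $p(u)=\Phi_{g^{\Lambda}}(\hat{p}(u))=g^{\Lambda}(\pi^{\Lambda}(\hat{p}(u)))(\hat{p}(u),\cdot)$ for every $u$.

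Next I would invoke the plot characterization of $\Lambda^1(X_1\cup_f X_2)$ from the ``General observations'' of Section 8.3. After shrinking $U$ to a connected sub-domain on which $\hat{p}$ admits a lift, the composition $\pi^{\Lambda}\circ\hat{p}$ lifts either to a plot of $X_1$ or to a plot of $X_2$; these are precisely the two alternatives in the statement. In the first case the range of $\hat{p}$ lies in $(\pi^{\Lambda})^{-1}(i_1(X_1)\cup i_2(f(Y)))$, which is the domain of definition of $\tilde{\rho}_1^{\Lambda}$, so $q_1:=\tilde{\rho}_1^{\Lambda}\circ\hat{p}$ is well-defined and, $\tilde{\rho}_1^{\Lambda}$ being smooth, is a plot of $\Lambda^1(X_1)$. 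In the second case the range lies in $(\pi^{\Lambda})^{-1}(i_2(X_2))$, the domain of $\tilde{\rho}_2^{\Lambda}$, and $q_2:=\tilde{\rho}_2^{\Lambda}\circ\hat{p}$ is a plot of $\Lambda^1(X_2)$.

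The last step rewrites $g^{\Lambda}$ in terms of $g_1^{\Lambda}$ (respectively $g_2^{\Lambda}$) using the three-part definition of $g^{\Lambda}$ from Section 8.5.3. Over points of $i_1(X_1\setminus Y)$ one has $g^{\Lambda}(x)(\cdot,\cdot)=g_1^{\Lambda}(i_1^{-1}(x))(\tilde{\rho}_1^{\Lambda}(\cdot),\tilde{\rho}_1^{\Lambda}(\cdot))$ by definition, and substituting $q_1(u)=\tilde{\rho}_1^{\Lambda}(\hat{p}(u))$ into $p(u)(\cdot)=g^{\Lambda}(\pi^{\Lambda}(\hat{p}(u)))(\hat{p}(u),\cdot)$ yields the displayed formula, the argument of the functional being read off $\cdot$ via $\tilde{\rho}_1^{\Lambda}\circ\Phi_{g^{\Lambda}}^{-1}$. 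The delicate point, and the one I expect to be the main obstacle, is the behavior over the gluing locus $i_2(f(Y))$: there $g^{\Lambda}$ is defined as the symmetric average $\tfrac12\big(g_1^{\Lambda}(\tilde{\rho}_1^{\Lambda}(\cdot),\tilde{\rho}_1^{\Lambda}(\cdot))+g_2^{\Lambda}(\tilde{\rho}_2^{\Lambda}(\cdot),\tilde{\rho}_2^{\Lambda}(\cdot))\big)$, and one must check that it still collapses to the single-factor expression. This is exactly what Proposition \ref{compatible:pseudo-metrics:on:lambda:and:maps:rho:prop} guarantees: compatibility of $g_1^{\Lambda}$ and $g_2^{\Lambda}$ (Definition \ref{compatible:pseudo-metrics:on:lambda:defn}) forces the two summands to agree on $\tilde{\rho}_1^{\Lambda}$- and $\tilde{\rho}_2^{\Lambda}$-images of elements of $\Lambda^1(X_1\cup_f X_2)$, so the average equals each summand and the formula matches smoothly across $f(Y)$. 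The symmetric alternative, with $q_2$ and $g_2^{\Lambda}$, then follows by the same computation read through $\tilde{\rho}_2^{\Lambda}$, while the fact that the two alternatives exhaust all plots (after restriction of $U$) is precisely the dichotomy furnished by the gluing-diffeology plot characterization.
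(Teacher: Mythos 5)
Your proof is correct and is exactly the argument the paper intends: the Observation is presented there without proof, as a summary of the pairing-map description of the diffeology of $(\Lambda^1(X_1\cup_f X_2))^*$, and your steps---transporting $p$ through $\Phi_{g^{\Lambda}}^{-1}$ (legitimate by Proposition \ref{fin-dim:lambda:implies:standard:fibres:prop}), invoking the lift dichotomy for plots of $\Lambda^1(X_1\cup_f X_2)$ to produce $q_1=\tilde{\rho}_1^{\Lambda}\circ\hat{p}$ or $q_2=\tilde{\rho}_2^{\Lambda}\circ\hat{p}$, and rewriting $g^{\Lambda}$ through $g_1^{\Lambda}$ or $g_2^{\Lambda}$ with Proposition \ref{compatible:pseudo-metrics:on:lambda:and:maps:rho:prop} collapsing the averaged expression over $i_2(f(Y))$---fill in precisely that outline. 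You also read the displayed formulas the right way despite the paper's notational slips (the evaluation slot is to be understood through $\tilde{\rho}_i^{\Lambda}\circ\Phi_{g^{\Lambda}}^{-1}$, and the $\tilde{\rho}_1^{\Lambda}$ appearing in the second formula should be $\tilde{\rho}_2^{\Lambda}$).
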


Finally, the following statement allows to obtain an overview of the pseudo-bundle $(\Lambda^1(X_1\cup_f X_2))^*$ as a whole.

\begin{thm}\label{three:piece:deco:of:dual:lambda:thm}
Let $f$ be such that $\calD_1^{\Omega}=\calD_2^{\Omega}$. Then there are the following diffeomorphisms:
\begin{enumerate}
\item $((\pi^{\Lambda})^*)^{-1}(i_1(X_1\setminus Y))\cong((\pi_1^{\Lambda})^*)^{-1}(X_1\setminus Y)$ via the restriction of the map $(\tilde{\rho}_1^{\Lambda})^*$ to 
$((\pi_1^{\Lambda})^*)^{-1}(X_1\setminus Y)\subset(\Lambda^1(X_1))^*$;
\item $((\pi^{\Lambda})^*)^{-1}(i_2(f(Y)))\cong((\pi_1^{\Lambda})^*)^{-1}(Y)\oplus_{comp}((\pi_2^{\Lambda})^*)^{-1}(f(Y))$ via the diffeomorphism defined by the appropriately restricted direct sum of the dual 
maps $(\tilde{\rho}_1^{\Lambda})^*$ and $(\tilde{\rho}_2^{\Lambda})^*$;
\item $((\pi^{\Lambda})^*)^{-1}(i_2(X_2\setminus f(Y)))\cong((\pi_2^{\Lambda})^*)^{-1}(X_2\setminus f(Y))$ via the restriction of the map $(\tilde{\rho}_2^{\Lambda})^*$ to 
$((\pi_2^{\Lambda})^*)^{-1}(X_2\setminus f(Y))\subset(\Lambda^1(X_2))^*$.
\end{enumerate} 
\end{thm}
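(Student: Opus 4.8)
The plan is to obtain the three diffeomorphisms by dualizing the three-piece decomposition of $\Lambda^1(X_1\cup_f X_2)$ recorded in Theorem \ref{three:piece:diffeology:in:lambda:thm}. The structural principle I would first isolate is that the diffeological dual is a contravariant functor on fibrewise-linear pseudo-bundle diffeomorphisms covering (an identification of) the base: if $\Phi:W\to W'$ is such a diffeomorphism, then $\Phi^*:(W')^*\to W^*$ defined fibrewise by $\Phi^*(\ell)=\ell\circ\Phi$ is again a fibrewise-linear diffeomorphism. Smoothness is immediate from the plot characterization of the dual bundle diffeology (the Lemma citing Vincent's Proposition 5.3.2): for a plot $p$ of $(W')^*$ and a plot $r$ of $W$, the pairing of $\Phi^*\circ p$ with $r$ equals the pairing of $p$ with $\Phi\circ r$, which is smooth since $\Phi$ is; applying the same to $\Phi^{-1}$ shows $(\Phi^*)^{-1}=(\Phi^{-1})^*$ is smooth. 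I would also record the compatibility of dualizing with restriction of the base, namely that for $A\subseteq X$ one has $(W|_A)^*=(W^*)|_A$ with matching subset diffeologies, since the pairing condition defining the dual diffeology only involves plots lying over matching base points.

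For part (1), Theorem \ref{three:piece:diffeology:in:lambda:thm}(1) states that $\tilde{\rho}_1^{\Lambda}$ restricts to a fibrewise-linear diffeomorphism of $(\pi^{\Lambda})^{-1}(i_1(X_1\setminus Y))$ onto $(\pi_1^{\Lambda})^{-1}(X_1\setminus Y)$, covering the canonical identification $i_1(X_1\setminus Y)\cong X_1\setminus Y$. Dualizing via the functor above and using the restriction compatibility, $(\tilde{\rho}_1^{\Lambda})^*$ restricts to a diffeomorphism $((\pi_1^{\Lambda})^*)^{-1}(X_1\setminus Y)\to((\pi^{\Lambda})^*)^{-1}(i_1(X_1\setminus Y))$ for the subset diffeologies, which is exactly the claim. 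Part (3) is identical, with $\tilde{\rho}_2^{\Lambda}$ in place of $\tilde{\rho}_1^{\Lambda}$ and Theorem \ref{three:piece:diffeology:in:lambda:thm}(2) in place of (1). I emphasize that the theorem asks only for three separate diffeomorphisms of pieces carrying their subset diffeologies, and makes no claim about how the pieces fit together, so no analysis of the transition across the boundary $i_2(f(Y))$ is required.

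Part (2) is the substantive one. Over $i_2(f(Y))$, Theorem \ref{three:piece:diffeology:in:lambda:thm}(3) gives a fibrewise-linear diffeomorphism $\tilde{\rho}_1^{\Lambda}\oplus\tilde{\rho}_2^{\Lambda}$ from $(\pi^{\Lambda})^{-1}(i_2(f(Y)))$ onto the compatible direct sum $(\pi_1^{\Lambda})^{-1}(i_2(f(Y)))\oplus_{comp}(\pi_2^{\Lambda})^{-1}(i_2(f(Y)))$. After dualizing it remains to identify the dual of this compatible direct sum with $((\pi_1^{\Lambda})^*)^{-1}(Y)\oplus_{comp}((\pi_2^{\Lambda})^*)^{-1}(f(Y))$. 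Fibrewise this identification is precisely the isomorphism built in the proof of Theorem \ref{dual:lambda:fibres:thm}, which splits a functional on $\Lambda_y^1(X_1)\oplus_{comp}\Lambda_{f(y)}^1(X_2)$ into a compatible pair of functionals on the two summands. I would assemble those fibrewise isomorphisms into a pseudo-bundle map; since all the fibres in sight carry the standard diffeology by Proposition \ref{fin-dim:lambda:implies:standard:fibres:prop}, the fibrewise maps are automatically smooth with smooth inverse, so only smoothness across fibres is at issue. The composite $(\tilde{\rho}_1^{\Lambda})^*\oplus(\tilde{\rho}_2^{\Lambda})^*$, suitably restricted, then realizes the asserted diffeomorphism.

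The main obstacle is thus entirely the behaviour of the diffeology across fibres, and in particular, in part (2), that the splitting $\alpha^*\mapsto(\alpha_1^*,\alpha_2^*)$ of Theorem \ref{dual:lambda:fibres:thm} depends smoothly on the base point. I would control this by combining the induction property of the embedding $(\pi^{\Lambda})^*\colon(\Lambda^1(X_1\cup_f X_2))^*\hookrightarrow(X_1\cup_f X_2)\times(\Omega^1(X_1\cup_f X_2))^*$ from Section \ref{dual:lambda:embeds:into:dual:omega:times:X:sect} with the plot description in the Observation following Theorem \ref{dual:lambda:fibres:thm} and with the pairing diffeomorphism $\Phi_{g^{\Lambda}}$ of Proposition \ref{fin-dim:lambda:implies:standard:fibres:prop}. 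The key leverage of the standing hypothesis $\calD_1^{\Omega}=\calD_2^{\Omega}$ is that it makes $\tilde{\rho}_1^{\Lambda}$ and $\tilde{\rho}_2^{\Lambda}$ subductions (Propositions \ref{tilde-rho-1:as:a:subduction:prop} and \ref{tilde-rho-2:as:a:subduction:prop}), so that their duals are inductions; this is exactly what guarantees that the dual maps are diffeomorphisms onto their images for the subset diffeologies, closing the only genuinely diffeological gap in the argument.
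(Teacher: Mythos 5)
Your proposal is correct and takes essentially the same route as the paper: the paper's own proof is a one-line appeal to ``the properties of the maps $\tilde{\rho}_1^{\Lambda}$ and $\tilde{\rho}_2^{\Lambda}$'', which is precisely the dualization of the three-piece decomposition of Theorem~\ref{three:piece:diffeology:in:lambda:thm} that you carry out, with the fibrewise splitting from the proof of Theorem~\ref{dual:lambda:fibres:thm} handling the piece over $i_2(f(Y))$ and the subduction/induction duality supplying the diffeological smoothness. Your write-up (the contravariant dualization principle, restriction compatibility, and the assembly of the fibrewise splittings) is in fact more detailed than the argument the paper records.
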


\begin{proof}
This is a direct consequence of the properties of the maps $\tilde{\rho}_1^{\Lambda}$ and $\tilde{\rho}_2^{\Lambda}$.
\end{proof}

The spaces 
\begin{flushleft}
$((\pi^{\Lambda})^*)^{-1}(i_1(X_1\setminus Y))\subset(\Lambda^1(X_1\cup_f X_2))^*$, $((\pi^{\Lambda})^*)^{-1}(i_2(f(Y)))\subset(\Lambda^1(X_1\cup_f X_2))^*$, 
\end{flushleft}
\begin{flushright}
$((\pi_1^{\Lambda})^*)^{-1}(X_1\setminus Y)\subset(\Lambda^1(X_1))^*$, and $((\pi_2^{\Lambda})^*)^{-1}(X_2\setminus f(Y))\subset(\Lambda^1(X_2))^*$ 
\end{flushright}
are considered with the corresponding subset diffeologies. The space 
$$((\pi_1^{\Lambda})^*)^{-1}(Y)\oplus_{comp}((\pi_2^{\Lambda})^*)^{-1}(f(Y))\subset((\pi_1^{\Lambda})^*)^{-1}(Y)\oplus((\pi_2^{\Lambda})^*)^{-1}(f(Y))$$ is considered with the subset diffeology relative to 
the direct sum diffeology, in the sense of pseudo-bundles, on $((\pi_1^{\Lambda})^*)^{-1}(Y)\oplus((\pi_2^{\Lambda})^*)^{-1}(f(Y))$ (the latter being considered as a pseudo-bundle over $Y$, or $f(Y)$, in the 
obvious way).

\subsection{Endowing $(\Lambda^1(X_1\cup_f X_2))^*$ with an induced pseudo-metric}

We are actually interested here in pseudo-metrics on $(\Lambda^1(X_1\cup_f X_2))^*$ induced by those on $\Lambda^1(X_1\cup_f X_2)$, and among the latter, in those that come from two compatible 
pseudo-metrics $g_1^{\Lambda}$ and $g_2^{\Lambda}$ on $\Lambda^1(X_1)$ and $\Lambda^1(X_2)$. Given such choice, the assumption of compatibility provides us, on one hand, with the pseudo-metric 
$g^{\Lambda}$ on $\Lambda^1(X_1\cup_f X_2)$, and on the other hand, with the dual pseudo-metrics $(g_1^{\Lambda})^*$ and $(g_2^{\Lambda})^*$ on $(\Lambda^1(X_1))^*$ and $(\Lambda^1(X_2))^*$. 
These dual pseudo-metrics are also compatible in the sense of a notion that mimics that of compatible pseudo-metrics on $\Lambda^1(X_1)$ and $\Lambda^1(X_2)$ and in a similar manner they provide us 
with a direct construction of a certain pseudo-metric $(g_1^{\Lambda})^*+(g_2^{\Lambda})^*$ on $(\Lambda^1(X_1\cup_f X_2))^*$. The latter actually coincides with the dual $(g^{\Lambda})^*$ of the 
pseudo-metric $g^{\Lambda}$ on $\Lambda^1(X_1\cup_f X_2)$ induced by $g_1^{\Lambda}$ and $g_2^{\Lambda}$ (see Section 8).

\subsubsection{If $g_1^{\Lambda}$ and $g_2^{\Lambda}$ are compatible then so are $(g_1^{\Lambda})^*$ and $(g_2^{\Lambda})^*$}\label{duals:of:comp:p-metrics:are:comp:sect}

Let $g_1^{\Lambda}$ and $g_2^{\Lambda}$ be pseudo-metrics on $\Lambda^1(X_1)$ and $\Lambda^1(X_2)$, and let $f:X_1\supseteq Y\to X_2$ be a gluing diffeomorphism. Assume that $g_1^{\Lambda}$ 
and $g_2^{\Lambda}$ are compatible. There is a natural induced notion of compatibility for pseudo-metrics on $(\Lambda^1(X_1))^*$ and $(\Lambda^1(X_2))^*$.

\begin{defn}
Let $g_1^{\Lambda^*}$ be some pseudo-metric on $(\Lambda^1(X_1))^*$ (not necessarily coinciding with the dual pseudo-metric $(g_1^{\Lambda})^*$), and let $g_2^{\Lambda}$ be a pseudo-metric on 
$(\Lambda^1(X_2))^*$. The pseudo-metrics $g_1^{\Lambda^*}$ and $g_2^{\Lambda^*}$ are \textbf{compatible} if for every compatible pair $\alpha_1^*\in(\Lambda^1(X_1))^*$, 
$\alpha_2^*\in(\Lambda^1(X_2))^*$ we have that
$$g_1^{\Lambda^*}((\pi_1^{\Lambda})^*)(\alpha_1^*,\alpha_1^*)=g_2^{\Lambda^*}((\pi_2^{\Lambda})^*)(\alpha_2^*,\alpha_2^*).$$
\end{defn}

The above definition is stated for some arbitrary pseudo-metrics on $(\Lambda^1(X_1))^*$ and $(\Lambda^1(X_2))^*$ but we indeed are mostly interested in the case of the dual pseudo-metrics 
$(g_1^{\Lambda})^*$ and $(g_2^{\Lambda})^*$. 

\begin{lemma}\label{duals:of:comp:pseudo-metrics:are:comp:lem}
Let $X_1$ and $X_2$ be two diffeological spaces such that $(\Lambda^1(X_1))^*$ and $(\Lambda^1(X_2))^*$ have only finite-dimensional fibres, and let $f:X_1\supseteq Y\to X_2$ be a gluing map that is 
a diffeomorphism and is such that $\calD_1^{\Omega}=\calD_2^{\Omega}$. Let $g_1^{\Lambda}$ and $g_2^{\Lambda}$ be compatible pseudo-metrics on $\Lambda^1(X_1)$ and $\Lambda^1(X_2)$. Then 
the dual pseud-metrics $(g_1^{\Lambda})^*$ and $(g_2^{\Lambda})^*$ are compatible as well.
\end{lemma}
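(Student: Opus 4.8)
The plan is to reduce the statement entirely to the correspondence between compatible functionals and compatible forms established in Lemma \ref{comp:functions:correspond:comp:forms:lem}, together with the defining property of the induced (dual) pseudo-metric. Fix a point $y\in Y$ and a compatible pair $\alpha_1^*\in(\Lambda_y^1(X_1))^*$, $\alpha_2^*\in(\Lambda_{f(y)}^1(X_2))^*$, compatible in the sense of Definition \ref{compatible:elements:of:dual:lambda:defn}. The goal is to verify the defining equality of compatible pseudo-metrics on the duals, namely
$$(g_1^{\Lambda})^*(y)(\alpha_1^*,\alpha_1^*)=(g_2^{\Lambda})^*(f(y))(\alpha_2^*,\alpha_2^*).$$

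First I would invoke the standing finite-dimensionality assumption of this section, which by Proposition \ref{fin-dim:lambda:implies:standard:fibres:prop} guarantees that the fibres $\Lambda_y^1(X_1)$ and $\Lambda_{f(y)}^1(X_2)$ carry the standard diffeology; consequently $g_1^{\Lambda}(y)$ and $g_2^{\Lambda}(f(y))$ are genuine scalar products, and the pairing maps $\Phi_{g_1^{\Lambda}}$ and $\Phi_{g_2^{\Lambda}}$ restrict to linear isomorphisms on these fibres. Hence there exist unique forms $\alpha_1\in\Lambda_y^1(X_1)$ and $\alpha_2\in\Lambda_{f(y)}^1(X_2)$ with $\alpha_i^*(\cdot)=g_i^{\Lambda}(\alpha_i,\cdot)$, that is, $\alpha_i^*=\Phi_{g_i^{\Lambda}}(\alpha_i)$. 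By the very definition of the dual pseudo-metric, $(g_i^{\Lambda})^*(\cdot)(\alpha_i^*,\alpha_i^*)=g_i^{\Lambda}(\cdot)(\alpha_i,\alpha_i)$, so the displayed equality is equivalent to
$$g_1^{\Lambda}(y)(\alpha_1,\alpha_1)=g_2^{\Lambda}(f(y))(\alpha_2,\alpha_2).$$

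Next I would apply Lemma \ref{comp:functions:correspond:comp:forms:lem}, whose hypotheses are all met here: the assumption $\calD_1^{\Omega}=\calD_2^{\Omega}$ yields $i^*(\Omega^1(X_1))=(f^*j^*)(\Omega^1(X_2))$, the relevant fibres are finite-dimensional, and $g_1^{\Lambda}(y)$, $g_2^{\Lambda}(f(y))$ are compatible (being the fibrewise restrictions of the compatible $g_1^{\Lambda}$, $g_2^{\Lambda}$). That lemma states that $\alpha_1^*=\Phi_{g_1^{\Lambda}}(\alpha_1)$ and $\alpha_2^*=\Phi_{g_2^{\Lambda}}(\alpha_2)$ are compatible if and only if $\alpha_1$ and $\alpha_2$ are compatible as elements of $\Lambda_y^1(X_1)$ and $\Lambda_{f(y)}^1(X_2)$. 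Thus $(\alpha_1,\alpha_2)$ is a compatible pair, and applying the defining identity of compatible pseudo-metrics (Definition \ref{compatible:pseudo-metrics:on:lambda:defn}) to this pair taken twice, i.e. with $\omega'=\mu'=\alpha_1$ and $\omega''=\mu''=\alpha_2$, yields precisely $g_1^{\Lambda}(y)(\alpha_1,\alpha_1)=g_2^{\Lambda}(f(y))(\alpha_2,\alpha_2)$, which closes the argument in combination with the previous paragraph.

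I do not expect a genuine obstacle, since the geometric content has already been isolated in Lemma \ref{comp:functions:correspond:comp:forms:lem}; the remaining points are bookkeeping. The two I would watch most closely are, first, that the \emph{diagonal} compatibility condition for the dual pseudo-metrics is legitimately recovered from the two-argument compatibility of $g_1^{\Lambda}$ and $g_2^{\Lambda}$ (this is harmless, as a symmetric bilinear form is determined by its diagonal), and second, that the fibrewise scalar products produce the preimages $\alpha_i$ unambiguously, so that the identity $(g_i^{\Lambda})^*(\alpha_i^*,\alpha_i^*)=g_i^{\Lambda}(\alpha_i,\alpha_i)$ is well-posed. Finally, I would note explicitly that no smoothness-across-fibres issue intervenes: compatibility is a purely pointwise condition over $y\in Y$, so the smoothness of the pseudo-metrics enters only insofar as it ensures the objects in play are genuine pseudo-metrics to begin with.
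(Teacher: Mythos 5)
Your proof is correct and takes essentially the same route the paper intends: the paper's own proof is just the one-line assertion that the claim is ``a direct consequence of the two compatibility notions,'' and your argument---converting a compatible pair of functionals into a compatible pair of forms via Lemma \ref{comp:functions:correspond:comp:forms:lem}, applying the compatibility of $g_1^{\Lambda}$ and $g_2^{\Lambda}$ on the diagonal, and transferring back through the defining identity of the dual pseudo-metric---is precisely the expansion of that assertion. The auxiliary facts you invoke (standardness of the finite-dimensional fibres, and $\calD_1^{\Omega}=\calD_2^{\Omega}\Rightarrow i^*(\Omega^1(X_1))=(f^*j^*)(\Omega^1(X_2))$) are both established in the paper, so there is no gap.
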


\begin{proof}
This is a direct consequence of the two compatibility notions.
\end{proof}

\subsubsection{The pseudo-metric $(g_1^{\Lambda})^*+(g_2^{\Lambda})^*$}

We have just seen in Lemma \ref{duals:of:comp:pseudo-metrics:are:comp:lem} that the duals of compatible pseudo-metrics are themselves compatible. Thus, it should be possible to define a pseudo-metric 
$(g_1^{\Lambda})^*+(g_2^{\Lambda})^*$ on $(\Lambda^1(X_1\cup_f X_2))^*$ induced by them, in the same manner that it was done for compatible pseudo-metrics on $\Lambda^1(X_1)$ and 
$\Lambda^1(X_2)$. To define this pseudo-metric, we first introduce the following notation (used also in Section 11):
$$\chi_1^*=\Phi_{g_1^{\Lambda}}\circ\tilde{\rho}_1^{\Lambda}\circ(\Phi_{g^{\Lambda}})^{-1}:(\Lambda^1(X_1\cup_f X_2))^*\supseteq((\pi^{\Lambda})^*)^{-1}(i_1(X_1\setminus Y)\cup i_2(f(Y)))
\to(\Lambda^1(X_1))^*\,\,\mbox{ and }$$
$$\chi_2^*=\Phi_{g_2^{\Lambda}}\circ\tilde{\rho}_2^{\Lambda}\circ(\Phi_{g^{\Lambda}})^{-1}:(\Lambda^1(X_1\cup_f X_2))^*\supseteq((\pi^{\Lambda})^*)^{-1}(i_2(X_2))\to(\Lambda^1(X_2))^*.$$
The pseudo-metric $(g_1^{\Lambda})^*+(g_2^{\Lambda})^*$ is then defined as follows:
$$\left((g_1^{\Lambda})^*+(g_2^{\Lambda})^*\right)((\pi^{\Lambda})^*(\alpha^*))(\alpha^*,\alpha^*)=$$
$$=\left\{\begin{array}{cl} 
(g_1^{\Lambda})^*(i_1^{-1}((\pi^{\Lambda})^*(\alpha^*)))\left(\chi_1^*(\alpha^*),\chi_1^*(\alpha^*)\right) & \mbox{if }(\pi^{\Lambda})^*(\alpha^*)\in i_1(X_1\setminus Y), \\
\frac12(g_1^{\Lambda})^*(f^{-1}(i_2^{-1}((\pi^{\Lambda})^*(\alpha^*))))\left(\chi_1^*(\alpha^*),\chi_1^*(\alpha^*)\right)+ & \\
+\frac12(g_2^{\Lambda})^*(i_2^{-1}((\pi^{\Lambda})^*(\alpha^*)))\left(\chi_2^*(\alpha^*),\chi_2^*(\alpha^*)\right) & \mbox{if }(\pi^{\Lambda})^*(\alpha^*)\in i_2(f(Y)), \\
(g_2^{\Lambda})^*(i_2^{-1}((\pi^{\Lambda})^*(\alpha^*)))\left(\chi_2^*(\alpha^*),\chi_2^*(\alpha^*)\right) & \mbox{if }(\pi^{\Lambda})^*(\alpha^*)\in i_2(X_2\setminus f(Y))
\end{array}\right.$$ for all $\alpha^*\in(\Lambda^1(X_1\cup_f X_2))^*$. Observe that, since for any $\alpha^*$ such that $(\pi^{\Lambda})^*(\alpha^*)\in i_2(f(Y))$ the images $\chi_1^*(\alpha^*)$ and 
$\chi_2^*(\alpha^*)$ are compatible, over $i_2(f(Y))$ the pseudo-metric $(g_1^{\Lambda})^*+(g_2^{\Lambda})^*$ can be identified with $g_1^{\Lambda}$ (as well as with $g_2^{\Lambda}$), which 
guarantees its smoothness.

\subsubsection{Comparison of $(g_1^{\Lambda})^*+(g_2^{\Lambda})^*$ with $(g^{\Lambda})^*$}

To put everything together, observe that on $(\Lambda^1(X_1\cup_f X_2))^*$, there is the pseudo-metric $(g_1^{\Lambda})^*+(g_2^{\Lambda})^*$ obtained by combining the dual pseudo-metrics 
$(g_1^{\Lambda})^*$ and $(g_2^{\Lambda})^*$; indeed, by Lemma \ref{duals:of:comp:pseudo-metrics:are:comp:lem} we know that these pseudo-metrics are compatible. On the other hand, since 
$g_1^{\Lambda}$ and $g_2^{\Lambda}$ are compatible from the start, there is also the dual pseudo-metric $(g^{\Lambda})^*$.

\begin{thm}
Let $X_1$ and $X_2$ be two diffeological spaces such that $\Lambda^1(X_1)$ and $\Lambda^1(X_2)$ have only finite-dimensional fibres, and let $f:X_1\supseteq Y\to X_2$ be such that 
$\calD_1^{\Omega}=\calD_2^{\Omega}$. Let $g_1^{\Lambda}$ and $g_2^{\Lambda}$ be compatible pseudo-metrics  on $\Lambda^1(X_1)$ and $\Lambda^1(X_2)$ respectively. Then the pseudo-metrics 
$(g_1^{\Lambda})^*+(g_2^{\Lambda})^*$ and $(g^{\Lambda})^*$ coincide.
\end{thm}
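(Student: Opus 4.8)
The plan is to verify the identity fibrewise. By Proposition \ref{fin-dim:lambda:implies:standard:fibres:prop} the finite-dimensionality hypothesis forces the subset diffeology on every fibre of $\Lambda^1(X_1\cup_f X_2)$ to be standard, and hence the pairing map $\Phi_{g^{\Lambda}}$ restricts to a linear \emph{isomorphism} on each fibre (not merely a subduction). Both $(g_1^{\Lambda})^*+(g_2^{\Lambda})^*$ and $(g^{\Lambda})^*$ have already been established to be pseudo-metrics on $(\Lambda^1(X_1\cup_f X_2))^*$, so in particular both are smooth; the only thing left to check is that they agree as symmetric bilinear forms on each fibre $((\pi^{\Lambda})^*)^{-1}(x)$. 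Since both are symmetric, by polarization it suffices to compare the associated quadratic forms, i.e.\ to compare their values of the shape $(\alpha^*,\alpha^*)$, which is exactly the shape in which the definition of $(g_1^{\Lambda})^*+(g_2^{\Lambda})^*$ is already written.

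The core of the argument is then a single uniform computation. Fix $x\in X_1\cup_f X_2$ and an element $\alpha^*$ of the fibre over $x$, and use the fibrewise isomorphism $\Phi_{g^{\Lambda}}$ to write $\alpha^*=\Phi_{g^{\Lambda}}(v)$ for the unique $v$ in the fibre of $\Lambda^1(X_1\cup_f X_2)$ over $x$. From the defining property of the dual pseudo-metric (Section 5.6.1) one gets immediately $(g^{\Lambda})^*(x)(\alpha^*,\alpha^*)=g^{\Lambda}(x)(v,v)$. It therefore remains only to identify $g^{\Lambda}(x)(v,v)$ with the value of $(g_1^{\Lambda})^*+(g_2^{\Lambda})^*$. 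Unwinding the definitions $\chi_i^*=\Phi_{g_i^{\Lambda}}\circ\tilde{\rho}_i^{\Lambda}\circ(\Phi_{g^{\Lambda}})^{-1}$ yields $\chi_i^*(\alpha^*)=\Phi_{g_i^{\Lambda}}(\tilde{\rho}_i^{\Lambda}(v))$, whence, again by the defining identity of the induced dual pseudo-metric $(g_i^{\Lambda})^*(\Phi_{g_i^{\Lambda}}(u),\Phi_{g_i^{\Lambda}}(u))=g_i^{\Lambda}(u,u)$, each term $(g_i^{\Lambda})^*(\chi_i^*(\alpha^*),\chi_i^*(\alpha^*))$ collapses to $g_i^{\Lambda}(\tilde{\rho}_i^{\Lambda}(v),\tilde{\rho}_i^{\Lambda}(v))$.

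Matching this against the three-stratum definition of $g^{\Lambda}$ from Section 8 finishes the proof: over $i_1(X_1\setminus Y)$ (resp.\ $i_2(X_2\setminus f(Y))$) only the $\chi_1^*$ (resp.\ $\chi_2^*$) term occurs on both sides, and the computation reads $(g_1^{\Lambda})^*(\chi_1^*(\alpha^*),\chi_1^*(\alpha^*))=g_1^{\Lambda}(\tilde{\rho}_1^{\Lambda}(v),\tilde{\rho}_1^{\Lambda}(v))=g^{\Lambda}(x)(v,v)$, while over the gluing locus $i_2(f(Y))$ one obtains
$$\frac12(g_1^{\Lambda})^*(\chi_1^*(\alpha^*),\chi_1^*(\alpha^*))+\frac12(g_2^{\Lambda})^*(\chi_2^*(\alpha^*),\chi_2^*(\alpha^*))=\frac12 g_1^{\Lambda}(\tilde{\rho}_1^{\Lambda}(v),\tilde{\rho}_1^{\Lambda}(v))+\frac12 g_2^{\Lambda}(\tilde{\rho}_2^{\Lambda}(v),\tilde{\rho}_2^{\Lambda}(v))=g^{\Lambda}(x)(v,v),$$
the last equality being precisely the definition of $g^{\Lambda}$ over $i_2(f(Y))$. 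Hence both pseudo-metrics take the value $g^{\Lambda}(x)(v,v)$ on $(\alpha^*,\alpha^*)$, and polarization gives the equality of the full bilinear forms on every fibre, so $(g_1^{\Lambda})^*+(g_2^{\Lambda})^*=(g^{\Lambda})^*$.

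The step I expect to demand the most care is the bookkeeping over $i_2(f(Y))$: one must confirm, using the fibre identification of Theorems \ref{dual:lambda:fibres:thm} and \ref{three:piece:deco:of:dual:lambda:thm}, that the single $v=(\Phi_{g^{\Lambda}})^{-1}(\alpha^*)$ is genuinely the element whose $\tilde{\rho}$-images $(\tilde{\rho}_1^{\Lambda}(v),\tilde{\rho}_2^{\Lambda}(v))$ enter \emph{both} defining formulae, so that the two $\frac12$-weighted terms coincide stratum-by-stratum and the numerical coefficients match. It is exactly here that the compatibility of $g_1^{\Lambda}$ and $g_2^{\Lambda}$ (and, dually, Lemma \ref{duals:of:comp:pseudo-metrics:are:comp:lem}) is invoked, since compatibility is what makes $g^{\Lambda}$, the maps $\chi_i^*$, and the sum $(g_1^{\Lambda})^*+(g_2^{\Lambda})^*$ well defined in the first place and guarantees that $\tilde{\rho}_1^{\Lambda}(v)$ and $\tilde{\rho}_2^{\Lambda}(v)$ form a compatible pair, so that the averaged expression is consistent.
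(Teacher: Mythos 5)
Your proof is correct, and it is essentially the paper's own approach: the paper disposes of this theorem with ``the proof is by direct calculation of which we omit the details,'' and your fibrewise computation—writing $\alpha^*=\Phi_{g^{\Lambda}}(v)$, collapsing both sides to $g^{\Lambda}(x)(v,v)$ via the defining identity of the dual pseudo-metrics and the three-stratum definition of $g^{\Lambda}$, then polarizing—is exactly that calculation written out.
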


\begin{proof}
The proof is by direct calculation of which we omit the details.
\end{proof}

\section{Connections on diffeological vector pseudo-bundles}

We now turn to considering a diffeological version for the notion of a connection. A certain (preliminary, by the author's own admittance) version of this notion appears in \cite{iglesiasBook}, Section 8.32 (it is 
the one we recalled in Section 2). The version in the above source however appears to be more in the spirit of algebraic topology; in the present section we give a separate exposition in the form which seems 
more suitable for our purposes. The covariant derivatives are defined with respect to sections of the dual pseudo-bundle $(\Lambda^1(X))^*$ (and using pairing maps can be similarly defined with respect to 
sections of $\Lambda^1(X_1)$ itself), which thus play the role of smooth vector fields. The proofs of the results stated can be found in \cite{connections-pseudobundles}.

\subsection{What is a diffeological connection}

The \emph{verbatim} extension of the usual definition of a connection on a vector bundle $E\to M$ as a linear operator $C^{\infty}(M,E)\to C^{\infty}(M,T^*M\otimes E)$, to the case of a diffeological vector 
pseudo-bundle $\pi:V\to X$, is as follows.

\begin{defn}\label{diffeological:connection:defn}
Let $\pi:V\to X$ be a finite-dimensional diffeological vector pseudo-bundle, and let $C^{\infty}(X,V)$ be the space of its smooth sections. A \textbf{connection} on this pseudo-bundle
is a smooth linear operator
$$\nabla:C^{\infty}(X,V)\to C^{\infty}(X,\Lambda^1(X)\otimes V),$$
which satisfies the Leibnitz rule: for every function $f\in C^{\infty}(X,\matR)$ and for every section $s\in C^{\infty}(X,V)$ there is the equality:
$$\nabla(fs)=df\otimes s+f\nabla s.$$
\end{defn}

Notice that on the left-hand side of the equality we have a term that, by explicit definition, is a section of $\Lambda^1(X)\otimes V$, whereas the term on the right-hand side includes the differential of a smooth 
function on $X$, that so far has been defined as an element of $\Omega^1(X)$. Thus, taken literally, the definition is not well-stated. What is being meant, however, is that there is a section of $\Lambda^1(X)$ 
that is associated to $df$ in a natural way. This is the section
$$df:x\mapsto\pi^{\Omega,\Lambda}(x,df),\,\,\,df\in C^{\infty}(X,\Lambda^1(X)),$$ where $\pi^{\Omega,\Lambda}$ is the quotient projection that defines $\Lambda^1(X)$. It should always be clear from the 
context whether $df$ stands for an element of the vector space $\Omega^1(X)$ or for a section of $\Lambda^1(X)$, so we use the same notation for both.

A usual connection on a smooth manifold is of course a diffeological connection in the above sense. We first illustrate the definition with a non-standard example; after that, we define covariant derivatives
along smooth sections of $(\Lambda^1(X))^*$, and consider the behavior of this construction under the operation of diffeological gluing.

\begin{example}
Let $X$ be the wedge at the origin of two copies of $\matR$; denote these two copies by $X_1$ and $X_2$, so that $X=X_1\vee_0 X_2$. Consider $X$ as the subset $\{xy=0,\,z=0\}\subset\matR^3$,
identifying $X_1$ with the $x$-axis $y=z=0$ and $X_2$, with the $y$-axis $x=z=0$. Let $V$ be the union of the $xz$-coordinate plane $\{y=0\}$ with the $yz$-coordinate plane $\{x=0\}$, and let
$\pi:V\to X$ be the restriction to $V$ of the standard projection of $\matR^3$ onto the $xy$-coordinate plane. The pre-image $\pi^{-1}(x,y,0)=\{(x,y,z)\,|\,z\in\matR\}$ of any point $(x,y,0)\in X$ has a
natural structure of a vector space given by the operations on the third coordinate, keeping the first two fixed (so, for instance, the sum of the vectors $(x,y,z_1)$ and $(x,y,z_2)$ is the vector
$(x,y,z_1+z_2)$).
\end{example}

Below we describe in a generic diffeological connection on this pseudo-bundle.

\subsubsection{The gluing presentation of $\pi:V\to X$ and a choice of pseudo-metric on it}

We consider $X$ as the result of gluing of $X_1$ to $X_2$ along the obvious origin-to-origin map $f$, and $V$, as a result of gluing of $V_1=\{y=0\}$ to $V_2=\{x=0\}$ along the identity map $\tilde{f}$
on the line $\{(0,0,z)\,|\,z\in\matR\}$. Let $\pi_1:V_1\to X_1$ and $\pi_2:V_2\to X_2$ be the corresponding restrictions of $\pi$. Consider the following pseudo-metrics on $V_1$ and $V_2$:
$g_1(x,0,0)=h_1(x)dz^2$ and $g_2(0,y,0)=h_2(y)dz^2$, where $h_1,h_2:\matR\to\matR$ are two usual smooth everywhere positive functions. Assume also that $h_1(0)=h_2(0)$; apart from these
conditions, $h_1$ and $h_2$ can be any. We then endow $V$ with the pseudo-metric $\tilde{g}$ obtained by the usual gluing of $g_1$ and $g_2$ (see Section 5); indeed, their compatibility in the
required sense is reflected by the condition $h_1(0)=h_2(0)$. This means that
$$\tilde{g}(x,y,0)=\left\{\begin{array}{ll} h_1(x)dz^2 & \mbox{if }y=0 \\ h_2(y)dz^2 & \mbox{if }x=0 \end{array}\right.$$ Equivalently, we can also write that
$$\tilde{g}=(h_1\cup_f h_2)dz^2,$$ where $h_1\cup_f h_2$ is the function on $X$ obtained by the usual gluing of $h_1$ and $h_2$.

\subsubsection{The standard connections $\nabla^1$ and $\nabla^2$ on the factors of gluing}

Since both $\pi_1$ and $\pi_2$ are, on their own, standard bundles $\matR^2\to\matR$, each can be endowed with a usual connection; and since both $g_1$ and $g_2$ are usual scalar products
(Riemannian metrics), there are connections compatible (in the standard sense) with them. Let us choose the specific functions $h_1$ and $h_2$ (for instance, $h_1(x)=e^x$ and $h_2(y)=e^{-y}$)
and calculate the corresponding Christoffel symbol of each. We shall have, for $g_1$, that
$$\Gamma_{11}^1(g_1)=\frac12 g^{11}\frac{dg_{11}}{dx}=\frac{h_1'(x)}{2h_1(x)};$$ let us denote by $\nabla^1$ the corresponding connection on the tangent bundle $T(\matR)$ of the real line. This is
the Levi-Civita connection on this tangent bundle corresponding to the Riemannian structure given by $g_1$. There is a natural identification of the bundle $T(\matR)\to\matR$ with $V_1\to X_1$ that
acts by $(x,s_1(x)\frac{d}{dx})\mapsto(x,0,s_1(x))$. Likewise, we identify $V_2\to X_2$ with another copy of $T(\matR)\to\matR$, via $(y,s_2(y)\frac{d}{dy})\mapsto(0,y,s_2(y))$. This allows to endow
$V_2$ with the connection $\nabla^2$ that corresponds to the Levi-Civita connection on $T(\matR)$ with the Riemannian structure given by $g_2$, with the Christoffel symbol is
$$\Gamma_{11}^1(g_2)=\frac{h_2'(x)}{2h_2(x)}.$$ The two connections can be described by
$$\nabla^1\frac{d}{dx}=\frac{h_1'(x)}{2h_1(x)}dx\otimes\frac{d}{dx},\,\,\,\nabla^2\frac{d}{dy}=\frac{h_2'(x)}{2h_2(x)}dy\otimes\frac{d}{dy}.$$ We can also put them in our coordinates $x,y,z$, so obtaining
$$\nabla^1(x,0,1)=\frac{h_1'(x)}{2h_1(x)}dx\otimes(x,0,1),\,\,\,\nabla^2(0,y,1)=\frac{h_2'(x)}{2h_2(x)}dy\otimes(0,y,1),$$ where respectively $dx$ and $dy$ are the standard sections of $\Lambda^1(X_1)$
and $\Lambda^1(X_2)$. More generally, by the Leibnitz rule we have
\begin{flushleft}
$\nabla^1s_1=\frac{h_1'(x)}{2h_1(x)}ds_1\otimes(x,0,1)+\frac{h_1'(x)}{2h_1(x)}s_1dx\otimes(x,0,1)=$
\end{flushleft}
\begin{flushright}
$=\frac{h_1'(x)}{2h_1(x)}(ds_1+s_1dx)\otimes(x,0,1)=\frac{h_1'(x)(s_1'(x)+s_1(x))}{2h_1(x)}dx\otimes(x,0,1)$,
\end{flushright}
\begin{flushleft}
$\nabla^2s_2=\frac{h_2'(y)}{2h_2(y)}ds_2\otimes(0,y,1)+\frac{h_2'(y)}{2h_2(y)}s_2dy\otimes(0,y,1)=$
\end{flushleft}
\begin{flushright}
$=\frac{h_2'(y)}{2h_2(y)}(ds_2+s_2dy)\otimes(0,y,1)=\frac{h_2'(y)(s_2'(y)+s_2(y))}{2h_2(y)}dy\otimes(0,y,1)$.
\end{flushright}

\subsubsection{Towards a connection on $V$}

The total space $V$ of $\pi$ is a union of $V_1$ and $V_2$ along the line $(0,0,z)$. The rough idea is that outside of this line the prospective connection $\nabla$ on $V$ should coincide with either
$\nabla^1$ or $\nabla^2$, as appropriate. Let us consider how it should behave on this line (more precisely, in a neighborhood of it). Since a connection is an operator
$C^{\infty}(X,V)\to C^{\infty}(X,\Lambda^1(X)\otimes V)$, consider a section $s:X\to V$. By the results of Section 6, $s$ has form $s=s_1\cup_{(f,\tilde{f})}s_2$ for some compatible sections
$s_1\in C^{\infty}(X_1,V_1)$ and $s_2\in C^{\infty}(X_2,V_2)$. These sections can be written as
$$(x,0,0)\mapsto(x,0,s_1(x))\,\,\,\mbox{ and }\,\,\,(0,y,0)\mapsto(0,y,s_2(y))\,\,\,\mbox{ such that }\,\,\,s_1(0)=s_2(0),$$ the last equality corresponding to the compatibility condition. We can thus consider
the assignment
$$s\,\,\mapsto\,\,\nabla^1s_1\in C^{\infty}(X_1,\Lambda^1(X_1)\otimes V_1),\,\,\nabla^2s_2\in C^{\infty}(X_2,\Lambda^1(X_2)\otimes V_2).$$

In order to assign to $\nabla^1s_1$ and $\nabla^2s_2$ an appropriately defined section $\nabla s$ in $C^{\infty}(X,\Lambda^1(X)\otimes V)$, we first consider their values at the origin. The value of
$\nabla^1s_1$ at the origin is
$$\nabla^1s_1|_{x=0}=\frac{h_1'(0)(s_1'(0)+s_1(0))}{2h_1(0)}dx\otimes(0,0,1),$$ while the value of $\nabla^2s_2$ is
$$\nabla^2s_2|_{y=0}=\frac{h_2'(0)(s_2'(0)+s_2(0))}{2h_2(0)}dy\otimes(0,0,1).$$ Let us consider their sum:\footnote{We are making a minor omission in describing the passage to the formula that follows,
which we will return to later on.}
$$\left(\frac{h_1'(0)(s_1'(0)+s_1(0))}{2h_1(0)}dx+\frac{h_2'(0)(s_2'(0)+s_2(0))}{2h_2(0)}dy\right)\otimes(0,0,1),$$ which we want to be an element of $\Lambda^1(X)\otimes V$, and more precisely, of its
fibre $\Lambda_0^1(X)\otimes\pi^{-1}(0)$ at $0$.

Recall that this fibre is $\left(\Lambda_0^1(X_1)\oplus_{comp}\Lambda_0^1(X_2)\right)\otimes\pi_2^{-1}(0)$. Thus, in order to view the above sum as an element of the fibre, we essentially need the
compatibility (with respect to $f$) of the forms $\frac{h_1'(0)(s_1'(0)+s_1(0))}{2h_1(0)}dx$ and $\frac{h_2'(0)(s_2'(0)+s_2(0))}{2h_2(0)}dy$. In this specific example the compatibility condition for $1$-forms
is empty, since $f$ is defined on a one-point set, and so the above sum is indeed an element of $\Lambda^1(X)\otimes V$. In a more general situation, there would be some non-trivial identity to be
satisfied involving the two forms; such a purported identity would be a condition (akin to the compatibility condition for pseudo-metrics, etc.) indicating which pairs $(\nabla^1,\nabla^2)$ of connections
on $V_1$ and $V_2$ respectively give rise to a well-defined connection on $V_1\cup_{\tilde{f}}V_2$. Likewise, in our case the gluing of $V_1$ to $V_2$ is given by the identity map on the line $x=y=0$;
more generally (this is the omission mentioned in the footnote above), we should consider the formal sum
$$\frac{h_1'(0)(s_1'(0)+s_1(0))}{2h_1(0)}dx\otimes\tilde{f}(0,0,1)+\frac{h_2'(0)(s_2'(0)+s_2(0))}{2h_2(0)}dy\otimes(0,0,1).$$

\subsubsection{Defining $\nabla$ on $V$}

We can now summarize all of the above reasoning by defining $\nabla$ to be $\nabla^1\oplus_Y\nabla^2$ (with $Y=\{0\}$), where the meaning of the latter symbol is the following one. Let
$s\in C^{\infty}(X,V)$ be a section, and let $s=s_1\cup_{(f,\tilde{f})}s_2$ for $s_1\in C^{\infty}(X_1,V_1)$ and $s_2\in C^{\infty}(X_2,V_2)$; recall that under the assumption that $f$ is a diffeomorphism,
$s_1$ and $s_2$ are uniquely defined by $s$ (in general, only $s_2$ is so). Thus, $\nabla^1s_1$ and $\nabla^2s_2$ are uniquely defined as well, and we can define $\nabla s$ as follows:
$$\nabla s(x)=\left\{\begin{array}{ll}
((\tilde{\rho}_1^{\Lambda})^{-1}\otimes j_1)\circ\nabla^1s_1(x) & \mbox{if }x\in X_1\setminus\{0\}, \\
((\tilde{\rho}_2^{\Lambda})^{-1}\otimes j_2)\circ\nabla^2s_2(x) & \mbox{if } x\in X_2\setminus\{0\}, \\
(i_1^{\Lambda}\otimes(j_2\circ\tilde{f}))\circ\nabla^1s_1(x)+(i_2^{\Lambda}\otimes j_2)\circ\nabla^2s_2(x) & \mbox{if }x=0,
\end{array} \right.$$ where $i_1^{\Lambda}:\Lambda_0^1(X_1)\to\Lambda_0^1(X_1)\oplus\Lambda_0^1(X_2)$ and $i_2^{\Lambda}:\Lambda_0^1(X_2)\to\Lambda_0^1(X_1)\oplus\Lambda_0^1(X_2)$
are the standard injections of the factors of the direct sum (\emph{i.e.}, they are the trivial identifications $\Lambda_0^1(X_1)\cong\Lambda_0^1(X_1)\oplus\{0\}$ and
$\Lambda_0^1(X_2)\cong\{0\}\oplus\Lambda_0^1(X_2)$), and $j_2$ is the standard induction $V_2\hookrightarrow V_1\cup_{\tilde{f}}V_2$ (notice also that the inverses of $\tilde{\rho}_1^{\Lambda}$ and
$\tilde{\rho}_2^{\Lambda}$ are not defined in general, but they are at the points where we consider them). The term $(i_1^{\Lambda}\otimes(j_2\circ\tilde{f}))\circ\nabla^1s_1(0)$ is thus an element of
$\left(\Lambda_0^1(X_1)\oplus\{0\}\right)\otimes(\pi_1\cup_{(\tilde{f},f)}\pi_2)^{-1}(0)$, and the term $(i_2^{\Lambda}\otimes j_2)\circ\nabla^2s_2(0)$ is an element of
$\left(\{0\}\oplus\Lambda_0^1(X_2)\right)\otimes(\pi_1\cup_{(\tilde{f},f)}\pi_2)^{-1}(0)$; the sum of these terms is taken in
$\left(\Lambda_0^1(X_1)\oplus\Lambda_0^1(X_2)\right)\otimes(\pi_1\cup_{(\tilde{f},f)}\pi_2)^{-1}(0)\subset\Lambda^1(X)\otimes V$.

As we have observed, $s_1$ and $s_2$ are uniquely defined by $s$, therefore $\nabla s$ is well-defined as a map $X\to\Lambda^1(X)\otimes V$; we need to check however that it is smooth, that is, that
$\nabla$ is indeed a map $C^{\infty}(X,V)\to C^{\infty}(X,\Lambda^1(X)\otimes V)$. We will then need to check, furthermore, that $\nabla$ is a smooth map for the functional diffeologies on these two spaces
of sections.

\subsubsection{$\nabla$ is well-defined as a map $C^{\infty}(X,V)\to C^{\infty}(X,\Lambda^1(X)\otimes V)$}

As we indicated above, we need to check that, for any arbitrary $s\in C^{\infty}(X,V)$ the image $\nabla s$ is a smooth map. Let $p:U\to X$ be a plot of $X$; assume from the start that $U$ is connected, so,
as is the case of all gluing diffeologies, $p$ lifts to either a plot $p_1$ of $X_1$ or a plot $p_2$ of $X_2$. By assumption, $s\circ p$ is a plot of $V$; furthermore, if $p$ lifts to a plot $p_1$ then $s\circ p$ lifts
to a plot $q_1$ of $V_1$, and if $p$ lifts to a plot $p_2$ then $s\circ p$ lifts to a plot $q_2$ of $V_2$.

We now need to show that $(\nabla s)\circ p$ is a plot of $\Lambda^1(X)\otimes V$. We shall assume that the image of $p$ contains the origin; if it does not then $(\nabla s)\circ p$ coincides up to
appropriate smooth identifications with either $(\nabla^1s_1)\circ p_1$ or $(\nabla^2s_2)\circ p_2$, so there would be nothing to prove. It is furthermore sufficient to consider only the case when $p$ lifts to
a plot $p_1$ of $X_1$, since the case when it lifts to a plot of $X_2$ is treated identically.

If $p$ lifts to $p_1$ then we have, by direct calculation,
\begin{flushleft}
$\nabla s(p(u))=\left\{\begin{array}{ll}
(\tilde{\rho}_1^{\Lambda})^{-1}\otimes j_1)\circ\nabla^1s_1(p_1(u) & \mbox{for }u\mbox{ such that }p_1(u)\neq 0, \\
(i_1^{\Lambda}\otimes(j_2\circ\tilde{f}))\circ\nabla^1s_1(p_1(u))+(i_2^{\Lambda}\otimes j_2)\circ\nabla^2s_2(f(p_1(u))) & \mbox{for }u\mbox{ such that }p_1(u)=0 \end{array}\right.=$
\end{flushleft}
\begin{flushright}
$=\left\{\begin{array}{ll}
\frac{h_1'(p_1(u))(s_1'(p_1(u))+s_1(p_1(u)))}{2h_1(p_1(u))}dx\otimes(p_1(u),0,1) & \mbox{for }u\mbox{ such that }p_1(u)\neq 0, \\
\left(\frac{h_1'(0)(s_1'(0)+s_1(0))}{2h_1(0)}dx+\frac{h_2'(0)(s_2'(0)+s_2(0))}{2h_2(0)}dy\right)\otimes(0,0,1) & \mbox{for }u\mbox{ such that }p_1(u)=0 \end{array}\right.$
\end{flushright}
We need this to be a plot of $\Lambda^1(X)\otimes V$. As follows from the definition of the tensor product diffeology, it suffices to ensure that the projection to $\Lambda^1(X)$ is a plot of it.

More precisely, consider the following auxiliary map $\varphi_{s_1,p_1}:U\to\Lambda^1(X)$:
$$\varphi_{s_1,p_1}(u)=\left\{\begin{array}{ll}
\frac{h_1'(p_1(u))(s_1'(p_1(u))+s_1(p_1(u)))}{2h_1(p_1(u))}dx & \mbox{for }u\mbox{ such that }p_1(u)\neq 0, \\
\frac{h_1'(0)(s_1'(0)+s_1(0))}{2h_1(0)}dx+\frac{h_2'(0)(s_2'(0)+s_2(0))}{2h_2(0)}dy & \mbox{for }u\mbox{ such that }p_1(u)=0.
\end{array}\right.$$ Since $p_1$ is already a plot (of the standard $\matR$), it is sufficient to show that the following map
$$u\mapsto\left\{\begin{array}{ll}
\frac{h_1'(u)(s_1'(u)+s_1(u))}{2h_1(u)}dx & \mbox{for }u\neq 0, \\
\frac{h_1'(0)(s_1'(0)+s_1(0))}{2h_1(0)}dx+\frac{h_2'(0)(s_2'(0)+s_2(0))}{2h_2(0)}dy & \mbox{for }u\mbox{ such that }p_1(u)=0
\end{array}\right.$$ is a plot of $\Lambda^1(X_1\cup_f X_2)$ defined on the interval $(-\varepsilon,\varepsilon)$ for a sufficiently small $\varepsilon>0$. The general description of plots of
$\Lambda^1(X_1\cup_f X_2)$ is that they are given by a pair of plots $(q_1(u),q_2(u))$ (formally, plots of $\Omega^1(X_1)$ and $\Omega^1(X_2)$ respectively) whose values are compatible for all $u$.
Finding such a pair is trivial: we take $u\mapsto\frac{h_1'(u)(s_1'(u)+s_1(u))}{2s_1(u)}dx$ for $q_1$ and the constant plot $u\mapsto\frac{h_2'(0)(s_2'(0)+s_2(0))}{2h_2(0)}dy$ for $q_2$. We can therefore
conclude that $\varphi_{s_1,p_1}$ is a plot of $\Lambda^1(X)$, and therefore $(\nabla s)\circ p$ is a plot of $\Lambda^1(X)\otimes V$. It then remains to observe that the case when $p$ lifts to $p_2$ is
entirely analogous, to obtain the following.

\begin{lemma}
For every section $s\in C^{\infty}(X,V)$ the image $\nabla s$ belongs to $C^{\infty}(X,\Lambda^1(X)\otimes V)$.
\end{lemma}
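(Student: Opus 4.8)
The plan is to verify smoothness of $\nabla s$ directly from the defining criterion for a section of a pseudo-bundle: for every plot $p:U\to X$ of $X$ I must show that the composite $(\nabla s)\circ p$ is a plot of $\Lambda^1(X)\otimes V$. Before that, I would record why $\nabla s$ is even a single-valued map. Since here $f$ is a diffeomorphism (the origin-to-origin map), the results of Section 6 supply a \emph{unique} decomposition $s=s_1\cup_{(f,\tilde f)}s_2$ with $s_1\in C^{\infty}(X_1,V_1)$ and $s_2\in C^{\infty}(X_2,V_2)$; consequently $\nabla^1s_1$ and $\nabla^2s_2$ are uniquely determined, and the three-line formula defining $\nabla s$ assigns a well-defined value at each point of $X$.

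For the smoothness itself I would localize, taking $U$ connected, so that by the structure of the gluing diffeology $p$ lifts to either a plot $p_1$ of $X_1$ or a plot $p_2$ of $X_2$. If $\mbox{Range}(p)$ misses the wedge point $0$, then on the relevant piece $(\nabla s)\circ p$ agrees, through the smooth identifications $(\tilde\rho_i^{\Lambda})^{-1}\otimes j_i$, with $(\nabla^i s_i)\circ p_i$, which is a plot because $\nabla^1$ and $\nabla^2$ are genuine connections on standard bundles; nothing is to be checked. The substantive case is $0\in\mbox{Range}(p)$, and by the symmetry of the construction it suffices to treat $p$ lifting to $p_1$. Writing $(\nabla s)\circ p$ out explicitly as in the preceding calculation, and invoking the definition of the tensor product diffeology (which reduces plot-checking to the $\Lambda^1(X)$-projection, the $V$-factor being manifestly a plot built from $p_1$ and the inductions), I would reduce the claim to showing that the auxiliary map $\varphi_{s_1,p_1}:U\to\Lambda^1(X)$ is a plot of $\Lambda^1(X)$.

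The hard part is concentrated precisely here: $\varphi_{s_1,p_1}$ is a multiple of $dx$ away from $0$, but at $0$ it equals a sum $(\cdots)dx+(\cdots)dy$, so its \emph{shape} changes exactly over the wedge point, and one must confirm that this is still a diffeological plot rather than a genuine discontinuity. I would resolve this using the explicit description of plots of $\Lambda^1(X_1\cup_f X_2)$ from Section 8: such a plot is presented by a pair of plots of $\Omega^1(X_1)$ and $\Omega^1(X_2)$ whose values are compatible for all parameters. In the present wedge situation the gluing domain $Y$ is a single point, so the compatibility condition for $1$-forms is vacuous; I am therefore free to present $\varphi_{s_1,p_1}$ by the pair $q_1(u)=\frac{h_1'(u)(s_1'(u)+s_1(u))}{2h_1(u)}dx$ together with the constant plot $q_2(u)=\frac{h_2'(0)(s_2'(0)+s_2(0))}{2h_2(0)}dy$, which exhibits $\varphi_{s_1,p_1}$ as a plot of $\Lambda^1(X)$.

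Putting these together, the tensor product diffeology then certifies $(\nabla s)\circ p$ as a plot of $\Lambda^1(X)\otimes V$. The case where $p$ lifts to $p_2$ is identical after exchanging the roles of the two factors, and since $p$ was an arbitrary plot of $X$, I conclude that $\nabla s\in C^{\infty}(X,\Lambda^1(X)\otimes V)$. The single genuine obstacle is the verification of the third paragraph — that the structurally jumping form $\varphi_{s_1,p_1}$ lifts to a compatible pair of $\Omega^1$-plots — and its resolution rests entirely on the emptiness of the compatibility constraint over a one-point gluing domain; for a gluing along a higher-dimensional $Y$ one would instead need a nontrivial identity between the two local forms, signalling exactly which pairs $(\nabla^1,\nabla^2)$ glue to a connection on $V$.
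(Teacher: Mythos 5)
Your proposal is correct and follows essentially the same route as the paper's own proof: localize to a connected plot domain, dispose of plots missing the wedge point, reduce via the tensor product diffeology to the $\Lambda^1(X)$-projection $\varphi_{s_1,p_1}$, and certify it as a plot by exhibiting the pair $(q_1,q_2)$ of $\Omega^1$-plots whose compatibility is vacuous over the one-point gluing domain. Your added remarks on uniqueness of the decomposition $s=s_1\cup_{(f,\tilde f)}s_2$ and on what would change for a higher-dimensional $Y$ also match the paper's surrounding discussion, so there is nothing to correct.
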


Thus, $\nabla$ is well-defined. It is clear that it has the linearity property and satisfies the Leibnitz rule, since both of these conditions are fibrewise.

\subsubsection{$\nabla$ is smooth for the functional diffeologies on $C^{\infty}(X,V)$ and $C^{\infty}(X,\Lambda^1(X)\otimes V)$}

Let $q:U\to C^{\infty}(X,V)$ be a plot of $C^{\infty}(X,V)$; assume that the domain $U$ is connected. Represent, for each $u$, the section $q(u)$ as $q_1(u)\cup_{(f,\tilde{f})}q_2(u)$; it is easy to see that
each $q_i$ thus defined is a plot of $C^{\infty}(X_i,V_i)$.

We now need to show $\nabla\circ q$ is a plot of $C^{\infty}(X,\Lambda^1(X)\otimes V)$; to do so, consider a plot $p:U'\to X$ of $X$. Again, it is sufficient to assume that $U'$ is connected, so that $p$
lifts to either a plot $p_1$ of $X_1$ or a plot $p_2$ of $X_2$ (these two cases are essentially symmetric, since the gluing is along a diffeomorphism, although there is a formal difference: the fibre at the
origin is one of $V_2$). We need to consider the evaluation map for $\nabla\circ q$ and $p$, which is the map $(u,u')\mapsto(\nabla\circ q)(u)(p(u'))$. This is a map $U\times U'\to\Lambda^1(X)\otimes V$,
of which we need to show that it is a plot of $\Lambda^1(X)\otimes V$.

Suppose that $p$ lifts to a plot $p_1$; as before, we have
\begin{flushleft}
$(\nabla\circ q)(u)(p(u'))=$
\end{flushleft}
\begin{flushright}
$\left\{\begin{array}{ll}
\frac{h_1'(p_1(u'))((q_1(u))'(p_1(u'))+q_1(u)(p_1(u')))}{2h_1(p_1(u'))}dx\otimes(p_1(u'),0,1) & \mbox{for }u'\mbox{ such that }p_1(u')\neq 0, \\
\left(\frac{h_1'(0)((q_1(u))'(0)+q_1(u)(0))}{2h_1(0)}dx+\frac{h_2'(0)((q_2(u))'(0)+q_2(u)(0))}{2h_2(0)}dy\right)\otimes(0,0,1) & \mbox{for }u'\mbox{ such that }p_1(u')=0.
\end{array}\right.$
\end{flushright} This turns out to be the plot of $\Lambda^1(X)\otimes V$ for all the same reasons as before. Indeed, as before, $p_1$ is already a plot (an ordinary smooth function), and it suffices
to consider the auxiliary map into $\Lambda^1(X)$
$$(u,u'')\mapsto\left\{\begin{array}{ll}
\frac{h_1'(u'')((q_1(u))'(u'')+q_1(u)(u''))}{2h_1(u'')}dx & \mbox{for }u''\neq 0, \\
\frac{h_1'(0)((q_1(u))'(0)+q_1(u)(0))}{2h_1(0)}dx+\frac{h_2'(0)((q_2(u))'(0)+q_2(u)(0))}{2h_2(0)}dy & \mbox{for }u\mbox{ such that }p_1(u'')=0,
\end{array}\right.$$ of which we need to show that it is a plot of $\Lambda^1(X)$. Since by assumption $u\mapsto q_1(u)$ and $u\mapsto q_2(u)$ are ordinary smooth functions, we can apply
the same reasoning as in the previous section (in fact, the latter is a partial case of the former). We thus obtain the following conclusion.

\begin{prop}
The operator $\nabla$ is a diffeological connection on $V$.
\end{prop}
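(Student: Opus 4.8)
The plan is to verify the four requirements imposed by Definition \ref{diffeological:connection:defn}: that $\nabla$ carries $C^{\infty}(X,V)$ into $C^{\infty}(X,\Lambda^1(X)\otimes V)$, that it is linear, that it obeys the Leibnitz rule, and that it is smooth for the functional diffeologies on these two section spaces. Since $\nabla$ is built by concatenating the two Levi-Civita connections $\nabla^1$ and $\nabla^2$ via the decomposition $s=s_1\cup_{(f,\tilde{f})}s_2$ (which is unique because $f$ is a diffeomorphism), the two algebraic conditions come essentially for free: linearity of $\nabla$ follows from the linearity of each $\nabla^i$ together with the additivity of the gluing of compatible sections, and the Leibnitz rule holds because away from the origin $\nabla$ agrees with one of the $\nabla^i$, while at the origin it is a sum of two terms each individually satisfying Leibnitz. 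Both properties are fibrewise, so I would dispatch them in a single short paragraph.

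The substance lies in the two smoothness claims, both already reduced to a common computation. First I would record well-definedness as a map into $C^{\infty}(X,\Lambda^1(X)\otimes V)$: given $s\in C^{\infty}(X,V)$ and a plot $p$ of $X$ with connected domain, I lift $p$ to a plot $p_1$ of $X_1$ (the case of $X_2$ being symmetric), and use the definition of the tensor product diffeology to reduce the claim that $(\nabla s)\circ p$ is a plot of $\Lambda^1(X)\otimes V$ to the claim that its $\Lambda^1(X)$-component, the auxiliary map $\varphi_{s_1,p_1}$, is a plot of $\Lambda^1(X_1\cup_f X_2)$. This last point is settled by exhibiting a compatible pair of plots of $\Omega^1(X_1)$ and $\Omega^1(X_2)$ representing $\varphi_{s_1,p_1}$, which is immediate here because over the one-point domain of gluing the compatibility condition for $1$-forms is vacuous. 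This is exactly the content of the Lemma asserting $\nabla s\in C^{\infty}(X,\Lambda^1(X)\otimes V)$.

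Then I would prove smoothness for the functional diffeologies: taking a plot $q\colon U\to C^{\infty}(X,V)$ with $U$ connected, I decompose each $q(u)$ as $q_1(u)\cup_{(f,\tilde{f})}q_2(u)$, observe that each $q_i$ is a plot of $C^{\infty}(X_i,V_i)$, and show that for every plot $p$ of $X$ the evaluation map $(u,u')\mapsto(\nabla\circ q)(u)(p(u'))$ is a plot of $\Lambda^1(X)\otimes V$. The computation is formally identical to the well-definedness argument carried with the extra parameter $u$, so it again reduces to producing a compatible pair of plots into $\Omega^1(X_1)\times\Omega^1(X_2)$, smoothness in $u$ being inherited from that of $u\mapsto q_1(u)$ and $u\mapsto q_2(u)$. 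Assembling these three ingredients gives the Proposition.

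The hard part, and the only place requiring genuine care, is the behaviour at the conical point $x=0$: there the value $\nabla s(0)$ lives in $\bigl(\Lambda_0^1(X_1)\oplus\Lambda_0^1(X_2)\bigr)\otimes\pi^{-1}(0)$, and one must check that the piecewise formula for $(\nabla s)\circ p$ (respectively, for the evaluation map) glues smoothly across the sublevel $p_1^{-1}(0)$ rather than merely being continuous. This is handled by the characterization of plots of $\Lambda^1(X_1\cup_f X_2)$ as compatible pairs $(q_1,q_2)$ of plots of the $\Omega^1(X_i)$; the crucial simplification in this example is that, the gluing being over a single point, the compatibility constraint on the two $1$-form components is empty, so any choice of $q_2$ (for instance a constant plot) works and smoothness across $p_1^{-1}(0)$ follows. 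In a more general gluing this vacuousness would be replaced by a genuine compatibility identity between $\nabla^1$ and $\nabla^2$ over the domain of gluing, which is precisely the obstruction one would otherwise have to confront.
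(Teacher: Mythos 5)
Your proposal is correct and follows essentially the same route as the paper: the same reduction of both smoothness claims to the auxiliary map $\varphi_{s_1,p_1}$ being a plot of $\Lambda^1(X_1\cup_f X_2)$, verified by exhibiting a compatible pair of plots of $\Omega^1(X_1)$ and $\Omega^1(X_2)$ (with a constant plot on the second factor, the compatibility condition being vacuous for a one-point gluing), and the same dismissal of linearity and the Leibnitz rule as fibrewise. The paper likewise treats the functional-diffeology smoothness as the well-definedness computation carried with the extra parameter $u$, so nothing in your argument diverges from its proof.
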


We thus conclude our discussion of this simple example of a diffeological connection on the result of a gluing induced by two given connections on the factors. We stress again that, since we chose
the simplest possible gluing, it does not fully reflect the general situation; indeed, in our example no role was played by the potential compatibility condition for the connections on the factors (we also
avoided specifying $\tilde{f}$ and discussing issues related to it). Later on we will consider this more general situation.

\subsection{Covariant derivatives}

The notion of the covariant derivatives along a smooth section of $(\Lambda^1(X))^*$ is the obvious one.

\begin{defn}
Let $X$ be a diffeological space, let $\pi:V\to X$ be a diffeological vector pseudo-bundle, let $\nabla:C^{\infty}(X,V)\to C^{\infty}(X,\Lambda^1(X)\otimes V)$ be a diffeological connection, and let
$t:X\to(\Lambda^1(X))^*$ be a smooth section of $(\Lambda^1(X))^*$. Let $s\in C^{\infty}(X,V)$. The \textbf{covariant derivative} of $s$ along $t$ is defined as $\nabla_s(t):=\nabla_t s$. 
\end{defn}

Written explicitly for $s$ of form $s=\sum\alpha^i\otimes v^i$, where $\alpha^i$ are some local sections of $\Lambda^1(X)$ and $v^i$ are some local sections of $V$, we have 
$(\nabla_t s)(x)=\sum t(x)(\alpha^i(x))v^i$. This local shape allows to see that $\nabla_t s$ is an element of $C^{\infty}(X,V)$. Indeed, the diffeology on $(\Lambda^1(X))^*$, as on any dual pseudo-bundle in
general, is defined in such a way that any evaluation map $x\mapsto t(x)(\alpha^i(x))$ be smooth. Moreover, it is straightforward to check that for any fixed $t$ the map $\nabla_t:C^{\infty}(X,V)\to C^{\infty}(X,V)$ 
is smooth for the functional diffeology on $C^{\infty}(X,V)$.

Later on we will also make use of covariant derivatives with respect to sections of $\Lambda^1(X)$, in the case when $\Lambda^1(X)$ is endowed with a pseudo-metric. 

\begin{defn}
Let $X$ be a diffeological space, let $\pi:V\to X$ be a diffeological vector pseudo-bundle, and let $\nabla:C^{\infty}(X,V)\to C^{\infty}(X,\Lambda^1(X)\otimes V)$ be a diffeological connection. Let $g^{\Lambda}$ 
be a pseudo-metric on $\Lambda^1(X)$, let $t\in C^{\infty}(X,\Lambda^1(X))$, and let $s\in C^{\infty}(X,V)$. Then the \textbf{covariant derivative of $s$ along $t$} is the covariant derivative of $s$ along 
$\Phi_{g^{\Lambda}}\circ t$, 
$$\nabla_{t}s:=\nabla_{(\Phi_{g^{\Lambda}})\circ t}s,$$ where $\Phi_{g^{\Lambda}}$ is the pairing map corresponding to the pseudo-metric $g^{\Lambda}$. 
\end{defn}

\subsection{Diffeological connections and gluing of pseudo-bundles}

We now consider the behavior of diffeological connections under gluing. Let $\pi_1\cup_{(\tilde{f},f)}\pi_2:V_1\cup_{\tilde{f}}V_2\to X_1\cup_f X_2$ be a pseudo-bundle obtained by gluing. Suppose that 
$V_1$ and $V_2$ are endowed with diffeological connections, $\nabla^1$ and $\nabla^2$ respectively. We might expect that, as it happens for all other objects that we have considered, under 
appropriate compatibility conditions, these two connections induce one on the pseudo-bundle obtained by gluing, \emph{i.e.} $V_1\cup_{\tilde{f}}V_2$; indeed, this is what happens in the example 
considered in Section 10.1. Below we describe how this can be done for abstract pseudo-bundles and connections on them, starting with the appropriate compatibility notion for connections.

\subsubsection{The compatibility notion for connections}

Recall the criterion of compatibility of elements of $\Lambda^1(X_1)$ and $\Lambda^1(X_2)$ in terms of the three pullback maps $f^*$, $i_{\Lambda}^*$, and $j_{\Lambda}^*$ (Proposition 8.4): for any 
$y\in Y$, two elements $\alpha_1\in\Lambda_y^1(X_1)$ and $\alpha_2\in\Lambda_{f(y)}^1(X_2)$ are compatible if and only if $i_{\Lambda}^*\alpha_1=f^*(j_{\Lambda}^*\alpha_2)$. This criterion allows us 
to give the following definition of the compatibility of a connection on a pseudo-bundle $V_1$ over $X_1$ with a connection on a pseudo-bundle $V_2$ over $X_2$.

\begin{defn}\label{compatibility:of:connections:defn}
Let $\pi_1:V_1\to X_1$ and $\pi_2:V_2\to X_2$ be two diffeological vector pseudo-bundles, let $f$ and $\tilde{f}$ be maps defining a gluing of the former to the latter, each of which is a diffeomorphism
of its domain with its image, and let $Y$ be the domain of definition of $f$. Let $\nabla^1$ be a connection on $V_1$, and let $\nabla^2$ be a connection on $V_2$. We say that $\nabla^1$ and 
$\nabla^2$ are \textbf{compatible} if for any pair $s_1\in C^{\infty}(X_1,V_1)$ and $s_2\in C^{\infty}(X_2,V_2)$ of compatible sections, and for any $y\in Y$, we have 
$$\left((i_{\Lambda}^*\otimes\tilde{f})\circ(\nabla^1s_1)\right)(y)=\left(((f^*j_{\Lambda}^*)\otimes\mbox{Id}_{V_2})\circ(\nabla^2s_2)\right)(f(y)).$$
\end{defn}

\subsubsection{Obtaining a connection on $V_1\cup_{\tilde{f}}V_2$ out of compatible $\nabla^1$ and $\nabla^2$ on $V_1$ and $V_2$}\label{induced:connection:on:glued:secn}

Let $\pi_1:V_1\to X_1$ and $\pi_2:V_2\to X_2$ be two pseudo-bundles, let $(\tilde{f},f)$ be a gluing of the former to the latter such that $f:X_1\supseteq Y\to X_2$ and $\tilde{f}:\pi_1^{-1}(Y)\to V_2$ are 
diffeomorphisms of their domains with their images, and $f$ is such that $\calD_1^{\Omega}=\calD_2^{\Omega}$. Let $\nabla^1$ and $\nabla^2$ be connections on $V_1$ and $V_2$ respectively, 
compatible in the sense of Definition \ref{compatibility:of:connections:defn}. We now define an induced connection $\nabla^{\cup}$ on $V_1\cup_{\tilde{f}}V_2$. 

As any connection on $V_1\cup_{\tilde{f}}V_2$, the operator $\nabla^{\cup}$ is a map 
$$C^{\infty}(X_1\cup_f X_2,V_1\cup_{\tilde{f}}V_2)\to C^{\infty}(X_1\cup_f X_2,\Lambda^1(X_1\cup_f X_2)\otimes(V_1\cup_{\tilde{f}}V_2)),$$ which is defined as follows. Let $s$ be any section in 
$C^{\infty}(X_1\cup_f X_2,V_1\cup_{\tilde{f}}V_2)$. Since $f$ and $\tilde{f}$ are diffeomorphisms, $s$ has a unique presentation in the form $s=s_1\cup_{(f,\tilde{f})}s_2$ for certain compatible sections
 $s_1\in C^{\infty}(X_1,V_1)$ and $s_2\in C^{\infty}(X_2,V_2)$. Thus, there is a well-defined assignment  
$$C^{\infty}(X_1\cup_f X_2,V_1\cup_{\tilde{f}}V_2)\ni s\mapsto(\nabla^1s_1,\nabla^2s_2)$$
$$\mbox{ for }\,\,\nabla^1s_1\in C^{\infty}(X_1,\Lambda^1(X_1)\otimes V_1)\mbox{ and }\nabla^2s_2\in C^{\infty}(X_2,\Lambda^1(X_2)\otimes V_2).$$ To these, we now assign a section 
$$\nabla^{\cup}s:X_1\cup_f X_2\to\Lambda^1(X_1\cup_f X_2)\otimes(V_1\cup_{\tilde{f}}V_2),$$ whose value at any given $x\in X_1\cup_f X_2$ is determined as follows:
$$(\nabla^{\cup}s)(x)=$$
$$\left\{\begin{array}{cl}
((\tilde{\rho}_1^{\Lambda})^{-1}\otimes j_1)((\nabla^1s_1)(i_1^{-1}(x))) & \mbox{for }x\in i_1(X_1\setminus Y), \\
((\tilde{\rho}_2^{\Lambda})^{-1}\otimes j_2)((\nabla^2s_2)(i_2^{-1}(x))) & \mbox{for }x\in i_2(X_2\setminus f(Y)), \\
((\tilde{\rho}_1^{\Lambda}\oplus\tilde{\rho}_2^{\Lambda})^{-1}\otimes\mbox{Id}_{V_1\cup_{\tilde{f}}V_2})
((\mbox{Incl}_{\Lambda_{f^{-1}(i_2^{-1}(x))}^1(X_1)}\otimes(j_2\circ\tilde{f}))\left(\nabla^1s_1(f^{-1}(i_2^{-1}(x)))\right)+ & \\
+(\mbox{Incl}_{\Lambda_{i_2^{-1}(x)}^1(X_2)}\otimes j_2)\left(\nabla^2s_2(i_2^{-1}(x))\right) & \mbox{ for }x\in i_2(f(Y))),
\end{array}\right.$$
where for $x\in i_2(f(Y))$ we have
\begin{itemize}
\item $\mbox{Incl}_{\Lambda_{f^{-1}(i_2^{-1}(x))}^1(X_1)}$ is the standard inclusion 
$$\Lambda_{f^{-1}(i_2^{-1}(x))}^1(X_1)\cong\Lambda_{f^{-1}(i_2^{-1}(x))}^1(X_1)\oplus\{0\}\hookrightarrow\Lambda_{f^{-1}(i_2^{-1}(x))}^1(X_1)\oplus\Lambda_{i_2^{-1}(x)}^1(X_2),$$
\item $\mbox{Incl}_{\Lambda_{i_2^{-1}(x)}^1(X_2)}$ is likewise the standard inclusion
$$\Lambda_{i_2^{-1}(x)}^1(X_2)\cong\{0\}\oplus\Lambda_{i_2^{-1}(x)}^1(X_2)\hookrightarrow\Lambda_{f^{-1}(i_2^{-1}(x))}^1(X_1)\oplus\Lambda_{i_2^{-1}(x)}^1(X_2),$$ and
\item $(\tilde{\rho}_1^{\Lambda}\oplus\tilde{\rho}_2^{\Lambda})^{-1}\otimes\mbox{Id}_{V_1\cup_{\tilde{f}}V_2}$ acts
$$(\Lambda_{f^{-1}(i_2^{-1}(x))}^1(X_1)\oplus\Lambda_{i_2^{-1}(x)}^1(X_2))\otimes(V_1\cup_{\tilde{f}}V_2)\to\Lambda^1(X_1\cup_f X_2)\otimes(V_1\cup_{\tilde{f}}V_2).$$
\end{itemize}

By the assumption of the compatibility of the connections $\nabla^1$ and $\nabla^2$, the resulting $\nabla^{\cup}s$ is well-defined as a map 
$X_1\cup_f X_2\to\Lambda^1(X_1\cup_f X_2)\otimes(V_1\cup_{\tilde{f}}V_2)$. Furthermore, the following statement is a matter of a simple (even if lengthy) check.

\begin{prop}
Let $f$ be a diffeomorphism and such that $\calD_1^{\Omega}=\calD_2^{\Omega}$. Then for every smooth section $s\in C^{\infty}(X_1\cup_f X_2,V_1\cup_{\tilde{f}}V_2)$ we have
$$\nabla^{\cup}s\in C^{\infty}(X_1\cup_f X_2,\Lambda^1(X_1\cup_f X_2)\otimes(V_1\cup_{\tilde{f}}V_2)),$$ that is, $\nabla^{\cup}$ is a map 
$C^{\infty}(X_1\cup_f X_2,V_1\cup_{\tilde{f}}V_2)\to C^{\infty}(X_1\cup_f X_2,\Lambda^1(X_1\cup_f X_2)\otimes(V_1\cup_{\tilde{f}}V_2))$.
\end{prop}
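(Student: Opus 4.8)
The plan is to show that for an arbitrary section $s\in C^{\infty}(X_1\cup_f X_2,V_1\cup_{\tilde{f}}V_2)$ the map $\nabla^{\cup}s$ is smooth, i.e.\ that its composition with every plot of $X_1\cup_f X_2$ is a plot of $\Lambda^1(X_1\cup_f X_2)\otimes(V_1\cup_{\tilde{f}}V_2)$. This is exactly the argument structure carried out in detail for the concrete example in Section 10.1.5, and I would present it as its abstract counterpart. First I would decompose $s$ uniquely as $s=s_1\cup_{(f,\tilde{f})}s_2$ with $s_1\in C^{\infty}(X_1,V_1)$ and $s_2\in C^{\infty}(X_2,V_2)$ compatible (this uniqueness is available since $f,\tilde{f}$ are diffeomorphisms, by the splitting results of Section 6, in particular the remark that both $s_1$ and $s_2$ are uniquely determined when the gluing is along diffeomorphisms). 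Then $\nabla^1 s_1$ and $\nabla^2 s_2$ are well-defined sections, and $\nabla^{\cup}s$ is well-defined pointwise by the given three-part formula, the value over $i_2(f(Y))$ being unambiguous precisely because $\nabla^1$ and $\nabla^2$ are compatible in the sense of Definition \ref{compatibility:of:connections:defn}.

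Next I would verify smoothness. Let $p:U\to X_1\cup_f X_2$ be a plot; reducing $U$, assume it connected, so by the characterization of plots of the gluing diffeology (the Lemma on gluing diffeology in Section 4.3) $p$ lifts either to a plot $p_1$ of $X_1$ or to a plot $p_2$ of $X_2$. By symmetry of gluing along a diffeomorphism it suffices to treat the case of a lift $p_1$, and I may assume the image meets the domain of gluing, since otherwise $\nabla^{\cup}s\circ p$ agrees, up to the smooth identifications $(\tilde{\rho}_1^{\Lambda})^{-1}\otimes j_1$, with $(\nabla^1 s_1)\circ p_1$, which is a plot because $\nabla^1 s_1\in C^{\infty}(X_1,\Lambda^1(X_1)\otimes V_1)$. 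By the definition of the tensor product pseudo-bundle diffeology (Section 4.4), it is enough to show that the $\Lambda^1(X_1\cup_f X_2)$-component of $(\nabla^{\cup}s)\circ p$ is a plot of $\Lambda^1(X_1\cup_f X_2)$. That component is the map sending $u$ to $\tilde{\rho}_1^{\Lambda}$-image of $(\nabla^1 s_1)(p_1(u))$ off the gluing locus and to the compatible sum of the $\tilde{\rho}_1^{\Lambda}$- and $\tilde{\rho}_2^{\Lambda}$-images over it; I would exhibit it as arising from a pair $(q_1^{\Omega},q_2^{\Omega})$ of plots of $\Omega^1(X_1)$ and $\Omega^1(X_2)$ whose values are compatible for every $u$, which is exactly the form required of plots of $\Lambda^1(X_1\cup_f X_2)$ by Section 8.3. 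Here the hypothesis $\calD_1^{\Omega}=\calD_2^{\Omega}$ enters: it guarantees (via Proposition \ref{tilde-rho-1:as:a:subduction:prop}, Proposition \ref{tilde-rho-2:as:a:subduction:prop}, and the subduction characterization Theorem \ref{diffeology:lambda:in:terms:of:rho:thm}) that a plot of $\Omega^1(X_1)$ pulling back from $\nabla^1 s_1$ always admits a compatible companion plot of $\Omega^1(X_2)$, so the required lift to $\Lambda^1(X_1\cup_f X_2)$ exists.

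The linearity of $\nabla^{\cup}$ and the Leibnitz rule I would dispatch quickly: both are fibrewise conditions, inherited from $\nabla^1$ and $\nabla^2$ together with the compatibility of the gluing of sections and of the differential $df$ (using the additive and module behavior of $\mathcal{S}$ recorded in the Lemma in Section 6.3 and the definition of $df$ as a section of $\Lambda^1(X)$ in Definition \ref{diffeological:connection:defn}). The main obstacle is the smoothness check over the gluing locus, specifically confirming that the two-summand value of the $\Lambda^1$-component assembles into a single plot rather than merely a pointwise-defined map; this is where I expect the compatibility of $\nabla^1,\nabla^2$ and the equality $\calD_1^{\Omega}=\calD_2^{\Omega}$ to do the real work, exactly as the explicit auxiliary map $\varphi_{s_1,p_1}$ did in the worked example. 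I would remark that, since the present statement only asserts that $\nabla^{\cup}$ lands in the correct space of sections (not yet its smoothness as an operator for the functional diffeologies), the argument stops once the lift $(q_1^{\Omega},q_2^{\Omega})$ is produced, and I would refer to \cite{connections-pseudobundles} for the routine verification that the constructed value is independent of the choices made in reducing $U$.
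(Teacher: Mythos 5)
Your overall scaffolding --- split $s$ uniquely as $s_1\cup_{(f,\tilde{f})}s_2$, take a connected plot $p$, lift it to $p_1$ or $p_2$, dispose of the case where the image misses the gluing locus, and exhibit the $\Lambda^1$-part of $(\nabla^{\cup}s)\circ p$ as the projection of a pair of plots of $\Omega^1(X_1)$ and $\Omega^1(X_2)$ with pointwise compatible values --- is indeed the method of the paper (it is carried out in full for the worked example of Section 10.1.5, with the general case deferred to \cite{connections-pseudobundles}). The gap is at the decisive step. Propositions \ref{tilde-rho-1:as:a:subduction:prop} and \ref{tilde-rho-2:as:a:subduction:prop} (and the hypothesis $\calD_1^{\Omega}=\calD_2^{\Omega}$) guarantee only that a plot $q_1^{\Omega}$ of $\Omega^1(X_1)$ representing $\nabla^1s_1\circ p_1$ admits \emph{some} compatible companion $q_2^{\Omega}$, i.e.\ that \emph{some} plot of $\Lambda^1(X_1\cup_f X_2)$ projects under $\tilde{\rho}_1^{\Lambda}$ onto the given one. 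That is not what is needed: over $p_1^{-1}(Y)$ the map you must realize as a plot has \emph{both} components prescribed, the second one being forced to represent $(\nabla^2s_2)(f(p_1(u)))$ by the very definition of $\nabla^{\cup}$. Compatibility of $q_2^{\Omega}(u)$ with $q_1^{\Omega}(u)$ pins down only the pullback $f_{\Lambda}^*j_{\Lambda}^*$ of its value class, and the kernel of $f_{\Lambda}^*j_{\Lambda}^*$ on $\Lambda_{f(y)}^1(X_2)$ is in general nonzero, so an arbitrary companion yields a plot that agrees with your map off the gluing locus but not on it. The failure is visible in the very example you imitate: there $Y$ is a point, the compatibility condition on forms is \emph{empty}, so "a compatible companion" is an arbitrary plot of $\Omega^1(X_2)$ --- and the argument of Section 10.1.5 works only because the companion is chosen to be the constant plot whose value is exactly that of $\nabla^2s_2$ at the glued point. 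What your proposal is missing is the construction, for a general gluing locus, of a companion plot of $\Omega^1(X_2)$ whose values along $p_1^{-1}(Y)$ represent $(\nabla^2s_2)\circ f\circ p_1$; this is where the smoothness of $\nabla^2s_2$, the compatibility of $\nabla^1$ and $\nabla^2$ in the sense of Definition \ref{compatibility:of:connections:defn}, and the hypothesis $\calD_1^{\Omega}=\calD_2^{\Omega}$ must act together, and it is not automatic, since $p_1^{-1}(Y)$ is in general not open and hence $(\nabla^2s_2)\circ f\circ p_1$ is not itself a plot.

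A secondary point: the reduction "it suffices to show that the $\Lambda^1$-component is a plot" is legitimate in the worked example because the $V$-part there is a single fixed section, but for an abstract pseudo-bundle a plot of $\Lambda^1(X_1\cup_f X_2)\otimes(V_1\cup_{\tilde{f}}V_2)$ has no well-defined "$\Lambda^1$-component"; one must first write $(\nabla^1s_1)\circ p_1$ locally as a finite sum $\sum_k q_{1,k}^{\Lambda}\otimes q_{1,k}^{V_1}$ of tensor products of plots and then run the companion construction summand by summand, tracking the corresponding plots of $V_1\cup_{\tilde{f}}V_2$ as well.
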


It is more or less clear from the construction that $\nabla^{\cup}$ is linear and satisfies the Leibnitz rule (the latter check is based on the description of the behavior of the differential under gluing, see 
Section 10.3.3 immediately below). It can also be verified that it is smooth for the functional diffeologies on $C^{\infty}(X_1\cup_f X_2,V_1\cup_{\tilde{f}}V_2)$ and on 
$C^{\infty}(X_1\cup_f X_2,\Lambda^1(X_1\cup_f X_2)\otimes(V_1\cup_{\tilde{f}}V_2))$. There is therefore the following statement.

\begin{thm}\label{induced:operator:on:glued:pseudo-bundle:is:connection:thm}
Let $\pi_1:V_1\to X_1$ and $\pi_2:V_2\to X_2$ be two diffeological vector pseudo-bundles, and let $(\tilde{f},f)$ be a gluing between them such that both $\tilde{f}$ and $f$ are diffeomorphisms, and $f$ has 
the property that $\calD_1^{\Omega}=\calD_2^{\Omega}$. Then the operator $\nabla^{\cup}$ is a connection on $V_1\cup_{\tilde{f}}V_2$.
\end{thm}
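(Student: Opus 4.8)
The plan is to verify the three defining properties of a diffeological connection (Definition \ref{diffeological:connection:defn}) for the operator $\nabla^{\cup}$ constructed in Section \ref{induced:connection:on:glued:secn}: that it is a well-defined map into the correct space of sections, that it is linear and satisfies the Leibnitz rule, and that it is smooth for the functional diffeologies. The first of these is already supplied by the Proposition stated immediately before the theorem, so I would cite it directly; the remaining work is to establish the algebraic conditions and the smoothness.

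For linearity and the Leibnitz rule, I would argue fibrewise. Since $f$ and $\tilde{f}$ are diffeomorphisms, every section $s \in C^{\infty}(X_1 \cup_f X_2, V_1 \cup_{\tilde{f}} V_2)$ has a \emph{unique} decomposition $s = s_1 \cup_{(f,\tilde{f})} s_2$ with $s_1, s_2$ compatible, and the assignment $s \mapsto (s_1,s_2)$ respects sums and scalar multiples (by the operations lemma on gluing of compatible sections). Because $\nabla^1$ and $\nabla^2$ are linear and the maps $(\tilde{\rho}_i^{\Lambda})^{-1}$, $j_i$, $\mbox{Incl}_{(\cdot)}$ appearing in the definition of $\nabla^{\cup}$ are all linear on the relevant fibres, linearity of $\nabla^{\cup}$ follows on each of the three pieces of $X_1 \cup_f X_2$; one checks the three formulas agree in the limit at points of $i_2(f(Y))$ using the compatibility of $\nabla^1$ and $\nabla^2$ (Definition \ref{compatibility:of:connections:defn}). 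For the Leibnitz rule I would take $h \in C^{\infty}(X_1 \cup_f X_2, \matR)$, write $h = h_1 \cup_f h_2$, use that $hs$ decomposes as $(h_1 s_1) \cup_{(f,\tilde{f})} (h_2 s_2)$, and apply the Leibnitz rule for $\nabla^1$ and $\nabla^2$ separately; the key point is that the section $df$ on $X_1 \cup_f X_2$ restricts via $\tilde{\rho}_1^{\Lambda}, \tilde{\rho}_2^{\Lambda}$ to the sections $d h_1, d h_2$, which is exactly the behavior of the differential under gluing recalled in Section 10.3.3 (the compatibility of $d h_1$ and $d h_2$ at points of $Y$ is what makes the direct-sum term at $i_2(f(Y))$ land in the compatible subspace).

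The smoothness for the functional diffeologies is the main obstacle and the step requiring genuine care. Here I would imitate exactly the argument carried out for the worked example in the preceding subsections: take a plot $q: U \to C^{\infty}(X_1 \cup_f X_2, V_1 \cup_{\tilde{f}} V_2)$ with $U$ connected, decompose $q(u) = q_1(u) \cup_{(f,\tilde{f})} q_2(u)$ and observe that each $q_i$ is a plot of $C^{\infty}(X_i, V_i)$, then test $\nabla^{\cup} \circ q$ against an arbitrary plot $p: U' \to X_1 \cup_f X_2$ (again taken connected, so it lifts to a plot $p_1$ of $X_1$ or $p_2$ of $X_2$). The evaluation map $(u,u') \mapsto (\nabla^{\cup} \circ q)(u)(p(u'))$ must be shown to be a plot of $\Lambda^1(X_1 \cup_f X_2) \otimes (V_1 \cup_{\tilde{f}} V_2)$. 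By the definition of the tensor product pseudo-bundle diffeology it suffices that its projection to $\Lambda^1(X_1 \cup_f X_2)$ be a plot, and here I would invoke Proposition \ref{tilde-rho-1:as:a:subduction:prop} and Proposition \ref{tilde-rho-2:as:a:subduction:prop} (using $\calD_1^{\Omega} = \calD_2^{\Omega}$): the hypothesis $\calD_1^{\Omega} = \calD_2^{\Omega}$ is precisely what guarantees that the pair of plots $(q_1^{\Omega}, q_2^{\Omega})$ arising from $\nabla^1 q_1$ and $\nabla^2 q_2$ can be assembled into a plot of $\Lambda^1(X_1 \cup_f X_2)$, i.e. that their values are compatible and that one factor can always be completed to match the other over the gluing domain.

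The delicate part throughout is the behavior over $i_2(f(Y))$, where the fibre of $\Lambda^1(X_1 \cup_f X_2)$ is the compatible direct sum $\Lambda^1_y(X_1) \oplus_{comp} \Lambda^1_{f(y)}(X_2)$ rather than a single summand; one must check that the two-sided formula for $\nabla^{\cup}$ produces values whose $\tilde{\rho}_1^{\Lambda}$- and $\tilde{\rho}_2^{\Lambda}$-components are compatible, and that the resulting map is continuous across the transition between the piece lying over $i_1(X_1 \setminus Y)$ (or $i_2(X_2 \setminus f(Y))$) and the piece over $i_2(f(Y))$. The compatibility of $\nabla^1$ and $\nabla^2$ is exactly the identity needed to glue the two evaluation functions smoothly at the boundary, so this identity is used in two guises: algebraically to make $\nabla^{\cup}$ well-defined, and analytically to ensure the assembled plot has no discontinuity over $Y$. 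Once these checks are in place, the theorem follows, since all three requirements of Definition \ref{diffeological:connection:defn} will have been verified.
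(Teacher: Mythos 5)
Your proposal is correct and follows essentially the same route as the paper's own argument: well-definedness from the compatibility of $\nabla^1$ and $\nabla^2$, section-valuedness via the Proposition preceding the theorem, linearity and the Leibnitz rule from the unique decomposition $s=s_1\cup_{(f,\tilde{f})}s_2$ together with the behavior of the differential under gluing, and smoothness for the functional diffeologies by the plot-testing argument modeled on the worked example, using $\calD_1^{\Omega}=\calD_2^{\Omega}$ and the subduction properties of $\tilde{\rho}_1^{\Lambda}$, $\tilde{\rho}_2^{\Lambda}$. The paper itself only sketches these steps (deferring details to the cited reference), and your expansion fills in exactly that sketch with the same key lemmas.
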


\subsubsection{The differentials and gluing}

Let $X_1$ and $X_2$ be two diffeological spaces, let $f:X_1\supseteq Y\to X_2$ be a gluing diffeomorphism, and let $h:X_1\cup_f X_2\to\matR$ be a smooth function (for the standard diffeology on $\matR$). 
Then, as in the case of smooth sections of pseudo-bundles, there is a presentation of $h$ in the form $h=h_1\cup_f h_2$, where $h_1\in C^{\infty}(X_1,\matR)$ and $h_2\in C^{\infty}(X_2,\matR)$. 

As we have explained already, the three differentials $dh$, $dh_1$, and $dh_2$, defined originally as elements of $\Omega^1(X_1\cup_f X_2)$, $\Omega^1(X_1)$, and $\Omega^1(X_2)$, are also naturally 
seen as smooth sections of, respectively, $\Lambda^1(X_1\cup_f X_2)$, $\Lambda^1(X_1)$, and $\Lambda^1(X_2)$. Between them, there is the following relation.

\begin{prop}
Let diffeological spaces $X_1$ and $X_2$ and the gluing diffeomorphism $f:X_1\supseteq Y\to X_2$ be such that $\calD_1^{\Omega}=\calD_2^{\Omega}$, and let $h=h_1\cup_f h_2:X_1\cup_f X_2\to\matR$ 
be a smooth function. Then the following is true:
$$dh(x)=\left\{\begin{array}{cl}
(\tilde{\rho}_1^{\Lambda})^{-1}(dh_1(i_1^{-1}(x))) & \mbox{if }x\in i_1(X_1\setminus Y), \\
(\tilde{\rho}_1^{\Lambda}\oplus\tilde{\rho}_2^{\Lambda})^{-1}\left(dh_1(f^{-1}(i_2^{-1}(x)))\oplus dh_2(i_2^{-1}(x))\right) & \mbox{if }x\in i_2(f(Y)), \\
(\tilde{\rho}_2^{\Lambda})^{-1}(dh_2(i_2^{-1}(x))) & \mbox{if }x\in i_2(X_2\setminus f(Y)).
\end{array}\right.$$
\end{prop}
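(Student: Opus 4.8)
The plan is to reduce the claimed pointwise identity to the already-established structural description of $\Lambda^1(X_1\cup_f X_2)$, by first tracking the differential at the level of the spaces of forms $\Omega^1$ and then pushing it down to the fibres. Both sides of the asserted formula are maps $X_1\cup_f X_2\to\Lambda^1(X_1\cup_f X_2)$, so it suffices to check equality at each point $x$; that $dh$ is a genuine smooth section is automatic, since $dh$ is the differential of a smooth function and hence lies in $\Omega^1(X_1\cup_f X_2)$.

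First I would identify $dh$ inside $\Omega^1(X_1\cup_f X_2)$ with the pair $(dh_1,dh_2)$. Recall the isomorphism $\pi^*:\Omega^1(X_1\cup_f X_2)\to\Omega_f^1(X_1)\times_{comp}\Omega^1(X_2)$ from Section 8.1.1, whose two components are the pullbacks $\tilde{i}_1^*$ and $\tilde{i}_2^*$ along the natural maps $\tilde{i}_1,\tilde{i}_2$. Since $h=h_1\cup_f h_2$, the compositions satisfy $h\circ\tilde{i}_1=h_1$ and $h\circ\tilde{i}_2=h_2$ (the equality over the domain of gluing being exactly the compatibility $h_1(y)=h_2(f(y))$ that makes $h_1\cup_f h_2$ well-defined). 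By the naturality of the differential under pullback, $\tilde{i}_1^*(dh)=d(h\circ\tilde{i}_1)=dh_1$ and $\tilde{i}_2^*(dh)=dh_2$, so $\pi^*(dh)=(dh_1,dh_2)$; in particular $dh_1$ and $dh_2$ are compatible forms and $dh_1$ is $f$-invariant.

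Next I would pass from forms to fibres. Since $\Lambda^1$ is a quotient pseudo-bundle with fibre $\Omega^1/\Omega_x^1$, the section $dh$ is $x\mapsto[dh]_x$, the class of the form $dh$ in the fibre over $x$. The maps $\tilde{\rho}_1^{\Lambda}$ and $\tilde{\rho}_2^{\Lambda}$ were defined in Section 8.2.2 as descents of the projections $\rho_1,\rho_2$ of $\Omega^1(X_1)\times_{comp}\Omega^1(X_2)$ onto its two factors, and these projections preserve the vanishing of $1$-forms; consequently $\tilde{\rho}_1^{\Lambda}([dh]_x)$ is the class of $dh_1$ and $\tilde{\rho}_2^{\Lambda}([dh]_x)$ the class of $dh_2$, at the appropriate base points. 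I would then split into the three cases of Theorem \ref{three:piece:diffeology:in:lambda:thm}. For $x\in i_1(X_1\setminus Y)$ that theorem gives a diffeomorphism $\tilde{\rho}_1^{\Lambda}$ of the fibre onto $\Lambda_{i_1^{-1}(x)}^1(X_1)$, under which $[dh]_x\mapsto dh_1(i_1^{-1}(x))$; inverting yields the first line. The case $x\in i_2(X_2\setminus f(Y))$ is symmetric, using $\tilde{\rho}_2^{\Lambda}$. For $x\in i_2(f(Y))$, writing $y=f^{-1}(i_2^{-1}(x))$, the map $\tilde{\rho}_1^{\Lambda}\oplus\tilde{\rho}_2^{\Lambda}$ is a diffeomorphism onto the compatible direct sum and sends $[dh]_x$ to $dh_1(y)\oplus dh_2(f(y))$, the compatibility of the pair following from the previous paragraph together with Proposition \ref{compatible:in:lambda:via:pullback:maps:prop}; inverting gives the middle line.

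The only step requiring real care is the bookkeeping that the descended maps $\tilde{\rho}_i^{\Lambda}$ genuinely carry $[dh]_x$ to the class of $dh_i$: one must check that the identifications of Theorem \ref{fibres:of:lambda:thm} and Theorem \ref{three:piece:diffeology:in:lambda:thm} are compatible with the quotient projections, i.e.\ that passing to classes commutes with the projections $\rho_i$. This is precisely the vanishing-preserving property recorded in Section 8.2.2, so no new argument is needed. The hypotheses that $f$ is a diffeomorphism and that $\calD_1^{\Omega}=\calD_2^{\Omega}$ enter only to guarantee, via Propositions \ref{tilde-rho-1:as:a:subduction:prop} and \ref{tilde-rho-2:as:a:subduction:prop} and Theorem \ref{three:piece:diffeology:in:lambda:thm}, that the relevant restrictions of $\tilde{\rho}_1^{\Lambda}$, $\tilde{\rho}_2^{\Lambda}$, and $\tilde{\rho}_1^{\Lambda}\oplus\tilde{\rho}_2^{\Lambda}$ are invertible on the three pieces, so that the inverses in the formula are well-defined; note in particular that $\calD_1^{\Omega}=\calD_2^{\Omega}$ forces $i^*(\Omega^1(X_1))=(f^*j^*)(\Omega^1(X_2))$, which is the hypothesis Theorem \ref{three:piece:diffeology:in:lambda:thm} requires.
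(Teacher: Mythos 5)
Your proof is correct. The paper itself gives no argument for this proposition (Section 10 defers all its proofs to \cite{connections-pseudobundles}), but your route --- identifying $\pi^*(dh)=(dh_1,dh_2)$ via naturality of $d$ under pullback and the compatibility $h\circ\tilde{i}_1=h_1$, $h\circ i_2=h_2$, then descending through the quotient defining $\Lambda^1$ and invoking Theorem \ref{three:piece:diffeology:in:lambda:thm} (together with the remark that $\calD_1^{\Omega}=\calD_2^{\Omega}$ implies $i^*(\Omega^1(X_1))=(f^*j^*)(\Omega^1(X_2))$) to invert $\tilde{\rho}_1^{\Lambda}$, $\tilde{\rho}_2^{\Lambda}$, and $\tilde{\rho}_1^{\Lambda}\oplus\tilde{\rho}_2^{\Lambda}$ on the three pieces --- is exactly the machinery the paper assembles for this purpose, and every step is justified.
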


It is worth noting that for points outside of the domain of gluing we have the following expected identities:
$$\tilde{\rho}_1^{\Lambda}(dh(i_1(x)))=dh_1(x)\,\mbox{ for all }x\in X_1\setminus Y\,\,\mbox{ and }\,\,\tilde{\rho}_2^{\Lambda}(dh(i_2(x)))=dh_2(x)\,\mbox{ for all }x\in X_2\setminus f(Y).$$

\subsection{The operations on diffeological connections}

For usual connections on smooth vector bundles there are standard ways of obtaining induced connections on direct sums, tensor products, and dual bundles. We now comment on how this carries over to 
the context of diffeological connections. For direct sums and tensor products the result is quite analogous to the standard case, although, since we avoid using local bases (diffeological vector pseudo-bundles 
may easily not have them) and therefore do not make recourse to the local matrix of $1$-forms, it is achieved a bit differently. For dual pseudo-bundles we do not claim any definite answer, limiting ourselves 
to pointing out the potential differences; these, however, regard the methods and do not necessarily preclude reaching the same conclusion. This is a question that we leave in the open.

\subsubsection{The direct sum}\label{direct:sum:of:connections:secn}

Let $\pi_1:V_1\to X$ and $\pi_2:V_2\to X$ be two diffeological vector pseudo-bundles over the same base space $X$; consider their direct sum, the pseudo-bundle 
$$\pi_1\oplus\pi_2:V_1\oplus V_2\to X.$$ Denote by $\mbox{pr}^{V_1}:V_1\oplus V_2\to V_1$ and $\mbox{pr}^{V_2}:V_1\oplus V_2\to V_2$ the obvious projections of the direct sum onto its summands 
(on each fibre these are the standard projections associated to the decomposition of a vector space into a direct sum). Such projections are always smooth, by the definition of the direct sum diffeology. 
Let
$$\mbox{Incl}_{V_1}:V_1\cong V_1\oplus\{0\}\hookrightarrow V_1\oplus V_2,\,\,\,\mbox{Incl}_{V_2}:V_2\cong\{0\}\oplus V_2\hookrightarrow V_1\oplus V_2,$$ and let 
$\nabla^1:C^{\infty}(X,V_1)\to C^{\infty}(X,\Lambda^1(X)\otimes V_1)$ be a connection on $V_1$, and let $\nabla^2:C^{\infty}(X,V_2)\to C^{\infty}(X,\Lambda^1(X)\otimes V_2)$ be a 
connection on $V_2$. 

We define the following connection $\nabla^1\oplus\nabla^2:C^{\infty}(X,V_1\oplus V_2)\to C^{\infty}(X,\Lambda^1(X)\otimes(V_1\oplus V_2))$ on $V_1\oplus V_2$. Let $s\in C^{\infty}(X,V_1\oplus V_2)$. 
Denote
$$s_1:=\mbox{pr}^{V_1}\circ s\,\,\,\mbox{ and }\,\,\,s_2:=\mbox{pr}^{V_2}\circ s,$$ and set 
$$(\nabla^1\oplus\nabla^2)s=(\mbox{Id}_{\Lambda^1(X)}\otimes\mbox{Incl}_{V_1})\circ(\nabla^1s_1)\oplus(\mbox{Id}_{\Lambda^1(X)}\otimes\mbox{Incl}_{V_2})\circ(\nabla^2s_2).$$
Although the formal description is different from the standard smooth case, the essence of the construction is the same.

\subsubsection{The tensor product}

This is analogous to the case of the direct sum. Let $\pi_1:V_1\to X$ and $\pi_2:V_2\to X$ be two diffeological vector pseudo-bundles, and consider $\pi_1\otimes\pi_2:V_1\otimes V_2\to X$.
Let $\nabla^1:C^{\infty}(X,V_1)\to C^{\infty}(X,\Lambda^1(X)\otimes V_1)$ be a connection on $V_1$, and let $\nabla^2:C^{\infty}(X,V_2)\to C^{\infty}(X,\Lambda^1(X)\otimes V_2)$ be a connection on 
$V_2$; the corresponding connection $\nabla^{\otimes}:C^{\infty}(X,V_1\otimes V_2)\to C^{\infty}(X,\Lambda^1(X)\otimes(V_1\otimes V_2))$ can be defined in a way that mimics the standard construction, 
that is, by setting 
$$\nabla^{\otimes}:=\nabla^1\otimes\mbox{Id}_{C^{\infty}(X,V_2)}+\mbox{Id}_{C^{\infty}(X,V_1)}\otimes\nabla^2.$$

\subsubsection{The dual pseudo-bundle} 

Let us now discuss the possibility of carrying over the standard construction of the induced connection on the dual bundle to the diffeological context. Let $\pi:V\to X$ be a diffeological vector pseudo-bundle, 
endowed with a connection $\nabla$. The standard construction requires a choice of a local basis $\{s_1,\ldots,s_n\}$, so we must assume that there exists one. This is the first main difference, since 
diffeological pseudo-bundles are not locally trivial, so they are not required to admit any.

Now, as we have seen in the case of $(\Lambda^1(X))^*$, if $V$ admits a pseudo-metric (also not guaranteed in general) and has only standard fibres, then $V\cong V^*$ via the corresponding pairing map. 
Thus, of course, there is the obvious dual connection. 

Finally, if $V$, admitting a pseudo-metric, has some non-standard fibres then the pairing map $\Phi_g$ (where $g$ is the pseudo-metric chosen) is still a subduction. Suppose that $V$ admits a (local) basis 
$s_1,\ldots,s_n$ of smooth sections, hence $\Phi_g\circ s_1,\ldots,\Phi_g\circ s_n$ is a local generating set. This might be used to define the dual connection via the standard rule, with the issue being whether 
the resulting purported connection is well-defined (we do not follow through on this, as we are only going to consider a kind of dual connection in the case of $\Lambda^1(X)$ with standard fibres).

\subsection{Compatibility with pseudo-metrics and gluing}

The notion of compatibility of a given diffeological connection $\nabla$ on a pseudo-bundle $\pi:V\to X$ with a given pseudo-metric $g$ on $V$ mimics the standard one.

\begin{defn}\label{connection:compatible:with:pseudo-metric:defn}
Let $\pi:V\to X$ be a diffeological vector pseudo-bundle with finite-dimensional fibres, let $g$ be a pseudo-metric on $V$, and let $\nabla$ be a diffeological connection on this pseudo-bundle. The 
connection $\nabla$ is said to be \textbf{compatible with the pseudo-metric $g$} if for every two smooth sections $s,t:X\to V$ we have that
$$d(g(s,t))=g(\nabla s,t)+g(s,\nabla t),$$ where for every $1$-form $\omega\in\Lambda^1(X)$ we set by definition $g(\omega\otimes s,t)=g(s,\omega\otimes t):=\omega\cdot g(s,t)$.
\end{defn}

Let now $\pi_1:V_1\to X_1$ and $\pi_2:V_2\to X_2$ be two diffeological vector pseudo-bundles, and let $(\tilde{f},f)$ be a gluing between them. Suppose that these pseudo-bundles can be endowed with 
compatible pseudo-metrics $g_1$ and $g_2$ respectively, and that they also admit connections $\nabla^1$ and $\nabla^2$ that satisfy the compatibility condition relative to the gluing along $(\tilde{f},f)$.
  
Recall from Section 5 that, given a choice of $g_1$ and $g_2$, the pseudo-bundle $V_1\cup_{\tilde{f}}V_2$ carries the induced pseudo-metric $\tilde{g}$. Assume now that the gluing maps $\tilde{f}$ and 
$f$ are such that, given a choice of $\nabla^1$ and $\nabla^2$, the induced connection $\nabla^{\cup}$ is well-defined, \emph{i.e.}, both maps are diffeomorphisms and $f$ is such that 
$\calD_1^{\Omega}=\calD_2^{\Omega}$. Then it is natural to ask whether the assumption that $\nabla^1$ and $\nabla^2$ are compatible with, respectively, $g_1$ and $g_2$ is sufficient to ensure that 
$\nabla^{\cup}$ is compatible with $\tilde{g}$; it turns out that this is the case.

\begin{thm}\label{induced:connection:is:compatible:with:induced:pseudo-metric:thm}
Let $\pi_1:V_1\to X_1$ and $\pi_2:V_2\to X_2$ be two diffeological vector pseudo-bundles, and let $(\tilde{f},f)$ be a gluing between them such that both $\tilde{f}$ and $f$ are diffeomorphisms, and $f$ is 
such that $\calD_1^{\Omega}=\calD_2^{\Omega}$. Assume that $V_1$ admits a pseudo-metric $g_1$ and a connection $\nabla^1$ compatible with $g_1$, and likewise, that $V_2$ admits a pseudo-metric 
$g_2$ and a connection $\nabla^2$ compatible with $g_2$. Assume finally that $g_1$ is compatible with $g_2$, and that $\nabla^1$ is compatible with $\nabla^2$ (both in terms of the gluing along 
$(\tilde{f},f)$). Then the induced connection $\nabla^{\cup}$ is compatible with the induced pseudo-metric $\tilde{g}$.
\end{thm}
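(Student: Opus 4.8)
The plan is to verify the defining identity of Definition~\ref{connection:compatible:with:pseudo-metric:defn}, namely $d(\tilde{g}(s,t)) = \tilde{g}(\nabla^{\cup}s,t) + \tilde{g}(s,\nabla^{\cup}t)$, for arbitrary $s,t \in C^{\infty}(X_1\cup_f X_2, V_1\cup_{\tilde f}V_2)$. Both sides are sections of $\Lambda^1(X_1\cup_f X_2)$, so it suffices to check the equality fibrewise, and the natural way to organize this is via the three-region decomposition of $\Lambda^1(X_1\cup_f X_2)$ from Theorem~\ref{three:piece:diffeology:in:lambda:thm}: over $i_1(X_1\setminus Y)$, over $i_2(X_2\setminus f(Y))$, and over the gluing locus $i_2(f(Y))$. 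Since $f$ and $\tilde f$ are diffeomorphisms, I would first write $s = s_1\cup_{(f,\tilde f)}s_2$ and $t = t_1\cup_{(f,\tilde f)}t_2$ with the $(s_i,t_i)$ uniquely determined and pairwise compatible, and record that $\tilde g(s,t) = g_1(s_1,t_1)\cup_f g_2(s_2,t_2)$ as a function on $X_1\cup_f X_2$; that this glued function is well defined (the two pieces agree over $Y$) is exactly the content of the compatibility of $g_1$ and $g_2$ together with the compatibility of the sections.

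Over the two regions away from the gluing locus the verification is immediate. There $\tilde g$ coincides with $g_1\circ(j_1^{-1}\otimes j_1^{-1})$ (resp. $g_2\circ(j_2^{-1}\otimes j_2^{-1})$) by Theorem~\ref{glued:pseudometric:noncommutative:thm}, $\nabla^{\cup}$ coincides with $(\tilde\rho_1^{\Lambda})^{-1}\otimes j_1$ applied to $\nabla^1 s_1$ (resp. the $\nabla^2$ analogue) by the construction in Section~\ref{induced:connection:on:glued:secn}, and $d(\tilde g(s,t))$ reduces, through the differential-gluing formula of Section~10.3.3, to $(\tilde\rho_i^{\Lambda})^{-1}$ applied to $d(g_i(s_i,t_i))$. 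So on these regions the required identity is precisely the assumed compatibility of $\nabla^i$ with $g_i$, transported by the diffeomorphism $\tilde\rho_i^{\Lambda}$.

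The real work is at the gluing locus $x = i_2(f(y))$, where the fibre of $\Lambda^1(X_1\cup_f X_2)$ is $\Lambda^1_y(X_1)\oplus_{comp}\Lambda^1_{f(y)}(X_2)$ and every $1$-form involved acquires both an $X_1$-component and an $X_2$-component. I would apply the differential-gluing formula to $d(\tilde g(s,t)) = d(g_1(s_1,t_1)\cup_f g_2(s_2,t_2))$, whose value at $x$ is $(\tilde\rho_1^{\Lambda}\oplus\tilde\rho_2^{\Lambda})^{-1}$ applied to $d(g_1(s_1,t_1))(y)\oplus d(g_2(s_2,t_2))(f(y))$, and compare, component by component in the direct sum, with the value of $\tilde g(\nabla^{\cup}s,t)+\tilde g(s,\nabla^{\cup}t)$ read off from the explicit formula for $\nabla^{\cup}$. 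Since $(\tilde\rho_1^{\Lambda}\oplus\tilde\rho_2^{\Lambda})^{-1}$ is the same common identification on both sides, it is enough to match the two components before applying it. The $X_2$-component matches directly: it is $d(g_2(s_2,t_2))(f(y))$ on the left and $g_2(\nabla^2 s_2,t_2)+g_2(s_2,\nabla^2 t_2)$ at $f(y)$ on the right, equal by compatibility of $\nabla^2$ with $g_2$. For the $X_1$-component the vector parts coming from $\nabla^1 s_1$ and from $t$ are transported into $V_2$ by $j_2\circ\tilde f$ and then paired by $\tilde g = g_2$; here I would invoke the pseudo-metric compatibility $g_2(\tilde f(v),\tilde f(w)) = g_1(v,w)$ (Definition~\ref{compatible:pseudo-metrics:on:two:pseudo-bundles:defn}), together with $t_2(f(y)) = \tilde f(t_1(y))$, to convert the pairing back to $g_1(\nabla^1 s_1, t_1)$; the analogous move on $\tilde g(s,\nabla^{\cup}t)$ then gives $g_1(s_1,\nabla^1 t_1)$, and compatibility of $\nabla^1$ with $g_1$ closes this component.

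The main obstacle I anticipate is precisely this gluing-locus bookkeeping: one must keep careful track of the convention $g(\omega\otimes v, w) = \omega\cdot g(v,w)$ as it interacts with the inclusions $\mbox{Incl}$ and the transport $j_2\circ\tilde f$ in the definition of $\nabla^{\cup}$, and verify that both sides genuinely land in the compatible summand $\oplus_{comp}$ rather than the full direct sum. The fact that they do, and hence that the $(\tilde\rho_1^{\Lambda}\oplus\tilde\rho_2^{\Lambda})^{-1}$-identification is applicable, rests on the compatibility of the connections (Definition~\ref{compatibility:of:connections:defn}), which is already what guarantees $\nabla^{\cup}$ is well defined (Theorem~\ref{induced:operator:on:glued:pseudo-bundle:is:connection:thm}); I would cite it here rather than reprove it. Everything else is a routine, if lengthy, fibrewise computation that I would not grind through in detail.
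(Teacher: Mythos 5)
Your proposal is correct and takes essentially the same approach as the paper: the paper's own proof is just the remark that the claim follows by direct computation from the definition of $\nabla^{\cup}$ together with the description of how differentials behave under gluing (the proposition of Section 10.3.3), with details deferred to the cited reference, and your three-region fibrewise verification is exactly that computation. The two key moves you single out---writing $\tilde{g}(s,t)=g_1(s_1,t_1)\cup_f g_2(s_2,t_2)$ and, over the gluing locus, using $t_2(f(y))=\tilde{f}(t_1(y))$ together with $g_2(f(y))(\tilde{f}(\cdot),\tilde{f}(\cdot))=g_1(y)(\cdot,\cdot)$ to reduce each component to the compatibility of $\nabla^i$ with $g_i$---are precisely what that computation requires.
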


The proof of this statement is quite straightforward and, in addition to the definition of the induced connection, uses the above description of the behavior of the differential under gluing.

\section{Diffeological Levi-Civita connections on $X$}

In this section we consider diffeological Levi-Civita connections. There are two sorts of them, one is defined on $(\Lambda^1(X))^*$, the other on $\Lambda^1(X)$; the two versions are related by the 
pairing map diffeomorphism. The notion itself mimics the standard one. All statements appearing below were proved in \cite{connections-LC}.

\subsection{Levi-Civita connections on $(\Lambda^1(X))^*$}

Let $X$ be a diffeological space such that $(\Lambda^1(X))^*$ admits a pseudo-metric $g^{\Lambda^*}$.

\begin{defn}\label{lie:bracket:on:dual:lambda:defn}
Let $f:X\to\matR$ be a smooth function, and let $t,t_1,t_2\in C^{\infty}(X,(\Lambda^1(X))^*)$. The \textbf{action of $t$ on $f$} is then defined by 
$$t(f):X\ni x\mapsto t(df)=t([f])\in C^{\infty}(X,\matR).$$ The \textbf{Lie bracket} $[t_1,t_2]\in C^{\infty}(X,(\Lambda^1(X))^*)$ is defined by 
$$[t_1,t_2](s)=t_1(t_2(s))-t_2(t_1(s))\mbox{ for any }s\in\Lambda^1(X).$$ In particular, 
$$[t_1,t_2](f)=t_1(t_2(f))-t_2(t_1(f))\in C^{\infty}(X,\matR).$$
\end{defn}

Observe that this Lie bracket is antisymmetric (this is obvious from the formula), bilinear, and satisfies the Jacobi identity (all of these hold for the same reason that they do in the standard case). We can 
now define the torsion, whose definition is fully analogous to the standard one.

\begin{defn}\label{torsion:of:a:connection:defn}
Let $X$ be a diffeological space, and let $\nabla$ be a connection on $(\Lambda^1(X))^*$. The \textbf{torsion} $T$ of $\nabla$ on $(\Lambda^1(X))^*$ is defined by setting, for all 
$t_1,t_2\in C^{\infty}(X,(\Lambda^1(X))^*)$,
$$T(t_1,t_2)=\nabla_{t_1}t_2-\nabla_{t_2}t_1-[t_1,t_2]\in C^{\infty}(X,(\Lambda^1(X))^*).$$ Since $T$ is clearly bilinear, it is a map 
$T:C^{\infty}(X,(\Lambda^1(X))^*)\otimes C^{\infty}(X,(\Lambda^1(X))^*)\to C^{\infty}(X,(\Lambda^1(X))^*)$. The connection $\nabla$ is called \textbf{symmetric} if $T$ is the zero tensor:
$$\nabla_{t_1}t_2-\nabla_{t_2}t_1=[t_1,t_2] \mbox{ for all }t_1,t_2\in C^{\infty}(X,(\Lambda^1(X))^*).$$
\end{defn}

The definition of the Levi-Civita connection is then identical to the standard one.

\begin{defn}
Let $X$ be a diffeological space such that $(\Lambda^1(X))^*$ is endowed with a pseudo-metric $g^{\Lambda^*}$, and let $\nabla$ be a connection on $(\Lambda^1(X))^*$. $\nabla$ is called a 
\textbf{Levi-Civita} connection if it is symmetric and compatible with the pseudo-metric $g^{\Lambda^*}$. 
\end{defn}

Any $((\Lambda^1(X))^*,g^{\Lambda^*})$ admits at most one Levi-Civita connection, for reasons that are essentially the same as in the standard case. Indeed, the standard formula, which also yields 
the uniqueness of the connection, that is,
\begin{flushleft}
$t_1(g^{\Lambda^*}(t_2,t_3))+t_2(g^{\Lambda^*}(t_3,t_1))-t_3(g^{\Lambda^*}(t_1,t_2))+$
\end{flushleft}
\begin{flushright}
$+g^{\Lambda^*}([t_1,t_2],t_3)+g^{\Lambda^*}([t_3,t_1],t_2)-g^{\Lambda^*}([t_2,t_3],t_1)=2g^{\Lambda^*}(\nabla_{t_1}t_2,t_3)$,
\end{flushright}
with $t_1,t_2,t_3\in C^{\infty}(X,(\Lambda^1(X))^*)$ arbitrary, holds in the present context as well. On the other hand, we cannot make the same claim regarding the existence (notice that in general, 
we have avoided the existence questions for connections, or even pseudo-mterics).

\subsection{Pushforward and pullback connections}

As we have seen in Section 9, if we assume that $\Lambda^1(X)$ admits a pseudo-metric $g^{\Lambda}$ (so in particular, it has only finite-dimensional fibres) then the corresponding pairing 
$\Phi_{g^{\Lambda}}$ is a diffeomorphism $\Lambda^1(X)\to(\Lambda^1(X))^*$. Therefore all constructions carry over from one to the other, in particular, a connection $\nabla$ on $\Lambda^1(X)$ yields a 
connection $\nabla^*$ on $(\Lambda^1(X))^*$, and \emph{vice versa} a connection $\nabla^*$ on $(\Lambda^1(X))^*$ induces a specific connection $\nabla$ on $\Lambda^1(X)$. We say that $\nabla^*$ is 
the \textbf{pushforward of the connection $\nabla$ by the pairing map $\Phi_{g^{\Lambda}}$}, and that $\nabla$ is the \textbf{pullback of $\nabla^*$}. 

The explicit relation between the two connections is as follows:
$$\nabla^*t=(\mbox{Id}_{\Lambda^1(X)}\otimes\Phi_{g^{\Lambda}})\circ\nabla(\Phi_{g^{\Lambda}}^{-1}\circ t)\,\,\mbox{ for any }\,\,t\in C^{\infty}(X,(\Lambda^1(X))^*),$$
$$\nabla s=(\mbox{Id}_{\Lambda^1(X)}\otimes\Phi_{g^{\Lambda}}^{-1})\circ\nabla^*(\Phi_{g^{\Lambda}}\circ s)\,\,\mbox{ for any }\,\,s\in C^{\infty}(X,\Lambda^1(X)).$$ This identification trivially preserves 
covariant derivatives. It is also easy to verify that it preserves compatibility with pseudo-metrics, in the sense that if $\nabla$ is compatible with the given $g^{\Lambda}$ then $\nabla^*$ is compatible with the 
dual pseudo-metric $(g^{\Lambda})^*$, and \emph{vice versa}.

\subsection{Levi-Civita connections on $\Lambda^1(X)$}

Roughly speaking, a Levi-Civita connection on $\Lambda^1(X)$ is the pullback of the Levi-Civita connection on $(\Lambda^1(X))^*$; the pullback connection is the Levi-Civita one for the pullback 
pseudo-metric. Its definition can also be stated separately, as that of a symmetric connection compatible with the given pseudo-metric $g^{\Lambda}$. Since we have already considered covariant derivatives 
along sections of $\Lambda^1(X)$, as well as the compatibility with pseudo-metrics, it now suffices to define the Lie bracket. This is also done through the pairing map, and in an obvious way:
$$[s_1,s_2]=\Phi_{g^{\Lambda}}^{-1}\circ[\Phi_{g^{\Lambda}}\circ s_1,\Phi_{g^{\Lambda}}\circ s_2],$$ using the already-given definition of the Lie bracket on $(\Lambda^1(X))^*$. A connection $\nabla$ on 
$(\Lambda^1(X),g^{\Lambda})$, where $g^{\Lambda}$ is a pseudo-metric is a \textbf{Levi-Civita connection} if it is compatible with $g^{\Lambda}$ and is symmetric,
$$\nabla_{s_1}s_2-\nabla_{s_2}s_1=[s_1,s_2].$$

\subsection{Compatible connections on $\Lambda^1(X_1)$ and $\Lambda^1(X_2)$, and the induced connection on $\Lambda^1(X_1\cup_f X_2)$}

We now turn to considering the interactions between the connections on $\Lambda^1(X_1)$ and $\Lambda^1(X_2)$, and those on $\Lambda^1(X_1\cup_f X_2)$. More precisely, we show that certain pairs 
of connections on $\Lambda^1(X_1)$ and $\Lambda^1(X_2)$ induce a well-defined connection on $\Lambda^1(X_1\cup_f X_2)$; these pairs are determined by an appropriate compatibility notion. In 
particular, the Levi-Civita connections on $\Lambda^1(X_1)$ and $\Lambda^1(X_2)$ defined with respect to compatible pseudo-metrics determine the Levi-Civita connection on $\Lambda^1(X_1\cup_f X_2)$.

\subsubsection{The compatibility notion for connections on $\Lambda^1(X_1)$ and $\Lambda^1(X_2)$}

We now define the compatibility for connections on $\Lambda^1(X_1)$ and $\Lambda^1(X_2)$. Two sections $s_1\in C^{\infty}(X_1,\Lambda^1(X_1))$ and $s_2\in C^{\infty}(X_2,\Lambda^1(X_2))$ are 
\textbf{compatible} if for all $y\in Y$ the images $s_1(y)$ and $s_2(f(y))$ are compatible elements of $\Lambda_y^1(X_1)$ and $\Lambda_{f(y)}^1(X_2)$ respectively. 

\begin{defn}\label{compatibility:of:connections:on:lambda:defn}
Let $\nabla^1$ be a connection on $\Lambda^1(X_1)$ and let $\nabla^2$ be a connection on $\Lambda^1(X_2)$. We say that $\nabla^1$ and $\nabla^2$ are \textbf{compatible} if for every 
$y\in Y$ and for every two compatible sections $s_1\in C^{\infty}(X_1,(\Lambda^1(X_1))^*)$ and $s_2\in C^{\infty}(X_2,(\Lambda^1(X_2))^*)$ we have that 
$$\left((i_{\Lambda}^*\otimes i_{\Lambda}^*)\circ(\nabla^1s_1)\right)(y)=\left(((f_{\Lambda}^*j_{\Lambda}^*)\otimes(f_{\Lambda}^*j_{\Lambda}^*))\circ(\nabla^2s_2)\right)(f(y)).$$
\end{defn}

The aim of this definition is to ensure that for every $y\in Y$ and for every pair of compatible sections $s_1\in C^{\infty}(X_1,\Lambda^1(X_1))$, $s_2\in C^{\infty}(X_2,\Lambda^1(X_2))$ the sum 
$(\nabla^1s_1)(y)\oplus(\nabla^2s_2)(f(y))$, which in general is an element of 
$$\left(\Lambda_y^1(X_1)\otimes\Lambda_y^1(X_1)\right)\,\oplus\,\left(\Lambda_{f(y)}^1(X_2)\otimes\Lambda_{f(y)}^1(X_2)\right),$$ be, in a natural sense, an element of 
$$\left(\Lambda_y^1(X_1)\oplus_{comp}\Lambda_{f(y)}^1(X_2)\right)\otimes\left(\Lambda_y^1(X_1)\oplus_{comp}\Lambda_{f(y)}^1(X_2)\right).$$

\subsubsection{The connection on $\Lambda^1(X_1\cup_f X_2)$ induced by two compatible ones}

Two compatible connections on $\Lambda^1(X_1)$ and $\Lambda^1(X_2)$ naturally induce a connection on $\Lambda^1(X_1\cup_f X_2)$. To describe this induced connection, consider first the following. 
Let $s\in C^{\infty}(X_1\cup_f X_2,\Lambda^1(X_1\cup_f X_2))$. Define
$$s_1:=\tilde{\rho}_1^{\Lambda}\circ s\circ \tilde{i}_1\in C^{\infty}(X_1,\Lambda^1(X_1)),\,\,\,s_2:=\tilde{\rho}_2^{\Lambda}\circ s\circ i_2\in C^{\infty}(X_2,\Lambda^1(X_2)).$$

\begin{defn}\label{induced:connection:on:lambda:defn}
Let $X_1$ and $X_2$ be two diffeological spaces, let $f:X_1\supseteq Y\to X_2$ be a map that is a diffeomorphism of its domain with its image such that $\calD_1^{\Omega}=\calD_2^{\Omega}$, and let 
$\nabla^1$ and $\nabla^2$ be connections on $\Lambda^1(X_1)$ and $\Lambda^1(X_2)$ respectively, compatible in the sense of Definition \ref{compatibility:of:connections:on:lambda:defn}. Let 
$s:X_1\cup_f X_2\to(\Lambda^1(X_1\cup_f X_2))^*$ be a smooth section. The \textbf{induced connection $\nabla_{\cup}$} on $\Lambda^1(X_1\cup_f X_2)$ is defined by setting
\begin{flushleft}
$\left(\nabla_{\cup}s\right)(x)=$
\end{flushleft}
$$\left\{\begin{array}{cl}
((\tilde{\rho}_1^{\Lambda})^{-1}\otimes(\tilde{\rho}_1^{\Lambda})^{-1})((\nabla^1s_1)(i_1^{-1}(x))) & \mbox{for }x\in i_1(X_1\setminus Y), \\
((\tilde{\rho}_2^{\Lambda})^{-1}\otimes(\tilde{\rho}_2^{\Lambda})^{-1})((\nabla^2s_2)(i_2^{-1}(x))) & \mbox{for }x\in i_2(X_2\setminus f(Y)), \\
((\tilde{\rho}_1^{\Lambda}\oplus\tilde{\rho}_2^{\Lambda})^{-1}\otimes(\tilde{\rho}_1^{\Lambda}\oplus\tilde{\rho}_2^{\Lambda})^{-1})\left((\nabla^1s_1)(f^{-1}(i_2^{-1}(x)))\oplus(\nabla^2s_2)(i_2^{-1}(x))\right) 
& \mbox{for }x\in i_2(f(Y)).
\end{array}\right.$$
\end{defn}

The compatibility notion for connections (Definition \ref{compatibility:of:connections:on:lambda:defn}) ensures that $\nabla^{\cup}$ is well-defined, in the sense that 
$(\nabla^1s_1)(f^{-1}(i_2^{-1}(x)))\oplus(\nabla^2s_2)(i_2^{-1}(x))$ always belongs to the range of 
$(\tilde{\rho}_1^{\Lambda}\oplus\tilde{\rho}_2^{\Lambda})\otimes(\tilde{\rho}_1^{\Lambda}\oplus\tilde{\rho}_2^{\Lambda})$. Moreover, the following is true.

\begin{thm}
The operator $\nabla^{\cup}$ is a diffeological connection on $\Lambda^1(X_1\cup_f X_2)$.
\end{thm}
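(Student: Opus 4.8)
The plan is to verify the three defining properties of a diffeological connection (Definition \ref{diffeological:connection:defn}) for the operator $\nabla^{\cup}$: that it maps into the correct space of sections, that it is linear, and that it satisfies the Leibnitz rule; and then, separately, that it is smooth for the functional diffeologies. The structure of the argument should closely mirror the model computation carried out in Section 10.1 for the wedge of two lines, but now phrased abstractly for arbitrary compatible connections $\nabla^1$ and $\nabla^2$. I would assume throughout, as stated in Definition \ref{induced:connection:on:lambda:defn}, that $f$ and $\tilde{f}$ are diffeomorphisms and $\calD_1^{\Omega}=\calD_2^{\Omega}$, so that Theorems \ref{three:piece:diffeology:in:lambda:thm} and \ref{diffeology:lambda:in:terms:of:rho:thm} apply, the maps $\tilde{\rho}_1^{\Lambda},\tilde{\rho}_2^{\Lambda}$ are subductions (Propositions \ref{tilde-rho-1:as:a:subduction:prop}, \ref{tilde-rho-2:as:a:subduction:prop}), and the sections $s_1,s_2$ are uniquely and smoothly recovered from $s$.

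First I would observe that $\nabla^{\cup}s$ is well-defined as a set map: over $i_1(X_1\setminus Y)$ and $i_2(X_2\setminus f(Y))$ the formula simply transports $\nabla^1 s_1$ or $\nabla^2 s_2$ through the fibrewise-invertible $\tilde{\rho}_i^{\Lambda}$ (invertibility there follows from Theorem \ref{three:piece:diffeology:in:lambda:thm}), while over $i_2(f(Y))$ the compatibility of $\nabla^1,\nabla^2$ (Definition \ref{compatibility:of:connections:on:lambda:defn}) is exactly the condition guaranteeing that $(\nabla^1 s_1)(f^{-1}(i_2^{-1}(x)))\oplus(\nabla^2 s_2)(i_2^{-1}(x))$ lands in the range of $(\tilde{\rho}_1^{\Lambda}\oplus\tilde{\rho}_2^{\Lambda})\otimes(\tilde{\rho}_1^{\Lambda}\oplus\tilde{\rho}_2^{\Lambda})$, hence can be pulled back. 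Linearity is immediate since all the maps involved ($\tilde{\rho}_i^{\Lambda}$, the inclusions, the inverses) are linear on fibres and $s\mapsto(s_1,s_2)$ is additive. The Leibnitz rule reduces to the fibrewise Leibnitz rules for $\nabla^1$ and $\nabla^2$ together with the behavior of the differential under gluing established in Section 10.3.3: writing a smooth $h=h_1\cup_f h_2$, the Proposition there gives $dh$ in terms of $dh_1,dh_2$ through precisely the same $(\tilde{\rho}_i^{\Lambda})^{-1}$ maps, so the identity $\nabla^{\cup}(hs)=dh\otimes s+h\,\nabla^{\cup}s$ follows by checking it in each of the three regions and matching terms.

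The main obstacle, as always in this circle of results, will be smoothness — showing that $\nabla^{\cup}$ actually maps $C^{\infty}(X_1\cup_f X_2,\Lambda^1(X_1\cup_f X_2))$ into $C^{\infty}(X_1\cup_f X_2,\Lambda^1(X_1\cup_f X_2)\otimes\Lambda^1(X_1\cup_f X_2))$, and then that it is smooth for the functional diffeologies. For the first part I would take an arbitrary plot $p:U\to X_1\cup_f X_2$ with $U$ connected, use the gluing-diffeology characterization to lift it to a plot $p_1$ of $X_1$ or $p_2$ of $X_2$, and then show $(\nabla^{\cup}s)\circ p$ is a plot of the tensor-product pseudo-bundle. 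By the definition of the tensor product diffeology it suffices to check that the $\Lambda^1(X_1\cup_f X_2)$-component is a plot, and this is where Theorem \ref{diffeology:lambda:in:terms:of:rho:thm} is decisive: the coarsest-diffeology characterization via $\tilde{\rho}_1^{\Lambda},\tilde{\rho}_2^{\Lambda}$ means it is enough to verify that the $\tilde{\rho}_i^{\Lambda}$-images are plots of $\Lambda^1(X_i)$, which they are because $\nabla^i s_i$ are smooth sections to begin with. The delicate point is matching the two descriptions across the boundary $p_1^{-1}(Y)$, exactly as in the explicit example where one exhibits a compatible pair $(q_1,q_2)$ of plots of $\Omega^1(X_1),\Omega^1(X_2)$; here the assumption $\calD_1^{\Omega}=\calD_2^{\Omega}$ is what supplies the needed companion plot.

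Finally, for smoothness with respect to the functional diffeologies I would take a plot $q:U\to C^{\infty}(X_1\cup_f X_2,\Lambda^1(X_1\cup_f X_2))$, decompose each $q(u)$ as $q_1(u)\cup_{(f,\tilde{f})}q_2(u)$ (with each $q_i$ a plot of $C^{\infty}(X_i,\Lambda^1(X_i))$, using the results of Section 6 on the map $\mathcal{S}$), and then examine the evaluation map $(u,u')\mapsto(\nabla^{\cup}\circ q)(u)(p(u'))$ against an arbitrary plot $p$ of the base. This is the exact analogue of the final computation of Section 10.1, and it reduces — again via Theorem \ref{diffeology:lambda:in:terms:of:rho:thm} and the tensor-product diffeology — to the smoothness of the evaluation maps for $\nabla^1$ and $\nabla^2$, which hold because $\nabla^1,\nabla^2$ are connections. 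I expect the whole smoothness argument to be the genuinely technical part, but conceptually routine given the subduction properties of $\tilde{\rho}_1^{\Lambda},\tilde{\rho}_2^{\Lambda}$ and the coarsest-diffeology description of $\Lambda^1(X_1\cup_f X_2)$; everything else is formal.
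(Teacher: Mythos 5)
Your overall skeleton is the paper's own: the manuscript records only that compatibility of $\nabla^1$ and $\nabla^2$ makes $\nabla^{\cup}$ well-defined (deferring the full verification to the cited companion paper on Levi-Civita connections), but the strategy it lays out — well-definedness from Definition \ref{compatibility:of:connections:on:lambda:defn}, linearity and the Leibnitz rule via the behavior of differentials under gluing, then the two smoothness checks carried out against plots exactly as in the worked example of Section 10.1 and the outline of Section 10.3.2, using Theorem \ref{diffeology:lambda:in:terms:of:rho:thm} and the subduction properties of $\tilde{\rho}_1^{\Lambda}$, $\tilde{\rho}_2^{\Lambda}$ — is precisely what you propose.

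There is, however, one concrete misstep, in your final paragraph. You decompose a plot $q$ of $C^{\infty}(X_1\cup_f X_2,\Lambda^1(X_1\cup_f X_2))$ by writing $q(u)=q_1(u)\cup_{(f,\tilde{f})}q_2(u)$ and appealing to the map $\mathcal{S}$ of Section 6. That machinery exists only for pseudo-bundles of the form $V_1\cup_{\tilde{f}}V_2$, i.e., those obtained by gluing, and the paper is emphatic (beginning of Section 8, and again in Section 12.2) that $\Lambda^1(X_1\cup_f X_2)$ is \emph{not} the result of any gluing of $\Lambda^1(X_1)$ to $\Lambda^1(X_2)$: over a point $x\in i_2(f(Y))$ a section takes its value in $\Lambda_y^1(X_1)\oplus_{comp}\Lambda_{f(y)}^1(X_2)$, which is not a fibre of either factor, so no expression $q_1(u)\cup_{(f,\tilde{f})}q_2(u)$ is meaningful and no map $\mathcal{S}$ is available here. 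The step as written would fail. The repair is the decomposition you yourself use (correctly) in your second paragraph, which is how the paper defines $s_1,s_2$: set $q_1(u):=\tilde{\rho}_1^{\Lambda}\circ q(u)\circ\tilde{i}_1$ and $q_2(u):=\tilde{\rho}_2^{\Lambda}\circ q(u)\circ i_2$. These are plots of $C^{\infty}(X_1,\Lambda^1(X_1))$ and $C^{\infty}(X_2,\Lambda^1(X_2))$ because pre- and post-composition with the fixed smooth maps $\tilde{i}_1,i_2$ and $\tilde{\rho}_1^{\Lambda},\tilde{\rho}_2^{\Lambda}$ preserves plots of functional diffeologies. With that substitution, your evaluation-map argument — reducing to the smoothness of $\nabla^1$ and $\nabla^2$ for their functional diffeologies, and using the coarsest-diffeology characterization together with $\calD_1^{\Omega}=\calD_2^{\Omega}$ to supply the companion plot over the gluing locus — goes through as you describe.
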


\subsubsection{Compatibility of the Levi-Civita connections on $\Lambda^1(X_1)$ and $\Lambda^1(X_2)$}

Since any Levi-Civita connection is determined by the pseudo-metric, we might expect that those corresponding to compatible pseudo-metrics might also be compatible. Of course, this is not completely 
immediate, since the two compatibility notions are not completely analogous. 

\begin{prop}\label{levi-civita:connections:wrt:compatible:pseudo-metrics:are:compatible:prop}
Let $g_1^{\Lambda}$ and $g_2^{\Lambda}$ be compatible pseudo-metrics on $\Lambda^1(X_1)$ and $\Lambda^1(X_2)$ respectively. Let $\nabla^1$ be the Levi-Civita connection on 
$(\Lambda^1(X_1),g_1^{\Lambda})$ (which we mean, of course, that $\nabla^1$ is in particular compatible with $g_1^{\Lambda}$), and let $\nabla^2$ be the Levi-Civita connection on 
$(\Lambda^1(X_2),g_2^{\Lambda})$. Then $\nabla^1$ and $\nabla^2$ are compatible.
\end{prop}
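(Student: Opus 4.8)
The plan is to reduce the identity required in Definition \ref{compatibility:of:connections:on:lambda:defn} to a term-by-term comparison via the Koszul formula, exploiting the fact that a Levi-Civita connection is \emph{uniquely} determined by its pseudo-metric through that formula. First I would transport the whole problem to the dual pseudo-bundles $(\Lambda^1(X_i))^*$ by means of the pairing diffeomorphisms $\Phi_{g_i^{\Lambda}}$, which are genuine diffeomorphisms because the fibres of $\Lambda^1(X_i)$ are finite-dimensional and hence standard (Proposition \ref{fin-dim:lambda:implies:standard:fibres:prop}); there the Lie bracket, the torsion and the Koszul identity are directly available. Since all fibres of $\Lambda^1(Y)$ (equivalently of $\Lambda^1(f(Y))$) are standard, the desired equality of elements of $\Lambda^1(Y)\otimes\Lambda^1(Y)$ can be tested by contracting the first factor with an arbitrary direction and pairing the second with an arbitrary test form; using the restricted scalar product this is the same as comparing the scalars $g_1^{\Lambda}(y)(\nabla^1_a s_1,b)$ with $g_2^{\Lambda}(f(y))(\nabla^2_{a'}s_2,b')$ for $f$-related (compatible) triples $a\leftrightarrow a'$, $s_1\leftrightarrow s_2$, $b\leftrightarrow b'$. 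The restricted metric on $\Lambda^1(Y)$ is well defined precisely because $g_1^{\Lambda}$ and $g_2^{\Lambda}$ are compatible (Definition \ref{compatible:pseudo-metrics:on:lambda:defn}), and the dual pseudo-metrics are compatible as well by Lemma \ref{duals:of:comp:pseudo-metrics:are:comp:lem}.

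The core of the argument consists of two auxiliary lemmas, which are the diffeological counterparts of two classical facts. The first is that the action of compatible sections on compatible functions is compatible: if $s_1\leftrightarrow s_2$ are compatible (viewed through the pairing as sections of the duals) and $h=h_1\cup_f h_2$, then the functions $s_1(h_1)$ and $s_2(h_2)$ agree on $Y$ under $f$. This I would derive from the description of the action $t(h)=t(dh)$ together with the behaviour of the differential under gluing (Section 10.3.3), which records exactly how $dh_1$ restricted to $Y$ matches the pullback of $dh_2$ for compatible $h_1,h_2$, combined with the compatibility criterion of Proposition \ref{compatible:in:lambda:via:pullback:maps:prop}. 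The second is the $f$-relatedness of Lie brackets: for compatible pairs $(s_1,s_2)$ and $(s_1',s_2')$, the brackets $[s_1,s_1']$ and $[s_2,s_2']$ are again compatible. This follows from the definition of the bracket through its action on functions (Definition \ref{lie:bracket:on:dual:lambda:defn}) and the first lemma, applied twice, since $[t_1,t_2](h)=t_1(t_2(h))-t_2(t_1(h))$ and compatibility is preserved at each step.

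With these two lemmas in hand I would apply the Koszul formula to each of the two sides. Its six summands split into three directional derivatives of metric functions, such as $a\bigl(g_1^{\Lambda}(s_1,b)\bigr)$, and three evaluations of the metric on Lie brackets, such as $g_1^{\Lambda}([a,s_1],b)$. Each metric function matches across $Y$ by compatibility of $g_1^{\Lambda}$ and $g_2^{\Lambda}$; its directional derivative along the $f$-related direction then matches by the first lemma; and each bracket term matches by the second lemma together with compatibility of the metrics. Hence the two Koszul expressions coincide, and since the directions and test forms were arbitrary and the restricted scalar products are non-degenerate, the two tensors in $\Lambda^1(Y)\otimes\Lambda^1(Y)$ are equal, which is exactly the compatibility of $\nabla^1$ and $\nabla^2$ demanded by Definition \ref{compatibility:of:connections:on:lambda:defn}. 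An alternative, if one were willing to assume existence of a Levi-Civita connection on the glued space, would be to build it directly from $g^{\Lambda}$ via Koszul and read off $\nabla^1,\nabla^2$ as its restrictions; but since the paper deliberately avoids existence statements for connections, I would not rely on this.

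The hard part will be the two auxiliary lemmas, and above all the bracket term, because the pullback map $i_{\Lambda}^*$ has a non-trivial kernel, so "restriction to $Y$" loses information and naive pointwise manipulation is not legitimate. The delicate point is that both differentiation along a direction and the bracket involve more than the value at the single point $y$, so the compatibility must be propagated through a neighbourhood; the control comes from the behaviour of $d$ under gluing and from the hypothesis $\calD_1^{\Omega}=\calD_2^{\Omega}$, which guarantees that compatible data on $X_1$ and $X_2$ can be realised by plots that are themselves compatible. Keeping track of which identities hold on the nose versus only after applying $i_{\Lambda}^*$, $j_{\Lambda}^*$ and $f_{\Lambda}^*$ is the main bookkeeping obstacle.
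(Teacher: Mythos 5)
Your overall strategy---uniqueness of the Levi-Civita connection via the Koszul formula, plus a term-by-term transfer of its right-hand side across the gluing---is the natural one, and it is surely the skeleton of any correct proof. But both of your auxiliary lemmas, \emph{as you state them} (for arbitrary compatible sections and compatible functions), are false whenever the pullbacks $i_{\Lambda}^*$, $j_{\Lambda}^*$ have non-trivial kernels on the fibres over $Y$, and with them the term-by-term matching collapses. Compatibility of sections and of functions only constrains data ``along $Y$'', whereas the derivative and bracket terms in the Koszul formula see transverse data. Concretely, for your first lemma: take $h_1$ vanishing on $Y$ with $dh_1(y)\neq 0$ (so $dh_1(y)\in\ker(i_{\Lambda}^*)$), take $h_2=0$, and take $t_1=dh_1$, $t_1'=0$; these are compatible sections and compatible functions, yet $t_1(h_1)(y)=g_1^{\Lambda}(y)(dh_1(y),dh_1(y))>0$ while $t_1'(h_2)(f(y))=0$. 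The bracket lemma fails for the same reason (classically, two fields whose tangential parts vanish along $Y$ can have a bracket with non-vanishing tangential part, as in $[\partial_y,\,y\partial_x]=\partial_x$), and it cannot be repaired by ``applying the first lemma twice''. Worse, at this level of generality the proposition itself is false: glue two copies of Euclidean $\matR^2$ along a curve embedded once as a circular arc and once as a straight segment (an isometry of the induced metrics), take $s_1=dr$ and $s_2=0$ (compatible, since $i_{\Lambda}^*s_1=0$); then $(i_{\Lambda}^*\otimes i_{\Lambda}^*)(\nabla^1 s_1)$ picks up the second fundamental form of the arc and is non-zero, while the other side vanishes. So no amount of bookkeeping with $i_{\Lambda}^*$, $j_{\Lambda}^*$, $f_{\Lambda}^*$ can make the argument, as proposed, go through.

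What rescues the statement---and what your proof never invokes---is the full strength of the compatibility hypothesis in Definition \ref{compatible:pseudo-metrics:on:lambda:defn}. Applied to the compatible pairs $(\omega',0)$ and $(\omega',0)$ with $\omega'\in\ker(i_{\Lambda}^*|_y)$, it yields $g_1^{\Lambda}(y)(\omega',\omega')=g_2^{\Lambda}(f(y))(0,0)=0$; since the fibres of $\Lambda^1(X_i)$ are standard (Proposition \ref{fin-dim:lambda:implies:standard:fibres:prop}) and pseudo-metrics on standard fibres are genuine scalar products, this forces $\ker(i_{\Lambda}^*|_y)=0$ and $\ker(j_{\Lambda}^*|_{f(y)})=0$ for every $y\in Y$. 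This is the decisive step: once the kernels vanish, the orthogonal projections of Definition \ref{compatible:elements:of:dual:lambda:defn} are identities, compatible sections determine one another pointwise along $Y$ through the fibrewise isometries $(i_{\Lambda}^*)^{-1}f_{\Lambda}^*j_{\Lambda}^*$, your first lemma becomes true by exactly the isometry computation you indicate, the bracket lemma follows by iterating it along all of $Y$ (modulo one further point you should address: matching actions on differentials of compatible functions only controls the brackets on values of such differentials, so one still needs these to span the relevant fibres, or a direct argument pairing against the test section $t_3$), and the two Koszul expressions then agree term by term, giving the compatibility required by Definition \ref{compatibility:of:connections:on:lambda:defn}. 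In short, the kernel-vanishing consequence of metric compatibility is not ``bookkeeping'' to be deferred; it is the point where the hypothesis must be spent, and without it both your lemmas and the proposition itself break down.
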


We thus obtain that the two Levi-Civita connections defined with respect to compatible pseudo-metrics always give rise to the induced connection $\nabla^{\cup}$ on $\Lambda^1(X_1\cup_f X_2)$. As we 
see in the next section, a stronger statement is actually true: for the appropriate pseudo-metric, $\nabla^{\cup}$ is itself the Levi-Civita connection.

\subsection{The Levi-Civita connection on $\Lambda^1(X_1\cup_f X_2)$}

Let $\nabla^1$ and $\nabla^2$ be the Levi-Civita connections on $(\Lambda^1(X_1),g_1^{\Lambda})$ and $(\Lambda^1(X_2),g_2^{\Lambda})$, where $g_1^{\Lambda}$ and $g_2^{\Lambda}$ are 
compatible. As follows from Proposition \ref{levi-civita:connections:wrt:compatible:pseudo-metrics:are:compatible:prop}, $\Lambda^1(X_1\cup_f X_2)$ comes endowed both with the induced connection 
$\nabla^{\cup}$ and the induced pseudo-metric $g^{\Lambda}$. It is more generally true that the induced connection is compatible with the induced pseudo-metric. It remains to check that also the 
symmetricity is inherited, to ensure that $\nabla^{\cup}$ is the Levi-Civita connection in turn.

\subsubsection{Compatibility with pseudo-metrics and gluing} 

Let $X_1$ and $X_2$ be diffeological spaces, let $f:X_1\supseteq Y\to X_2$ be a diffeomorphism such that $\calD_1^{\Omega}=\calD_2^{\Omega}$, and let $g_1^{\Lambda}$ and $g_2^{\Lambda}$ be 
pseudo-metrics on $\Lambda^1(X_1)$ and $\Lambda^1(X_2)$ respectively, that are compatible with respect to $f$. Let also $\nabla^1$ and $\nabla^2$ be diffeological connections on $\Lambda^1(X_1)$ 
and $\Lambda^1(X_2)$ that are compatible with $f$ in the sense of Definition \ref{compatibility:of:connections:on:lambda:defn}. Assume also that each of them is compatible with the corresponding 
pseudo-metric ($\nabla^1$ is compatible with $g_1^{\Lambda}$ and $\nabla^2$ is compatible with $g_2^{\Lambda}$) in the sense of Definition \ref{connection:compatible:with:pseudo-metric:defn}. The 
following is then established by direct calculation.

\begin{prop}\label{induced:connection:on:lambda:is:compatible:prop}
The induced connection $\nabla^{\cup}$ is compatible with the pseudo-metric $g^{\Lambda}$. 
\end{prop}

\subsubsection{Symmetric connections and gluing} 

The analogue of Proposition \ref{induced:connection:on:lambda:is:compatible:prop} is true also for the symmetricity property. Specifically, we have 
the following statement.

\begin{prop}\label{induced:connection:on:dual:lambda:is:symmetric:prop}
Let $X_1$ and $X_2$ be diffeological spaces, let $f:X_1\supseteq Y\to X_2$ be a diffeomorphism such that $\calD_1^{\Omega}=\calD_2^{\Omega}$, and let $\nabla^1$ and $\nabla^2$ be connections on 
$\Lambda^1(X_1)$ and $\Lambda^1(X_2)$ respectively, compatible in the sense of Definition \ref{compatibility:of:connections:on:lambda:defn}. If both $\nabla^1$ and $\nabla^2$ are symmetric then 
the induced connection $\nabla^{\cup}$ is symmetric as well.
\end{prop}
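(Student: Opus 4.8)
The plan is to reduce the symmetricity of $\nabla^{\cup}$ to the vanishing of its torsion tensor $T^{\cup}(t_1,t_2)=\nabla_{t_1}^{\cup}t_2-\nabla_{t_2}^{\cup}t_1-[t_1,t_2]$, and then to exploit the fibrewise decomposition that already underlies Definition \ref{induced:connection:on:lambda:defn}, exactly as was done for the compatibility-with-pseudo-metric statement in Proposition \ref{induced:connection:on:lambda:is:compatible:prop}. First I would record that $T^{\cup}$ is a tensor, that is, it is $C^{\infty}(X_1\cup_f X_2)$-bilinear, so that $T^{\cup}(t_1,t_2)(x)$ depends only on $t_1(x)$ and $t_2(x)$. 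This follows from the usual cancellation, using $C^{\infty}$-linearity of the covariant derivative in its directional slot, the Leibnitz rule of Definition \ref{diffeological:connection:defn}, and the identity $[ht_1,t_2]=h[t_1,t_2]-t_2(h)t_1$, which is immediate from Definition \ref{lie:bracket:on:dual:lambda:defn}; these are precisely the facts already invoked to assert bilinearity in Definition \ref{torsion:of:a:connection:defn}. Thus it suffices to prove $T^{\cup}(t_1,t_2)(x)=0$ at each $x$, and for this I will project through the maps $\tilde{\rho}_1^{\Lambda}$ and $\tilde{\rho}_2^{\Lambda}$.

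Write $r_1(t):=\tilde{\rho}_1^{\Lambda}\circ t\circ\tilde{i}_1\in C^{\infty}(X_1,\Lambda^1(X_1))$ and $r_2(t):=\tilde{\rho}_2^{\Lambda}\circ t\circ i_2\in C^{\infty}(X_2,\Lambda^1(X_2))$, the restriction operators used to define $\nabla^{\cup}$. The heart of the argument is the pair of termwise identities, valid wherever $\tilde{\rho}_j^{\Lambda}$ is defined at $x$, $\tilde{\rho}_1^{\Lambda}\bigl(T^{\cup}(t_1,t_2)(x)\bigr)=T^1\bigl(r_1 t_1,r_1 t_2\bigr)(i_1^{-1}(x))$ over $i_1(X_1\setminus Y)$ and over $i_2(f(Y))$, and the analogous one with $\tilde{\rho}_2^{\Lambda}$, $T^2$ and the point $i_2^{-1}(x)$ over $i_2(X_2)$. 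For the two covariant-derivative terms these are read straight off Definition \ref{induced:connection:on:lambda:defn}, since over each region the $\tilde{\rho}_1^{\Lambda}$-component of $\nabla^{\cup}s$ is by construction $\nabla^1(r_1 s)$ and the $\tilde{\rho}_2^{\Lambda}$-component is $\nabla^2(r_2 s)$; the compatibility of $\nabla^1$ and $\nabla^2$ (Definition \ref{compatibility:of:connections:on:lambda:defn}) is what lets these two pieces sit inside a single element of the compatible direct sum of Theorem \ref{three:piece:diffeology:in:lambda:thm}(3). Granting also the bracket identities, each available component of $T^{\cup}(t_1,t_2)(x)$ equals $T^1$ or $T^2$ at the corresponding point and hence vanishes by hypothesis; since by Theorem \ref{three:piece:diffeology:in:lambda:thm} the fibre over $x$ is determined by these components (one component away from the gluing domain, a compatible pair over $i_2(f(Y))$), I conclude $T^{\cup}(t_1,t_2)(x)=0$ for every $x$, so $\nabla^{\cup}$ is symmetric.

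The main obstacle is the matching of the Lie-bracket term, $\tilde{\rho}_1^{\Lambda}\bigl([t_1,t_2](x)\bigr)=[r_1 t_1,r_1 t_2](i_1^{-1}(x))$ and its $\tilde{\rho}_2^{\Lambda}$-analogue, because the bracket of Definition \ref{lie:bracket:on:dual:lambda:defn} is a genuine first-order operator and so is not controlled fibrewise. Here I would analyse how the action of a section on functions decomposes under gluing: every smooth $h$ on $X_1\cup_f X_2$ splits as $h=h_1\cup_f h_2$, and the behaviour of the differential under gluing (the Proposition on differentials and gluing in Section 10) expresses $dh$ in terms of $dh_1,dh_2$ through $\tilde{\rho}_1^{\Lambda},\tilde{\rho}_2^{\Lambda}$. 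Feeding this into the defining formula for the bracket yields the two identities, and this is exactly the step where the standing hypotheses are used: $f$ being a diffeomorphism gives the symmetric two-sided correspondence of sections, and $\calD_1^{\Omega}=\calD_2^{\Omega}$ guarantees the surjectivity needed for the projections $\tilde{\rho}_1^{\Lambda},\tilde{\rho}_2^{\Lambda}$ to be subductions onto the full fibres (Propositions \ref{tilde-rho-1:as:a:subduction:prop} and \ref{tilde-rho-2:as:a:subduction:prop}), so that representatives of compatible pairs can be chosen compatibly. I would also remark that, by the pushforward/pullback correspondence of Section 11.2, it is equivalent, and slightly cleaner for the bracket bookkeeping, to run the whole argument on the dual connections $(\nabla^i)^{*}$ on $(\Lambda^1(X_i))^{*}$, where the Lie bracket is intrinsic and no pseudo-metric is invoked; the statement on $\Lambda^1$ then follows because symmetricity is defined on $\Lambda^1$ precisely as the conjugate of symmetricity on the dual via the pairing map.
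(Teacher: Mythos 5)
Your proposal is correct and takes essentially the same route as the paper: the two termwise identities you isolate — the decomposition of covariant derivatives and of the Lie bracket through $\tilde{\rho}_1^{\Lambda}$ and $\tilde{\rho}_2^{\Lambda}$, the bracket one being handled via the splitting $h=h_1\cup_f h_2$ and the behaviour of the differential under gluing — are precisely the paper's Lemma \ref{covariant:derivatives:and:gluing:lem} and Lemma \ref{lie:bracket:and:gluing:lem}, after which the torsion vanishes componentwise exactly as you argue. Your additional observations (the $C^{\infty}$-bilinearity of $T^{\cup}$ and the option of working on the dual pseudo-bundles) are harmless but not needed, since vanishing of $T^{\cup}(t_1,t_2)$ at every point for arbitrary sections is all the paper's argument uses.
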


This statement is based on two lemmas describing the behavior of covariant derivatives and the Lie bracket under gluing.

\begin{lemma}\label{covariant:derivatives:and:gluing:lem}
Let $s,t\in C^{\infty}(X_1\cup_f X_2,\Lambda^1(X_1\cup_f X_2))$, and let $x\in X_1\cup_f X_2$. Denote $s_1:=\tilde{\rho}_1^{\Lambda}\circ s\circ\tilde{i}_1$, $s_2:=\tilde{\rho}_2^{\Lambda}\circ s\circ i_2$, 
$t_1:=\tilde{\rho}_1^{\Lambda}\circ t\circ\tilde{i}_1$, and $t_2:=\tilde{\rho}_2^{\Lambda}\circ t\circ i_2$. Then:
$$(\nabla^{\cup}_t s)(x)=\left\{\begin{array}{cl}
(\tilde{\rho}_1^{\Lambda})^{-1}((\nabla^1_{t_1}s_1)(i_1^{-1}(x))) & \mbox{if }x\in i_1(X_1\setminus Y),\\
(\tilde{\rho}_1^{\Lambda}\oplus\tilde{\rho}_2^{\Lambda})^{-1}((\nabla^1_{t_1}s_1)(\tilde{i}_1^{-1}(x))\oplus(\nabla^2_{t_2}s_2)(i_2^{-1}(x))) & \mbox{if }x\in i_2(f(Y)), \\
(\tilde{\rho}_2^{\Lambda})^{-1}((\nabla^2_{t_2}s_2)(i_2^{-1}(x))) & \mbox{if }x\in i_2(X_2\setminus f(Y)).
\end{array}\right.$$
\end{lemma}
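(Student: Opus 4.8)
The plan is to compute both sides fibrewise at the fixed point $x$, exploiting that $\nabla^{\cup}_t s$ is by definition obtained from $\nabla^{\cup}s$ by contracting the first tensor factor against the functional $\Phi_{g^{\Lambda}}(t(x))$ attached to the induced pseudo-metric $g^{\Lambda}$ on $\Lambda^1(X_1\cup_f X_2)$. Since $\nabla^{\cup}s(x)$ is already given explicitly by the three-case formula of Definition \ref{induced:connection:on:lambda:defn}, the whole statement reduces to carrying out this contraction in each of the three pieces. First I would record the two elementary facts that drive everything: that $t_1(i_1^{-1}(x))=\tilde{\rho}_1^{\Lambda}(t(x))$ for $x\in i_1(X_1)$ (and symmetrically $t_2(i_2^{-1}(x))=\tilde{\rho}_2^{\Lambda}(t(x))$ on $i_2(X_2)$), which is immediate from $t_1=\tilde{\rho}_1^{\Lambda}\circ t\circ\tilde{i}_1$, $t_2=\tilde{\rho}_2^{\Lambda}\circ t\circ i_2$ and $\tilde{i}_1(y)=i_2(f(y))$ for $y\in Y$; and that $\tilde{\rho}_1^{\Lambda}(t(x))$ and $\tilde{\rho}_2^{\Lambda}(t(x))$ are always a compatible pair by construction.

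For $x\in i_1(X_1\setminus Y)$ the argument is direct: there $\nabla^{\cup}s(x)=((\tilde{\rho}_1^{\Lambda})^{-1}\otimes(\tilde{\rho}_1^{\Lambda})^{-1})((\nabla^1 s_1)(i_1^{-1}(x)))$, and over this region $g^{\Lambda}$ is, by its very definition, the transport of $g_1^{\Lambda}$ through $\tilde{\rho}_1^{\Lambda}$. Writing $(\nabla^1 s_1)(i_1^{-1}(x))=\sum_j\alpha^j\otimes\beta^j$ and contracting the first factor, each coefficient $g^{\Lambda}(x)(t(x),(\tilde{\rho}_1^{\Lambda})^{-1}(\alpha^j))$ collapses to $g_1^{\Lambda}(i_1^{-1}(x))(t_1(i_1^{-1}(x)),\alpha^j)$, since $\tilde{\rho}_1^{\Lambda}$ is left-inverted by $(\tilde{\rho}_1^{\Lambda})^{-1}$ and sends $t(x)$ to $t_1(i_1^{-1}(x))$. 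Factoring $(\tilde{\rho}_1^{\Lambda})^{-1}$ back out of the surviving second factors gives exactly $(\tilde{\rho}_1^{\Lambda})^{-1}((\nabla^1_{t_1}s_1)(i_1^{-1}(x)))$, and the case $x\in i_2(X_2\setminus f(Y))$ is identical with the indices interchanged.

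The gluing-point case $x\in i_2(f(Y))$, with $y:=f^{-1}(i_2^{-1}(x))$, is where the real work lies, because there $g^{\Lambda}(x)$ is the averaged form $\tfrac12\big(g_1^{\Lambda}(y)(\tilde{\rho}_1^{\Lambda}\cdot,\tilde{\rho}_1^{\Lambda}\cdot)+g_2^{\Lambda}(f(y))(\tilde{\rho}_2^{\Lambda}\cdot,\tilde{\rho}_2^{\Lambda}\cdot)\big)$ and $\nabla^{\cup}s(x)$ lives in the tensor square of the compatible direct sum $\Lambda_y^1(X_1)\oplus_{comp}\Lambda_{f(y)}^1(X_2)$. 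I would avoid manipulating that tensor directly and instead test the asserted identity through its $\tilde{\rho}_1^{\Lambda}$- and $\tilde{\rho}_2^{\Lambda}$-projections, which suffices because $\tilde{\rho}_1^{\Lambda}\oplus\tilde{\rho}_2^{\Lambda}$ is a fibrewise isomorphism onto its image by Theorem \ref{three:piece:diffeology:in:lambda:thm}. Applying $\tilde{\rho}_1^{\Lambda}\otimes\tilde{\rho}_1^{\Lambda}$ to Definition \ref{induced:connection:on:lambda:defn} recovers $(\nabla^1 s_1)(y)$ (the $X_2$-summand being annihilated), so projecting the contracted object reduces to comparing the functional $\Phi_{g^{\Lambda}}(t(x))$ with $\Phi_{g_1^{\Lambda}}(t_1(y))\circ\tilde{\rho}_1^{\Lambda}$ on the fibre $\Lambda_x^1(X_1\cup_f X_2)$.

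The crux is the pointwise claim that these two functionals agree on an arbitrary $\omega\in\Lambda_x^1(X_1\cup_f X_2)$. Writing $\omega_i:=\tilde{\rho}_i^{\Lambda}(\omega)$, the first evaluates to $\tfrac12\big(g_1^{\Lambda}(y)(t_1(y),\omega_1)+g_2^{\Lambda}(f(y))(t_2(f(y)),\omega_2)\big)$ and the second to $g_1^{\Lambda}(y)(t_1(y),\omega_1)$. Here I would invoke the compatibility of the pseudo-metrics (Definition \ref{compatible:pseudo-metrics:on:lambda:defn}, in the form of Proposition \ref{compatible:pseudo-metrics:on:lambda:and:maps:rho:prop}): the pairs $(t_1(y),t_2(f(y)))$ and $(\omega_1,\omega_2)$ are both compatible, whence $g_1^{\Lambda}(y)(t_1(y),\omega_1)=g_2^{\Lambda}(f(y))(t_2(f(y)),\omega_2)$ and the average collapses onto a single term. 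This yields $\tilde{\rho}_1^{\Lambda}(\nabla^{\cup}_t s(x))=(\nabla^1_{t_1}s_1)(y)$ and, symmetrically, $\tilde{\rho}_2^{\Lambda}(\nabla^{\cup}_t s(x))=(\nabla^2_{t_2}s_2)(f(y))$; reassembling through $(\tilde{\rho}_1^{\Lambda}\oplus\tilde{\rho}_2^{\Lambda})^{-1}$ gives the third case. I expect the only genuinely delicate point to be keeping the compatible-direct-sum bookkeeping honest — in particular justifying that the first-factor projections really isolate $(\nabla^i s_i)$ and that the compatibility of connections (Definition \ref{compatibility:of:connections:on:lambda:defn}) is exactly what guarantees $\nabla^{\cup}s(x)$ lands where Definition \ref{induced:connection:on:lambda:defn} places it; the analytic content, by contrast, is the one-line collapse of the $\tfrac12$-average.
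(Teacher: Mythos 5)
Your proposal is correct and amounts to the same argument the paper has in mind: the paper explicitly dismisses this lemma as "obtained again by direct calculation" (with details deferred to the cited reference), and your fibrewise computation — contracting the three-case formula for $\nabla^{\cup}s$ against $\Phi_{g^{\Lambda}}(t(x))$, checking the identity through the $\tilde{\rho}_1^{\Lambda}$- and $\tilde{\rho}_2^{\Lambda}$-projections, and collapsing the $\tfrac12$-average via compatibility of the pseudo-metrics — is exactly that calculation. The bookkeeping point you flag (that the corner projections of $\nabla^{\cup}s(x)$ isolate $(\nabla^i s_i)$) is indeed the defining property of $\nabla^{\cup}$ in Definition \ref{induced:connection:on:lambda:defn}, so no gap remains.
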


The statement just made is obtained again by direct calculation, as is the statement below.

\begin{lemma}\label{lie:bracket:and:gluing:lem}
Let $s,t,r\in C^{\infty}(X_1\cup_f X_2,\Lambda^1(X_1\cup_f X_2))$, let $s_1,s_2,t_1,t_2$ be as above, let $r_1,r_2$ be similarly defined, and let $x\in X_1\cup_f X_2$. Then
$$[s,t](r)(x)=\left\{\begin{array}{cl}
(\tilde{\rho}_1^{\Lambda})^{-1}([s_1,t_1](r_1)(i_1^{-1}(x))) & \mbox{if }x\in i_1(X_1\setminus Y),\\
(\tilde{\rho}_1^{\Lambda}\oplus\tilde{\rho}_2^{\Lambda})^{-1}([s_1,t_1](r_1)(\tilde{i}_1^{-1}(x))\oplus[s_2,t_2](r_2)(i_2^{-1}(x))) & \mbox{if }x\in i_2(f(Y)), \\
(\tilde{\rho}_2^{\Lambda})^{-1}([s_2,t_2](r_2)(i_2^{-1}(x))) & \mbox{if }x\in i_2(X_2\setminus f(Y)).
\end{array}\right.$$
\end{lemma}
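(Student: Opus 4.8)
The plan is to compute $[s,t](r)$ straight from its definition and to reduce every piece of it, through the maps $\tilde{\rho}_1^{\Lambda}$ and $\tilde{\rho}_2^{\Lambda}$, to the corresponding brackets on the two factors, treating separately the three regions appearing in the statement. Recall that the Lie bracket on $\Lambda^1$ is defined via the pairing map $\Phi_{g^{\Lambda}}$ together with Definition \ref{lie:bracket:on:dual:lambda:defn}, so that $[s,t](r)$ is assembled from expressions of the form $u(v(\cdot))$, where $u,v$ are the covector fields associated to $s,t,r$, the inner term $v(\cdot)$ is the pairing of such a covector field against a $1$-form section, and the outer action of a covector field on the resulting smooth function passes through the differential of that function. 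The two ingredients that govern how each such piece behaves under gluing are therefore (i) the gluing formula for differentials (the proposition of Section~10.3.3), which shows that for $h=h_1\cup_f h_2$ the section $dh$ restricts through $\tilde{\rho}_1^{\Lambda}$ and $\tilde{\rho}_2^{\Lambda}$ to $dh_1$ and $dh_2$, and (ii) the compatibility of the pairing, and hence of the covariant-derivative computation of Lemma \ref{covariant:derivatives:and:gluing:lem}, with these same restriction maps.

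First I would fix $s,t,r\in C^{\infty}(X_1\cup_f X_2,\Lambda^1(X_1\cup_f X_2))$ and set $s_i=\tilde{\rho}_i^{\Lambda}\circ s\circ(\cdot)$, and likewise $t_i,r_i$, exactly as in Lemma \ref{covariant:derivatives:and:gluing:lem}. By the subduction property of $\tilde{\rho}_1^{\Lambda}$ and $\tilde{\rho}_2^{\Lambda}$ (Propositions \ref{tilde-rho-1:as:a:subduction:prop} and \ref{tilde-rho-2:as:a:subduction:prop}, valid since $\calD_1^{\Omega}=\calD_2^{\Omega}$), each $s_i,t_i,r_i$ is a genuine smooth section of $\Lambda^1(X_i)$, and over points of $Y\cong f(Y)$ the pairs $(s_1,s_2)$, $(t_1,t_2)$, $(r_1,r_2)$ are compatible. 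Over $x\in i_1(X_1\setminus Y)$, Theorem \ref{three:piece:diffeology:in:lambda:thm}(1) makes $\tilde{\rho}_1^{\Lambda}$ a diffeomorphism onto a neighbourhood in $\Lambda^1(X_1)$ and $i_1$ identifies a neighbourhood of $x$ with one in $X_1$; since the differential, the pairing, and hence the whole bracket are local first-order constructions, they are simply transported from $X_1$, giving the first line of the formula. The case $x\in i_2(X_2\setminus f(Y))$ is symmetric, via Theorem \ref{three:piece:diffeology:in:lambda:thm}(2).

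The real work is the case $x\in i_2(f(Y))$. Here the fibre is the compatible direct sum $\Lambda_y^1(X_1)\oplus_{comp}\Lambda_{f(y)}^1(X_2)$ (Theorem \ref{fibres:of:lambda:thm}(3)), and I must show that the bracket computed on the glued space splits as $(\tilde{\rho}_1^{\Lambda}\oplus\tilde{\rho}_2^{\Lambda})^{-1}$ applied to $[s_1,t_1](r_1)(\tilde{i}_1^{-1}(x))\oplus[s_2,t_2](r_2)(i_2^{-1}(x))$, with no cross terms between the two summands. The key point is that the bracket involves, through the derivation action, the differential of functions of the form $v(w)$ (a covector field paired with a $1$-form); such a function is itself of the glued form $(\cdot)\cup_f(\cdot)$, so its differential splits by the Section~10.3.3 gluing formula, and the compatibility of the three pairs forces the $X_1$- and $X_2$-contributions to match over $f(Y)$ and to assemble into an element of the direct-sum fibre. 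I expect this verification — both that no mixed $X_1$/$X_2$ derivative terms survive and that the resulting pair lands in $\Lambda_y^1(X_1)\oplus_{comp}\Lambda_{f(y)}^1(X_2)$ rather than in the larger full direct sum — to be the main obstacle, and it is precisely where the hypotheses $\calD_1^{\Omega}=\calD_2^{\Omega}$ and $i^*(\Omega^1(X_1))=(f^*j^*)(\Omega^1(X_2))$ (through Theorems \ref{fibres:of:lambda:thm} and \ref{three:piece:diffeology:in:lambda:thm}) are indispensable. Once the three cases are settled, smoothness of the resulting assignment follows from the plot description of $\Lambda^1(X_1\cup_f X_2)$ (the general observations of Section~8.3.2) exactly as in the proof of Lemma \ref{covariant:derivatives:and:gluing:lem}, completing the calculation.
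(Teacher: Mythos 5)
Your overall strategy --- reducing $[s,t](r)$ to the brackets on the two factors by a case-by-case direct computation, using the gluing formula for differentials of Section~10.3.3 and the behaviour of the $g^{\Lambda}$-pairing under $\tilde{\rho}_1^{\Lambda}$ and $\tilde{\rho}_2^{\Lambda}$ --- is exactly the calculation the paper has in mind (the paper gives no details, asserting the lemma ``by direct calculation'' alongside Lemma~\ref{covariant:derivatives:and:gluing:lem}). Over the gluing locus your outline is sound: the inner pairings $\Phi_{g^{\Lambda}}(t)(r)$ and $\Phi_{g^{\Lambda}}(s)(r)$ are glued functions, their differentials split by the Section~10.3.3 proposition, and no cross terms can appear because $g^{\Lambda}$ is by definition a sum of terms pulled back through $\tilde{\rho}_1^{\Lambda}\otimes\tilde{\rho}_1^{\Lambda}$ and $\tilde{\rho}_2^{\Lambda}\otimes\tilde{\rho}_2^{\Lambda}$ only. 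One attribution should be corrected, though: what makes the $X_1$- and $X_2$-contributions over $i_2(f(Y))$ agree (and what makes $\Phi_{g^{\Lambda}}(t)(r)$ a glued function in the first place) is the compatibility of the pseudo-metrics $g_1^{\Lambda}$ and $g_2^{\Lambda}$ entering the three-part definition of $g^{\Lambda}$, not the conditions $\calD_1^{\Omega}=\calD_2^{\Omega}$ and $i^*(\Omega^1(X_1))=(f^*j^*)(\Omega^1(X_2))$ by themselves; those guarantee the fibrewise decomposition and the subduction properties of the $\tilde{\rho}_i^{\Lambda}$, not the matching of values.

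The one step that does not stand as written is your treatment of the two regions off the gluing locus by a ``locality'' argument. Theorem~\ref{three:piece:diffeology:in:lambda:thm}(1) gives a diffeomorphism for the \emph{subset} diffeologies on $(\pi^{\Lambda})^{-1}(i_1(X_1\setminus Y))$ and $(\pi_1^{\Lambda})^{-1}(X_1\setminus Y)$; it does not say that $i_1(X_1\setminus Y)$ is a D-open neighbourhood of $x$. In general such images are not D-open (the paper itself lists the D-openness of $i_2(X_2)$ in $X_1\cup_f X_2$ among the open questions, and $i_1(X_1\setminus Y)$ is D-open only if $Y$ is D-closed in $X_1$, which is not assumed). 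More to the point, the bracket at $x$ involves the values at $x$ of differentials of functions such as $\Phi_{g^{\Lambda}}(t)(r)$, which live on the whole of $X_1\cup_f X_2$; a plot of the glued space through $x$ may leave $i_1(X_1\setminus Y)$, so these values are not determined by restricting everything to that subset, and ``first-order locality'' cannot be invoked. The repair is immediate and uses nothing you have not already cited: the Section~10.3.3 proposition gives $dh(x)=(\tilde{\rho}_1^{\Lambda})^{-1}\bigl(dh_1(i_1^{-1}(x))\bigr)$ for $x\in i_1(X_1\setminus Y)$ and $h=h_1\cup_f h_2$, and combining this with the form of $g^{\Lambda}$ over $i_1(X_1\setminus Y)$ yields the first line of the formula by the same computation you perform over $i_2(f(Y))$; the third line is symmetric. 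In short, run your case-three computation uniformly in all three cases and drop the neighbourhood argument.
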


\subsubsection{The final statement} 

All the above yields the following result. 

\begin{thm}\label{induced:connection:on:lambda:is:levi-civita:thm}
Let $X_1$ and $X_2$ be two diffeological spaces, let $f:X_1\supseteq Y\to X_2$ be a diffeomorphism such that $\calD_1^{\Omega}=\calD_2^{\Omega}$, and let $g_1^{\Lambda^*}$ and $g_2^{\Lambda^*}$
be compatible pseudo-metrics on $\Lambda^1(X_1)$ and $\Lambda^1(X_2)$ respectively. Let $\nabla^1$ and $\nabla^2$ be the Levi-Civita connections on $(\Lambda^1(X_1),g_1^{\Lambda})$ and 
$(\Lambda^1(X_2),g_2^{\Lambda})$. Then $\nabla_{\cup}$ is the Levi-Civita connection on $(\Lambda^1(X_1\cup_f X_2),g_1^{\Lambda})$.
\end{thm}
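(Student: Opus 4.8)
The plan is to identify $\nabla_{\cup}$ as the Levi-Civita connection by verifying its two defining properties — symmetry and compatibility with the induced pseudo-metric — and then invoking uniqueness. Almost all of the analytic work has already been carried out in the preceding propositions, so the proof is essentially a matter of assembling them in the correct order and checking that their hypotheses are satisfied. First I would record that $\nabla_{\cup}$ is genuinely well-defined: since $g_1^{\Lambda}$ and $g_2^{\Lambda}$ are compatible, Proposition \ref{levi-civita:connections:wrt:compatible:pseudo-metrics:are:compatible:prop} gives that the Levi-Civita connections $\nabla^1$ and $\nabla^2$ are compatible in the sense of Definition \ref{compatibility:of:connections:on:lambda:defn}, which is exactly the hypothesis needed, by the theorem following Definition \ref{induced:connection:on:lambda:defn}, for $\nabla_{\cup}$ to be a diffeological connection on $\Lambda^1(X_1\cup_f X_2)$. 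I would also note that, with $f$ a diffeomorphism and $\calD_1^{\Omega}=\calD_2^{\Omega}$, the glued pseudo-bundle carries the induced pseudo-metric $g^{\Lambda}$ built in Section 8 from $g_1^{\Lambda}$ and $g_2^{\Lambda}$; this is the pseudo-metric with respect to which the target Levi-Civita connection is taken (the symbol $g_1^{\Lambda}$ in the statement should be read as this induced $g^{\Lambda}$).

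Next I would invoke the two key propositions directly. Because each of $\nabla^1$ and $\nabla^2$ is a Levi-Civita connection, each is in particular compatible with its own pseudo-metric, so Proposition \ref{induced:connection:on:lambda:is:compatible:prop} yields that $\nabla_{\cup}$ is compatible with $g^{\Lambda}$. Likewise, each of $\nabla^1$ and $\nabla^2$ is symmetric, so Proposition \ref{induced:connection:on:dual:lambda:is:symmetric:prop} — whose own proof rests on the gluing formulae for covariant derivatives and the Lie bracket, Lemmas \ref{covariant:derivatives:and:gluing:lem} and \ref{lie:bracket:and:gluing:lem} — gives that $\nabla_{\cup}$ is symmetric. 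At this point $\nabla_{\cup}$ satisfies both clauses in the definition of a Levi-Civita connection on $(\Lambda^1(X_1\cup_f X_2),g^{\Lambda})$.

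Finally, to upgrade ``a Levi-Civita connection'' to ``\emph{the} Levi-Civita connection'' I would appeal to uniqueness. Since the factors have finite-dimensional fibres, so does $\Lambda^1(X_1\cup_f X_2)$; hence its fibres are standard and the pairing map $\Phi_{g^{\Lambda}}$ is a diffeomorphism by Proposition \ref{fin-dim:lambda:implies:standard:fibres:prop}. Transporting the situation to $(\Lambda^1(X_1\cup_f X_2))^*$, the Koszul-type identity recorded in Section 11.1 shows that a symmetric connection compatible with a fixed pseudo-metric is uniquely determined, and this uniqueness pulls back to $\Lambda^1(X_1\cup_f X_2)$ along $\Phi_{g^{\Lambda}}$. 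The one place I expect any friction is not in the assembly itself but in the bookkeeping of uniqueness: one must confirm that the $g^{\Lambda}$ with which $\nabla_{\cup}$ is shown compatible is literally the same pseudo-metric that determines the Levi-Civita connection in the conclusion, so that symmetry, compatibility, and uniqueness are all stated relative to one and the same metric structure. Once this matching is verified, the three properties together force $\nabla_{\cup}$ to coincide with the Levi-Civita connection, which completes the proof.
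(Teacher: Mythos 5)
Your proposal is correct and follows essentially the same route as the paper: the theorem is obtained by assembling Proposition \ref{induced:connection:on:lambda:is:compatible:prop} (compatibility of $\nabla^{\cup}$ with the induced pseudo-metric) and Proposition \ref{induced:connection:on:dual:lambda:is:symmetric:prop} (symmetry), with Proposition \ref{levi-civita:connections:wrt:compatible:pseudo-metrics:are:compatible:prop} supplying the mutual compatibility of $\nabla^1$ and $\nabla^2$ needed for $\nabla^{\cup}$ to be defined, and uniqueness coming from the Koszul-type formula of Section 11.1 transported by the pairing map. Your added bookkeeping --- reading the $g_1^{\Lambda}$ in the conclusion as the induced $g^{\Lambda}$, and verifying that symmetry, compatibility, and uniqueness all refer to that one pseudo-metric --- is exactly the right reading of the statement and consistent with the paper's remark following it.
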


Notice that, due to the assumption that $\Lambda^1(X_1)$ and $\Lambda^1(X_2)$, and as a consequence $\Lambda^1(X_1\cup_f X_2)$, have finite-dimensional fibres, the pairing maps 
$\Phi_{g_1^{\Lambda}}$, $\Phi_{g_2^{\Lambda}}$, and $\Phi_{g^{\Lambda}}$ are all diffeomorphisms onto $(\Lambda^1(X_1))^*$, $(\Lambda^1(X_2))^*$, and $(\Lambda^1(X_1\cup_f X_2))^*$, so the above 
statements hold for these dual pseudo-bundles as well.

\section{Clifford connections}

The diffeological counterpart of the notion of a Clifford connection is obtained by straightforward extension of the standard notion. The results of this section are original to the present manuscript and come 
with complete proofs.

\subsection{A diffeological Clifford connection}

Let $X$ be a diffeological space such that $\Lambda^1(X)$ has only finite-dimensional fibres and is endowed with a pseudo-metric $g^{\Lambda}$. Let $\pi:V\to X$ be a pseudo-bundle of Clifford modules 
over $\cl(\Lambda^1(X),g^{\Lambda})$; we could for instance have $V=\bigwedge(\Lambda^1(X))$. The standard notion of a Clifford connection is a connection $\nabla^E$ on a smooth vector bundle $E$ of 
Clifford modules over the cotangent bundle $T^*M$ of a Riemannian manifold $(M,g)$, such that for every vector field $X\in C^{\infty}(M,TM)$, every smooth $1$-form $\phi\in C^{\infty}(M,T^*M)$, and every 
section $s\in C^{\infty}(M,E)$ the following equality is satisfied:
$$\nabla_X^E(c(\phi)s)=c(\nabla_X^{LC}\phi)(s)+c(\phi)(\nabla_X^E s),$$ where $c$ is the Clifford action (of $T^*M$ on $E$) and $\nabla^{LC}$ is the Levi-Civita connection on the cotangent bundle.

\subsubsection{Definition}

The diffeological notion uses $\Lambda^1(X)$ for the cotangent bundle, and sections of its dual pseudo-bundle $(\Lambda^1(X))^*$ as smooth vector fields, leading to the following definition.

\begin{defn}
Let $X$ be a diffeological space such that $\Lambda^1(X)$ admits pseudo-metrics, let $g^{\Lambda}$ be a pseudo-metric on $\Lambda^1(X)$, and let $\nabla^{\Lambda}$ be the Levi-Civita connection 
on $(\Lambda^1(X),g^{\Lambda})$. Let $\chi:E\to X$ be a pseudo-bundle of Clifford modules over $\cl(\Lambda^1(X),g^{\Lambda})$ with Clifford action $c$, and let $\nabla^E$ be a diffeological connection 
on it. Then $\nabla^E$ is a \textbf{Clifford connection} if for every $t\in C^{\infty}(X,(\Lambda^1(X))^*)$, for every $s\in C^{\infty}(X,\Lambda^1(X))$, and for every $r\in C^{\infty}(X,E)$ we have
$$\nabla^E_t(c(s)r)=c(\nabla^{\Lambda}_t s)(r)+c(s)(\nabla^E_t r).$$
\end{defn}

\subsubsection{Example}

Let us consider briefly the construction of Section 10.1. Recall that the base space $X$ is the union of the coordinate axes in $\matR^2$, and the two pseudo-bundles $\pi_1:V_1\to X_1$ and 
$\pi_2:V_2\to X_2$ are naturally identified with two copies of the tangent bundle $T\matR^1\to\matR^1$ (endowed with two different pseudo-metrics), so we can also view them as the diffeological cotangent 
bundles $\Lambda^1(X_1)$ and $\Lambda^1(X_2)$. Due to the fact that the gluing is along a single point subspace and the choice of $\tilde{f}$, the pseudo-bundle $V_1\cup_{\tilde{f}}V_2$ coincides then 
with $\Lambda^1(X_1\cup_f X_2)$. The connections that we considered on $V_1$ and $V_2$ are actually the Levi-Civita connections corresponding to the chosen pseudo-metrics, and the resulting 
connection on $V_1\cup_{\tilde{f}}V_2$ is the induced connection $\nabla^{\cup}$. Thus, by Theorem \ref{induced:connection:on:lambda:is:levi-civita:thm} it is the Levi-Civita connection on 
$V_1\cup_{\tilde{f}}V_2\cong\Lambda^1(X_1\cup_f X_2)$. As a matter of standard reasoning (see, for instance, \cite{heat-kernel}, Section 3.6), it yields a Clifford connection on 
$\bigwedge(\Lambda^1(X_1\cup_f X_2))$, seen as a pseudo-bundle of Clifford module over $\cl((\Lambda^1(X_1\cup_f X_2),g)$ with the usual Clifford action.

\subsection{Gluing of Clifford modules over $\Lambda^1(X_1)$ and $\Lambda^1(X_2)$}

We have already considered gluing of Clifford modules in Section 7, with the conclusion that the result of gluing is again a Clifford module over an appropriate Clifford algebra, which itself is the result of 
gluing. This situation does not have an automatic carry-over to the case of Clifford modules over $\Lambda^1(X_1)$ and $\Lambda^1(X_2)$. Indeed, in the latter context we want the result to be a Clifford 
module over $\Lambda^1(X_1\cup_f X_2)$ (more precisely, over $\cl(\Lambda^1(X_1\cup_f X_2),g^{\Lambda})$, where $g^{\Lambda}$ is induced by the pseudo-metrics $g_1^{\Lambda}$ and 
$g_2^{\Lambda}$ on $\Lambda^1(X_1)$ and $\Lambda^1(X_2)$ respectively). Since $\Lambda^1(X_1\cup_f X_2)$ is not the result of any gluing of $\Lambda^1(X_1)$ and $\Lambda^1(X_2)$, we cannot 
use the same strategy as in Section 7; in this section we show that a certain induced action can be obtained, but it is defined differently from the induced action considered in Section 7.

\subsubsection{Notation and approach}

Let $X_1$ and $X_2$ be two diffeological spaces, let $f:X_1\supseteq Y\to X_2$ be a diffeomorphism such that $\calD_1^{\Omega}=\calD_2^{\Omega}$, and suppose that $\Lambda^1(X_1)$ and 
$\Lambda^1(X_2)$ admit compatible pseudo-metrics $g_1^{\Lambda}$ and $g_2^{\Lambda}$. 

Let $\chi_1:E_1\to X_1$ and $\chi_2:E_2\to X_2$ be pseudo-bundles of Clifford modules with Clifford actions $c_1$ and $c_2$ by $\cl(\Lambda^1(X_1),g_1^{\Lambda})$ and 
$\cl(\Lambda^1(X_2),g_2^{\Lambda})$ respectively, and let $\tilde{f}':\chi_1^{-1}(Y)\to\chi_2^{-1}(f(Y))$ be a smooth fibrewise linear map that covers $f$. If $c_1$ and $c_2$ are compatible actions (Section 7) 
then there is a well-defined gluing of these Clifford modules, with the result $\chi_1\cup_{(\tilde{f}',f)}\chi_2:E_1\cup_{\tilde{f}'}E_2\to X_1\cup_f X_2$, with each fibre inheriting the Clifford module structure over 
either $\cl(\Lambda^1(X_1),g_1^{\Lambda})$ or $\cl(\Lambda^1(X_2),g_2^{\Lambda})$. 

These structures endow $E_1\cup_{\tilde{f}'}E_2$ with a certain structure resembling that of a Clifford module. However, it is not a Clifford module over $\cl(\Lambda^1(X_1\cup_f X_2),g^{\Lambda})$; in fact, 
even the action of $\Lambda^1(X_1\cup_f X_2)$ on it is not automatic. This is because $\Lambda^1(X_1\cup_f X_2)$ is not the result of any gluing between $\Lambda^1(X_1)$ and $\Lambda^1(X_2)$, since 
it has fibres that do not coincide with any of the fibres of either $\Lambda^1(X_1)$ and $\Lambda^1(X_2)$ (these are fibres over the domain of gluing). This situation is therefore different from the one 
considered in Section 7, where $E_1\cup_{(\tilde{f}',f)}E_2$, obtained by gluing the Clifford modules $E_1$ and $E_2$ over certain $\cl(V_1,g_1)$ and $\cl(V_2,g_2)$, inherited under certain assumptions the 
Clifford action of the appropriate $\cl(V_1\cup_{(\tilde{f},f)}V_2,g_1\cup_{(f,\tilde{f})}g_2)\cong\cl(V_1,g_1)\cup_{(\tilde{f}^{\cl},f)}\cl(V_2,g_2)$.

Below we consider what natural action $\cl(\Lambda^1(X_1\cup_f X_2),g^{\Lambda})$ might inherit from $\cl(\Lambda^1(X_1),g_1^{\Lambda})$ and $\cl(\Lambda^1(X_2),g_2^{\Lambda})$. The construction 
is based on using the partially defined maps $\tilde{\rho}_1^{\Lambda}$ and $\tilde{\rho}_2^{\Lambda}$, and the universal property of Clifford algebras, which allows to extend the partially defined projections 
$\tilde{\rho}_1^{\Lambda}:\Lambda^1(X_1\cup_f X_2)\supseteq(\pi^{\Lambda})^{-1}(\tilde{i}_1(X_1))\to\Lambda^1(X_1)$ and 
$\tilde{\rho}_2^{\Lambda}:\Lambda^1(X_1\cup_f X_2)\supseteq(\pi^{\Lambda})^{-1}(i_2(X_2))\to\Lambda^1(X_2)$ to the corresponding subsets of $\cl(\Lambda^1(X_1\cup_f X_2),g^{\Lambda})$: 
$$\tilde{\rho}_1^{\cl(\Lambda)}:\cl(\Lambda^1(X_1\cup_f X_2),g^{\Lambda})\supseteq(\pi^{\cl(\Lambda)})^{-1}(\tilde{i}_1(X_1))\to\cl(\Lambda^1(X_1),g_1^{\Lambda}),$$
$$\tilde{\rho}_2^{\cl(\Lambda)}:\cl(\Lambda^1(X_1\cup_f X_2),g^{\Lambda})\supseteq(\pi^{\cl(\Lambda)})^{-1}(i_2(X_2))\to\cl(\Lambda^1(X_2),g_2^{\Lambda}).$$

\begin{prop}
Let $x\in\tilde{i}_1(X_1)\subseteq X_1\cup_f X_2$. Then the map $\tilde{\rho}_1^{\Lambda}$ restricted to the fibre $\Lambda_x^1(X_1\cup_f X_2)$ determines a smooth algebra homomorphism 
$$\tilde{\rho}_1^{\cl(\Lambda),x}:\cl(\Lambda_x^1(X_1\cup_f X_2),g^{\Lambda}(x))\to\cl(\Lambda_{\tilde{i}_1^{-1}(x)}^1(X_1),g_1^{\Lambda}(\tilde{i}_1^{-1}(x))).$$ Likewise, if 
$x\in i_2(X_2)\subseteq X_1\cup_f X_2$ then the restriction of the map $\tilde{\rho}_2^{\Lambda}$ to the fibre $\Lambda_x^1(X_1\cup_f X_2)$ yields a smooth algebra homomorphism 
$$\tilde{\rho}_2^{\cl(\Lambda),x}:\cl(\Lambda_x^1(X_1\cup_f X_2),g^{\Lambda}(x))\to\cl(\Lambda_{i_2^{-1}(x)},g_2^{\Lambda}(i_2^{-1}(x))).$$
\end{prop}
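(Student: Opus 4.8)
The plan is to prove the statement fibrewise: first identify the restriction of $\tilde{\rho}_1^{\Lambda}$ to a single fibre as a linear isometry of pseudo-metrics, and then invoke the universal property of Clifford algebras recalled in Section 1.1. First I would recall from Theorem \ref{fibres:of:lambda:thm} that for $x\in\tilde{i}_1(X_1)=i_1(X_1\setminus Y)\cup i_2(f(Y))$ the fibre $\Lambda_x^1(X_1\cup_f X_2)$ is either $\Lambda_{\tilde{x}}^1(X_1)$, with $\tilde{x}=\tilde{i}_1^{-1}(x)$, when $x\in i_1(X_1\setminus Y)$, or $\Lambda_y^1(X_1)\oplus_{comp}\Lambda_{f(y)}^1(X_2)$, with $y=\tilde{i}_1^{-1}(x)=f^{-1}(i_2^{-1}(x))$, when $x\in i_2(f(Y))$. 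In the first case the restriction of $\tilde{\rho}_1^{\Lambda}$ is the identification itself, hence a linear isomorphism; in the second it is the projection onto the first summand. In either case it is a linear map $\Lambda_x^1(X_1\cup_f X_2)\to\Lambda_{\tilde{i}_1^{-1}(x)}^1(X_1)$, and both spaces carry the standard diffeology by Proposition \ref{fin-dim:lambda:implies:standard:fibres:prop}.

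The key step is to verify the isometry identity $g^{\Lambda}(x)(\alpha,\beta)=g_1^{\Lambda}(\tilde{i}_1^{-1}(x))(\tilde{\rho}_1^{\Lambda}(\alpha),\tilde{\rho}_1^{\Lambda}(\beta))$ for all $\alpha,\beta\in\Lambda_x^1(X_1\cup_f X_2)$. For $x\in i_1(X_1\setminus Y)$ this is immediate from the very definition of $g^{\Lambda}$ (Section 8), where over such points $g^{\Lambda}$ is by construction $g_1^{\Lambda}\circ(\tilde{\rho}_1^{\Lambda}\times\tilde{\rho}_1^{\Lambda})$. For $x\in i_2(f(Y))$ the definition gives the symmetrized expression $\frac{1}{2}\left(g_1^{\Lambda}(y)(\tilde{\rho}_1^{\Lambda}(\cdot),\tilde{\rho}_1^{\Lambda}(\cdot))+g_2^{\Lambda}(f(y))(\tilde{\rho}_2^{\Lambda}(\cdot),\tilde{\rho}_2^{\Lambda}(\cdot))\right)$; here I would apply Proposition \ref{compatible:pseudo-metrics:on:lambda:and:maps:rho:prop}, which states precisely that for forms over the domain of gluing $g_1^{\Lambda}(y)(\tilde{\rho}_1^{\Lambda}(\cdot),\tilde{\rho}_1^{\Lambda}(\cdot))=g_2^{\Lambda}(f(y))(\tilde{\rho}_2^{\Lambda}(\cdot),\tilde{\rho}_2^{\Lambda}(\cdot))$, so the two summands coincide and the average collapses to $g_1^{\Lambda}(y)\circ(\tilde{\rho}_1^{\Lambda}\times\tilde{\rho}_1^{\Lambda})$. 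This collapse, in which both the compatibility of $g_1^{\Lambda}$ with $g_2^{\Lambda}$ and the coefficient $\frac{1}{2}$ are used, is the only genuinely nontrivial point and is where I expect the main work to lie.

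With the isometry established I would produce the algebra homomorphism via the universal property. Composing $\tilde{\rho}_1^{\Lambda}|_{\Lambda_x^1(X_1\cup_f X_2)}$ with the canonical inclusion of $\Lambda_{\tilde{i}_1^{-1}(x)}^1(X_1)$ into $\cl(\Lambda_{\tilde{i}_1^{-1}(x)}^1(X_1),g_1^{\Lambda}(\tilde{i}_1^{-1}(x)))$ yields a linear map whose square on a vector $v$ equals the scalar determined by $g_1^{\Lambda}(\tilde{\rho}_1^{\Lambda}(v),\tilde{\rho}_1^{\Lambda}(v))=g^{\Lambda}(x)(v,v)$ by the isometry; hence it satisfies the defining Clifford relation of $\cl(\Lambda_x^1(X_1\cup_f X_2),g^{\Lambda}(x))$, and the universal property extends it uniquely to the algebra homomorphism $\tilde{\rho}_1^{\cl(\Lambda),x}$. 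Smoothness is then automatic: since each fibre is a standard finite-dimensional diffeological vector space, so is its Clifford algebra, and every linear map between such spaces — in particular the homomorphism just built — is smooth. The claim for $\tilde{\rho}_2^{\cl(\Lambda),x}$ is entirely symmetric, using the third line of the definition of $g^{\Lambda}$ over $i_2(X_2\setminus f(Y))$ together with the same collapse over $i_2(f(Y))$.
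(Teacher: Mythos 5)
Your proposal is correct and follows essentially the same route as the paper's own proof: you establish that $\tilde{\rho}_1^{\Lambda}$ and $\tilde{\rho}_2^{\Lambda}$ are fibrewise isometries --- immediate from the definition of $g^{\Lambda}$ off the gluing locus, and over $i_2(f(Y))$ from the collapse of the averaged expression via the compatibility of $g_1^{\Lambda}$ and $g_2^{\Lambda}$ (Proposition \ref{compatible:pseudo-metrics:on:lambda:and:maps:rho:prop}) --- and then invoke the universal property of Clifford algebras, with smoothness free since all fibres are standard. Your write-up merely spells out in more detail (the fibre decomposition of Theorem \ref{fibres:of:lambda:thm}, the role of the coefficient $\frac12$) what the paper compresses into its two-line argument.
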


\begin{proof}
This follows from the universal properties of the Clifford algebras; it suffices to observe that the maps $\tilde{\rho}_1^{\Lambda}$ and $\tilde{\rho}_2^{\Lambda}$ are isometries, which follows from the 
construction of $g^{\Lambda}$, and the compatibility of $g_1^{\Lambda}$ and $g_2^{\Lambda}$, the latter ensuring that for any $\alpha\in\Lambda^1(X_1\cup_f X_2)$ such that 
$\pi^{\Lambda}(\alpha)\in i_2(f(Y))$ we have 
$$g^{\Lambda}(\pi^{\Lambda}(\alpha))(\alpha,\alpha)=g_1^{\Lambda}(\tilde{i}_1^{-1}(\pi^{\Lambda}(\alpha)))(\tilde{\rho}_1^{\Lambda}(\alpha),\tilde{\rho}_1^{\Lambda}(\alpha))= 
g_2^{\Lambda}(i_2^{-1}(\pi^{\Lambda}(\alpha)))(\tilde{\rho}_2^{\Lambda}(\alpha),\tilde{\rho}_2^{\Lambda}(\alpha)).$$
\end{proof}

\subsubsection{Compatibility of Clifford actions by $\Lambda^1(X_1)$ and $\Lambda^1(X_2)$}

Let $c_1$ be a smooth action of $\cl(\Lambda^1(X_1),g_1^{\Lambda})$ on $E_1$, and let $c_2$ be a smooth action of $\cl(\Lambda^1(X_2),g_2^{\Lambda})$ on $E_2$. Consider the gluing of the two 
base spaces, $X_1$ and $X_2$, along a given smooth map $f:X_1\supseteq Y\to X_2$ (usually a diffeomorphism and satisfying the extendibility condition $\calD_1^{\Omega}=\calD_2^{\Omega}$, 
although these are not strictly necessary for the definition below). 

\begin{defn}\label{compatible:clifford:actions:of:lambdas:defn}
The actions $c_1$ and $c_2$ are \textbf{compatible} if for all $x\in i_2(f(Y))$, for all $\alpha\in(\pi^{\Lambda})^{-1}(x)$, and for all $e_1\in\chi_1^{-1}(\tilde{i}_1^{-1}(x))$ we have that
$$(c_2(\tilde{\rho}_2^{\Lambda}(\alpha))(i_2^{-1}(x))))(\tilde{f}'(e_1))=\tilde{f}'((c_1(\tilde{\rho}_1^{\Lambda}(\alpha))(\tilde{i}_1^{-1}(x)))(e_1)).$$
\end{defn}

The aim of this notion is to ensure that $E_1\cup_{\tilde{f}'}E_2$ carries a well-defined smooth Clifford action by $\Lambda^1(X_1\cup_f X_2)$, and in particular the middle line of the formula in Definition 
\ref {compatible:clifford:actions:of:dual:lambdas:defn} allows for the action to be smooth across both $\tilde{i}_1(X_1)$ and $i_2(X_2)$ (see the next section for the proof).

\subsubsection{The induced Clifford action of $\cl(\Lambda^1(X_1\cup_f X_2),g^{\Lambda})$ on $E_1\cup_{\tilde{f}'}E_2$}

As we have seen in Section 7, if there is an appropriate gluing of Clifford algebras then $E_1\cup_{\tilde{f}'}E_2$ is naturally a Clifford module over the result of that gluing. However, in the case of 
$\cl(\Lambda^1(X_1),g_1^{\Lambda})$ and $\cl(\Lambda^1(X_2),g_2^{\Lambda})$, the pseudo-bundle of algebras resulting from the gluing is not $\cl(\Lambda^1(X_1\cup_f X_2),g^{\Lambda})$, whereas 
we want $E_1\cup_{\tilde{f}'}E_2$ to be a Clifford module over the latter. In this section we construct the appropriate action.

\begin{defn}\label{induced:clifford:action:of:lambda:glued:defn}
Let $x\in X_1\cup_f X_2$, let $\alpha\in\Lambda_x^1(X_1\cup_f X_2)$, and let $e\in(\chi_1\cup_{(\tilde{f}',f)}\chi_2)^{-1}(x)$. Define the \textbf{induced action} $\tilde{c}$ of $\Lambda^1(X_1\cup_f X_2)$ on 
$E_1\cup_{\tilde{f}'}E_2$ by setting that
$$\tilde{c}(\alpha)(x)(e):=\left\{\begin{array}{cl} 
j_1^{E_1}((c_1(\tilde{\rho}_1^{\Lambda}(\alpha))(i_1^{-1}(x)))((j_1^{E_1})^{-1}(e))) & \mbox{if }x\in i_1(X_1\setminus Y), \\
j_2^{E_2}((c_2(\tilde{\rho}_2^{\Lambda}(\alpha))(i_2^{-1}(x)))((j_2^{E_2})^{-1}(e))) & \mbox{if }x\in i_2(f(Y)), \\
j_2^{E_2}((c_2(\tilde{\rho}_2^{\Lambda}(\alpha))(i_2^{-1}(x)))((j_2^{E_2})^{-1}(e))) & \mbox{if }x\in i_2(X_2\setminus f(Y)).
\end{array}\right.$$
\end{defn}

Observe that the compatibility condition ensures that over $\tilde{i}_1(X_1)$ the action $\tilde{c}$ is equivalent to $c_1$ (while by definition over $i_2(X_2)$ it is equivalent to $c_2$), which allows to show 
that $\tilde{c}$ is smooth; it being a linear action on each fibre is obvious from the construction.

\begin{thm}\label{clifford:action:of:lambda:glued:thm}
Let $X_1$ and $X_2$ be such that $\Lambda^1(X_1)$ and $\Lambda^1(X_2)$ are finite-dimensional, and let $f:X_1\supseteq Y\to X_2$ be a gluing diffeomorphism such that 
$\calD_1^{\Omega}=\calD_2^{\Omega}$. Let $g_1^{\Lambda}$ and $g_2^{\Lambda}$ be compatible pseudo-metrics on $\Lambda^1(X_1)$ and $\Lambda^1(X_2)$ respectively, and let $g^{\Lambda}$ be 
the induced pseudo-metric on $\Lambda^1(X_1\cup_f X_2)$. Let $\chi_1:E_1\to X_1$ be a pseudo-bundle of Clifford modules over $\cl(\Lambda^1(X_1),g_1^{\Lambda})$ with Clifford action $c_1$, let 
$\chi_2:E_2\to X_2$ be a pseudo-bundle of Clifford modules over $\cl(\Lambda^1(X_2),g_2^{\Lambda})$ with Clifford action $c_2$, let $\tilde{f}':\chi_1^{-1}(Y)\to E_2$ be a fibrewise linear diffeomorphism 
that covers $f$, and suppose that $c_1$ and $c_2$ are compatible with the gluing along $(\tilde{f}',f)$. Then the induced action $\tilde{c}$ yields a well-defined smooth Clifford action of 
$\cl(\Lambda^1(X_1\cup_f X_2),g^{\Lambda})$ on $E_1\cup_{\tilde{f}'}E_2$.
\end{thm}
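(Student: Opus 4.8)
The plan is to verify the three claims bundled in the statement in turn: that $\tilde{c}(\alpha)(x)$ is well-defined and linear on each fibre of $E_1\cup_{\tilde{f}'}E_2$; that it gives a Clifford module action on that fibre, \emph{i.e.} that $\tilde{c}(\alpha)(x)^2=-g^{\Lambda}(x)(\alpha,\alpha)\mbox{Id}$ so that it factors through $\cl(\Lambda^1(X_1\cup_f X_2),g^{\Lambda})$; and that the resulting map is smooth as a map of pseudo-bundles $\cl(\Lambda^1(X_1\cup_f X_2),g^{\Lambda})\to\mathcal{L}(E_1\cup_{\tilde{f}'}E_2,E_1\cup_{\tilde{f}'}E_2)$. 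The fibrewise well-definedness is immediate off the domain of gluing, where every fibre of $E_1\cup_{\tilde{f}'}E_2$ is (via $j_1^{E_1}$ or $j_2^{E_2}$) canonically one of $E_1$ or $E_2$ and $\tilde{c}$ is literally $c_1$ or $c_2$ transported across the induction. The only point needing care is the overlap: over $i_2(f(Y))$ the fibre of $E_1\cup_{\tilde{f}'}E_2$ is identified via $j_2^{E_2}$ with a fibre of $E_2$, and Definition 12.6 (compatibility of $c_1$ and $c_2$) is exactly what guarantees that the two candidate formulas — the one coming from $c_1\circ\tilde{\rho}_1^{\Lambda}$ and the one coming from $c_2\circ\tilde{\rho}_2^{\Lambda}$ — agree there. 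So I would first record that the middle and third branches of the formula in Definition 12.7 coincide on $i_2(f(Y))$ precisely by the compatibility of the actions, making $\tilde{c}$ agree with $c_1$ under the identification over all of $\tilde{i}_1(X_1)$ and with $c_2$ over all of $i_2(X_2)$.

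Next I would establish that $\tilde{c}$ is a Clifford action fibrewise. Here the key input is the preceding Proposition, which says that the restrictions $\tilde{\rho}_1^{\cl(\Lambda),x}$ and $\tilde{\rho}_2^{\cl(\Lambda),x}$ are algebra homomorphisms of the relevant fibrewise Clifford algebras; this in turn rests on $\tilde{\rho}_1^{\Lambda}$ and $\tilde{\rho}_2^{\Lambda}$ being fibrewise isometries for $g^{\Lambda}$ versus $g_1^{\Lambda}$ and $g_2^{\Lambda}$, which follows from the explicit definition of the induced pseudo-metric $g^{\Lambda}$ (Section 8) together with the compatibility of $g_1^{\Lambda}$ and $g_2^{\Lambda}$. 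Granting the isometry property, for $\alpha\in\Lambda_x^1(X_1\cup_f X_2)$ I would compute, say on $\tilde{i}_1(X_1)$,
$$\tilde{c}(\alpha)(x)^2=j_1^{E_1}\circ c_1(\tilde{\rho}_1^{\Lambda}(\alpha))^2\circ(j_1^{E_1})^{-1}=-g_1^{\Lambda}(\tilde{\rho}_1^{\Lambda}(\alpha),\tilde{\rho}_1^{\Lambda}(\alpha))\,\mbox{Id}=-g^{\Lambda}(\alpha,\alpha)\,\mbox{Id},$$
the last equality being the isometry, and symmetrically on $i_2(X_2)$. By the universal property of the Clifford algebra this extends the degree-one action to a genuine algebra homomorphism $\cl(\Lambda_x^1(X_1\cup_f X_2),g^{\Lambda}(x))\to\mbox{End}((\chi_1\cup_{(\tilde{f}',f)}\chi_2)^{-1}(x))$, fibre by fibre, so that $\tilde{c}$ is a Clifford action on each fibre.

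Finally, smoothness across fibres is what ties everything together, and it is the step I expect to be the main obstacle. The strategy mirrors the proof of Theorem 6.3 and of the smoothness of the glued pseudo-metric $g^{\Lambda}$ in Section 8: take an arbitrary plot of $\cl(\Lambda^1(X_1\cup_f X_2),g^{\Lambda})$ and plots of $E_1\cup_{\tilde{f}'}E_2$, lift locally (restricting to a connected subdomain) through the gluing structure so that the base plot lifts either to a plot of $X_1$ or to a plot of $X_2$, and then express the evaluation of $\tilde{c}$ on these plots. On the $X_2$-side the evaluation coincides with one for $c_2$, which is smooth by hypothesis; on the $X_1$-side, using the fact — secured in the first step from the compatibility of $c_1$ and $c_2$ — that $\tilde{c}$ agrees with $c_1$ over all of $\tilde{i}_1(X_1)$ (including the gluing locus), the evaluation coincides with one for $c_1$, again smooth by hypothesis. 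The delicate part is handling the overlap carefully: one must check that the local lifts of the $E_1\cup_{\tilde{f}'}E_2$-plots and of the $\cl(\Lambda^1(X_1\cup_f X_2),g^{\Lambda})$-plot are compatible through $\tilde{\rho}_1^{\Lambda}$ and $\tilde{\rho}_2^{\Lambda}$ on the preimage of $i_2(f(Y))$, so that no discontinuity is introduced where the two branches of the definition meet. This is where the weakness of the gluing diffeology is used (as in Section 8): the local shapes of plots of $\Lambda^1(X_1\cup_f X_2)$, hence of its Clifford pseudo-bundle, separate into those lifting to $X_1$ and those lifting to $X_2$, and on each piece $\tilde{c}$ reduces to a known-smooth action. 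Assembling these observations yields that every evaluation map for $\tilde{c}$ is smooth, hence $\tilde{c}$ is a smooth pseudo-bundle map, completing the proof.
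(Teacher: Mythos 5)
Your proposal is correct and follows essentially the same route as the paper's proof: well-definedness is deduced from the compatibility of $c_1$ and $c_2$, and smoothness is established by evaluating $\tilde{c}$ on plots, reducing to the smoothness of $c_1$ over $\tilde{i}_1(X_1)$ (via $\tilde{\rho}_1^{\Lambda}$ and the map $\tilde{j}_1$) and of $c_2$ over $i_2(X_2)$. Your explicit fibrewise verification of the Clifford relation via the isometry property and the universal property is handled in the paper by the proposition immediately preceding the theorem (the algebra homomorphisms $\tilde{\rho}_i^{\cl(\Lambda),x}$), so it is the same argument presented in a slightly more expanded form.
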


\begin{proof}
The fact that $\tilde{c}$ is well-defined follows from the compatibility of $c_1$ and $c_2$. To show that it is smooth, it essentially suffices to observe that, again by definition of the compatibility of Clifford actions, 
over $\tilde{i}_1(X_1)$ it essentially (up to technicalities of the gluing construction) coincides with $c_1$, and over $i_2(X_2)$ it coincides, in the same sense, with $c_2$. To illustrate this, let 
$q:U'\to\Lambda^1(X_1\cup_f X_2)$ be a plot of $\Lambda^1(X_1\cup_f X_2)$ such that $\mbox{Range}(\pi^{\Lambda}\circ q)\subseteq\tilde{i}_1(X_1)$, and let $p:U\to E_1\cup_{\tilde{f}'}E_2$ be a plot of 
$E_1\cup_{\tilde{f}'}E_2$ such that $\mbox{Range}((\chi_1\cup_{(\tilde{f}',f)}\chi_2)\circ p)\subseteq i_2(X_2)$. We need to check that the evaluation function
$$(u',u)\mapsto\tilde{c}(q(u'))(p(u))$$ defined on the subset $Z_{q,p}:=\{(u',u)\,|\,\pi^{\Lambda}(q(u'))=(\chi_1\cup_{(\tilde{f}',f)}\chi_2)(p(u))\}\subseteq U'\times U$ considered with the subset diffeology is smooth 
as a map into $E_1\cup_{\tilde{f}'}E_2$. This evaluation function has form
$$\tilde{c}(q(u'))(p(u))=\left\{\begin{array}{ll}
j_1^{E_1}(c_1(\tilde{\rho}_1^{\Lambda}(q(u')))((j_1^{E_1})^{-1}(p(u)))) & \mbox{for }u\mbox{ such that }p(u)\in i_1(X_1\setminus Y), \\
(j_2^{E_2}\circ\tilde{f}')(c_1(\tilde{\rho}_1^{\Lambda}(q(u')))((j_2^{E_2}\circ\tilde{f}')^{-1}(p(u)))) & \mbox{for }u\mbox{ such that }p(u)\in\tilde{i}_1(Y)=i_2(f(Y)).
\end{array}\right.$$ 

Let $\tilde{j}_1:E_1\to E_1\cup_{\tilde{f}'}E_2$ be defined by 
$$\tilde{j}_1(e_1)=\left\{\begin{array}{ll} 
j_1^{E_1}(e_1) & \mbox{if }\chi_1(e_1)\in X_1\setminus Y, \\
(j_2^{E_2}\circ\tilde{f}')(e_1) & \mbox{if }\chi_1(e_1)\in Y.
\end{array}\right.$$
Observe that by the definition of gluing diffeology $\tilde{j}_1^{-1}\circ p$ is some plot $p_1$ of $E_1$, and that $\tilde{\rho}_1^{\Lambda}\circ q$ is a plot $q_1$ of $\Lambda^1(X_1)$. Since $c_1$ is a smooth 
action by assumption, the evaluation function $(u',u)\mapsto c_1(q_1(u'))(\tilde{j}_1^{-1}(p(u)))$ is smooth; in particular, its pre-composition with any plot of $Z_{p,q}$ is a plot of $E_1$. Since we have 
$$\tilde{c}(q(u'))(p(u))=\left\{\begin{array}{ll} 
j_1^{E_1}(c_1(q_1(u'))(\tilde{j}_1^{-1}(p(u)))) & \mbox{for }u\mbox{ such that }p(u)\in i_1(X_1\setminus Y), \\
(j_2^{E_2}\circ\tilde{f}')(c_1(q_1(u'))(\tilde{j}_1^{-1}(p(u)))) &\mbox{for }u\mbox{ such that }p(u)\in\tilde{i}_1(Y)=i_2(f(Y))
\end{array}\right.$$ (essentially one of the standard forms of plots of $E_1\cup_{\tilde{f'}}E_2$) we conclude that the evaluation function for $\tilde{c}$ relative to plots $q$ and $p$ is indeed smooth as a map 
into $E_1\cup_{\tilde{f'}}E_2$. The other case (when $q$ and $p$ take values in fibres over $i_2(X_2)$) is treated similarly (it is actually simpler), so we obtain the claim.
\end{proof}

Usually, Clifford actions involved in the constructions of Dirac operators are assumed to be unitary. The diffeological concept of a unitary Clifford action does not really differ from the usual notion and is 
as follows (we state it only for Clifford modules over some $\cl(\Lambda^1(X),g)$). 

\begin{defn}\label{unitary:clifford:action:defn}
Let $X$ be a diffeological space such that $\Lambda^1(X)$ carries a pseudo-metric $g^{\Lambda}$, let $\chi:E\to X$ be a pseudo-bundle of Clifford modules over $\cl(\Lambda^1(X),g^{\Lambda})$ with 
Clifford action $c$, and let $g$ be a pseudo-metric on $E$. The action $c$ is said to be \textbf{unitary} if
$$g(x)(c(\alpha)e_1,c(\alpha)(e_2))=g(x)(e_1,e_2)$$ for all $x\in X$, for all $\alpha\in\Lambda_x^1(X)$ such that $g^{\Lambda}(x)(\alpha,\alpha)=1$, and for all $e_1,e_2\in\chi^{-1}(x)=E_x$.
\end{defn}

It is essentially the consequence of the compatibility notion for pseudo-metrics on $\Lambda^1(X_1)$ and $\Lambda^1(X_2)$ (see Definition \ref{compatible:pseudo-metrics:on:lambda:defn}) that gluing 
together two unitary actions yields a unitary action.

\begin{prop}\label{induced:clifford:action:is:unitary:prop}
Let $\chi_1:E_1\to X_1$ be a pseudo-bundle of Clifford modules over $\cl(\Lambda^1(X_1),g_1^{\Lambda})$ with Clifford action $c_1$, let $\chi_2:E_2\to X_2$ be a pseudo-bundle of Clifford modules over 
$\cl(\Lambda^1(X_2),g_2^{\Lambda})$ with Clifford action $c_2$, let $f:X_1\supseteq Y\to X_2$ be a diffeomorphism such that $\calD_1^{\Omega}=\calD_2^{\Omega}$, and let $\tilde{f}':\chi_1^{-1}(Y)\to E_2$ 
be a fibrewise linear diffeomorphism that covers $f$. Assume that $g_1^{\Lambda}$ and $g_2^{\Lambda}$ are compatible with $f$ as pseudo-metrics on $\Lambda^1(X_1)$ and $\Lambda^1(X_2)$, and that 
$c_1$ and $c_2$ are compatible Clifford actions; let $g^{\Lambda}$ be the induced pseudo-metric on $\Lambda^1(X_1\cup_f X_2)$, and let $\tilde{c}$ be the induced Clifford action of 
$\cl(\Lambda^1(X_1\cup_f X_2),g^{\Lambda})$ on $E_1\cup_{\tilde{f}'}E_2$. Suppose that $E_1$ and $E_2$ are endowed with compatible pseudo-metrics $g_1$ and $g_2$, and let $\tilde{g}$ be the 
induced pseudo-metric on $E_1\cup_{\tilde{f}'}E_2$. If the actions $c_1$ and $c_2$ are unitary then $\tilde{c}$ is a unitary action as well.
\end{prop}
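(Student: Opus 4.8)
The plan is to prove the unitarity of $\tilde{c}$ fibrewise, exploiting the fact that every point of $X_1\cup_f X_2$ lies either in $i_1(X_1\setminus Y)$, in $i_2(X_2\setminus f(Y))$, or in $i_2(f(Y))$, and that on each of these pieces the induced action $\tilde{c}$ coincides (via the standard inductions) with $c_1$ or $c_2$, while the induced pseudo-metric $\tilde{g}$ coincides with $g_1$ or $g_2$, as described in Theorem \ref{glued:pseudometric:noncommutative:thm} and in Definition \ref{induced:clifford:action:of:lambda:glued:defn}. Since unitarity (Definition \ref{unitary:clifford:action:defn}) is a pointwise condition, and $\tilde{c}$, $\tilde{g}$, and the induced pseudo-metric $g^{\Lambda}$ are all defined fibrewise by transporting the corresponding objects from $E_1$ or $E_2$, the verification reduces to a fibrewise check.

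First I would dispose of the two easy cases. Take $x\in i_1(X_1\setminus Y)$ and let $\alpha\in\Lambda_x^1(X_1\cup_f X_2)$ with $g^{\Lambda}(x)(\alpha,\alpha)=1$, and $e_1,e_2\in(\chi_1\cup_{(\tilde{f}',f)}\chi_2)^{-1}(x)$. By the definition of $\tilde{\rho}_1^{\Lambda}$ and the fact that $g^{\Lambda}$ restricts to $g_1^{\Lambda}$ over this piece, the form $\tilde{\rho}_1^{\Lambda}(\alpha)$ is a unit vector for $g_1^{\Lambda}(i_1^{-1}(x))$; since $\tilde{c}(\alpha)(x)=j_1^{E_1}\circ c_1(\tilde{\rho}_1^{\Lambda}(\alpha))\circ(j_1^{E_1})^{-1}$ and $\tilde{g}(x)$ is the transport of $g_1(i_1^{-1}(x))$ by $(j_1^{E_1})^{-1}$, the desired equality $\tilde{g}(x)(\tilde{c}(\alpha)e_1,\tilde{c}(\alpha)e_2)=\tilde{g}(x)(e_1,e_2)$ follows at once from the unitarity of $c_1$. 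The case $x\in i_2(X_2\setminus f(Y))$ is identical with $c_2$, $g_2$, $j_2^{E_2}$, and $\tilde{\rho}_2^{\Lambda}$ in place of the former data.

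The heart of the matter is the conical (gluing) piece $x\in i_2(f(Y))$. Here the fibre of $\Lambda^1(X_1\cup_f X_2)$ is $\Lambda_y^1(X_1)\oplus_{comp}\Lambda_{f(y)}^1(X_2)$ with $y=f^{-1}(i_2^{-1}(x))$, so that $\alpha$ has \emph{two} images $\tilde{\rho}_1^{\Lambda}(\alpha)$ and $\tilde{\rho}_2^{\Lambda}(\alpha)$, which are compatible forms. The key observation I would use is that both maps $\tilde{\rho}_1^{\Lambda}$ and $\tilde{\rho}_2^{\Lambda}$ are isometries for $g^{\Lambda}$ (this is exactly the identity established in the proof of the preceding Proposition, namely $g^{\Lambda}(\alpha,\alpha)=g_1^{\Lambda}(\tilde{\rho}_1^{\Lambda}(\alpha),\tilde{\rho}_1^{\Lambda}(\alpha))=g_2^{\Lambda}(\tilde{\rho}_2^{\Lambda}(\alpha),\tilde{\rho}_2^{\Lambda}(\alpha))$, which rests on the compatibility of $g_1^{\Lambda}$ and $g_2^{\Lambda}$). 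Consequently $g^{\Lambda}(x)(\alpha,\alpha)=1$ forces $g_2^{\Lambda}(i_2^{-1}(x))(\tilde{\rho}_2^{\Lambda}(\alpha),\tilde{\rho}_2^{\Lambda}(\alpha))=1$, i.e. $\tilde{\rho}_2^{\Lambda}(\alpha)$ is a genuine unit vector for $g_2^{\Lambda}$. Since over $i_2(f(Y))$ Definition \ref{induced:clifford:action:of:lambda:glued:defn} makes $\tilde{c}(\alpha)(x)$ equal to $j_2^{E_2}\circ c_2(\tilde{\rho}_2^{\Lambda}(\alpha))\circ(j_2^{E_2})^{-1}$, and $\tilde{g}(x)$ is the transport of $g_2(i_2^{-1}(x))$, the unitarity of $c_2$ on this unit vector yields the claim. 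Thus the compatibility of the pseudo-metrics is precisely what guarantees that the normalization condition $g^{\Lambda}(x)(\alpha,\alpha)=1$ translates into a normalization condition for the branch actually used to define $\tilde{c}$.

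The step I expect to be the main obstacle is confirming, on the conical piece, that the isometry identity for $\tilde{\rho}_2^{\Lambda}$ genuinely transfers the normalization $g^{\Lambda}(x)(\alpha,\alpha)=1$ to $\tilde{\rho}_2^{\Lambda}(\alpha)$, and that this is the image relevant to the branch of $\tilde{c}$ selected by Definition \ref{induced:clifford:action:of:lambda:glued:defn} (which uses $c_2$ and $\tilde{\rho}_2^{\Lambda}$ over $i_2(f(Y))$), rather than the $c_1$-branch invoked in the compatibility condition of Definition \ref{compatible:clifford:actions:of:lambdas:defn}. One must check carefully that the two descriptions agree, i.e. that applying $c_1$ to $\tilde{\rho}_1^{\Lambda}(\alpha)$ and then transporting by $\tilde{f}'$ gives the same operator as applying $c_2$ to $\tilde{\rho}_2^{\Lambda}(\alpha)$; this is exactly the content of the compatibility of $c_1$ and $c_2$, so the unitarity of $c_1$ could equally be invoked there, and consistency of the two routes is what needs verifying. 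Once this bookkeeping between the compatibility conditions on actions and on pseudo-metrics is made explicit, the remaining computation is routine transport of bilinear forms, and the three cases together establish that $\tilde{c}$ is unitary.
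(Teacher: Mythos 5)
Your proposal is correct and follows essentially the same route as the paper: both arguments reduce the claim to the fibrewise identification of $\tilde{c}$ and $\tilde{g}$ with $c_1,g_1$ or $c_2,g_2$ via the standard inductions, and then to the fact that $\tilde{\rho}_1^{\Lambda}$ and $\tilde{\rho}_2^{\Lambda}$ carry $g^{\Lambda}$-unit elements to $g_i^{\Lambda}$-unit elements, which is read off from the three-case formula for $g^{\Lambda}$ together with the compatibility of $g_1^{\Lambda}$ and $g_2^{\Lambda}$.
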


\begin{proof}
It suffices to show that $\alpha\in\Lambda^1(X_1\cup_f X_2)$ is unitary if and only if either both, or one of (since they may not be both defined), $\tilde{\rho}_1^{\Lambda}(\alpha)$, 
$\tilde{\rho}_2^{\Lambda}(\alpha)$ are unitary. Indeed, let $x=\pi^{\Lambda}(\alpha)$. Indeed, let $x=\pi^{\Lambda}(\alpha)$, and let $e_1,e_2\in(\chi_1\cup_{(\tilde{f}',f)}\chi_2)^{-1}(x)$. Then by definition
\begin{flushleft}
$\tilde{g}(x)(\tilde{c}(\alpha)e_1,\tilde{c}(\alpha)(e_2))=$
\end{flushleft}
$$\left\{\begin{array}{ll}
g_1(i_1^{-1}(x))((c_1(\tilde{\rho}_1^{\alpha}(\alpha))(i_1^{-1}(x)))((j_1^{E_1})^{-1}(e_1)),(c_1(\tilde{\rho}_1^{\alpha}(\alpha))(i_1^{-1}(x)))((j_1^{E_1})^{-1}(e_2))) & \mbox{if }x\in i_1(X_1\setminus Y), \\
g_2(i_2^{-1}(x))((c_2(\tilde{\rho}_2^{\alpha}(\alpha))(i_2^{-1}(x)))((j_2^{E_1})^{-1}(e_1)),(c_2(\tilde{\rho}_2^{\alpha}(\alpha))(i_2^{-1}(x)))((j_2^{E_1})^{-1}(e_2))) & \mbox{if }x\in i_2(X_2),
\end{array}\right.$$
$$\tilde{g}(x)(e_1,e_2)=\left\{\begin{array}{ll}
g_1(i_1^{-1}(x))((j_1^{E_1})^{-1}(e_1),(j_1^{E_1})^{-1}(e_2)) & \mbox{if }x\in i_1(X_1\setminus Y), \\
g_2(i_2^{-1}(x))((j_2^{E_1})^{-1}(e_1),(j_2^{E_1})^{-1}(e_2)) & \mbox{if }x\in i_2(X_2).
\end{array}\right.$$ It thus suffices to show that $\tilde{\rho}_1^{\Lambda}$ and $\tilde{\rho}_2^{\Lambda}$ preserve the unitarity of the actions. This follows from the definition of the pseudo-metric 
$g^{\Lambda}$. Indeed, 
$$g^{\Lambda}(x)(\alpha,\alpha)=\left\{\begin{array}{cl} 
g_1^{\Lambda}(i_1^{-1}(x))(\tilde{\rho}_1^{\Lambda}(\alpha),\tilde{\rho}_1^{\Lambda}(\alpha)) & \mbox{if }x\in i_1(X_1\setminus Y), \\
\frac12 g_1^{\Lambda}(\tilde{i}_1^{-1}(x))(\tilde{\rho}_1^{\Lambda}(\alpha),\tilde{\rho}_1^{\Lambda}(\alpha))+
\frac12 g_2^{\Lambda}(i_2^{-1}(x))(\tilde{\rho}_2^{\Lambda}(\alpha),\tilde{\rho}_2^{\Lambda}(\alpha)) &  \mbox{if }x\in i_2(f(Y)), \\
g_2^{\Lambda}(i_2^{-1}(x))(\tilde{\rho}_2^{\Lambda}(\alpha),\tilde{\rho}_2^{\Lambda}(\alpha)) &  \mbox{if }x\in i_2(X_2\setminus f(Y)),
\end{array}\right.$$ from which the claim is obvious.
\end{proof}

\subsubsection{A Clifford connection on $E_1\cup_{\tilde{f}'}E_2$ out of those on $E_1$ and $E_2$}

Let $\nabla^1$ be a Clifford connection on $E_1$, that carries a smooth Clifford action $c_1$ by $\cl(\Lambda^1(X_1),g_1^{\Lambda})$, and let $\nabla^2$ be a Clifford connection on $E_2$, that has a 
smooth Clifford action $c_2$ of $\cl(\Lambda^1(X_2),g_2^{\Lambda})$. Let $\nabla^{\Lambda,1}$ and $\nabla^{\Lambda,2}$ be the Levi-Civita connections on $(\Lambda^1(X_1),g_1^{\Lambda})$ and 
$(\Lambda^1(X_2),g_2^{\Lambda})$ respectively (whose existence is by assumption). Let $f:X_1\supseteq Y\to X_2$ be a diffeomorphism satisfying $\calD_1^{\Omega}=\calD_2^{\Omega}$. We assume that 
$c_1$ and $c_2$ are compatible in the sense of Definition \ref{compatible:clifford:actions:of:lambdas:defn}, $g_1^{\Lambda}$ and $g_2^{\Lambda}$ are compatible in the sense of Definition 
\ref{compatible:pseudo-metrics:on:lambda:defn}, $\nabla^1$ and $\nabla^2$ are compatible in the sense of Definition \ref{compatibility:of:connections:defn}, and $\nabla^{\Lambda,1}$ and 
$\nabla^{\Lambda,2}$ are compatible in the sense of Definition \ref{compatibility:of:connections:on:lambda:defn}.

\begin{thm}\label{induced:connection:is:clifford:thm}
Let $\nabla^{\cup}$ be the induced connection on $E_1\cup_{\tilde{f}'}E_2$, and let $\tilde{c}$ be the induced Clifford action of $\cl(\Lambda^1(X_1\cup_f X_2),g^{\Lambda})$ on it. Then $\nabla^{\cup}$ is 
a Clifford connection.
\end{thm}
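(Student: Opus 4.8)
The plan is to verify the defining identity of a Clifford connection
$$\nabla^{\cup}_t(\tilde c(s)r)=\tilde c(\nabla_{\cup,t}\,s)(r)+\tilde c(s)(\nabla^{\cup}_t r)$$
directly, exploiting the fact that it is a pointwise (fibrewise) identity in the base $X_1\cup_f X_2$. Accordingly I would fix arbitrary $t\in C^{\infty}(X_1\cup_f X_2,(\Lambda^1(X_1\cup_f X_2))^*)$, $s\in C^{\infty}(X_1\cup_f X_2,\Lambda^1(X_1\cup_f X_2))$ and $r\in C^{\infty}(X_1\cup_f X_2,E_1\cup_{\tilde f'}E_2)$, evaluate both sides at an arbitrary $x$, and split into the three regions that recur throughout the paper: $x\in i_1(X_1\setminus Y)$, $x\in i_2(X_2\setminus f(Y))$, and the gluing locus $x\in i_2(f(Y))$. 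Here $\nabla_{\cup}$ denotes the induced Levi-Civita connection on $(\Lambda^1(X_1\cup_f X_2),g^{\Lambda})$ of Theorem \ref{induced:connection:on:lambda:is:levi-civita:thm}. Before splitting, I would record the data needed in each region: the decompositions $s\mapsto(s_1,s_2)$, $r\mapsto(r_1,r_2)$ into compatible components over $X_1$ and $X_2$ (available since $f$, $\tilde f$, $\tilde f'$ are diffeomorphisms, via the section-splitting of Section 6 and the maps $\tilde\rho_i^{\Lambda}$), together with the local formulas for $\nabla^{\cup}$ (Section \ref{induced:connection:on:glued:secn}), for $\tilde c$ (Definition \ref{induced:clifford:action:of:lambda:glued:defn}), for $\nabla_{\cup}$ (Definition \ref{induced:connection:on:lambda:defn}), and for covariant derivatives under gluing (Lemma \ref{covariant:derivatives:and:gluing:lem} and its evident analogue for $E_1\cup_{\tilde f'}E_2$ read off from the construction of $\nabla^{\cup}$). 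Smoothness of all three operators is already known (in particular $\tilde c$ is a smooth Clifford action by Theorem \ref{clifford:action:of:lambda:glued:thm}), so only the algebraic identity needs checking.

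The two ``pure'' regions are immediate. Over $i_1(X_1\setminus Y)$ each induced object reduces, through the diffeomorphism $\tilde\rho_1^{\Lambda}$ and the induction $j_1^{E_1}$, to its counterpart on $E_1$: $\nabla^{\cup}$ to $\nabla^1$, $\tilde c$ to $c_1$, $\nabla_{\cup}$ to $\nabla^{\Lambda,1}$, and the contractions with $t$ transport accordingly. Hence the identity at such a point is exactly the image, under these identifications, of the Clifford-connection identity for $\nabla^1$ at $i_1^{-1}(x)$, which holds by hypothesis. The region $i_2(X_2\setminus f(Y))$ is treated identically with $\nabla^2$, $c_2$, $\nabla^{\Lambda,2}$ and $j_2^{E_2}$. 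No compatibility hypothesis is needed here beyond the well-definedness already established.

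The gluing locus is where the real work lies. Over $x\in i_2(f(Y))$ the fibre of $\Lambda^1(X_1\cup_f X_2)$ is the compatible direct sum $\Lambda_y^1(X_1)\oplus_{comp}\Lambda_{f(y)}^1(X_2)$, and both $\nabla^{\cup}$ and $\nabla_{\cup}$ split into two summand-contributions through the inclusions $\mbox{Incl}$ and the map $(\tilde\rho_1^{\Lambda}\oplus\tilde\rho_2^{\Lambda})^{-1}$, whereas the fibre of $E_1\cup_{\tilde f'}E_2$ is identified with that of $E_2$ and $\tilde c$ acts via $c_2\circ\tilde\rho_2^{\Lambda}$. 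My plan is to expand all three terms of the desired identity using these formulas and to show that the $\Lambda^1(X_2)$-summand contributions collapse to the Clifford-connection identity for $\nabla^2$ at $f(y)$, using that $\tilde\rho_2^{\Lambda}$ extends to an isometric algebra homomorphism $\tilde\rho_2^{\mathit{cl}(\Lambda)}$ (the Proposition preceding Definition \ref{compatible:clifford:actions:of:lambdas:defn}), while the $\Lambda^1(X_1)$-summand contributions collapse, through $\tilde f'$, to the Clifford-connection identity for $\nabla^1$ at $y$. The four compatibility hypotheses are precisely what guarantees that these two collapses are mutually consistent: compatibility of $c_1,c_2$ (Definition \ref{compatible:clifford:actions:of:lambdas:defn}) aligns the action terms across $\tilde f'$; compatibility of $g_1^{\Lambda},g_2^{\Lambda}$ (Definition \ref{compatible:pseudo-metrics:on:lambda:defn}) aligns the isometry terms entering $\nabla_{\cup}$; compatibility of $\nabla^{\Lambda,1},\nabla^{\Lambda,2}$ (Definition \ref{compatibility:of:connections:on:lambda:defn}) aligns the $\nabla_{\cup,t}s$ terms; and compatibility of $\nabla^1,\nabla^2$ (Definition \ref{compatibility:of:connections:defn}) aligns the $\nabla^{\cup}_t r$ terms.

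The main obstacle I anticipate is the bookkeeping in this last case rather than any conceptual subtlety: the three operators, the two inclusions, the identification of the $E$-fibre with $E_2$, and the transport through $\tilde f'$ must all be applied in the correct order, and the cross term arising from $\tilde c(s)r$ (a product differentiated by $\nabla^{\cup}_t$) has to be shown to respect the direct-sum splitting so that its Leibniz expansion matches $\tilde c(\nabla_{\cup,t}s)(r)+\tilde c(s)(\nabla^{\cup}_t r)$ summand by summand. Once the summand-wise reduction is carried out and the four compatibility identities are invoked to identify the $X_1$- and $X_2$-contributions, both sides of the identity agree at $x$; combined with the two pure regions this yields the identity at every point of $X_1\cup_f X_2$, so $\nabla^{\cup}$ is a Clifford connection.
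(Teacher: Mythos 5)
Your proposal is correct and follows essentially the same route as the paper's own proof: split $s$, $t$, $r$ into their compatible components over $X_1$ and $X_2$, verify the Clifford identity pointwise over the three regions $i_1(X_1\setminus Y)$, $i_2(X_2\setminus f(Y))$, $i_2(f(Y))$, reduce the two pure regions to the Clifford-connection hypotheses on $\nabla^1$ and $\nabla^2$ via $\tilde\rho_i^{\Lambda}$ and the inductions, and handle the gluing locus by expanding $\tilde c(s)r$ through the compatibility of $c_1$ and $c_2$ so that the Leibniz terms match summand by summand. The paper carries out the seam computation with a symmetrized $\frac12$-weighted presentation of $(\tilde c(s)r)(x)$, which is exactly the bookkeeping device your plan anticipates needing.
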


\begin{proof}
The identity to verify is
$$\nabla^{\cup}_t(\tilde{c}(s)r)=\tilde{c}(\nabla^{\Lambda}_t s)(r)+\tilde{c}(s)(\nabla^{\cup}_t r).$$ Let $r\in C^{\infty}(X_1\cup_f X_2,E_1\cup_{\tilde{f}'}E_2)$, and let 
$s,t\in C^{\infty}(X_1\cup_f X_2,\Lambda^1(X_1\cup_f X_2))$. Define 
$$r_1:=\tilde{j}_1^{-1}\circ r\circ\tilde{i}_1\in C^{\infty}(X_1,E_1),\,\,\,r_2:=j_2^{E_2}\circ r\circ i_2,$$ and recall the sections $s_1,s_2,t_1,t_2$ associated to $s$ and $t$ via
$$s_1=\tilde{\rho}_1^{\Lambda}\circ s\circ\tilde{i}_1,\,\,s_2=\tilde{\rho}_2^{\Lambda}\circ s\circ i_2,\,\,t_1=\tilde{\rho}_1^{\Lambda}\circ t\circ\tilde{i}_1,\,\,t_2=\tilde{\rho}_2^{\Lambda}\circ t\circ i_2.$$ Let us check 
the desired equality at an arbitrary point $x\in X_1\cup_f X_2$.

Let first $x\in i_1(X_1\setminus Y)$. Then 
$$(\nabla^{\cup}_t(\tilde{c}(s)r))(x)=j_1^{E_1}((\nabla^1_{t_1}(\tilde{c}(s)r)_1)(i_1^{-1}(x))).$$ Observe that $(\tilde{c}(s)r)_1=c_1(s_1)r_1$ by construction, so we actually have 
$$(\nabla^{\cup}_t(\tilde{c}(s)r))(x)=j_1^{E_1}((\nabla^1_{t_1}c_1(s_1)r_1)(i_1^{-1}(x))).$$ On the right-hand side we have 
$$(\tilde{c}(\nabla^{\Lambda}_t s)(r)+\tilde{c}(s)(\nabla^{\cup}_t r))(x)=j_1^{E_1}\left(c_1(\tilde{\rho}_1^{\Lambda}(\nabla^{\Lambda}_t s))(r_1)(i_1^{-1}(x))+c_1(s_1)(\nabla^1_{t_1}r_1)(i_1^{-1}(x))\right)=$$
$$=j_1^{E_1}\left((c_1(\nabla^{\Lambda,1}_{t_1}s_1))(r_1)(i_1^{-1}(x))+c_1(s_1)(\nabla^1_{t_1}r_1)(i_1^{-1}(x))\right)=j_1^{E_1}\left(\nabla^1_{t_1}(c_1(s_1)r_1)(i_1^{-1}(x))\right),$$ since $\nabla^1$ is a 
Clifford connection by assumption. This yields the desired equality for $x\in i_1(X_1\setminus Y)$, and the case of $x\in i_2(X_2\setminus f(Y))$ is completely analogous. 

Thus, let $x\in i_2(f(Y))$. Let us write, first of all,
$$(\tilde{c}(s)r)(x)=\frac12\tilde{j}_1(c_1(s_1)r_1)(\tilde{i}_1^{-1}(x))+\frac12 j_2^{E_2}(c_2(s_2)r_2)(i_2^{-1}(x)),$$ which we can do by compatibility of the actions $c_1$ and $c_2$, the definition of $\tilde{c}$, 
and the constructions of $s_1,s_2,r_1,r_2$. Then we have
$$(\nabla^{\cup}_t(\tilde{c}(s)r))(x)=\tilde{j}_1\left((\nabla^1_{t_1}(c_1(s_1)r_1))(\tilde{i}_1^{-1}(x))\right)+j_2^{E_2}\left((\nabla^2_{t_2}(c_2(s_2)r_2))(i_2^{-1}(x))\right).$$  
On the right-hand side we 
have
\begin{flushleft}
$(\tilde{c}(\nabla^{\Lambda}_t s)(r)+\tilde{c}(s)(\nabla^{\cup}_t r))(x)=$
\end{flushleft}
\begin{flushright}
$\frac12(c_1(\nabla^{\Lambda,1}_{t_1}s_1))(r_1)(\tilde{i}_1^{-1}(x))+\frac12(c_2(\nabla^{\Lambda,2}_{t_2}s_2))(i_2^{-1}(x))+\frac12 c_1(s_1)(\nabla^1_{t_1}r_1)(\tilde{i}_1^{-1}(x))+
\frac12 c_2(s_2)(\nabla^2_{t_2}r_2)(i_2^{-1}(x)).$ 
\end{flushright}
The desired equality follows from the assumption of $\nabla^1$ and $\nabla^2$ being Clifford 
connections on $E_1$ and $E_2$ respectively.
\end{proof}

\subsection{The induced Clifford connection on $\bigwedge(\Lambda^1(X_1\cup_f X_2))$}

The pseudo-bundles $\bigwedge(\Lambda^1(X_1))$ and $\bigwedge(\Lambda^1(X_2))$ are specific instances of Clifford modules, over the Clifford algebras $\cl(\Lambda^1(X_1),g_1^{\Lambda})$ and 
$\cl(\Lambda^1(X_2),g_2^{\Lambda})$ respectively. As has been said already, $\bigwedge(\Lambda^1(X_1\cup_f X_2))$ is not the result of any gluing between them.\footnote{Although, as we have seen in
Section 7,the result of a gluing of $\bigwedge(\Lambda^1(X_1))$ to $\bigwedge(\Lambda^1(X_2))$ may coincide with $\bigwedge(\Lambda^1(X_1)\cup\Lambda^1(X_2))$, for some gluing between 
$\Lambda^1(X_1)$ and $\Lambda^1(X_2)$.} 

As a matter of standard reasoning, a connection on $\Lambda^1(X)$ provides us with a connection on $\bigwedge(\Lambda^1(X))$, and in particular, the Levi-Civita connection on 
$(\Lambda^1(X),g^{\Lambda})$ yields a Clifford connection on $\bigwedge(\Lambda^1(X))$, where the latter is considered as a Clifford module over $\cl(\Lambda^1(X),g^{\Lambda})$ with the standard 
Clifford action. Thus, if we assume that $(\Lambda^1(X_1),g_1^{\Lambda})$ and $(\Lambda^1(X_2),g_2^{\Lambda})$ admit compatible Levi-Civita connections then by Theorem 
\ref{induced:connection:on:lambda:is:levi-civita:thm}, they induce the Levi-Civita connection on $(\Lambda^1(X_1\cup_f X_2),g^{\Lambda})$, and this allows to endow $\bigwedge(\Lambda^1(X_1\cup_f X_2))$, 
seen as a Clifford module over $\cl(\Lambda^1(X_1\cup_f X_2),g^{\Lambda})$ with the standard Clifford action, with the corresponding Clifford connection.

\section{Diffeological Dirac operators}

In this concluding section we put together the standard definition of the Dirac operator and the above-described diffeological counterparts of its building blocks. The result fully mimics the standard notion and 
is well-behaved with respect to the gluing procedure, for which we comment on how it applies to Dirac operators on wedges of standard smooth manifolds.

\subsection{The definition and the gluing procedure}

We first spell out the abstract definition, although it is in almost complete analogy with the standard one (as we cited it in the Introduction), consider very simple examples, and point out that there is an almost 
trivial procedure of gluing of two Dirac operators (obtaining again a Dirac operator), as long as these are associated to all the compatible data.

\subsubsection{The definition}

This is the same as the standard definition (a version, more precisely); the only difference is that the diffeological versions of all the components are used.

\begin{defn}
Let $X$ be a diffeological space such that $\Lambda^1(X)$ is finite-dimensional and admits pseudo-metrics, let $g^{\Lambda}$ be a pseudo-metric on $\Lambda^1(X)$, and let $\chi:E\to X$ be a 
pseudo-bundle of Clifford modules over $\cl(\Lambda^1(X),g^{\Lambda})$ with Clifford action $c$. Suppose, furthermore, that $E$ admits a pseudo-metric $g$ and a Clifford connection $\nabla$ compatible 
with $g$. The operator 
$$D:C^{\infty}(X,E)\to C^{\infty}(X,E)\,\,\mbox{ given by }\,\, D=c\circ\nabla$$
is the \textbf{Dirac operator} on $E$ corresponding to the data $(X,g^{\Lambda},E,c,\nabla)$.
\end{defn} 

In the standard context it is also required that the Clifford action be unitary with respect to the given Riemannian metric on the given bundle of Clifford modules. For us, that would mean that $c$ should be an 
unitary action with respect to the pseudo-metrics $g^{\Lambda}$ on $\Lambda^1(X)$ and $g$ on $E$, see Definition \ref{unitary:clifford:action:defn}.

\subsubsection{Gluing of compatible Dirac operators}

This is akin to most of our gluing constructions (since it collects them all). The idea of the construction should be obvious by now. 

\paragraph{The assumptions} Let $X_1$ and $X_2$ be two diffeological spaces such that $\Lambda^1(X_1)$ and $\Lambda^1(X_2)$ are finite-dimensional. Let $f:X_1\supseteq Y\to X_2$ be a gluing map, 
that we need to assume to be a diffeomorphism and such that $\calD_1^{\Omega}=\calD_2^{\Omega}$ (the reason why we need, as opposed to want, these assumptions is that some of our constructions, such 
as $g^{\Lambda}$ and $\nabla^{\cup}$, were only defined in their presence). 

Let $g_1^{\Lambda}$ and $g_2^{\Lambda}$ be compatible (see Definition \ref{compatible:pseudo-metrics:on:lambda:defn}) pseudo-metrics on $\Lambda^1(X_1)$ and $\Lambda^1(X_2)$ respectively (thus 
assuming that each of them admits a pseudo-metric in the first place and, furthermore, that there exists at least one pair of compatible pseudo-metrics on them). Assume also that 
$(\Lambda^1(X_1),g_1^{\Lambda})$ and $(\Lambda^1(X_2),g_2^{\Lambda})$ both admit Levi-Civita connections, and that these Levi-Civita connections are compatible with each other in the sense of 
Definition \ref{compatibility:of:connections:on:lambda:defn}. By Theorem \ref{induced:connection:on:lambda:is:levi-civita:thm} they induce the Levi-Civita connection on 
$(\Lambda^1(X_1\cup_f X_2),g^{\Lambda})$.

Let now $\chi_1:E_1\to X_1$ be a pseudo-bundle of Clifford modules over $\cl(\Lambda^1(X_1),g_1^{\Lambda})$ with Clifford action $c_1$, and let $\chi_2:E_2\to X_2$ be, likewise, a pseudo-bundle of 
Clifford modules over $\cl(\Lambda^1(X_2),g_2^{\Lambda})$. Let $\tilde{f}':E_1\supseteq\chi_1^{-1}(Y)\to E_2$ be a fibrewise linear diffeomorphism that covers $f$. Assume that $c_1$ and $c_2$ are 
compatible with the gluing along $(\tilde{f}',f)$, in the sense of Definition \ref{compatible:clifford:actions:of:lambdas:defn}, and let $\tilde{c}$ be the induced Clifford action of $\Lambda^1(X_1\cup_f X_2)$ on 
$E_1\cup_{\tilde{f}'}E_2$, see Definition \ref{induced:clifford:action:of:lambda:glued:defn} and Theorem \ref{clifford:action:of:lambda:glued:thm}.
 
Let $g_1$ and $g_2$ be pseudo-metrics on $E_1$ and $E_2$ respectively, and assume that they are compatible with the gluing along $(\tilde{f}',f)$ in the sense of Definition 
\ref{compatible:pseudo-metrics:on:two:pseudo-bundles:defn}. Let $\tilde{g}$ be the induced pseudo-metric on $E_1\cup_{\tilde{f}'}E_2$, see Theorem \ref{glued:pseudometric:commutative:thm} and 
Theorem \ref{glued:pseudometric:noncommutative:thm}. Assume that $c_1$ and $c_2$ are unitary actions. Then by Proposition \ref{induced:clifford:action:is:unitary:prop} the induced action $\tilde{c}$ is a 
unitary action as well.

Let $\nabla^1$ be a Clifford connection on $E_1$, compatible with $g_1$, and let $\nabla^2$  be a Clifford connection on $E_2$, compatible with $g_2$. Assume furthermore that $\nabla^1$ and $\nabla^2$ 
are compatible with each other in the sense of Definition \ref{compatibility:of:connections:defn}, and let $\nabla^{\cup}$ be the induced connection on $E_1\cup_{\tilde{f}'}E_2$. By Theorem 
\ref{induced:connection:is:compatible:with:induced:pseudo-metric:thm}, $\nabla^{\cup}$ is compatible with the pseudo-metric $\tilde{g}$, and by Theorem \ref{induced:connection:is:clifford:thm} it is a 
Clifford connection on $E_1\cup_{\tilde{f}'}E_2$, considered as a pseudo-bundle of Clifford modules over $\cl(\Lambda^1(X_1\cup_f X_2),g^{\Lambda})$ with Clifford action $\tilde{c}$.

\paragraph{The Dirac operator obtained by gluing} The following three $5$-tuples
$$(X_1,g_1^{\Lambda},E_1,c_1,\nabla^1),\,\,\,(X_2,g_2^{\Lambda},E_2,c_2,\nabla^2),\,\,\,(X_1\cup_f X_2,g^{\Lambda},E_1\cup_{\tilde{f}'}E_2,\tilde{c},\nabla^{\cup})$$ provide each the data necessary to 
define a Dirac operator. Let 
$$D_1:C^{\infty}(X_1,E_1)\to C^{\infty}(X_1,E_1),\,D_1=c_1\circ\nabla^1\,\,\,\mbox{ and }\,\,\,D_2:C^{\infty}(X_2,E_2)\to C^{\infty}(X_2,E_2),\,D_2=c_2\circ\nabla^2$$ be the two given Dirac operators, 
\emph{i.e.}, corresponding to the first two tuples.

\begin{defn}
The Dirac operator 
$$\tilde{D}:=\tilde{c}\circ\nabla^{\cup}:C^{\infty}(X_1\cup_f X_2,E_1\cup_{\tilde{f}'}E_2)\to C^{\infty}(X_1\cup_f X_2,E_1\cup_{\tilde{f}'}E_2)$$ is said to be the result of \textbf{gluing of Dirac operators} $D_1$ 
and $D_2$. 
\end{defn}

The action of $\tilde{D}$ can easily be described in terms of the actions of $D_1$ and $D_2$. 

\begin{prop}\label{splitting:dirac:operator:prop}
Let $s\in C^{\infty}(X_1\cup_f X_2,E_1\cup_{\tilde{f}'}E_2)$, and let $s=s_1\cup_{(f,\tilde{f}')}s_2$ be its splitting as the result of gluing of compatible sections $s_1\in C^{\infty}(X_1,E_1)$ and 
$s_2\in C^{\infty}(X_2,E_2)$. Then
$$\tilde{D}(s)=D_1(s_1)\cup_{(f,\tilde{f}')}D_2(s_2).$$
\end{prop}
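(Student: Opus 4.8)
The plan is to verify the asserted identity $\tilde{D}(s)=D_1(s_1)\cup_{(f,\tilde{f}')}D_2(s_2)$ pointwise, by splitting the base $X_1\cup_f X_2$ into the three standard pieces $i_1(X_1\setminus Y)$, $i_2(X_2\setminus f(Y))$, and $i_2(f(Y))$, and using the explicit fibrewise descriptions of $\tilde{c}$ and $\nabla^{\cup}$ already established. Recall that $\tilde{D}=\tilde{c}\circ\nabla^{\cup}$ by definition, and that $D_i=c_i\circ\nabla^i$; since the right-hand side $D_1(s_1)\cup_{(f,\tilde{f}')}D_2(s_2)$ is the gluing of two compatible sections, it is characterized (by the construction in Section 6) by its values $j_i^{E_i}(D_i(s_i)(\cdot))$ on the respective pieces. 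So the whole proof reduces to showing that $\tilde{D}(s)$ takes exactly these values, piece by piece.

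First I would fix $s=s_1\cup_{(f,\tilde{f}')}s_2$ and observe that, since $f$ and $\tilde{f}'$ are diffeomorphisms, the sections $s_1,s_2$ are uniquely determined by $s$ via $s_1=\tilde{j}_1^{-1}\circ s\circ\tilde{i}_1$ and $s_2=j_2^{E_2}{}^{-1}\circ s\circ i_2$, exactly as in Section~\ref{induced:connection:on:glued:secn}. Then for $x\in i_1(X_1\setminus Y)$, I would apply the first line of the definition of $\nabla^{\cup}$ to get $(\nabla^{\cup}s)(x)=((\tilde{\rho}_1^{\Lambda})^{-1}\otimes j_1)((\nabla^1 s_1)(i_1^{-1}(x)))$, then apply $\tilde{c}$ using its first-line definition (which over $i_1(X_1\setminus Y)$ reduces to $c_1$ composed with $\tilde{\rho}_1^{\Lambda}$ on the form factor). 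The key cancellation is that $\tilde{\rho}_1^{\Lambda}$ and $(\tilde{\rho}_1^{\Lambda})^{-1}$ compose to the identity, so the form factor $\tilde{\rho}_1^{\Lambda}(\tilde{\rho}_1^{\Lambda})^{-1}$ disappears and one is left with $j_1^{E_1}(c_1(\nabla^1 s_1)(i_1^{-1}(x)))=j_1^{E_1}(D_1(s_1)(i_1^{-1}(x)))$, which is precisely the value of the glued section. The case $x\in i_2(X_2\setminus f(Y))$ is completely symmetric, using the second lines of both definitions.

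The main obstacle, as usual, is the overlap piece $x\in i_2(f(Y))$, where $\nabla^{\cup}s$ is given by the three-term expression involving $(\tilde{\rho}_1^{\Lambda}\oplus\tilde{\rho}_2^{\Lambda})^{-1}$ and the two standard inclusions $\mbox{Incl}$, while $\tilde{c}$ over $i_2(f(Y))$ is defined (second line of Definition~\ref{induced:clifford:action:of:lambda:glued:defn}) through $\tilde{\rho}_2^{\Lambda}$ and $c_2$. Here I would expand $(\nabla^{\cup}s)(x)$ into its two summands living in $\left(\Lambda^1(X_1)\oplus\{0\}\right)\otimes E$ and $\left(\{0\}\oplus\Lambda^1(X_2)\right)\otimes E$, apply $\tilde{\rho}_2^{\Lambda}$ to the direct-sum form factor, and use that $\tilde{\rho}_2^{\Lambda}\circ(\tilde{\rho}_1^{\Lambda}\oplus\tilde{\rho}_2^{\Lambda})^{-1}$ projects onto the $V_2$ summand. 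The crucial input is the compatibility of the Clifford actions $c_1$ and $c_2$ (Definition~\ref{compatible:clifford:actions:of:lambdas:defn}), which guarantees that $c_2(\tilde{\rho}_2^{\Lambda}(\cdot))\circ\tilde{f}'=\tilde{f}'\circ c_1(\tilde{\rho}_1^{\Lambda}(\cdot))$, so that the value computed via $c_2$ agrees, after applying $(\tilde{f}')^{-1}$, with $D_1(s_1)$; combined with the compatibility of $\nabla^1$ and $\nabla^2$ and the fact that $s_1(y)$ and $s_2(f(y))$ are compatible, both summands collapse to the single well-defined value $j_2^{E_2}(D_2(s_2)(i_2^{-1}(x)))=j_2^{E_2}(c_2(\nabla^2 s_2)(i_2^{-1}(x)))$.

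Finally I would note that these three pointwise identities, taken together, say exactly that $\tilde{D}(s)$ agrees on each piece with the glued section $D_1(s_1)\cup_{(f,\tilde{f}')}D_2(s_2)$; since $i_1(X_1\setminus Y)$ and $i_2(X_2)$ cover $X_1\cup_f X_2$ and the gluing of compatible sections is uniquely determined by its values there (Section~6), the two sections coincide, yielding the claim. I expect no smoothness issue to arise at this stage, because both $\tilde{D}(s)$ and the glued section are already known to be smooth sections (the former as the image of $\tilde{D}$, the latter by the smoothness of gluing of compatible sections), so the argument is purely a pointwise matching computation resting on the composition identities $\tilde{\rho}_i^{\Lambda}\circ(\tilde{\rho}_i^{\Lambda})^{-1}=\mbox{Id}$ and the compatibility of all the glued data.
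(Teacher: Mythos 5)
Your proposal is correct and takes essentially the same route as the paper: a pointwise comparison over the three pieces, immediate off the gluing locus, and settled over $i_2(f(Y))$ by combining the definition of $\tilde{c}$ (which acts there through $c_2$ and $\tilde{\rho}_2^{\Lambda}$) with the construction of $\nabla^{\cup}$, whose image under $\tilde{\rho}_2^{\Lambda}\otimes(j_2^{E_2})^{-1}$ is exactly $\nabla^2 s_2$, giving $j_2^{E_2}(D_2(s_2)(i_2^{-1}(x)))$ as required. The only imprecision is your phrase that ``both summands collapse'' to the common value: in fact the summand coming from $\nabla^1 s_1$ is annihilated outright by $\tilde{\rho}_2^{\Lambda}$ (its form factors lie in $\Lambda^1(X_1)\oplus\{0\}$), so no appeal to the compatibility of $c_1$ and $c_2$ is needed at that step --- that compatibility enters only if one also wants to verify that $D_1(s_1)$ and $D_2(s_2)$ are themselves compatible sections, a point the paper leaves implicit and which your argument usefully makes explicit.
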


\begin{proof}
Let us compare $\tilde{D}(s)(x)$ and $(D_1(s_1)\cup_{(f,\tilde{f})}D_2(s_2))(x)$ for $x\in X_1\cup_f X_2$. For $x\in i_1(X_1\setminus Y)$ and $x\in i_2(X_2\setminus f(Y))$ the equality between the two is 
obvious, so let $x\in i_2(f(Y))$. By definition of the gluing construction (for diffeological spaces and maps between them), we need to compare $(\tilde{c}\circ\nabla^{\cup})(s)(x)$ with 
$D_2(s_2)(i_2^{-1}(x))=(c_2\circ\nabla^2)(s_2)(i_2^{-1}(x))$. 

Let us therefore calculate $(\tilde{c}\circ\nabla^{\cup})(s)(x)$. We have 
$$(\tilde{c}\circ\nabla^{\cup})(s)(x)=(c_2\circ(\tilde{\rho}_2^{\Lambda}\otimes(j_2^{E_2})^{-1})(\nabla^{\cup}s)(i_2^{-1}(x))=c_2(\nabla^2s_2)(i_2^{-1}(x)),$$ respectively by definition of the action $\tilde{c}$ and 
by the construction of the connection $\nabla^{\cup}$, whence the claim.
\end{proof}

\begin{rem}
The formula in Proposition \ref{splitting:dirac:operator:prop} could be taken as a definition of gluing of Dirac operators. Specifically, say that Dirac operators $D_1$ and $D_2$ on $E_1$ and $E_2$ are 
\textbf{compatible} if for every compatible pair of sections $s_1\in C^{\infty}(X_1,E_1)$, $s_2\in C^{\infty}(X_2,E_2)$ the sections $D_1(s_1)$ and $D_2(s_2)$ are again compatible. For any two compatible 
Dirac operators $D_1$ and $D_2$ define the result of their gluing to be the operator 
$$D_1\cup_{(f,\tilde{f}')}D_2:C^{\infty}(X_1\cup_f X_2,E_1\cup_{\tilde{f}'}E_2)\to C^{\infty}(X_1\cup_f X_2,E_1\cup_{\tilde{f}'}E_2)$$ acting by 
$$(D_1\cup_{(f,\tilde{f}')}D_2)(s)=D_1(s_1)\cup_{(f,\tilde{f}')}D_2(s_2)$$ for every section $s\in C^{\infty}(X_1\cup_f X_2,E_1\cup_{\tilde{f}'}E_2)$ written as $s=s_1\cup_{(f,\tilde{f}')}s_2$ (recall that, since $f$ 
and $\tilde{f}'$ are diffeomorphisms, this presentation of $s$ is unique). The operator is well-defined by compatibility of $D_1$ and $D_2$ and is again a Dirac operator, since by construction it coincides with 
$\tilde{D}$.
\end{rem}

\subsection{One-point wedges of smooth manifolds}

This is probably the simplest illustration of the gluing procedure for Dirac operators. Let $(M_1,g_1,E_1,c_1,\nabla^1)$ and $(M_2,g_2,E_2,c_2,\nabla^2)$ be two standard sets of data defining usual Dirac 
operators $D_1$ and $D_2$. Let $x_1\in M_1$ and $x_2\in M_2$ be two points, and let $f:\{x_1\}\to\{x_2\}$ be the obvious map. Then $M_1\cup_f M_2$ is an instance of gluing of diffeological spaces.

Assume now that the fibres over $x_1$ and $x_2$ are isomorphic, and choose an isomorphism $\tilde{f}'$ of these fibres, $\tilde{f}':(E_1)_{x_1}\to(E_2)_{x_2}$ (it is of course smooth, since it is just a 
diffeomorphism of standard vector spaces). To apply the gluing construction, we need the following compatibility conditions:
\begin{enumerate}
\item The map $\tilde{f}'$ should preserve the scalar products $g_1(x_1)$ and $g_2(x_2)$ (this corresponds to the compatibility condition for pseudo-metrics); and 
\item The actions $c_1$ and $c_2$ should be equivariant with respect to $\tilde{f}'$, that is, they should satisfy 
$$c_1|_{T_{x_1}^*M_1}(\alpha_1)((\tilde{f}')^*(\cdot))=c_2|_{T_{x_2}^*M_2}(\alpha_2)(\cdot)$$ for all $\alpha_1\in T_{x_1}^*M_1$ and $\alpha_2\in T_{x_2}^*M_2$ (this is the compatibility condition for the 
actions).
\end{enumerate}
Notice that the compatibility condition for the connections is empty. This is because it is based on the compatibility conditions for elements of $\Lambda^1(X_1)$ and $\Lambda^1(X_2)$, which is always 
empty in the case of a one-point gluing.

Obviously, outside of the wedge point $D_1\cup_{(f,\tilde{f'})}D_2$ acts either as $D_1$ or as $D_2$, whichever is appropriate. Let $s\in C^{\infty}(M_1\cup_f M_2,E_1\cup_{\tilde{f}'}E_2)$. The value 
$(D_1\cup_{(f,\tilde{f'})}D_2)(s)(x)$ of the image $(D_1\cup_{(f,\tilde{f'})}D_2)(s)$ at the wedge point $x$ has then form 
$$\sum(\omega_i^1(x_1)\oplus\omega_j^2(x_2))\otimes(\tilde{f}'(e_i^1)+e_j^2),$$ where $D_1(s_1)(x_1)=\sum\omega_i^1(x_1)\otimes e_i^1$ and $D_2(s_2)(x_2)=\sum\omega_j^2(x_2)\otimes e_j^2$.

\subsection{Concluding remarks}

A vast amount of topics has been omitted from this manuscript, including everything that has to do with Dirac operators and the Atiyah-Singer theory \emph{proper}, and most of the constructions developed 
herein come with strong limitations, such as restricting ourselves, particularly from Section 8 onwards, to gluings along diffeomorphisms and extendable differential forms. As far as the omissions are 
concerned, they have to do with finding a reasonable limit for the scope of this work. 

These omissions, in any case, are of two sorts. One concerns the notions that go most closely together with the constructions considered here, such as the curvature tensor of a diffeological connection and 
characteristic classes of diffeological vector pseudo-bundles endowed with connections. These were omitted mostly because they are not strictly necessary for the final purpose, and also for reasons of length. 
Another noticeable absence is that of diffeological counterparts of the standard instances of Dirac operators, such as the de Rham operator (although it is not impossible to have one). That would be nice to 
have, and it will probably get done in the future.

The limitations are a somewhat different matter, but in the end it was also a conscious choice to avoid reaching statements-in-maximal-generality all throughout (this is not to imply that I was able to obtain all 
the potential maximal generality statements; sometimes I wasn't). The reasoning was that, for the first approach to this subject matter (which is, after all, is not much more than just a systematic way of piecing 
together the usual Dirac operators and explaining in which sense, consistent with the existing theory, the result is again a Dirac operator) it appears, as a matter of opinion to limit the discussion to bluings 
along diffeological diffeomorphisms that are defined, although not just on usual open domains, on sets that are sufficiently well-behaved to ensure our extendability conditions.

\section*{Appendix: open questions}

There are some obvious open questions that got raised during this work. Here is a rather incomplete list of them; it is distinct from the list of omissions appearing above, and has little intersection with the list 
of limitations, also see above. Unlike the former two, where some objective considerations (the length and so on) may justify the absence, in this manuscript, of the answer, the list appearing just below is 
compiled on the basis of, it would be nice to include an answer here, but I don't know it.

\paragraph{Existence questions} We have mostly avoided dealing with existence issues throughout this manuscript; when existence was in doubt, we dealt with the matter by just asking for whatever required 
object as a matter of assumption. Two factors are behind this. One is the inherent breadth of the concept of a diffeological space; since this can be pretty much anything at all, \emph{some} assumptions should 
always be needed to ensure the existence of such-and-such object, and it might just happen that a list of such assumption is better replaced by a plain requirement for the object to exist. The other reason is 
more of a technical matter and applies to more concrete objects, such as pseudo-metrics or connections: the way in which the existence of their standard counterparts is checked is based on local coordinates 
of sort. These do not really exist in diffeology, so the standard procedure does not immediately apply. This is certainly not an insurmountable obstacle (it might be just a matter of using a different approach), so 
such existence questions are collected here, starting with the most technical one:
\begin{itemize}
\item is there a counterpart of the partition of unity theorem for diffeological spaces?
\item which diffeological vector pseudo-bundles admit pseudo-metrics? (My original hope in considering the diffeological gluing was to use it as a substitute of local trivializations, first of all, with the aim of 
obtaining pseudo-metrics by gluing them. This can certainly be done, but the range of pseudo-bundles that result from a finite sequence of bluings is rather limited, see below).
\item which diffeological vector pseudo-bundles admit diffeological connections?
\item which diffeological spaces (and for which pseudo-metrics) admit Levi-Civita connections? 
\item which diffeological vector pseudo-bundles admit Clifford connections, and for which pseudo-metrics? 
\end{itemize}

\paragraph{Extendibility questions} These regard specifically the spaces of sections of pseudo-bundles and the spaces $\Omega^1(X)$ of differential forms on diffeological spaces, and the behavior of these 
under gluing, such as, how frequently the condition $\calD_1^{\Omega}=\calD_2^{\Omega}$ is satisfied.

\paragraph{Interplay between various compatibility notions} A wealth of compatibility notions appears throughout the paper (too many, actually, for having a common name). These may, or may not, be 
unrelated to each other. Here are some specific questions in this respect:
\begin{itemize}
\item are the Levi-Civita connections corresponding to compatible pseudo-metrics themselves compatible (as connections, of course)?
\item are Clifford connections on Clifford modules endowed with compatible actions themselves compatible?
\end{itemize}

\paragraph{Strengthening the gluing diffeology} As we pointed out several times throughout, the gluing diffeology is a very weak diffeology, weaker in any case than one would expect in a given setting. The 
idea of it is thus of being a precursor to other, stronger diffeologies.

\paragraph{D-topology and gluing} This matter was not considered at all here, but it is a very natural one. There are some obvious questions such as whether the image $i_2(X_2)$ is a D-open subset of 
$X_1\cup_f X_2$ (in general it is not).

\paragraph{Locality and dimensions} It would have been suitable to at least mention local dimensions of the spaces $C^{\infty}(X,V)$ of smooth sections of pseudo-bundles in Section 7; we avoided doing this 
for reasons of space.

\vspace{5mm}

\noindent University of Pisa \\
Department of Mathematics \\
Via F. Buonarroti 1C\\
56127 PISA -- Italy\\
\ \\
ekaterina.pervova@unipi.it\\

\end{document}